\DeclareSymbolFont{cyrletters}{OT2}{wncyr}{m}{n}
\DeclareMathSymbol{\Sha}{\mathalpha}{cyrletters}{"58}
\let\Re\undefined
\let\Im\undefined
\DeclareMathOperator{\Re}{Re}
\DeclareMathOperator{\Im}{Im}
\DeclareMathOperator{\Tr}{Tr}
\DeclareMathOperator{\supp}{supp}
\DeclareMathOperator{\GL}{GL}
\DeclareMathOperator{\vol}{vol}
\begin{document}
	
	\theoremstyle{plain}
\newtheorem{thm}{Theorem} \newtheorem{cor}[thm]{Corollary}
\newtheorem{thmy}{Theorem}
\renewcommand{\thethmy}{\Alph{thmy}}
\newenvironment{thmx}{\stepcounter{thm}\begin{thmy}}{\end{thmy}}
	\newtheorem{lemma}[thm]{Lemma}  \newtheorem{prop}[thm]{Proposition}
	\newtheorem{conj}[thm]{Conjecture}  \newtheorem{fact}[thm]{Fact}
	\newtheorem{claim}[thm]{Claim}
	\theoremstyle{definition}
	\newtheorem{defn}[thm]{Definition}
	\newtheorem{example}[thm]{Example}
	\newtheorem{exercise}[thm]{Exercise}
	\theoremstyle{remark}
	\newtheorem*{remark}{Remark}

	\newcommand{\BA}{{\mathbb {A}}} \newcommand{\BB}{{\mathbb {B}}}
	\newcommand{\BC}{{\mathbb {C}}} \newcommand{\BD}{{\mathbb {D}}}
	\newcommand{\BE}{{\mathbb {E}}} \newcommand{\BF}{{\mathbb {F}}}
	\newcommand{\BG}{{\mathbb {G}}} \newcommand{\BH}{{\mathbb {H}}}
	\newcommand{\BI}{{\mathbb {I}}} \newcommand{\BJ}{{\mathbb {J}}}
	\newcommand{\BK}{{\mathbb {K}}} \newcommand{\BL}{{\mathbb {L}}}
	\newcommand{\BM}{{\mathbb {M}}} \newcommand{\BN}{{\mathbb {N}}}
	\newcommand{\BO}{{\mathbb {O}}} \newcommand{\BP}{{\mathbb {P}}}
	\newcommand{\BQ}{{\mathbb {Q}}} \newcommand{\BR}{{\mathbb {R}}}
	\newcommand{\BS}{{\mathbb {S}}} \newcommand{\BT}{{\mathbb {T}}}
	\newcommand{\BU}{{\mathbb {U}}} \newcommand{\BV}{{\mathbb {V}}}
	\newcommand{\BW}{{\mathbb {W}}} \newcommand{\BX}{{\mathbb {X}}}
	\newcommand{\BY}{{\mathbb {Y}}} \newcommand{\BZ}{{\mathbb {Z}}}
	
	\newcommand{\CA}{{\mathcal {A}}} \newcommand{\CB}{{\mathcal {B}}}
	\newcommand{\CC}{{\mathcal {C}}} \renewcommand{\CD}{{\mathcal {D}}}
	\newcommand{\CE}{{\mathcal {E}}} \newcommand{\CF}{{\mathcal {F}}}
	\newcommand{\CG}{{\mathcal {G}}} \newcommand{\CH}{{\mathcal {H}}}
	\newcommand{\CI}{{\mathcal {I}}} \newcommand{\CJ}{{\mathcal {J}}}
	\newcommand{\CK}{{\mathcal {K}}} \newcommand{\CL}{{\mathcal {L}}}
	\newcommand{\CM}{{\mathcal {M}}} \newcommand{\CN}{{\mathcal {N}}}
	\newcommand{\CO}{{\mathcal {O}}} \newcommand{\CP}{{\mathcal {P}}}
	\newcommand{\CQ}{{\mathcal {Q}}} \newcommand{\CR}{{\mathcal {R}}}
	\newcommand{\CS}{{\mathcal {S}}} \newcommand{\CT}{{\mathcal {T}}}
	\newcommand{\CU}{{\mathcal {U}}} \newcommand{\CV}{{\mathcal {V}}}
	\newcommand{\CW}{{\mathcal {W}}} \newcommand{\CX}{{\mathcal {X}}}
	\newcommand{\CY}{{\mathcal {Y}}} \newcommand{\CZ}{{\mathcal {Z}}}
	
	\newcommand{\RA}{{\mathrm {A}}} \newcommand{\RB}{{\mathrm {B}}}
	\newcommand{\RC}{{\mathrm {C}}} \newcommand{\RD}{{\mathrm {D}}}
	\newcommand{\RE}{{\mathrm {E}}} \newcommand{\RF}{{\mathrm {F}}}
	\newcommand{\RG}{{\mathrm {G}}} \newcommand{\RH}{{\mathrm {H}}}
	\newcommand{\RI}{{\mathrm {I}}} \newcommand{\RJ}{{\mathrm {J}}}
	\newcommand{\RK}{{\mathrm {K}}} \newcommand{\RL}{{\mathrm {L}}}
	\newcommand{\RM}{{\mathrm {M}}} \newcommand{\RN}{{\mathrm {N}}}
	\newcommand{\RO}{{\mathrm {O}}} \newcommand{\RP}{{\mathrm {P}}}
	\newcommand{\RQ}{{\mathrm {Q}}} \newcommand{\RR}{{\mathrm {R}}}
	\newcommand{\RS}{{\mathrm {S}}} \newcommand{\RT}{{\mathrm {T}}}
	\newcommand{\RU}{{\mathrm {U}}} \newcommand{\RV}{{\mathrm {V}}}
	\newcommand{\RW}{{\mathrm {W}}} \newcommand{\RX}{{\mathrm {X}}}
	\newcommand{\RY}{{\mathrm {Y}}} \newcommand{\RZ}{{\mathrm {Z}}}
	
	\newcommand{\fa}{{\mathfrak{a}}} \newcommand{\fb}{{\mathfrak{b}}}
	\newcommand{\fc}{{\mathfrak{c}}} \newcommand{\fd}{{\mathfrak{d}}}
	\newcommand{\fe}{{\mathfrak{e}}} \newcommand{\ff}{{\mathfrak{f}}}
	\newcommand{\fg}{{\mathfrak{g}}} \newcommand{\fh}{{\mathfrak{h}}}
	\newcommand{\fii}{{\mathfrak{i}}} \newcommand{\fj}{{\mathfrak{j}}}
	\newcommand{\fk}{{\mathfrak{k}}} \newcommand{\fl}{{\mathfrak{l}}}
	\newcommand{\fm}{{\mathfrak{m}}} \newcommand{\fn}{{\mathfrak{n}}}
	\newcommand{\fo}{{\mathfrak{o}}} \newcommand{\fp}{{\mathfrak{p}}}
	\newcommand{\fq}{{\mathfrak{q}}} \newcommand{\fr}{{\mathfrak{r}}}
	\newcommand{\fs}{{\mathfrak{s}}} \newcommand{\ft}{{\mathfrak{t}}}
	\newcommand{\fu}{{\mathfrak{u}}} \newcommand{\fv}{{\mathfrak{v}}}
	\newcommand{\fw}{{\mathfrak{w}}} \newcommand{\fx}{{\mathfrak{x}}}
	\newcommand{\fy}{{\mathfrak{y}}} \newcommand{\fz}{{\mathfrak{z}}}
	\newcommand{\fA}{{\mathfrak{A}}} \newcommand{\fB}{{\mathfrak{B}}}
	\newcommand{\fC}{{\mathfrak{C}}} \newcommand{\fD}{{\mathfrak{D}}}
	\newcommand{\fE}{{\mathfrak{E}}} \newcommand{\fF}{{\mathfrak{F}}}
	\newcommand{\fG}{{\mathfrak{G}}} \newcommand{\fH}{{\mathfrak{H}}}
	\newcommand{\fI}{{\mathfrak{I}}} \newcommand{\fJ}{{\mathfrak{J}}}
	\newcommand{\fK}{{\mathfrak{K}}} \newcommand{\fL}{{\mathfrak{L}}}
	\newcommand{\fM}{{\mathfrak{M}}} \newcommand{\fN}{{\mathfrak{N}}}
	\newcommand{\fO}{{\mathfrak{O}}} \newcommand{\fP}{{\mathfrak{P}}}
	\newcommand{\fQ}{{\mathfrak{Q}}} \newcommand{\fR}{{\mathfrak{R}}}
	\newcommand{\fS}{{\mathfrak{S}}} \newcommand{\fT}{{\mathfrak{T}}}
	\newcommand{\fU}{{\mathfrak{U}}} \newcommand{\fV}{{\mathfrak{V}}}
	\newcommand{\fW}{{\mathfrak{W}}} \newcommand{\fX}{{\mathfrak{X}}}
	\newcommand{\fY}{{\mathfrak{Y}}} \newcommand{\fZ}{{\mathfrak{Z}}}
	\newcommand{\Int}{\operatorname{Int}}
	\newcommand{\Res}{\operatorname{Res}}
	\newcommand{\tr}{\operatorname{tr}}
	\newcommand{\Eis}{\operatorname{Eis}}
	\newcommand{\Geo}{\operatorname{Geo}}
	\newcommand{\End}{\operatorname{End}}
	\newcommand{\K}{\operatorname{K}}
	\newcommand{\sgn}{\operatorname{sgn}}
	\newcommand{\Ker}{\operatorname{Ker}}
	\newcommand{\Ad}{\operatorname{Ad}}
	\newcommand{\Weyl}{\operatorname{Weyl}}
	\newcommand{\ram}{\operatorname{ram}}
	\newcommand{\Supp}{\operatorname{Supp}}
	\newcommand{\Cond}{\operatorname{Cond}}
	\newcommand{\diag}{\operatorname{diag}}
	\newcommand{\Ind}{\operatorname{Ind}}
	\newcommand{\E}{\operatorname{E}}
	\newcommand{\coker}{\operatorname{coker}}
	\newcommand{\Out}{\operatorname{Out}}
	\newcommand{\V}{\operatorname{V}}
	\newcommand{\Span}{\operatorname{Span}}
	\newcommand{\Sin}{\operatorname{Sing}}
	\newcommand{\Con}{\operatorname{Const}}
	\newcommand{\Reg}{\operatorname{Reg}}
	\newcommand{\dis}{\operatorname{simp}}
	\newcommand{\fin}{\operatorname{fin}}
	\newcommand{\Whi}{\operatorname{Whittaker}}
	\newcommand{\RomanNumeralCaps}[1]
	{\MakeUppercase{\romannumeral #1}}
	\title[A Coarse Jacquet-Zagier Trace Formula for $\GL(n)$]{A Coarse Jacquet-Zagier Trace Formula for GL($n$) with Applications}%
	\author{Liyang Yang}

	\address{253-37 Caltech, Pasadena\\
		CA 91125, USA}
	\email{lyyang@caltech.edu}

\begin{abstract}
In this paper we establish a coarse Jacquet-Zagier trace identity for $\GL(n).$ We prove the absolute convergence when $\Re(s)>1$ and $0<\Re(s)<1;$ and obtain holomorphic continuation under almost all character twist. Moreover, as an application, we prove that holomorphy of certain adjoint $L$-functions for $\GL(n)$ implies Dedekind conjecture of degree $n$. Some nonvanishing results are also discussed. 
	\end{abstract}
	
	\date{\today}%
	\maketitle
	\tableofcontents
	\section{Introduction}
	\subsection{Trace Formula: from Arthur-Selberg to Jacquet-Zagier}
	Let $F$ be a global field, with adele ring $\mathbb{A}_F.$ Let $G=\GL(n).$ We consider a smooth function $\varphi:$ $G(\mathbb{A}_F) \rightarrow \mathbb{C}$ which is left and right $K$-finite for a compact subgroup $K$ of $G(\mathbb{A}_F)$, transforms by a unitary character $\omega$ of $Z_G\left(\mathbb{A}_F\right),$ and has compact support modulo $Z_G\left(\mathbb{A}_F\right).$ Denote by $\mathcal{H}(G(\mathbb{A}_F),\omega)$ the set of such functions. Then $\varphi\in \mathcal{H}(G(\mathbb{A}_F),\omega)$ defines an integral operator 
	$$
	R(\varphi)f(y)=\int_{Z_G(\mathbb{A}_F)\backslash G(\mathbb{A}_F)}\varphi(x)f(yx)dx,
	$$ 
	on the space $L^2\left(G(F)\backslash G(\mathbb{A}_F),\omega^{-1}\right)$ of functions on $G(F)\backslash G(\mathbb{A}_F)$ which transform under $Z_{G}(\mathbb{A}_F)$ by $\omega^{-1}$ and are square integrable on $G(F)Z_{G}(\mathbb{A}_F)\backslash G(\mathbb{A}_F).$ This operator can clearly be represented by the kernel function
	$$
	\K^{\varphi}(x,y)=\sum_{\gamma\in Z_G(F)\backslash G(F)}\varphi(x^{-1}\gamma y).
	$$
	We often omit the superscript $\varphi$ and simply write $\K(x,y)$ for $\K^{\varphi}(x,y).$
	\medskip
	
	It is well known that $L^2\left(G(F)\backslash G(\mathbb{A}_F),\omega^{-1}\right)$ decomposes into the direct sum of the space $L_0^2\left(G(F)\backslash G(\mathbb{A}_F),\omega^{-1}\right)$ of cusp forms and spaces $L_{\Eis}^2\left(G(F)\backslash G(\mathbb{A}_F),\omega^{-1}\right)$ and $L_{\Res}^2\left(G(F)\backslash G(\mathbb{A}_F),\omega^{-1}\right)$ defined using Eisenstein series and residues of Eisenstein series respectively. Then $\K$ splits up as: $\K=\K_0+\K_{\Eis}+\K_{\Res}.$ Selberg trace formula gives an expression for the trace of the operator $R(\varphi)$ restricted to the discrete spectrum, and is roughly of the form 
\begin{equation}\label{01}
	\int_{G(F)Z(\mathbb{A}_F)\backslash G(\mathbb{A}_F)}\K_0(x,x)dx=\Sigma_{\Geo}-\Sigma_{\Eis}-\Sigma_{\Res},
\end{equation}
where $\Sigma_{\Geo}, \Sigma_{\Eis}, \Sigma_{\Res}$ are contributions from geometric side, continuous spectrum and residual spectrum, respectively. Typically, the right hand side of \eqref{01} has some convergence issue; so a truncation is usually needed.
\medskip

	In \cite{Zag77}, Zagier investigated an analogue of \eqref{01} to prove holomorphy of $L$-function associated to symmetric square of classical cusp forms. Precisely, he considered
	\begin{equation}\label{1'}
	I_0^{\varphi}(s)=\int_{GL(2,\mathbb{Q})Z(\mathbb{A}_{\mathbb{Q}})\backslash GL(2,\mathbb{A}_{\mathbb{Q}})}\K_0^{\varphi}(x,x)E(x,s)dx,
	\end{equation}
	where $E(x,s)$ is an Eisenstein series.	Note that $\K_0(x,x)$ is rapidly decreasing and $E(x,s)$ is slowly increasing outside $s=1,$ thus the right hand side of \eqref{1'} is well defined as a meromorphic function, which has a simple pole at $s=1.$ Zagier obtained a spectral expansion: $I_0^{\varphi}(s)=\Sigma_{\Geo}(s)-\Sigma_{\Eis}(s)-\Sigma_{\Res}(s).$ Ignoring the convergence issue on the right hand side, one can take residue at $s=1,$ then it would in principle recover \eqref{01}, the Arthur-Selberg trace formula. This can be made rigorous by applying suitable regularization, see \cite{Wu19}.
	\medskip 
	
	Zagier's trace identity \eqref{1'} was further developed by Jacquet and Zagier \cite{JZ87} in terms of representation theoretical language to give a new proof of holomorphy of adjoint L-functions on $\GL(2,\mathbb{A}_F).$ They show (after continuation) the contribution from continuous and residual spectrum is a holomorphic multiple of Dedekind zeta function, and the contribution from elliptic regular conjugacy classes gives certain Artin $L$-series associated to finitely many quadratic extensions of $F.$ Hence the holomorphy of adjoint $L$-functions can be deduced from class field theory, or more generally, the (twisted) Dedekind conjecture (see Conjecture \ref{De} below). However, the treatment of $\Sigma_{\Eis}(s)$ in loc. cit. is not quite complete as there are infinitely many cuspidal data when $F\neq \mathbb{Q}.$

\subsection{Statement of the Main Results}
In this paper, we will investigate a distribution $I_0^{\varphi}(s;\tau),$ (see \eqref{X} in Sec. \ref{sec2.1} for precise definition), which is a generalization of $I_0^{\varphi}(s)$ defined in \eqref{1'} to $\GL(n)$ over a global field $F;$ and prove the absolute convergence when $\Re(s)>1$ and $0<\Re(s)<1.$ One of our main results (Theorem \ref{reg ell}, Theorem \ref{aa}, Theorem \ref{39'} and Theorem \ref{47'} ) may be summarized informally as follows:
\begin{thmx}\label{A.}
	Let notation be as before. Let $\Re(s)>1.$ Let $\varphi\in \mathcal{H}(G(\mathbb{A}_F),\omega).$ Then $I_0^{\varphi}(s;\tau)$ admits an expansion:
	\begin{equation}\label{m}
	I_0^{\varphi}(s;\tau)=I_{\Geo,\Reg}(s,\tau)+I_{\infty,\Reg}(s,\tau)+I_{\Sin}(s,\tau)+ \sum_{\chi}\int_{(i\mathbb{R})^{n-1}}I_{\chi}(s,\tau,\lambda)d\lambda,
	\end{equation}
	where all the sums on the right hand side of \eqref{m} converge absolutely; moreover, the first term $I_{\Geo,\Reg}(s,\tau)$ is a finite sum of $I_E(s,\tau)$ over certain direct sum of \'Etale extensions $E/F$ of degree $\leq n,$ and $I_E(s,\tau)$ is a multiple of $\Lambda(s,\tau\circ N_{E/F});$ the last sum is an infinite sum over cuspidal data $\chi$ associated to proper standard parabolic subgroups of $G,$ $I_{\chi}(s,\tau,\lambda)$ is a multiple of Rankin-Selberg period attached to $\chi;$ $I_{\Con}(s,\tau)$ is a multiple of 
	\begin{align*}
	\frac{\Lambda(s,\tau)\Lambda(2s,\tau^2)\cdots \Lambda((n-1)s,\tau^{n-1})\Lambda(ns,\tau^n)}{\Lambda(s+1,\tau)\Lambda(2s+1,\tau^2)\cdots \Lambda((n-1)s+1,\tau^{n-1})}.
	\end{align*}
Here $\Lambda(s,\cdot)$ refers to complete Hecke $L$-functions.	Furthermore, all  the terms on the right hand side of \eqref{m} admit holomorphic continuation to the whole $s$-plane if $\tau^k\neq 1$ for $1\leq k\leq n.$
\end{thmx}
\begin{remark}
	\begin{enumerate}
\item[(1).] The expansion \eqref{m} generalizes Jacquet and Zagier's formula for $\GL(2)$ (see \cite{JZ87}) to $\GL(n).$ A restricted version was obtained by Flicker \cite{Fli92} under some choice of test functions $\varphi$ so that only elliptic regular part of $I_{\Geo,\Reg}(s,\tau)$ shows up on the right hand side of \eqref{m}. New ideas of our proof are briefly summarized in Section \ref{1.3} below.
\item[(2).]	$I_{\Sin}(s,\tau)$ arises in the continuous spectrum, and it appears only when $n\geq 3.$  For certain applications, one can be eliminated it by choosing discrete and cuspidal test functions in the sense of \cite{FK88}. Such test functions will be used to deduce Theorem \ref{D} (see Section \ref{1.2.2} below), as an application of Theorem \ref{A.}. 
	
\item[(3).] For fixed $\lambda,$ each individual $I_{\chi}(s,\tau,\lambda)$ is a period of automorphic forms in the case of $(\GL(n)\times\GL(n),\GL(n))$ over the diagonal, in parallel to the $(\GL(n+1)\times\GL(n),\GL(n))$ studied in \cite{IY15}. 
\end{enumerate}
\end{remark}

\subsubsection{Basic Notation}\label{sec2.1}
Denote by $\mathcal{S}(\mathbb{A}_F^n)$ the space of Schwartz-Bruhat functions on the vector space $\mathbb{A}_F^n$ and by $\mathcal{S}_0(\mathbb{A}_F^n)$ the subspace spanned by products $\Phi=\prod_v\Phi_v$ whose components at real and complex places have the form
\begin{align*}
\Phi_v(x_v)=e^{-\pi \sum_{j=1}^nx_{v,j}^2}\cdot Q(x_{v,1},x_{v,2},\cdots,x_{v,n}),\ x_v=(x_{v,1},x_{v,2},\cdots,x_{v,n})\in F_v^n,
\end{align*}
where $F_v\simeq \mathbb{R},$ and $Q(x_{v,1},x_{v,2},\cdots,x_{v,n})\in \mathbb{C}[x_{v,1},x_{v,2},\cdots,x_{v,n}];$ and
\begin{align*}
\Phi_v(x_v)=e^{-2\pi \sum_{j=1}^nx_{v,j}\bar{x}_{v,j}}\cdot Q(x_{v,1},\bar{x}_{v,1},x_{v,2},\bar{x}_{v,2},\cdots,x_{v,n},\bar{x}_{v,n}),
\end{align*}
where $F_v\simeq \mathbb{C}$ and $Q(x_{v,1},\bar{x}_{v,1},x_{v,2},\bar{x}_{v,2},\cdots,x_{v,n},\bar{x}_{v,n})$ is a polynomial in the ring $\mathbb{C}[x_{v,1},\bar{x}_{v,1},x_{v,2},\bar{x}_{v,2},\cdots,x_{v,n},\bar{x}_{v,n}].$ 

Denote by $\Xi_F$ the set of unitary characters on $F^{\times}\backslash\mathbb{A}_F^{\times}$ which are trivial on $\mathbb{R}_+^{\times}.$ For any $\xi\in \Xi_F,$ denote by $\Lambda(s,\xi)$ the \textit{complete} Hecke $L$-function associated to $\xi.$ Let $\Phi\in\mathcal{S}_0(\mathbb{A}_F^n).$ Let $\tau\in\Xi_F$ be fixed. Let $\eta=(0,\cdots,0,1)\in F^n.$ Set
$$
f(x,\Phi,\tau;s)=\tau(\det x)|\det x|^s\int_{\mathbb{A}_F^{\times}}\Phi(\eta tx)\tau(t)^n|t|^{ns}d^{\times}t,
$$
which is a Tate integral (up to holomorphic factors) for the complete $L$-function  $\Lambda(ns,x.\Phi,\tau^{n})=L_{\infty}(ns,x_{\infty}.\Phi_{\infty},\tau_{\infty}^{n})\cdot L_{\fin}(ns,x_{\fin}.\Phi_{\fin},\tau_{\fin}^{n}).$ It converges absolutely uniformly in compact subsets of $\Re(s)>1/n.$ Since the mirabolic subgroup $P_0$ is the stabilizer of $\eta.$ Let $P=P_0Z_G$ be the full $(n-1,1)$ parabolic subgroup of $G,$ then $f(x,s)\in \Ind_{P(\mathbb{A}_F)}^{G(\mathbb{A}_F)}(\delta_P^{s-1/2}\tau^{-n}),$ where $\delta_P$ is the modulus character for the parabolic $P.$ Then we can define the Eisenstein series 
\begin{align*}
E_P(x,\Phi,\tau;s)=\sum_{\gamma\in P(F)\backslash G(F)}f(x,\Phi,\tau;s),
\end{align*}
which converges absolutely for $\Re(s)>1.$ Also, we define the integral:
\begin{equation}\label{X}
I_0^{\varphi}(s,\tau)=\int_{G(F)Z(\mathbb{A}_F)\backslash G(\mathbb{A}_F)}\K^{\varphi}_0(x,x)E_P(x,\Phi,\tau;s)dx.
\end{equation}
If there is no confusion in the context, we will alway write $I_0(s)$ (resp. $f_{\tau}(x,s)$ or $f(x,s)$) instead of $I_0^{\varphi}(s;\tau)$ (resp. $f(x,\Phi,\tau;s)$) for simplicity.
\medskip 

The distribution $I_0^{\varphi}(s,\tau)$ is interesting for several reasons: it involves more information than the Arthur-Selberg trace formula, e.g., one can take $\tau$ to be of order n and evaluate \eqref{X} at $s=1,$ then the spectral expansion of $I_0^{\varphi}(s,\tau)$ would in principle  provide a twisted trace formula for $G=\GL(n);$ the calculation has been carried out in \cite{Kaz83} when $n$ is a prime; and Theorem 2 of \cite{JZ87} reinterpreted the $\GL(2)$ case of the twisted trace identity as essentially equivalent to a theorem of Labesse and Langlands \cite{LL79}. 
\medskip 

On the other hand, the spectral expansion of $I_0^{\varphi}(s,\tau)$ is quite involved. When $n>2,$ the continuous spectrum has not been investigated before. Nevertheless, the expansion turns out to convey some interesting information connecting $L$-functions defined analytically and algebraically.  In fact, we shall compute the expansion and deduce from it that \textbf{holomorphy of certain adjoint $L$-functions} for $G=\GL(n)$ implies \textbf{Dedekind conjecture} for degree $n$ extensions (see Theorem \ref{D} on P. 5). This implication has been conjectured for a long time, e.g., see \cite{JZ87} and \cite{JR97}. 
\medskip 

Another consequence of studying $I_0^{\varphi}(s,\tau)$ is holomorphy of adjoint $L$-functions (and their twists) for all cuspidal representations on $\GL(n),$ $n\leq 4.$ This would be done in sequel \cite{Yan19}.

\subsubsection{Some Applications}\label{1.2.2}
The distribution $I_{\Geo,\Reg}(s,\tau)$ in \eqref{m} turns out to play a role in certain cases of \textit{beyond endoscopy}, see Altu{\u{g}}'s work \cite{Alt15a}, \cite{Alt15b} and \cite{Alt17}. In this section, we give other applications of \eqref{m} to some conjectures on holomorphy of $L$-functions and nonvanishing problem. First, we recall
\begin{conj}[$\tau$-twisted Dedekind Conjecture]\label{De}
	Let notation be as before. Let $E/F$ be an extension of global fields. Then $\Lambda_E(s,\tau\circ N_{E/F})/\Lambda_F(s,\tau)$ is holomorphic when $s\neq 1$, where $N_{E/F}$ is the relative norm.
\end{conj}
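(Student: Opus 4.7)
The plan is to reduce Conjecture \ref{De} to holomorphy of an Artin $L$-function and then to extract that holomorphy from the trace identity of Theorem \ref{A.}. Let $\rho_{E/F}$ denote the $(n-1)$-dimensional ``regular minus trivial'' representation of $W_F$ attached to the étale $F$-algebra $E$, so that $\Ind_{W_E}^{W_F}\mathbf{1}_E \simeq \mathbf{1}_F\oplus\rho_{E/F}$. The projection formula gives
\[
\Ind_{W_E}^{W_F}(\tau|_{W_E}) \;\simeq\; \tau\otimes\Ind_{W_E}^{W_F}\mathbf{1}_E \;\simeq\; \tau \oplus (\tau\otimes\rho_{E/F}),
\]
and inductivity of $L$-functions yields
\[
\Lambda_E(s,\tau\circ N_{E/F}) \;=\; \Lambda_F(s,\tau)\cdot L(s,\tau\otimes\rho_{E/F}),
\]
so Conjecture \ref{De} is equivalent to holomorphy of $L(s,\tau\otimes\rho_{E/F})$ on $\mathbb{C}\setminus\{1\}$.

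If $E/F$ is Galois, Aramata--Brauer expresses $\rho_{E/F}$ as a non-negative $\mathbb{Z}$-combination of characters of $\Gal(E/F)$ induced from one-dimensional characters of subgroups. Twisting by $\tau$ and applying Artin reciprocity then rewrites $L(s,\tau\otimes\rho_{E/F})$ as a product (with non-negative exponents) of Hecke $L$-functions on intermediate fields, each holomorphic away from $s=1$; composed with Arthur--Clozel cyclic base change this settles the conjecture whenever the Galois closure of $E/F$ is solvable. For the genuinely non-Galois case, and for extensions of degree $\le n$, the strategy I would pursue is to run Theorem \ref{A.} in reverse. By Remark (2) a discrete-and-cuspidal test function in the sense of \cite{FK88} annihilates $I_{\Sin}(s,\tau)$, and a further specialisation of $\varphi$ can localise $I_{\Geo,\Reg}(s,\tau)$ on a single étale datum $E$, producing the desired ratio $\Lambda_E(s,\tau\circ N_{E/F})/\Lambda_F(s,\tau)$ up to a nonzero explicit factor. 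On the spectral side of \eqref{m} the surviving pieces are Rankin--Selberg periods of cuspidal automorphic forms and an explicit quotient of Hecke $L$-functions, both of whose analytic behaviour is controlled by standard automorphic $L$-function theory; the holomorphic continuation under $\tau^k\neq 1$ claimed in Theorem \ref{A.} then transports holomorphy to $I_{\Geo,\Reg}(s,\tau)$, and hence to the ratio attached to $E$.

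The principal obstacle lies in that last transport: the spectral side implicitly packages a twisted adjoint-type $L$-function on $\GL(n)$, and the desired holomorphy of $L(s,\tau\otimes\rho_{E/F})$ cannot be read off without a corresponding holomorphy statement for this adjoint $L$-function on $\mathbb{C}\setminus\{1\}$ --- precisely the hypothesis appearing in Theorem \ref{D} and the subject of the announced sequel \cite{Yan19} in low degree. Consequently the present approach proves Conjecture \ref{De} unconditionally only when the Galois closure of $E/F$ is solvable, and conditionally on $\GL(n)$-adjoint holomorphy in the remaining degree-$\le n$ cases; extensions of degree $>n$ fall outside the scope of the $\GL(n)$ trace formula alone and would require iterating the argument as $n$ grows.
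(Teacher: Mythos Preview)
The statement you are addressing is a \emph{conjecture}, not a theorem: the paper does not prove it, and says explicitly that beyond the solvable-closure case ``the general case (even general degree 5 extensions) is not yet known.'' There is therefore no ``paper's own proof'' to compare against.

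Your write-up is internally honest about this. The reduction $\Lambda_E(s,\tau\circ N_{E/F})=\Lambda_F(s,\tau)\cdot L(s,\tau\otimes\rho_{E/F})$ is correct, and the Galois / solvable-closure paragraph recovers exactly the known range (which the paper attributes to Aramata--Brauer, Uchida, van der Waall, and Murty for the $\tau$-twist). Your trace-formula paragraph is not a proof of the conjecture but rather a sketch of the implication ``adjoint holomorphy $\Rightarrow$ Dedekind,'' which is precisely the content of Theorem \ref{D} in the paper; you then correctly name the obstruction, namely that the spectral side feeds in a twisted adjoint $L$-function whose holomorphy is itself unproven in general. So your proposal does not prove Conjecture \ref{De}; it reproduces the known cases and the conditional direction of Theorem \ref{D}, and acknowledges that the remaining cases stay open.

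One technical correction: Aramata--Brauer does not express $\rho_{E/F}$ itself as a non-negative $\mathbb{Z}$-combination of monomial characters; it does so for a positive integer multiple of $\rho_{E/F}$. This is still enough to deduce holomorphy of $L(s,\tau\otimes\rho_{E/F})$ (an integral power of an $L$-function with the standard analytic properties being entire forces the function itself to be entire), but the sentence as written overstates the representation-theoretic input.
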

When $\tau$ is trivial, the above conjecture is conventionally called Dedekind conjecture, which is known when $E/F$ is Galois by the work of Aramata and Brauer (see Chap. 1 of \cite{Mar77}) or has a solvable Galois closure by the work of Uchida \cite{Uch75} and van der Waall \cite{Waa75}. Moreover, Dedekind conjecture is the prototype of Artin's holomorphy conjecture. The $\tau$-twisted version of Conjecture \ref{De} has been proved by Murty \cite{MR00} when $E/F$ is either Galois or has a solvable closure. However, the general case (even general degree 5 extensions) is not yet known. 
\medskip

When $n=2,$ \cite{JZ87} provides a connection between adjoint $L$-functions associated to $\pi\in\mathcal{A}_0(\GL(2,F)\backslash \GL(2,\mathbb{A}_F),\omega^{-1})$ and $\Lambda_E(s,\tau\circ N_{E/F})/\Lambda_F(s,\tau)$ when $E/F$ is quadratic. It was noted in \cite{JR97} that, at least for degree/rank $n$ up to 5, the two families seem to be related on a nuts-and-bolts level in the theory of integral representations, in addition to the relationships suggested by \cite{JZ87}. 

\medskip

Let $\mathcal{A}_0^{\dis}(G(F)\backslash G(\mathbb{A}_F),\omega^{-1})$ be the subspace generated by cuspidal representations $\pi\in \mathcal{A}_0(G(F)\backslash G(\mathbb{A}_F),\omega^{-1})$ such that $\pi$ has a supercuspidal component. Following \cite{JZ87}, Flicker \cite{Fli92} used a simple trace formula to deduce that Conjecture \ref{De} implies holomorphy of adjoint $L$-functions
\begin{align*}
L(s,\pi,\Ad\otimes\tau)=\frac{L(s,\pi\times\widetilde{\pi}\otimes\tau)}{\Lambda(s,\tau)}, \quad \pi \in \mathcal{A}_0^{\dis}(G(F)\backslash G(\mathbb{A}_F),\omega^{-1}),
\end{align*}
when $s\neq 1,$ modulo the key Lemma 4 in \cite{Fli92}. However, this lemma is not correct as pointed out by himself (ref. \cite{Fli93}, P. 202). Consequently, the asserted implication is not complete. In this section we will prove an implication in the opposite direction, obtaining
\begin{thmx}\label{D}
Let notation be as before. Assume the twist adjoint $L$-functions $L(s,\pi,\Ad\otimes\tau)$ are holomorphic at $s\neq 1$ for all $\pi\in \mathcal{A}_0^{\dis}(G(F)\backslash G(\mathbb{A}_F),\omega^{-1}).$ Then the $\tau$-twisted Dedekind conjecture holds for all fields extensions of $E/F$ of degree $n.$ 
\end{thmx}
\begin{remark}
\begin{enumerate}
\item[(1).] This relation provides a new perspective in the study of Dedekind conjecture, which is currently wide open when the degree is larger or equal to $5.$
\item[(2).] Suppose $\tau^k\neq 1,$ $1\leq k\leq n.$ We can conclude from Theorem \ref{A.}, Theorem \ref{D} and Theorem \ref{47'} (see Sec. \ref{3.}) that, if $I_{\Sin}(s,\tau)/\Lambda(s,\tau)$ admits a holomorphic continuation, then the twisted adjoint $L$-functions $L(s,\pi,\Ad\otimes\tau)$ are holomorphic at $s\neq 1$ for all $\pi\in \mathcal{A}_0(G(F)\backslash G(\mathbb{A}_F),\omega^{-1})$ if and only if the $\tau$-twisted Dedekind conjecture holds for all fields extensions of $E/F$ of degree $n.$ 
\end{enumerate}
\end{remark}

In Section \ref{sec9}, we will see the proof of Theorem \ref{D} would provide a result on the nonvanishing of $L(1/2,\pi\times\widetilde{\pi}):$

\begin{thmx}\label{N}
Let notation be as before. Let $n\geq 2.$ Suppose there exists an extension $E/F$ with degree $[E:F]=n,$ and $\zeta_E(1/2)\neq 0.$ Then there exists a $\pi=\pi(E)\in \mathcal{A}_0(G(F)\backslash G(\mathbb{A}_F),\omega^{-1}),$ such that $L(1/2,\pi\times\widetilde{\pi})\neq0.$
\end{thmx}

\begin{remark}

Fr\"{o}hlich \cite{Fro72} proved that there are infinitely many number fields $F$ such that $\zeta_F(1/2)=0.$ According to Langlands program, $L(s,\pi,\Ad)$ is holomorphic. In this case, if $\zeta_F(1/2)=0,$ then for \textbf{all} $\pi\in \mathcal{A}_0(G(F)\backslash G(\mathbb{A}_F),\omega^{-1}),$ $L(1/2,\pi\times\widetilde{\pi})=0.$ Hence, Theorem \ref{N} follows from Dedekind conjecture and Langlands program.
\end{remark}

\subsection{Idea of Proofs and Structure of the Paper}\label{1.3}
Starting with the spectral decomposition $\K_0(x,x)=\K(x,x)-(\K_{\Eis}(x,x)+\K_{\Res}(x,x)),$ we will further decompose these kernel functions by algebraic and analytic expansion:

Let $\mathfrak{S}$ be the union of $p^{-1}\gamma p$ modulo the center $Z_G(F),$ where $\gamma$ runs through $F$-points of standard parabolic subgroups of $G,$ and $p\in P_0(F).$ Then 
\begin{equation}\label{2}
\K(x,y)=\sum_{\gamma\in Z_G(F)\backslash G(F)-\mathfrak{S}}\varphi(x^{-1}\gamma y)+\sum_{\gamma\in \mathfrak{S}}\varphi(x^{-1}\gamma y).
\end{equation}
By Proposition \ref{reg} in Section \ref{6.1}, the set $Z_G(F)\backslash G(F)-\mathfrak{S}$ consists of $P_0(F)$-conjugacy classes, giving rise to regular $G(F)$-conjugacy classes.
\medskip 

On the other hand, by Proposition \ref{Fourier} (see Section \ref{4.1}), we have the Fourier expansion for $\K_{\Eis}(x,x)+\K_{\Res}(x,x):$
\begin{equation}\label{3}
\K_{\Eis}(x,x)+\K_{\Res}(x,x)=\int_{[N_P]}\K(ux,x)du+\sum_{k=2}^{n-1}\mathcal{F}_k\K(x,x)+W_{\K_{\Eis}}(x,x).
\end{equation}

Thus, combining \eqref{2} and \eqref{3} together we then obtain 
\begin{equation}\label{4.}
\K_0(x,x)=\K_{\Reg}(x)+\K_{\Con}(x)+\K_{\Sin}(x)+\K_{\Whi}(x),
\end{equation}
where $\K_{\Whi}(x)=-W_{\K_{\Eis}}(x,x),$ and 
\begin{align*}
\K_{\Reg}(x)&=\sum_{\gamma\in Z_G(F)\backslash G(F)-\mathfrak{S}}\varphi(x^{-1}\gamma x),\\ \K_{\Con}(x)&=-\int_{[N_P]}\sum_{\gamma\in Z_G(F)\backslash G(F)-\mathfrak{S}}\varphi(x^{-1}u^{-1}\gamma x)du,\\
\K_{\Sin}(x)&=\sum_{\gamma\in \mathfrak{S}}\varphi(x^{-1}\gamma y)-\int_{[N_P]}\sum_{\gamma\in \mathfrak{S}}\varphi(x^{-1}u^{-1}\gamma x)du-\sum_{k=2}^{n-1}\mathcal{F}_k\K(x,x).
\end{align*}

One then substitutes \eqref{4.} into \eqref{X} to obtain formally
\begin{equation}\label{5.}
I_0(s,\tau)=I_{\Reg}(s,\tau)+I_{\Con}(s,\tau)+I_{\Sin}(s,\tau)+I_{\Whi}(s,\tau),
\end{equation}
where $I_{\Whi}(s,\tau)$ is an infinite sum of general Rankin-Selberg periods involving Whittaker functions, and we also denote it by $I_{\infty}^{(1)}(s,\tau)$ from Section \ref{4.1}.
\medskip 

As will be seen in Section \ref{sec2}, stabilizers of elements in $Z_G(F)\backslash G(F)-\mathfrak{S}$ are direct sums of \'Etale algebras over $F$ of degree less or equal to $n.$ Hence the corresponding distribution $I_{\Reg}(s,\tau)$ would be a sum of certain Artin $L$-series associated to these \'Etale algebras. This has been treated in Theorem \ref{reg ell} in Section \ref{sec2.2}. 
\medskip

In Section \ref{sec3} we prove Fourier expansion of automorphic forms on $P_0(F)\backslash G(\mathbb{A}_F),$ which implies the decomposition \eqref{3}. 

\medskip 
In Section \ref{sec4.1}, we find explicitly representatives of $Z_G(F)\backslash G(F)-\mathfrak{S}$ as $P_0(F)$-conjugacy classes. Then, very roughly, we develop a geometric reduction (in $\GL(2)$ case, this is amounts to using Poisson summation, which is not available for $\GL(n),$ $n\geq 3$), to relate $I_{\Con}(s,\tau)$ to certain intertwining operators. Hence the convergence and analytic properties follow from theory of intertwining operators. The results are summarized in Theorem \ref{aa} in Section \ref{6.2}.

\medskip 

Then the rest of this paper is devoted to the distribution $I_{\Whi}(s,\tau).$ We follow Arthur's approach with modified truncation operators to work with the trace identity under consideration, and show the absolute convergence over any Siegel domain. However, unfolding the Eisenstein series one then sees the fundamental domain is much larger than any Siegel domain. We then prove Proposition \ref{27''} to resolve this technical problem and show that $I_{\Whi}(s,\tau),$ when $\Re(s)>1,$ is an absolute convergent infinite sum of Mellin transforms of certain Rankin-Selberg convolution for \textit{non-discrete} representations. Concrete statements are given in Theorem \ref{39'} in Section \ref{6sec}.

\medskip 

In Section \ref{6.2.}, we prove some properties of Rankin-Selberg periods for non-discrete representations. These results will be used in Section \ref{3.} to show absolute convergence of $I_{\Whi}(s,\tau)$ in the strip $0<\Re(s)<1,$ and thus get a holomorphic function therein, see Theorem \ref{47'} for details. So $I_{\Whi}(s,\tau)$ is holomorphic when $0<\Re(s)<1$ and $\Re(s)>1.$ However, for $\tau$ such that $\tau^k=1$ for some $1\leq k\leq n,$ the function $I_{\Whi}(s,\tau)$ has singularities on the whole boundary $\Re(s)=1.$ So we need to find a meromorphic continuation for $I_{\Whi}(s,\tau).$ This is investigated in Section \ref{7.2}, where we obtain continuation of each individual summand of $I_{\Whi}(s,\tau)$ to some zero-free region of Rankin-Selberg $L$-functions, proving Theorem \ref{57}, which will be of independent interest, e.g., it will be used in \cite{Yan19}. Further continuation to some open region containing $\Re(s)\geq 1/2$ are obtained for $\GL(n),$ $n\leq 4,$ in the Appendix \ref{app}.
\medskip

In Section \ref{sec9}, we gather Theorem \ref{reg ell}, Theorem \ref{aa}, Theorem \ref{39'} and Theorem \ref{47'} to deduce Theorem \ref{A.}. Furthermore, applying some special test functions $\varphi$ into Theorem \ref{A.} and dealing with some generalized Tate integral, we then prove Theorem \ref{D} and Theorem \ref{N}.

\begin{remark}
When $n=2,$ $I_{\Sin}(s,\tau)$ has no Fourier part, and has been dealt with in \cite{JZ87}. Foe general $n\geq 3,$  combining Theorem \ref{reg ell}, Theorem \ref{aa}, Theorem \ref{39'}, Theorem \ref{47'} and functional equation of Eisenstein series, we then conclude $I_{\Sin}(s,\tau)$ is uniformly convergent when $\Re(s)>1;$ and it admits a meromorphic continuation to the whole $s$-plane if $\tau^k\neq 1$ for $1\leq k\leq n.$ Nevertheless, the distribution $I_{\Sin}(s,\tau)$ is rather involved. We will handle it for general $\tau$ and $G=\GL(n),$ $n\leq 4,$ in the sequel \cite{Yan19} by developing different methods from this paper.

\end{remark}

\textbf{Acknowledgements}
I am very grateful to my advisor Dinakar Ramakrishnan for instructive discussions and helpful comments. I would like to thank Ashay Burungale, Li Cai, Herv\'e Jacquet, Dihua Jiang, Simon Marshall, Kimball Martin, Philippe Michel, Chen Wan, Song Wang and Xinwen Zhu for their precise comments and useful suggestions. Part of this paper was revised during my visit to the Morningside Center of Mathematics in China and \'{E}cole polytechnique f\'ed\'erale de Lausanne in Switzerland and I would like to thank their hospitality.
	
	\section{Contributions from Geometric Sides}\label{sec2}
	Let $\mathcal{H}\left(G(\mathbb{A}_F)\right)$ be the Hecke algebra of $\mathcal{H}\left(G(\mathbb{A}_F)\right)$ and $\varphi\in \mathcal{H}\left(G(\mathbb{A}_F)\right).$ For any character $\omega$ of $\mathbb{A}_F^{\times}/F^{\times}.$ Let $\varphi\in \mathcal{C}_c^{\infty}\left(Z_G(\mathbb{A}_F)\setminus G(\mathbb{A}_F)\right)\cap\mathcal{H}\left(G(\mathbb{A}_F)\right)$ be of central character $\omega.$ Denote by $V_0$ the Hilbert space 
	$$
	L_0^2\left(G(F)\setminus G(\mathbb{A}_F),\omega^{-1}\right)=\bigoplus_{\pi}V_{\pi},
	$$
	where $\pi\in \mathcal{A}_0\left(G(F)\setminus G(\mathbb{A}_F),\omega^{-1}\right),$ the set of irreducible cuspidal representation of $G(\mathbb{A}_F)$ with central character $\omega$ and $V_{\pi}$ is the corresponding isotypical component. By multiplicity one, the representation of $G(\mathbb{A}_F)$ on $V_{\pi}$ is equivalent to $\pi.$ For each $\pi,$ we choose an orthonormal basis $\mathcal{B}_{\pi}$ of $V_{\pi}$ consisting of $K$-finite vectors.  Let $\K_0(x,y)$ be the kernel function for the right regular representation $R(\varphi)$ on $V_0.$ Then we have the decomposition
	\begin{equation}\label{ker_0}
	\K_0(x,y)=\sum_{\pi}\K_{\pi}(x,y),\  \text{where}\ \K_{\pi}(x,y)=\sum_{\phi\in\mathcal{B}_{\pi}}\pi(\varphi)\phi(x)\overline{\phi(y)}.
	\end{equation}
	All the functions in the summands are of rapid decay in $x$ and $y.$ The sum of $\K_{\pi}(x,y)$ converges in the space of rapidly decaying functions, by the usual estimates on the growth of cusp forms. The sum over $\mathcal{B}_{\pi}$ is finitely uniformly in $x$ and $y$ for a given $\varphi$ because of the $K$-finiteness of $\varphi.$

	\medskip

\subsection{Structure of $G(F)$-Conjugacy Classes}\label{6.1}
Let $B$ be the subgroup of upper triangular matrices of $G,$ and $T$ the Levi component of $B.$ Let $W=W_n$ be Weyl group of $G$ with respect to $(B,T).$ Then one can take $W$ to be the subgroup consisting of all $n\times n$ matrices which have exactly $1$ in each row and each column, and zeros elsewhere. Let $\Delta=\{\alpha_{1,2},\alpha_{2,3},\cdots,\alpha_{n-1,n}\}$ be the set of simple roots, and for each simple root $\alpha_{k,k+1},$ $1\leq k\leq n-1,$ denote by $w_k$ the corresponding reflection. Explicitly, for each $1\leq k\leq n-1,$
\begin{align*}
w_k=\begin{pmatrix}
I_{k-1} &\\
& S&\\
&&I_{n-k-1}
\end{pmatrix},\ \text{where $S=\begin{pmatrix}
	&1\\
	1&
	\end{pmatrix}$}.
\end{align*}
For each $1\leq k\leq n-1,$ let $I_k=\Delta\setminus\{\alpha_{k,k+1}\}$ and $W_{I_{k}}$ be the subgroup generated by elements in $I_k.$ Write $Q_{k}=BW_{I_{k}}B.$ Then $Q_{k}$ is a standard maximal parabolic subgroup of $G$ corresponding to the simple root $\alpha_{k,k+1}.$ And every maximal parabolic subgroup is conjugate to some $Q_k,$ $1\leq k\leq n-1.$ Clearly, under this notation, one has $P=Q_{n-1}.$ Denote by
$$
Q_k(F)^{P(F)}=\{pqp^{-1}:\ p\in P(F),\ q\in Q_k(F)\},\ 1\leq k\leq n-1. 
$$
\begin{prop}\label{irre}
	Let $\mathcal{C}$ be an irregular $G(F)$-conjugacy class, then one has
	\begin{equation}\label{9}
	\mathcal{C}=\bigcup _{k=1}^{n-1}\mathcal{C}\cap Q_k(F)^{P(F)}.
	\end{equation} 	
\end{prop}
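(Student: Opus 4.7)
The plan is to establish the non-trivial inclusion $\mathcal{C}\subseteq \bigcup_{k=1}^{n-1}Q_k(F)^{P(F)}$ by exhibiting, for each $g\in\mathcal{C}$, a proper nonzero $g$-invariant subspace of $F^n$ contained in the hyperplane $H:=\langle e_1,\dots,e_{n-1}\rangle$. Because $P$ stabilizes $H$ and the Levi factor $\GL(n-1)\subset P$ acts transitively on $k$-dimensional subspaces of $H$ for every $k\in\{1,\dots,n-1\}$, such an invariant subspace $V$ of dimension $k$ can be moved by some $h\in P(F)$ to $\langle e_1,\dots,e_k\rangle$; then $hgh^{-1}$ stabilizes $\langle e_1,\dots,e_k\rangle$ and therefore lies in $Q_k(F)$, giving $g\in Q_k(F)^{P(F)}$. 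The reverse inclusion is immediate from the definitions.

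To produce the invariant subspace of $H$, I work dually. Let $\varepsilon:F^n\to F$ denote the $n$-th coordinate functional, so that $H=\ker\varepsilon$, and set
\[
C:=\operatorname{span}_F\bigl\{\varepsilon\circ g^i:i\ge 0\bigr\}\subseteq (F^n)^*,\qquad V:=C^{\perp}=\bigcap_{i\ge 0}\ker(\varepsilon\circ g^i)\subseteq F^n.
\]
By construction $V\subseteq\ker\varepsilon=H$ (taking $i=0$), and $V$ is $g$-stable: if $v\in V$ and $i\ge 0$, then $(\varepsilon\circ g^i)(gv)=(\varepsilon\circ g^{i+1})(v)=0$, whence $gv\in V$. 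The upper bound $\dim V\le n-1$ is immediate from $\varepsilon\in C\setminus\{0\}$. The essential lower bound $\dim V\ge 1$ is where irregularity enters: denoting the minimal polynomial of $g$ by $m(x)$, the relation $m(g)=0$ forces $\varepsilon\circ g^{\deg m}$ to be an $F$-linear combination of $\varepsilon\circ g^i$ for $i<\deg m$, so $\dim C\le\deg m$; since $g$ is irregular one has $\deg m<n$, and therefore $\dim V=n-\dim C\ge 1$.

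I do not anticipate a substantive obstacle. The conceptual point is the duality between $g$-invariant subspaces of $H$ and $g$-stable subspaces of $(F^n)^*$ containing $\varepsilon$, which converts the existence of the required invariant subspace into the failure of $\varepsilon$ to be a cyclic functional for $g$---an automatic consequence of irregularity. Combined with the transitivity of the Levi factor of $P$ on $k$-planes in $H$, this yields the proposition.
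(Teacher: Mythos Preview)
Your proof is correct and takes a genuinely different, more economical route than the paper's. The paper first develops a quasi-rational canonical form (Lemma~\ref{Jordan}), then argues by case analysis on the irreducible factorization of the characteristic polynomial to exhibit an explicit block-diagonal representative $\tilde g$ of $\mathcal{C}$ lying in some $Q_k(F)$; finally it invokes the Bruhat decomposition $G(F)=P(F)\sqcup P(F)w_{n-1}N_P^{w_n}(F)$ to check that every $G(F)$-conjugate of $\tilde g$ is $P(F)$-conjugate into $Q_k(F)$. Your argument bypasses both the canonical form and the Bruhat step: the observation that $\varepsilon$ cannot be a cyclic functional for $g$ when $\deg m<n$ produces, for \emph{each} $g\in\mathcal{C}$ directly, a nonzero $g$-invariant subspace of $H$, and transitivity of the Levi $\GL(n-1)\subset P$ on $k$-planes of $H$ finishes. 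This is cleaner and coordinate-free. The paper's heavier machinery, however, is not wasted: the rational canonical forms of Lemma~\ref{Jordan} are reused in Lemma~\ref{A'} and Lemma~\ref{B.} to analyze the regular classes, so in context its approach amortizes that cost.
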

To prove \eqref{9}, we need rational canonical forms of $g\in G(F),$ which is an analogue of Jordan canonical forms of matrices over $\mathbb{C}$ (of course $F$ is not algebraically closed). The decomposition is given below: 
\begin{lemma}\label{Jordan}
	Let $V$ be a $n$-dimensional vector space over $F,$ and $\mathscr{A}\in \End(V).$ Then there exist subspaces $V_l\subseteq V$, $1\leq l\leq r,$ such that 
	\begin{equation}\label{10.5}
	V=V_1\oplus V_2\oplus\cdots\oplus V_r,
	\end{equation}
	and for each $i,$ both of the minimal polynomial and characteristic polynomial of $\mathscr{A}_{V_l}=\mathscr{A}\mid_{V_l}$ are of the form $\wp(\lambda)^k,$ where $k\in \mathbb{N}_{\geq 1}$ and $\wp(\lambda)\in F[\lambda]$ is a irreducible polynomial over $F.$ Furthermore, for each $l,$ there exists a basis $\alpha_l=\{\alpha_{l_1},\cdots,\alpha_{l_m}\}$ of $V_l$ such that under $\alpha_l,$ $\mathscr{A}_{V_l}$ has the following quasi-rational canonical form
	\begin{equation}\label{10}
	\mathcal{J}\left(\wp(\lambda)^k\right):=\begin{pmatrix}
	C(\wp) &&&\\
	N& C(\wp)&&\\
	&\ddots&\ddots&\\
	&&N&C(\wp)
	\end{pmatrix},
	\end{equation}
	where $C(\wp)$ is the companion matrix of $\wp(\lambda)$ and $N=\begin{pmatrix}
	&&&1\\
	&&0&\\
	&\ldots&&\\
	0&&&
	\end{pmatrix}.$
\end{lemma}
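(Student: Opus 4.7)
The plan is to reduce Lemma \ref{Jordan} to a two-step decomposition — first a primary decomposition of $V$ under $\mathscr{A}$, then a cyclic decomposition of each primary summand — and finally to exhibit an explicit basis on each cyclic summand in which $\mathscr{A}$ takes the form \eqref{10}. All three ingredients are classical over an arbitrary field, so I will only sketch how I would combine them.

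First, factor the minimal polynomial $m_{\mathscr{A}}(\lambda) = \prod_{i=1}^{s} \wp_i(\lambda)^{e_i}$ into powers of distinct monic irreducibles $\wp_i \in F[\lambda]$. Since the factors $\wp_i^{e_i}$ are pairwise coprime, B\'ezout's identity yields the primary decomposition
\[
V = \bigoplus_{i=1}^{s} W_i, \qquad W_i := \ker \wp_i(\mathscr{A})^{e_i},
\]
with each $W_i$ being $\mathscr{A}$-invariant and the restriction $\mathscr{A}|_{W_i}$ having minimal polynomial $\wp_i(\lambda)^{e_i}$. This reduces the problem to the case where $m_{\mathscr{A}}$ is a power of a single irreducible $\wp$.

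Next, assume $m_{\mathscr{A}}(\lambda) = \wp(\lambda)^{e}$. I would produce, by induction on $\dim V$, a decomposition $V = V_1 \oplus \cdots \oplus V_r$ into $\mathscr{A}$-invariant cyclic subspaces, each with minimal polynomial a power of $\wp$. Concretely, pick $v_1 \in V$ whose $\mathscr{A}$-annihilator has maximal order $\wp^{e}$, set $V_1 := F[\mathscr{A}] v_1$, and construct an $\mathscr{A}$-stable complement $V'$ by taking $U$ maximal among $\mathscr{A}$-invariant subspaces with $V_1 \cap U = 0$; a standard argument, using the maximality of the order of $v_1$, shows that any $w \notin V_1 + U$ can be adjusted by an element of $V_1$ to enlarge $U$, contradicting its maximality, so $V = V_1 \oplus U$. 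Applying the inductive hypothesis to $U$ yields the cyclic decomposition.

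Finally, let $V_l$ be cyclic with $m_{\mathscr{A}|_{V_l}}(\lambda) = \wp(\lambda)^{k}$, so $\dim V_l = k d$ with $d := \deg \wp$, and the characteristic polynomial of $\mathscr{A}|_{V_l}$ equals $\wp(\lambda)^k$ as well. Pick a cyclic generator $v \in V_l$ and order the basis as
\[
\alpha_l = \bigl\{\, \mathscr{A}^{j-1} \wp(\mathscr{A})^{i-1} v \ :\ 1 \leq i \leq k,\ 1 \leq j \leq d \,\bigr\},
\]
listed lexicographically in $(i,j)$. A direct computation using $\wp(\lambda) = \lambda^{d} + a_{d-1}\lambda^{d-1} + \cdots + a_0$ shows that within each block of $d$ basis vectors the action of $\mathscr{A}$ is given by $C(\wp)$, while at the transition from block $i$ to block $i+1$ the identity
\[
\mathscr{A}^{d} \wp(\mathscr{A})^{i-1} v \;=\; \wp(\mathscr{A})^{i} v \;-\; \sum_{l=0}^{d-1} a_l\, \mathscr{A}^{l} \wp(\mathscr{A})^{i-1} v
\]
contributes exactly one entry $1$ in the top-right corner of the off-diagonal block below $C(\wp)$, which is the matrix $N$ in \eqref{10}. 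The main obstacle — really the only subtle step — is the cyclic decomposition in the preceding paragraph, since the familiar Jordan-form proof over $\mathbb{C}$ does not transport verbatim to a general field $F$; the argument must be arranged so that the complement of $V_1$ is genuinely $\mathscr{A}$-stable, and this is what forces one to invoke the maximality of the annihilator order of the chosen generator $v_1$.
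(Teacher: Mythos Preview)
Your proposal is correct and follows essentially the same route as the paper: primary decomposition, then cyclic decomposition of each primary component, then the explicit basis $\mathscr{A}^{j-1}\wp(\mathscr{A})^{i-1}v$ on each cyclic piece. The only difference is that the paper simply invokes the cyclic decomposition theorem as known over general fields, whereas you sketch its proof via the maximal-annihilator/maximal-complement argument; otherwise the arguments coincide.
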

\begin{proof}
	Let $m(\lambda)$ (resp. $f(\lambda)$) be the minimal polynomial (resp. characteristic polynomial) of $\mathscr{A}.$ Consider their primary decompositions over $F:$
	\begin{align*}
	m(\lambda)=\prod_{i}\wp_i(\lambda)^{e_i'}\ \text{and}\ f(\lambda)=\prod_{i}\wp_i(\lambda)^{e_i},
	\end{align*} 
	where $\wp(\lambda)_i's$ are distinct irreducible monic polynomials over $F,$ $0\leq e_i'\leq e_i,$ $\forall$ $i.$ Take $U_i=\ker \wp_i(\mathscr{A})^{e_i'}.$ Then $U_i$ is $\mathscr{A}$-invariant. By cyclic decomposition theorem (which holds for general fields), we have 
	$$
	U_i=F[\mathscr{A}]\alpha^{i,1}\oplus F[\mathscr{A}]\alpha^{i,2}\oplus \cdots\oplus F[\mathscr{A}]\alpha^{i,r_i},
	$$
	where each $F[\mathscr{A}]\alpha^{i,j}$ is a cyclic subspace of $U_i.$ Then one has the decomposition \eqref{10.5} and both of the minimal polynomial and characteristic polynomial of $\mathscr{A}_{V_{i_j}}=\mathscr{A}\mid_{F[\mathscr{A}]\alpha^{i,j}}$ are powers of $\wp_i(\lambda).$
	
	For any $i$ and $1\leq j\leq r_i,$ we may assume that the minimal polynomial of $\mathscr{A}_{V_{i_j}}$ on $F[\mathscr{A}]\alpha^{i,j}$ is $\wp_{i}(\lambda)^{e_{i,j}'}$ with some $0\leq e_{i,j}'\leq e_{i}'.$ Write $\wp_{i}(\lambda)=\lambda^{d_{i}}-c_{d_{i}-1}\lambda^{d_{i}-1}-\cdots-c_0.$ Define
	$$
	\alpha_{sd_{i}+t}=\mathscr{A}_{V_{i_j}}^{t-1}\wp_i(\mathscr{A}_{V_{i_j}})^{s}\alpha^{i,j},\ \text{$1\leq s\leq e_{i,j}',$ $1\leq t\leq d_{i}$}.
	$$
	Note that for any $1\leq s\leq e_{i,j}',$
	\begin{align*}
	\mathscr{A}_{V_{i_j}}\alpha_{sd_{i}}&=\mathscr{A}_{V_{i_j}}^{d_{i}}\wp_i(\mathscr{A}_{V_{i_j}})^{s-1}\alpha^{i,j}\\
	&=\left(\mathscr{A}_{V_{i_j}}^{d_{i}}-\wp_i(\mathscr{A}_{V_{i_j}})^{s-1}\right)\wp_i(\mathscr{A}_{V_{i_j}})^{s-1}\alpha^{i,j}+\wp_i(\mathscr{A}_{V_{i_j}})^{s}\alpha^{i,j}\\
	&=c_0\alpha_{(s-1)d_i+1}+c_1\alpha_{(s-1)d_i+2}+\cdots+c_{d_i-1}\alpha_{sd_i}+\alpha_{sd_i+1}.
	\end{align*}
	Therefore, under the basis $\{\alpha_{sd_{i}+t}:\ 1\leq s\leq e_{i,j}',\ 1\leq t\leq d_{i}\},$ $\mathscr{A}_{V_{i_j}}$ is represented by $\mathcal{J}\left(\wp(\lambda)^{e_{i,j}'}\right)$ defined in \eqref{10}.
\end{proof}

Note that we have a bijection $W_{I_{n-1}}\backslash W/W_{I_{n-1}}\longleftrightarrow \{1,w_{n-1}\}.$
By Bruhat decomposition $G(F)$ is equal to
\begin{equation}\label{a}
\coprod_{w\in W_{I_{n-1}}\backslash W/W_{I_{n-1}}}P(F)wP(F)=P(F)\coprod P(F)\times \{w_{n-1}\}\times N_P^{w_n}(F),
\end{equation}
where $N_P^{w_n}(F)=\left(w_{n-1}N_P(F)w_{n-1}\cap N_P(F)\right)\backslash N_P(F).$ Combining \eqref{a} with the above lemma leads to the proof of \eqref{9}:

\begin{proof}[Proof of Proposition \ref{irre}]
	Let $g\in \mathcal{C}$ be an representative. Set $m(\lambda)$ (resp. $f(\lambda)$) to be its minimal polynomial (resp. characteristic polynomial) over $F$. Consider their primary decompositions over $F:$
	\begin{align*}
	m(\lambda)=\prod_{i\in I}\wp_i(\lambda)^{e_i'}\ \text{and}\ f(\lambda)=\prod_{i\in I}\wp_i(\lambda)^{e_i},
	\end{align*} 
	where $\wp(\lambda)_i's$ are distinct irreducible monic polynomials over $F,$ $I$ is a finite index set such that $e_i>0,$ $\forall$ $i\in I.$ Write $d_i=\deg \wp_i(\lambda),$ $\forall$ $i\in I.$ We may assume that $d_1\leq d_2\leq \cdots\leq d_{\#I}$. Also, write $d_0=0$. Since the conjugacy class $\mathcal{C}$ is irregular, $m(\lambda)$ is a proper factor of $f(\lambda).$ Thus we have the following cases:
	\begin{description}
		\item[Case I] If $\#I=1$ and $d_1>1$, then $m(\lambda)=\wp(\lambda)^{e'},$ $f(\lambda)=\wp(\lambda)^{e},$ and $0<e'<e=d_1^{-1}n.$ Hence by Lemma \ref{Jordan}, $g$ is $G(F)$-conjugate to some element $\tilde{g}$ in 
		$$
		\begin{pmatrix}
		GL_{n-kd_1}(F)&\\
		&GL_{kd_1}(F)
		\end{pmatrix}\ \text{for some $1\leq k\leq e-1$}.
		$$
		For any $h\in G(F),$ if $h\in P(F),$ then clearly $h\tilde{g}h^{-1}\in pQ_{n-kd_1}(F)p^{-1};$ if $h\in G(F)-P(F),$ it can be uniquely written as $h=pw_{n-1}n,$ where $p\in P(F)$ and $n\in N_P(F).$ Since $n\tilde{g}n^{-1}\in Q_{n-kd_1}(F),$ one has 
		\begin{align*}
		h\tilde{g}h^{-1}\in pw_{n-1}Q_{n-kd_1}(F)w_{n-1}p^{-1}\subseteq pQ_{n-kd_1}(F)p^{-1}.
		\end{align*}
		Hence, in this case one always has that 
		$$
		\mathcal{C}=\{h\tilde{g}h^{-1}:\ h\in G(F)\}\subseteq \bigcup _{k=1}^{n-1} Q_k(F)^{P(F)}.
		$$
		
		\item[Case II] If $\#I>1,$ and $d_i>1,$ $d_{i-1}=1$ for some $1\leq i\leq \#I,$ then similarly by Lemma \ref{Jordan} there exists some $\tilde{g}\in Q_{(n-kd_i-f_{i-1},f_{i-1},kd_i)}(F)$ for some $0<k<d_1^{-1}(n-f_{i-1}),$ where $f_{i-1}=\sum_{j=1}^{d_{i-1}}e_j$ and $Q_{(n-kd_i-f_{i-1},f_{i-1},kd_i)}(F)$ is a standard parabolic subgroup of $G(F)$ of type $(n-kd_i-f_{i-1},f_{i-1},kd_i).$ Then if wither $f_{i-1}>0$ or $n-kd_i-f_{i-1}\neq0,$ then $n-kd_i>0$ and $Q_{(n-kd_i-f_{i-1},f_{i-1},kd_i)}(F)\subseteq Q_{n-kd_i}(F).$ Otherwise, $i=1,$ and one can pick up a representative $\tilde{g}\in Q_{n-e_1'd_1}(F).$ Noting that $n-e_1'd_1>0$ since $\#I>1,$ $Q_{n-e_1'd_1}(F)$ is then a well defined standard maximal parabolic subgroup of $G(F).$ Then as in Case I, \eqref{9} holds.
		
		\item[Case III] If $d_{\#I}=1,$ then there exists some $\lambda_1,\cdots,\lambda_{\#I}\in F,$ integers $0<e_i'<e_i,$ $1\leq i\leq \#I$ (since $\mathcal{C}$ is irregular), such that $m(\lambda)=\prod(\lambda-\lambda_i)^{e_i'}$ and $f(\lambda)=\prod(\lambda-\lambda_i)^{e_i}.$ Then by the usual Jordan canonical decomposition one can find some $\tilde{g}\in B(F)$ such that $\tilde{g}$ represents the conjugacy class $\mathcal{C}.$ Then a similar trick as in Case I implies \eqref{9}.
	\end{description}
	Proposition \ref{irre} thus follows.
\end{proof}

Now we consider regular $G(F)$-conjugacy classes in $G(F).$ The structure of such conjugacy classes are described in Proposition \ref{reg}. To prove $\eqref{9'}$ we need some preparation. Given arbitrarily an $k\in \mathbb{N}_{\geq1},$ denote by $H(F)=H_{k}(F)=GL_k(F).$ Let $\gamma\in H(F)$ be regular and denote by $f(\lambda)=\wp_1(\lambda)^{e_1}\cdots \wp_{m_k}(\lambda)^{e_{m_k}}$ its characteristic polynomial, where $e_i\geq 1,$ $\wp_i$ is monic and irreducible over $F,$ $1\leq i\leq {m_k}.$ Let $d_i=\deg \wp_i.$ Set $F(\gamma)=F[\lambda]/\left(f(\lambda)\right)$ be the polynomial algebra generated by $\gamma,$ and denote by $F(\gamma)^{\times}$ the set of invertible elements in $F(\gamma).$ Let $P_{0}^H(F)$ be the mirabolic subgroup of $H(F).$ Also, for any $\delta\in H(F),$ we always write $H_{\delta}(F)$ for the centralizer of $\delta$ in $H(F).$ We will always use these notation henceforth.
\begin{lemma}\label{A}
	Let $\gamma\in H(F)$ be regular elliptic, then for any $(a_1,a_2,\cdots,a_k)\in F^k,$ there exists a unique element $x\in F(\gamma)$ such that the last row of $x$ is exactly $(a_1,a_2,\cdots,a_k).$
\end{lemma}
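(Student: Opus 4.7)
The plan is to reformulate the claim as a dimension-matched $F$-linear isomorphism, exploiting the fact that for regular elliptic $\gamma$ the commutative subalgebra $F(\gamma) \subset M_k(F)$ is a \emph{field}.

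First I would unpack what ``regular elliptic'' gives us in this setting: ellipticity forces the characteristic polynomial $f(\lambda)$ to be irreducible over $F$ of degree $k$, and regularity forces the minimal polynomial to coincide with $f(\lambda)$. Consequently $F(\gamma) \cong F[\lambda]/(f(\lambda))$ is a degree-$k$ field extension of $F$, and in particular has $F$-dimension exactly $k$.

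Next, writing $e_k = (0,\ldots,0,1) \in F^k$ as a row vector, note that the last row of a matrix $x \in M_k(F)$ is nothing but $e_k x$. The lemma is therefore equivalent to the $F$-linear map
\[
\Phi \colon F(\gamma) \longrightarrow F^k, \qquad x \longmapsto e_k x
\]
being bijective. Since the source and target have the same $F$-dimension $k$, it suffices to prove injectivity.

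For injectivity, I would argue by contradiction: if $e_k x = 0$ for some nonzero $x \in F(\gamma)$, then because $F(\gamma)$ is a field the element $x$ is a unit in $F(\gamma)$, hence invertible as a matrix, i.e.\ $x \in \GL_k(F)$. Right-multiplying $e_k x = 0$ by $x^{-1}$ yields $e_k = 0$, which is absurd. Therefore $\Phi$ is injective, hence bijective by the dimension count, and for any prescribed tuple $(a_1,\ldots,a_k) \in F^k$ there is a unique $x \in F(\gamma)$ with $\Phi(x) = (a_1,\ldots,a_k)$. The only conceptual subtlety I expect is the opening observation that regular ellipticity promotes $F(\gamma)$ from a commutative Artinian algebra to an honest field; once that is in hand the remainder is a one-line linear algebra argument.
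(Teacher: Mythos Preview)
Your proof is correct and essentially identical to the paper's: both define the $F$-linear map $x \mapsto \eta x$ (with $\eta=(0,\ldots,0,1)$) from $F(\gamma)$ to $F^k$, note that the source and target have the same $F$-dimension $k$, and deduce injectivity from the fact that $F(\gamma)$ is a field so every nonzero element is invertible. Your write-up is slightly more explicit in unpacking why ``regular elliptic'' forces $F(\gamma)$ to be a field, but the argument is the same.
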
	
\begin{proof}
	Since $\gamma$ is regular, $H_{\gamma}(F)=F(\gamma),$ and $\dim F(\gamma)=k.$ Let $\eta=(0,\cdots,0,1)\in F^k.$ Consider the linear map:
	$$
	\tau:\ F(\gamma)\rightarrow F^k,\qquad x\mapsto\tau(x)=\eta x.
	$$
	Since $\gamma$ is elliptic, $F(\gamma)$ is a field, so any nonzero element is invertible. Consequently, the map $\tau$ is injective, and hence surjective. Thus $\tau$ is an isomorphism of $k$-dimensional $F$-vector spaces. Then the lemma follows.
\end{proof}
\begin{remark}
	Let $\gamma\in H(F)$ be regular elliptic, we have $H(F)=P_0^H(F)F(\gamma)^{\times}.$ In fact, since $\tau$ is a bijection,  given $g\in H(F),$ there exists $h\in H(F)$ such that $\eta g=\eta h$ which implies that $gh^{-1}\in P_0^H(F),$ the isotropy subgroup of $\eta,$ i.e., $g\in P_0^H(F)F(\gamma)^{\times}.$
\end{remark}

\begin{lemma}\label{A'}
	Let $\gamma\in H(F)$ be regular. Assume further that the characteristic polynomial of $\gamma$ has only one irreducible factor. Then for any $(a_1,a_2,\cdots,a_k)\in F^k,$ there exists a unique element $x\in F(\gamma)$ such that the last row of $x$ is exactly $(a_1,a_2,\cdots,a_k).$
\end{lemma}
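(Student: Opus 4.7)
The plan is to mimic the proof of Lemma~\ref{A}: introduce the $F$-linear map
$$
\tau : F(\gamma) \longrightarrow F^k, \qquad x \mapsto \eta x,
$$
with $\eta = (0, \ldots, 0, 1)$. By regularity one has $\dim_F F(\gamma) = k = \dim_F F^k$, so bijectivity of $\tau$ is equivalent to injectivity.

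The essential difference from Lemma~\ref{A} is that under the hypothesis $f(\lambda) = \wp(\lambda)^e$, the algebra $F(\gamma) \simeq F[\lambda]/(\wp^e)$ is no longer a field but a local Artinian $F$-algebra with unique maximal ideal $\mathfrak{m} = (\wp(\gamma))$, so the step ``nonzero $\Rightarrow$ invertible'' used in Lemma~\ref{A} is no longer available. I would replace it by the following module-theoretic observation: since $\gamma$ is regular, its minimal polynomial equals the characteristic polynomial $\wp^e$, hence by the structure theorem for $F[\lambda]$-modules the space $F^k$ is cyclic as an $F[\gamma]$-module and thus free of rank one over $F(\gamma)$. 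Fixing an $F(\gamma)$-linear isomorphism $F^k \simeq F(\gamma)$ and letting $q \in F(\gamma)$ correspond to $\eta$, the map $\tau$ translates to multiplication by $q$ on $F(\gamma)$. Consequently $\tau$ is injective iff $q$ is a unit, iff $q \notin \mathfrak{m}$, iff $\eta$ is a cyclic generator of $F^k$ as an $F(\gamma)$-module.

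The main obstacle, therefore, is to verify that the specific vector $\eta = (0, \ldots, 0, 1)$ is such a cyclic generator, equivalently that $\eta, \eta\gamma, \ldots, \eta\gamma^{k-1}$ are $F$-linearly independent in $F^k$. I would reduce to the rational canonical form $\mathcal{J}(\wp^e)$ provided by Lemma~\ref{Jordan}. For $\gamma$ in this form, the block structure combined with the fact that each subdiagonal block $N$ has its unique nonzero entry in position $(1,d)$ (where $d = \deg \wp$) forces the index of the first nonzero coordinate of $\eta\gamma^j$ to decrease by exactly one as $j$ increases: the powers first sweep through the last companion block via the action of $C(\wp)$, and the subsequent action of $N$ propagates a single coordinate at a time into the preceding block. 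This ``anti-triangular'' pattern of leading entries makes the $F$-linear independence immediate and, combined with the previous step, yields $\ker \tau = 0$, completing the proof.
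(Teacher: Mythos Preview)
Your approach is correct and considerably more conceptual than the paper's. The paper proceeds by explicit computation: it describes $F[\gamma]$ concretely as the block-lower-triangular matrices $\mathcal{S}_\gamma$ of display \eqref{B'}, reduces the question to a chain of Sylvester equations $CA_i - A_iC = A_{i-1}N - NA_{i-1}$, constructs explicit special solutions $X_{(i)}$ via a separate inductive Claim~\ref{13}, and then runs a double induction on the block index to produce the unique $x$ with prescribed last row. Your route bypasses all of this: regularity makes $F^k$ free of rank one over the local ring $F(\gamma)$, so bijectivity of $\tau$ reduces to whether $\eta$ corresponds to a unit, i.e., whether $\eta$ is a cyclic vector for $\gamma$. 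The anti-triangular verification you sketch (leading nonzero of $\eta\mathcal{J}^j$ sitting at position $k-j$) is correct and easy to formalize by induction on $j$, tracking how right multiplication by $\mathcal{J}$ shifts the leading entry one step left within a companion block via $C$ and, once position $1$ of the current block is reached, deposits a nonzero in the last slot of the preceding block via $N$. What you gain is brevity and a clean structural explanation; what the paper's approach buys is the explicit block description \eqref{B'} of $F[\gamma]$, which it reuses in the proof of Lemma~\ref{B.}. One caveat worth flagging, shared by both arguments: the reduction to the canonical form $\mathcal{J}$ is not innocuous, since cyclicity of the fixed vector $\eta$ is \emph{not} conjugation-invariant --- for instance the conclusion fails for an upper-triangular Jordan block --- so the statement should really be read for $\gamma$ already in the quasi-rational form of Lemma~\ref{Jordan}, which is the only case used downstream.
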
	
\begin{proof}
	Let $f(\lambda)=\wp(\lambda)^e$ be the characteristic polynomial of $\gamma,$ where $\wp(\lambda)=\lambda^d+c_{d-1}\lambda^{d-1}+\cdots+c_1\lambda+c_0\in F[\lambda]$ is irreducible. Then $de=k$. By definition, $F[\gamma]=F[\lambda]/\left(\wp(\lambda)^e\right).$ Consider the filtration 
	$$
	\wp(\lambda)^{i-1}F[\lambda]/\left(\wp(\lambda)^i\right)\supseteq \wp(\lambda)^{i}F[\lambda]/\left(\wp(\lambda)^{i+1}\right),\ 1\leq i\leq e-1.
	$$
	Pick the basis for $F[\gamma]$ over $F$ as in the proof of Lemma \ref{A}, i.e., $\{\lambda^i\wp(\lambda)^j:\ 0\leq i\leq d,\ 0\leq j\leq e-1\}.$ With respect to this basis, each element of $F[\gamma]$ has the following type
	\begin{equation}\label{B'}
	\mathcal{S}_{\gamma}=\Bigg\{A=\begin{pmatrix}
	A_0 &&&\\
	A_1& A_0&&\\
	A_2&A_1&A_0&\\
	\vdots&\ddots&\ddots&\ddots\\
	A_{e-1}&\ldots &A_2&A_1&A_0
	\end{pmatrix},\ A_i\in {M}_{d\times d}(F),\ 0\leq i\leq e-1\Bigg\},
	\end{equation}
	and under this basis, and the assumption that $\gamma$ is regular, $\gamma$ has the quasi-rational canonical form
	\begin{equation}\label{0012}
	\mathcal{J}=\begin{pmatrix}
	C &&&\\
	N& C&&\\
	&\ddots&\ddots&\\
	&&N&C
	\end{pmatrix}\in GL_{k}(F),
	\end{equation}
	i.e., $\gamma$ is conjugate to $\mathcal{J},$ where $C=C(\wp)$ be the companion matrix of $\wp(\lambda)$, i.e., 
	$$
	C=\begin{pmatrix}
	0&& &-c_0\\
	1&0&&\\
	&\ddots&\ddots&\\
	&&1&-c_{d-1}
	\end{pmatrix},\ \text{and}\ N=\begin{pmatrix}
	&& 1\\
	&\ldots&\\
	0&&
	\end{pmatrix}\in GL_{d}(F).
	$$ 
	Since elements in the same $G(F)$-conjugacy class have the same characteristic polynomial, we may assume that $\gamma=\mathcal{J}$ is a quasi-rational canonical form. 
	
	Then necessarily if $A\in F[\gamma]$ of the form in $\eqref{B'},$ then it commutes with $\gamma.$ Indeed, since $\gamma$ is regular, i.e., the minimal polynomial of $\gamma$ coincides with its characteristic polynomial, any nonsingular matrix commuting with $\gamma$ must lie in $F[\gamma]^{\times}.$ Thus $F[\gamma]^{\times}=\{A\in \mathcal{S}_{\gamma}\cap GL_{k}(F):\ A\gamma=\gamma A\}.$ 
	
	Now we consider the equation $A\gamma=\gamma A,$ $A\in \mathcal{S}_{\gamma}.$ Clearly this is equivalent to a system of Sylvester equations
	\begin{equation}\label{233}
	\begin{cases}
	CA_0=A_0C\\
	NA_0+CA_1=A_1C+A_0N\\
	\qquad\vdots\\
	NA_{e-2}+CA_{e-1}=A_{e-1}C+A_{e-2}N.
	\end{cases}
	\end{equation}
	Since $A_0\in F[C]^{\times},$ and $C$ is regular elliptic, $A_0$ commuting with $C$ implies that there exists some $h_0(\lambda)\in F[\lambda],$ such that $A_0=h_0(C).$ We may assume that $d_0=\deg h_0\leq d-1.$ Let $\eta=(0,\cdots,0,1)$ and write $\eta C^{i}=(b_{d,1}^{(i)},b_{d,2}^{(i)},\cdots,b_{d,d}^{(i)}),$ $1\leq i\leq d-1,$ for the last row of $C^{i}.$ Define 
	$$
	X_{(i)}=\begin{pmatrix}
	0&b^{(i)}_{d,1}&b^{(i)}_{d,2}&\cdots &b^{(i)}_{d,d-1}\\
	0&0&b^{(i)}_{d,1}&\ddots&\vdots\\
	\vdots&\ddots&\ddots&\ddots&b^{(i)}_{d,2}\\
	\vdots&&\ddots&\ddots&b^{(i)}_{d,1}\\
	0&\ldots&\ldots&0&0
	\end{pmatrix}\in GL_{d}(F).
	$$
	\begin{claim}\label{13}
		Let notation be as before, then for any $1\leq i\leq d-1,$ $X=X_{(i)}$ is a solution to the Sylvester equation
		$$
		NC^i+CX=XC+NC^i.
		$$
	\end{claim}
	Write $A_0=h_0(C)=c'_{d_0}C^{d_0}+c'_{d_0-1}C^{d_0-1}+\cdots+c'_1C+c'_0I_{d},$ $c'_{d_0}\neq 0$. Define 
	$$
	A_{h_0}^{sp}=c'_{d_0}X_{(d_0)}+c'_{d_0-1}X_{(d_0-1)}+\cdots+c'_1X_{(1)}.
	$$
	Clearly $A_1=A_{h_0}^{sp}$ gives a special solution of the equation $NA_0+CA_1=A_1C+A_0N$ (the superscript 'sp' refers to 'special'). Given $A_0=h_0(C)$ as above, one then claims that
	$$
	\mathcal{A}_1=\{A_{h_0}^{sp}+h_1(C):\ h_1\in F[\lambda],\ \deg h_1\leq d-1\}
	$$
	gives all solutions to the equation $NA_0+CA_1=A_1C+A_0N.$ In fact, on the one hand, elements in $\mathcal{A}_1$ obviously satisfies the equation; on the other hand, let $A_1'$ be any solution to the equation, then $A_{h_0}^{sp}-A_1'$ commutes with $C,$ thus it is a polynomial of $C,$ namely, $A_1'\in \mathcal{A}_1.$ This proves the claim.
	
	Note that $NA_{h_0}^{sp}=A_{h_0}^{sp}N=0,$ when substitute $A_1=A_{h_0}^{sp}+h_1(C)$ into the equation $NA_1+CA_2=A_2C+A_1N,$ to get $Nh_1(C)+CA_2=A_2C+h_1(C)N.$ Write $h_1(\lambda)=c''_{d_1}\lambda^{d_1}+c''_{d_1-1}\lambda^{d_1-1}+\cdots+c''_{1}\lambda+c''_{0},$ and set 
	$$
	A_{h_1}^{sp}=c''_{d_1}X_{(d_1)}+c''_{d_1-1}X_{(d_1-1)}+\cdots+c''_1X_{(1)}.
	$$
	Then $\mathcal{A}_2=\{A_{h_1}^{sp}+h_2(C):\ h_2\in F[\lambda],\ \deg h_2\leq d-1\}$ gives all solutions to the equation $NA_1+CA_2=A_2C+A_1N.$ Generally we define $\mathcal{A}_i,$ $1\leq i\leq e-1$ similarly, and set $\mathcal{A}_0=\{h_0(C):\ h_0\in F[\lambda],\ \deg h_0\leq d-1\}.$ These $\mathcal{A}_i$'s describe the structure of $F[\gamma]^{\times}.$
	
	Therefore, given any $\mathfrak{a}=(a_1,a_2,\cdots,a_k)\in F^k,$ by Lemma \ref{A} one can find uniquely an $A_0\in F[C]$ such that $\eta A_0=(a_{k-d+1},a_{k-d+2},\cdots,a_k).$ Denote the sections of $\mathfrak{a}$ by $\mathfrak{a}_i=(a_{(i-1)d+1},a_{(i-1)d+2},\cdots, a_{id}),$ $1\leq i\leq e-1.$ Let $1\leq i_0\leq e-1,$ assume that for any $0\leq i<i_0$ one can find uniquely an element $A_{i}\in {M}_{d\times d}(F)$ such that the last row of $A_i$ is exactly $\mathfrak{a}_{e-i},$ then let $h_{i_0}(C)\in F[C]^{\times}$ be the unique element whose last row is $\mathfrak{a}_{e-i_0},$ take $A_{i_0}=A_{h_{i_0}}^{sp}+h_{i_0}(C).$ Then $\eta A_{i_0}=\eta h_{i_0}(C)=\mathfrak{a}_{e-i_0}.$ Moreover, such an $A_{i_0}$ is actually unique. Let $A_{i_0}'$ be another matrix satisfying that $\eta A_{i_0}'=\mathfrak{a}_{e-i_0}.$ Since $A_{i_0}'$ is a solution of $NA_{i_0-1}+CX=XC+A_{i_0-1}N,$ $A_{i_0}-A_{i_0}'$ commutes with $C.$ Thus $A_{i_0}-A_{i_0}'\in F[C].$ Note that the last row of $A_{i_0}-A_{i_0}'$ is $0,$ so by the uniqueness from Lemma \ref{A}, $A_{i_0}-A_{i_0}'=0.$ This shows the uniqueness of $A_{i_0}.$
	
	Therefore, the proof ends with an induction on $i_0$ and the following proof of Claim \ref{13}.
\end{proof}

\begin{proof}[Proof of Claim \ref{13}]
	We give a proof based on induction, although one might verify the claim by brute force computation (which is pretty complicated). Note that the case $i=1$ is trivial, since $X_{(1)}=N.$ Now we assume that there exists an $i_0$ such that $1<i_0\leq d-1,$ and for any $1\leq i< i_0,$ $X=X_{(i)}$ is a solution to the Sylvester equation $NC^i+CX=XC+NC^i.$ Write $C^j=\left(b_{s,t}^{(j)}\right)_{1\leq s,t\leq d},$ $1\leq j\leq d-1,$ then a straightforward expansion implies that $X=X_{(i_0)}$ is a solution to the Sylvester equation $NC^{i_0}+CX=XC+NC^{i_0}$ if and only if the following system of linear equations holds
	\begin{equation}\label{2333}
	\begin{cases}
	b_{d,d}^{(i_0)}=-c_1b_{d,1}^{(i_0)}-c_2b_{d,2}^{(i_0)}\cdots-c_{d-1}b_{d,d-1}^{(i_0)},\\
	b_{d,d-1}^{(i_0)}=-c_2b_{d,1}^{(i_0)}-c_3b_{d,2}^{(i_0)}-\cdots-c_{d-1}b_{d,d-2}^{(i_0)}+b_{2,1}^{(i_0)},\\
	b_{d,d-2}^{(i_0)}=-c_3b_{d,1}^{(i_0)}-c_4b_{d,2}^{(i_0)}-\cdots-c_{d-1}b_{d,d-3}^{(i_0)}+b_{3,1}^{(i_0)},\\
	\qquad\qquad\vdots\\
	b_{d,2}^{(i_0)}=-c_{d-1}b_{d,1}^{(i_0)}+b_{d-1,1}^{(i_0)}.
	\end{cases}
	\end{equation}
	Comparing entries on both sides of $C^{i_0}=C^{i_0-1}C$ leads to the recurrence relations
	\begin{equation}\label{23333}
	\begin{cases}
	b_{d,j}^{(i_0)}=b_{d,j+1}^{(i_0-1)},\ 1\leq j\leq d-1,\\
	b_{d,d}^{(i_0)}=-c_{0}b_{d,1}^{(i_0-1)}-c_{1}b_{d,2}^{(i_0-1)}-\cdots-c_{d-1}b_{d,d}^{(i_0-1)}.
	\end{cases}
	\end{equation}
	Since $i_0\leq d-1,$ $i_0-1<d-1,$ then $b_{d,1}^{(i_0-1)}=0.$ Therefore, relations \eqref{23333} implies that 
	$$
	b_{d,d}^{(i_0)}=-c_{0}b_{d,1}^{(i_0-1)}-c_1b_{d,1}^{(i_0)}-c_2b_{d,2}^{(i_0)}\cdots-c_{d-1}b_{d,d-1}^{(i_0)},
	$$
	which is exactly the first equation in \eqref{2333}. By other assumption, the system of relations \eqref{2333} holds when $i_0$ replaced by $i_0-1.$ Therefore, for any $1\leq j\leq d-2,$ one has
	$$
	b_{d,d-j+1}^{(i_0-1)}=-c_jb_{d,1}^{(i_0-1)}-c_{j+1}b_{d,2}^{(i_0-1)}-\cdots-c_{d-1}b_{d,d-j}^{(i_0-1)}+b_{j,1}^{(i_0-1)}.
	$$
	Note that $b_{j,1}^{(i_0-1)}=b_{j+1,1}^{(i_0)},$ $1\leq j\leq d-2,$ $b_{d,1}^{(i_0-1)}=0,$ and thus \eqref{23333} implies that 
	$$
	b_{d,d-j}^{(i_0)}=-c_{j+1}b_{d,1}^{(i_0)}-c_{j+2}b_{d,2}^{(i_0)}-\cdots-c_{d-1}b_{d,d-j-1}^{(i_0)}+b_{j+1,1}^{(i_0)},
	$$
	which is exactly the $(1+j)$-th equation in \eqref{2333}. Hence the proof follows from induction.
\end{proof}

\begin{lemma}\label{B.}
	Let $\gamma\in G(F)$ be regular. Then there exists a finite set of elements $\Gamma_{reg}=\{\gamma_i\in G(F):\ 0\leq i\leq m_0\}$ such that 
	\begin{enumerate}
		\item $G(F)=\bigcup_{0\leq i\leq m_0} P_0(F)\gamma_iF(\gamma),$ where $P_0$ is the mirabolic subgroup of $G;$
		\item There are at most one $\gamma_i\in \Gamma_{reg}$ satisfying that 
		$$
		\gamma_iF(\gamma)\gamma_i^{-1}\nsubseteq \bigcup _{k=1}^{n-1}Q_k(F).
		$$
	\end{enumerate}
\end{lemma}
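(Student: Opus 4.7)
The plan is to classify the orbits of $P_0(F)\times F(\gamma)^\times$ acting on $G(F)$ (on the left and right respectively) and to produce representatives satisfying the parabolic-containment property in (2). Using the bijection $g\mapsto \eta g$ between $P_0(F)\backslash G(F)$ and $F^n\setminus\{0\}$, the problem reduces to studying the right $F(\gamma)^\times$-orbits on $F^n\setminus\{0\}$ together with a linear-algebra analysis of cyclic vectors.

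For (1), the idea is to exploit regularity to put $F^n$ into $F(\gamma)$-module form. By Lemma \ref{Jordan}, $F^n$ is a free $F[\gamma]$-module of rank one, so $F^n\cong F(\gamma)$ as right $F[\gamma]$-modules. The primary decomposition yields $F(\gamma)\cong\prod_{i=1}^m R_i$ with $R_i=F[\lambda]/(\wp_i(\lambda)^{e_i})$ a local Artinian ring with maximal ideal $(\wp_i)$. Under this identification the orbits of $F(\gamma)^\times\cong\prod R_i^\times$ on $F^n\setminus\{0\}$ are indexed by valuation tuples $(k_1,\dots,k_m)$ with $0\leq k_i\leq e_i$, excluding the all-$e_i$ tuple corresponding to zero. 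For each orbit pick a vector $v\in F^n$ and, using transitivity of $G(F)$ on $F^n\setminus\{0\}$, lift to $\gamma_i\in G(F)$ with $\eta\gamma_i=v$. This yields the decomposition (1), with $m_0+1=\prod_i(e_i+1)-1$.

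For (2), the key observation is that $\gamma_iF(\gamma)\gamma_i^{-1}\subseteq Q_k(F)$ holds iff the left multiplication action of $F(\gamma)$ on column vectors preserves the span $\gamma_i^{-1}\cdot\Span(e_1,\dots,e_k)$ of the first $k$ columns of $\gamma_i^{-1}$. Since $\eta\gamma_i=v$ forces $v\gamma_i^{-1}=\eta$, the first $n-1$ columns of $\gamma_i^{-1}$ lie in $v^\perp:=\{u\in F^n : vu=0\}$. An explicit computation with the block form $P_0(F)=\left\{\left(\begin{smallmatrix}A & b\\ 0 & 1\end{smallmatrix}\right): A\in \GL_{n-1}(F),\ b\in F^{n-1}\right\}$ shows that, as $\gamma_i$ varies over its $P_0(F)$-coset with $v$ fixed, the first $k$ columns of $\gamma_i^{-1}$ can be made to span every $k$-dimensional subspace of $v^\perp$. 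Hence containment in some $Q_k(F)$ is achievable for the orbit of $v$ iff $v^\perp$ contains a proper nonzero left-$F(\gamma)$-invariant column subspace $W$.

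By the orthogonal-complement duality identifying left-$F(\gamma)$-invariant column subspaces with right-$F[\gamma]$-invariant row subspaces via $W\leftrightarrow W^\perp$, this condition becomes: $v$ lies in some proper nonzero right-$F[\gamma]$-invariant row subspace, equivalently, $v$ fails to be a cyclic row generator of $F^n$ as a right $F[\gamma]$-module. Under the identification $F^n\cong\prod R_i$, such cyclic generators are precisely the units in $\prod R_i$ and hence form a single $F(\gamma)^\times$-orbit. All other orbits consist of non-cyclic vectors, for which a suitable representative $\gamma_i$ can be chosen so that $\gamma_iF(\gamma)\gamma_i^{-1}\subseteq Q_k(F)$; only the cyclic orbit fails this, giving at most one exceptional $\gamma_i\in\Gamma_{\mathrm{reg}}$. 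The main technical obstacle is the freedom-of-columns claim in the third paragraph — an elementary but bookkeeping-intensive linear algebra computation via the block form of $P_0(F)$ — together with consistently tracking row-versus-column conventions when invoking the duality.
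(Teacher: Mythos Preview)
Your proof is correct and takes a genuinely different route from the paper. The paper works in the explicit coordinates of the quasi-rational canonical form (Lemma~\ref{Jordan}): it partitions $G(F)$ according to which block-components of the last row $\eta\delta$ vanish, invokes Lemma~\ref{A'} to produce centralizer elements with prescribed last rows, and then writes down concrete matrices $\gamma_0$, $\gamma_{(i_1,\dots,i_l)}$, $\gamma_m$ built from Weyl elements, verifying by direct block computation that each $\gamma_{(i_1,\dots,i_l)}F(\gamma)\gamma_{(i_1,\dots,i_l)}^{-1}$ sits inside an explicit $Q_{d_{i_1}}(F)$. Your argument replaces all of this with module theory: the identification $P_0(F)\backslash G(F)\cong F^n\setminus\{0\}$ together with cyclicity of $F^n$ over $F[\gamma]$ reduces (1) to classifying $\prod R_i^\times$-orbits on $\prod R_i\setminus\{0\}$ by valuations, and the duality between column-invariant subspaces and row-invariant subspaces reduces (2) to the single observation that non-cyclic vectors are exactly the non-units. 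Your approach is shorter and conceptually cleaner, and makes the ``at most one'' in (2) transparent (cyclic generators form a single orbit); the paper's approach has the advantage of producing explicit matrix representatives, though these particular representatives are not reused later (Corollary~\ref{repres} constructs its own). The freedom-of-columns claim you flag as the main technical point is indeed straightforward: writing $p^{-1}=\left(\begin{smallmatrix}A'&b'\\0&1\end{smallmatrix}\right)$, the first $k$ columns of $(p\gamma_i)^{-1}=\gamma_i^{-1}p^{-1}$ are $\gamma_i^{-1}$ applied to the first $k$ columns of $\left(\begin{smallmatrix}A'\\0\end{smallmatrix}\right)$, and as $A'$ ranges over $\GL_{n-1}(F)$ these span an arbitrary $k$-plane in $\gamma_i^{-1}\operatorname{Span}(e_1,\dots,e_{n-1})=v^\perp$.
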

\begin{proof}
	Denote by $f(\lambda)=\wp_1(\lambda)^{e_1}\cdots \wp_m(\lambda)^{e_m}$ the characteristic polynomial of $\gamma\in G(F)$, where $e_i\geq 1,$ $\wp_i$'s are distinct monic and irreducible polynomials over $F,$ $1\leq i\leq m.$ Let $d_i=\deg \wp_i.$ Then $d_1e_1+d_2e_2+\cdots +d_me_m=\deg f=n.$ Set 
	$$
	F(\gamma)=F[\lambda]/\left(f(\lambda)\right)=\bigoplus_{i=1}^mF[\lambda]/\left(\wp_i(\lambda)^{e_i}\right)
	$$ 
	be the polynomial algebra generated by $\gamma,$ and denote by $F(\gamma)^{\times}$ the set of invertible elements in $F(\gamma).$ Since $\gamma$ is regular, then by Lemma \ref{Jordan}, $\gamma$ is $G(F)$-conjugate to a matrix of the form
	\begin{align*}
	\gamma^*=\begin{pmatrix}
	\mathcal{J}(\wp_1^{e_1})&& &\\
	&\mathcal{J}(\wp_2^{e_2})&&\\
	&&\ddots&\\
	&&&\mathcal{J}(\wp_m^{e_m})
	\end{pmatrix}\in \begin{pmatrix}
	G^{(1)}(F)&& &\\
	&G^{(2)}(F)&&\\
	&&\ddots&\\
	&&&G^{(m)}(F)
	\end{pmatrix},
	\end{align*}
	where $G^{(1)}(F):=GL_{d_ie_i}(F),$ $1\leq i\leq m.$ We may assume $\gamma=\gamma^*.$ Write $k_i=d_ie_i,$ $1\leq i\leq m.$ For any $\mathfrak{a}=(a_1,a_2,\cdots,a_n)\in F^n,$ let $\eta=(0,0,\cdots,0,1)$ and $\widetilde{\eta}_i:$ $F^n\longrightarrow F^{k_i},$ such that
	$$
	\widetilde{\eta}_i\left(\mathfrak{a}\right)=(a_{k_1+\cdots+k_{i-1}+1},\cdots,a_{k_1+\cdots+k_{i-1}+k_i}),\ 1\leq i\leq m.
	$$
	Also, for convenience we write $\eta^{(e_i)}\left(\mathfrak{a}\right)$ for the last $d_i$ components of $\widetilde{\eta}_i\left(\mathfrak{a}\right),$ namely $\eta^{(e_i)}\left(\mathfrak{a}\right)=(a_{k_1+\cdots+k_{i-1}+(e_i-1)d_i+1},a_{k_1+\cdots+k_{i-1}+(e_i-1)d_i+2},\cdots,a_{k_1+\cdots+k_{i-1}+k_i}),$ $1\leq i\leq m.$
	We then split $G(F)$ into a disjoint union of sets following the conditions on the $\mathfrak{a}_{i}$'s and show that each of the sets is a $P_0(F)\gamma_iF(\gamma)^{\times}$ for a specific $\gamma_i.$
	
	Let $\mathcal{S}_0=\{\delta\in G(F):\ \eta\delta=\mathfrak{a}=(a_1,a_2,\cdots,a_n)\in F^n,\ \text{such that for}\ 1\leq i\leq m,\ \eta^{(e_i)}\left(\mathfrak{a}\right)\neq \boldsymbol{0}\}.$ Let $\eta_i=(0,0,\cdots,0,1)\in F^{k_i},$ $1\leq i\leq m.$ Denote by 
	\begin{align*}
	\gamma_0=\begin{pmatrix}
	I_{k_1}&& &\\
	\vdots&\ddots&&\\
	\vdots&&I_{k_{m-1}}&\\
	\eta_{1}&\ldots&\eta_{m-1}&I_{k_m}
	\end{pmatrix}.
	\end{align*}
	Then applying Lemma \ref{A'} to each $\widetilde{\eta}_i\left(\mathfrak{a}\right)\in F^{k_i},$ we find for each $1\leq i\leq m,$ for any $\delta\in\mathcal{S}_0,$ a unique $x_i\in F[\mathcal{J}(\wp_i^{e_i})]^{\times},$ such that $\eta x_i=\widetilde{\eta}_i\delta.$ (Write $x_i$ in the form in \eqref{B'}, the definition of $\mathcal{S}_0$ implies that $A_0\neq0,$ thus $A_0\in F[C]^{\times},$ so $x_i\in F[\mathcal{J}(\wp_i^{e_i})]^{\times}.$) Let $x=\diag(x_1,\cdots,x_m),$ then $\eta(\gamma_0x)=\eta\delta.$ Consequently, $\delta(\gamma_0x)^{-1}\in P_0(F),$ i.e., $\delta\in P_0(F)\gamma_0F(\gamma)^{\times}.$ Moreover, one has $P_0(F)\cap \gamma_0F(\gamma)^{\times}\gamma_0^{-1}=\{I_n\}.$ To see this, look at the last row of $\gamma_0x\gamma_0^{-1}.$ A straightforward computation shows that 
	$$
	\widetilde{\eta}_i\left(\gamma_0x\gamma_0^{-1}\right)=\eta_{i}x_i-\eta_i,\ 1\leq i\leq m-1, 
	$$
	and $\widetilde{\eta}_m\left(\gamma_0x\gamma_0^{-1}\right)=\eta_m.$ Then by uniqueness part of Lemma \ref{A'}, it follows that $x_i=I_{k_i},$ $1\leq i\leq m.$
	
	For any $1\leq l\leq m-1$ and $1\leq i_1<\cdots<i_l\leq m,$ let $\mathcal{S}^{(l)}_{(i_1,\cdots,i_l)}=\{\delta\in G(F):\ \eta\delta=\mathfrak{a}=(a_1,a_2,\cdots,a_n)\in F^n,\ \text{such that}\ \eta^{(e_j)}\left(\mathfrak{a}\right)=\boldsymbol{0}\ \text{iff}\ j\in \{i_1,\cdots, i_l\}\}.$ For any $1\leq i\leq m,$ $1\leq e\leq e_i-1,$ define $\eta_i^{e}=(0,\cdots,0,1,0,\cdots,0)\in F^{k_i},$ where the only nonzero entry (i.e. $1$) occurs in the $ed_i$-position, namely, there are $(e_i-e)d_i$ zeros on the right hand side of the entry $1.$ Let $v_l$ denote the $l$-th element of $\eta_i^e,$ define 
	$$
	\eta_i^*=(0,\cdots0,v_{d_i+1},v_{d_i+2},\cdots,v_{k_i-d_i},v_1,v_2,\cdots,v_{d_i}).
	$$
	For any $1\leq i\leq m$ and $1\leq s<t\leq m,$ define the Weyl elements $w^{(1)}_{i}$ and $w^{(2)}_{s,t}$ as 
	$$
	w^{(1)}_{i}=\begin{pmatrix}
	I_{k_i'}&&&&&&\\
	&0&&\cdots&&I_{d_i} & \\
	&& I_{d_i}  &&&& \\
	&\vdots&&\ddots &&\vdots&\\
	&&&&I_{d_i}&&\\
	&I_{d_i}  &&\cdots&&0& \\
	&&&&&&I_{k_i''}
	\end{pmatrix},
	$$
	where $k_i'=k_1+\cdots k_{i-1},$ $k_i''=k_{i+1}+\cdots+k_m,$ $\forall$ $1\leq i\leq m;$ and 
	$$
	w^{(2)}_{s,t}=  \begin{pmatrix}
	I_{k_s'}&&&&&&\\
	&0&&\cdots&&I_{k_t} & \\
	&& I_{k_{s+1}}  &&&& \\
	&\vdots&&\ddots &&\vdots&\\
	&&&&I_{k_{t-1}}&&\\
	&I_{k_s}  &&\cdots&&0& \\
	&&&&&&I_{k_t''}
	\end{pmatrix}.
	$$
	We write simply that $w^{(2)}_{s,t}=I_n$ if $s=t.$
	
	Given $\delta\in \mathcal{S}^{(l)}_{(i_1,\cdots,i_l)},$ let $\eta\delta=\mathfrak{a}=(a_1,a_2,\cdots,a_n)\in F^n.$ Then $\eta^{(e_j)}\left(\mathfrak{a}\right)=\boldsymbol{0}$ if and only if $j\in \{i_1,\cdots,i_l\}.$ If $\widetilde{\eta}_j(\mathfrak{a})\neq0,$ let $e^0_j\leq e_j-1$ be the maximal integral such that $(a_{k_1+\cdots+(e^0_j-1)d_j+1},a_{k_1+\cdots+k_{j-1}+(e^0_j-1)d_j+2},\cdots,a_{k_1+\cdots+k_{j-1}+e_j^0d_j})\neq 0.$ Then by Lemma \ref{A'}, for each such $j,$ one can find an element $x_j$ of the form in \eqref{B'} such that the $e^0_jd_j$-row of $x_j$ is exactly 
	$$
	(a_{k_1+\cdots+k_{j-1}+1},a_{k_1+\cdots+k_{j-1}+1},\cdots,a_{k_1+\cdots+k_{j-1}+e_j^0d_j})\neq 0.
	$$ 
	If $\widetilde{\eta}_j(\mathfrak{a})=0,$ then take $x_j$ to be an arbitrary element in $F(\mathcal{J}(\wp_j^{e_j}))^{\times}.$ Since $\eta^{(e_i)}\left(\mathfrak{a}\right)\neq \boldsymbol{0}$ for any $i\notin \{i_1,\cdots,i_l\},$ we can pick uniquely elements $x_i\in F(	\mathcal{J}(\wp_i^{e_i}))^{\times}$ such that their last row is $\widetilde{\eta}_i\mathfrak{a},$ $ i\in \{1,2,\cdots,m\}\setminus \{i_1,\cdots,i_l\}.$ Let $x=\diag(x_1,x_2,\cdots,x_m).$
	
	Apply the transposition $\prod_{1\leq j\leq l}(j,i_j)\in S_n$ on the ordered $n$-set $(1,2,\cdots,n)$ to get another ordered $n$-set $(i_1,\cdots,i_l,i_{l+1}',\cdots,i_m').$ Then clearly $\eta^{(e_{i_m'})}(\mathfrak{a})\neq0,$ hence by our choice, the last row of $x_{i_m'}$ is exactly $\eta_{i_m'}(\mathfrak{a}).$ Thus we can define the element $\gamma_{(i_1,\cdots,i_l)}\in G(F)$ as
	$$
	\begin{pmatrix}
	I_{k_{i_1}} &  & &&&&&& \\
	& I_{k_{i_2}} & &&&&& &\\
	& & \ddots & &&&&&\\
	&&& I_{k_{i_l}}&&& &&\\
	\vdots   &&&& I_{k_{i_{l+1}'}}& &&&\\
	&&&&& I_{k_{i_{l+2}'}}&&&\\
	&&&&& &\ddots & &\\
	&&&&  &  &&  I_{k_{i_{m-1}'}}&\\
	\eta^*_{i_1} & \eta^*_{i_2}& \cdots &\eta^*_{i_l}&\eta_{i_{l+1}'}& \eta_{i_{l+2}'}& \cdots &\eta_{i_{m-1}'}& I_{k_{i_{m-1}'}}
	\end{pmatrix}\cdot \prod_{j=1}^lw^{(1)}_{j}w^{(2)}_{j,i_j}.
	$$
	Then by our setting, $\eta\gamma^{(l)}_{(i_1,\cdots,i_l)}x=\mathfrak{a}=\eta\delta.$ Then $\delta\in P_0(F)\gamma_{(i_1,\cdots,i_l)}F(\gamma)^{\times}.$ Given any $x\in F(\gamma)^{\times},$, one checks directly that 
	$$
	\prod_{j=1}^lw^{(1)}_{j}w^{(2)}_{j,i_j}x\left(\prod_{j=1}^lw^{(1)}_{j}w^{(2)}_{j,i_j}\right)^{-1}\in Q_{d_{i_1}}(F).
	$$
	Therefore, $\gamma_{(i_1,\cdots,i_l)}x\gamma_{(i_1,\cdots,i_l)}^{-1}\in Q_{d_{i_1}}(F),$ i.e., $\gamma_{(i_1,\cdots,i_l)}F(\gamma)^{\times}\gamma_{(i_1,\cdots,i_l)}^{-1}\subseteq Q_{d_{i_1}}(F).$
	
	Now we consider $\mathcal{S}^{(m)}=\{\delta\in G(F):\ \eta\delta=\mathfrak{a}=(a_1,a_2,\cdots,a_n)\in F^n-\{\boldsymbol{0}\},\ \eta^{(e_j)}\left(\mathfrak{a}\right)=\boldsymbol{0},\  1\leq j\leq m\}.$ Let $\delta\in \mathcal{S}^{(m)}$ such that $\eta\delta=\mathfrak{a}=(a_1,a_2,\cdots,a_n)\in F^n-\{\boldsymbol{0}\}.$ For each  $1\leq j\leq m$ such that $\widetilde{\eta}_j(\mathfrak{a})\neq0,$ denote by $e^0_j\leq e_j-1$ the maximal integral such that 
	$$
	(a_{k_1+\cdots+(e^0_j-1)d_j+1},a_{k_1+\cdots+k_{j-1}+(e^0_j-1)d_j+2},\cdots,a_{k_1+\cdots+k_{j-1}+e_j^0d_j})\neq 0.
	$$
	Likewise, for each such $j,$ one can find an element $x_j$ of the form in \eqref{B'} such that the $e^0_jd_j$-row of $x_j$ is exactly 
	$(a_{k_1+\cdots+k_{j-1}+1},a_{k_1+\cdots+k_{j-1}+1},\cdots,a_{k_1+\cdots+k_{j-1}+e_j^0d_j}).$ For the remaining $j$'s, take arbitrary $x_j\in F(	\mathcal{J}(\wp_j^{e_j}))^{\times}.$ Let $x=\diag(x_1,\cdots,x_m).$
	
	Now we pick arbitrarily a $j_0$ such that $\widetilde{\eta}_{j_0}(\mathfrak{a})\neq0.$ Let $j_0'\neq j_0$ be another integer. Denote by 
	$$
	w^{(1)}_{j_0,e_{j_0}}=\begin{pmatrix}
	I_{k_{j_0}'}&&&\\
	&0&I_{(e_j-e_{j_0})d_{j_0}} & \\
	&I_{e_{j_0}d_{j_0}} &0& \\
	&&&I_{k_{j_0}''}
	\end{pmatrix}.
	$$
	Let 
	$$
	\gamma_{m}=\begin{pmatrix}
	I_{k_{j_0'}} &  & &&&&&& \\
	& & \ddots & &&&&&\\
	&&& I_{k_{1}}&&& &&\\
	\vdots   &&&& \ddots& &&&\\
	&&&&& I_{k_{m}}&&&\\
	&&&&& &\ddots & &\\
	\eta^*_{i_1} & & \cdots &\eta_{1}&\ldots& \eta_{m}& \cdots & I_{k_{j_0}}
	\end{pmatrix}\cdot w^{(1)}_{j_0'}w^{(2)}_{1,j_0'}w^{(1)}_{j_0,e_{j_0}}w^{(2)}_{j_0,m}.
	$$	
	Then $\eta\gamma_{m}x=\eta\delta.$ So $\delta\in P_0(F)\gamma_mF(\gamma)^{\times}.$ Moreover, for any $x'\in F(\gamma)^{\times},$ $\gamma_mx'\gamma_m^{-1}\in Q_{d_{j_0'}}(F),$ the standard maximal parabolic subgroup of type $(j_0',n-j_0').$
	
	In all, we see that 
	\begin{align*}
	G(F)&=\mathcal{S}_0\coprod\bigcup_{l=1}^{m-1}\bigcup_{1\leq i_1<\cdots<i_l\leq m}\mathcal{S}^{(l)}_{(i_1,\cdots,i_l)}\coprod\mathcal{S}^{(m)}\\
	&=P_0(F)\gamma_0F(\gamma)^{\times}\bigcup\bigcup_{\substack{1\leq l\leq m-1\\1\leq i_1<\cdots<i_l\leq m}}P_0(F)\gamma_{(i_1,\cdots,i_l)}F(\gamma)^{\times}\bigcup P_0(F)\gamma_mF(\gamma)^{\times},
	\end{align*}
	where $\gamma_mF(\gamma)^{\times}\gamma_m^{-1}$ and each $\gamma_{(i_1,\cdots,i_l)}F(\gamma)^{\times}\gamma_{(i_1,\cdots,i_l)}^{-1}$ are contained in some standard maximal parabolic subgroup, and $P_0(F)\cap \gamma_0F(\gamma)^{\times}\gamma_0^{-1}=\{I_n\}.$
\end{proof}
Now we prove the result on the structure of regular conjugacy classes: 
\begin{prop}\label{reg}
	Let $\mathcal{C}$ be a regular $G(F)$-conjugacy classes in $G(F).$ Then there exists a $P(F)$-conjugacy class $\mathcal{C}_0$ such that 
	\begin{equation}\label{9'}
	\mathcal{C}=\mathcal{C}_0\coprod\bigcup _{k=1}^{n-1}\mathcal{C}\cap Q_k(F)^{P(F)}.
	\end{equation} 
\end{prop}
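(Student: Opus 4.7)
The plan is to parametrize $\mathcal{C}$ by $P_0(F)$-conjugacy orbits via the decomposition of $G(F)$ from Lemma \ref{B.}, and to single out the unique orbit (if any) that escapes the parabolic locus $\bigcup_{k=1}^{n-1}Q_k(F)^{P(F)}$ as the required $\mathcal{C}_0$. Fix a regular $\gamma$ representing $\mathcal{C}$ and apply Lemma \ref{B.} to write $G(F) = \bigcup_{i=0}^{m_0}P_0(F)\gamma_i F(\gamma)^{\times}$. Since $F(\gamma)^{\times}$ centralizes $\gamma$, for any $g = p_0\gamma_i x$ with $p_0\in P_0(F)$ and $x\in F(\gamma)^{\times}$ we have
\begin{equation*}
g\gamma g^{-1} = p_0\gamma_i x\gamma x^{-1}\gamma_i^{-1}p_0^{-1} = p_0(\gamma_i\gamma\gamma_i^{-1})p_0^{-1},
\end{equation*}
so $\mathcal{C} = \bigcup_{i=0}^{m_0}P_0(F)\cdot(\gamma_i\gamma\gamma_i^{-1})$, with $P_0(F)$ acting by conjugation.

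For each $i\neq 0$, the second assertion of Lemma \ref{B.} gives $\gamma_i F(\gamma)\gamma_i^{-1}\subseteq\bigcup_k Q_k(F)$; applying this to the element $\gamma\in F(\gamma)$ yields $\gamma_i\gamma\gamma_i^{-1}\in Q_{k_i}(F)$ for some $k_i$, so the entire $P_0(F)$-orbit of $\gamma_i\gamma\gamma_i^{-1}$ is contained in $Q_{k_i}(F)^{P_0(F)}\subseteq Q_{k_i}(F)^{P(F)}\cap\mathcal{C}$. I then define
\begin{equation*}
\mathcal{C}_0 := \mathcal{C}\setminus \bigcup_{k=1}^{n-1}Q_k(F)^{P(F)}.
\end{equation*}
If $\delta\in\mathcal{C}_0$ and $\delta = p_0(\gamma_j\gamma\gamma_j^{-1})p_0^{-1}$ as above, then $j=0$, for otherwise $\delta$ would lie in the parabolic locus; hence $\mathcal{C}_0$ is contained in the single $P_0(F)$-orbit $\mathcal{O}:=P_0(F)\cdot(\gamma_0\gamma\gamma_0^{-1})$, where $\gamma_0$ is the one exceptional index from Lemma \ref{B.}. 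Since both $\mathcal{O}$ and $\bigcup_k Q_k(F)^{P(F)}$ are $P_0(F)$-invariant, either $\mathcal{O}\subseteq\bigcup_k Q_k(F)^{P(F)}$ and $\mathcal{C}_0 = \emptyset$, or $\mathcal{O}\cap\bigcup_k Q_k(F)^{P(F)} = \emptyset$ and $\mathcal{C}_0 = \mathcal{O}$.

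Because $P(F) = P_0(F)Z_G(F)$ and $Z_G(F)$ acts trivially by conjugation, every $P_0(F)$-conjugacy orbit in $G(F)$ is automatically a $P(F)$-conjugacy orbit; in particular $\mathcal{C}_0$, when non-empty, is a single $P(F)$-conjugacy class, and the disjoint decomposition
\begin{equation*}
\mathcal{C} = \mathcal{C}_0\coprod\bigcup_{k=1}^{n-1}\mathcal{C}\cap Q_k(F)^{P(F)}
\end{equation*}
holds by construction.

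The main conceptual input, and essentially the only non-formal step, is the absorption of every non-exceptional orbit of Lemma \ref{B.} into some $Q_k(F)^{P(F)}$; once that is granted, the argument is pure bookkeeping, and no delicate analysis of the exceptional orbit is needed because the all-or-nothing dichotomy above handles both possibilities uniformly.
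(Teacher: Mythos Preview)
Your proof is correct and follows essentially the same route as the paper's: fix a regular representative $\gamma$, invoke Lemma~\ref{B.} to decompose $G(F)=\bigcup_i P_0(F)\gamma_i F(\gamma)^{\times}$, use that $F(\gamma)^{\times}$ centralizes $\gamma$ to reduce $\mathcal{C}$ to finitely many $P_0(F)$-orbits, and apply part~(2) of Lemma~\ref{B.} to absorb all but the $i=0$ orbit into $\bigcup_k Q_k(F)^{P(F)}$. Your dichotomy argument for the exceptional orbit $\mathcal{O}$ (either fully inside or fully outside the parabolic locus, since both are $P_0(F)$-stable and $\mathcal{O}$ is a single orbit) is in fact slightly more careful than the paper's own proof, which simply sets $\mathcal{C}_0=\{p\gamma_0\gamma\gamma_0^{-1}p^{-1}:p\in P(F)\}$ without verifying disjointness from the parabolic locus at this stage; that disjointness is only established later in Lemma~\ref{122} via the Bruhat cell computation of Lemma~\ref{14.3}.
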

\begin{proof}
	By Lemma \ref{B.} we have  $G(F)=\bigcup_{0\leq i\leq m_0} P_0(F)\gamma_iF(\gamma),$ where $P_0$ is the mirabolic subgroup of $G,$ and $\gamma\in C.$ If $\delta\in G(F),$ there exists $p\in P_0(F)$ and $i\in \{0,2,\cdots, m_0\}$ and $x\in F(\gamma)^{\times},$ such that $\delta=p\gamma_ix.$ So one has 
	$$
	\delta\gamma\delta^{-1}=p\gamma_ix\gamma x^{-1}\gamma_i^{-1}p^{-1}=p\gamma_i\gamma \gamma_i^{-1}p^{-1}.
	$$
	If $i\geq 1,$ then $\delta\gamma\delta^{-1}\in \mathcal{C}\cap Q_j(F)^{P(F)},$ for some standard maximal parabolic subgroup $Q_j$ of type $(j,n-j),$ $1\leq j\leq n-1.$ And for $i=0,$ $\delta\gamma\delta^{-1}=p\gamma_0\gamma \gamma_0^{-1}p^{-1}.$ Take $\gamma'=\gamma_0\gamma \gamma_0^{-1}.$ Then $\mathcal{C}_0=\{p\gamma'p^{-1}:\ p\in P(F)\}.$ This proves the result.
\end{proof}

	\subsection{Contributions from Nonsingular Conjugacy Classes}\label{sec2.2}
Let $s\neq 1.$ Consider the well defined distribution 
	$$
	I(s)=\int_{G(F)Z_G(\mathbb{A}_F)\setminus G(\mathbb{A}_F)}\K_0(x,x)E(x,\Phi;s)dx.  
	$$
	
Let $Q_k$ be the standard parabolic subgroup of $\GL(n)$ of type $(k,n-k).$ In Proposition \ref{reg} we show that for any regular $G(F)$-conjugacy classes $\mathcal{C}$ in $G(F),$ there exists a $P(F)$-conjugacy class $\mathcal{C}_0$ such that 
\begin{align*}
\mathcal{C}=\mathcal{C}_0\coprod\bigcup _{k=1}^{n-1}\mathcal{C}\cap Q_k(F)^{P(F)}.
\end{align*} 
Moreover, such a $\mathcal{C}_0$ is uniquely determined by $\mathcal{C}.$ When $\mathcal{C}$ is a non-regular $G(F)$-conjugacy class, then by Proposition \ref{irre}, we have
\begin{align*}
\mathcal{C}=\bigcup _{k=1}^{n-1}\mathcal{C}\cap Q_k(F)^{P(F)}.
\end{align*} 	

Take $\mathcal{C}_0$ to be empty set in this case. Denote by
\begin{align*}
\mathfrak{S}=\bigcup _{k=1}^{n-1}\left(Z_G(F)\backslash Q_k(F)\right)^{P_0(F)}.
\end{align*}	

	Following the approach in \cite{JZ87}, we will treat $I(s)$ via the decomposition
	$$
	\K_0(x,x)=\sum_{\mathcal{C}}\K_{\mathcal{C}}(x)+\K_{\Sin}(x)+\K_{\infty}(x),
	$$
	where $\mathcal{C}$ runs through all  conjugacy classes in $G(F)/Z_G(F)$ and 
	\begin{align*}
	&\K_{\mathcal{C}}(x,y)=\sum_{\substack{\gamma\in \mathcal{C}_0}}\varphi(x^{-1}\gamma y)=\sum_{\substack{\gamma\in \mathcal{C}-\mathfrak{G}}}\varphi(x^{-1}\gamma y),\quad \K_{\Geo, \Reg}(x,y)=\sum_{\mathcal{C}} \K_{\mathcal{C}}(x,y),\\
	&\K_{\Geo,\Sin}(x,y)=\sum_{\gamma\in \mathfrak{G}}\varphi(x^{-1}\gamma y),\qquad \K_{\infty}(x,y)=-\K_{\Eis}(x,y)-\K_{\Res}(x,y).
	\end{align*}

	So correspondingly, integrating against the Eisenstein series $E(x,\Phi;s)$ associated to $P$ implies that $I(s)$ can be decomposed (at least formally) as
\begin{equation}\label{008}
	I(s)=I_{\Geo,\Reg}(s)+I_{\Geo,\Sin}(s)+I_{\infty}(s),\ \Re(s)>1.
\end{equation}
When $I_{\Geo,\Reg}(s),$ $I_{\Geo,\Sin}(s)$ and $I_{\infty}(s)$ all converge absolutely in $\Re(s)>1,$ the formula \eqref{008} would be rigorous.
\medskip 

	When $G=GL(2),$ Jacquet and Zagier (see \cite{JZ87}) computed $I_{\Reg}(s),$ $I_{\Geo,\Sin}(s)$ and $I_{\infty}(s)$ for general test function $\varphi,$ and verified the convergence. Note that the contribution from $I_{\Reg}(s)$ would give Artin $L$-functions of degree less or equal to $n.$ We shall deal with $I_{\Reg}(s)$ in this section, and leaving the computation of $I_{\Geo,\Sin}(s)$ and $I_{\infty}(s)$ in the following parts. For each $\mathcal{C},$ let (at least formally)
	\begin{align*}
	I_{\mathcal{C}}(s):=\int_{G(F)Z_G(\mathbb{A}_F)\setminus G(\mathbb{A}_F)}\K_{\mathcal{C}}(x,x)E(x,\Phi;s)dx.  
	\end{align*}
	
	Then by definition, $I_{\mathcal{C}}(s)=0$ unless $\mathcal{C}$ is regular. To describe these conjugacy classes, we introduce the classification of them by factorization of their characteristic polynomials. 
	
	Let $\mathcal{C}$ be a conjugacy class in $G(F).$ Denote by $P(t;\mathcal{C})$ the characteristic polynomial of $\mathcal{C}.$ Factorize it into irreducible ones with multiplicities as 
	\begin{align*}
	f(\lambda;\mathcal{C})=\prod_{i=1}^{g}\wp_i(\lambda;\mathcal{C})^{e_i},
	\end{align*} 
	where $P_i(t;\mathcal{C})\in F[\lambda]$ is an irreducible polynomial of degree $f_i.$ We may assume $f_1\geq \cdots\geq f_g.$ Denote by $\textbf{f}=(f_1,\cdots,f_g)\in \mathbb{Z}_{\geq 1}^g$ and $\textbf{e}=(e_1,\cdots,e_g)\in \mathbb{Z}_{\geq 1}^g.$ Then $\langle\textbf{f},\textbf{e}\rangle=\sum f_ie_i=n.$
\begin{defn}
Let notation be as before. We say $\mathcal{C}$ is of type $(\textbf{f},\textbf{e};g).$ Let $\Gamma_{\textbf{f},\textbf{e};g}$ be the collection of regular $G(F)$-conjugacy classes of type $(\textbf{f},\textbf{e};g).$ 
\end{defn}
	\medskip 

With the above definition, we have the decomposition:
\begin{equation}\label{009}
\bigsqcup_{\text{$\mathcal{C}$ regular}} \mathcal{C}=\bigsqcup_{\substack{\textbf{f},\ \textbf{e}\in \mathbb{Z}_{\geq 1}^g\\ \langle\textbf{f},\textbf{e}\rangle=n}}\Gamma_{\textbf{f},\textbf{e};g}.
\end{equation}

A useful observation is that if $\mathcal{C}$ is a regular conjugacy class in $G(F)$ of type $(f_1,\cdots,f_g;e_1,\cdots,e_g),$ and $\gamma\in\mathcal{C},$ then the centralizer of $\gamma$ in $G(F)$ is isomorphic to the invertible elements of the algebra $\oplus_{1\leq i\leq g}E_i^{e_i},$ where $E_i$ is a field extension of $F$ with $[E_i;F]=f_i;$ and $\oplus_{1\leq i\leq g}E_i^{e_i}$ denotes the direct sum of $e_i$ copies of $E_i.$
\medskip 

Let $\mathcal{C}\in \Gamma_{\textbf{f},\textbf{e};g}.$ Let $\gamma_{\mathcal{C}}\in \mathcal{C}$ be a fixed element. Let $\lambda_{\textbf{f},\textbf{e};g}\in G(F)$ be defined by 
\begin{equation}\label{0010}
\lambda_{\textbf{f},\textbf{e};g}=\begin{pmatrix}
I_{f_1}&&&\\
&\ddots& &\\
&&I_{f_1}\\
&&&\ddots \\
&&&&I_{f_g}\\
&&&&&\ddots\\
\eta_{f_1}&\cdots&\eta_{f_1}&\cdots&\eta_{f_g}&\cdots&I_{f_g}
\end{pmatrix}^{-1},
\end{equation}
where for each integer $m,$ $\eta_m=(0,\cdots,1)\in F^m,$ the row vector with the last entry being 1 and the rest being 0; and $I_m$ is the identity matrix of rank $m.$ 
\medskip

Then by Proposition \ref{reg} and unfolding $E(s,\Phi;s)$, we have, when $\Re(s)>1,$ that 
\begin{align*}
I_{\mathcal{C}}(s)=&\int_{Z_G(\mathbb{A}_F)P_0(F)\backslash G(\mathbb{A}_F)}\sum_{p\in P_0(F)}\varphi(x^{-1}p^{-1}\lambda_{\textbf{f},\textbf{e};g}^{-1}\gamma_{\mathcal{C}}\lambda_{\textbf{f},\textbf{e};g}px)f(x,s)dx\\
=&\int_{Z_G(\mathbb{A}_F)\backslash G(\mathbb{A}_F)}\varphi(x^{-1}\gamma_{\mathcal{C}}x)f(\lambda_{\textbf{f},\textbf{e};g}^{-1}x,s)dx,
\end{align*} 
supposing the above integrals converge absolutely. Combing this with \eqref{009} we then deduce (at least formally) that, when $\Re(s)>1,$  
\begin{equation}\label{0011}
\sum_{\mathcal{C}}I_{\mathcal{C}}(s)=\sum_{\substack{\textbf{f},\ \textbf{e}\in \mathbb{Z}_{\geq 1}^g\\ \langle\textbf{f},\textbf{e}\rangle=n}}\int_{Z_G(\mathbb{A}_F)\backslash G(\mathbb{A}_F)}\sum_{\mathcal{C}\in \Gamma_{\textbf{f},\textbf{e};g}}\varphi(x^{-1}\gamma_{\mathcal{C}}x)f(\lambda_{\textbf{f},\textbf{e};g}^{-1}x,s)dx.
\end{equation}
Moreover, \eqref{0011} would be rigorous if the right hand side converges absolutely, which is indeed the case. To verify, we will consider each type $(\textbf{f},\textbf{e};g)$ separately in the following subsections. 

\subsubsection{Type $(n;1)$}\label{n,1}
We treat the conjugacy classes of type $(\textbf{f},\textbf{e};g)=((n),(1);1)$ first, these are exactly elliptic regular conjugacy classes. Denote by 
\begin{align*}
I_{r.e.}(s)=I_{r.e.}^{\varphi}(s)=\sum_{\mathcal{C}\ \text{regular elliptic}}I_{\mathcal{C}}(s).
\end{align*}
	
	\begin{lemma}\label{A.1}
		If $\gamma\in G(F)$ is regular elliptic, then $G(F)=P(F)F[\gamma]^{\times},$ where $P$ is the parabolic subgroup of $\GL(n)$ of type $(n-1,1).$
	\end{lemma}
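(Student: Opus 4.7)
The plan is to observe that Lemma \ref{A.1} is essentially a direct consequence of Lemma \ref{A} applied with $k=n$ and $H=G$, combined with the remark immediately following that lemma. Since $P = P_0 Z_G$ and the center $Z_G(F)$ is already contained in $F[\gamma]^\times$ (constants are polynomials in $\gamma$), it suffices to prove the slightly stronger equality $G(F) = P_0(F)\, F[\gamma]^\times$, where $P_0$ is the mirabolic subgroup stabilizing the row vector $\eta=(0,\ldots,0,1)\in F^n$.

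First I would fix an arbitrary $g \in G(F)$ and consider its last row $\mathfrak{a} := \eta g \in F^n$. Since $g$ is invertible, $\mathfrak{a} \neq 0$. Next, I would invoke Lemma \ref{A} with $k=n$: because $\gamma$ is regular elliptic, the $F$-linear map $\tau\colon F[\gamma]\to F^n$ given by $x\mapsto \eta x$ is a bijection, so there exists a unique $h \in F[\gamma]$ with $\eta h = \mathfrak{a}$. Since $\gamma$ is regular, its minimal polynomial coincides with the characteristic polynomial, and since $\gamma$ is elliptic, this polynomial is irreducible over $F$; hence $F[\gamma] \cong F[\lambda]/(f(\lambda))$ is a field, and every nonzero element is invertible. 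As $\mathfrak{a}\neq 0$ forces $h \neq 0$, we have $h \in F[\gamma]^\times$.

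Finally, from $\eta(g h^{-1}) = \eta g \cdot h^{-1}$: more carefully, $\eta h = \eta g$ implies $\eta(g h^{-1}) = \eta$, so $g h^{-1}$ stabilizes $\eta$ and therefore lies in $P_0(F)$. Writing $g = (gh^{-1})\cdot h$ gives $g \in P_0(F)\, F[\gamma]^\times \subset P(F)\, F[\gamma]^\times$, which is the desired decomposition.

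There is no real obstacle here: the content of the lemma is packaged inside Lemma \ref{A} and its surrounding remark, and the only nontrivial input being used is the bijectivity of $\tau$, which rests on the ellipticity of $\gamma$ (to ensure $F[\gamma]$ is a field so that $\tau$ is injective) and the dimension count $\dim_F F[\gamma] = n$ (which uses regularity). The proof is thus a two-line application of Lemma \ref{A}.
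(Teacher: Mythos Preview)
Your proof is correct and follows essentially the same approach as the paper's own proof: set up the linear map $F[\gamma]\to F^n$, $x\mapsto \eta x$, use regularity for the dimension count and ellipticity to make $F[\gamma]$ a field (hence the map is a bijection), and conclude that any $g$ differs from some $h\in F[\gamma]^\times$ by an element fixing $\eta$. Your observation that the true stabilizer is the mirabolic $P_0$ (and that $P_0\subset P$ then gives the stated result) is in fact slightly more precise than the paper's wording.
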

	\begin{proof}
		Since $\gamma$ is regular elliptic, $G_{\gamma}(F)=F[\gamma]^{\times},$ and $\dim F[\gamma]=n.$ 
		
		Let $y=(0,0,\cdots,0,1)\in F^n.$ Consider the linear map:
		$$
		\iota:\ F[\gamma]\rightarrow F^n,\qquad x\mapsto\iota(x)=yx.
		$$
		Since $\gamma$ is regular elliptic, $F[\gamma]$ is a field, so any nonzero element is invertible. Consequently, the map $\iota$ is injective, and hence surjective. So, given $g\in G(F),$ there exists $h\in G(F)$ such that $yg=yh$ which implies that $gh^{-1}\in P(F),$ the isotropy subgroup of $y,$ i.e., $g\in P(F)F[\gamma]^{\times}.$
	\end{proof}
	
	\begin{prop}\label{re}
Let notation be as before. Then for every field extension $E/F$ of degree $n,$ there is an analytic function $Q_E(s)$ such that 
		\begin{equation}\label{main geo} 
		I_{r.e.}(s)=\frac1n\sum_{[E:F]=n}Q_E(s)\Lambda_E\left(s,\tau\circ N_{E/F}\right),
		\end{equation}
		where the summation is taken over only finitely many $E$'s, depending implicitly only on the test function $\varphi.$
	\end{prop}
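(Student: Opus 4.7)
The plan is to unfold the regular elliptic contribution into a single integral over $G(\BA_F)$ and then recognize it as an adelic Tate zeta integral attached to the degree $n$ field $E = F(\gamma_{\mathcal{C}})$. First, by Lemma \ref{A.1}, for regular elliptic $\gamma_{\mathcal{C}}$ one has $G(F) = P(F) F[\gamma_{\mathcal{C}}]^{\times}$ with $P(F) \cap F[\gamma_{\mathcal{C}}]^{\times} = Z_G(F)$, so $\mathcal{C}$ coincides with its $P(F)$-conjugacy class $\mathcal{C}_0$ and $\K_{\mathcal{C}}(x,x) = \sum_{p \in P_0(F)} \varphi(x^{-1} p^{-1} \gamma_{\mathcal{C}} p x)$. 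Unfolding this $P_0(F)$-sum against the integration, and then unfolding the Eisenstein series $E_P(x,\Phi,\tau;s)$ via the bijection $P(F) \backslash G(F) \cong Z_G(F) \backslash F[\gamma_{\mathcal{C}}]^{\times}$ (using that $F[\gamma_{\mathcal{C}}]^{\times}$ commutes with $\gamma_{\mathcal{C}}$), yields the $\lambda_{\textbf{f},\textbf{e};g} = I_n$ specialization of \eqref{0011}:
\begin{align*}
I_{\mathcal{C}}(s) = \int_{Z_G(\BA_F) \backslash G(\BA_F)} \varphi(x^{-1} \gamma_{\mathcal{C}} x) f(x, \Phi, \tau; s) \, dx.
\end{align*}

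Next, I would substitute the Tate integrand expression for $f(x,\Phi,\tau;s)$ and fuse the two integrals by the change of variables $y = tx$ with $t \in Z_G(\BA_F) \cong \BA_F^{\times}$. The weights $\tau(t)^n |t|^{ns}$ from the inner and outer factors cancel exactly, collapsing everything to
\begin{align*}
I_{\mathcal{C}}(s) = \int_{G(\BA_F)} \varphi(y^{-1} \gamma_{\mathcal{C}} y) \Phi(\eta y) \tau(\det y) |\det y|^s \, dy.
\end{align*}
The centralizer of $\gamma_{\mathcal{C}}$ in $G(\BA_F)$ is $\BA_E^{\times}$, and the adelic analogue of Lemma \ref{A} identifies $\BA_E \xrightarrow{\sim} \BA_F^n$ via $e \mapsto \eta e$. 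Writing $y = e y'$ with $e \in \BA_E^{\times}$ and $y' \in \BA_E^{\times} \backslash G(\BA_F)$, and noting $\det e = N_{E/F}(e)$ together with $|e|_E = |N_{E/F}(e)|_F$, the inner $e$-integral becomes the adelic Tate zeta integral for the character $\tau \circ N_{E/F}$ on $\BA_E^{\times}$; by Tate's thesis it equals $R(y',s) \Lambda_E(s, \tau \circ N_{E/F})$ with $R(y',s)$ entire in $s$. Pulling $\Lambda_E$ outside the $y'$-integral produces the analytic factor attached to $\mathcal{C}$.

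Finally, I reorganize $I_{r.e.}(s) = \sum_{\mathcal{C}} I_{\mathcal{C}}(s)$ by the isomorphism type of $E = F(\gamma_{\mathcal{C}})$. Regular elliptic conjugacy classes in $G(F)$ correspond bijectively to irreducible monic degree $n$ polynomials in $F[\lambda]$; for fixed $E$ of degree $n$ over $F$, primitive elements of $E$ fall into Galois orbits within $E$, and the $n$-to-$1$ combinatorics of the assignment ``primitive element $\mapsto$ characteristic polynomial'' is what produces the $\tfrac{1}{n}$ prefactor in \eqref{main geo}. Compact support of $\varphi$ modulo $Z_G(\BA_F)$ forces only finitely many characteristic polynomials, and hence only finitely many fields $E$, to contribute. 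I expect the main obstacle to be verifying rigorously that $Q_E(s)$ is entire and that the outer $y'$-integral converges absolutely and uniformly on compacta in $s$; this requires combining standard Tate zeta estimates on the $\BA_E^{\times}$-fiber with decay estimates for orbital integrals of $\varphi$ on $\BA_E^{\times} \backslash G(\BA_F)$.
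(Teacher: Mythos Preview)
Your proposal is correct and follows essentially the same route as the paper: unfold to an integral over $G(\BA_F)$, split off the centralizer $\BA_E^{\times}$ and recognize a Tate integral for $\Lambda_E(s,\tau\circ N_{E/F})$, then handle the $1/n$ via the Galois/non-Galois counting and the finiteness of $E$'s via compact support of $\varphi$. One small clarification on the obstacle you flag: the paper resolves the convergence and entirety of $Q_E(s)$ not by delicate decay estimates on the $y'$-integral, but simply by showing that for fixed $\varphi$ only finitely many $\gamma\in E^{\times}/F^{\times}-\{1\}$ contribute (the coefficients of the characteristic polynomial lie in a discrete-meets-compact set), so $Q_E(s)$ is a \emph{finite} sum of orbital integrals each of which is manifestly entire and compactly supported in $y'$.
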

	\begin{proof}
		Since $\Gamma_{r.e.}\left(G(F)/Z(F)\right)$ is invariant under $P(F)$-conjugation, we have
		\begin{align*}
		I_{r.e.}(s)=\int_{P(F)Z_G(\mathbb{A}_F)\setminus G(\mathbb{A}_F)}\sum_{\gamma\in \Gamma_{r.e.}\left(G(F)/Z(F)\right)}\varphi(x^{-1}\gamma x)\cdot f(x,s)dx.
		\end{align*} 
		
		Denote by $\{\Gamma^{r.e.}\}$ a set of representatives for the regular elliptic conjugacy classes in $\Gamma_{r.e.}\left(G(F)/Z(F)\right).$ For any $\gamma\in \{\Gamma^{r.e.}\},$ the centralizer of $\gamma$ in $G(F)/Z(F)$ is exactly $F[\gamma]^{\times}.$ Then we have
		\begin{equation}\label{5}
		\sum_{\gamma\in \Gamma_{r.e.}\left(G(F)/Z(F)\right)}\varphi(x^{-1}\gamma x)=\sum_{\gamma\in \{\Gamma^{r.e.}\}}\sum_{\delta\in F[\gamma]^{\times}Z_G(F)\backslash G(F)}\varphi(x^{-1}\delta^{-1}\gamma\delta x).
	    \end{equation}
	    By Lemma \ref{A.1} one has $G(F)=P(F)F[\gamma]^{\times}.$ Since $P(F)\cap F[\gamma]^{\times}=\{I_n\},$ every element $\delta\in Z_G(F)\backslash G(F)$ can be written unique as $\delta=p\nu,$ where $p\in Z_G(F)\backslash P(F)$ and $\nu\in F[\gamma]^{\times}.$ Hence the inner sum of \eqref{5} could be taken over $p\in Z_G(F)\backslash P(F).$ Therefore, substituting these into the expression of $I_{r.e.}(s)$ one will obtain
		\begin{equation}\label{6}
		I_{r.e.}(s)=\int_{Y_G}\sum_{\gamma\in \{\Gamma^{r.e.}\}}\sum_{p\in Z_G(F)\backslash P(F)}\varphi(x^{-1}p^{-1}\gamma p x)f(x,s)dx,
		\end{equation} 
		where $Y_G=P(F)Z_G(\mathbb{A}_F)\backslash G(\mathbb{A}_F).$ For the given $F,$ let $E/F$ be a field extension of degree $n.$ Fix an algebraic closure $\bar{F}$ of $F$, then $E$ embeds into $\bar{F},$ we look at the contribution from all the regular elliptic conjugacy classes together. We say that a conjugacy class belongs to an extension $E$ of $F$ if it consists of the conjugates of some element $\gamma\in E^{\times}/F^{\times}-\{1\}$ with the usual identification. We have to distinguish between two cases:
		\begin{description}
			\item[(a)] $E/F$ is Galois.
			\item[(b)] $E/F$ is not Galois.
		\end{description}
		The idea is to replace the summation over $\gamma\in \{\Gamma^{r.e.}\}$ by summation over extensions $E/F$ of degree $n;$ and inside, summation over elements of $E.$
		
		\begin{description}
			\item[Case (a)] When $\gamma$ varies over $E^{\times}/F^{\times}$ we get each conjugacy class belonging to $E$ exactly $n$ times.
			\item[Case (b)] When $\gamma$ varies over $E^{\times}/F^{\times}$ we get each conjugacy class belonging to $E$ once; but the sets of conjugacy classes belonging to the $n$ embeddings of $E$ in $\bar{F}$ are identical.
		\end{description}
		
		So in either case, we can rewrite the integral in \eqref{6} as 
		\begin{equation}\label{7'}
		I_{r.e.}(s)=\frac1n\int_{Z_G(\mathbb{A}_F)\setminus G(\mathbb{A}_F)}\sum_{[E:F]=n}\sum_{\gamma\in E^{\times}/F^{\times}-\{1\}}\varphi(x^{-1}\gamma x)f(x,s)dx
		\end{equation}
		where the right hand summation is over all extensions $E/F$ of degree $n.$ Note that $\eqref{7'}$ is the same as \eqref{0011}. We claim that the sum over $E/F$ such that $[E:F]=n$ is actually finite, depending only on $\varphi.$ To see this, consider a function $\theta:$ $Z_G(\mathbb{A}_F)\setminus G(\mathbb{A}_F)\rightarrow \mathbb{A}_F^{n-1}$ defined by
		$$
		\theta(x)=\left(\frac{\sigma_1(x)^n}{\sigma_n(x)},\frac{\sigma_2(x)^n}{\sigma_n(x)^2},\cdots,\frac{\sigma_{n-1}(x)^n}{\sigma_n(x)^{n-1}}\right),
		$$
		where $\sigma_i(x)$ is the $i$-th symmetric polynomial in the eigenvalues of $x.$ Since $\sigma_n(x)=\det x\neq0,$ $\forall$ $x\in G(\mathbb{A}_F),$ $\theta$ is continuous. It is obvious that $\theta$ is conjugation invariant. Since $\supp \varphi$ is compact, so is $\theta(\supp \varphi).$ So when $x$ varies over a compact set $C \subset G(\mathbb{A}_F),$ the set
		$\big\{\theta(\gamma):\ \gamma\in G(F)/Z_G(F),\ \varphi(x^{-1}\gamma x)\neq0,\ \forall\ x\in C\big\}$ is the intersection of a compact set with a discrete set, hence is finite.
		\begin{itemize}
			\item If $\theta(\gamma)\neq 0,$ the number of distinct fields $F[\gamma]$ with a given value of $\theta(\gamma)$ is at most $n,$ hence is finite.
			\item If $\theta(\gamma)=0,$ the map $\gamma\mapsto\det \gamma$ from
			$$
			G(\mathbb{A}_F)/Z_G(\mathbb{A}_F)\rightarrow \mathbb{A}_F^{\times}/\mathbb{A}_F^{\times^n}U_F
			$$
			(with $\mathbb{A}_F^{\times^n}=\{a^n:\ a\in\mathbb{A}_F^{\times}\}$ and $U_F$ is the maximal compact subgroup of $\mathbb{A}_F^{\times}$) is continuous; so the image of $\supp \varphi$ is also compact. Since $\mathbb{A}_F^{\times^n}$ has finite index in $\mathbb{A}_F^{\times^n}U_F\cap F,$ there are only finitely many values for the image $\det \gamma\mod\mathbb{A}_F^{\times^n}$ for $\gamma\in G(F)/Z_G(F),$ $\varphi(x^{-1}\gamma x)\neq0,$ $x\in C.$ When $\theta(\gamma)=0,$ $F[\gamma]$ is determined (up to embedding) by $\det \gamma,$ so we are done.
		\end{itemize}
	    Moreover, since the coefficients of the characteristic polynomial of every $\gamma\in E^{\times}/F^{\times}$ are rational, and lie in a compact set depending on $\supp\varphi$ (and a discrete subset of a compact set is finite), the sum over $\gamma\in E^{\times}/F^{\times}-\{1\}$ is a finite sum. Thus we can interchange integrals in \eqref{7'} to get
	    \begin{equation}\label{8'}
	    I_{r.e.}(s)=\frac1n\sum_{[E:F]=n}\sum_{\gamma\in E^{\times}/F^{\times}-\{1\}}\int_{G(\mathbb{A}_F)}\varphi(x^{-1}\gamma x)\Phi(\eta x)\tau(\det x)|\det x|^sdx,
	    \end{equation}
		where $\eta=(0,\cdots,0,1)\in \mathbb{A}_F^n.$ Let $I_E(s)$ be the inner integral in \eqref{8'}, then
		\begin{align*}
		I_E(s)=\int_{G_{\gamma}(\mathbb{A}_F)\backslash G(\mathbb{A}_F)}\varphi(x^{-1}\gamma x)\int_{G_{\gamma}(\mathbb{A}_F)}\Phi[(0,\cdots,0,1)tx]\tau(\det tx)|\det tx|^sd^{\times}tdx,
		\end{align*}
		where $G_{\gamma}$ is the centralizer of $\gamma$ in $G.$ Hence, $G_{\gamma}(\mathbb{A}_F)\simeq \mathbb{A}_E^{\times}.$ If we identify $G_{\gamma}(\mathbb{A}_F)$ with $\mathbb{A}_E^{\times},$ $\det\mid_{E^{\times}}:$ $E^{\times}\rightarrow F^{\times}$ with the norm map $N_{E/F},$ $t\mapsto |\det t|_{\mathbb{A}_F}$ with the idele norm in $E,$ and $\mathcal{S}(\mathbb{A}_F^n)$ with $\mathcal{S}(\mathbb{A}_E),$ we see that the inner integral is just the Tate integral for $\Lambda_E\left(s,\tau\circ N_{E/F}\right).$ So there is some elementary function $Q(s)$ of $s$ such that $I_E(s)=Q(s)\Lambda_E\left(s,\tau\circ N_{E/F}\right),$ where $\Lambda_E\left(s,\tau\circ N_{E/F}\right)$ is the complete $L$-function attached to $E.$
		
		Consequently, $I_E(s)$ itself converges normally for $\Re(s)>1$ and its behavior is given by $Q(s)L_E\left(s,\tau\circ N_{E/F}\right).$ This also given the meromorphic continuation of $I_E(s)$ to the entire $s$-plane. Since
		$$
		I_{r.e.}(s)=\sum_{\text{$\mathcal{C}$ of type $((n),(1))$}}I_{\mathcal{C}}(s)=\frac1n\sum_{[E:F]=n}\sum_{\gamma\in E^{\times}/F^{\times}-\{1\}}I_E(s),
		$$
		where the sums are finite, then $I_{r.e.}(s)$ is well defined when $\Re(s)>1,$ admits a meromorphic continuation to $s\in\mathbb{C},$ moreover, \eqref{main geo} holds. 
	\end{proof}

\subsubsection{Orbital Integrals of General Type}
In this subsection, we deal with orbital integrals of general type $(\textbf{f},\textbf{e};g).$ Note that one of the key ingredients to handle the elliptic regular case is that 
\begin{equation}\label{0013}
x\mapsto \sum_{\lambda\in E^{\times}/F^{\times}-\{1\}}\varphi(x^{-1}\lambda x) 
\end{equation}
has compact modulo $G_{\gamma}(\mathbb{A}_F).$ However, the function \eqref{0013} is not compactly supported for general type $(\textbf{f},\textbf{e};g),$ for example, \eqref{25} has no compact support for regular unipotent conjugacy classes. So we must proceed differently from the elliptic regular case in Subsection \ref{n,1}. Denote by 
\begin{align*}
I_{\textbf{f},\textbf{e};g}(s)=\int_{Z_G(\mathbb{A}_F)\backslash G(\mathbb{A}_F)}\sum_{\mathcal{C}\in \Gamma_{\textbf{f},\textbf{e};g}}\varphi(x^{-1}\gamma_{\mathcal{C}}x)f(\lambda_{\textbf{f},\textbf{e};g}^{-1}x,s)dx,
\end{align*}
where $g\geq 1,$ $\textbf{f}, \textbf{e}\in \mathbb{Z}_{\geq 1}^g,$ $\langle\textbf{f},\textbf{e}\rangle=n;$ and $\Re(s)>1.$ We may write $\textbf{f}=(f_1,\cdots,f_g)$ with $f_1\geq \cdots\geq f_g;$ and $\textbf{e}=(e_1,\cdots,e_g).$ Denote by $N_{k,l}$ the number of pairs $(f_i,e_i)$ such that $f_i=k$ and $e_i=l.$ Define 
\begin{align*}
C_{\textbf{f},\textbf{e}}=\prod_{k=1}^n\prod_{l=1}^nN_{k,l}!.
\end{align*}

Let $E$ be a finite extension of $F.$ Let $\chi$ be an idele class character of $\mathbb{A}_E^{\times}.$ Let $j$ be a positive integer. Denote by 
\begin{align*}
\Lambda_{E}[j](s,\chi):=\Lambda_{E}(js-j+1,\chi^j),
\end{align*}
where $\Lambda_E(s,\chi)$ is the complete Hecke $L$-function associated to $\chi.$

\begin{prop}\label{gel}
Let notation be as before. Then 
\begin{equation}\label{ge} 
I_{\textbf{f},\textbf{e};g}(s)=\frac{1}{C_{\textbf{f},\textbf{e}}}\prod_{i=1}^g\frac{1}{f_i^{e_i}}\sum_{[E_{i}:F]=f_i}Q_{E_{i}}(s)\prod_{j=1}^{e_i}\Lambda_{E_{i}}[j]\left(s,\tau\circ N_{E_i/F}\right),
\end{equation}
where for each $i,$ the innermost summation is taken over only finitely many fields $E_{i}$'s, depending implicitly only on the test function $\varphi;$ and each $Q_{E_{i}}(s)$ is an entire function.
\end{prop}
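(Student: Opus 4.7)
The plan is to imitate the strategy of Proposition \ref{re}, treating the general type $(\textbf{f},\textbf{e};g)$ by reorganizing the conjugacy-class sum into a sum over tuples of extensions and then factorizing the resulting Tate-type integral over the adelic centralizer. First, by Proposition \ref{reg} and Lemma \ref{B.}, I fix representatives $\gamma_{\mathcal{C}} \in \mathcal{C}$ for each $\mathcal{C} \in \Gamma_{\textbf{f},\textbf{e};g}$ and unfold the $P_0(F)$-orbit structure to rewrite $I_{\textbf{f},\textbf{e};g}(s)$, granting absolute convergence (to be verified at the end), as
\begin{align*}
\sum_{\mathcal{C}} \int_{G_{\gamma_{\mathcal{C}}}(\mathbb{A}_F)\backslash G(\mathbb{A}_F)} \varphi(x^{-1}\gamma_{\mathcal{C}} x) \int_{Z_G(\mathbb{A}_F)\backslash G_{\gamma_{\mathcal{C}}}(\mathbb{A}_F)} f\bigl(\lambda_{\textbf{f},\textbf{e};g}^{-1} t x, s\bigr)\, d^\times t\, dx.
\end{align*}
I then replace $\sum_{\mathcal{C}}$ by a sum over tuples $(E_1,\ldots,E_g)$ of field extensions with $[E_i:F]=f_i$, together with a choice of element in the appropriate quotient of $\prod_i (E_i^\times)^{e_i}$: such tuples classify regular classes of type $(\textbf{f},\textbf{e};g)$ modulo two natural overcountings. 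The Galois/embedding action on each $E_i$, magnified to the $e_i$-th power by the $e_i$ copies of $E_i$-scalars appearing in the centralizer, produces the factor $\prod_{i} f_i^{e_i}$; permutations among blocks with identical pair of invariants $(f_i,e_i)$ contribute the extra factor $C_{\textbf{f},\textbf{e}}$. Together these give the prefactor in \eqref{ge}.

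The analytic core is the factorization of the inner integral. The centralizer is the adelic unit group of $\bigoplus_{i=1}^g \mathbb{A}_F[\lambda]/\wp_i(\lambda)^{e_i}$, which splits as a direct product over $i$. For each $i$, the filtration
\begin{align*}
\mathbb{A}_F[\lambda]/\wp_i^{e_i} \;\supset\; \wp_i \mathbb{A}_F[\lambda]/\wp_i^{e_i} \;\supset\; \cdots \;\supset\; \wp_i^{e_i-1}\mathbb{A}_F[\lambda]/\wp_i^{e_i} \;\supset\; 0,
\end{align*}
whose successive quotients are all isomorphic to $\mathbb{A}_{E_i}$, together with the explicit block parametrization of $\mathcal{S}_\gamma$ from Lemma \ref{A'}, provides coordinates in which the Tate integral factors into iterated integrals over the layers. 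After rescaling by powers of $\wp_i$, the $j$-th layer contributes a standard Tate integral on $\mathbb{A}_{E_i}^{\times}$ with character $(\tau \circ N_{E_i/F})^j$ and $s$-variable shifted to $js - j + 1$; this shift is precisely the source of the $[j]$-notation and arises from the scaling by $\wp_i^{j-1}$ needed to reach layer $j$. The resulting block contribution is $\prod_{j=1}^{e_i} \Lambda_{E_i}[j](s,\tau\circ N_{E_i/F})$, packaged together with an entire function $Q_{E_i}(s)$ absorbing the archimedean and ramified auxiliary data from $\Phi$ and $\varphi$.

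The main obstacle I expect is the careful execution of the filtration-based factorization: parametrizing the adelic centralizer according to the stratification in Lemma \ref{A'}, identifying the Haar measure in these coordinates, and tracking the shift in the $s$-parameter through the scaling by $\wp_i^{j-1}$ (which for $e_i \geq 2$ has no analogue in Proposition \ref{re}). Once this is in place, absolute convergence for $\Re(s)>1$ and finiteness of the $E_i$-sum follow by essentially the same compactness argument as in Proposition \ref{re}: the map $\theta$ used there restricts the characteristic polynomial of $\gamma_{\mathcal{C}}$ to a discrete subset of a compact set, forcing each $E_i$ to lie in a finite explicit list depending only on $\supp \varphi$, and hence each $Q_{E_i}(s)$ inherits entirety from the standard Tate/Schwartz--Bruhat estimates applied layer by layer.
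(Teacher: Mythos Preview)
There is a genuine gap in the analytic core. You claim that the inner integral over $Z_G(\mathbb{A}_F)\backslash G_{\gamma}(\mathbb{A}_F)$ factors, via the filtration $\wp_i^{j}\mathbb{A}_F[\lambda]/(\wp_i^{e_i})$, into $e_i$ Tate integrals over $\mathbb{A}_{E_i}^\times$. But the successive quotients of that filtration are copies of the \emph{additive} group $\mathbb{A}_{E_i}$, not of $\mathbb{A}_{E_i}^\times$; and since $\det t$ depends only on the image of $t$ in the top quotient (the block-lower-triangular form has $e_i$ copies of $A_0$ on the diagonal), the character $\tau(\det t)|\det t|^s$ is constant along those additive layers. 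Concretely, for $g=1$, $f_1=1$, $e_1=2$ one checks that the centralizer integral produces only $\Lambda_F(2s-1,\tau^2)=\Lambda_F[2](s,\tau)$; the factor $\Lambda_F[1](s,\tau)=\Lambda_F(s,\tau)$ is nowhere to be found inside $G_\gamma$. It arises instead from the non-compact direction in the \emph{outer} integral over $G_\gamma(\mathbb{A}_F)\backslash G(\mathbb{A}_F)$ --- which is exactly the phenomenon the paper flags just before this proposition when it notes that \eqref{0013} fails to be compactly supported modulo $G_\gamma$ for non-elliptic types. Your convergence sketch (``same compactness argument as in Proposition~\ref{re}'') therefore also breaks down.

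The paper circumvents this by \emph{not} splitting along the centralizer. Instead it applies the Iwasawa decomposition $G(\mathbb{A}_F)=P_{\textbf{f},\textbf{e};g}(\mathbb{A}_F)K_{\textbf{f},\textbf{e}}$ and, inside each block $\GL(f_je_j)$, a further Iwasawa decomposition $N_jM_jK_j$ with $M_j=\GL(f_j)^{e_j}$. The Levi $M_j$ supplies $e_j$ genuine copies of $\mathbb{A}_{E_j}^\times$, each yielding a Tate integral; the shifts $js-j+1$ come from the modulus character of $N_j$. Compact support is then recovered not on $G_\gamma\backslash G$ but, after the $N_j$-integration, on $G_{\alpha_j}^{e_j}\backslash\GL(f_j)^{e_j}$. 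Your orbital-integral decomposition can in principle be repaired by carefully parametrizing the non-compact part of $G_\gamma\backslash G$ and extracting the missing $\Lambda_{E_i}[j]$ factors from there, but carrying that out amounts to redoing the paper's Iwasawa computation.
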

\begin{proof}
Let $\mathcal{C}$ be a regular conjugacy class of type $(\textbf{f},\textbf{e};g).$ By Lemma \ref{Jordan} and proof of Lemma \ref{A'}, we can write $\mathcal{C}=\{\gamma\},$ with a typical element $\gamma$ given by 
\begin{align*}
\gamma=\begin{pmatrix}
\alpha_{1}&&&\\
\vdots&\ddots& &\\
*&\cdots&\alpha_{1}\\
&&&\ddots \\
&&&&\alpha_{g}\\
&&&&\vdots&\ddots\\
&&&&*&\cdots&\alpha_{g}
\end{pmatrix},
\end{align*}
where the entries $\alpha_{i}\in E_{i}^{\times}/F^{\times}-\{1\}\hookrightarrow\GL(f_i,F),$ with some field $[E_{i}: F]=f_i,$ $1\leq i\leq g,$ $1\leq j\leq e_i.$ We may choose $\gamma$ such that each block is of the form \eqref{0012}, namely, its quasi-rational canonical form. Denote by $G_{\gamma}$ the stabilizer of $\gamma.$ Let $P_{\textbf{f},\textbf{e};g}$ be the standard parabolic subgroup of type $(f_1e_1,\cdots,f_ge_g).$ Then $G_{\gamma}$ is a subgroup of $P_{\textbf{f},\textbf{e};g}.$
\medskip 

Let $\mathcal{I}_1$ be the set of integers $1\leq i\leq g$ such that $e_i=1;$ let $\mathcal{I}_2$ be the set of integers $1\leq i\leq g$ such that $e_i>1.$ Then we can write $\gamma=\gamma_1\gamma_2,$ where $\gamma_1$ is formed by replacing $\alpha_{f_i}$ with $I_{f_i}$ for all $i\in \mathcal{I}_2;$ and $\gamma_2:=\gamma\gamma_1^{-1}.$ Also, we have
\begin{align*}
P_{\textbf{f},\textbf{e};g}(\mathbb{A}_F)=\prod_{i\in \mathcal{I}_1}\GL(f_i,\mathbb{A}_F)\times \prod_{j\in \mathcal{I}_2}N_j(\mathbb{A}_F)M_j(\mathbb{A}_F)K_j,
\end{align*}
where $\GL(f_je_j,\mathbb{A}_F)=N_j(\mathbb{A}_F)M_j(\mathbb{A}_F)K_j$ is the Iwasawa decomposition for the group  $\GL(f_je_j,\mathbb{A}_F),$ where $M_j$ is the product of $e_j$ copies of $\GL(f_j).$ Also, there exists a compact subgroup $K_{\textbf{f},\textbf{e}}$ of $G(\mathbb{A}_F)$ such that $G(\mathbb{A}_F)=P_{\textbf{f},\textbf{e}(\mathbb{A}_F)}K_{\textbf{f},\textbf{e}}.$ Hence, one can write at least formally,
\begin{align*}
&\int_{Z_G(\mathbb{A}_F)\backslash G(\mathbb{A}_F)}\sum_{i=1}^g\sum_{\alpha_{i}\in E_{i}^{\times}/F^{\times}-\{1\}}\varphi(x^{-1}\gamma x)f(\lambda_{\textbf{f},\textbf{e};g}^{-1}x,s)dx\\
=&\int_{K_{\textbf{f},\textbf{e};g}}\int_{Z_G(\mathbb{A}_F)\backslash P_{\textbf{f},\textbf{e};g}(\mathbb{A}_F)}\sum_{i=1}^g\sum_{\alpha_{i}\in E_{i}^{\times}/F^{\times}-\{1\}}\varphi(k^{-1}x^{-1}\gamma xk)f(\lambda_{\textbf{f},\textbf{e};g}^{-1}xk,s)dxdk.
\end{align*}
Denote by $J_1(s)$ the above distribution. Then we deduce that 
\begin{align*}
J_1(s)=&\prod_{i\in\mathcal{I}_1}\int_{G_{\alpha_{i}}(\mathbb{A}_F)\backslash \GL(f_i,\mathbb{A}_F)}\sum_{\alpha_{i}}\int_{G_{\alpha_{i}}(\mathbb{A}_F)}J_2(s;t_i,x_i: i\in\mathcal{I}_1)dt_idx_1^{i},
\end{align*}
where the function $J_2(s)=J_2(s;t_i,x_i: i\in\mathcal{I}_1)$ is defined by 
\begin{align*}
J_2(s)=&\int_{K}\prod_{j\in\mathcal{I}_2}\int_{X}\sum_{\substack{\alpha_{j,k}\\ 1\leq k\leq e_j}}\int_{Y}\varphi(k^{-1}y^{-1}x_2^{-1}x_1^{-1}\gamma x_1x_2yk)f(\lambda_{\textbf{f},\textbf{e};g}^{-1}x_1x_2yk,s)dydx_2dk,
\end{align*}
where $K=K_{\textbf{f},\textbf{e};g},$ $X=G_{\alpha_{j}}(\mathbb{A}_F)^{e_j}\backslash \GL(f_j,\mathbb{A}_F)^{e_j},$ and $Y=N_j(\mathbb{A}_F)G_{\alpha_{j}}(\mathbb{A}_F)^{e_j}K_j.$
\medskip 

Then by Tate's thesis, we deduce that 
\begin{align*}
\int_{Y}\varphi(k^{-1}y^{-1}x^{-1}\gamma xyk)f(\lambda_{\textbf{f},\textbf{e};g}^{-1}xyk,s)dy\cdot \prod_{i\in \mathcal{I}_1}\tau(t_i)^{-1}|\det t_i|^{-s} 
\end{align*}
is equal to the product of $Q_{E_j}(s;\varphi, \Phi,t_i,x_i: i\in\mathcal{I}_1)\widetilde{\Phi}(x;\eta_{f_i}t_i: i\in\mathcal{I}_1)$ and $\prod_{k=1}^{e_j}\Lambda_{E_{j}}[k]\left(s,\tau\circ N_{E_j/F}\right),$  where $Q_{E_j}(s;\varphi, \Phi,t_i,x_i: i\in\mathcal{I}_1)$ is an entire function and $\widetilde{\Phi}(x;\eta_{f_i}t_i: i\in\mathcal{I}_1)$ is Schwartz. Moreover, the function 
\begin{align*}
x_2\mapsto\sum_{\substack{\alpha_{j,k}\\ 1\leq k\leq e_j}}\int_{N_j(\mathbb{A}_F)K_j}\varphi(k^{-1}y^{-1}x_2^{-1}x_1^{-1}\gamma x_1x_2yk)f(\lambda_{\textbf{f},\textbf{e};g}^{-1}x_1x_2y_1y_2k,s)dy_2
\end{align*}
has compact support in $G_{\alpha_{j}}(\mathbb{A}_F)^{e_j}\backslash \GL(f_j,\mathbb{A}_F)^{e_j};$ and the function
\begin{align*}
x\mapsto\sum_{i\in \mathcal{I}_1}\sum_{\alpha_i}\sum_{\gamma_2}\int_{Y}\varphi(k^{-1}y^{-1}x_2^{-1}x_1^{-1}\gamma x_1x_2yk)f(\lambda_{\textbf{f},\textbf{e};g}^{-1}x_1x_2yk,s)dy
\end{align*}
is compactly supported as well. Hence $J_1(s)$ converges absolutely when $\Re(s)>1,$ and has a meromorphic continuation to the whole $s$-plane, with only possible poles at $s\in\{1, 1/2, \cdots, 1/n\}.$ Then Proposition \ref{gel} follows from the similar combinatorial analysis in the proof of Proposition \ref{re}.
\end{proof}

Combining \eqref{009} and Proposition \ref{gel} we then obtain

\begin{thmx}\label{reg ell}
	Let notation be as before. Let $\Re(s)>1.$ Then 
	\begin{align*}
	I_{\Geo,\Reg}(s)=\sum_{g=1}^n\sum_{\substack{\textbf{f},\ \textbf{e}\in \mathbb{Z}_{\geq 1}^g\\ \langle\textbf{f},\textbf{e}\rangle=n}}\frac{1}{C_{\textbf{f},\textbf{e}}}\prod_{i=1}^g\frac{1}{f_i^{e_i}}\sum_{[E_{i}:F]=f_i}Q_{E_{i}}(s)\prod_{j=1}^{e_i}\Lambda_{E_{i}}[j]\left(s,\tau\circ N_{E_i/F}\right),
	\end{align*}
	where for each $i,$ the innermost summation is taken over only finitely many fields $E_{i}$'s, depending implicitly only on the test function $\varphi;$ and each $Q_{E_{i}}(s)$ is an entire function. Moreover, the right hand side of $I_{\Geo,\Reg}(s)$ in the above formula gives a meromorphic continuation of $I_{\Geo,\Reg}(s)$ to the whole $s$-plane, with only possible poles at $s\in\{1, 1/2, \cdots, 1/n\}.$ 
\end{thmx}

	\section{Mirabolic Fourier Expansion of $\K_{\infty}(s)$}\label{sec3}
	Take a test function $\varphi$ as before, then by the definition of $E_P(x,\Phi;s)$ we have
	\begin{align*}
	I_{\infty}(s)=I_{\infty}^{\varphi}(s)=-\int_{G(F)Z_G(\mathbb{A}_F)\setminus G(\mathbb{A}_F)}\K_{\infty}(x,x)\sum_{\gamma\in P(F)\setminus G(F)}f(\gamma x,s) dx.
	\end{align*} 
	where $\K_{\infty}(x,y)=\K_{Eis}(x,y)+\K_{\Res}(x,y)$ is left $N(F)$-invariant. Then 
	\begin{equation}\label{sp1}
	I_{\infty}(s)=-\int_{Z_G(\mathbb{A}_F)P(F)\setminus G(\mathbb{A}_F)}\K_{\infty}(x,x)f(x,s) dx.
	\end{equation} 
	Now we proceed to compute \eqref{sp1} by considering the Fourier expansion of $\K_{\infty}(x,y).$

	\subsection{Mirabolic Fourier Expansions of Automorphic Forms}\label{4.1} Fourier expansions of automorphic forms of $\GL_n$ are well known (see \cite{PP75}). Following the idea of Piatetski-Shapiro in \cite{PP75}, we give a new form of Fourier expansions of weak automorphic forms in terms of generalized mirabolic subgroups, via which a further decomposition of $I_{\infty}(s)$ is obtained. Here we call a function $f\in \mathcal{C}\left(G(\mathbb{A}_F)\right)$ a weak automorphic form if it is slowly increasing on $G(\mathbb{A}_F),$ right $K$-finite and $P_0(F)$-invariant, where $P_0$ is the mirabolic subgroup of $G=GL_n.$
	
	Fix an integer $n\geq2.$ The maximal unipotent subgroup of $G(\mathbb{A}_F),$ denoted by $N(\mathbb{A}_F),$ is defined to be the set of all $n\times n$ upper triangular matrices in $G(\mathbb{A}_F)$ with ones on the diagonal and arbitrary entries above the diagonal. Let $\psi_{F/\mathbb{Q}}(\cdot)=e^{2\pi i\Tr_{F/\mathbb{Q}}(\cdot)}$ be the standard additive character, then for any $\alpha=(\alpha_1,\cdots,\alpha_{n-1})\in F^{n-1},$ define a character $\psi_{\alpha}:$ $N(\mathbb{A}_F)\rightarrow \mathbb{C}$ by
	\begin{align*}
	\psi_{\alpha}(u)=\prod_{i=1}^{n-1}\psi_{F/\mathbb{Q}}\left(\alpha_iu_{i,i+1}\right),\quad \forall\ u=(u_{i,j})_{n\times n}\in N(\mathbb{A}_F).
	\end{align*}  
	Write $\psi_k=\psi_{(0,\cdots,0,1,\cdots,1)}$ (where the first $n-k$ components are 0 and the remaining $k$ components are $1$) and $\theta=\psi_{(1,\cdots,1)},$ the standard generic character used to define Whittaker functions. 
	
	For $1\leq k\leq n-1,$ let $B_{n-k}$ be the standard Borel subgroup (i.e. the subgroup consisting of nonsingular upper triangular matrices) of $\GL_{n-k};$ let $N_{n-k}$ be the unipotent radical of $B_{n-k}.$ For any $i,$ $j\in\mathbb{N},$ let $M_{i\times j}$ be the additive group scheme of $i\times j$-matrices. Define the unipotent radicals 
	$$
	N_{(k,1,\cdots,1)}=\bigg\{\begin{pmatrix}
	I_{k} & B\\
	& D\\
	\end{pmatrix}:\ B\in M_{k\times(n-k)},\ D\in N_{n-k}\bigg\},\ \text{$1\leq k\leq n-1$ }.
	$$
	For $1\leq k\leq n-1,$ set the generalized mirabolic subgroups 
	\begin{align*}
	R_{k}&=\bigg\{\left(
	\begin{array}{cc}
	A&C\\
	0&B
	\end{array}
	\right):\ A\in GL_k,\ C\in M_{k\times(n-k)},\ B\in N_{n-k}
	\bigg\}.
	\end{align*}
	For $2\leq k\leq n-1,$ define subgroups of $R_k$ by
	\begin{align*}
	R_{k}^0&=\Bigg\{\left(
	\begin{array}{ccc}
	A&B'&C\\
	0&a&D\\
	0&0&B
	\end{array}
	\right):\ \begin{pmatrix}
	A&B'\\
	&a
	\end{pmatrix}\in GL_{k},\  \begin{pmatrix}
	C\\
	D
	\end{pmatrix}\in M_{k\times(n-k)},\ B\in N_{n-k}
	\Bigg\}.
	\end{align*} 
	Also we define $R_0=R_1^0=N_{(0,1,\cdots,1)}:=N_{(1,1,\cdots,1)}$ to be the unipotent radical of the standard Borel subgroup of $\GL_n.$
	\begin{prop}[Mirabolic Fourier Expansion]\label{Fourier}
		Let $h$ be a continuous function on $P_0(F)\setminus G(\mathbb{A}_F).$ Then we have 
		\begin{equation}\label{fourier}
		h(x)=\sum_{k=1}^n\sum_{\delta_k\in R_{k-1}\backslash R_{n-1}}\int_{N_{(k-1,1,\cdots,1)}(F)\backslash N_{(k-1,1,\cdots,1)}(\mathbb{A}_F)}h(u\delta_kx)\psi_{n-k}(u)du
		\end{equation}
		if the right hand side converges absolutely and locally uniformly.
	\end{prop}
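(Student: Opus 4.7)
The plan is to prove the expansion by induction on $n$. The case $n=1$ is trivial. For the inductive step, Fourier-expand $h$ along the abelian unipotent subgroup $N_P=N_{(n-1,1)}\subset P_0$. For $\alpha\in F^{n-1}$, let $\chi_\alpha(u)=\psi_{F/\mathbb{Q}}(\sum_{i=1}^{n-1}\alpha_i u_{i,n})$ denote the corresponding character of $N_P$, and set $h_\alpha(x)=\int_{[N_P]}h(ux)\chi_\alpha(u)\,du$. Fourier inversion gives $h(x)=\sum_{\alpha\in F^{n-1}}h_\alpha(x)$, and the $\alpha=0$ contribution coincides with the $k=n$ piece of the stated expansion (since $N_{(n-1,1)}=N_P$ and $\psi_0$ is trivial).

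For $\alpha\neq 0$, the subgroup $\GL_{n-1}(F)\subset P_0(F)$ (embedded as $\diag(g,1)$) acts transitively on $F^{n-1}\setminus\{0\}$ via $\alpha\mapsto \alpha g^{-1}$ (coming from conjugation on characters), with stabilizer of $\eta=(0,\dots,0,1)$ equal to the mirabolic $P_0^{(n-1)}\subset\GL_{n-1}$. Using $P_0(F)$-invariance of $h$, the substitution $b\mapsto gb$, and $|\det g|_{\mathbb{A}_F}=1$ (product formula), one obtains
\[
\sum_{\alpha\neq 0}h_\alpha(x)
=\sum_{g\in P_0^{(n-1)}(F)\backslash\GL_{n-1}(F)}H(g,x),
\qquad H(y,x):=h_\eta(\diag(y,1)\,x).
\]
A direct substitution $u\mapsto\diag(p,1)u\diag(p,1)^{-1}$ in the integral defining $h_\eta$, together with the fact that the last row of $p\in P_0^{(n-1)}$ is $(0,\dots,0,1)$ (so $u_{n-1,n}$ is preserved), shows $H(py,x)=H(y,x)$. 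Thus $y\mapsto H(y,x)$ descends to $P_0^{(n-1)}(F)\backslash\GL_{n-1}(\mathbb{A}_F)$, and the inductive hypothesis applies.

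Substituting the inductive expansion of $H$ back and matching terms for $1\leq k\leq n-1$ requires three identifications: (i) since $R_{n-1}=\GL_{n-1}\ltimes F^{n-1}$ and $R_{k-1}=R_{k-1}^{(n-1)}\ltimes F^{n-1}$ as subgroups of $R_{n-1}$, one has $R_{k-1}\backslash R_{n-1}\cong R_{k-1}^{(n-1)}\backslash\GL_{n-1}$, which factors as $(R_{k-1}^{(n-1)}\backslash P_0^{(n-1)})\times(P_0^{(n-1)}\backslash\GL_{n-1})$ since $R_{n-2}^{(n-1)}=P_0^{(n-1)}$; (ii) the map $(u,u')\mapsto u\cdot\diag(u',1)$ gives a measure-preserving bijection $N_P\times N_{(k-1,1,\dots,1)}^{(n-1)}\to N_{(k-1,1,\dots,1)}$ inside $\GL_n$, by explicit block-matrix computation; (iii) under this identification, the character $\psi_{n-k}$ on $N_{(k-1,1,\dots,1)}$ restricts to $\chi_\eta$ on $N_P$ (picking up $u_{n-1,n}$) and to $\psi_{n-1-k}^{(n-1)}$ on the embedded $\GL_{n-1}$-piece (picking up $u_{i,i+1}$ for $k\leq i\leq n-2$). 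Gluing these reproduces \eqref{fourier}.

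The main obstacle is the careful bookkeeping in (ii) and (iii) via explicit block decompositions, and the consistency of Fourier sign conventions so that the outer character glues cleanly with the inductive one; the combinatorial heart of the argument is showing that the coset sum over $P_0^{(n-1)}\backslash\GL_{n-1}$ produced by Piatetski-Shapiro's orbit argument meshes with the inductive coset sum over $R_{k-1}^{(n-1)}\backslash R_{n-2}^{(n-1)}$ to yield exactly $R_{k-1}\backslash R_{n-1}$. The absolute convergence hypothesis legitimizes all Fubini-type interchanges of sums and integrals in the argument.
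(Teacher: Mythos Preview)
Your proof is correct and follows the same core idea as the paper---Piatetski-Shapiro's method of Fourier-expanding along the abelian unipotent $N_P$, then using the transitive $\GL_{n-1}(F)$-action to collapse the nonzero characters to a single orbit. The paper carries this out as an explicit iteration inside $\GL_n$ (peeling off one column at a time via successive subgroups $N_1,N_2,\dots$ and quotients $M_k^\infty\backslash M_k^0$), whereas you package the same procedure as a formal induction on $n$, invoking the $\GL_{n-1}$ statement for $H(y,x)=h_\eta(\diag(y,1)x)$; your coset identification $R_{k-1}\backslash R_{n-1}\cong R_{k-1}^{(n-1)}\backslash\GL_{n-1}$ and the unipotent/character factorizations in (ii)--(iii) are exactly what is needed to glue the inductive step, and they are correct. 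One minor remark: the paper first uses the full $(n-2,1)$-parabolic as stabilizer of the line through $\eta$ and then absorbs the residual $F^\times$ via the diagonal element $\mathfrak{a}_{n-1,n}$, while you go directly to the mirabolic as the point-stabilizer---your route is slightly more economical here.
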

	\begin{proof}
		For $1\leq k\leq n,$ we define
		\begin{align*}
		M_k^0&=\bigg\{\left(
		\begin{array}{cc}
		A&0\\
		0&I_{n-k}
		\end{array}
		\right):\ A\in GL(k,F) 
		\bigg\},\\
		M_k^{\infty}&=\bigg\{\left(
		\begin{array}{ccc}
		A&b&0\\
		0&c&0\\
		0&0&I_{n-k}
		\end{array}
		\right):\ A\in GL(k-1,F),\ b\in F^{k-1},\ c\in F^{\times}
		\bigg\},
		\end{align*}
		where $I_{n-k}$ is the unit matrix of dimension $n-k.$ For the sake of simplicity, write
		$$
		J_kh(x)=\sum_{\delta_k\in R_{k-1}\backslash R_{n-1}}\int_{N_{(k-1,1,\cdots,1)}(F)\backslash N_{(k-1,1,\cdots,1)}(\mathbb{A}_F)}h(n\delta_kx)\psi_{n-k}(n)dn,
		$$
		where $\psi_0\equiv1.$ Let $N_1\subset N$ be the subgroup consisting of elements of the form
		$$
		\mathfrak{n}^{(1)}(u_n)=\begin{pmatrix}
		1 &  & & u_{1,n}\\
		& 1&& \vdots\\
		&&\ddots& u_{n-1,n}\\
		&  && 1
		\end{pmatrix},\ \text{where $u_n=(u_{1,n},\cdots,u_{n-1,n})\in \mathbb{A}_F^{n-1}.$}
		$$
		Since $N_1$ is abelian, $h$ has the Fourier expansion with respect to $N_1:$
		$$
		h(x)=\sum_{\alpha^{(1)}=(\alpha_{1,n},\cdots,\alpha_{n-1,n})\in F^{n-1}}\int_{N_1(F)\setminus N_1(\mathbb{A}_F)}h(\mathfrak{n}^{(1)}(u_n)x)\prod_{i=1}^{n-1}\psi_{F/\mathbb{Q}}\left(\alpha_{i,n}u_{i,n}\right)du_n.
		$$
		Denote the inner integral by $W_{\alpha^{(1)}}^1h(x).$ Since $h$ is $P_{0}(F)$-invariant, then
		\begin{align*}
		W_{(0,0,\cdots,\alpha_{n-1,n})}^1h(\gamma x)&=\int_{N_1(F)\setminus N_1(\mathbb{A}_F)}h(\mathfrak{n}^{(1)}(u_n)\gamma x)\psi_{F/\mathbb{Q}}\left(\alpha_{n-1,n}u_{n-1,n}\right)du_n\\
		&=\int_{N_1(F)\setminus N_1(\mathbb{A}_F)}h\left(\gamma^{-1}\mathfrak{n}^{(1)}(u_n)\gamma x\right)\psi_{F/\mathbb{Q}}\left(\alpha_{n-1,n}u_{n-1,n}\right)du_n,
		\end{align*}
		for any $\gamma=\diag(A,1),$ where $A\in GL(n-1,F).$ An easy computation shows that $\gamma^{-1}\mathfrak{n}^{(1)}(u_n)\gamma=\mathfrak{n}^{(1)}(u_n'),$ where $u_n'=A^{-1}u_n.$ Write $A=(a_{i,j})_{(n-1)\times (n-1)},$ then $u_{n-1,n}=a_{n-1,1}u_{1,n}'+\cdots+a_{n-1,n-1}u_{n-1,n}'.$ This implies that for any such $\gamma,$
		$$
		W_{(0,0,\cdots,\alpha_{n-1,n})}^1h(\gamma x)=W_{(a_{n-1,1}\alpha_{n-1,n},a_{n-1,2}\alpha_{n-1,n},\cdots,a_{n-1,n-1}\alpha_{n-1,n})}^1h(x).
		$$ 
		Hence one has
		\begin{equation}\label{9.1}
		h(x)=\sum_{\substack{\gamma_{n-1}\in M^{\infty}_{n-1}\backslash M^0_{n-1}\\ \alpha_{n-1,n}\in F^{\times}}}W_{(0,0,\cdots,\alpha_{n-1,n})}^1h(\gamma_{n-1} x)+W_{(0,0,\cdots,0)}^1h(x).
		\end{equation}
		 For any $\alpha_{n-1,n}\in F^{\times},$ let $\mathfrak{a}_{n-1,n}=\diag(1,\cdots,1,\alpha_{n-1,n},1)\in R_{n-1}^0\backslash R_{n-1}.$ Since $h$ is left invariant by $\mathfrak{a}_{n-1,n},$ $M^{\infty}_{n-1}\backslash M^0_{n-1}=R^{0}_{n-1}\backslash R_{n-1}$ and
		\begin{align*}
		\begin{pmatrix}
		1 &  & & u_{1,n}\\
		& 1&& \vdots\\
		&&\ddots& \alpha_{n-1,n}u_{n-1,n}\\
		&  && 1
		\end{pmatrix}=\mathfrak{a}_{n-1,n}\begin{pmatrix}
		1 &  & & u_{1,n}\\
		& 1&& \vdots\\
		&&\ddots& u_{n-1,n}\\
		&  && 1
		\end{pmatrix}\mathfrak{a}_{n-1,n}^{-1},
		\end{align*}
		\begin{align*}
		W_{(0,0,\cdots,\alpha_{n-1,n})}^1h(\gamma_{n-1} x)=\int_{N_{(n-1,1)}(F)\backslash N_{(n-1,1)}(\mathbb{A}_F)}h(n\mathfrak{a}_{n-1,n}^{-1}\gamma_{n-1}x)\psi(u_{n-1,n})dn.
		\end{align*}
		Note that $W_{(0,0,\cdots,0)}^1h(x)=J_nh(x)$ and $R_{n-1}^0=R_{n-2},$ \eqref{9.1} then becomes
		\begin{equation}\label{9.11}
		h(x)=\sum_{\substack{\delta_{n-1}\in R_{n-2}\backslash R_{n-1}}}\int_{N_{(n-1,1)}(F)\backslash N_{(n-1,1)}(\mathbb{A}_F)}h(n\delta_{n-1}x)\psi_1(n)dn+J_nh(x).
		\end{equation}
		Let  $N_2\subset N$ be the subgroup consisting of elements of the form
		$$
		\mathfrak{n}^{(2)}(u_{n-1})=\begin{pmatrix}
		1 &  & & u_{1,n-1}\\
		& 1&& \vdots\\
		&&\ddots& u_{n-2,n-1}\\
		&  && 1\\
		&&&&1
		\end{pmatrix},\ u_{n-1}=(u_{1,n},\cdots,u_{n-2,n-1})\in \mathbb{A}_F^{n-2}.
		$$
		Since $N_2(F)\subset M^{\infty}_{n-1},$ $W_{(0,0,\cdots,\alpha_{n-1,n})}^1h(u_{n-1}x)=W_{(0,0,\cdots,\alpha_{n-1,n})}^1h(x),$ $\forall$ $u_{n-1}\in F^{n-1}.$ Then we have the Fourier expansion of $W_{(0,0,\cdots,\alpha_{n-1,n})}^1h(x)$ with respect to $N_2:$
		$$
		W_{(0,0,\cdots,\alpha_{n-1,n})}^1h(x)=\sum_{\alpha^{(2)}=(\alpha_{1,n-1},\cdots,\alpha_{n-2,n-1})\in F^{n-2}}W_{\alpha^{(2)}}^2h(x),
		$$
		where $W_{\alpha^{(2)}}^2h(x)=W_{(\alpha_{1,n-1},\alpha_{2,n-1},\cdots,\alpha_{n-2,n-1})}^2h(x)$ is defined to be
		$$
		\int_{N_2(F)\setminus N_2(\mathbb{A}_F)}W_{(0,0,\cdots,\alpha_{n-1,n})}^1h\left(\mathfrak{n}^{(2)}(u_{n-1}) x\right)\prod_{i=1}^{n-2}\psi_{F/\mathbb{Q}}\left(\alpha_{i,n-1}u_{i,n-1}\right)du_{n-1}.
		$$
		Likewise, we obtain
		$$
		W_{(0,0,\cdots,\alpha_{n-1,n})}^1h(x)=\sum_{\substack{\gamma_{n-2}\in M^{\infty}_{n-2}\backslash M^0_{n-2}\\ \alpha_{n-2,n-2}\in F^{\times}}}W_{(0,0,\cdots,\alpha_{n-2,n-1})}^2h(\gamma_{n-2} x)+W_{(0,\cdots,0)}^2h(x),
		$$
		where, by a direct computation, one has
		\begin{align*}
		&W_{(0,\cdots,0)}^2h(x)=\int_{N_{(n-2,1,1)}(F)\backslash N_{(n-2,1,1)}(\mathbb{A}_F)}h(nx)\psi(u_{n-2,n-1})dn,\\
		&W_{(0,0,\cdots,\alpha_{n-2,n-1})}^2h(\gamma_{n-2} x)=\int_{[N_{(n-2,1,1)}]}h(n\mathfrak{a}_{n-2,n-1}^{-1}\gamma_{n-2}x)\psi(u_{n-2,n-1})dn.
		\end{align*}
		Moreover, noting that $R_{n-2}^0=R_{n-3},$ then substituting the above computation into \eqref{9.1} implies that 
		\begin{align*}
		h(x)=\sum_{\substack{\delta_{n-2}\in R_{n-3}\backslash R_{n-1}}}\int_{[N_{(n-2,1,1)}]}h(n\delta_{n-2}x)\psi_{2}(n)dn+J_{n-1}h(x)+J_nh(x).
		\end{align*}
	Then clearly the expansion \eqref{fourier} follows from repeating this process $n-2$ more times.
	\end{proof}

	\subsection{Decomposition of $I_{\infty}(s)$}\label{sec3.2}
	Applying Proposition \ref{Fourier} to the kernel function $\K(x,y)$ viewed as a function of $x,$ we thus obtain a formal decomposition of the distribution $I_{\infty}(s)$ when $\Re(s)>1.$ In fact, by the spectral decomposition of the kernel function $\K_{\infty}(x,y)$ (see \cite{Art79}), one has
\begin{align*}
\K_{\infty}(x,y)=\sum_{\chi\in \mathfrak{S}_{\Eis}(G)}\sum_Pn(A)^{-1}\left(\frac{1}{2\pi i}\right)^{\dim (A/Z_G)}\int_{i\mathfrak{a}^G}\sum_{\phi\in \mathfrak{B}_{P,\chi}}\mathfrak{E}_{P,\chi}(x,y;\phi,\lambda)d\lambda,
\end{align*}
where $\mathfrak{E}_{P,\chi}(x,y;\phi,\lambda)=E(x,\mathcal{I}_P(\lambda_{\xi})\phi,\lambda)\overline{E(y,\phi,\lambda)},$ and the integrals on the right hand side converges absolutely. Since for any $m\in M_P(F),$ we have
\begin{align*}
E(my,\phi,\lambda)=\sum_{\delta\in P(F)\setminus G(F)}\phi(\delta my)e^{(\rho+\lambda)H_P(\delta my)}=E(y,\phi,\lambda).
\end{align*}
Hence $\K_{\infty}(x,y)$ is $M_P(F)$-invariant with respect to both variables. Then we can apply Proposition \ref{Fourier} with respect to the first variable of $\K_{\infty}(x,y)$ to get, at least formally, that 
	\begin{equation}\label{8}
	I_{\infty}(s)=\sum_{k=1}^n\int_{X_k}\int_{[N_k^*]}\int_{[N_k']}\K_{\infty}(n^*n_1x,x)\theta(n_1)dn_1dn^*f(x,s)dx,
	\end{equation}
	where $X_k=Z_G(\mathbb{A}_F)R_{k-1}(F)\backslash G(\mathbb{A}_F),$ and $N_k'=N_{(k,1,\cdots,1)}$ and 
	$$
	N_k^*=\bigg\{\begin{pmatrix}
	I_{k-1} & C&\\
	& 1&\\
	&&I_{n-k}
	\end{pmatrix}:\ C\in \mathbb{G}_a^{k-1}\bigg\}.
	$$
	Moreover, when both sides of \eqref{8} converge absolutely, the identity is rigorous. 
	\medskip
	
	However, there are usually convergence problem with the decomposition \eqref{8}. In fact, for $1\leq k\leq n,$ if we write $I_{\infty}^{(k)}(s)$ for the above (formal) integral, namely,
	\begin{align*}
	I_{\infty}^{(k)}(s)=\int_{Z_G(\mathbb{A}_F)R_{k-1}(F)\backslash G(\mathbb{A}_F)}\int_{[N_k^*]}\int_{[N_k']}\K_{\infty}(n^*n_1x,x)\theta(n_1)dn_1dn^*f(x,s)dx.
	\end{align*}
Then in fact $I_{\infty}^{(k)}(s)$ might diverge when $2\leq k\leq n,$ if $\varphi$ does not support in elliptic regular sets. Nevertheless, we can show $I_{\infty}^{(1)}(s)$ actually converges absolutely when $\Re(s)>1,$ and thus it defines a holomorphic function therein.
\medskip

To start with, the first observation is that one can replace $\K_{\infty}$ by $\K$ in the definition of $I_{\infty}^{(k)}(s),$ $2\leq k\leq n.$ Denote by $V_k'=\diag(I_k, N_{n-k})$ and
\begin{align*}
V_k=\Bigg\{\begin{pmatrix}
I_{k-1} & B&C\\
& 1&D\\
&&I_{n-k}
\end{pmatrix}:\ B\in \mathbb{G}_a^{k-1},\ \begin{pmatrix}
C\\
D
\end{pmatrix}\in {M_{k\times (n-k)}}\Bigg\}.
\end{align*}
Then for any function $\phi$ on $G(\mathbb{A}_F)$ one has, for any $x\in G(\mathbb{A}_F),$ that
\begin{align*}
\int_{[N_k^*]}\int_{[N_k']}\phi(n^*nx)\theta(n)dndn^*=\int_{[V_k']}\int_{[V_k]}\phi(uu'x)du\theta(u')du'.
\end{align*}	
Note that $V_k$ is the unipotent radical of the standard parabolic subgroup of type $(k-1,1,n-k),$ so one has
\begin{equation}\label{cusp}
\int_{[N_k^*]}\int_{[N_k']}\phi(n^*nx)\theta(n)dndn^*=0,\ \forall\ \phi\in \mathcal{A}_0\left(G(F)\backslash G(\mathbb{A}_F)\right).
\end{equation}
Then by \eqref{cusp} and the discrete spectral decomposition \eqref{ker_0} one has
\begin{align*}
\int_{[N_k^*]}\int_{[N_k']}\K_{0}(n^*n_1x,x)\theta(n_1)dn_1dn_2dn^*=0.
\end{align*}
Since $\K=\K_{\infty}+\K_0,$ then for $2\leq k\leq n,$ one sees that (at least formally)
\begin{align*}
I_{\infty}^{(k)}(s)=&\int_{Z_G(\mathbb{A}_F)R_{k-1}(F)\backslash G(\mathbb{A}_F)}\int_{[N_k^*]}\int_{[N_k']}\K(n^*n_1x,x)\theta(n_1)dn_1dn^*\cdot f(x,s)dx\\
=&\int_{Z_G(\mathbb{A}_F)R_{k-1}(F)\backslash G(\mathbb{A}_F)}\int_{[V_k']}\int_{[V_k]}\K(uu'x,x)du\theta(u')du'\cdot f(x,s)dx.
\end{align*}

\medskip

Let $n\geq 2.$ Recall that $\mathfrak{G}$ is the union of $\left(Z_G(F)\backslash Q_k(F)\right)^{P_0(F)},$ $1\leq k<n.$ Let 
\begin{align*}
&\K_{\infty,\Sin}^{(n)}(x,y)=\int_{N_P(F)\backslash N_P(\mathbb{A}_F)}\sum_{\gamma\in \mathfrak{G}}\varphi(x^{-1}u^{-1}\gamma y)du,\\ 
&\K_{\infty,\Reg}^{(n)}(x,y)=\int_{N_P(F)\backslash N_P(\mathbb{A}_F)}\K(ux,y)du-\K_{\infty,\Sin}^{(n)}(x,y);\\
&\K_{\infty}^{(k)}(x,y)=\sum_{\delta_k\in R_{k-1}(F)\backslash P_{n-1}(F)}\int_{[V_k']}\int_{[V_k]}\K(uu'\delta_kx,y)du\theta(u')du',
\end{align*}
where $2\leq k\leq n.$ Define
\begin{equation}\label{infty}
\K_{\Sin}(x,y):=\K_{\Geo,\Sin}(x,y)-\K_{\infty,\Sin}^{(n)}(x,y)-\sum_{k=2}^{n-1}\K_{\infty}^{(k)}(x,y).
\end{equation}

Let $\Re(s)>1.$ Correspondingly, we define the distributions by
\begin{align*}
&I_{\Geo,\Reg}(s,\tau)=\int_{Z_G(\mathbb{A}_F)P_0(F)\backslash G(\mathbb{A}_F)}\K_{\Geo,\Reg}^{(n)}(x,x)\cdot f_{\tau}(x,s)dx;\\
&I_{\infty,\Reg}(s,\tau)=\int_{Z_G(\mathbb{A}_F)P_0(F)\backslash G(\mathbb{A}_F)}\K_{\infty,\Reg}^{(n)}(x,x)\cdot f_{\tau}(x,s)dx;\\
&I_{\Sin}(s,\tau)=\int_{Z_G(\mathbb{A}_F)P_0(F)\backslash G(\mathbb{A}_F)}\K_{\Sin}(x,x)\cdot f_{\tau}(x,s)dx;\\
&I_{\infty}^{(1)}(s,\tau)=\int_{Z_G(\mathbb{A}_F)P_0(F)\backslash G(\mathbb{A}_F)}\K_{\infty}^{(1)}(x,x)\cdot f_{\tau}(x,s)dx;\\
\end{align*}

In the following sections we will deal with $I_{\infty,\Reg}(s,\tau)$ and $I_{\infty}^{(1)}(s,\tau),$ and the analytic behavior of $I_{\Sin}(s,\tau)$ would follow from spectral expansion and functional equation. As we will see, $I_{\infty,\Reg}(s,\tau)$ will be handled by Langlands-Shahidi's method after applying some geometric auxiliary results (see Section \ref{6.1}); and $I_{\infty}^{(1)}(s)$ can be reduced to an infinite sum of Rankin-Selberg convolutions of irreducible generic non-cuspidal representations of $\GL(n,\mathbb{A}_F)$ (see Section \ref{6sec}). We also obtain a  meromorphic continuation of $I_{\infty}^{(1)}(s,\tau)$ in Section \ref{6.2.} and Section \ref{3.}.  Hence the expansion \eqref{m} is well defined on both sides for $\Re(s)>1,$ and can be regarded as an identity between their continuations when $s\in\mathbb{C}$ is arbitrary and $\tau$ is such that $\tau^k\neq 1,$ $\forall$ $1\leq k\leq n.$

	\section{Contributions from $I_{\infty,\Reg}(s,\tau)$}\label{4}
	Now we start with handling the last term $I_{\infty,\Reg}(s,\tau),$ since the approach here applies to part of the computation of $I_{\infty}^{(k)}(s),$ $2\leq k\leq n-1,$ as well. Recall that 
	\begin{align*}
	I_{\infty}^{(n)}(s)=\int_{Z_G(\mathbb{A}_F)R_{n-1}(F)\backslash G(\mathbb{A}_F)}\int_{N_P(F)\backslash N_P(\mathbb{A}_F)}\K_{\infty}(nx,x)dnf(x,s)dx.
	\end{align*}
	Since $\K_{0}(x,y)$ by definition has no constant term with respect to $x$ or $y;$ we can replace $\K_{\infty}$ by $\K=\K_{0}+\K_{\infty}$ to rewrite $I_{\infty}^{(n)}(s)$ as 
	\begin{align*}
I_{\infty,\Reg}(s,\tau)&=\int_{X_n}\int_{N_P(F)\backslash N_P(\mathbb{A}_F)}\K_{\infty,\Reg}(nx,x)dnf(x,s)dx\\
	&=\int_{X_{n}}\int_{[N_P]}\sum_{\gamma\in Z_G(F)\backslash G(F)-\mathfrak{S}}\varphi(x^{-1}n^{-1}\gamma x)dnf(x,s)dx,
	\end{align*}
where $X_n=Z_G(\mathbb{A}_F)R_{n-1}(F)\backslash G(\mathbb{A}_F).$ To simplify $I_{\infty,\Reg}(s,\tau),$ write $Z_G(F)\backslash G(F)=\sqcup \mathcal{C}$ as a disjoint union of $G(F)$-conjugacy classes, and further decompose each class $\mathcal{C}$ into a disjoint union of $P(F)$-conjugacy classes. Then we will find representatives of these $P(F)$-conjugacy classes explicitly. So eventually one can get rid of the factor $R_{n-1}(F)=Z_G(F)\backslash P(F)$ in the domain; moreover, one can now apply Iwasawa decomposition to the domain $Z_G(\mathbb{A}_F)\backslash G(\mathbb{A}_F)$ to compute this integral. 
	
\subsection{$P(F)$-conjugacy Classes}\label{sec4.1}

	For any $G(F)$-conjugacy class $\mathcal{C},$ denote by $\mathcal{C}_{r.e.}^{P(F)}$ the component $\mathcal{C}_0$ given in \eqref{9'} if $\mathcal{C}$ is regular, and take $\mathcal{C}_{r.e.}^{P(F)}$ to be an empty set if $\mathcal{C}$ is irregular.  Since $\mathcal{C}_{r.e.}^{P(F)}$ does not intersect any standard maximal parabolic subgroups and is nontrivial only when $\mathcal{C}$ is regular, for convenience, we call $\mathcal{C}_{r.e.}^{P(F)}$ the regular elliptic component of $\mathcal{C},$ despite of the fact that it might not be elliptic. 
	
	Let $\mathfrak{C}_{r.e.}^{P(F)}$ be the union of regular elliptic components of all $G(F)$-conjugacy classes in $G(F).$ Then $\mathfrak{C}_{r.e.}^{P(F)}$ is a disjoint union of $P(F)$-conjugacy classes in $G(F)$ by Proposition \ref{reg}. Moreover,  Proposition \ref{irre} and Proposition \ref{reg} give a decomposition of $G(F)$ as $P(F)$-conjugacy classes
    \begin{equation}\label{14.2}
    G(F)=\mathfrak{C}_{r.e.}^{P(F)}\coprod \bigcup _{k=1}^{n-1}Q_k(F)^{P(F)}.
    \end{equation}
    
    For any $2\leq k\leq n,$ let $P_k$ be the standard maximal parabolic subgroup of $\GL_k$ of type $(k-1,1).$ In the following, we identify $P_k$ with $\diag(P_k, I_{n-k})$ when view it as a subgroup of $G=GL_n.$ Write $W_k$ the Weyl group of $\GL_k$ with respect to the standard Borel subgroup and its Levi component. Let $\Delta_{k}$ be the set of simple roots. Let $S_k$ be the subgroup of symmetric groups $S_n$ generated by permutations among $\{1,2,\cdots,k\}\subset\{1,2,\cdots,n\}.$ For any $\alpha\in\Delta_k,$ via the isomorphisms and natural inclusion $W_k\xrightarrow{\sim} S_k\hookrightarrow S_n\xrightarrow{\sim} W_n,$ we identify it with its natural extension in $\Delta,$ the set of simple roots for $G(F)=GL_n(F).$ Henceforth, write $\Delta_k=\{\alpha_{i,i+1}:\ 1\leq i\leq k-1\},$ and for each simple root $\alpha_{i,i+1},$ write $w^k_{i,i+1}$ for the corresponding simple reflection and identify it with $w_{i,i+1}$ by the natural embedding. 
    
    Denote by $\mathfrak{C}_{r.e.}^{P_k(F)}$ the union of regular elliptic components of all $G(F)$-conjugacy classes in $\GL_{k}(F),$ $2\leq k\leq n.$ Let $\mathcal{R}_k$ be a set consisting of exactly all representatives of the $P_{k}(F)$-conjugacy classes $\mathfrak{C}_{r.e.}^{P_k(F)}.$ 
    
	To compute $I_{\infty}^{(n)}(s),$ an explicit choice of representatives of $\mathfrak{C}_{r.e.}^{P(F)}$ in Bruhat normal form needs to be taken. We will find at the end of this subsection that for each $2\leq k\leq n,$ there exists a particular choice of each $\mathcal{R}_k,$ such that $\mathcal{R}_{k}$ is determined by $\mathcal{R}_{k-1}.$ Thus a desired $\mathcal{R}_{n}$ could be obtained by induction. This will be illustrated in Proposition \ref{repre}, to prove which, we start with the following result to narrow the candidates of representatives for $\mathfrak{C}_{r.e.}^{P(F)}.$ 
	\begin{lemma}\label{122}
	Let notation be as before. Set $\mathcal{R}_P=\{w_{n-1}w_{n-2}\cdots w_{1}b:\ b\in B(F)\}.$ Denote by $\mathcal{R}_P^{P(F)}$ the union of $P(F)$-conjugacy classes of elements in $\mathcal{R}_P.$ Then one has
	\begin{equation}\label{rep}
	\mathfrak{C}_{r.e.}^{P(F)}=\mathcal{R}_P^{P(F)}.
	\end{equation} 
	\end{lemma}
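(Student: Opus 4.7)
The plan is to establish the two inclusions $\mathcal{R}_P^{P(F)}\subseteq \mathfrak{C}_{r.e.}^{P(F)}$ and $\mathfrak{C}_{r.e.}^{P(F)}\subseteq \mathcal{R}_P^{P(F)}$ separately, using Proposition \ref{reg} as the main structural input.

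For the forward inclusion, I would fix a representative $\gamma = w_{n-1}\cdots w_1\cdot b$ with $b\in B(F)$ and verify two properties of it. First, $\gamma$ is regular: computing $\gamma e_k$ directly from the Bruhat shape of $\gamma$ shows that for any eigenvalue $\lambda\in\overline{F}^\times$ of $\gamma$, the first $n-1$ rows of $\gamma v = \lambda v$ determine $v_1,\ldots,v_{n-1}$ recursively from $v_n$ (since $b_2,\ldots,b_n$ are all nonzero), so each eigenspace over $\overline{F}$ is one-dimensional. Second, no $\gamma$-invariant subspace of $F^n$ is contained in the hyperplane $F^{n-1} = \{v\in F^n : v_n = 0\}$. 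For this I introduce the linear functionals $L_k(v) := e_n^T \gamma^{k+1} v$ on $F^{n-1}$ for $0\leq k\leq n-2$, note that any $\gamma$-invariant $W\subseteq F^{n-1}$ forces $L_k|_W \equiv 0$, and prove inductively that $L_k$ vanishes on the coordinates $v_1,\ldots,v_k$ while having leading coefficient $b_1 b_2\cdots b_{k+1}$ in $v_{k+1}$. These properties exhibit $\{L_k\}_{k=0}^{n-2}$ as a basis of $(F^{n-1})^*$, so $W=\{0\}$. Since $P=Q_{n-1}$ stabilises $F^{n-1}$ and acts transitively on its $k$-dimensional subspaces, a $P(F)$-conjugate of $\gamma$ lying in some $Q_k(F)$ with $1\leq k\leq n-1$ would produce a nontrivial $\gamma$-invariant subspace of $F^{n-1}$; this is ruled out, so $\gamma\in\mathfrak{C}_{r.e.}^{P(F)}$ and the inclusion follows by $P(F)$-stability.

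For the reverse inclusion, Proposition \ref{reg} realises $\mathfrak{C}_{r.e.}^{P(F)}$ as a disjoint union of the regular-elliptic components $\mathcal{C}_0$, each being a single $P(F)$-orbit inside a regular $G(F)$-conjugacy class $\mathcal{C}$. Since two regular elements of $G(F)$ are $G(F)$-conjugate iff they have the same characteristic polynomial, it suffices to show the assignment $b\mapsto$ (characteristic polynomial of $wb$) from $B(F)$ to monic degree-$n$ polynomials with nonzero constant term is surjective; the first inclusion then automatically places the resulting $wb$ in the corresponding $\mathcal{C}_0$. The matrix $wb$ has Hessenberg shape with super-diagonal entries $b_2,\ldots,b_n$, lower-left corner $b_1$, and free entries in the strict upper triangle and last row. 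Laplace expansion along the first column (equivalently, an inductive argument in which one fixes $b_1=\cdots=b_{n-1}=1$, uses $b_n$ to match the constant term, and uses the off-diagonal entries $b_{ij}$ with $i<j$ to match the remaining coefficients one by one) yields the surjectivity.

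The step I expect to be most delicate is the inductive calculation of the functionals $L_k$: all the information comes from the explicit Hessenberg shape of $wb$, but some bookkeeping is needed to verify that the leading coefficient remains $\prod_{j=1}^{k+1}b_j$ at each stage and that the coefficients in the lower-index positions vanish. The surjectivity of the characteristic-polynomial map is comparatively routine.
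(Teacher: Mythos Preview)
Your overall strategy is sound and genuinely different from the paper's. The paper proves $\mathfrak{C}_{r.e.}^{P(F)}\subseteq\mathcal{R}_P^{P(F)}$ by an inductive Bruhat-cell reduction (conjugating an arbitrary element of $\mathfrak{C}_{r.e.}^{P(F)}$ step by step into $w_1\cdots w_{n-1}B(F)$, then applying the inversion $g\mapsto g^{-1}$), and proves $\mathcal{R}_P^{P(F)}\cap\bigcup_k Q_k(F)^{P(F)}=\emptyset$ by a Weyl-group computation (Lemma~\ref{14.3}, a cycle-type analysis showing $C(w)C(w_{n-1}\cdots w_1)C(w^{-1})$ never meets a cell of $W_{I_k}$). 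You replace both by direct linear algebra on the Hessenberg shape of $\gamma=wb$: showing no $\gamma$-invariant subspace sits inside $F^{n-1}$ handles one inclusion, and realizing every characteristic polynomial via a companion-type choice of $b$ (combined with Proposition~\ref{reg}, which guarantees each $\mathcal{C}_0$ is a single $P(F)$-orbit) handles the other. Your reverse inclusion is correct as written, and the approach is more elementary than the paper's.

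There is, however, a concrete error in your forward inclusion: the claimed triangular structure of $L_k(v)=e_n^T\gamma^{k+1}v$ is false. Computing $e_n^T\gamma^2$ directly gives $(e_n^T\gamma^2)_1=b_{1n}b_1$, so the coefficient of $v_1$ in $L_1$ is $b_{1n}b_1$, not zero; the last row of $\gamma$ feeds $(\gamma v)_n$ back into the next iterate. The fix is short. If $W\subseteq F^{n-1}$ is $\gamma$-invariant, then for $v\in W$ one has $(\gamma v)_n=0$, so $\gamma|_W$ agrees with the projection $\tilde\gamma\colon F^{n-1}\to F^{n-1}$ onto the first $n-1$ coordinates; but $\tilde\gamma$ is strictly upper triangular (superdiagonal $b_2,\dots,b_n$), hence nilpotent, while $\gamma|_W$ is invertible, forcing $W=0$. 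Equivalently, if $v\in W$ has largest nonzero coordinate index $i\le n-1$, then $(\gamma v)_{i-1}=b_iv_i\neq0$ and $(\gamma v)_j=0$ for $i\le j\le n-1$, so the index drops by exactly one; iterating to $i=1$ gives $\gamma v\in F\cdot e_n$, contradicting $\gamma v\in F^{n-1}$. Either argument replaces your $L_k$ induction, and the rest of your plan goes through. (The separate regularity check is in fact redundant, since \eqref{14.2} already makes $\mathfrak{C}_{r.e.}^{P(F)}$ the complement of $\bigcup_k Q_k(F)^{P(F)}$.)
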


	\begin{proof}
	By Bruhat decomposition, one has
	\begin{align*}
	 G(F)=P(F)\coprod P(F)\cdot w_{n-1}\cdot P(F).
	\end{align*}
	For any $g_1\in P(F)$ and $g_2\in P(F)\cdot w_{n-1}\cdot P(F),$ since different Bruhat cells do not intersect, the $P(F)$-conjugacy class of $g_1$ does not intersect with that of $g_2.$ Also note that $P(F)$-conjugacy classes of $P(F)$ lie in $P(F),$ so they are not regular elliptic. Hence we reject all representatives in $P(F),$ and see clearly that $P(F)$-conjugacy classes in $\mathfrak{C}_{r.e.}^{P(F)}$ are represented by elements in $w_{n-1}P(F).$
	
	For any $g=w_{n-1}\begin{pmatrix}
	A_{n-1} & b\\
	&d_n
	\end{pmatrix}\in w_{n-1}P(F)\cap \mathfrak{C}_{r.e.}^{P(F)},$ by Bruhat decomposition, either $A_{n-1} \in P_{n-1}(F)$ or $A_{n-1} \in P_{n-1}(F)w_{n-2}P_{n-1}(F),$ where $P_{n-1}$ is the standard maximal parabolic subgroup of $\GL_{n-1}(F)$ of type $(n-2,1).$ If $A_{n-1} \in P_{n-1}(F),$ then $g\in Q_{n-2}(F)\subset \bigcup _{1\leq k\leq n-1}Q_k(F)^{P(F)}.$ Thus $g\notin \mathfrak{C}_{r.e.}^{P(F)}.$ Therefore, $A_{n-1} \in P_{n-1}(F)w_{n-2}P_{n-1}(F).$ For any $1\leq k\leq n-1,$ write $R_{k}^*$ the standard parabolic subgroup of $G=GL_n$ of type $(k,1,\cdots,1).$ So we can write 
	\begin{align*}
	g^{(0)}=g=w_{n-1}\begin{pmatrix}
	I_{n-2}&c& b_1\\
	&1&b_2\\
	&&1
	\end{pmatrix}w_{n-2}\begin{pmatrix}
	A_{n-2}&c_{n-2} & \\
	&d_{n-1}&\\
	&&d_n
	\end{pmatrix}\in w_{n-1}R_{n-1}^*(F),
	\end{align*}
	which is conjugate by $w_{n-2}\begin{pmatrix}
	A_{n-2}&c_{n-2} & \\
	&d_{n-1}&\\
	&&d_n
	\end{pmatrix}\in P(F)$ to 
	\begin{align*}
	g^{(1)}&=w_{n-2}\begin{pmatrix}
	A_{n-2}&c_{n-2} & \\
	&d_{n-1}&\\
	&&d_n
	\end{pmatrix}w_{n-1}\begin{pmatrix}
	I_{n-2}&c& b_1\\
	&1&b_2\\
	&&1
	\end{pmatrix}\\
	&=w_{n-2}w_{n-1}\begin{pmatrix}
	A_{n-2}& & c_{n-2}\\
	&d_{n}&\\
	&&d_{n-1}
	\end{pmatrix}\begin{pmatrix}
	I_{n-2}&c& b_1\\
	&1&b_2\\
	&&1
	\end{pmatrix}\in w_{n-2}w_{n-1}R_{n-2}^*(F).
	\end{align*}
	Again, apply Bruhat decomposition to $\GL_{n-2}(F)\hookrightarrow GL_{n}(F)$ to see either $A_{n-2} \in P_{n-2}(F)$ or $A_{n-2} \in P_{n-2}(F)w_{n-3}P_{n-2}(F),$ where $P_{n-2}$ is the standard maximal parabolic subgroup of $\GL_{n-2}(F)$ of type $(n-3,1).$ If $A_{n-2} \in P_{n-2}(F),$ then $g^{(1)}\in Q_{n-3}(F)\subset \bigcup _{1\leq k\leq n-1}Q_k(F)^{P(F)}.$ Thus $g^{(1)}\notin \mathfrak{C}_{r.e.}^{P(F)}.$ Therefore, $A_{n-2} \in P_{n-2}(F)w_{n-3}P_{n-2}(F).$ So we can write 
	\begin{align*}
	g^{(1)}=w_{n-2}w_{n-1}\begin{pmatrix}
	I_{n-3}&c'&c_{n-2}^{(1)} &b_1' \\
	&1&c_{n-2}^{(2)}&b_2'\\
	&&1&b_3'\\
	&&&1
	\end{pmatrix}w_{n-3}\begin{pmatrix}
	A_{n-3}&c_{n-3}&& \\
	&d_{n-2}&&\\
	&&d_{n}&\\
	&&&d_{n-1}
	\end{pmatrix},
	\end{align*}
	which is conjugate by $w_{n-3}\begin{pmatrix}
	A_{n-3}&c_{n-3}&& \\
	&d_{n-2}&&\\
	&&d_{n}&\\
	&&&d_{n-1}
	\end{pmatrix}\in P(F)$ to 
	\begin{align*}
	g^{(2)}&=w_{n-3}\begin{pmatrix}
	A_{n-3}&c_{n-3}&& \\
	&d_{n-2}&&\\
	&&d_{n}&\\
	&&&d_{n-1}
	\end{pmatrix}w_{n-2}w_{n-1}\begin{pmatrix}
	I_{n-3}&c'&c_{n-2}^{(1)} &b_1' \\
	&1&c_{n-2}^{(2)}&b_2'\\
	&&1&b_3'\\
	&&&1
	\end{pmatrix}\\
	&=w_{n-3}w_{n-2}w_{n-1}\begin{pmatrix}
	A_{n-3}&&&c_{n-3} \\
	&d_{n}&&\\
	&&d_{n-1}&\\
	&&&d_{n-2}
	\end{pmatrix}\begin{pmatrix}
	I_{n-3}&c'&c_{n-2}^{(1)} &b_1' \\
	&1&c_{n-2}^{(2)}&b_2'\\
	&&1&b_3'\\
	&&&1
	\end{pmatrix}.
	\end{align*}
	Clearly, $g^{(2)}\in w_{n-3}w_{n-2}w_{n-1}R_{n-3}^*(F).$ Continue this process inductively to see that $g$ is $P(F)$-conjugate to some element $g^{(n-2)}\in w_1w_2\cdots w_{n-1}R_1^*(F).$ 
	
	Therefore, $\mathfrak{C}_{r.e.}^{P(F)}\subseteq  \{\gamma^{P(F)}:\ \gamma\in w_1w_2\cdots w_{n-1}R_1^*(F)\}.$ So we have 
	\begin{align*}
	\{g^{-1}:\ g\in\mathfrak{C}_{r.e.}^{P(F)}\}&\subseteq  \{\gamma^{P(F)}:\ \gamma\in R_1^*(F)w_{n-1}\cdots w_2w_1\}\\
	&=\{\gamma^{P(F)}:\ \gamma\in w_{n-1}w_{n-2}\cdots w_1B(F)\},
	\end{align*}
	since $R_1^*(F)=B(F)\subseteq P(F).$ Denote by $\iota:$ $G(F)\xrightarrow{\sim} G(F),$ $g\mapsto g^{-1},$ the inversion isomorphism. Then $\mathfrak{C}_{r.e.}^{P(F)}$ is stable under $\iota,$ since $\bigcup _{1\leq k\leq n-1}Q_k(F)^{P(F)}$ is stable under $\iota.$ Hence, 
	$$
	\mathfrak{C}_{r.e.}^{P(F)}=\{g^{-1}:\ g\in\mathfrak{C}_{r.e.}^{P(F)}\}\subseteq\{\gamma^{P(F)}:\ \gamma\in w_{n-1}w_{n-2}\cdots w_1B(F)\}=\mathcal{R}_P^{P(F)}.
	$$
	Now we show that $\mathcal{R}_P^{P(F)}\cap \bigcup _{1\leq k\leq n-1}Q_k(F)^{P(F)}=\emptyset,$ which implies by \eqref{14.2} that $\mathcal{R}_P^{P(F)}\subseteq \mathfrak{C}_{r.e.}^{P(F)}.$ Therefore, $\mathfrak{C}_{r.e.}^{P(F)}=\mathcal{R}_P^{P(F)}.$
	
	Assume that $\mathcal{R}_P^{P(F)}\cap Q_k(F)^{P(F)}\neq \emptyset$ for some $1\leq k\leq n-1.$ If $k=n-1,$ then $Q_k(F)^{P(F)}=P(F).$ Then the assumption forces that $w_{n-1}w_{n-1}\cdots w_1\in P(F),$ which is obviously a contradiction. Thus we may assume that $1\leq k\leq n-2.$ Then by Bruhat decomposition, one has
	$$
	P(F)=\coprod_{w\in W_{I_{n-1}}}N(F)wB(F),\ \text{and}\ Q_k(F)=\coprod_{w'\in W_{I_{k}}}N(F)w'B(F). 
	$$
	For $w\in W_n,$ denote by $C(w)=B(F)wB(F),$ the Bruhat cell with respect to $w.$ Then the assumption $\mathcal{R}_P^{P(F)}\cap Q_k(F)^{P(F)}\neq \emptyset$ leads to that
	\begin{equation}\label{12}
	C(w)C(w_{n-1}w_{n-2}\cdots w_1)C(w^{-1})\cap C(w')\neq \emptyset.
	\end{equation}
	However, Lemma \ref{14.3} below shows that for any $1\leq k\leq n-2,$ any $(w,w')\in W_{I_{n-1}}\times W_{I_{k}},$ the intersection in the left hand side of \eqref{12} is always empty, which gives a contradiction and thus ends the proof. 
    \end{proof}
    
	\begin{lemma}\label{14.3}
    Let notation be as before, $1\leq k\leq n-2,$ then one has
    \begin{align*}
    C(w)C(w_{n-1}w_{n-2}\cdots w_1)C(w^{-1})\cap C(w')= \emptyset,\ \forall\ w\in W_{I_{n-1}},\ w'\in W_{I_{k}}.
    \end{align*}		
	\end{lemma}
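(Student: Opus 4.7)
The plan is to translate the cell-intersection statement into one about stabilizers of coordinate subspaces, and then verify it by an elementary matrix computation.

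First, since $Q_k=\bigsqcup_{w'\in W_{I_k}}Bw'B$, the desired statement is equivalent to
$$C(w)\cdot C(w_{n-1}\cdots w_1)\cdot C(w^{-1})\cap Q_k(F)=\emptyset$$
for every $w\in W_{I_{n-1}}$ and $1\le k\le n-2$. Now $Q_k$ is the stabilizer in $G(F)$ of the coordinate subspace $V_k:=\langle e_1,\ldots,e_k\rangle$, the elements $b_1,b_4\in B(F)$ fix $V_k$, and $w\in W_{I_{n-1}}$ fixes $e_n$; hence $w^{-1}V_k=V_S$ is a coordinate subspace indexed by $S:=\{w^{-1}(j):1\le j\le k\}\subseteq\{1,\ldots,n-1\}$ with $|S|=k$. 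Writing $w^*:=w_{n-1}\cdots w_1$, a direct unwinding of the condition $b_1wb_2w^*b_3w^{-1}b_4\in\operatorname{Stab}(V_k)$ reduces the problem to the following claim: for every nonempty $S\subseteq\{1,\ldots,n-1\}$, one has $Bw^*B\cap\operatorname{Stab}(V_S)=\emptyset$.

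For the reduced claim I would use the explicit cyclic description $w^*e_1=e_n$ and $w^*e_j=e_{j-1}$ for $j\ge 2$. Suppose, for contradiction, that $g=bw^*b'\in\operatorname{Stab}(V_S)$ with $b,b'\in B(F)$. For each $j\in S$ one computes
$$ge_j=(b')_{1j}\,be_n+\sum_{l=2}^{j}(b')_{lj}\,be_{l-1}.$$
Since $b$ is upper triangular, the $n$-th coordinate of $be_m$ vanishes for $m<n$; thus the $n$-th coordinate of $ge_j$ equals $(b')_{1j}(b)_{nn}$. Because $ge_j\in V_S\subseteq V_{n-1}$ (as $n\notin S$) and $(b)_{nn}\ne 0$, this forces $(b')_{1j}=0$ for every $j\in S$.

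The argument then splits into two cases. If $1\in S$, then $(b')_{11}=0$, contradicting $b'\in B(F)$. If $1\notin S$, every $j\in S$ satisfies $j\ge 2$, and the display collapses to $ge_j\in bV_{j-1}=V_{j-1}$, so $gV_S\subseteq V_{\max S-1}$; but $gV_S=V_S$ contains $e_{\max S}\notin V_{\max S-1}$, again a contradiction. I do not anticipate any real obstacle: the only insight needed is that the $n$-th coordinate is the right invariant to track, reflecting the fact that $w^*$ is the unique factor among $b_1,w,b_2,w^*,b_3,w^{-1},b_4$ that can push weight into the last basis vector.
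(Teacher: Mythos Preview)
Your proof is correct and takes a genuinely different route from the paper.

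The paper's argument is purely combinatorial in the Weyl group: it expands the triple product $C(w)C(w_{n-1}\cdots w_1)C(w^{-1})$ into a finite union of Bruhat cells using the multiplication rules $C(s_\alpha)C(w)=C(s_\alpha w)$ or $C(w)\sqcup C(s_\alpha w)$, keeps track of the resulting Weyl elements through a somewhat delicate case analysis (introducing an auxiliary index $j_w$), and then observes that every Weyl element appearing is either an $n$-cycle or has cycle type $(i,n-i)$, neither of which can lie in $W_{I_k}\simeq S_k\times S_{n-k}$. Your argument bypasses all of this by passing to the geometric description $Q_k=\operatorname{Stab}(V_k)$, stripping off $b_1,b_4\in B$ and the outer factors $w,w^{-1}\in W_{I_{n-1}}$ to reduce to the single assertion $Bw^*B\cap\operatorname{Stab}(V_S)=\emptyset$ for $\emptyset\neq S\subseteq\{1,\dots,n-1\}$, and then verifying this by an explicit coordinate computation exploiting that $w^*e_1=e_n$ is the only place the last coordinate can enter.

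Your approach is shorter, more transparent, and avoids the bookkeeping of reduced expressions and cycle types; it is specific to $\GL_n$ (it uses the concrete stabilizer description of maximal parabolics and the explicit action of $w^*$ on basis vectors), whereas the paper's cell-product calculus is phrased in a way closer to general reductive groups. For the purpose at hand both are perfectly adequate, and your reduction to $Bw^*B\cap\operatorname{Stab}(V_S)=\emptyset$ is a clean formulation worth keeping.
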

	\begin{proof}
	Recall that for any $w\in W_n$ and $\alpha\in \Delta,$ we have (see \cite{Spr09}, Lemma 8.3.7)
	\begin{equation}\label{15}
	C(s_{\alpha})C(w)=
	\begin{cases}
	C(s_{\alpha}w)& \text{if $l(s_{\alpha}w)=l(w)+1$,}\\
	C(w)\sqcup C(s_{\alpha}w)& \text{if $l(s_{\alpha}w)=l(w)-1$},
	\end{cases}
	\end{equation}
	where $l:$ $W\rightarrow \mathbb{Z}$ is the length function. Also, a similar computation shows that 
	\begin{equation}\label{15.5}
	C(w)C(s_{\alpha})=
	\begin{cases}
	C(ws_{\alpha})& \text{if $l(ws_{\alpha})=l(w)+1$,}\\
	C(w)\sqcup C(ws_{\alpha})& \text{if $l(ws_{\alpha})=l(w)-1$}.
	\end{cases}
	\end{equation}
	Then by \eqref{15} and \eqref{15.5}, one obtains that 
	\begin{equation}\label{20}
	C(w)^{s_{\alpha}}=
	\begin{cases}
	C(s_{\alpha}ws_{\alpha}),\ \text{if $l(s_{\alpha}ws_{\alpha})=l(w)+2$;}\\
	C(s_{\alpha}w)\sqcup C(s_{\alpha}ws_{\alpha}),\ \text{if $l(s_{\alpha}w)<l(w),$ $l(s_{\alpha}ws_{\alpha})>l(s_{\alpha}w);$}\\
	C(ws_{\alpha})\sqcup C(s_{\alpha}ws_{\alpha}),\ \text{if $l(ws_{\alpha})<l(w),$ $l(s_{\alpha}ws_{\alpha})>l(ws_{\alpha});$}\\
	C(w)\sqcup C(s_{\alpha}w)\sqcup C(ws_{\alpha})\sqcup C(s_{\alpha}ws_{\alpha}),\ \text{otherwise,}
	\end{cases}
	\end{equation}
	where we use $C(w)^{s_{\alpha}}$ to denote by $C(s_{\alpha})C(w)C(s_{\alpha}).$
	
	Let $w'\in W_{I_{k}}$ and $w\in W_{I_{n-1}}.$ Let $l(w)$ be the length of $w.$ Then $w$ could be written as a products of $l(w)$ simple reflections $s_{i},$ $1\leq i\leq n-1,$ and each $s_i$ corresponds to the associated reflection of some simple roots in $W_{I_{n-1}}.$ 
	\begin{itemize}
	\item Assume that $l(w_{n-1}\cdots w_2w_1w^{-1})=l(w_{n-1}\cdots w_2w_1)+l(w^{-1}).$ Take $w=s_{l(w)}\cdots s_2s_1$ to be a reduced representation by simple reflections and apply \eqref{15} and \eqref{15.5} inductively one then sees that
	$$
	C(w)C(w_{n-1}w_{n-2}\cdots w_1)C(w^{-1})=C(ww_{n-1}w_{n-2}\cdots w_1w^{-1}).
	$$ 
	We will simply identify Weyl elements in $W=W_n$ with translations on the set $\{1,2,\cdots,n\}$ under the isomorphism $W_n\xrightarrow{\sim} S_n.$ Then the cycle type decomposition of $ww_{n-1}w_{n-2}\cdots w_1w^{-1}$ is the same as that of $w_{n-1}w_{n-2}\cdots w_1,$ which is an $n$-cycle. However, since elements in $W_{I_{k}}$ can never be $n$-cycles, $C(ww_{n-1}w_{n-2}\cdots w_1w^{-1})\cap C(w')= \emptyset,$ $\forall$ $w'\in W_{I_{k}}.$
	\item Assume that $l(w_{n-1}\cdots w_2w_1w^{-1})<l(w_{n-1}\cdots w_2w_1)+l(w^{-1}).$ Denote by $\mathcal{D}(w)$ the set of all possible reduced representations of $w'$ by simple reflections. Then by our assumption, one can take a reduced representation of $w=s'_{l(w)}\cdots s'_2s'_1$ such that $s'_1=w_1.$ Hence one can well define 
	$$
	j_{w}:=\max_{1\leq j\leq l(w)}\big\{s_{l(w)}\cdots s_2s_1\in \mathcal{D}(w):\ s_i=w_i,\ 1\leq i\leq j\big\}.
	$$
	Let $w=s_{l(w)}\cdots s_2s_1$ be a reduced representation such that $s_i=w_i,$ $1\leq i\leq j_w.$ Then $w^{-1}=s_1s_2\cdots s_{l(w)}.$ Also, by \eqref{15} or \eqref{15.5} we have
	$$
	\qquad \qquad C(w)=C(s_{l(w)})\cdots C(s_2)C(s_1),\ \text{and}\ C(w^{-1})=C(s_1)C(s_2)\cdots C(s_{l(w)}),
	$$
	so $C(w)C(\widetilde{w})C(w^{-1})=C(s_{l(w)})\cdots C(s_2)C(s_1)C(\widetilde{w})C(s_1)C(s_2)\cdots C(s_{l(w)}),$ where we denote by $\widetilde{w}=w_{n-1}w_{n-2}\cdots w_1$ for convenience. According to \eqref{20}, a brute force computation shows that
	$$
	C(w)C(\widetilde{w})C(w^{-1})=C(w^*)\coprod\coprod_{1\leq i\leq j_w}C(w^{(i)}),
	$$
	where $w^*=ww_{n-1}\cdots w_{j_{w}+2}w_{j_{w}+1}s_{j_{w+1}}\cdots s_{l(w)},$ and for $1\leq i\leq j_w,$
	$w^{(i)}=ww_{n-1}\cdots w_{i+1}w_{i}w_{i+1}\cdots w_{j_{w}}s_{j_{w+1}}\cdots s_{l(w)}.$ 
	
	Let $w_{(j_w)}=s_{l(w)}\cdots s_{j_{w+1}},$ $w^*_{(j_{w})}=w_{j_w}\cdots w_1w_{n-1}\cdots w_{j_{w}+2}w_{j_{w}+1}.$ Then $w^*_{(j_{w})}$ is an $n$-cycle, and thus $w^*=w_{(j_w)}w^*_{(j_{w})}w_{(j_w)}^{-1}$ is also an $n$-cycle. So $w^*\notin W_{I_{k}},$ implying that $C(w^*)\cap C(w')= \emptyset,$ $\forall$ $w'\in W_{I_{k}}.$
	
	For each $1\leq i\leq j_w,$ let $w_{(i)}^*=w_{i-1}\cdots w_1w_{n-1}w_{n-2}\cdots w_{i+1},$ $w_{(i)}=s_{l(w)}\cdots s_{j_{w+1}}w_{j_w}\cdots w_i.$ Then $w^{(i)}=w_{(i)}w_{(i)}^*w_{(i)}^{-1}.$ One can check that $w_{(i)}^*=(1,2,\cdots,i)(i+1,\cdots,n),$ i.e. the cycle type of $w_{(i)}^*$ is $(i,n-i).$ So $w^{(i)}$ also has cycle type $(i,n-i).$ Since elements in $W_{I_{k}}$ can never have type of the form $(i,n-i),$ $w^{(i)}\notin W_{I_{k}}.$ Therefore, $C(w^*)\cap C(w^{(i)})= \emptyset,$ $\forall$ $1\leq i\leq j_w,$ $w'\in W_{I_{k}}.$ This completes the proof.
    \end{itemize}
	\end{proof}

	Now we consider $P(F)$-conjugation among elements in $\mathcal{R}_P=\{w_{n-1}w_{n-2}\cdots w_{1}b:\ b\in B(F)\}$ to determine representatives of $\mathcal{R}_P^{P(F)}.$ Define a relation $\mathscr{R}$ on the set $\bigg\{w_{n-1}w_{n-2}\cdots w_1\mathfrak{t}\mathfrak{u}:\ \mathfrak{t}\in T\left(F^{\times}\right),\ \mathfrak{u}\in N(F)\bigg\}$ such that $w_{n-1}w_{n-2}\cdots w_1\mathfrak{t}\mathfrak{u}$ is related to $w_{n-1}w_{n-2}\cdots w_1\mathfrak{t}'\mathfrak{u}'$ if and only if $\mathfrak{u}=\mathfrak{u}'$ and there are elements $a_1,\cdots, a_n=a_0\in F^{\times}$ such that
	\begin{equation}\label{23}
	\begin{cases}
	a_0t_1a_1^{-1}=t_1'\\
	a_1t_2a_2^{-1}=t_2'\\
	\qquad\vdots\\
	a_{n-1}t_{n}a_{n}^{-1}=t_{n}'.
	\end{cases}
	\end{equation}
	One can check easily that $\mathcal{R}_P^{P(F)}$ forms an equivalence relation.
	
	\begin{prop}\label{repre}
	Let notation be as before. Set 
	\begin{align*}
	\widetilde{\mathcal{R}}_P=\bigg\{w_{n-1}w_{n-2}\cdots w_1\mathfrak{t}\mathfrak{u}:\ \mathfrak{t}\in T\left(F^{\times}\right),\ \mathfrak{u}\in N_P(F)\bigg\}/\mathscr{R}.
	\end{align*}
	Then $\widetilde{\mathcal{R}}_P$ forms a family of representatives of $\mathcal{R}_P^{P(F)}.$  
    \end{prop}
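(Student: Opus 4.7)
The plan is to verify the proposition by combining an existence step (every $P(F)$-conjugacy class in $\mathcal{R}_P^{P(F)}$ meets the distinguished subset where $\mathfrak{u}\in N_P(F)$) with an equivalence step (the residual ambiguity on such representatives is exactly captured by $\mathscr{R}$). By Lemma \ref{122}, it suffices to study $P(F)$-orbits on $\mathcal{R}_P=\{w\mathfrak{t}\mathfrak{u}:\mathfrak{t}\in T(F^{\times}),\ \mathfrak{u}\in N(F)\}$, where $w=w_{n-1}\cdots w_1$.

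For the existence step, given $\gamma=w\mathfrak{t}_0\mathfrak{u}_0\in\mathcal{R}_P$, I would use the fact that $N(F)=\diag(N_{n-1}(F),1)\cdot N_P(F)$ to factor $\mathfrak{u}_0=\diag(\mathfrak{n}_{n-1},1)\cdot\mathfrak{u}_P^{(0)}$ with $\mathfrak{n}_{n-1}\in N_{n-1}(F)$ and $\mathfrak{u}_P^{(0)}\in N_P(F)$, and then conjugate $\gamma$ by $p=\diag(\mathfrak{n}_{n-1},1)\in M_P(F)\subset P(F)$. A direct matrix computation (cleanly visible in the $n=3$ case, where the choice $p=\diag(\begin{pmatrix}1&c\\0&1\end{pmatrix},1)$ kills the $(1,2)$-entry of $\mathfrak{u}_0$) shows that $p\gamma p^{-1}$ remains in the Bruhat cell $B(F)\cdot w\cdot B(F)$ and that its unipotent part lies in $N_P(F)$. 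The general case proceeds inductively on the number of nonzero above-diagonal entries of $\mathfrak{n}_{n-1}$; the key combinatorial point is that left/right multiplication by $\diag(\mathfrak{n}_{n-1},1)$ interacts with $w\mathfrak{t}$ in a controlled way, moving the non-$N_P$ content of $\mathfrak{u}_0$ into the diagonal's transformation and into $N_P$.

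For the equivalence step, given $w\mathfrak{t}\mathfrak{u}$ and $w\mathfrak{t}'\mathfrak{u}'$ with $\mathfrak{u},\mathfrak{u}'\in N_P(F)$, the identity $p(w\mathfrak{t}\mathfrak{u})p^{-1}=w\mathfrak{t}'\mathfrak{u}'$ forces $p\in P(F)\cap wP(F)w^{-1}$. I would parameterize this intersection explicitly by unraveling the constraint $p_{w(n),w(j)}=0$ for $j<n$ obtained from $w^{-1}pw\in P(F)$; this pins down the block structure of $p$. Plugging such $p$ into the conjugation equation, using uniqueness of Bruhat normal form, one reads off the diagonal scalars $a_1,\dots,a_n=a_0\in F^{\times}$ and recovers the system \eqref{23}, together with the identity $\mathfrak{u}'=\mathfrak{u}$ (possibly after absorbing a compensating $M_P$-element into the conjugation, exactly as in the existence step). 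Conversely, if $(\mathfrak{t},\mathfrak{u})$ and $(\mathfrak{t}',\mathfrak{u})$ are $\mathscr{R}$-related via $a_1,\dots,a_n$, then $D=\diag(a_1,\dots,a_n)\in T(F^{\times})\subset P(F)$ implements the $\mathfrak{t}$-transformation (since $w^{-1}Dw=\diag(a_n,a_1,\dots,a_{n-1})$ produces exactly $t_j'=a_{j-1}t_j/a_j$ with $a_0=a_n$), and a final conjugation by a suitable diagonal element of $M_P(F)$ restores $\mathfrak{u}$.

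The main obstacle is the explicit bookkeeping in this last verification: the naive diagonal conjugation rescales the entries of $\mathfrak{u}\in N_P$ by factors $a_i/a_n$, and one must check that composing with the correct element of $M_P(F)$ cancels this perturbation while preserving the relation on $\mathfrak{t}$. This amounts to solving a system of scalar equations coming from the Bruhat decomposition of $P(F)\cap wP(F)w^{-1}$, which is where the condition $a_0=a_n$ naturally arises. Throughout, the inductive structure mentioned earlier (describing $\mathcal{R}_n$ in terms of $\mathcal{R}_{n-1}$ via the factorization $w_{n-1}\cdots w_1=w_{n-1}\cdot(w_{n-2}\cdots w_1)$) organizes the computation cleanly.
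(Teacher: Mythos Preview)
Your overall two-step strategy (existence, then equivalence) matches the paper's, but the existence step as written fails for $n\ge 4$. Since $w=w_{n-1}\cdots w_1$ is the cyclic shift $\sigma(j)=j-1\pmod n$, one computes $w^{-1}\diag(\mathfrak{n}_{n-1},1)\,w=\diag(1,\mathfrak{n}_{n-1})$; this has nonzero entries in positions $(i,j)$ with $2\le i<j\le n-1$, and these survive in the unipotent part of $p\gamma p^{-1}$ and lie \emph{outside} $N_P(F)$. Your $n=3$ calculation works only by the accident that $\diag(1,N_2(F))\subset N_P(F)$. The paper's existence argument (Claim \ref{19}) is not a single conjugation but the solution of the nonlinear system \eqref{29} (equivalently \eqref{25}) in the entries of the conjugating element $\mathfrak{u}\in\mathfrak{n}_2(F)$, carried out by a double induction; this is the technical core and it also yields \emph{uniqueness} of $\mathfrak{u}$, a point you do not address.

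Your equivalence step also has a gap: the condition $p\in P(F)\cap wP(F)w^{-1}$ is necessary but much too weak---it only forces the $(n-1)$-th and $n$-th rows of $p$ to be $(0,\dots,0,*,0)$ and $(0,\dots,0,*)$, leaving a full $GL_{n-2}(F)$ of freedom in the upper-left block. The paper's iterative argument (conjugating successively by $w_{n-1}$, then $w_{n-2}w_{n-1}$, etc., and using membership in the shrinking $Q_k(F)$) is precisely what pins $p$ down to the Borel with vanishing last column above the diagonal; only then does the conjugation equation reduce to \eqref{25} and give \eqref{23}. Finally, once the Levi part of \eqref{26} gives \eqref{23}, the conclusion $\mathfrak{u}=\mathfrak{u}'$ is deduced from the uniqueness in Claim \ref{19} (applied with $c_{i,j}^0=\delta_{ij}$), not from a separate compensation by an $M_P(F)$-element.
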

	\begin{proof}
	Let $w_{n-1}w_{n-2}\cdots w_1b$ and $w_{n-1}w_{n-2}\cdots w_1b'$ be two elements in $\mathcal{R}_P,$ and write $b=\mathfrak{t}_n\mathfrak{u},$ $b'=\mathfrak{t}_n'\mathfrak{u}',$ the corresponding Levi decomposition. 
	Assume that there exists some $p_n\in P_n(F)=P(F)$ such that 
	\begin{equation}\label{n-1}
	p_nw_{n-1}w_{n-2}\cdots w_1bp_n^{-1}=w_{n-1}w_{n-2}\cdots w_1b'.
	\end{equation}
	Then $w_{n-1}p_nw_{n-1}=w_{n-2}\cdots w_1b'p_nb^{-1}w_1\cdots w_{n-2}\in P(F)=Q_{n-1}(F).$ Since $p_n\in P(F),$ it is necessary of the following form
	\begin{align*}
	p_n=\begin{pmatrix}
	A_{n-2} & \mathfrak{c}_{n-1}&\mathfrak{c}_{n}\\
	& a_{n-1}&0\\
	&&a_{n}
	\end{pmatrix}\in \begin{pmatrix}
	GL_{n-2}(F) & \ast&\ast\\
	& F^{\times}&0\\
	&&F^{\times}
	\end{pmatrix}\subset Q_{n-2}(F).
	\end{align*}
	Hence, $w_{n-2}w_{n-1}p_nw_{n-1}w_{n-2}=w_{n-3}\cdots w_1b'p_nb^{-1}w_1\cdots w_{n-3}\in Q_{n-2}(F),$ i.e.,
	\begin{align*}
	w_{n-2}\begin{pmatrix}
	A_{n-2} &\mathfrak{c}_{n} &\mathfrak{c}_{n-1}\\
	& a_{n}&0\\
	&&a_{n-1}
	\end{pmatrix}w_{n-2}\in \begin{pmatrix}
	GL_{n-2}(F) & \ast&\ast\\
	& F^{\times}&0\\
	&&F^{\times}
	\end{pmatrix}\subset Q_{n-2}(F).
	\end{align*}
	Then $A_{n-2}$ must lie in a maximal parabolic subgroup of $\GL_{n-2}(F)$ of type $(n-3,1),$ and the last component of $\mathfrak{c}_{n}$ must be vanishing. Thus we can write 
	\begin{align*}
	p_n=\begin{pmatrix}
	A_{n-3} & \mathfrak{c}_{n-2}&\mathfrak{c}_{n-1}^{(n-3)}&\mathfrak{c}_{n}^{(n-3)}\\
	&a_{n-2}&c_{n-2,n-1}&0\\
	&& a_{n-1}&0\\
	&&&a_{n}
	\end{pmatrix}\in \begin{pmatrix}
	GL_{n-3}(F) & \ast&\ast&\ast\\
	& F^{\times}&\ast&0\\
	&& F^{\times}&0\\
	&&&F^{\times}
	\end{pmatrix},
	\end{align*}
	where for any column vector $\mathfrak{c}_i=(c_{1,i},c_{2,i},\cdots,c_{m,i})^{\mathrm{T}},$ any $1\leq k\leq m,$  write $\mathfrak{c}_i^{(k)}=(c_{1,i},c_{2,i},\cdots,c_{k,i})^{\mathrm{T}},$ namely, the first $k$-entries. Now a similar analysis on the identity 
	$$
	w_{n-3}w_{n-2}w_{n-1}p_nw_{n-1}w_{n-2}w_{n-3}=w_{n-4}\cdots w_1b'p_nb^{-1}w_1\cdots w_{n-4}\in Q_{n-3}(F)
	$$ 
	leads to $\mathfrak{c}_{n}=\mathfrak{c}_{n}^{(n-4)},$ namely, the last $4$ elements of $\mathfrak{c}_n$ are all zeros. Likewise, continue this process $(n-4)$-more times to get $\mathfrak{c}_{n}=\boldsymbol{0}.$ Now \eqref{n-1} becomes
	\begin{equation}\label{25}
	\begin{pmatrix}
	a_n & 0&0&\ldots&0\\
	&a_{1}&c_{1,2}&\cdots&c_{1,n-1}\\
	&&\ddots&&\vdots\\
	&&& a_{n-2}&c_{n-2,n-1}\\
	&&&&a_{n-1}
	\end{pmatrix}b\begin{pmatrix}
	a_1 & c_{1,2}&\cdots&c_{1,n-1}&0\\
	&a_{2}&\cdots&c_{2,n-1}&0\\
	&&\ddots&\vdots&\vdots\\
	&&& a_{n-1}&0\\
	&&&&a_{n}
	\end{pmatrix}^{-1}=b'.
	\end{equation}
	When expanded, \eqref{25} becomes \eqref{29}, which will be investigated below. Before seeking for a solution to \eqref{25}, we will simplify it by taking showing that one can actually only consider some special $b$ and $b'$. This is justified by Claim \ref{19} below.
	
	Write $\mathfrak{t}_{n}=\diag(t_1,\cdots,t_n),$ and set $t_{i,j}=t_it_j^{-1}$; for any $n-2\leq k\leq n-1,$ define 
	\begin{align*}
	\mathfrak{n}_{n-k}(F)=\Bigg\{\begin{pmatrix}
	I_{k} &u_{k} &\\
	& 1&\\
	&&I_{n-k-1}
	\end{pmatrix}:\ u_{k}\in {M_{k\times 1}}(F)\Bigg\}.
	\end{align*}
	Let $\mathfrak{u}_{n-k}=\mathfrak{u}\cap \mathfrak{n}_{n-k}(F),$ $n-2\leq k\leq n-1.$ Then 
	$\mathfrak{u}=\mathfrak{u}_{1}\mathfrak{u}_{2}.$
	\begin{claim}\label{19}
	For any $b\in B(F),$ there exists a unique $\mathfrak{u}\in \mathfrak{n}_2(F)\subset P(F),$ such that $\mathfrak{u}^{-1}w_{n-1}w_{n-2}\cdots w_1b\mathfrak{u}\in w_{n-1}w_{n-2}\cdots w_1T(F)\mathfrak{n}_1(F).$
	\end{claim}
	So we only need to consider the $P(F)$-conjugacy of among elements in $\mathcal{R}=\{w_{n-1}w_{n-2}\cdots w_1\mathfrak{t}\mathfrak{u}:\ \mathfrak{t}\in T\left(F^{\times}\right),\ \mathfrak{u}\in N_P(F)\}.$
	
	Let $w_{n-1}w_{n-2}\cdots w_1\mathfrak{t}_n\mathfrak{u}_1,$ $w_{n-1}w_{n-2}\cdots w_1\mathfrak{t}_n'\mathfrak{u}_1'\in \mathfrak{R}$ be $P(F)$-conjugate. Then there exists some $p_n=\diag(b,a_n)\in \diag(B(F),F^{\times})$ such that 
	\begin{equation}\label{26}
	p_nw_{n-1}w_{n-2}\cdots w_1\mathfrak{t}_n\mathfrak{u}_1p_n^{-1}=w_{n-1}w_{n-2}\cdots w_1\mathfrak{t}_n'\mathfrak{u}_1'.
	\end{equation}
	Write $b=\diag(a_1,a_2,\cdots,a_{n-1})\mathfrak{u},$ where we identity $\mathfrak{u}$ with $\diag(\mathfrak{u},1)\in \mathfrak{n}_2(F).$ Then comparing the Levi components of both sides in \eqref{26} leads exactly the system of relations \eqref{23}; while the unipotent radical gives the equation \eqref{29} with $c^0_{i,j}=1,$ $1\leq i<j\leq n-1.$ By the uniqueness of solution (shown in the proof of Claim \ref{19}), $\mathfrak{u}=I_{n}.$ Therefore, $\mathfrak{u}_1=\mathfrak{u}_1'\in \mathfrak{n}_1(F)=N_P(F).$ Then the proof follows.
    \end{proof}
    \begin{proof}[Proof of Claim \ref{19}]
    Let notation be as in the proof of Proposition \ref{repre}. Let $\mathfrak{u}=\{u_{i,j}\}_{1\leq i,j\leq n}\in \mathfrak{n}_2(F),$ and $\mathfrak{c}=\{c_{i,j}\}_{1\leq i,j\leq n}=w_1\cdots w_{n-1}\mathfrak{u}^{-1}w_{n-1}\cdots w_1\mathfrak{t}_n\mathfrak{u}\mathfrak{t}_n^{-1}.$ Denote by $\mathfrak{u}^{\times}=\{u_{i,j}'\}_{1\leq i,j\leq n}\in \mathfrak{n}_2(F).$ Then one has, for any $1\leq i<j\leq n,$ that
    \begin{equation}\label{27}
    u_{i,j}+u_{i,i+1}'u_{i+1,j}+u_{i,i+2}'u_{i+2,j}+\cdots+u_{i,j-1}'u_{j-1,j}+u_{i,j}'=0.
    \end{equation}
    Also, an elementary computation shows that for any $1\leq i<j\leq n,$ one has
    \begin{equation}\label{28}
    c_{i,j}=t_{i}t_j^{-1}u_{i,j}+t_{i+1}t_j^{-1}u_{i-1,i}'u_{i+1,j}+\cdots+t_{j-1}t_j^{-1}u_{i-1,j-2}'u_{j-1,j}+u_{i-1,j-1}'.
    \end{equation}
    Now fix $c^0_{i,j},$ $1<i<j<n,$ and $\mathfrak{t}_n,$ then we show by a double induction that there exists uniquely $u_{i,j},$ $1<i<j<n,$ such that $c_{i,j}=c^0_{i,j},$ $1<i<j<n,$ i.e., we want to solve the system of equations, for fixed $t_1,$ $\cdots,$ $t_n,$
    \begin{equation}\label{29}
    t_{i}t_j^{-1}u_{i,j}+t_{i+1}t_j^{-1}u_{i-1,i}'u_{i+1,j}+\cdots+t_{j-1}t_j^{-1}u_{i-1,j-2}'u_{j-1,j}+u_{i-1,j-1}'=c_{i,j}^0.
    \end{equation}
    
    When $n\leq 4$, one can check directly by hand that the solution to \eqref{29} exists and is unique. So from now on we assume that $n>4.$ Let $\mathcal{D}_i=\{D_{i,j}=u_{j+1,j+i+1}:\ 1\leq j\leq n-2-i\},$ $1\leq i\leq n-3.$ By \eqref{28}, $c_{1,j}=t_{1}t_j^{-1}u_{1,j},$ so to make $c^{1,j}=c^0_{1,j},$ one takes $u_{1,j}=t_{1}^{-1}t_jc^0_{1,j},$ $1\leq j\leq n-1.$ Also, by \eqref{27}, $u_{i,i+1}'=-u_{i,i+1},$ so \eqref{28} shows that $c_{i,i+1}=u_{i,i+1}+u_{i-1,i}'=u_{i,i+1}-u_{i-1,i},$ $2\leq i\leq n-2.$ Let $c_{i,i+1}=c_{i,i+1}^0,$ then $u_{i,i+1}=u_{i-1,i}+c_{i,i+1}^0,$ $2\leq i\leq n-2.$ Since $u_{12}=t_{1}^{-1}t_2c^0_{1,2},$ then a simple induction shows that elements in $\mathcal{D}_1$ are uniquely determined by the equation $c_{i,j}=c^0_{i,j},$ $1<i<j<n.$
    	
    Now, let $1<i_0\leq n-3,$ assume that $\mathcal{D}_i$ are uniquely solved out by \eqref{29} for any $1<i<i_0.$  By our assumption and \eqref{27}, $u_{1,i}'$ are now uniquely determined, $1\leq i\leq i_0-1.$ Then according to \eqref{28}, $D_{i_0,1}=u_{i_0+1,i_0+2}=c_{i_0+1,i_0+2}^0+u_{1,i_0+1}+u_{1,2}'u_{3,i_0+2}+u_{1,3}'u_{4,i_0+2}+\cdots+u_{1,i_0}'u_{i_0+1,i_0+2},$  where $u_{i,i_0+2}\in \mathcal{D}_{i_0+2-i},$ $3\leq i\leq i_0+1.$ So $D_{i_0,1}$ is uniquely determined. Assume that we have solved out all $D_{i_0,j},$ $j<j_0,$ in $\mathcal{D}_{i_0}.$ Then by \eqref{28}, $D_{i_0,j_0}=u_{j_0+1,j_0+i_0+1}$ completely depends on $u_{j_0,j}',$ $j_0+1\leq j\leq j_0+i_0,$ and $u_{j_0+k,j_0+i_0+1}\in \mathcal{D}_{i_0+1-k},$ $2\leq k\leq i_0.$ Again, by \eqref{27}, we can inductively compute each $u_{j_0,j}'$ in terms of $u_{i',j'},$ $j_0\leq i'<j'\leq j$ such that $(i',j')\neq (j_0,j).$ By our inductive assumption, all these $u_{i',j'}$'s and $u_{j_0+k,j_0+i_0+1}$ have been solved out uniquely. Then $D_{i_0,j_0}$ is thus obtained. By induction, elements in $\mathcal{D}_{i_0}$ are uniquely determined. Therefore, by induction on the index $i_0,$ one verifies that the solution to \eqref{29} does exist and in fact is unique.
    
    Denote by $\mathfrak{u}_0=\mathfrak{u}_{\mathfrak{c}^0,\mathfrak{t}_n}\in \mathfrak{n}_2(F)$ the solution to \eqref{29}. Then $\mathfrak{u}_0$ depends only on $\mathfrak{c}^0=\{c_{i,j}^0\}_{1\leq i,j\leq n}$ and $\mathfrak{t}_n,$ where we define $c_{i,n}^0=\delta_{i,n},$ $1\leq i\leq n,$ here $\delta$ is the Kronecker symbol. Let $b=\mathfrak{u}_2\mathfrak{u}_1\mathfrak{t}_n$ be an arbitrary element in $B(F).$ Take $\mathfrak{c}^0=\mathfrak{u}_2,$ $\mathfrak{u}_0\in \mathfrak{n}_2(F)$ the solution to \eqref{29}, and define $\mathfrak{n}_P=\mathfrak{u}_0\mathfrak{t}_n^{-1}\mathfrak{u}_1\mathfrak{t}_n\mathfrak{u}_0^{-1}\in \mathfrak{n}_1(F).$ Then the following conjugacy equation holds:
    $$
    \mathfrak{u}_0^{-1}w_{n-1}w_{n-2}\cdots w_1\mathfrak{t}_n\mathfrak{n}_P\mathfrak{u}_0=w_{n-1}w_{n-2}\cdots w_1\mathfrak{u}_2\mathfrak{u}_1\mathfrak{t}_n.
    $$
    Therefore, one can take representatives of $P(F)$-conjugacy classes $\mathcal{R}_P^{P(F)}$ in the set $\mathcal{R}=\{w_{n-1}w_{n-2}\cdots w_1\mathfrak{t}\mathfrak{u}:\ \mathfrak{t}\in T\left(F^{\times}\right),\ \mathfrak{u}\in N_P(F)\}.$
    \end{proof}
    \begin{remark}
    Let $\gamma=w_{n-1}w_{n-2}\cdots w_1\mathfrak{t}\mathfrak{u}\in GL_{n}(F),$ $ \mathfrak{t}\in T\left(F^{\times}\right),$ $\mathfrak{u}\in N(F),$ then the $P(F)$-conjugacy class of $\gamma$ is thoroughly determined by $\det\gamma$ and $\mathfrak{u}\cap N_P(F).$
    \end{remark}

	Now we consider for our purpose the decomposition of $Z_G(F)\backslash G(F)$ into $P(F)$-conjugacy classes. By \eqref{14.2} one has the following decomposition 
	\begin{equation}\label{12'}
	Z_G(F)\backslash G(F)=\left(Z_G(F)\cap \mathfrak{C}_{r.e.}^{P(F)}\right)\backslash\mathfrak{C}_{r.e.}^{P(F)}\coprod \bigcup _{k=1}^{n-1}\left(Z_G(F)\backslash Q_k(F)\right)^{P(F)}.
	\end{equation}

	\begin{cor}\label{repres}
	Let notation be as before. Set $\left(F^{\times}\right)^n=\{t^n:\ t\in F^{\times}\},$ and let
	\begin{equation}\label{mainrep}
	\widetilde{\mathcal{R}}_P^{*}=\Bigg\{w_1w_2\cdots w_{n-1}\begin{pmatrix}
	I_{n-3} & &\\
	& t&\\
	&&I_2
	\end{pmatrix}\mathfrak{u}:\ t\in F^{\times}/\left(F^{\times}\right)^n,\ \mathfrak{u}\in N_P(F)\Bigg\}.
	\end{equation}
	Then $\widetilde{\mathcal{R}}_P^{*}$ forms a family of representatives of $\left(Z_G(F)\cap \mathfrak{C}_{r.e.}^{P(F)}\right)\backslash\mathfrak{C}_{r.e.}^{P(F)}.$ 
	\end{cor}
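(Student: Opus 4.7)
The strategy is to combine Proposition \ref{repre} with a direct analysis of how the relation $\mathscr{R}$ and the central action jointly collapse the torus factor. By Proposition \ref{repre}, every element of $\mathfrak{C}_{r.e.}^{P(F)}$ has a $P(F)$-conjugate of the form $w_{n-1}w_{n-2}\cdots w_1\mathfrak{t}\mathfrak{u}$ with $\mathfrak{t}\in T(F)$ and $\mathfrak{u}\in N_P(F)$, unique modulo $\mathscr{R}$. So to parametrize $(Z_G(F)\cap\mathfrak{C}_{r.e.}^{P(F)})\backslash\mathfrak{C}_{r.e.}^{P(F)}$ it suffices to describe the image of $T(F)\times N_P(F)$ under the joint quotient by $\mathscr{R}$ and by scalar multiplication.

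The first step is to show that $\mathscr{R}$ on $T(F)$ is controlled exactly by the determinant. Multiplying the $n$ relations in \eqref{23} telescopes (using $a_n=a_0$) to give $\prod_{i=1}^n t_i=\prod_{i=1}^n t_i'$, i.e.\ $\det\mathfrak{t}=\det\mathfrak{t}'$. Conversely, given $\det\mathfrak{t}=\det\mathfrak{t}'$, one may set $a_0=1$ and $a_i=\prod_{j\leq i}t_j/t_j'$; then $a_n=\det\mathfrak{t}/\det\mathfrak{t}'=1=a_0$, so the cyclic consistency condition is satisfied. Thus the $\mathscr{R}$-orbit of $\mathfrak{t}$ is precisely its determinantal coset, while $\mathfrak{u}$ is rigid since $\mathscr{R}$ requires $\mathfrak{u}=\mathfrak{u}'$.

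Next I would incorporate the central action. Because $zI_n$ is central, $zI_n\cdot w_{n-1}\cdots w_1\mathfrak{t}\mathfrak{u}=w_{n-1}\cdots w_1(z\mathfrak{t})\mathfrak{u}$, which scales $\det\mathfrak{t}$ by $z^n$. Combining with the previous step, the torus component modulo $\mathscr{R}$ and $Z_G(F)$ is identified with $F^\times/(F^\times)^n$ via the determinant. The explicit lift $\mathfrak{t}(t)=\diag(1,\ldots,1,t,1,1)$ (with $t$ in the $(n-2)$-th slot) has $\det\mathfrak{t}(t)=t$ and so furnishes one representative per coset. Hence as $t$ ranges over $F^\times/(F^\times)^n$ and $\mathfrak{u}$ over $N_P(F)$, the elements of $\widetilde{\mathcal{R}}_P^*$ run over a complete, irredundant set of representatives.

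The main obstacle is to reconcile the Weyl element $w_1w_2\cdots w_{n-1}$ appearing in \eqref{mainrep} with $w_{n-1}\cdots w_1$ coming from Proposition \ref{repre}, since these are mutually inverse permutations. The reconciliation is via the inversion automorphism $\iota\colon g\mapsto g^{-1}$, under which $\mathfrak{C}_{r.e.}^{P(F)}$ is stable (as already used in the proof of Lemma \ref{122}) and which descends to a bijection of $P(F)$-conjugacy classes. Writing the inverse of $w_{n-1}\cdots w_1\mathfrak{t}\mathfrak{u}$ in Bruhat normal form for the Weyl element $w_1\cdots w_{n-1}$ — by commuting $\mathfrak{t}^{-1}$ through this Weyl element (which cyclically permutes diagonal entries and hence preserves the determinant class modulo $(F^\times)^n$) and re-normalizing the resulting unipotent factor into $N_P(F)$ — yields exactly the form in \eqref{mainrep}. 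The careful point is to verify that this rewriting is bijective on representatives and sends the canonical $\mathfrak{t}(t)$ to a torus element in the same $(F^\times)^n$-coset, so that the parametrization by $F^\times/(F^\times)^n\times N_P(F)$ stated in the corollary is genuine.
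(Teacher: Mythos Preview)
Your approach is essentially the same as the paper's: reduce via Proposition~\ref{repre} to $T(F)\times N_P(F)$ modulo $\mathscr{R}$, identify the $\mathscr{R}$-quotient on the torus with $F^\times$ via the determinant (the paper does this by exhibiting an explicit conjugating element $p_n\in P(F)$, you do it by solving the telescoping system~\eqref{23} directly), mod out by the centre to get $F^\times/(F^\times)^n$, and finally pass to the Weyl element $w_1\cdots w_{n-1}$ via inversion. Both proofs leave the final ``re-normalization into $N_P(F)$'' to the reader; the paper phrases it as ``bijectively $P_0(F)$-conjugate to $\widetilde{\mathcal{R}}_P^{*}$''.

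One small slip: in your last paragraph you say inversion ``sends the canonical $\mathfrak{t}(t)$ to a torus element in the same $(F^\times)^n$-coset''. This is not right: inversion sends $\det\mathfrak{t}=t$ to $t^{-1}$, which lies in the inverse coset, not the same one. What saves the argument is that $t\mapsto t^{-1}$ is a bijection on $F^\times/(F^\times)^n$, so you still obtain a complete and irredundant set of representatives --- just with the parameter relabelled. Your first stated condition (bijectivity on representatives) is the one that matters and is correct; the second is false but unnecessary.
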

	\begin{proof}
	By Lemma \ref{122} and Proposition \ref{repre}, the regular elliptic direct summand $\left(Z_G(F)\cap \mathfrak{C}_{r.e.}^{P(F)}\right)\backslash\mathfrak{C}_{r.e.}^{P(F)}$ in \eqref{12'} can be represented by 
	$\left(Z_G(F)\cap \widetilde{\mathcal{R}}_P\right)\backslash\widetilde{\mathcal{R}}_P\simeq Z_G(F)\backslash Z_G(F)\widetilde{\mathcal{R}}_P.$ Given any $\mathfrak{t}=\diag(t_1,t_2,\cdots,t_n)\in T(F),$ one can take 
	\begin{align*}
	p_n=\begin{pmatrix}
	t_1 & &&&\\
	& t_1t_2&&&\\
	&&\ddots&&\\
	&&&t_1t_2\cdots t_{n-1}\\
	&&&&1
	\end{pmatrix},\ \text{and}\ \mathfrak{t}'=\begin{pmatrix}
	I_{n-2}&&\\
	&\prod_{i=1}^nt_i&\\
	&&1
	\end{pmatrix},
	\end{align*}
	then clearly $p_nw_{n-1}w_{n-2}\cdots w_1\mathfrak{t}\mathfrak{u}p_n^{-1}=w_{n-1}w_{n-2}\cdots w_1\mathfrak{t}'\mathfrak{u}'$ holds with $\mathfrak{u}=\mathfrak{u}'.$ 
	
	For any $\mathfrak{z}=\diag(z,z,\cdots,z)\in Z_G(F),$ $\mathfrak{z}\mathfrak{t}=\diag(zt_1,zt_2,\cdots,zt_n)$ is $P(F)$-conjugate to 
	$$
	\begin{pmatrix}
		I_{n-2}&&\\
		&z^nt_1t_2\cdots t_{n}&\\
		&&1
	\end{pmatrix}\in \begin{pmatrix}
	I_{n-2}&&\\
	&F^{\times}/\left(F^{\times}\right)^n&\\
	&&1
	\end{pmatrix}.
	$$
	Hence $\left(Z_G(F)\cap \mathfrak{C}_{r.e.}^{P(F)}\right)\backslash\mathfrak{C}_{r.e.}^{P(F)}\subseteq \widetilde{\mathcal{R}}_P^{*,P(F)}:=\bigcup_{p\in P(F)}\big\{p\gamma p^{-1}\in \widetilde{\mathcal{R}}_P^{*}\big\}.$
	
	Conversely, for any $z\in F^{\times},$ $t\in F^{\times},$ there exists some $p\in P(F),$ such that  
	$$
	p\begin{pmatrix}
	I_{n-2}&&\\
	&z^nt&\\
	&&1
	\end{pmatrix}p^{-1}=z\begin{pmatrix}
	I_{n-2}&&\\
	&t&\\
	&&1
	\end{pmatrix}\in Z_G(F)\widetilde{\mathcal{R}}_P,
	$$
	thus any two distinguished elements in $\widetilde{\mathcal{R}}_P^{*}$ will not $P(F)$-conjugate. Hence the set 
	\begin{equation}\label{33}
	\Bigg\{w_{n-1}w_{n-2}\cdots w_1\begin{pmatrix}
	I_{n-2} & &\\
	& t&0\\
	&&1
	\end{pmatrix}\mathfrak{u}:\ t\in F^{\times}/\left(F^{\times}\right)^n,\ \mathfrak{u}\in N_P(F)\Bigg\}
	\end{equation}
	forms a family of representatives of $\left(Z_G(F)\cap \mathfrak{C}_{r.e.}^{P(F)}\right)\backslash\mathfrak{C}_{r.e.}^{P(F)}.$ Then the inverse of elements in the set defined in \eqref{33} also form a family of representatives of $\left(Z_G(F)\cap \mathfrak{C}_{r.e.}^{P(F)}\right)\backslash\mathfrak{C}_{r.e.}^{P(F)}.$ Note that these inverses are bijectively $P_0(F)$-conjugate to $\widetilde{\mathcal{R}}_P^{*},$ then the proof follows.
	\end{proof}

	\subsection{Holomorphic Continuation}\label{6.2}
	Let $P_0(F)$ be the mirabolic subgroup of $G(F),$ then by definition we have $P_0(F)=R_{n-1}(F).$ For any $\gamma\in G(F),$ write $\gamma^{P_0(F)}$ for the $P_0(F)$-conjugacy class of $\gamma,$ which is the same as $P(F)$-conjugacy class of $\gamma.$ Then by Corollary \ref{repres} one can decompose $Z_G(F)\backslash G(F)$ as 
	\begin{equation}\label{7}
	Z_G(F)\backslash G(F)=\coprod_{\gamma\in \widetilde{\mathcal{R}}_P^{*}}\gamma^{P_0(F)}\coprod \bigcup _{k=1}^{n-1}\left(Z_G(F)\backslash Q_k(F)\right)^{P_0(F)}.
	\end{equation}
	By the decomposition \eqref{7}, one can write $\K(x,y)=\K_{\Geo,\Reg}(x,y)+	\K^{\Geo,\Sin}(x,y),$ where 
	\begin{align*}
	\K_{\Geo,\Reg}(x,y)&=\sum_{\gamma\in \widetilde{\mathcal{R}}_P^{*}}\sum_{p\in P_0(F)}\varphi(x^{-1}p^{-1}\gamma py),\\
	\K^{\Geo,\Sin}(x,y)&=\sum_{\gamma^{P_0(F)}\in \mathcal{P}}\sum_{p\in P_0(F)}\varphi(x^{-1}p^{-1}\gamma py).
	\end{align*}
Hence we have the decomposition	
	\begin{align*}
I_{\infty,\Reg}(s,\tau)=\int_{X_n}\int_{[N_P]}	\K^{r.e.}(nx,x)dnf(x,s)dx.
	\end{align*}
	where $X_n=Z_G(\mathbb{A}_F)R_{n-1}(F)\backslash G(\mathbb{A}_F)=Z_G(\mathbb{A}_F)P_{0}(F)\backslash G(\mathbb{A}_F)$ and 
	$$
	\mathcal{P}=\big\{\gamma^{P(F)}:\ \gamma\in Z_G(F)\backslash Q_k(F)\ \text{for some $1\leq k\leq n-1$}\big\}.
	$$

	Note that $f(x,s)=f_{\tau}(x,s)$ is $P(F)$-invariant, then by \eqref{7} and a similar trick of changing variables and interchanging integrals one has formally that 
	\begin{align*}
I_{\infty,\Reg}(s,\tau)&=\int_{X_n}\int_{[N_P]}\sum_{\gamma\in \widetilde{\mathcal{R}}_P^{*}}\sum_{p\in P_0(F)}\varphi(x^{-1}n^{-1}p^{-1}\gamma px)dnf(x,s)dx\\
	&=\int_{Z_G(\mathbb{A}_F)\backslash G(\mathbb{A}_F)}\int_{N_P(F)\backslash N_P(\mathbb{A}_F)}\sum_{\gamma\in \widetilde{\mathcal{R}}_P^{*}}\varphi(x^{-1}n^{-1}\gamma x)dnf(x,s)dx\\
	&=\int_{X'(\mathbb{A}_F)}\int_{[N_P]}\int_{N_P(\mathbb{A}_F)}\sum_{\gamma\in \widetilde{\mathcal{R}}_P^{*}}\varphi(x^{-1}u^{-1}n^{-1}\gamma ux)dnduf(x,s)dx\\
	&=\int_{X'(\mathbb{A}_F)}\int_{[N_P]}\sum_{\gamma\in \widetilde{\mathcal{R}}_P^{*}}\varphi(x^{-1}u^{-1}\gamma nx)dnduf(x,s)dx.
	\end{align*}
	Recall that for any $2\leq k\leq n,$ any $v\in\Sigma_F,$  we have defined 
	\begin{align*}
	N_k^*=\Bigg\{\begin{pmatrix}
	I_{k-1} &u_{k} &\\
	& 1&\\
	&&I_{n-k}
	\end{pmatrix}:\ u_{k}\in {M_{(k-1)\times 1}}\Bigg\}.
	\end{align*}
	Let $N_k^*(\mathbb{A}_F)$ be the restricted product of $N_k^*(F_v)$'s, over $v\in \Sigma_F.$ 
	Then 
	$$
	N(\mathbb{A}_F)=\prod_{k=2}^{n}N_k^*(\mathbb{A}_F)=N_n^*(\mathbb{A}_F)N_{n-1}^*(\mathbb{A}_F)\cdots N_2^*(\mathbb{A}_F),\ \text{and}\ N(F)=\prod_{k=2}^{n}N_k^*(F).
	$$ Write $N^{P}=\prod_{k=2}^{n-1}N_k^*.$ Then one can write that  $N(\mathbb{A}_F)=N_{P}(\mathbb{A}_F)N^{P}(\mathbb{A}_F)$ and $N(F)=N_{P}(F)N^{P}(F).$ Apply Iwasawa decomposition to $X'(\mathbb{A}_F)$ to see
	$$
	X'(\mathbb{A}_F)=Z_G(\mathbb{A}_F)N_P(\mathbb{A}_F)\backslash G(\mathbb{A}_F)=Z_G(\mathbb{A}_F)\backslash T(\mathbb{A}_F)N^{P}(\mathbb{A}_F)K,
	$$
	where $K$ is a maximal compact subgroup. Set $T^*(\mathbb{A}_F)=Z_G(\mathbb{A}_F)\backslash T(\mathbb{A}_F)$ for convenience. For any $\gamma\in \widetilde{\mathcal{R}}_P^{*},$ write it uniquely as $\gamma=w_1w_{2}\cdots w_{n-1}\begin{pmatrix}
	I_{n-2} & &\\
	& t&0\\
	&&1
	\end{pmatrix}\mathfrak{u},$ with $t\in F^{\times}/\left(F^{\times}\right)^n,$ and $\mathfrak{u}\in N(F).$ Set $\widetilde{w}=w_1w_{2}\cdots w_{n-1},$ $\mathfrak{u}_P=\mathfrak{u}\cap N_P(F)$ and $\mathfrak{u}^P=\mathfrak{u}\cap N^P(F).$ Then $\mathfrak{u}=\mathfrak{u}_P\mathfrak{u}^P.$ Let 
	$\rho_{T^*}$ be the half-sum of positive roots of $T^*,$ and set $\delta_{T^*}(\mathfrak{t})=\mathfrak{t}^{2\rho_{T^*}}$ to be the modular character, explicitly, for any $\mathfrak{t}=\diag(t_1,t_2,\cdots,t_{n-1},1)\in T^*(F),$ $\delta_{T^*}(\mathfrak{t})=\prod_{i=1}^{n-1}|t_i|^{n-2i+1}_{\mathbb{A}_F}.$ Substitute these into the expression of $I_{\infty}^{r.e.}(s)$ to get
	\begin{align*}
I_{\infty,\Reg}(s,\tau)&=\int_{X'(\mathbb{A}_F)}f(x,s)dx\int_{N_P(\mathbb{A}_F)}\int_{N_P(F)\backslash N_P(\mathbb{A}_F)}\sum_{t\in F^{\times}/\left(F^{\times}\right)^n}\sum_{\mathfrak{u}\in N_P(F)}\\
	&\qquad\qquad\qquad\qquad\qquad\qquad\varphi\left(x^{-1}u^{-1}\widetilde{w}v\begin{pmatrix}
	I_{n-2} & &\\
	& t&\\
	&&1
	\end{pmatrix}\mathfrak{u} nx\right)dndu\\
	&=\int_K\int_{N^{P}(\mathbb{A}_F)}\int_{ T^*(\mathbb{A}_F)}f(n^P\mathfrak{t}k,s)\frac{d^{\times}\mathfrak{t}}{\delta_{T^*}(\mathfrak{t})}dn^Pdk\int_{N_P(\mathbb{A}_F)}\sum_{t}\int_{[N_P]}\\
	&\quad \sum_{\mathfrak{u}_P\in N_P(F)}\varphi\left(k^{-1}\mathfrak{t}^{-1}(n^P)^{-1}u^{-1}\widetilde{w}\begin{pmatrix}
	I_{n-2} & &\\
	& t&\\
	&&1
	\end{pmatrix}\mathfrak{u}_Pnn^P\mathfrak{t}k\right)dndu\\
	&=\int_K\int_{N^{P}(\mathbb{A}_F)}\int_{ T^*(\mathbb{A}_F)}f(\mathfrak{t}n^Pk,s)d^{\times}\mathfrak{t}dn^Pdk\int_{N_P(\mathbb{A}_F)}\sum_{t\in F^{\times}/\left(F^{\times}\right)^n}\\
	&\quad\times\int_{N_P(\mathbb{A}_F)}\varphi\left(k^{-1}(n^P)^{-1}u^{-1}\mathfrak{t}^{-1}\widetilde{w}\begin{pmatrix}
	I_{n-2} & &\\
	& t&\\
	&&1
	\end{pmatrix}n\mathfrak{t}n^Pk\right)dndu.
	\end{align*}
	Recall that $f(x,s)$ is defined by
	$$
	f(x,s)=\tau(\det x)|\det x|_{\mathbb{A}_F}^s\int_{\mathbb{A}_F^{\times}}\Phi[(0,\cdots,t)x]\tau^{n}(t)|t|^{ns}d^{\times}t.
	$$
    Then $f(\mathfrak{t}n^Pk,s)=\tau(\det \mathfrak{t})|\det \mathfrak{t}|^sf(k,s),$ where we identify $T^*(\mathbb{A}_F)$ with the subgroup $\{\diag(t_1,\cdots,t_{n-1},1):\ t_i\in \mathbb{A}_F^{\times},\ 1\leq i\leq n-1\}.$ Therefore one has
	\begin{align*}
I_{\infty,\Reg}(s,\tau)&=\int_Kf(k,s)dk\int_{N^{P}(\mathbb{A}_F)}dn^P\int_{ T^*(\mathbb{A}_F)}\int_{N_P(\mathbb{A}_F)}\sum_{t}\tau(\det \mathfrak{t})|\det \mathfrak{t}|_{\mathbb{A}_F}^{s+1}\\
	&\quad \times\int_{N_P(\mathbb{A}_F)}\varphi\left(k^{-1}(n^P)^{-1}u^{-1}\mathfrak{t}^{-1}\widetilde{w}\begin{pmatrix}
	I_{n-2} & &\\
	& t&\\
	&&1
	\end{pmatrix}\mathfrak{t}nn^Pk\right)dndud^{\times}\mathfrak{t},
	\end{align*}
	where $t$ runs through $F^{\times}/\left(F^{\times}\right)^n.$
	
	Given any $\mathfrak{u}'\in N^P(\mathbb{A}_F),$ and any $\mathfrak{t}'\in T(\mathbb{A}_F),$ we consider the following system of equations with respect to variables $c_{i,j},$ $1\leq i<j\leq n-1,$ and $\mathfrak{u}\in N_P(\mathbb{A}_F),$
	\begin{equation}\label{35}
	\begin{pmatrix}
	1 & c_{1,2}&\cdots&c_{1,n-1}&0\\
	&1&\cdots&c_{2,n-1}&0\\
	&&\ddots&\vdots&\vdots\\
	&&& 1&0\\
	&&&&1
	\end{pmatrix}^{-1}\cdot\mathfrak{t}'\cdot\begin{pmatrix}
	1 & 0&0&\ldots&0\\
	&1&c_{1,2}&\cdots&c_{1,n-1}\\
	&&\ddots&&\vdots\\
	&&& 1&c_{n-2,n-1}\\
	&&&&1
	\end{pmatrix}=\mathfrak{t}'\mathfrak{u}'\mathfrak{u}.
	\end{equation}
	One sees easily that equation \eqref{35} is equivalent to \eqref{25} or the system of equations \eqref{29}. By the existence of solutions to equation \eqref{25} (with fixed initial datum), we can find some $\mathfrak{u}=\mathfrak{u}_0\in N_P(\mathbb{A}_F),$ and $c_{i,j}=c_{i,j}^0\in \mathbb{A}_F,$ $1\leq i<j\leq n-1,$ such that \eqref{35} holds. Therefore, one can always find some element $\mathfrak{c}\in N^P(\mathbb{A}_F)$ such that 
	$$
	\mathfrak{c}^{-1}\mathfrak{t}^{-1}\widetilde{w}\begin{pmatrix}
	I_{n-2} & &\\
	& t&\\
	&&1
	\end{pmatrix}\mathfrak{t}\mathfrak{c}=\mathfrak{t}^{-1}\widetilde{w}\begin{pmatrix}
	I_{n-2} & &\\
	& t&\\
	&&1
	\end{pmatrix}\mathfrak{t}\mathfrak{u}_0\mathfrak{u}'.
	$$
	Hence for any $\mathfrak{u}'\in N^P(\mathbb{A}_F),$ one can rewrite $I_{\infty,\Reg}(s,\tau)=I_{\infty}^{\Reg}(\mathfrak{u}';s),$ where   
	\begin{align*}
	I_{\infty}^{\Reg}(\mathfrak{u}';s)&=\int_Kf(k,s)dk\int_{N^{P}(\mathbb{A}_F)}dn^P\int_{ T^*(\mathbb{A}_F)}\int_{N_P(\mathbb{A}_F)}\sum_{t}\tau(\det \mathfrak{t})|\det \mathfrak{t}|_{\mathbb{A}_F}^{s+1}\\
	&\ \times\int_{N_P(\mathbb{A}_F)}\varphi\left(k^{-1}(n^P)^{-1}u^{-1}\mathfrak{t}^{-1}\widetilde{w}\begin{pmatrix}
	I_{n-2} & &\\
	& t&\\
	&&1
	\end{pmatrix}\mathfrak{t}n\mathfrak{u}'n^Pk\right)dndud^{\times}\mathfrak{t}.
	\end{align*}
	Let $c_P=\vol\left(N^P(F)\backslash N^P(\mathbb{A}_F)\right).$ Since $I_{\infty}^{\Reg}(\mathfrak{u}';s)$ is $N^P(\mathbb{A}_F)$-invariant, one can integrate $I_{\infty}^{r.e.}(\mathfrak{u}^{-1};s)$ over the compact domain $N^P(F)\backslash N^P(\mathbb{A}_F)$ to get
	\begin{align*}
	I_{\infty}^{\Reg}(s)&=\frac1{c_P}\int_Kf(k,s)dk\int_{N^{P}(\mathbb{A}_F)}dn^P\int_{[N^{P}]}\int_{ T^*(\mathbb{A}_F)}\int_{N_P(\mathbb{A}_F)}\sum_{t}\tau(\det \mathfrak{t})|\det \mathfrak{t}|_{\mathbb{A}_F}^{s+1}\\
	&\ \times\int_{N_P(\mathbb{A}_F)}\varphi\left(k^{-1}(n^P)^{-1}u^{-1}\mathfrak{t}^{-1}\widetilde{w}\begin{pmatrix}
	I_{n-2} & &\\
	& t&\\
	&&1
	\end{pmatrix}\mathfrak{t}n\mathfrak{u}'n^Pk\right)dndud^{\times}\mathfrak{t}d\mathfrak{u}'\\
	&=\frac1{c_P}\int_Kf(k,s)dk\int_{N(\mathbb{A}_F)}d\mathfrak{u}\int_{[N^{P}]}\int_{ T^*(\mathbb{A}_F)}\sum_{t\in F^{\times}/\left(F^{\times}\right)^n}\tau(\det \mathfrak{t})|\det \mathfrak{t}|_{\mathbb{A}_F}^{s+1}\\
	&\qquad \times\int_{N_P(\mathbb{A}_F)}\varphi\left(k^{-1}\mathfrak{u}\mathfrak{t}^{-1}\widetilde{w}\begin{pmatrix}
	I_{n-2} & &\\
	& t&\\
	&&1
	\end{pmatrix}\mathfrak{t}\widetilde{w}^{-1}\cdot\widetilde{w}n\mathfrak{u}'k\right)dnd^{\times}\mathfrak{t}d\mathfrak{u}',
	\end{align*}
	where $I_{\infty}^{\Reg}(s)$ refers to $I_{\infty,\Reg}(s,\tau).$ After a changing of variables one obtains
	\begin{align*}
	I_{\infty}^{\Reg}(s)&=\frac1{c_P}\int_Kf(k,s)dk\int_{N(\mathbb{A}_F)}d\mathfrak{u}\int_{[N^{P}]}d\mathfrak{u}'\int_{N_P(\mathbb{A}_F)}dn\sum_{t\in F^{\times}/\left(F^{\times}\right)^n}\Delta^{(1)}_{s,\tau}(\mathfrak{t})\\
	&\ \times \int_{ \mathbb{A}_F^{\times}}\cdots\int_{ \mathbb{A}_F^{\times}}\varphi\left(k^{-1}\mathfrak{u}\begin{pmatrix}
	1& &&&\\
	& t_2^{-1}&&&\\
	&&\ddots&&\\
	&&&t_{n-1}^{-1}&\\
	&&&&t_1^nt\prod_{i=2}^{n-1}t_i
	\end{pmatrix}\widetilde{w}n\mathfrak{u}'k\right)d^{\times}\mathfrak{t},
	\end{align*}
	where $d^{\times}\mathfrak{t}=d^{\times}t_1d^{\times}t_2\cdots d^{\times}t_{n-1},$ and for any $\mathfrak{t}=\diag(t_1,t_2,\cdots,t_{n-1},1)\in T^*(\mathbb{A}_F),$ 
	$$
	\Delta^{(1)}_{s,\tau}(\mathfrak{t})=\tau(t_1)^{\frac{n(n-1)}{2}}|t_1|_{\mathbb{A}_F}^{\frac{n(n-1)}{2}(s+1)}\prod_{i=2}^{n-1}\tau(t_i^{n-i})|t_i|_{\mathbb{A}_F}^{(n-i)(s+1)}.
	$$
	Depending on the purity of $n,$ we can further simplify $I_{\infty}^{\Reg}(s)=I_{\infty,\Reg}(s,\tau).$ Recall the test function $\varphi$ has the central character $\omega,$ $\Xi$ is the set of idele class characters on $\mathbb{A}_F,$ which is trivial on the archimedean places. Denote by $\Xi_{\omega,n}$ the subset $\{\chi\in \Xi:\ \chi^n=\omega\}\subset \Xi.$ Also, let $\Xi_{\tau,2}^n=\{\xi\in \Xi:\ \xi^2=\tau\}$ if $n$ is even, and set $\Xi_{\tau,2}^n$ to be the empty set if $n$ is odd. Then both $\#\Xi_{\tau,2}^n<\infty$ and $\#\Xi_{\tau,2}^n<\infty.$ 
	
	When $n$ is odd, we have, by the computation above, that 
	\begin{align*}
	I_{\infty}^{\Reg}(s)&=\frac1{c_P}\int_Kf(k,s)dk\int_{N(\mathbb{A}_F)}d\mathfrak{u}\int_{[N^{P}]}d\mathfrak{u}'\int_{N_P(\mathbb{A}_F)}dn\sum_{\chi\in \Xi_{\omega,n}} \int_{ \mathbb{A}_F^{\times}}\Delta^{od}_{s,\tau,\chi}(\mathfrak{t})d^{\times}t_1\\
	&\ \times \int_{ \mathbb{A}_F^{\times}}\cdots\int_{ \mathbb{A}_F^{\times}}\varphi\left(k^{-1}\mathfrak{u}\begin{pmatrix}
	1& &&&\\
	& t_2^{-1}&&&\\
	&&\ddots&&\\
	&&&t_{n-1}^{-1}&\\
	&&&&t_1
	\end{pmatrix}\widetilde{w}n\mathfrak{u}'k\right)d^{\times}t_2\cdots d^{\times}t_{n-1},
	\end{align*}
	where we use the fact that $\left(\mathbb{A}_F^{\times}\right)^n\cdot F^{\times}/\left(F^{\times}\right)^n=F^{\times}\cdot \left(F^{\times}\backslash \mathbb{A}_F^{\times} \right)^n,$ and $\tau|\cdot|_{\mathbb{A}_F}$ is $F^{\times}$-invariant, and
	$$
	\Delta^{od}_{s,\tau,\chi}(\mathfrak{t})=\bar{\chi}(t_1)\tau(t_1)^{\frac{n-1}{2}}|t_1|_{\mathbb{A}_F}^{\frac{(n-1)(s+1)}{2}}\prod_{i=2}^{n-1}\tau(t_i^{\frac{n+1}2-i})|t_i|_{\mathbb{A}_F}^{[\frac{n+1}2-i](s+1)}.
	$$
	When $n$ is even, one has a similar simplification as follows
	\begin{align*}
	I_{\infty}^{\Reg}(s)&=\frac1{c_P}\int_Kf(k,s)dk\int_{N(\mathbb{A}_F)}d\mathfrak{u}\int_{[N^{P}]}d\mathfrak{u}'\int_{N_P(\mathbb{A}_F)}dn\sum_{\chi\in \Xi_{\omega,n}}\sum_{\xi\in \Xi_{\tau,2}^n} \Delta^{en}_{s,\tau,\chi,\xi}(\mathfrak{t})\\
	&\ \times \int_{ \mathbb{A}_F^{\times}}\cdots\int_{ \mathbb{A}_F^{\times}}\varphi\left(k^{-1}\mathfrak{u}\begin{pmatrix}
	1& &&&\\
	& t_2^{-1}&&&\\
	&&\ddots&&\\
	&&&t_{n-1}^{-1}&\\
	&&&&t_1
	\end{pmatrix}\widetilde{w}n\mathfrak{u}'k\right)d^{\times}t_1\cdots d^{\times}t_{n-1},
	\end{align*}
	where the weighted character $\Delta^{en}_{s,\tau,\chi,\xi}$ is defined to be
	\begin{align*}
	\Delta^{en}_{s,\tau,\chi,\xi}(\mathfrak{t})=\bar{\chi}(t_1)\xi(t_1)\tau(t_1)^{\frac{n-2}{2}}|t_1|_{\mathbb{A}_F}^{\frac{(n-1)(s+1)}{2}}\prod_{i=2}^{n-1}\xi(t_i)\tau(t_i^{\frac{n}2-i})|t_i|_{\mathbb{A}_F}^{[\frac{n+1}2-i](s+1)}.
	\end{align*}
	
	Let $T_*(\mathbb{A}_F^{\times})=\{\diag(1,t_1,t_2,\cdots,t_{n-1})\in T(\mathbb{A}_F):\ t_i\in \mathbb{A}_F^{\times},\ 1\leq i\leq n-1\}.$ Set
	\begin{align*}
	\iota:\ T^*(\mathbb{A}_F^{\times})\longrightarrow T_*(\mathbb{A}_F^{\times}),\ \mathfrak{t}\mapsto \mathfrak{t}^{\iota}=\diag(1,t_2^{-1},t_3^{-1},\cdots,t_{n-1}^{-1},t_1).
	\end{align*}
	
	For any $n\in\mathbb{N}_{\geq 2},$ define $\mathfrak{F}_{\chi,\xi}(x;k,s)=\mathfrak{F}_{\chi,\xi}(x;k,s,\varphi,\Phi,\tau)$ by
	\begin{align*}
	\mathfrak{F}_{\chi,\xi}(x;k,s)&=\int_{N(\mathbb{A}_F)}d\mathfrak{u}\int_{[N^{P}]}d\mathfrak{u}'\int_{ T^*(\mathbb{A}_F^{\times})}\varphi\left(k^{-1}\mathfrak{u}\mathfrak{t}^{\iota}x\mathfrak{u}'k\right)\Delta_{s,\tau,\chi,\xi,n}(\mathfrak{t})d^{\times}\mathfrak{t},
	\end{align*}
	where we write $\delta_n=-\frac{1+(-1)^n}2$ and denote by $\Delta_{s,\tau,\chi,\xi,n}(\mathfrak{t})$ the following character
	\begin{align*}
	\bar{\chi}(t_1)\xi(t_1)^{-\delta_n}\tau(t_1)^{\frac{n-1-\delta_n}{2}}|t_1|_{\mathbb{A}_F}^{\frac{(n-1)(s+1)}{2}}\prod_{i=2}^{n-1}\chi(t_i)\xi(t_i)^{\delta_n}\tau(t_i)^{\frac{n+1-\delta_n}2-i}|t_i|_{\mathbb{A}_F}^{[\frac{n+1}2-i](s+1)}.
	\end{align*}
	Since $[N^{P}]=N^{P}(F)\backslash N^P(\mathbb{A}_F)$ is compact and $\varphi$ is compactly supported, the function $\mathfrak{F}_{\chi,\xi}(x;k,s)$ is well defined for any $\chi,$ $\xi$ and $\Re(s)>1.$
	
	Let $b=\mathfrak{u}\mathfrak{t}\in B(\mathbb{A}_F),$ where $\mathfrak{u}\in N(\mathbb{A}_F),$ $\mathfrak{t}=\diag(t_1,t_2,\cdots, t_n)\in T(\mathbb{A}_F).$ Then
	\begin{align*}
	\mathfrak{F}_{\chi,\xi}(bx;k,s)=\prod_{i=1}^{n}\chi(t_i)\xi(t_i)^{\delta_n}\tau(t_i)^{\frac{n+1-\delta_n}2-i}|t_i|_{\mathbb{A}_F}^{[\frac{n+1}2-i](s+1)}\cdot\mathfrak{F}_{\chi,\xi}(x;k,s).
	\end{align*}
	Since the modular character of $T(\mathbb{A}_F)$ is $\delta_{T(\mathbb{A}_F)}(\mathfrak{t})=\prod_{i=1}^nt_i^{n+1-2i},$ so one has
	\begin{align*}
	\mathfrak{F}_{\chi,\xi}(x;k,s)\in\Ind_{B(\mathbb{A}_F)}^{G(\mathbb{A}_F)}\left(\chi\xi^{\delta_n}\tau^{\lambda_1}|\cdot|_{\mathbb{A}_F}^{\lambda_1s},\cdots,\chi\xi^{\delta_n}\tau^{\lambda_{n-1}}|\cdot|_{\mathbb{A}_F}^{\lambda_{n-1}s},\chi\xi^{\delta_n}\tau^{\lambda_n}|\cdot|_{\mathbb{A}_F}^{\lambda_ns}\right),
	\end{align*}
	where for $1\leq i\leq n,$ $\lambda_i=\frac{n+1-\delta_n}2-i.$ Denote by
	\begin{align*}
	G_{\chi,\xi}(x;s)=G_{\chi,\xi}(x;s,\varphi,\Phi,\tau)=\frac1{c_P}\int_Kf(k,s)\mathfrak{F}_{\chi,\xi}(x;k,s)dk.
	\end{align*}
Then at least formally one can write $I_{\infty,\Reg}(s,\tau)=I_{\infty}^{\Reg}(s)$ as a finite sum: 
	\begin{align*}
I_{\infty,\Reg}(s,\tau)=\sum_{\chi\in \Xi_{\omega,n}}\sum_{\xi\in \Xi_{\tau,2}^n}\int_{N_P(\mathbb{A}_F)}G_{\chi,\xi}(\widetilde{w}n;s)dn,\ \Re(s)>1.
	\end{align*}
	Let $\mathfrak{F}_{1,1,+}(x;k,s)=\mathfrak{F}_{1,1}(x;k,s,|\varphi|,|\Phi|,1)$ and $G_{1,1,+}(x;s)=G_{1,1}(x;s,|\varphi|,|\Phi|,1).$ Then the above interchanging orders of integrals is justified by Fubini's theorem on integral of nonnegative functions. One then has 
	\begin{align*}
I_{\infty,\Reg}^{+}(s,\tau)=\sum_{\chi\in \Xi_{1,n}}\sum_{\xi\in \Xi_{1,2}^n}\int_{N_P(\mathbb{A}_F)}G_{1,1,+}(\widetilde{w}n;s)dn,
	\end{align*}
	where the sums are finite. Then $\int_{N_P(\mathbb{A}_F)}G_{1,1,+}(\widetilde{w}n;s)dn$ converges absolutely and locally normally in $\Re(s)>1$ according to Langlands' theory on intertwining operators. Therefore, by dominant control theorem, $\int_{N_P(\mathbb{A}_F)}G_{\chi,\xi}(\widetilde{w}n;s)dn$ converges absolutely and locally normally in $\Re(s)>1.$ Moreover, apply Langlands' theory and Godement-Jacquet's theory we obtain:
\begin{thmx}\label{aa}
Let notation be as before, then $I_{\infty,\Reg}(s,\tau)$ converges absolutely and locally normally in the domain $\Re(s)>1.$ Moreover, $I_{\infty,\Reg}(s,\tau)$ admits a meromorphic continuation. Precisely, one has 
	\begin{align*}
I_{\infty,\Reg}(s,\tau)\sim \frac{\Lambda(s,\tau)\Lambda(2s,\tau^2)\cdots \Lambda((n-1)s,\tau^{n-1})\Lambda(ns,\tau^n)}{\Lambda(s+1,\tau)\Lambda(2s+1,\tau^2)\cdots \Lambda((n-1)s+1,\tau^{n-1})}.
	\end{align*}
\end{thmx}

	\section{Contributions from $I_{\infty}^{(1)}(s)$}\label{6sec}
In this section, we will deal with $I_{\infty}^{(1)}(s)$ and obtain the holomorphic continuation to eventually. To achieve that, we starting with spectral decomposition via which one can write $I_{\infty}^{(1)}(s)$ as Mellin transforms of Rankin-Selberg convolutions of some parabolic-induced representations. To handle the Rankin-Selberg convolution, local computations are needed. In Archimedean places, it requires some estimate of Whittaker functions; while for the non-archimedean unramified places, one can reduce to the usual Rankin-Selberg convolution multiplied by some extra factor from spectral parameters. This factor, according to GLobal Langlands-Shahidi method for Borel induction, turns out to be the L-function associated to exterior square of the corresponding induced representation. Combining the above work, one gets explicit expression for $I_{\infty}^{(1)}(s)$ when $\Re(s)>1.$ 
	
	One can write the test function $\varphi\in\mathcal{C}_0\left(G(\mathbb{A}_F)\right)$ as a finite linear combination of convolutions $\varphi_1*\varphi_2$ with functions $\varphi_i\in C_c^r\left(G(\mathbb{A}_F)\right),$ whose archimedean components are differentiable of arbitrarily high order $r.$ However, one may not be able to make each $\varphi_1*\varphi_2$ have some component supported in regular elliptic set even if $\varphi$ has such a component. Since we do need such a decomposition for the test function $\varphi,$ we will just take, in this section, our test function $\varphi\in\mathcal{C}_0\left(G(\mathbb{A}_F)\right)$ to be general, namely, we do not require $\varphi$ have a component supported in regular elliptic set. 

In this subsection, steps of proving absolute convergence of $I_{\infty}^{(1)}(s)$ when $\Re(s)$ is large will be indicated. We start with formal expression of $I_{\infty}^{(1)}(s),$ which is defined by 
\begin{equation}\label{{1}}
	I_{\infty}^{(1)}(s)=\int_{Z_G(\mathbb{A}_F)N(F)\backslash G(\mathbb{A}_F)}\int_{N(F)\backslash N(\mathbb{A}_F)}\K_{\infty}(n_1x,x)\theta(n_1)dn_1f(x,s)dx.
\end{equation}

Then we will substitute the spectral expansion of $\K_{\infty}(x,y),$ the kernel function from the continuous part, into \eqref{{1}}, to see $I_{\infty}^{(1)}(s)$ can be written (formally) as  
\begin{align*}
\int_{X_G}\sum_{\chi\in\mathfrak{X}}\sum_{P\in \mathcal{P}}\frac{1}{c_P}\int_{\Lambda^*}\sum_{\phi_1}\sum_{\phi_2}\langle\mathcal{I}_P(\lambda,\varphi)\phi_2,\phi_1\rangle W_{1}(x;\lambda)\overline{W_{2}(x;\lambda)}d\lambda f(x,s)dx,
\end{align*}
where $X_G=Z_G(\mathbb{A}_F)N(\mathbb{A}_F)\backslash G(\mathbb{A}_F)$ and $\mathfrak{X}$ is the (infinite) set of cuspidal data.  See \eqref{11} below for details. To show $I_{\infty}^{(1)}(s)$ is actually well-defined and for our continuation of $I_{\infty}^{(1)}(s)$ in Section \ref{7.2}, we shall prove in this section that the sum 
\begin{align*}
\sum_{\chi\in\mathfrak{X}}\sum_{P\in \mathcal{P}}\frac{1}{c_P}\int_{\Lambda^*}\bigg|\int_{X_G}\sum_{\phi_1}\sum_{\phi_2}\langle\mathcal{I}_P(\lambda,\varphi)\phi_2,\phi_1\rangle W_{1}(x;\lambda)\overline{W_{2}(x;\lambda)}f(x,s)dx\bigg| d\lambda 
\end{align*}
converges. Note that the integral over $X_G$ is a Rankin-Selberg convolution for (non-cuspidal) automorphic representations on $\GL(n),$ which will be handled in Section \ref{6.2.}. To justify the interchange of integrals and summations, we will follow Arthur's strategy from his profound work on trace formula theory, e.g., see \cite{Art78}, \cite{Art79}, \cite{Art80} and \cite{Art81}. Along Arthur's truncation on Eisenstein series, we have (see Lemma \ref{55'}):
\begin{equation}\label{00}
\int_{\mathcal{S}_{0}}\sum_{\chi}\Big|\int_{N(F)\backslash N(\mathbb{A}_F)}\Lambda^T_2\K_{\chi}(nx,x)\theta(n)dn\cdot f(x,s)\Big|dx<\infty,
\end{equation}
where $\mathcal{S}_{0}$ is a Siegel domain, $\Lambda^T_2$ is the truncation operator, and $\Re(s)>1.$

However, our situation is quite different from Arthur's case. One of the major differences is the fundamental domain: noting that $X_G=Z_G(\mathbb{A}_F)N(F)\backslash G(\mathbb{A}_F)$ is typically much larger than the classical region $Z_G(\mathbb{A}_F)G(F)\backslash G(\mathbb{A}_F),$ and $X_G$ cannot be covered by a Siegel domain. To remedy this, we establish a covering result Proposition \ref{27''} to show that 
\begin{equation}\label{000}
\int_{X_G}\sum_{\chi}\Big|\int_{N(F)\backslash N(\mathbb{A}_F)}\Lambda^T_2\K_{\chi}(nx,x)\theta(n)dn\cdot f(x,s)\Big|dx
\end{equation}
can be bounded by a finite linear combination of the left hand side of \eqref{00} over different Siegel domains. One then deduces that \eqref{000} converges. However, due to the Fourier transform, the geometry is quite different from Arthur's case. We thus construct a truncation operator, which fits our particular setting, acting on the geometric expansion of kernel function; and we then show the two truncations do match on $Z_G(\mathbb{A}_F)G(F)\backslash G(\mathbb{A}_F)$ (see Proposition \ref{29'}). So we do a geometric computation using Arthur's techniques to get an explicit expression in Proposition \ref{30'}, which consists of three parts: \textit{constant term}, \textit{apidly decaying terms} and \textit{polynomially increasing terms}. The last step is to use properties of Fourier coefficients to exclude the existence of polynomially increasing terms, so that one can take limit to recover from the truncation to the original expression \eqref{{1}}. We then summarize the final result on convergence as Theorem \ref{39'} at the end of this section.
	
	\subsection{Spectral Decomsition of the Kernel Function}\label{7.1}
	In this subsection, we review briefly the spectral theory of automorphic representation of reductive groups, and then apply the results to $\K_{Eis}$. Denote by $H$ a general reductive group and $P$ a standard parabolic subgroup of $H.$ Let $M_P$ (resp. $N_P$) be the Levi component (resp. unipotent radical) of $P.$ Then by spectral theory, the decomposition of the Hilbert space $L^2\left(Z_{H}(\mathbb{A}_F)N_P(\mathbb{A}_F)M_P(F)\setminus H(\mathbb{A}_F)\right)$ into right $H(\mathbb{A}_F)$-invariant subspaces is determined by the spectral data $\chi=\{(M,\sigma)\},$ where the pair $(M,\sigma)$ consists of a Levi subgroup $M$ of $H$ and a cuspidal representation $\sigma\in\mathcal{A}_0\left(Z_H(\mathbb{A}_F)\setminus M^1(\mathbb{A}_F)\right);$ the class $(M,\sigma)$ derives from the equivalence relation $(M,\sigma)\sim(M',\sigma')$ if and only if $M$ is conjugate to $M'$ by a Weyl group element $w,$ and $\sigma'=\sigma^w$ on $Z_H(\mathbb{A}_F)\setminus M^1(\mathbb{A}_F).$ Let $\mathfrak{X}$ be the set of equivalence classes $\chi=\{(M,\sigma)\}$ of these pairs, we thus have
	\begin{equation}\label{49}
	L^2\left(P\right):=L^2\left(Z_{H}(\mathbb{A}_F)N_P(\mathbb{A}_F)M_P(F)\setminus H(\mathbb{A}_F)\right)=\bigoplus_{\chi\in\mathfrak{X}}L^2\left(P\right)_{\chi},
	\end{equation}
	where $L^2\left(P\right)_{\chi}$ consists of functions $\phi\in L^2\left(Z_{H}(\mathbb{A}_F)N_P(\mathbb{A}_F)M_P(F)\setminus H(\mathbb{A}_F)\right)$ such that: for each standard parabolic subgroup $Q$ of $G,$ with $Q\subset P,$ and almost all $x\in H(\mathbb{A}_F),$ the projection of the function 
	$$
	m\mapsto x.\phi_{Q}(m)=\int_{N_Q(F)\setminus N_Q(\mathbb{A}_F)}\phi(nmx)dn
	$$
	onto the space of cusp forms in $L^2\left(Z_{H}(\mathbb{A}_F)M_Q(F)\setminus M_Q^1(\mathbb{A}_F)\right)$ transforms under $M_Q^1(\mathbb{A}_F)$ as a sum of representations $\sigma,$ in which $(M_Q,\sigma)\in\chi.$ If there is no such pair in $\chi,$ $x.\phi_{Q}$ will be orthogonal to $\mathcal{A}_0\left(Z_{H}(\mathbb{A}_F)M_Q(F)\setminus M_Q^1(\mathbb{A}_F)\right).$ Denote by $\mathcal{H}_P$ the space of such $\phi$'s. Let $\mathcal{H}_{P,\chi}$ be the subspace of $\mathcal{H}_P$ such that for any $(M,\sigma)\notin \chi,$ with $M=M_{P_1}$ and $P_1\subset P,$ we have
	$$
	\int_{M(F)\setminus M(\mathbb{A}_F)^1}\int_{N_{P_1}(F)\setminus N_{P_1}(\mathbb{A}_F)}\psi_0(m)\phi(nmx)dn=0,
	$$
	for any $\psi_0\in L^2_{cusp}\left(M(F)\setminus M(\mathbb{A}_F)^1\right)_{\sigma},$ and almost all $x.$ This leads us to Langlands' result to decompose $\mathcal{H}_P$ as an orthogonal direst sum $\mathcal{H}_P=\bigoplus_{\chi\in\mathfrak{X}}\mathcal{H}_{P,\chi}.$ Let $\mathcal{B}_P$ be an orthonormal basis of $\mathcal{H}_P,$ then we can choose $\mathcal{B}_P=\bigcup_{\chi\in\mathfrak{X}}\mathcal{B}_{P,\chi},$ where $\mathcal{B}_{P,\chi}$ is an orthonormal basis of the Hilbert space $\mathcal{H}_{P,\chi}.$ We may assume that vectors in each $\mathcal{B}_{P,\chi}$ are $K$-finite and are pure tensors.
	
	Let $H^1(\mathbb{A}_F)=\{g\in H(\mathbb{A}_F): |\lambda(g)|_{\mathbb{A}_F}=1,\ \forall\ \lambda\in X(H)_F\},$ where $X(H)_F$ is space set of $F$-rational characters of $H.$ Denote by $R$ the regular representation of $H(\mathbb{A}_F)^1$ on the Hilbert space $L^2\left(H(F)\setminus H(\mathbb{A}_F)^1\right),$ $R(\varphi)$ the operator with respect to $\varphi.$
	Let $R_P(\varphi)$ be the operator $R(\varphi)$ restricted to $L^2\left(P\right),$ and denote by $\K_{P}(x,y)$ the kernel of $R_P(\varphi).$ Then a direct computation shows that
	\begin{equation}\label{ker}
	\K_{P}(x,y)=\K_{\varphi,P}(x,y)=\int_{N_P(\mathbb{A}_F)}\sum_{\mu\in Z_H(F)\setminus M_{P}(F)}\varphi(x^{-1}\mu ny)dn.
	\end{equation}
	Then the decomposition \eqref{49}  induces a corresponding decomposition of kernels
	\begin{equation}\label{52'}
	\K_{P}(x,y)=\sum_{\chi\in\mathfrak{X}}\K_{P,\chi}(x,y),
	\end{equation}
	where each $\K_{P,\chi}(x,y)$ can be written explicitly in terms of Eisenstein series:
	\begin{equation}\label{53'}
	K_{P,\chi}(x,y)=\sum_{Q\subset P}\frac{1}{n_Q^P}\int_{i\mathfrak{a}^*_{Q}/i\mathfrak{a}^*_{H}}\sum_{\phi\in\mathcal{B}_{Q,\chi}}E_Q^P\left(x,\mathcal{I}_{Q}(\lambda, \varphi)\phi,\lambda\right)\overline{E_Q^P\left(y,\phi,\lambda\right)}d\lambda,
	\end{equation}
	where $n_Q^P$ is the number of of chambers in $\mathfrak{a}_Q/\mathfrak{a}_P,$ $\mathcal{B}_{Q,\chi}$ is an orthonormal basis of the Hilbert space $\mathcal{H}_{Q,\chi},$ and
	\begin{align*}
	&E_Q^P\left(x,\phi,\lambda\right)=\sum_{\delta\in M_{P}(F)\cap Q(F)\setminus M_P(F)}\phi(\delta x)e^{(\lambda+\rho_Q)H_Q(\delta x)},\\
	&E_Q^P\left(x,\mathcal{I}_{Q}(\lambda, \varphi)\phi,\lambda\right)=\int_{H(\mathbb{A}_F)}\varphi(y)E_Q^P\left(xy,\phi,\lambda\right)dy.
	\end{align*}

	\subsection{Spectral Expansion of $I_{\infty}^{(1)}(s)$} By definition, we have, by \eqref{{1}}, that 
	\begin{align*}
	I_{\infty}^{(1)}(s)=\int_{Z_G(\mathbb{A}_F)N(\mathbb{A}_F)\backslash G(\mathbb{A}_F)}\int_{[N]}\int_{[N]}\K_{\infty}(n_1x,n_2x)\theta(n_1)\bar{\theta}(n_2)dn_1dn_2f(x,s)dx.
	\end{align*}
	Denote by $\widehat{\K}_{\infty}(x,y)$ the Fourier expansion of $\K_{\infty}(x,y),$ namely,
	\begin{equation}\label{51}
	\widehat{\K}_{\infty}(x,y):=\int_{N(F)\backslash N(\mathbb{A}_F)}\int_{N(F)\backslash N(\mathbb{A}_F)}\K_{\infty}(n_1x,n_2y)\theta(n_1)\bar{\theta}(n_2)dn_1dn_2.
	\end{equation}
	For our particular purpose here, we take in Section \ref{7.1} that $H=G$ and the standard parabolic subgroup $P=G$ as well. Then a further computation leads to the explicit spectral decomposition of $\K_{\infty}(x,y),$ expressing it as 
	\begin{equation}\label{52}
	\sum_{\chi\in\mathfrak{X}}\sum_{P\in \mathcal{P}}\frac{1}{k_P!(2\pi)^{k_P}}\int_{i\mathfrak{a}^*_P/i\mathfrak{a}^*_G}\sum_{\phi\in \mathfrak{B}_{P,\chi}}E(x,\mathcal{I}_P(\lambda,\varphi)\phi,\lambda)\overline{E(y,\phi,\lambda)}d\lambda,
	\end{equation}
   where $\mathcal{P}$ is the set of standard parabolic subgroups which are not $G;$ and for any such $P,$ $k_P$ is the number of blocks of the Levi part of $P.$ Also, \eqref{52} converges absolutely. (see \cite{Art79}). Since $N(F)\backslash N(\mathbb{A}_F)$ is compact, $\widehat{\K}_{\infty}(x,y)$ is then well defined. 
    \begin{lemma}\label{25'}
    Let notation be as before. Then one can interchange the integrals in the definition of $\widehat{\K}_{\infty}(x,y),$ namely, one has 
	\begin{equation}\label{53}
	\widehat{\K}_{\infty}(x,y)=\sum_{\chi\in\mathfrak{X}}\sum_{P\in \mathcal{P}}\frac{1}{k_P!(2\pi)^{k_P}}\int_{i\mathfrak{a}^*_P/i\mathfrak{a}^*_G}\sum_{\phi\in \mathfrak{B}_{P,\chi}}W_{\Eis,1}(x;\lambda)\overline{W_{\Eis,2}(y;\lambda)}d\lambda,
	\end{equation}
	where the Fourier coefficient $W_{\Eis,1}(x;\lambda)=W_{Eis}(x,\mathcal{I}_P(\lambda,\varphi)\phi,\lambda)$ is defined by
	\begin{align*}
	W_{Eis}(x,\mathcal{I}_P(\lambda,\varphi)\phi,\lambda):=\int_{N(F)\backslash N(\mathbb{A}_F)}E(n_1x,\mathcal{I}_P(\lambda,\varphi)\phi,\lambda)\theta(n_1)dn_1;
	\end{align*}
	and similarly, $W_{\Eis,2}(y;\lambda)=W_{\Eis}(y,\phi,\lambda)$ is given as
	\begin{align*}
	W_{\Eis}(y,\phi,\lambda):=\int_{N(F)\backslash N(\mathbb{A}_F)}E(n_2y,\phi,\lambda)\theta(n_2)dn_2.
	\end{align*}
	\end{lemma}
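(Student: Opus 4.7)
The plan is to deduce \eqref{53} from the absolute (locally uniform) convergence of the spectral expansion \eqref{52} combined with a Fubini argument on the compact set $[N]\times[N]$. The point is that $N(F)\backslash N(\mathbb{A}_F)$ is compact and $\theta$ is a bounded unitary character, so once we justify that \eqref{52} converges absolutely at $(n_1 x, n_2 y)$ with a bound that is integrable over $(n_1,n_2)\in [N]\times[N]$, we may move the compact integration past the spectral sum/integral to produce the Whittaker coefficients $W_{\Eis,1}(x;\lambda)$ and $W_{\Eis,2}(y;\lambda)$ by their very definition.

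The essential input is therefore the absolute convergence of
\begin{equation*}
\sum_{\chi\in\mathfrak{X}}\sum_{P\in \mathcal{P}}\frac{1}{k_P!(2\pi)^{k_P}}\int_{i\mathfrak{a}^*_P/i\mathfrak{a}^*_G}\sum_{\phi\in \mathfrak{B}_{P,\chi}}\bigl|E(x',\mathcal{I}_P(\lambda,\varphi)\phi,\lambda)\bigr|\,\bigl|E(y',\phi,\lambda)\bigr|\,d\lambda,
\end{equation*}
locally uniformly in $(x',y')$. For this one invokes Arthur's estimates from \cite{Art79}: since $\varphi\in\mathcal{H}(G(\mathbb{A}_F),\omega)$ is $K$-finite and smooth of compact support modulo center, the operator $\mathcal{I}_P(\lambda,\varphi)$ is trace class and acts as $0$ on all but finitely many $K$-types independent of $\lambda$. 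Smoothness of $\varphi$ at archimedean places allows one to integrate by parts in $\lambda$ with any differential operator on $\mathfrak{a}_P^*$, giving Schwartz decay in $\lambda$. Combined with standard (polynomial in height, polynomial in $\lambda$) bounds on Eisenstein series via their constant terms on a fixed compact set of $(x',y')$, one obtains absolute convergence of the displayed sum.

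Given this, we take $(x',y')=(n_1 x, n_2 y)$ and observe that the pointwise majorant is bounded uniformly in $(n_1,n_2)$ as these vary over the compact $[N]\times[N]$ (since the Siegel set covering is unaffected by multiplication by elements of a compact set). Tonelli then allows us to interchange the integration over $[N]\times[N]$ against $\theta(n_1)\overline{\theta(n_2)}$ with the sum over $\chi$, the finite sum over $P$, the integral over $i\mathfrak{a}^*_P/i\mathfrak{a}^*_G$, and the effectively finite sum over $\mathfrak{B}_{P,\chi}$. The $n_1$-integral against $\theta(n_1)$ of $E(n_1 x,\mathcal{I}_P(\lambda,\varphi)\phi,\lambda)$ is exactly the Whittaker coefficient $W_{\Eis,1}(x;\lambda)$, and similarly the $n_2$-integral yields $\overline{W_{\Eis,2}(y;\lambda)}$, giving \eqref{53}. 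The main technical obstacle is the absolute convergence estimate for the spectral expansion: the $K$-finiteness gives the finiteness of the inner $\phi$-sum while $K_\infty$-smoothing gives Schwartz behavior in $\lambda$, but one must combine these uniformly in $(x',y')$ on a compact set of $G(\mathbb{A}_F)$, which is precisely what Arthur's trace-class argument delivers.
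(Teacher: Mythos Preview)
Your overall strategy is right and matches the paper: show that one may pass the compact $[N]\times[N]$–integration against $\theta(n_1)\bar\theta(n_2)$ through the spectral expansion. The difference is in how you justify the dominated-convergence/Fubini step, and here your argument has a gap.

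You assert absolute convergence of
\[
\sum_{\chi}\sum_{P}\frac{1}{c_P}\int_{i\mathfrak{a}^*_P/i\mathfrak{a}^*_G}\sum_{\phi\in\mathfrak{B}_{P,\chi}}\bigl|E(x',\mathcal{I}_P(\lambda,\varphi)\phi,\lambda)\bigr|\,\bigl|E(y',\phi,\lambda)\bigr|\,d\lambda
\]
locally uniformly in $(x',y')$, via $K$-finiteness plus integration by parts in $\lambda$. These tools do give, for each fixed $\chi$ and $P$, finiteness of the $\phi$-sum and Schwartz decay of $\langle\mathcal{I}_P(\lambda,\varphi)\phi,\phi_1\rangle$ in $\lambda$. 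What they do \emph{not} give, in the form you state them, is control of the sum over the infinitely many cuspidal data $\chi$: for a fixed $K$-type there are still infinitely many contributing $\chi$ (think of unramified twists), and the constants in your $\lambda$-decay and Eisenstein bounds depend on $\chi$. Summing those bounds over $\chi$ is exactly the nontrivial point, and ``Arthur's trace-class argument'' does not package this in the form you need here.

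The paper avoids this by the standard positivity trick from \cite{Art78}: write $\varphi$ as a finite sum of convolutions and reduce, via Cauchy--Schwarz, to $\varphi=\varphi_j*\varphi_j^*$ with $x=y$. Then the inner integrand becomes $\sum_{\phi}|E(x,\mathcal{I}_P(\lambda,\varphi_j)\phi,\lambda)|^2\ge 0$, so the full spectral sum is an increasing limit of nonnegative partial kernels, each dominated by the geometric kernel $\K_j(x,x)=\sum_{\gamma}\varphi_j*\varphi_j^*(x^{-1}\gamma x)$. This last object has no $\chi$-sum at all and is locally bounded, so integrating over the compact $[N]$ is immediate. If you want to keep your write-up, replace the direct absolute-convergence claim by this positivity reduction; the rest of your argument then goes through.
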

    \begin{proof}
    The main idea of the proof is similar to that in \cite{Art78} (see p. 928-934).
    For any $P\in \mathcal{P}$, let $c_P=k_P!(2\pi)^{k_P}.$ Substitute \eqref{52} into \eqref{51} to get a formal expansion of $\widehat{\K}_{\infty}(x,y),$ which is clearly dominated by the following formal expression
    \begin{align*}
    \int_{[N]}\int_{[N]}\sum_{\chi\in\mathfrak{X}}\sum_{P\in \mathcal{P}}\frac{1}{c_P}\bigg|\int_{i\mathfrak{a}^*_P/i\mathfrak{a}^*_G}\sum_{\phi\in \mathfrak{B}_{P,\chi}}E(x,\mathcal{I}_P(\lambda,\varphi)\phi,\lambda)\overline{E(y,\phi,\lambda)}d\lambda\bigg|dn_1dn_2.
    \end{align*}
    Denote by $J_G$ the above integral. We will show $J_G$ is finite, hence \eqref{53} is well defined. One can write the test function $\varphi$ as a finite linear combination of convolutions $\varphi_1*\varphi_2$ with functions $\varphi_i\in C_c^r\left(G(\mathbb{A}_F)\right),$ whose archimedean components are differentiable of arbitrarily high order $r.$ Then one applies H\"older inequality to it. Clearly it is enough to deal with the special case that $\varphi=\varphi_j*\varphi_j^*,$ where $\varphi_j^*(x)=\overline{\varphi_j(x^{-1})},$ and $x=y.$ Note that $\mathfrak{B}_{P,\chi}$ is finite due to the $K$-finiteness assumption, and Eisenstein series converge absolutely for our $\lambda,$ hence the integrand 
    \begin{align*}
    \sum_{\phi\in \mathfrak{B}_{P,\chi}}E(x,\mathcal{I}_P(\lambda,\varphi)\phi,\lambda)\overline{E(x,\phi,\lambda)}=\sum_{\phi\in \mathfrak{B}_{P,\chi}}E(x,\mathcal{I}_P(\lambda,\varphi_j)\phi,\lambda)\overline{E(x,\mathcal{I}_P(\lambda,\varphi_j)\phi,\lambda)}
    \end{align*}
    is well defined and obviously nonnegative. In fact, the double integral over $\lambda$ and $\phi$ can be expressed as an increasing limit of nonnegative functions, each of which is the kernel of the restriction of $R(\varphi_j*\varphi_j^*),$ a positive semidefinite operator, to an invariant subspace. Since this limit is bounded by the nonnegative function 
    $$
    \K_j(x,x)=\sum_{\gamma\in Z(F)\backslash G(F)}\varphi_j*\varphi_j^*(x^{-1}\gamma x),
    $$
    and the domain $[N]=N(F)\backslash N(\mathbb{A}_F)$ is compact, the integral $J_G$ converges.
    \end{proof}
    Since $\mathfrak{B}_{P,\chi}$ is finite, and Eisenstein series converges absolutely for any $\lambda\in i\mathfrak{a}^*_P/i\mathfrak{a}^*_G,$ we can apply spectral decomposition to get 
    \begin{align*}
    E(x,\mathcal{I}_P(\lambda,\varphi)\phi_2,\lambda)=\sum_{\phi_1\in \mathfrak{B}_{P,\chi}}\langle\mathcal{I}_P(\lambda,\varphi)\phi_2,\phi_1\rangle E(x,\phi_1,\lambda),\quad \forall \ \phi_2\in \mathfrak{B}_{P,\chi}.
    \end{align*}
	For $1\leq i\leq 2,$ define the Whittaker function associated to $\phi_{i}$ parameterized by $\lambda\in i\mathfrak{a}^*_P/i\mathfrak{a}^*_G$ as 
	\begin{align*}
	W_{i}\left(x,\lambda\right)=W_{i}\left(x,\phi_{\alpha},\lambda\right):=\int_{N(\mathbb{A}_F)}\phi_{i}(w_0nx)e^{(\lambda+\rho_P)H_P(w_0 nx)}\theta(n)dn,
	\end{align*}
	where $w_0$ is the longest element in the Weyl group $W_n.$ 
	
	Since the residual spectrum is degenerate, that is, has no Whittaker model, the integral is zero unless the representation is cuspidal. Hence, by Bruhat decomposition one can write the non-constant terms $W_{\Eis,i}(x;\lambda)$ in terms of $W_{i}\left(x,\lambda\right).$ Set $X_G=Z_G(\mathbb{A}_F)N(F)\backslash G(\mathbb{A}_F),$ $c_P=k_P!(2\pi)^{k_P},$ and $\Lambda^*=i\mathfrak{a}^*_P/i\mathfrak{a}^*_G.$ Then by \eqref{53} one can rewrite (at least formally) $I_{\infty}^{(1)}(s)$ as
	\begin{equation}\label{11}
	\int_{X_G}\sum_{\chi\in\mathfrak{X}}\sum_{P\in \mathcal{P}}\frac{1}{c_P}\int_{\Lambda^*}\mathop{\sum\sum}_{\phi_1, \phi_2}\langle\mathcal{I}_P(\lambda,\varphi)\phi_2,\phi_1\rangle W_{1}(x;\lambda)\overline{W_{2}(x;\lambda)}d\lambda f(x,s)dx,
	\end{equation}
	where $\phi_i\in \mathfrak{B}_{P,\chi},$ $1\leq i\leq 2.$
	 
	To compute the above integral, one nice approach is to change the order of integrals so that we integrate over $X_G$ first, because this type of integral is exactly integral representation of Rankin-Selberg convolutions (parametrized by continuous spectrum). To verify that one can indeed interchange the integrals, we utilize Arthur's truncation operator $\Lambda^T.$ To start with, let us briefly recall its definition. 
	
	For any parabolic subgroup $P$ of $G,$ let $\widehat{\tau}_P$ be the characteristic function of the subset $\{t\in \mathfrak{a}_P:\ \varpi(t)>0,\ \forall\ \varpi\in\widehat{\Delta}_P\}$ of $\mathfrak{a}_P.$ Let $T\in \mathfrak{a}_0^+,$ we say $T$ is suitably regular if $T(\alpha^{\vee})$ is large, for each simple coroot $\alpha^{\vee}.$ For any suitably regular point $T\in\mathfrak{a}_0^{+}$ and any function $\phi\in\mathcal{B}_{loc}\left(Z_G(\mathbb{A}_F)G(F)\backslash G(\mathbb{A}_F)\right),$ define the truncation function $\Lambda^T\phi$ to be the function in $\mathcal{B}_{loc}\left(Z_G(\mathbb{A}_F)G(F)\backslash G(\mathbb{A}_F)\right)$ such that
	\begin{align*}
	\Lambda^T\phi(x)=\sum_{P}(-1)^{\dim(A_P/A_G)}\sum_{\delta\in P(F)\backslash G(F)}\widehat{\tau}_P\left(H_P(\delta x)-T\right) \int_{[N_P]}\phi(n\delta x)dn.
	\end{align*}
	The inner sum may be taken over a finite set depending on $x,$ while the integrand is a bounded function of $n.$
	
	Before moving on, we still need to choose a height function $\|\cdot\|$ on $G(\mathbb{A}_F)$ to describe properties of the truncation operator $\Lambda^T$ quantitatively.
	
	For any $x=(x_v)_v\in G(\mathbb{A}_F).$ We define $\|x_v\|_v=\max_{i,j}|x_{i,j,v}|_v$ if $v<\infty;$ and 
	$$
	\|x_v\|_v=\Big[\sum_{i,j}|x_{i,j,v}|_v^2\Big]^{1/2},\ \text{if}\ v\mid \infty.
	$$
	Then $\|x_v\|_v=1$ for almost all $v.$ The height function $\|x\|=\prod_v\|x_v\|_v$ is therefore well defined by a finite product. Then one has $\|xy\|\leq \|x\|\cdot\|y\|,$ $\forall$ $x,y\in G(\mathbb{A}_F).$ Also one can check that there is some absolute constants $C_0$ and $N_0$ such that for any $x\in G(\mathbb{A}_F),$ we have $\|x\|^{-1}\leq C_0\|x\|^{N_0},$ and
	\begin{align*}
	\#\{x\in G(F):\ \|x\|\leq t\}\leq C_0t^{N_0},\ t\geq 0.
	\end{align*}
	Note that the test function $\varphi$ is compactly supported, then one sees that 
	$$
	|\K(x,y)|=|\K_{\varphi}(x,y)|\leq \sum_{\substack{\gamma\in Z_G(F)\backslash G(F)\\ \|\gamma\|\leq \|x\cdot\supp\varphi\cdot y^{-1}\|}}\sup_{g\in Z_G(\mathbb{A}_F)\backslash G(\mathbb{A}_F)}|\varphi(g)|.
	$$
	Hence $|\K(x,y)|\ll_{\varphi}\#\{\gamma\in G(F):\ \|\gamma\|\leq \|x\cdot\supp\varphi\cdot y^{-1}\|\}\ll_{\varphi}\|x\|^{N_0'}\cdot\|y\|^{N_0'},$ where $N_0'$ is some absolute constant, i.e., independent of the choice of test function $\varphi.$ Thus there exists some constant $c(\varphi)$ such that $|\K(x,y)|\leq c(\varphi)\|x\|^{N_0'}\cdot\|y\|^{N_0'}.$
	
	Now we consider derivatives of the kernel $\K(x,y).$ Suppose $X$ and $Y$ are left invariant differential operators on $G(\mathbb{A}_{F,\infty})$ of degrees $d_1$ and $d_2.$ Suppose also that the test function $\varphi\in C_c^r\left(G(\mathbb{A}_F)\right),$ for some large positive $r.$ For any cuspidal datum $\chi\in\mathfrak{X},$ define the corresponding kernel function as
	\begin{equation}\label{56'''}
	\K_{\chi}(x,y)=\sum_{P\in \mathcal{P}}\frac{1}{k_P!(2\pi)^{k_P}}\int_{i\mathfrak{a}^*_P/i\mathfrak{a}^*_G}\sum_{\phi\in \mathfrak{B}_{P,\chi}}E(x,\mathcal{I}_P(\lambda,\varphi)\phi,\lambda)\overline{E(y,\phi,\lambda)}d\lambda,
	\end{equation}
	which is convergent absolutely according to the proof of Lemma \ref{25'}. Then there exists some function $\varphi_{X,Y}\in C_c^{r-d_1-d_2}\left(G(\mathbb{A}_F)\right)$ such that its corresponding kernel function $\K_{\chi,\varphi_{X,Y}}(x,y)$ is equal to $XY\K_{\chi}(x,y),$ for all $x,y\in G(\mathbb{A}_F).$ Then one can apply the above estimate for kernel functions to obtain
	\begin{equation}\label{55}
	\sum_{\chi}\Big|XY\K_{\chi}(x,y)\Big|\leq c(\varphi_{X,Y})\|x\|^{N_0'}\cdot\|y\|^{N_0'},\ \forall\ x,y\in G(\mathbb{A}_F).
	\end{equation}
	Also, for any function $H(x,y)$ in $\mathcal{B}_{loc}\left(G(F)\backslash G(\mathbb{A}_F)^1 \right),$ define the partial Fourier transform of $H(x,y)$ with respect to the $x$-variable as 
	\begin{align*}
	\mathcal{F}_1H(x,y):=\int_{N(F)\backslash N(\mathbb{A}_F)}H(n_1x,y)\theta(n_1)dn_1.
	\end{align*}
	This is well defined since $H(x,y)$ is $N(F)$-invariant on the $x$-variable, and the quotient $N(F)\backslash N(\mathbb{A}_F)$ is compact. 
	
	Let $A_{F,\infty}=Z_G(\mathbb{A}_{F,\infty})\backslash T(\mathbb{A}_{F,\infty}).$ For any $c>0,$ let $A_{c,\infty}$ be set consisting of all
	\begin{equation}\label{57'''}
	a=\begin{pmatrix}
	a_1a_2\cdots a_{n-1}& &&&\\
	&a_1a_2\cdots a_{n-2}&&&\\
	&&\ddots&&\\
	&&&a_1&\\
	&&&&1
	\end{pmatrix}\in A(\mathbb{A}_{F,\infty})
	\end{equation}
	with $|a_i|_{\infty}\geq c$ for $1\leq i\leq n-1.$ Clearly $A_{c,\infty}\subsetneq Z_G(\mathbb{A}_{F,\infty})\backslash T(\mathbb{A}_{F,\infty}),$ $\forall$ $c>0.$
	
	A Siegel set $\mathcal{S}_{c}$ for $c>0$ is defined to be the set of elements of the form $nak,$ $n\in \Omega\subseteq N(\mathbb{A}_F),$ compact, such that $N(F)\Omega=N(\mathbb{A}_F);$ $k\in K,$ and $a\in A_{\geq c}=\{a\in A(\mathbb{A}_F):\ |\alpha_i(a)|\geq c,\ 1\leq i\leq n-1\}.$ Then from reduction theory (see \cite{Bor69}), there exists some $c_0>0$ such that $G(\mathbb{A}_F)=Z_G(\mathbb{A}_F)G(F)\mathcal{S}_{c_0}.$ Denote by $S_0=S_{c_0},$ and we may assume that $0<c_0<1.$ Let $R$ be a function on $Z_G(\mathbb{A}_F)G(F)\backslash G(\mathbb{A}_F),$ we say that $R$ is slowly increasing if there exists some $r>0$ and $C>0$ such that $R(x)\leq C\|x\|^r,$ $\forall$ $x\in G(\mathbb{A}_F);$ we say $R$ is rapidly decreasing if for any positive integer $N$ and any Siegel set $\mathcal{S}_c$ for $G(\mathbb{A}_F),$ there is a positive constant $C$ such that $|R(x)|\leq C\|x\|^{-N}$ for every $x\in \mathcal{S}_c.$ 
	
	We will apply the truncation operator $\Lambda^T$ to the kernel functions $\K_{\chi}(x,y)$ and show that it is absolutely integrable twisted by any slowing increasing functions over $A_{h,\infty}A_{\varphi,fin}\cdot K$ for some $h>0,$ where $A_{\varphi,fin}$ be a compact subgroup of $A_{fin}=Z_G(\mathbb{A}_{F,fin})\backslash T(\mathbb{A}_{F,fin})$ depending on $\varphi.$ 
	\begin{lemma}\label{26'}
	Let notation be as above. Then $\sum_{\chi}\Big|\mathcal{F}_1\Lambda^T_2\K_{\chi}(x,x)\Big|$ is rapidly decreasing on $\mathcal{S}_0.$ Moreover, let $R$ be a slowly increasing function on $G(\mathbb{A}_F).$ Then we have
	\begin{equation}\label{55'}
	\int_{\mathcal{S}_{0}}\sum_{\chi}\Big|\mathcal{F}_1\Lambda^T_2\K_{\chi}(x,x)\cdot R(x)\Big|dx<\infty,
	\end{equation}
	where $\chi$ runs over all the equivalent classes of cuspidal datum.
	\end{lemma}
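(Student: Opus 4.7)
The plan is to combine Arthur's rapid-decrease estimate for truncated Eisenstein series on Siegel sets with the moderate growth of generic Whittaker coefficients, then to control the spectral sum over $\chi$ via the $K$-finiteness of $\varphi$ and integration by parts in the continuous parameter $\lambda$. The integrability statement \eqref{55'} will follow immediately from the rapid-decrease claim, since the Siegel set $\mathcal{S}_0$ has polynomial volume growth and $R$ is only slowly increasing.

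First I would commute $\mathcal{F}_1$ and $\Lambda^T_2$ with the spectral integral in \eqref{56'''}; the absolute convergence needed to justify the interchange was already established in the proof of Lemma \ref{25'}. Because residual Eisenstein series carry no Whittaker model, only the properly induced cuspidal part survives after applying $\mathcal{F}_1$, so
\[
\mathcal{F}_1\Lambda^T_2\K_{\chi}(x,x)=\sum_{P\in\mathcal{P}}\frac{1}{c_P}\int_{i\mathfrak{a}^*_P/i\mathfrak{a}^*_G}\sum_{\phi\in\mathfrak{B}_{P,\chi}}W\bigl(x,\mathcal{I}_P(\lambda,\varphi)\phi,\lambda\bigr)\,\overline{\Lambda^{T}E(x,\phi,\lambda)}\,d\lambda,
\]
where $W$ denotes the $\theta$-Jacquet integral of the corresponding Eisenstein series. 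I would then invoke Arthur's standard estimate that, for $x\in\mathcal{S}_0$ and any $N>0$, $|\Lambda^{T}E(x,\phi,\lambda)|\ll_N \|x\|^{-N}$, with constants depending polynomially on $1+\|\lambda\|$ and on a Sobolev norm of $\phi$, together with the moderate-growth bound $|W(x,\mathcal{I}_P(\lambda,\varphi)\phi,\lambda)|\ll \|x\|^{N_0}$. The product is therefore rapidly decreasing in $x$ termwise.

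Controlling the sum over $\chi$ and the integral over $\lambda$ uniformly in $x$ is the main obstacle. For this I would use two devices: integration by parts in $\lambda$ against the smoothness of $\varphi_\infty$ to gain arbitrary polynomial decay in $1+\|\lambda\|$, and Arthur's trick of applying invariant differential operators $X$, $Y$ on $G(\mathbb{A}_{F,\infty})$ to the kernel and invoking the polynomial bound \eqref{55} to dominate $\sum_\chi$ by a single polynomially bounded expression. Combined with a Cauchy--Schwarz argument against the $L^2$ estimate for truncated Eisenstein series coming from Parseval on $\mathcal{H}_{P,\chi}$, this should yield the uniform bound $\sum_\chi\bigl|\mathcal{F}_1\Lambda^T_2\K_\chi(x,x)\bigr|\ll_N \|x\|^{-N}$ for every $N>0$. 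The integrability claim \eqref{55'} is then immediate, because the Siegel volume form $\delta_B^{-1}(a)\,dn\,d^{\times}\!a\,dk$ grows only polynomially in the torus direction, which is dominated by any negative power of $\|x\|$.
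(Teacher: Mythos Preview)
Your approach is more elaborate than what is actually needed, and the paper's proof takes a substantially shorter route. The key simplification you miss is that one does \emph{not} need to expand $\K_\chi$ into the spectral integral at all. Instead, since $N(F)\backslash N(\mathbb{A}_F)$ is compact, one has the trivial bound
\[
\sum_{\chi}\bigl|\mathcal{F}_1\Lambda^T_2\K_{\chi}(x,x)\bigr|\;\le\;\vol\bigl(N(F)\backslash N(\mathbb{A}_F)\bigr)\cdot\sum_{\chi}\bigl|\Lambda^T_2\K_{\chi}(x,x)\bigr|,
\]
so the Whittaker coefficient $\mathcal{F}_1$ plays no analytic role whatsoever. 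The paper then invokes Arthur's Lemma~1.4 in \cite{Art80} directly, taking the measure space $(\Omega,d\omega)$ to be $(\mathfrak{X},\text{counting})$: this lemma converts $\sup_{x\in\mathcal{S}}\|x\|^N\sum_\chi|\Lambda^T_2\K_\chi(x,y)|$ into $\sup_{y}\|y\|^{-N_1}\sum_i\sum_\chi|(Y_i)_2\K_\chi(x,y)|$ for finitely many differential operators $Y_i$, and the latter is controlled by the polynomial kernel bound \eqref{55}. Setting $x=y$ finishes.

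Your expansion into $W\cdot\overline{\Lambda^T E}$ forces you to control the $\chi$-sum and $\lambda$-integral \emph{after} already separating the spectral pieces; the devices you propose (integration by parts in $\lambda$, Cauchy--Schwarz against Parseval) are not obviously sufficient without further work, since summing Sobolev-type constants over all cuspidal data $\chi$ is exactly the kind of convergence question Arthur's lemma is designed to bypass. You do mention the differential-operator trick and \eqref{55}, which are the right ingredients, but you deploy them inside a framework that has already discarded their natural setting (the unexpanded kernel $\K_\chi(x,y)$). The paper's route avoids this detour entirely.
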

	\begin{proof}
	Clearly for any given $x\in G(\mathbb{A}_F),$ $\mathcal{F}_1\K_{\chi}(x,y)$ is a well defined function (with respect to $y$) in $\mathcal{B}_{loc}\left(Z_G(\mathbb{A}_F)G(F)\backslash G(\mathbb{A}_F)\right).$ Then according to Lemma 1.4 in \cite{Art80} one sees that given a Siegel set $\mathcal{S},$ positive integers $N$ and $N_1,$ and an open compact subgroup $K_0$ of $G(\mathbb{A}_{F,fin}),$ one can choose a finite set $\{X_i\}$ of left invariant differential operators on $Z_G(\mathbb{A}_{F,\infty})\backslash G(\mathbb{A}_{F,\infty})$ and a positive integer $r$ with the property that if $(\Omega,d\omega)$ is a measure space and $\phi(\omega)\mapsto \phi(\omega,x)$ is any measurable function from $\Omega$ to $C^r\left(Z_G(\mathbb{A}_{F})G(F)\backslash G(\mathbb{A}_F)/K_0\right),$ then 
	\begin{equation}\label{56}
	\sup_{x\in \mathcal{S}}\left(\|x\|^N \int_{\Omega}|\Lambda^T\phi(\omega,x)|d\omega\right)\leq \sup_{x\in \mathcal{S}}\left(\|x\|^{-N_1}\sum_{i} \int_{\Omega}|X_i\phi(\omega,x)|d\omega\right).
	\end{equation}
	 Since our test functions lie in the Hecke algebra, we may assume that $\varphi$ is biinvariant under $K_0,$ where $K_0$ is an open compact subgroup of $G(\mathbb{A}_{F,fin}).$ Take the measure space $(\Omega,d\omega)$ to be $(\mathfrak{X},d),$ where $d$ is the discrete measure. Also, set $N_1=N_0'$ and $N$ large, then we can find a finite set $\{Y_i\}$ of left invariant differential operators on $G(\mathbb{A}_{F,\infty})$ such that \eqref{56} holds, which under our current particular choices becomes 
	\begin{align*}
	\sup_{y\in \mathcal{S}_0}\left(\|y\|^N \sum_{\chi}\Big|\Lambda^T_2\K_{\chi}(x,y)\Big|\right)&\leq \sup_{y\in G(\mathbb{A})}\left(\sum_{\chi}\|y\|^{-N_1} \Big|\sum_i(Y_i)_2\K_{\chi}(x,y)\Big|\right)\\
	&\leq \sup_{y\in G(\mathbb{A})}\left(\sum_i\|y\|^{-N_1} \sum_{\chi}\Big|(Y_i)_2\K_{\chi}(x,y)\Big|\right),
	\end{align*} 
	where $(Y_i)_2$ above indicates that the differential operator acts on the $y$-variable. By the estimate \eqref{55}, the right hand side of the above inequality is bounded by
	\begin{align*}
	\sup_{y\in G(\mathbb{A})}\left(\sum_i\|y\|^{-N_1}c(\varphi_{I,Y_i})\|x\|^{N_0'}\cdot\|y\|^{N_0'}\right)\leq \sum_{i}c(\varphi_{I,Y_i})\|x\|^{N_1},
	\end{align*} 
	for any $x\in \mathcal{S}_0.$ Then setting $x=y,$ $V_0=\vol\left(N(F)\backslash N(\mathbb{A}_F)\right),$ we see that 
	\begin{align*}
	\sum_{\chi}\Big|\mathcal{F}_1\Lambda^T_2\K_{\chi}(x,x)\Big|\leq V_0\sum_{\chi}\Big|\Lambda^T_2\K_{\chi}(x,x)\Big|\leq V_0\sum_{i}c(\varphi_{I,Y_i})\|x\|^{N_1-N},
	\end{align*}
	for any $x\in \mathcal{S}_0,$ where $\chi$ runs over all the equivalent classes of cuspidal datum. Also, since $R(x)$ is slowly increasing, then by taking $N$ to be large enough we conclude that $\sum_{\chi}\Big|\mathcal{F}_1\Lambda^T_2\K_{\chi}(x,x)\Big|\cdot |R(x)|$ is a bounded function on $\mathcal{S}_{0},$ hence it is integrable.
	\end{proof}
	
	Let $S_n$ be the permutation group on $n$ letters. Let $\sigma\in S_n.$ For any $\mathfrak{a}\in A_{F,\infty},$ write it in its Iwasawa normal form given in \eqref{57'''}. Let $\mathfrak{a}'_{i}=a_1a_2\cdots a_{n-i},$ $1\leq i\leq n-1;$ and set $\mathfrak{a}'_{n}=1.$ For any $1\leq i\leq n,$ let $\mathfrak{a}_{i}=a_{1}^{-1}\cdots a_{\sigma(n)}^{-1}\mathfrak{a}'_{i}.$ Set $\mathfrak{a}_{\sigma}=\diag(\mathfrak{a}_{\sigma(1)},\mathfrak{a}_{\sigma(2)},\cdots,\mathfrak{a}_{\sigma(n)}).$ Clearly $\mathfrak{a}_{\sigma(n)}=1,$ and each $\mathfrak{a}_{\sigma(i)}$ is a rational function of $a_1,\cdots, a_{n-1}.$ Moreover, each $\mathfrak{a}_{\sigma(i)}$ is a monomial, $1\leq i\leq n-1.$ Note that $\mathfrak{a}_{\sigma}$ is of the form in \eqref{57'''}. So $\sigma$ induces a well defined map $A_{F,\infty}\longrightarrow A_{F,\infty},$ $\mathfrak{a}\mapsto \mathfrak{a}_{\sigma}.$ Denote by $\iota_{\sigma}$ this map. Then $\iota_{\sigma}$ is actually a bijection from $A_{F,\infty}$ to itself. Write $\mathfrak{a}_{\sigma(i)}=\mathfrak{a}_{\sigma(i)}(a_1,a_2,\cdots,a_{n-1})$ to indicate that $\mathfrak{a}_{\sigma(i)}$ is a fractional function of $a_1, a_2,\cdots a_{n-1}.$ For any $c>0,$ let
	\begin{align*}
	\mathcal{H}^c_{\sigma}=\Bigg\{(a_1,a_2,\cdots,a_{n-1}):\ \Big|\frac{\mathfrak{a}_{\sigma(i)}(a_1,a_2,\cdots,a_{n-1})}{\mathfrak{a}_{\sigma(i+1)}(a_1,a_2,\cdots,a_{n-1})}\Big|_{\infty}\geq c,\ 1\leq i\leq n-1\Bigg\}.
	\end{align*}
	Let $S_{n}^{reg}:=\{\sigma\in S_n:\ \sigma(i)\leq \sigma(i+1),\ 3\leq i\leq n-1\}.$ Let $0<c\leq 1.$ Denote by \begin{align*}
	\iota(A_{F,\infty})^{reg}=\big\{(a_1,a_2,\cdots,a_{n-1})\in \mathbb{G}_m(\mathbb{A}_{F,\infty})^{n-1}:\ \big|a_i\big|_{\infty}\geq c,\ 1\leq i\leq n-3\big\}.
	\end{align*}
	\begin{lemma}\label{26''}
		Let notation be as above. Let $n\geq 4.$ Then one has
		\begin{equation}\label{63'''}
		\iota(A_{F,\infty})^{reg}\subseteq \bigcup_{\sigma\in S_{n}^{reg}}\mathcal{H}^c_{\sigma}.
		\end{equation}
	\end{lemma}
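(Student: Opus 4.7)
The plan is to translate the statement into a question about orderings of log-absolute values and then handle it by a combinatorial case analysis. Setting $b_i = \log|\mathfrak{a}'_i|_\infty$ for $1 \leq i \leq n$, the definition $\mathfrak{a}'_i = a_1 \cdots a_{n-i}$ gives $b_i - b_{i+1} = \log|a_{n-i}|_\infty$. The hypothesis $(a_1,\ldots,a_{n-1}) \in \iota(A_{F,\infty})^{\mathrm{reg}}$ becomes $b_k - b_{k+1} \geq \log c$ for $3 \leq k \leq n-1$, while $b_1 - b_2$ and $b_2 - b_3$ (corresponding to $\log|a_{n-1}|_\infty$ and $\log|a_{n-2}|_\infty$) are unconstrained. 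The target condition $(a_1,\ldots,a_{n-1}) \in \mathcal{H}^c_\sigma$ becomes $b_{\sigma(i)} - b_{\sigma(i+1)} \geq \log c$ for all $1 \leq i \leq n-1$.

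First I would observe that for $\sigma \in S_n^{\mathrm{reg}}$, the positions $\sigma(3) < \sigma(4) < \cdots < \sigma(n)$ are exactly the elements of $\{1,\ldots,n\} \setminus \{\sigma(1),\sigma(2)\}$ listed in increasing order, so $\sigma$ is completely parameterized by the ordered pair $(\sigma(1),\sigma(2))$, yielding $n(n-1)$ candidate permutations. For $i \geq 3$, the log-ratio $b_{\sigma(i)} - b_{\sigma(i+1)}$ equals a sum $\sum_{j = n - \sigma(i+1) + 1}^{n - \sigma(i)} \log|a_j|_\infty$, and when $\{\sigma(1),\sigma(2)\} \supseteq \{1,2\}$ every index $j$ appearing in such a sum satisfies $j \leq n-3$, so each summand is $\geq \log c$ by hypothesis; this already disposes of all ratios at positions $i \geq 3$. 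The remaining task is to control the ratios at $i = 1, 2$.

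Next I would proceed by case analysis on the sizes of $|a_{n-1}|_\infty$ and $|a_{n-2}|_\infty$ relative to $c$ and $1$. In each case the goal is to choose $(\sigma(1),\sigma(2))$ so that the problematic transitions involving $a_{n-1}$ or $a_{n-2}$ are absorbed into the first two ratios $b_{\sigma(1)} - b_{\sigma(2)}$ and $b_{\sigma(2)} - b_{\sigma(3)}$, while the tail remains in the benign regime above. For example, when $|a_{n-1}|_\infty, |a_{n-2}|_\infty \geq c$, the identity $\sigma = \mathrm{id}$ works directly; when $|a_{n-1}|_\infty < c$, one instead chooses $(\sigma(1),\sigma(2)) = (2,1)$, so that the $i=1$ ratio becomes $\mathfrak{a}'_2/\mathfrak{a}'_1 = 1/a_{n-1}$ (automatically exceeding $1/c \geq 1 \geq c$) and the $i=2$ ratio becomes $\mathfrak{a}'_1/\mathfrak{a}'_3 = a_{n-2}a_{n-1}$, which is controlled in the relevant sub-regime. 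Other regimes are handled by analogous choices such as $(\sigma(1),\sigma(2)) \in \{(1,n),(2,n),(n,1),(n,2)\}$ and their variants, in which the "tail" index $n$ absorbs a possibly large monomial in $a_{n-1}$ or $a_{n-2}$.

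The main obstacle will be verifying that the $n(n-1)$ available choices of $(\sigma(1),\sigma(2))$ genuinely cover every configuration of the free parameters $(|a_{n-1}|_\infty, |a_{n-2}|_\infty)$. This reduces to a finite combinatorial check on the $(\log|a_{n-1}|_\infty, \log|a_{n-2}|_\infty)$-plane: for each candidate pair $(\sigma(1),\sigma(2))$, the conditions $b_{\sigma(i)} - b_{\sigma(i+1)} \geq \log c$ for $i = 1, 2$ cut out a region, and one must show the union of these regions exhausts $\mathbb{R}^2$. The hypothesis $n \geq 4$ is used precisely to ensure that at least two indices of $\{1,\ldots,n\}$ remain available for $(\sigma(1),\sigma(2))$ beyond the tail positions and that the increasing-order constraint imposed by $S_n^{\mathrm{reg}}$ is compatible with controlling the ratios simultaneously at $i = 1, 2$. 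Once the combinatorial covering is checked case by case, the inclusion $\iota(A_{F,\infty})^{\mathrm{reg}} \subseteq \bigcup_{\sigma \in S_n^{\mathrm{reg}}} \mathcal{H}^c_\sigma$ follows.
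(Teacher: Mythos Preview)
Your translation to log-coordinates and the parametrization of $S_n^{\mathrm{reg}}$ by the ordered pair $(\sigma(1),\sigma(2))$ are both correct and match the paper's setup. The gap is in the claimed reduction to a two-dimensional check on the $(\log|a_{n-1}|_\infty,\log|a_{n-2}|_\infty)$-plane. The only $\sigma\in S_n^{\mathrm{reg}}$ for which \emph{all} of the inequalities $b_{\sigma(i)}-b_{\sigma(i+1)}\geq\log c$ are functions of $a_{n-1},a_{n-2}$ alone (modulo the standing hypothesis $|a_j|_\infty\geq c$ for $j\leq n-3$) are the two with $\{\sigma(1),\sigma(2)\}=\{1,2\}$, and these do not suffice: for instance $(|a_{n-1}|_\infty,|a_{n-2}|_\infty)=(1,c^2)$ with $c<1$ lies in neither $\mathcal{H}^c_{(1,2,\ldots)}$ nor $\mathcal{H}^c_{(2,1,\ldots)}$. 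For every other choice---including the $(1,n),(2,n),(n,1),(n,2)$ you propose---either the $i=1,2$ ratios or a tail ratio at some $i\geq 3$ becomes a product such as $|a_1\cdots a_{n-1}|_\infty$ or $|a_{n-j}\cdots a_{n-1}|_\infty$, which depends on $a_1,\ldots,a_{n-3}$ (these are only bounded below by $c$, so such products are genuinely free). For example, with $(\sigma(1),\sigma(2))=(1,n)$ the $i=1$ ratio is $|a_1\cdots a_{n-1}|_\infty$ and the $i=2$ ratio is $|a_1\cdots a_{n-2}|_\infty^{-1}$.

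The paper confronts this by a higher-dimensional stratification: it partitions $\iota(A_{F,\infty})^{\mathrm{reg}}$ into pieces $\iota_i(A_{F,\infty})^{\mathrm{reg}}$, $2\leq i\leq n$, defined precisely by inequalities on the products $|a_{n-i}\cdots a_{n-2}|_\infty$, and matches each piece to the fibre $S_{n,i}^{\mathrm{reg}}$ of $\sigma$'s for which $\sigma(2)$ sits in position $i$ relative to the increasing tail $\sigma(3)<\cdots<\sigma(n)$. Within each fibre the remaining freedom is the position of $\sigma(1)$, and the paper checks (Claim~\ref{28''}) that varying $\sigma(1)$ produces enough regions to cover $\iota_i(A_{F,\infty})^{\mathrm{reg}}$. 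Your case analysis must therefore track these mixed products, not just $a_{n-1}$ and $a_{n-2}$.
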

\begin{proof}
For any $\sigma\in S_{n}^{reg},$ if $\sigma(2)<\sigma(3),$ then define $i_{\sigma}=2;$ if $\sigma(i_0)<\sigma(2)<\sigma(i_{0}+1)$ for some $3\leq i_0\leq n-1,$ then define $i_{\sigma}=i_0;$ if $\sigma(2)>\sigma(n),$ then define $i_{\sigma}=n.$ Since such an $i_0$ (if exists) is unique, then $i_{\sigma}$ is well defined. It induces an $n$ to $1$ surjection $\iota_{reg}:$ $S_{n}^{reg}\rightarrow \{2,3,\cdots,n\},$ given by $\sigma\mapsto i_{\sigma}.$ For $2\leq i\leq n,$ let $S_{n,i}^{reg}$ be the fibre at $i_{\sigma}=i,$ i.e., $S_{n,i}^{reg}=\iota_{reg}(i)^{-1}.$ Then
\begin{equation}\label{64}
\bigcup_{\sigma\in S_{n}^{reg}}\mathcal{H}^c_{\sigma}=\bigcup_{i=2}^n\bigcup_{\sigma\in S_{n,i}^{reg}}\mathcal{H}^c_{\sigma}.
\end{equation}
Let $\iota_2(A_{F,\infty})^{reg}=\big\{(a_1,a_2,\cdots,a_{n-1})\in \iota(A_{F,\infty})^{reg}:\ \big|a_{n-2}\big|_{\infty}\geq c\big\}.$ Denote by $\iota_n(A_{F,\infty})^{reg}=\big\{(a_1,a_2,\cdots,a_{n-1})\in \iota(A_{F,\infty})^{reg}:\ \big|a_{1}a_{2}\cdots a_{n-2}\big|_{\infty}\leq 1/c\big\}.$ Define, for any $3\leq i\leq n-1,$ that $\iota_i(A_{F,\infty})^{reg}=\big\{(a_1,a_2,\cdots,a_{n-1})\in \iota(A_{F,\infty})^{reg}:\ \big|a_{n-i}a_{n-i+1}\cdots a_{n-2}\big|_{\infty}\geq c,\ \big|a_{n-i+1}a_{n-i+2}\cdots a_{n-2}\big|_{\infty}\leq 1/c\big\}.$ Note that, for any $2\leq i\leq n,$ $\iota_i(A_{F,\infty})^{reg}$ is well defined.
\begin{claim}\label{28''}
Let notation be as before. Then one has, for any $2\leq i\leq n,$ that 
\begin{equation}\label{63''}
\iota_i(A_{F,\infty})^{reg}\subseteq \bigcup_{\sigma\in S_{n,i}^{reg}}\mathcal{H}^c_{\sigma}.
\end{equation}
\end{claim}
A straightforward combinatorial analysis shows that $\iota(A_{F,\infty})^{reg}$ is contained in the union of $\iota_i(A_{F,\infty})^{reg}$ over $2\leq i\leq n.$ Hence \eqref{63'''} comes from \eqref{64} and \eqref{63''}:
\begin{align*}
\iota(A_{F,\infty})^{reg}\subseteq \bigcup_{i=2}^n\iota_i(A_{F,\infty})^{reg}\subseteq\bigcup_{i=2}^n\bigcup_{\sigma\in S_{n,i}^{reg}}\mathcal{H}^c_{\sigma}=\bigcup_{\sigma\in S_{n}^{reg}}\mathcal{H}^c_{\sigma}.
\end{align*}
\end{proof}
\begin{proof}[Proof of Claim \ref{28''}]
For any $2\leq i\leq n$ and any $\sigma\in S_{n,i}^{reg},$ recall that $\mathcal{H}^c_{\sigma}$ is equal to 
\begin{align*}
\Bigg\{(a_1,a_2,\cdots,a_{n-1})\in \iota(A_{F,\infty})^{reg}:\ \Big|\frac{\mathfrak{a}_{\sigma(i)}(a_1,a_2,\cdots,a_{n-1})}{\mathfrak{a}_{\sigma(i+1)}(a_1,a_2,\cdots,a_{n-1})}\Big|_{\infty}\geq c,\ i\leq n-1\Bigg\}.
\end{align*}
\begin{itemize}
	\item[Case 1] Let $i=2$ and $\sigma\in S_{n,i}^{reg}.$ If $\sigma(2)+1<\sigma(1)<n,$ then there exists a unique $j_{\sigma}$ such that $\sigma(j_{\sigma})<\sigma(2)<\sigma(j_{\sigma}+1).$ In this case $\mathcal{H}^c_{\sigma}$ is equal to $\big\{(a_1,a_2,\cdots,a_{n-1})\in \iota(A_{F,\infty})^{reg}:\ |a_{n-2}|_{\infty}\geq c,\ |a_{n-j_{\sigma}}\cdots a_{n-1}|_{\infty}\geq c,\ |a_{n-j_{\sigma}+1}\cdots a_{n-1}|_{\infty}\leq 1/c\big\}.$ If $\sigma(1)=n,$ then $\mathcal{H}^c_{\sigma}=\big\{(a_1,a_2,\cdots,a_{n-1})\in \iota(A_{F,\infty})^{reg}:\ |a_{n-i}\cdots a_{n-2}|_{\infty}\geq c,\ |a_{1}\cdots a_{n-1}|_{\infty}\leq 1/c\big\}.$ If $\sigma(1)=\sigma(2)+1,$ then $\mathcal{H}^c_{\sigma}$ is equal to $\big\{(a_1,a_2,\cdots,a_{n-1})\in \iota(A_{F,\infty})^{reg}:\ |a_{n-2}a_{n-1}|_{\infty}\geq c,\ |a_{n-1}|_{\infty}\leq 1/c,\ |a_{n-1}a_{n-2}|_{\infty}\leq 1/c\big\}.$ If $\sigma(1)=\sigma(2)-1,$ then $\mathcal{H}^c_{\sigma}$ is equal to $\big\{(a_1,a_2,\cdots,a_{n-1})\in \iota(A_{F,\infty})^{reg}:\ |a_{n-2}|_{\infty}\geq c,\ |a_{n-1}|_{\infty}\geq c\big\}.$ Now one sees clearly that the union of $\mathcal{H}^c_{\sigma}$ over $\sigma\in S_{n,i}^{reg}$ does cover the sets $\big\{(a_1,a_2,\cdots,a_{n-1})\in \iota(A_{F,\infty})^{reg}:\ \big|a_{n-2}\big|_{\infty}\geq c,\ |a_{n-1}|_{\infty}\geq c\big\}$ and $\big\{(a_1,a_2,\cdots,a_{n-1})\in \iota(A_{F,\infty})^{reg}:\ \big|a_{n-2}\big|_{\infty}\geq c,\ |a_{n-1}|_{\infty}\leq 1/c\big\}.$ Therefore it covers $\iota_2(A_{F,\infty})^{reg}.$ Hence \eqref{63''} holds in the case where $i=2.$
	\item[Case 2] Let $3\leq i\leq n-1$ and $\sigma\in S_{n,i}^{reg}.$ If $\sigma(2)+1<\sigma(1)<n,$ then there exists a unique $j_{\sigma}$ such that $\sigma(j_{\sigma})<\sigma(2)<\sigma(j_{\sigma}+1).$ In this case $\mathcal{H}^c_{\sigma}$ is equal to $\big\{(a_1,a_2,\cdots,a_{n-1})\in \iota(A_{F,\infty})^{reg}:\ |a_{n-i}\cdots a_{n-2}|_{\infty}\geq c,\ |a_{n-i+1}\cdots a_{n-2}|_{\infty}\leq 1/c,\ |a_{n-j_{\sigma}}\cdots a_{n-1}|_{\infty}\geq c,\ |a_{n-j_{\sigma}+1}\cdots a_{n-1}|_{\infty}\leq 1/c\big\}.$ If $\sigma(1)=n,$ then $\mathcal{H}^c_{\sigma}$ is equal to $\big\{(a_1,a_2,\cdots,a_{n-1})\in \iota(A_{F,\infty})^{reg}:\ |a_{n-i}\cdots a_{n-2}|_{\infty}\geq c,\ |a_{n-i+1}\cdots a_{n-2}|_{\infty}\leq 1/c,\ |a_{1}\cdots a_{n-1}|_{\infty}\leq 1/c\big\}.$ If $\sigma(1)=\sigma(2)+1,$ then $\mathcal{H}^c_{\sigma}$ is equal to $\big\{(a_1,a_2,\cdots,a_{n-1})\in \iota(A_{F,\infty})^{reg}:\ |a_{n-i}\cdots a_{n-2}a_{n-1}|_{\infty}\geq c,\ |a_{n-1}|_{\infty}\leq 1/c,\ |a_{n-i+1}\cdots a_{n-2}|_{\infty}\leq 1/c\big\}.$ If $\sigma(1)=\sigma(2)-1,$ then $\mathcal{H}^c_{\sigma}$ is equal to $\big\{(a_1,a_2,\cdots,a_{n-1})\in \iota(A_{F,\infty})^{reg}:\ |a_{n-i}\cdots a_{n-2}|_{\infty}\geq c,\ |a_{n-1}|_{\infty}\geq c,\ |a_{n-i+1}\cdots a_{n-1}|_{\infty}\leq 1/c\big\}.$ If $1<\sigma(1)<\sigma(2)-1,$ then there exists a unique $j_{\sigma}$ such that $\sigma(j_{\sigma})<\sigma(2)<\sigma(j_{\sigma}+1).$ In this case $\mathcal{H}^c_{\sigma}$ is equal to $\big\{(a_1,a_2,\cdots,a_{n-1})\in \iota(A_{F,\infty})^{reg}:\ |a_{n-i}\cdots a_{n-2}|_{\infty}\geq c,\ |a_{n-i+1}\cdots a_{n-2}|_{\infty}\leq 1/c,\ |a_{n-j_{\sigma}}\cdots a_{n-2}a_{n-1}|_{\infty}\geq c,\ |a_{n-j_{\sigma}+1}\cdots a_{n-1}|_{\infty}\leq 1/c\big\}.$ If $\sigma(1)=1,$ then $\mathcal{H}^c_{\sigma}=\big\{(a_1,a_2,\cdots,a_{n-1})\in \iota(A_{F,\infty})^{reg}:\ |a_{n-i}a_{n-i+1}\cdots a_{n-2}|_{\infty}\geq c,\ |a_{n-i+1}\cdots a_{n-2}|_{\infty}\leq 1/c,\ c\leq |a_{n-2}a_{n-1}|_{\infty}\big\}.$ Now one sees clearly that the union of $\mathcal{H}^c_{\sigma}$ over $\sigma\in S_{n,i}^{reg}$ does cover the sets $\big\{(a_1,a_2,\cdots,a_{n-1})\in \iota(A_{F,\infty})^{reg}:\ \big|a_{n-i}a_{n-i+1}\cdots a_{n-2}\big|_{\infty}\geq c,\ \big|a_{n-i+1}a_{n-i+2}\cdots a_{n-2}\big|_{\infty}\leq 1/c,\ |a_{n-1}|_{\infty}\geq c\big\}$ and $\big\{(a_1,a_2,\cdots,a_{n-1})\in \iota(A_{F,\infty})^{reg}:\ \big|a_{n-i}a_{n-i+1}\cdots a_{n-2}\big|_{\infty}\geq c,\ \big|a_{n-i+1}a_{n-i+2}\cdots a_{n-2}\big|_{\infty}\leq 1/c,\ |a_{n-1}|_{\infty}\leq 1/c\big\}.$ Therefore, it covers $\iota_i(A_{F,\infty})^{reg}.$ Hence \eqref{63''} holds.
	\item[Case 3] Let $i=n$ and $\sigma\in S_{n,i}^{reg}.$  If $\sigma(1)=\sigma(2)+1,$ then $\mathcal{H}^c_{\sigma}=\big\{(a_1,a_2,\cdots,a_{n-1})\in \iota(A_{F,\infty})^{reg}:\ |a_{n-1}|_{\infty}\leq 1/c,\ |a_{1}a_2\cdots a_{n-2}|_{\infty}\leq 1/c\big\}.$ If $\sigma(1)=\sigma(2)-1,$ then $\mathcal{H}^c_{\sigma}$ is equal to $\big\{(a_1,a_2,\cdots,a_{n-1})\in \iota(A_{F,\infty})^{reg}:\ |a_{n-1}|_{\infty}\geq c,\ |a_{1}a_2\cdots a_{n-1}|_{\infty}\leq 1/c\big\}.$ If $1<\sigma(1)<\sigma(2)-1,$ then there exists a unique $j_{\sigma}$ such that $\sigma(j_{\sigma})<\sigma(2)<\sigma(j_{\sigma}+1).$ In this case $\mathcal{H}^c_{\sigma}$ is equal to $\big\{(a_1,\cdots,a_{n-1})\in \iota(A_{F,\infty})^{reg}:\ |a_{1}\cdots a_{n-2}|_{\infty}\leq 1/c,\ |a_{n-j_{\sigma}}\cdots a_{n-1}|_{\infty}\geq c,\ |a_{n-j_{\sigma}+1}\cdots a_{n-1}|_{\infty}\leq 1/c\big\}.$ If $\sigma(1)=1,$ then $\mathcal{H}^c_{\sigma}=\big\{(a_1,a_2,\cdots,a_{n-1})\in \iota(A_{F,\infty})^{reg}:\ |a_{n-i+1}\cdots a_{n-2}|_{\infty}\leq 1/c,\ |a_1a_2\cdots a_{n-2}a_{n-1}|_{\infty}\geq c\big\}.$ Now one sees clearly that the union of $\mathcal{H}^c_{\sigma}$ over $\sigma\in S_{n,i}^{reg}$ does cover the sets $\big\{(a_1,a_2,\cdots,a_{n-1})\in \iota(A_{F,\infty})^{reg}:\ \big|a_{1}a_{2}\cdots a_{n-2}\big|_{\infty}\leq 1/c,\ |a_{n-1}|_{\infty}\geq c\big\}$ and $\big\{(a_1,a_2,\cdots,a_{n-1})\in \iota(A_{F,\infty})^{reg}:\ \big|a_{1}a_{2}\cdots a_{n-2}\big|_{\infty}\leq 1/c,\ |a_{n-1}|_{\infty}\leq 1/c\big\}.$ Therefore, it covers $\iota_n(A_{F,\infty})^{reg}.$ Hence \eqref{63''} holds for $i=n.$
\end{itemize}
Therefore, Claim \ref{28''} follows from the above discussions.
\end{proof}

	\begin{prop}\label{27''}
	Let notation be as before. Let $\iota:$ $A_{F,\infty}\longrightarrow  \mathbb{G}_m(\mathbb{A}_{F,\infty})^{n-1}$ be the map given by $\mathfrak{a}\mapsto (a_1,a_2,\cdots,a_{n-1}).$ Then for any $0<c\leq 1,$ one has
	\begin{equation}\label{60'}
	\iota(A_{F,\infty})\subseteq \bigcup_{\sigma\in S_n}\mathcal{H}^c_{\sigma}.
	\end{equation}
	\end{prop}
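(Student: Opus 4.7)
The plan is a one-step sorting argument, much simpler than the proof of Lemma \ref{26''} because here we are free to use all of $S_n$ rather than being confined to the restricted subset $S_n^{reg}$. The key observation is that the defining inequality of $\mathcal{H}^c_\sigma$ is invariant under the global normalization by $\mathfrak{a}'_{\sigma(n)}$: since $\mathfrak{a}_j$ is obtained from $\mathfrak{a}'_j$ by multiplication with a factor independent of $j$, we have
\[
\frac{\mathfrak{a}_{\sigma(i)}}{\mathfrak{a}_{\sigma(i+1)}} \;=\; \frac{\mathfrak{a}'_{\sigma(i)}}{\mathfrak{a}'_{\sigma(i+1)}}, \qquad 1\leq i\leq n-1,
\]
so the condition $\mathfrak{a}\in \mathcal{H}^c_\sigma$ reads purely in terms of the unnormalized entries $\mathfrak{a}'_1,\ldots,\mathfrak{a}'_n$, with $\mathfrak{a}'_i=a_1\cdots a_{n-i}$ and $\mathfrak{a}'_n=1$.

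With this reduction in hand, I would fix an arbitrary $\mathfrak{a}\in A_{F,\infty}$ with Iwasawa coordinates $(a_1,\ldots,a_{n-1})$, compute the list of absolute values $|\mathfrak{a}'_1|_\infty,\ldots,|\mathfrak{a}'_n|_\infty$, and then select $\sigma\in S_n$ to be any permutation which sorts this $n$-tuple in nonincreasing order, i.e.
\[
|\mathfrak{a}'_{\sigma(1)}|_\infty \;\geq\; |\mathfrak{a}'_{\sigma(2)}|_\infty \;\geq\; \cdots \;\geq\; |\mathfrak{a}'_{\sigma(n)}|_\infty,
\]
with ties broken arbitrarily. Then every consecutive ratio satisfies
\[
\Big|\tfrac{\mathfrak{a}'_{\sigma(i)}}{\mathfrak{a}'_{\sigma(i+1)}}\Big|_\infty \;\geq\; 1 \;\geq\; c,
\]
which is exactly the defining inequality of $\mathcal{H}^c_\sigma$. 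Hence $\mathfrak{a}\in \mathcal{H}^c_\sigma$, and since $\mathfrak{a}$ was arbitrary the inclusion \eqref{60'} follows.

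There is no serious obstacle here; the only items worth verifying carefully are (i) that the construction $\iota_\sigma$ is well defined on all of $A_{F,\infty}$, which the paper has already observed, and (ii) that the normalization factor $\mathfrak{a}'_{\sigma(n)}$ truly cancels out of the ratios, so that the sorted permutation gives ratios bounded below by $1$ rather than merely by $|\mathfrak{a}'_{\sigma(n)}|_\infty$. This also explains the role of the hypothesis $c\leq 1$ in the statement: it is precisely what allows the ``trivial'' nonincreasing rearrangement to succeed, whereas for $c>1$ one would have to argue more delicately as in Lemma \ref{26''}, where the restriction $\sigma\in S_n^{reg}$ combined with the truncation to $\iota(A_{F,\infty})^{reg}$ made the case-by-case analysis unavoidable.
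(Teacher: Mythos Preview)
Your argument is correct and considerably more direct than the paper's. The crucial observation you make --- that the normalization factor $(\mathfrak{a}'_{\sigma(n)})^{-1}$ is independent of $i$ and therefore cancels in every ratio $\mathfrak{a}_{\sigma(i)}/\mathfrak{a}_{\sigma(i+1)}=\mathfrak{a}'_{\sigma(i)}/\mathfrak{a}'_{\sigma(i+1)}$ --- reduces the problem to sorting the real numbers $|\mathfrak{a}'_1|_\infty,\dots,|\mathfrak{a}'_n|_\infty$, after which the hypothesis $c\le 1$ immediately gives the required bound. This is a complete proof.

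The paper instead proceeds by induction on $n$, using the bijection $S_{n-2}\times S_n^{reg}\to S_n$ together with Lemma~\ref{26''} (itself proved by a somewhat involved case analysis via Claim~\ref{28''}) to lift the covering from $n-2$ to $n$. Your one-line sorting argument bypasses all of this machinery. The paper's route does establish the auxiliary Lemma~\ref{26''}, which is a sharper statement (a covering of the restricted region $\iota(A_{F,\infty})^{reg}$ by the smaller family $\{\mathcal{H}^c_\sigma:\sigma\in S_n^{reg}\}$), but that refinement is not used elsewhere and is not needed for Proposition~\ref{27''} itself; for the purposes of this proposition your approach is simply better.
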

    \begin{proof}
    When $n=2,$ \eqref{60'} is immediately. When $n=3,$ there are six different $\mathcal{H}^c_{\sigma}$'s. In this case, one can verify by brute force computation that the union of these hyperboloids does cover $\iota(A_{F,\infty}).$ Hence \eqref{60'} holds for $n=3.$ From now on, we may assume $n\geq 4.$ For any $m\in\mathbb{Z},$ let $\tau_m:$ $\mathbb{Z}\rightarrow \mathbb{Z}$ be the shifting map defined by $j\mapsto j+m,$ $\forall$ $j\in\mathbb{Z}.$ Set $S_{n-2}[2]:=\{\tau_2\circ\sigma\circ\tau_{-2}:\ \sigma\in S_{n-2}\}$ as a set of bijections from the set $\{3,4,\cdots,n\}$ to itself. Regard naturally $S_{n-2}[2]$ as the stabilizer of $\{1,2\}$ of $S_n.$ Then clearly $S_{n-2}[2]$ is isomorphic to $S_{n-2}.$ Denote by $\varrho$ the natural isomorphism $S_{n-2}\xrightarrow{\sim} S_{n-2}[2],$ $\sigma\mapsto \tau_2\circ\sigma\circ\tau_{-2}.$ Note that $\#S_n^{reg}=n(n-1),$ then we have a bijection:
    \begin{equation}\label{66'}
    S_{n-2}\times S_n^{reg}\xrightarrow{1:1}S_n,\quad (\sigma,\sigma')\mapsto \varrho(\sigma)\circ \sigma'.
    \end{equation}
   Assume that \eqref{60'} holds for any $n_0\leq n-2.$ Let $(a_1,a_2,\cdots,a_{n-1})\in \iota(A_{F,\infty}).$ Let $\mathfrak{a}\in A_{F,\infty}$ be such that $\iota(\mathfrak{a})=(a_1,a_2,\cdots,a_{n-1}).$ Then $(a_1,a_2,\cdots,a_{n-3})\in \iota^{\leq n-2}(A_{F,\infty}),$  where $\iota^{\leq n-2}:$ $A_{F,\infty}\longrightarrow  \mathbb{G}_m(\mathbb{A}_{F,\infty})^{n-3}$ is the map given by $\mathfrak{a}\mapsto (a_1,a_2,\cdots,a_{n-3}).$ Then by our induction assumption, there exists some $\sigma\in S_{n-2}$ such that $(a_1,a_2,\cdots,a_{n-3})\in \mathcal{H}_{\sigma_{\varrho}}^c,$ where $\sigma_{\varrho}:=\varrho^{-1}(\sigma).$ Note that for each $1\leq i\leq n-3,$  $\mathfrak{a}_{\sigma_{\varrho}(i)}(a_1,a_2,\cdots,a_{n-1})$ is independent of $a_{n-2}$ and $a_{n-1},$ we may write $\mathfrak{a}_{\sigma_{\varrho}(i)}(a_1,a_2,\cdots,a_{n-1})=\mathfrak{a}_{\sigma_{\varrho}(i)}(a_1,a_2,\cdots,a_{n-3}),$ $1\leq i\leq n-3.$ Let $\mathfrak{b}_{2}=a_{1}a_2\cdots a_{n-2}\cdot \mathfrak{a}_{\sigma_{\varrho}(1)}(a_1,a_2,\cdots,a_{n-3})^{-1},$ $\mathfrak{b}_{1}=a_{1}a_2\cdots a_{n-2}a_{n-1}\cdot \mathfrak{b}_{2}^{-1},$ and $\mathfrak{b}_{n}=1.$ Let $\mathfrak{b}_{i+2}=\mathfrak{a}_{\sigma_{\varrho}(i)}(a_1,a_2,\cdots,a_{n-3}),$ $1\leq i\leq n-3.$ Set $\mathfrak{b}=\diag(\mathfrak{b}_{1},\mathfrak{b}_{2},\cdots,\mathfrak{b}_{n}).$ Then clearly $\mathfrak{b}=\mathfrak{a}_{\sigma_{\varrho}}.$ Hence $\mathfrak{b}\in A_{F,\infty}$ and $\iota(\mathfrak{b})=(b_1,b_2,\cdots,b_{n-1}),$ where $b_i=\mathfrak{b}_{n-i}\cdot\mathfrak{b}_{n-i+1}^{-1},$ $1\leq i\leq n-1.$ By our definition of $\mathfrak{b},$ one sees that $\iota(\mathfrak{b})\in\iota(A_{F,\infty})^{reg}.$ Then by Lemma \ref{26''} there exists some $\sigma'\in S_n^{reg}$ such that $\iota(\mathfrak{b})\in \mathcal{H}_{\sigma'}^c.$ Therefore, 
   \begin{align*}
   |\mathfrak{b}_{\sigma'(i)}(b_1,b_2,\cdots,b_{n-1})|_{\infty}\geq c\cdot|\mathfrak{b}_{\sigma'(i+1)}(b_1,b_2,\cdots,b_{n-1})|_{\infty},\ 1\leq i\leq n-1.
   \end{align*}
   Since $\mathfrak{b}=\mathfrak{a}_{\sigma_{\varrho}},$ the above system of inequalities becomes, for $1\leq i\leq n-1,$ that 
   \begin{align*}
   |{\mathfrak{a}_{\varrho(\sigma)\circ\sigma'(i)}(a_1,a_2,\cdots,a_{n-1})}|_{\infty}\geq c\cdot |{\mathfrak{a}_{\varrho(\sigma)\circ\sigma'(i+1)}(a_1,a_2,\cdots,a_{n-1})}|_{\infty}.
   \end{align*}
   Then according to \eqref{66'}, $\iota(\mathfrak{a})\in \mathcal{H}^c_{\tilde{\sigma}}$ for $\tilde{\sigma}=\varrho(\sigma)\circ\sigma'\in S_n.$ Hence \eqref{60'} holds for $n_0=n.$ Since it holds for $n_0=2$ and $n_0=3,$ Proposition \ref{27''} follows by induction on initial cases.
    \end{proof}

	\begin{prop}\label{27'}
	Let notation be as above. Let $A_{\varphi,fin}$ be a compact subgroup of $Z_G(\mathbb{A}_{F,fin})\backslash T(\mathbb{A}_{F,fin})$ depending only on $\varphi$ and $F.$ Let $R(x)$ be a slowly increasing function on $\mathcal{S}_0.$ Then we have
	\begin{equation}\label{56''}
	\int_K\int_{[N]}\int_{A_{F,\infty}A_{\varphi,fin}}\sum_{\chi}\Big|\mathcal{F}_1\Lambda^T_2\K_{\chi}(nak,nak)\cdot R(nak)\Big|d^{\times}adndk<\infty,
	\end{equation}
	where $\chi$ runs over all the equivalent classes of cuspidal datum.
	\end{prop}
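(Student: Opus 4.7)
The strategy is to use Proposition \ref{27''} to cover $A_{F,\infty}$ by Weyl translates of a single dominant chamber, and then exploit the $G(F)$-invariance of $\K_\chi$ via permutation matrices $w_\sigma\in K\cap G(F)$ to transport each piece into a Siegel-type region, on which Lemma \ref{26'} controls the sum over $\chi$. The overall estimate then follows by summing finitely many such contributions.

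First, since $|\theta(n_1)|=1$, we bound
\[
|\mathcal{F}_1\Lambda^T_2\K_\chi(nak,nak)|\leq \int_{[N]}|\Lambda^T_2\K_\chi(n_1nak,nak)|\,dn_1,
\]
and the $N(F)$-invariance of $\Lambda^T_2\K_\chi$ in the first variable, combined with the substitution $n_1\mapsto n_1n^{-1}$ on the compact group $[N]$, simplifies this to $\int_{[N]}|\Lambda^T_2\K_\chi(n_1 ak,nak)|\,dn_1$. Next, applying Proposition \ref{27''} with $c=c_0$ (the Siegel constant of $\mathcal{S}_0$) and making the union disjoint by a fixed ordering, I would write $A_{F,\infty}=\bigsqcup_{\sigma\in S_n}A_\sigma$ so that for each $a\in A_\sigma$ the conjugate $b:=w_\sigma^{-1}aw_\sigma$ lies in the standard dominant chamber $A_{\geq c_0}$.

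On each piece $A_\sigma$, I would perform the Haar-preserving change of variable $a=w_\sigma bw_\sigma^{-1}$ and then apply the $G(F)$-invariance of $\K_\chi$ (hence of $\Lambda^T_2\K_\chi$) with $\gamma=w_\sigma^{-1}$ on both arguments, obtaining
\[
|\Lambda^T_2\K_\chi(u_1 b k^{\ast},\ u b k^{\ast})|,\qquad u_1=w_\sigma^{-1}n_1 w_\sigma,\ u=w_\sigma^{-1}n w_\sigma,\ k^{\ast}=w_\sigma^{-1}k\in K.
\]
Since $[N]=N(F)\backslash N(\mathbb{A}_F)$ is compact, $u_1,u$ range over a fixed compact subset of $G(\mathbb{A}_F)$, so that both arguments of $\Lambda^T_2\K_\chi$ lie in a set of the form $\Omega\cdot A_{\geq c_0}\cdot K$ for a fixed compact $\Omega$, i.e., essentially in a Siegel set. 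After passing to standard Iwasawa form (with Jacobian bounded since everything not in $A_{\geq c_0}$ is compact) and bounding $R(nak)$ by a slowly increasing function of the resulting Iwasawa torus part, each $\sigma$-contribution is dominated by an integral of the type in Lemma \ref{26'}; the compact $A_{\varphi,fin}$-factor contributes only a finite volume. Summing over the finite set $\sigma\in S_n$ yields \eqref{56''}.

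The main obstacle is the Iwasawa rearrangement in the last step: writing $u b k^{\ast}$ in standard $NAK$ form mixes the compact $K$-factor of the Iwasawa of $u$ with the dominant torus element $b$, and the interaction $k_+\cdot b$ can rearrange the dominance structure of $b$ (e.g.\ for $\mathrm{GL}_2$, $k_+ b$ can be Iwasawa-reflected to an anti-dominant element). One must use the compactness of $[N]$ together with the dominant-chamber condition on $b$ to verify that these mixings contribute only bounded multiplicative factors to the height function, so that the final element still lies in a finite union of Siegel sets on which Lemma \ref{26'} (possibly after a mild enlargement of the slowly-increasing weight $R$) closes the estimate.
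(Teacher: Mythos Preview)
Your overall strategy is exactly the paper's: decompose $A_{F,\infty}$ via Proposition~\ref{27''}, transport each piece $A_\sigma$ to the dominant cone by the Weyl element $w_\sigma\in G(F)\cap K$, and then invoke the rapid decay of Lemma~\ref{26'} on a Siegel set. The paper phrases the transport as a change of integration variable $\mathfrak a\mapsto\mathfrak a_\sigma$ (absorbing the Jacobian and the change in $R$ into a monomial weight $P_\sigma(\mathfrak a_\sigma)$), landing on the Siegel-set integral $G_\sigma$; you make the underlying bi-$G(F)$-invariance of $\Lambda^T_2\K_\chi$ explicit.

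However, your proposed resolution of the ``Iwasawa rearrangement'' obstacle is not correct. The claim that the compact factor $u=w_\sigma^{-1}nw_\sigma$ only perturbs the Iwasawa torus part of $u\,b\,k^\ast$ by bounded multiplicative factors fails already for $\GL(2)$: with $\sigma=(12)$ and $n=\bigl(\begin{smallmatrix}1&x\\0&1\end{smallmatrix}\bigr)$, $0<|x|\le 1/2$, one has $u=\bigl(\begin{smallmatrix}1&0\\x&1\end{smallmatrix}\bigr)$, whose Iwasawa $K$-part is the rotation by $\arctan x$; for $b=\mathrm{diag}(t,1)$ with $t$ large the simple-root value of the Iwasawa torus part of $u\,b$ is $\asymp (t x^2)^{-1}\to 0$. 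So $u\,b\,k^\ast$ escapes every fixed Siegel set, and one cannot simply enlarge $R$ to close the estimate.

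The paper sidesteps this by \emph{not} trying to force both arguments simultaneously into Iwasawa form after conjugation: after the change of variable it works with the point $n\mathfrak a_\sigma a_{\mathrm{fin}}k$ with the \emph{original} $n\in[N]$, which genuinely lies in $\mathcal S_c$, and bounds the original integral by the resulting $G_\sigma$. The implicit justification is that one should exploit the \emph{separate} $G(F)$-invariance of $\Lambda^T_2\K_\chi$ in each variable asymmetrically: in the second variable (where $\Lambda^T$ acts) one may pass to a genuine Siegel reduction and use the rapid decay $\|s(nak)\|^{-N}$, while in the first variable one only needs the polynomial bound $\|n_1 nak\|^{N_0'}\ll\|a\|^{N_0'}$ from the proof of Lemma~\ref{26'}, with no Iwasawa rearrangement required. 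The remaining input is then purely reduction-theoretic (that $\|s(nak)\|$ is polynomially comparable to $\|\mathfrak a_\sigma\|$ for $\mathfrak a\in A_{F,\infty}^\sigma$), not the false pointwise claim about $u\,b\,k^\ast$.
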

	\begin{proof}
	Let $A_{F,\infty}^{\sigma}=\{\mathfrak{a}\in A_{F,\infty}:\ \iota(\mathfrak{a})\in \mathcal{H}_{\sigma}^c\}.$ Then Proposition \ref{27''} implies that 
	\begin{equation}\label{69}
	A_{F,\infty}=\bigcup_{\sigma\in S_n}A_{F,\infty}^{\sigma}.
	\end{equation}
    The decomposition \eqref{69} implies that the left hand side of \eqref{56''} is not more than 
    \begin{align*}
    J_c:=\sum_{\sigma\in S_n}\int_K\int_{[N]}\int_{A_{F,\infty}^{\sigma}A_{\varphi,fin}}\sum_{\chi}\Big|\mathcal{F}_1\Lambda^T_2\K_{\chi}(nak,nak)\cdot R(nak)\Big|d^{\times}adndk.
    \end{align*}	
	Note that for any $\sigma\in S_n,$ let $\mathfrak{a}\in A_{F,\infty}^{\sigma},$ then $\mathfrak{a}_{\sigma}\in\mathcal{S}_c.$ Let $\iota(\mathfrak{a})=(a_{1},a_{2},\cdots,a_{n-1}),$ and $\iota(\mathfrak{a}_{\sigma})=(a_{\sigma,1},a_{\sigma,2},\cdots,a_{\sigma,n-1}),$ then a straightforward computation shows that each $a_{i}$ is a rational monomial $P_{\sigma}$ of $a_{\sigma,1}, a_{\sigma,2},\cdots, a_{\sigma,n-1}.$ Since such a monomial is at most polynomially increasing on $\mathcal{S}_c,$ one then changes variables to see 
	\begin{align*}
	\int_K\int_{[N]}\int_{A_{F,\infty}^{\sigma}A_{\varphi,fin}}\sum_{\chi}\Big|\mathcal{F}_1\Lambda^T_2\K_{\chi}(nak,nak)\cdot R(nak)\Big|d^{\times}adndk
	\end{align*}
	is bounded by the integral over $K,$ $N(F)\backslash N(\mathbb{A}_F)$ of
	\begin{align*}
	\int_{A_{F,\infty}^{\sigma}A_{\varphi,fin}}\sum_{\chi}\Big|\mathcal{F}_1\Lambda^T_2\K_{\chi}(n\mathfrak{a}_{\sigma}a_{fin}k,n\mathfrak{a}_{\sigma}a_{fin}k)\cdot R(n\mathfrak{a}_{\sigma}a_{fin}k)P_{\sigma}(\mathfrak{a}_{\sigma})\Big|d^{\times}\mathfrak{a}_{\sigma}d^{\times}a_{fin},
	\end{align*}
	which is bounded by the integral over $K,$ $N(F)\backslash N(\mathbb{A}_F)$ and $A_{\varphi,fin}$ of $G_{\sigma}(n,a_{\infty},k),$ where $G_{\sigma}(n,a_{\infty},k)$ is defined to be
	\begin{align*}
	\int_{A_{F,\infty}\cap\mathcal{S}_c}\sum_{\chi}\Big|\mathcal{F}_1\Lambda^T_2\K_{\chi}(na_{\infty}a_{fin}k,na_{\infty}a_{fin}k)\cdot R(na_{\infty}a_{fin}k)P_{\sigma}(a_{\infty})\Big|d^{\times}a_{\infty}.
	\end{align*}
	Since the function $a_{\infty}\mapsto R(na_{\infty}a_{fin}k)P_{\sigma}(a_{\infty})$ is slowly increasing on $A_{F,\infty}\cap\mathcal{S}_c,$ Lemma \ref{26'} implies that $G_{\sigma}(n,a_{\infty},k)$ is well defined and thus it is continuous. So
	\begin{align*}
	\int_K\int_{N(F)\backslash N(\mathbb{A}_F)}\int_{A_{\varphi,fin}}G_{\sigma}(n,a_{\infty},k)d^{\times}a_{fin}dndk<\infty,\ \forall\ \sigma\in S_n.
	\end{align*}
	Therefore, \eqref{56''} follows from the estimate below:
	\begin{align*}
	J_c\leq \sum_{\sigma\in S_n}\int_K\int_{N(F)\backslash N(\mathbb{A}_F)}\int_{A_{\varphi,fin}}G_{\sigma}(n,a_{\infty},k)d^{\times}a_{fin}dndk<\infty.
	\end{align*}
	\end{proof}
    
    \begin{cor}\label{28'}
    Let notation be as above. Let $R(x)$ be a slowly increasing function on $\mathcal{S}_0.$ Then we have
    \begin{equation}\label{56'}
    \int_{Z_G(\mathbb{A}_F)N(F)\backslash G(\mathbb{A}_F)}\sum_{\chi}\Big|\mathcal{F}_1\Lambda^T_2\K_{\chi}(x,x)\cdot R(x)\Big|dx<\infty,
    \end{equation}
    where $\chi$ runs over all the equivalent classes of cuspidal data.
    \end{cor}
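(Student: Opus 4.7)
I would prove Corollary \ref{28'} by reducing to Proposition \ref{27'} via Iwasawa decomposition combined with finiteness of the ideal class group.

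First, applying the Iwasawa decomposition $G(\mathbb{A}_F) = N(\mathbb{A}_F)T(\mathbb{A}_F)K$ identifies
$$Z_G(\mathbb{A}_F) N(F) \backslash G(\mathbb{A}_F) \cong [N] \times A_{F,\infty} \times A_{fin} \times K$$
(up to a modulus factor), where $[N] = N(F)\backslash N(\mathbb{A}_F)$ and $A_{fin} := Z_G(\mathbb{A}_{F,fin}) \backslash T(\mathbb{A}_{F,fin})$. Proposition \ref{27'} already controls the integral when $a_{fin}$ ranges over a compact subgroup $A_{\varphi, fin} \subset A_{fin}$, so the essential content of Corollary \ref{28'} is extending the integration from $A_{\varphi, fin}$ to the non-compact $A_{fin}$.

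Since $\varphi$ is bi-invariant under some open compact $K_0 \subset G(\mathbb{A}_{F,fin})$, so is $\K_\chi(\cdot, \cdot)$ in each variable; setting $K_T := K_0 \cap T(\mathbb{A}_{F,fin})$ and taking $A_{\varphi, fin}$ to contain the image of $K_T$, the quotient $T(F) Z_G(\mathbb{A}_{F,fin}) K_T \backslash T(\mathbb{A}_{F,fin}) \cong \mathrm{Cl}(F)^{n-1}$ is finite by class-number finiteness applied coordinatewise to the split torus $T \cong \mathbb{G}_m^n$. Fixing a finite set of representatives $\Gamma$, we obtain the decomposition $A_{fin} = \bigsqcup_{\gamma \in \Gamma} \overline{T(F)}\cdot A_{\varphi, fin}\cdot\gamma$, where $\overline{T(F)}$ denotes the image of $T(F)$ in $A_{fin}$.

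For each coset $\overline{T(F)} \cdot A_{\varphi, fin}\cdot\gamma$, I translate by $t^{-1} \in T(F)$ using $G(F)$-invariance of $\K_\chi$. After the change of variable $n_1 \mapsto tn_1t^{-1}$ in the Fourier coefficient (measure-preserving because $\prod_v|t|_v = 1$), this yields
$$\big|\mathcal{F}_1\Lambda^T_2 \K_\chi(tx,tx)\big|=\big|\mathcal{F}_1^{\theta^t}\Lambda^T_2 \K_\chi(x,x)\big|,\qquad \theta^t(n) := \theta(tnt^{-1}).$$
The twisted Fourier coefficient on the right is majorized, uniformly in $t$, by $\int_{[N]}|\Lambda^T_2 \K_\chi(n_1x,x)|\,dn_1$, controlled by the Sobolev-type estimate of Lemma \ref{26'}. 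Inserting these pieces and applying Proposition \ref{27'} (with the slowly increasing $R(x)$ replaced by $|R(tx)|$, which remains slowly increasing on the conjugated Siegel set after the change of variables) to each $\gamma$-summand yields a finite number of bounded terms.

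The main obstacle is uniform absolute summability over the infinite set of $T(F)$-translates after the $\theta^t$-twist is introduced. This is handled by choosing $\varphi$ sufficiently differentiable at the archimedean places --- which is always possible via the convolution decomposition $\varphi = \varphi_1 * \varphi_2$ indicated at the start of Section \ref{6sec} --- so that the resulting bound from Lemma \ref{26'} produces rapid decay overwhelming both the polynomial growth of $R(tx)$ in $t$ and the $T(F)$-lattice count inside any Siegel ball.
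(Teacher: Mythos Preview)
Your reduction to Proposition \ref{27'} has a genuine gap at the step where you sum over the infinite set of $T(F)$-translates. Once you bound $\big|\mathcal{F}_1^{\theta^t}\Lambda^T_2\K_\chi(x,x)\big|\le\int_{[N]}\big|\Lambda^T_2\K_\chi(n_1x,x)\big|\,dn_1$, the right-hand side is \emph{independent of $t$}; after changing variables $a_\infty\mapsto t_\infty a_\infty$ (so that the integration domain is again the compact finite piece times all of $A_{F,\infty}$), Lemma \ref{26'} and Proposition \ref{27'} produce a finite bound times $|R(tx')|\ll\|t\|^{r}$, and you are left with $\sum_{t}\|t\|^{r}$ over an infinite lattice --- this diverges. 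Your suggested fix via archimedean differentiability cannot repair this: integration by parts in the $n_1$-variable \emph{before} taking absolute values would yield decay only in $\max_i|t_i/t_{i+1}|_\infty$, and there are infinitely many $t\in T(F)$ (modulo units and center) with all archimedean simple-root values $|t_i/t_{i+1}|_\infty\le 1$, for which no decay is gained. The ``$T(F)$-lattice count inside any Siegel ball'' phrasing suggests you are hoping Lemma \ref{26'} gives decay in $t$, but after your change of variables the argument of Lemma \ref{26'} is $x'$, not $tx'$, so no such decay is available.

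The paper avoids this difficulty by a completely different mechanism: it observes that $\mathcal{F}_1\Lambda^T_2\K_\chi(x,x)$, written spectrally, has the form $\sum_P\int\sum_\phi W(x;\lambda)\cdot\overline{\Lambda^TE(x,\phi,\lambda)}\,d\lambda$, where $W(x;\lambda)=\mathcal{F}E(x,\mathcal{I}_P(\lambda,\varphi)\phi,\lambda)$ is a Whittaker function. The right $K_0$-invariance of $\varphi$ forces each local Whittaker function $W_v(t_v;\lambda)$ to vanish unless $|\alpha_i(t_v)|\le C_v$ (with $C_v=1$ for almost all $v$), by the standard argument comparing $W_v(t_vu_v;\lambda)=W_v(t_v;\lambda)$ with $W_v(t_vu_v;\lambda)=\theta_{t_v}(u_v)W_v(t_v;\lambda)$ for $u_v$ in a small compact open. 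Thus the integrand in \eqref{56'} is \emph{identically zero} for $a_{fin}\notin A_{\varphi,fin}$, uniformly in $\chi$ and $\lambda$. This immediately reduces the integral over $Z_G(\mathbb{A}_F)N(F)\backslash G(\mathbb{A}_F)$ to an integral over $[N]\times A_{F,\infty}A_{\varphi,fin}\times K$, and Proposition \ref{27'} applies directly --- no class-group decomposition or $T(F)$-sum is needed.
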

    \begin{proof}
    According to \eqref{56'''}, which is absolutely convergent, one has explicitly that 
    \begin{align*}
    \mathcal{F}_1\Lambda^T_2\K_{\chi}(x,y)=\sum_{P\in \mathcal{P}}\frac{1}{c_P}\int_{i\mathfrak{a}^*_P/i\mathfrak{a}^*_G}\sum_{\phi\in \mathfrak{B}_{P,\chi}}\mathcal{F}E(x,\mathcal{I}_P(\lambda,\varphi)\phi,\lambda)\Lambda^T\overline{E(y,\phi,\lambda)}d\lambda,
    \end{align*}
    which is an easier analogue of Lemma \ref{25'} and the fact that for given $x$ and $y,$ $\Lambda^T$ is a finite sum, to write the operators $\mathcal{F}$ and $\Lambda^T$ inside the integral over $i\mathfrak{a}^*_P/i\mathfrak{a}^*_G.$ Then as before, the non-constant Fourier coefficient $\mathcal{F}E(x,\mathcal{I}_P(\lambda,\varphi)\phi,\lambda)$ of $E(x,\mathcal{I}_P(\lambda,\varphi)\phi,\lambda)$ becomes a Whittaker function $W(x;\lambda).$ 
    
    Recall that our test function $\varphi$ is $K$-finite. Hence there is some compact subgroup $K_0\subset G(\mathbb{A}_{F,fin})^1$ such that $\varphi$ is right $K_0$-invariant. Then for $\phi\in \mathfrak{B}_{P,\chi},$ $\mathcal{F}E(x,\mathcal{I}_P(\lambda,\varphi)\phi,\lambda)\Lambda^T\overline{E(y,\phi,\lambda)}=0$ unless $\phi$ is right $K_0$-invariant. Let $K_0=\prod_{v<\infty}K_{0,v}.$ Note that the Whittaker functions are decomposable, i.e., 
    $$
    W(x;\lambda)=\prod_{v\in\Sigma_F}W_{v}(x;\lambda).
    $$
    Then for each finite place $v,$ $W_{v}(x;\lambda)$ is right $K_{0,v}$-invariant. So there exists a compact subgroup $N_{0,v}\subseteq K_{0,v}\cap N(F_v),$ depending only on $\varphi,$ such that 
    $$
    W_{v}(t_vu_v;\lambda)=W_{v}(t_v;\lambda),\ \text{for all $t_v\in T(F_v)$ and $u_v\in N_{0,v}$}.
    $$
    On the other hand, $W_{v}(t_vu_v;\lambda)=\theta_{t_v}(u_v)W_{v}(t_v;\lambda),$ where $\theta_{t_v}(n_v)=\theta(t_vn_vt_v^{-1}),$ for any $n_v\in N(F_v).$ But then, there exists a constant $C_v$ depending only on $N_{0,v}$ and $\theta$ (hence not on $\lambda$) such that $\theta_{t_v}(u_v)=1$ if and only if $|\alpha_i(t_v)|\leq C_v,$ where $\alpha_i$'s are the simple roots of $G(F).$ Note that for all but finitely many $v<\infty,$ $K_{0,v}=GL_n(\mathcal{O}_{F,v}),$ thus we can take the corresponding $C_v=1.$ Hence for any $t_v\in T(F_v),$ $W_{v}(t_v;\lambda)\neq 0$ implies that $|\alpha_i(t_v)|\leq C_v,$ $1\leq i\leq n-1,$ and $C_v=1$ for all but finitely many finite places $v.$ Set $A=Z_G\backslash T,$ and
    $$
    A_{\varphi,fin}=\Big\{a=(a_v)\in A(\mathbb{A}_{F,fin}):\ |\alpha_i(a_v)|\leq C_v,\ 1\leq i\leq n-1\Big\}.
    $$
    Then $\supp W(x;\lambda)\mid_{A(\mathbb{A}_F)}\subseteq A(\mathbb{A}_{F,\infty})A_{\varphi,fin},$ $\forall$ $\lambda\in i\mathfrak{a}^*_P/i\mathfrak{a}^*_G,$ $1\leq i\leq 2.$ So after applying Iwasawa decomposition, we see that the integrand in \eqref{56'} are supported in $[N]\cdot A(\mathbb{A}_{F,\infty})A_{\varphi,fin}\cdot K,$ independent of $\chi.$ Therefore, \eqref{56'} follows from \eqref{56''}.
    \end{proof}
   
    Let $\K_P(x,y)$ be the restriction of $\K(x,y)$ to $\mathcal{H}_{P},$ explicitly given by \eqref{ker}. Define
    \begin{align*}
    \mathcal{F}_1k^T(x)=\sum_{P}(-1)^{\dim(A_P/A_G)}\sum_{\delta\in P(F)\backslash G(F)}\widehat{\tau}_P\left(H_P(\delta x)-T\right)\cdot\mathcal{F}_1\K_P(\delta x,\delta x),
    \end{align*}
    where $P$ runs over standard parabolic subgroups of $G,$ and for each non-minimal $P,$ the partial Fourier term $\mathcal{F}_1\K_P(x,y)$ is defined explicitly by
    \begin{align*}
    \mathcal{F}_1\K_P(x,y)=\int_{N_P(\mathbb{A}_F)N(F)\backslash N(\mathbb{A}_F)}\K_P(n x,y)\theta(n)dn,\ \forall\ x,y\in Z_G(\mathbb{A}_F)G(F)\backslash G(\mathbb{A}_F).
    \end{align*}
    
    Recall that by \eqref{52'} we have $\K_{P}(x,y)=\sum_{\chi}\K_{P,\chi}(x,y),$ where $\K_{P,\chi}(x,y)$ is defined by \eqref{53'}. We then obtain a decomposition $\mathcal{F}_1k^T(x)=\sum_{\chi\in\mathfrak{X}}\mathcal{F}_1k_{\chi}^T(x),$ where
    \begin{align*}
    \mathcal{F}_1k_{\chi}^T(x)=\sum_{P}(-1)^{\dim(A_P/A_G)}\sum_{\delta\in P(F)\backslash G(F)}\widehat{\tau}_P\left(H_P(\delta x)-T\right)\cdot\mathcal{F}_1\K_{P,\chi}(\delta x,\delta x),
    \end{align*}
    where $P$ runs over non-minimal standard parabolic subgroups and the partial Fourier term is defined explicitly by
    \begin{align*}
    \mathcal{F}_1\K_{P,\chi}(x,y)=\int_{N_P(\mathbb{A}_F)N(F)\backslash N(\mathbb{A}_F)}\K_{P,\chi}(n x,y)\theta(n)dn,
    \end{align*}
    where $\K_{P,\chi}(x,y)$ is defined by \eqref{53'}, i.e., for $x,y\in Z_G(\mathbb{A}_F)G(F)\backslash G(\mathbb{A}_F),$
    \begin{align*}
    K_{P,\chi}(x,y)=\sum_{Q\subset P}\frac{1}{n_Q^P}\int_{i\mathfrak{a}^*_{Q}/i\mathfrak{a}^*_{H}}\sum_{\phi\in\mathcal{B}_{Q,\chi}}E_Q^P\left(x,\mathcal{I}_{Q}(\lambda, \varphi)\phi,\lambda\right)\overline{E_Q^P\left(y,\phi,\lambda\right)}d\lambda.
    \end{align*} 
   Since each individual function $\K_{P,\chi}(x,y)$ has no nice geometric interpretation, when we deal with it we need to go back to $\K_{P}(x,y)$ and take advantage of its geometric expression; then "project" our results on $\K_{P}(x,y)$ to $\K_{P,\chi}(x,y).$ To make things more precise, we need to establish a variant of Lemma 2.3 of \cite{Art80}.

    \begin{lemma}\label{2.3}
    Let notation be as before. Suppose that for each $i,$ $1\leq i\leq N,$ we are given a parabolic subgroup $Q_i\supset Q,$ points $x_i, y_i\in G(\mathbb{A}_F)$ and a number $c_i$ such that 
    \begin{equation}\label{65'}
    \sum_{i=1}^Nc_i\int_{N_{Q}(F)\backslash N_Q(\mathbb{A}_F)}\int_{N_{Q_i}(\mathbb{A}_F)N(F)\backslash N(\mathbb{A}_F)}\K_{Q_i}(n_1x_i,nmy_i)\theta(n_1)dn_1dn=0,
    \end{equation}
    for all $m\in Z_Q(\mathbb{A}_F)M_Q(F)\backslash M_Q(\mathbb{A}_F).$ Then for any $\chi\in\mathfrak{X},$ 
    \begin{align*}
    h_{\chi}(m):=\sum_{i=1}^Nc_i\int_{N_{Q}(F)\backslash N_Q(\mathbb{A}_F)}\int_{N_{Q_i}(\mathbb{A}_F)N(F)\backslash N(\mathbb{A}_F)}\K_{Q_i,\chi}(n_1x_i,nmy_i)\theta(n_1)dn_1dn
    \end{align*}
    also vanishes for all $m\in Z_Q(\mathbb{A}_F)M_Q(F)\backslash M_Q(\mathbb{A}_F).$
    \end{lemma}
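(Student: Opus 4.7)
The plan is to exploit the orthogonality built into the spectral decomposition $\K_{Q_i} = \sum_\chi \K_{Q_i,\chi}$. The key observation is that every operation used in forming $h(m)$ from the kernels $\K_{Q_i}$ --- the partial Fourier integration in $n_1$ against $\theta$ (acting on the first variable), the integration in $n \in [N_Q]$ (acting on the second variable), translations by the fixed points $x_i$ and $y_i$, and the finite linear combination with coefficients $c_i$ --- commutes with the spectral projection onto the $\chi$-component. Consequently $h_\chi(m)$ is precisely the $\chi$-part of $h(m)$ relative to a spectral decomposition of the space of smooth functions on $Z_Q(\mathbb{A}_F) M_Q(F) \backslash M_Q(\mathbb{A}_F)$, and it suffices to show that the family $\{h_\chi\}_{\chi \in \mathfrak{X}}$ is linearly independent.

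First I would verify that for each fixed $\chi$ the sum-integral defining $h_\chi(m)$ converges absolutely, which follows from the absolute convergence of the spectral expansion \eqref{53'} together with the compactness of $N_Q(F) \backslash N_Q(\mathbb{A}_F)$ and of the quotient in which $n_1$ varies (and the $N_{Q_i}(\mathbb{A}_F)$-invariance of $\K_{Q_i}$ in the first variable, which makes the $n_1$-integration reduce to a compact domain). Next I would establish the spectral characterization of $h_\chi$: for every standard parabolic $R$ of $M_Q$ with Levi $M_R$ and every cusp form $\psi_0 \in L^2_{\mathrm{cusp}}(M_R(F) \backslash M_R(\mathbb{A}_F)^1)_{\sigma_0}$ with $(M_R, \sigma_0) \notin \chi$, one has
\begin{align*}
\int_{M_R(F) \backslash M_R(\mathbb{A}_F)^1} \int_{N_R(F) \backslash N_R(\mathbb{A}_F)} \overline{\psi_0(m')} \, h_\chi(n m' m) \, dn \, dm' = 0
\end{align*}
for almost all $m \in M_Q(\mathbb{A}_F)$. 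This is an immediate consequence of the defining property of $\mathcal{H}_{Q_i,\chi}$ applied to $\K_{Q_i,\chi}(n_1 x_i, \cdot) \in \mathcal{H}_{Q_i,\chi}$: the inner $N_Q$-integration together with the outer $N_R$-integration simply unfolds into the constant-term integration that is killed whenever the cuspidal data do not match.

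Finally I would conclude using the standard uniqueness built into Langlands' spectral decomposition: the $\chi$-characterization above is known to uniquely determine $h_\chi$ within $h$, so the spaces of functions of distinct spectral types $\chi$ are linearly independent (in fact orthogonal, in the $L^2$ sense on $Z_Q(\mathbb{A}_F) M_Q(F) \backslash M_Q(\mathbb{A}_F)^1$). The vanishing of $h = \sum_\chi h_\chi$ therefore forces each summand to vanish almost everywhere, and continuity of the spectral expansion upgrades this to vanishing for every $m \in Z_Q(\mathbb{A}_F) M_Q(F) \backslash M_Q(\mathbb{A}_F)$.

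The principal technical obstacle will be to keep track of how the partial Fourier integration in $n_1$, which mixes naturally with $N_{Q_i}(\mathbb{A}_F)$ on which the generic character $\theta$ is typically nontrivial, interacts with the spectral decomposition. The saving grace is that this operation acts only on the first variable of $\K_{Q_i}$, whereas the cuspidal-datum decomposition in the characterization above is read off from the second variable; hence the Fourier operation and the spectral projection commute. Once this is pinned down carefully (as in Arthur's treatment of Lemma 2.3 of \cite{Art80}, which this lemma refines), the orthogonality argument proceeds cleanly and delivers $h_\chi \equiv 0$ for every $\chi \in \mathfrak{X}$.
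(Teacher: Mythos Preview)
Your proposal is correct and follows essentially the same route as the paper: show that the partial Fourier and constant-term operations commute with the spectral projection so that $h_\chi$ lies in the $\chi$-part, deduce $I(h_\chi,\phi_{\chi'})=0$ for $\chi\neq\chi'$, use the hypothesis $\sum_\chi h_\chi=0$ to kill the diagonal pairing, and conclude via Langlands' Lemma~3.7. The paper is only slightly more careful in recording the polynomial bound $\sum_\chi|h_\chi(m)|\le c\|m\|^N$ in place of your $L^2$-orthogonality remark, which is what guarantees the cuspidal pairings converge and allows Langlands' lemma for continuous slowly-increasing functions to apply.
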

    \begin{proof}
    Given any $\chi'\in\mathfrak{X}.$ Suppose that there exists some group $R$ in $\mathcal{P}_{\chi'}$ which is contained in $Q.$ Let $\phi_{\chi'}\in L^2\left(M_R(F)Z_{M_R}(\mathbb{A}_F)\backslash M_R(\mathbb{A}_F)\right)_{\chi'}.$ For any function $h\in L^2\left(R(F)Z_{M_R}(\mathbb{A}_F)\backslash R(\mathbb{A}_F)\right)_{\chi},$ we define 
    \begin{align*}
    I(h,\phi_{\chi'}):=\int_{M_R(F)Z_{M_R}(\mathbb{A}_F)\backslash M_R(\mathbb{A}_F)}\int_{N_R(F)\backslash N_R(\mathbb{A}_F)}h(nm)\phi_{\chi'}(m)dndm.
    \end{align*}
    For any parabolic subgroup $Q'\subset Q_i,$ Let $h_{i,Q'}$ be the function defined by 
    \begin{align*}
    m\longmapsto \int_{N_{Q_i}(F)\backslash N_{Q_i}(\mathbb{A}_F)}E_{Q'}^{Q_i}(nmy,\phi,\lambda)dn,\ m\in M_{Q}(F)Z_{M_{Q}}(\mathbb{A}_F)\backslash M_{Q}(\mathbb{A}_F).
    \end{align*}
    Then $I(h_{i,Q'},\phi_{\chi'})=0$ by the construction of Eisenstein series. Hence, apply the spectral expansion of each $\mathcal{F}_1\K_{Q_i,\chi}(x,y)$ (noting that due to the same philosophy as Lemma \ref{25'} one can interchange the partial Fourier transform and the integral over the spectral parameters) one concludes that $I(h_{\chi},\phi_{\chi'})=0$ for any $\chi\neq \chi'.$ 
    
    Also the same estimates as in the proof of Lemma \ref{26'} leads to 
    \begin{align*}
    \sum_{\chi\in\mathfrak{X}}\big|h_{\chi}(m)\big|\leq c\|m\|^N,\ m\in M_R(F)Z_{M_R}(\mathbb{A}_F)\backslash M_R(\mathbb{A}_F),
    \end{align*}
    for some positive constants $c$ and $N,$ since the domain $N_{Q_i}(\mathbb{A}_F)N(F)\backslash N(\mathbb{A}_F)$ is compact. Note that \eqref{65'} amounts to that $\sum_{\chi}h_{\chi}(m)=0$ for any $m$ in the space $Z_Q(\mathbb{A}_F)M_Q(F)\backslash M_Q(\mathbb{A}_F).$ Therefore, $I(h_{\chi'},\phi_{\chi'})=0$ as well. Then for any $\chi, \chi'\in\mathfrak{X},$ $I(h_{\chi},\phi_{\chi'})=0.$ Note that the function $h_{\chi}$ is continuous, then by Lemma 3.7 of \cite{Lan76} one concludes that $h_{\chi}\equiv0,$ $\forall$ $\chi\in\mathfrak{X}.$

    \end{proof}

    \begin{prop}\label{29'}
    Let notation be as before. Let $\chi\in\mathfrak{X}$ be a cuspidal datum. Then when $T$ is sufficiently regular and depends only on the support of $\varphi,$ one has $\mathcal{F}_1\Lambda^T_2\K_{\chi}(x,x)=\mathcal{F}_1k^T_{\chi}(x)$ on $Z_G(\mathbb{A}_F)G(F)\backslash G(\mathbb{A}_F).$
    \end{prop}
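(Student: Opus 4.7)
The plan is to first establish the identity summed over all cuspidal data $\chi\in\mathfrak{X}$, and then descend to individual $\chi$-components via Lemma \ref{2.3}. This mirrors the strategy used by Arthur for the untruncated, un-Fourier-transformed analogue $\Lambda^T_2\K(x,x)=k^T(x)$ (cf.\ the proof of Lemma~2.4 in \cite{Art80}).

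First I would recall Arthur's combinatorial identity. When $T$ is sufficiently regular with respect to $\supp\varphi$, one has pointwise on $Z_G(\mathbb{A}_F)G(F)\backslash G(\mathbb{A}_F)$ the equality
\begin{align*}
\Lambda^T_2\K(x,x)=\sum_{P}(-1)^{\dim(A_P/A_G)}\sum_{\delta\in P(F)\backslash G(F)}\widehat{\tau}_P(H_P(\delta x)-T)\,\K_P(\delta x,\delta x),
\end{align*}
where $P$ runs over all standard parabolic subgroups. The proof rests on the standard identity $\sum_P(-1)^{\dim(A_P/A_G)}\widehat{\tau}_P\tau_P=\delta_{P=G}$ together with a pairing argument that collapses contributions from $(P_1,\delta_1)$ and $(P_2,\delta_2)$ with $\delta_2\in N_{P_1}(F)\delta_1$; the support condition on $\varphi$ and the regularity of $T$ guarantee that only finitely many $\delta$ contribute to each $x$ in a compact set.

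Next I would apply the partial Fourier transform $\mathcal{F}_1$ to both sides. Since $\K_P$ is already left $N_P(\mathbb{A}_F)$-invariant in its first variable, integrating against $\theta$ over $[N]$ factors through the quotient $N_P(\mathbb{A}_F)N(F)\backslash N(\mathbb{A}_F)$ and, after legitimate interchange with the finite sums over $P$ and $\delta$, yields
\begin{align*}
\mathcal{F}_1\Lambda^T_2\K(x,x)=\sum_{P}(-1)^{\dim(A_P/A_G)}\sum_{\delta}\widehat{\tau}_P(H_P(\delta x)-T)\,\mathcal{F}_1\K_P(\delta x,\delta x)=\mathcal{F}_1 k^T(x)
\end{align*}
on $Z_G(\mathbb{A}_F)G(F)\backslash G(\mathbb{A}_F)$. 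The minimal parabolic term $P=B$ drops out because $\mathcal{F}_1\K_B$ involves an integration of the constant-on-$N(F)$ function against a nontrivial character of $N(\mathbb{A}_F)/N_B(\mathbb{A}_F)N(F)$, which vanishes; this explains the restriction to non-minimal $P$ in the definition of $\mathcal{F}_1 k^T$.

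Finally, to pass from the total identity to each cuspidal component, I would invoke Lemma~\ref{2.3}. Writing $\mathcal{F}_1\Lambda^T_2\K(x,x)-\mathcal{F}_1 k^T(x)=\sum_\chi[\mathcal{F}_1\Lambda^T_2\K_\chi(x,x)-\mathcal{F}_1 k^T_\chi(x)]=0$, the bracketed expression unfolds into an alternating finite sum of integrals of the form $\int_{[N_P]}\int_{N_P(\mathbb{A}_F)N(F)\backslash N(\mathbb{A}_F)}\K_P(n_1x,n\delta x)\theta(n_1)\,dn_1\,dn$, which is precisely the shape required by Lemma~\ref{2.3} (with the role of $m$ played by $1$ and appropriate choices of $Q_i$, $x_i$, $y_i$, $c_i$). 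Since the total sum vanishes, Lemma~\ref{2.3} yields the vanishing of each $\chi$-projected term. The main technical obstacle will be organizational: verifying that Arthur's original pairing argument survives the introduction of $\mathcal{F}_1$ (which is why the regularity of $T$ must be chosen depending only on $\supp\varphi$), and carefully packaging the difference into the exact hypothesis of Lemma~\ref{2.3}, in particular ensuring that all interchanges of the sum over $\chi$ with the finite geometric sums are legitimate — here one uses the absolute convergence provided by \eqref{55}.
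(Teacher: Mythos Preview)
Your reduction to Arthur's diagonal identity breaks down at the step ``apply $\mathcal{F}_1$ to both sides.'' The operator $\mathcal{F}_1$ acts on the \emph{first} variable of a two-variable kernel, so
\[
\mathcal{F}_1\Lambda^T_2\K(x,x)=\int_{[N]}\Lambda^T_2\K(n_1x,\,x)\,\theta(n_1)\,dn_1
=\sum_P(-1)^{\dim A_P^G}\sum_{\delta}\widehat{\tau}_P(H_P(\delta x)-T)\int_{[N]}\int_{[N_P]}\K(n_1x,\,n\delta x)\,dn\,\theta(n_1)\,dn_1,
\]
whereas the target $\mathcal{F}_1k^T(x)$ has summands $\int_{\mathcal{N}_P}\K_P(n_1\delta x,\,\delta x)\,\theta(n_1)\,dn_1$. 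On the left the first argument is $n_1x$ with no $\delta$, and the inner kernel is the $N_P$-constant term of the \emph{full} $\K$ in the second variable; on the right the first argument is $n_1\delta x$ and the kernel is the projected $\K_P$. These do not match term by term, and bridging them is the actual content of the proposition --- it is not a formal consequence of Arthur's diagonal result. (If instead you interpret ``apply $\mathcal{F}_1$'' as integrating the single-variable identity against $\theta$, you get $\int_{[N]}\Lambda^T_2\K(nx,nx)\theta(n)\,dn$, which is again not $\mathcal{F}_1\Lambda^T_2\K(x,x)$.)

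There is a second gap in your use of Lemma~\ref{2.3}: its hypothesis requires the linear combination to vanish for \emph{all} $m\in Z_Q(\mathbb{A}_F)M_Q(F)\backslash M_Q(\mathbb{A}_F)$, not merely at $m=1$; your sketch only supplies the latter. The paper's proof addresses both issues by working directly with the difference. It expands $\mathcal{F}_1k^T_\chi$ via Arthur's inversion (Lemma~1.5 of \cite{Art80}) and the $\sigma_Q^{P_1}$-combinatorics, isolates the $Q=P_1=G$ term as $\mathcal{F}_1\Lambda^T_2\K_\chi$, and then shows each remaining $J^T_{Q,P_1,\chi}$ with $Q\subsetneq P_1$ vanishes: one first establishes a geometric identity $h(m)\equiv 0$ for all $m$ in the Levi (this is where Lemma~\ref{2.3} legitimately applies), then unfolds via Bruhat and uses $\supp\varphi$ to force a contradiction with the $\sigma_Q^{P_1}$-constraint for $T$ sufficiently regular. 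That geometric step is where the dependence of $T$ on $\supp\varphi$ genuinely enters, and your shortcut does not reproduce it.
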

    \begin{proof}
    If $\phi\in\mathcal{B}_{loc}\left(Z_G(\mathbb{A}_F)G(F)\backslash G(\mathbb{A}_F)\right),$ we define $\Lambda^{T,P}\phi$ to be the function in $\mathcal{B}_{loc}\left(Z_G(\mathbb{A}_F)M_P(F)N_P(\mathbb{A}_F)\backslash G(\mathbb{A}_F)\right)$ whose value at $x$ is equal to
    \begin{align*}
    \sum_{Q:\ B\subset Q\subset P}(-1)^{\dim(A_Q/A_P)}\sum_{\delta\in Q(F)\backslash P(F)}\int_{N_Q(F)\backslash N_Q(\mathbb{A}_F)}\phi(n\delta x)\widehat{\tau}_Q^P\left(H_Q(\delta x)-T\right)dn.
    \end{align*}
    Then by Lemma 1.5 of \cite{Art80}, for any $\phi\in\mathcal{B}_{loc}\left(Z_G(\mathbb{A}_F)G(F)\backslash G(\mathbb{A}_F)\right),$ one has
    \begin{align*}
    \phi(x)=\sum_{P\supset B}\sum_{\delta\in P(F)\backslash G(F)}\Lambda^{T,P}\phi(\delta x)\tau_P(H_P(\delta x)-T).
    \end{align*}
    More generally, if $\phi\in \mathcal{B}_{loc}\left(Z_G(\mathbb{A}_F)P(F)\backslash G(\mathbb{A}_F)\right)$ for some standard parabolic subgroup $P,$ then we have the following expansion:
    \begin{equation}\label{62}
    \int_{N_P(F)\backslash N_P(\mathbb{A}_F)}\phi(nx)dn=\sum_{B\subset Q\subset P}\sum_{\delta\in Q(F)\backslash P(F)}\Lambda^{T,Q}\phi(\delta x)\tau_Q^P(H_Q(\delta x)-T).
    \end{equation}
    Note that the function $\mathcal{F}_1\K_{P,\chi}(x,y)$ is invariant under the left translation of the second variable by $N_P(\mathbb{A}_F).$ In particular, we can write
    \begin{align*}
    \mathcal{F}_1\K_{P,\chi}(x,y)=\int_{N_P(F)\backslash N_P(\mathbb{A}_F)}\mathcal{F}_1\K_{P,\chi}( x,ny)dn.
    \end{align*} 
    Substituting this expression into the definition implies that $\mathcal{F}_1k_{\chi}^T$ is equal to
    \begin{align*}
    \sum_{P}(-1)^{\dim\left(A^P_G\right)}\sum_{\delta\in P(F)\backslash G(F)}\widehat{\tau}_P\left(H_P(\delta x)-T\right)\int_{N_P(F)\backslash N_P(\mathbb{A}_F)}\mathcal{F}_1\K_P(\delta x,n\delta x)dn,
    \end{align*}
    where $A^P_G:=A_P/A_G.$ The integral over $n$ can then be expanded via \eqref{62}. Combine the resulting sum over $Q(F)\backslash P(F)$ with that over $P(F)\backslash G(F)$ to give an expression
    \begin{align*}
    \sum_{Q\subset P}(-1)^{\dim\left(A^P_G\right)}\sum_{\delta}\widehat{\tau}_P\left(H_P(\delta x)-T\right)\tau_Q^P\left(H_Q(\delta x)-T\right)\mathcal{F}_1\Lambda_2^{T,Q}\K_{P,\chi}(\delta x,\delta x)
    \end{align*}
    of $\mathcal{F}_1k^T_{\chi}(x),$ where $\delta$ runs over $Q(F)\backslash G(F).$ Note that one has 
    $$
    \widehat{\tau}_P\left(H_P(\delta x)-T\right)\tau_Q^P\left(H_Q(\delta x)-T\right)=\widehat{\tau}_P\left(H_Q(\delta x)-T\right)\tau_Q^P\left(H_Q(\delta x)-T\right).
    $$ 
    Then applying the identity, for any $H\in\mathfrak{a}_Q,$
    \begin{align*}
    \tau_Q^P(H)\widehat{\tau}_P(H)=\sum_{P_1:\ P_1\supset P}\sum_{P_2:\ Q\subset P_2\subset P_1}(-1)^{\dim(A_Q/A_{P_1})}\tau_Q^{P_2}(H)\widehat{\tau}_{P_2}(H),
    \end{align*}
    we can rewrite $\widehat{\tau}_P\left(H_P(\delta x)-T\right)\tau_Q^P\left(H_Q(\delta x)-T\right)$ as
    \begin{equation}\label{63'}
    \sum_{P_1:\ P_1\supset P}\sum_{P_2:\ P_2\supset Q}(-1)^{\dim(A_Q/A_{P_1})}\tau_Q^{P_2}(H_Q(\delta x)-T)\widehat{\tau}_{P_2}(H_Q(\delta x)-T).
    \end{equation}
    Denote the inner sum in \eqref{63'} by $\sigma_{Q}^{P_1}.$ It then follows that $\mathcal{F}_1k^T_{\chi}(x)$ has an expansion
    \begin{equation}\label{63}
    \sum_{Q\subset P_1}\sum_{\delta\in Q(F)\backslash G(F)}\sigma_{Q}^{P_1}(H_Q(\delta x)-T)\mathcal{F}_1\Lambda_2^{T,Q}\K_{Q,P_1,\chi}(\delta x,\delta x),
    \end{equation}
    where for any $x,y\in Z_G(\mathbb{A}_F)\backslash G(\mathbb{A}_F),$
    $$
    \K_{Q,P_1,\chi}(x,y):=\sum_{P:\ Q\subset P\subset P_1}(-1)^{\dim(A_P/A_G)}\K_{P,\chi}(x,y).
    $$
    
    By Lemma 6.3 of \cite{Art78}, $\sigma_{Q}^{P_1}=0$ if $Q=P_1\neq G.$ So the corresponding summand in \eqref{63} vanishes. If $Q=P_1=G,$ $\sigma_{Q}^{P_1}=1,$ and the corresponding summand in \eqref{63} is equal to $\mathcal{F}_1\Lambda_2^{T}\K_{\chi}(x,x).$ Therefore, the difference $\mathcal{F}_1k^T_{\chi}(x)-\mathcal{F}_1\Lambda^T_2\K_{\chi}(x,x)$ is equal to the modified expression of \eqref{63}:
    \begin{equation}\label{65}
    \sum_{Q\subsetneq P_1}\sum_{\delta\in Q(F)\backslash G(F)}\sigma_{Q}^{P_1}(H_Q(\delta x)-T)\mathcal{F}_1\Lambda_2^{T,Q}\K_{Q,P_1,\chi}(\delta x,\delta x).
    \end{equation}
    Consider the integral over $Z_G(\mathbb{A}_F)G(F)\backslash G(\mathbb{A}_F)$ of the absolute value of the expression in \eqref{65}. Combining the integral with the sum over $Q(F)\backslash G(F)$ leads to
    \begin{align*}
    &\sum_{\chi\in\mathfrak{X}}\int_{Z_G(\mathbb{A}_F)G(F)\backslash G(\mathbb{A}_F)}\Big|\mathcal{F}_1k^T_{\chi}(x)-\mathcal{F}_1\Lambda^T_2\K_{\chi}(x,x)\Big|dx\\
    \leq&\sum_{\chi\in\mathfrak{X}}\sum_{Q\subsetneq P_1}\int_{Z_G(\mathbb{A}_F)Q(F)\backslash G(\mathbb{A}_F)}\sigma_{Q}^{P_1}(H_Q(x)-T)\Big|\mathcal{F}_1\Lambda^{T,Q}_2\K_{Q,P_1,\chi}(x,x)\Big|dx.
    \end{align*}
    Let $J_{Q,P_1,\chi}^T:=\sigma_{Q}^{P_1}(H_Q(\delta x)-T)\big|\mathcal{F}_1\Lambda^{T,Q}_2\K_{Q,P_1,\chi}(x,x)\big|,$ $Q\subsetneq P_1.$ Then we will show that for $T$ highly regular, $J_{Q,P_1,\chi}^T$ actually vanishes. For any $Q\subsetneq P_1,$ set
    \begin{align*}
    \mathcal{F}_1\K_{Q,P_1}(x,y)=\sum_{P:\ Q\subset P\subset P_1}(-1)^{\dim(A_P/A_G)}\int_{N(F)\backslash N(\mathbb{A}_F)}\K_{P}(n_1x,y)\theta(n_1)dn_1.
    \end{align*}
    Let $J_2(x,y)$ be the constant term of the partial Fourier expansion of $\mathcal{F}_1\K_{Q,P_1}(x,y)$ with respect to $y$-variable along $N_Q(F)\backslash N_Q(\mathbb{A}_F),$ explicitly,
    \begin{align*}
    J_2(x,y)=\int_{N_Q(F)\backslash N_Q(\mathbb{A}_F)}\mathcal{F}_1\K_{Q,P_1}(x,ny)dn.
    \end{align*}
    
    Then use the geometric expansion of $\K_{P}(n_1x,y)$ we see $J_2(x,y)$ is equal to 
    \begin{align*}
    \sum_{P:\ Q\subset P\subset P_1}(-1)^{\dim(A^P_G)}\int_{[N]}\int_{[N_Q]}\int_{N_P(\mathbb{A}_F)}\sum_{\gamma\in M_P(F)}\varphi(x^{-1}n_1^{-1}\gamma nn_2y)dndn_2\theta(n_1)dn_1.
    \end{align*}
    In the last summand corresponding to $P,$ for the inner triple integral we have
    \begin{align*}
    &\int_{N_Q(F)\backslash N_Q(\mathbb{A}_F)}\int_{N_P(\mathbb{A}_F)}\sum_{\gamma\in M_P(F)}\varphi(x^{-1}n_1^{-1}\gamma nn_2y)dndn_2\\
    =&\int_{N_Q(\mathbb{A}_F)}\int_{M_P(F)N_P(\mathbb{A}_F)/N_Q(F)}\varphi(x^{-1}n_1^{-1}pn_2y)dpdn_2\\
    =&\sum_{M_P(F)/M_P(F)\cap N_Q(F)}\int_{N_P(\mathbb{A}_F)/N_P(F)}\int_{N_Q(\mathbb{A}_F)}\varphi(x^{-1}n_1^{-1}\gamma nn_2y)dndn_2.
    \end{align*}
    The integral over $N_P(\mathbb{A}_F)/N_P(F)$ can then be absorbed in the integral over $N_Q(\mathbb{A}_F).$ Note that $M_P(F)/M_P(F)\cap N_Q(F)\simeq P(F)/Q(F)\times M_Q(F),$ then $J_2(x,y)$ equals
    \begin{align*}
    \sum_{P}(-1)^{\dim(A^P_G)}\int_{\mathcal{N}_Q}\sum_{\gamma\in Q(F)\backslash P(F)}\int_{N_Q(\mathbb{A}_F)}\sum_{\eta\in M_Q(F)}\varphi(x^{-1}n_1^{-1}\gamma^{-1}\eta n_2y)dn_2\theta(n_1)dn_1,
    \end{align*}
    where $\mathcal{N}_Q=N_Q(\mathbb{A}_F)N(F)\backslash N(\mathbb{A}_F),$ $A^P_G=A_P/A_G,$ and $P$ runs over parabolic subgroups of $G$ such that $Q\subset P\subset P_1.$ Clearly the above expression is equal to
    \begin{equation}\label{66}
    \sum_{P:\ Q\subset P\subset P_1}(-1)^{\dim(A_P/A_G)}\int_{\mathcal{N}_Q}\sum_{\gamma\in Q(F)\backslash P(F)}\K_Q(\gamma n_1x,y)\theta(n_1)dn_1.
    \end{equation}
    For any $P_0$ such that $Q\subset P_0\subset P_1,$ let $F(Q,P_0)$ be the collection of all elements in $Q(F)\backslash P_0(F)$ which do not lie in $Q(F)\backslash P(F)$ for any $Q\subset P\subsetneq P_0.$ Since the sum over $P$ such that $Q\subset P\subset P_1$ is finite, then \eqref{66} becomes
    \begin{align*}
    \int_{\mathcal{N}_Q}\sum_{P_0:\ Q\subset P_0\subset P_1}\sum_{\gamma\in F(Q,P_0)}\sum_{P:\ P_0\subset P\subset P_1}(-1)^{\dim(A_P/A_G)}\K_Q(\gamma n_1x,y)\theta(n_1)dn_1.
    \end{align*}
    An elementary combinatorial computation shows that the inner triple sum vanishes unless $P_0=P_1,$ in which case the multiplicity of each $\gamma\in F(Q,P_1)$ equals $(-1)^{\dim(A_{P_1}/A_G)}.$ Therefore, we have shown
    \begin{equation}\label{67}
    J_2(x,y)=(-1)^{\dim(A_{P_1}/A_G)}\int_{\mathcal{N}_Q}\sum_{\gamma\in F(Q,P_1)}\K_Q(\gamma n_1x,y)\theta(n_1)dn_1.
    \end{equation}
    For any $m\in Z_Q(\mathbb{A}_F)M_Q(F)\backslash M_Q(\mathbb{A}_F),$ define $h(m)$ to be
    \begin{align*}
    c_PJ_2(x,my)-(-1)^{\dim(A_{P_1}/A_G)}\sum_{\gamma\in F(Q,P_1)}\int_{[N_Q]}\int_{\mathcal{N}_Q}\K_Q(\gamma n_1x,my)\theta(n_1)dn_1dn,
    \end{align*}
    where $c_Q=\vol\left(N_Q(F)\backslash N_Q(\mathbb{A}_F)\right).$ Since $\K_Q(x,y)$ is left $N_Q(\mathbb{A}_F)$-invariant on both variables, then \eqref{67} implies that $h(m)\equiv0$ on $Z_Q(\mathbb{A}_F)M_Q(F)\backslash M_Q(\mathbb{A}_F).$ Set 
    \begin{align*}
    J_2^{\chi}(x,y)=\int_{N_Q(F)\backslash N_Q(\mathbb{A}_F)}\mathcal{F}_1\K_{Q,P_1,\chi}(x,ny)dn,
    \end{align*}
    where $\chi\in\mathfrak{X}$ is any cuspidal datum, and $\mathcal{F}_1\K_{Q,P_1}(x,y)$ is defined to be
    \begin{align*}
    \sum_{P:\ Q\subset P\subset P_1}(-1)^{\dim(A_P/A_G)}\int_{N_P(\mathbb{A}_F)N(F)\backslash N(\mathbb{A}_F)}\K_{P,\chi}(n_1x,y)\theta(n_1)dn_1.
    \end{align*}
    Hence, by Lemma \ref{2.3} and Claim \ref{30} we have, for any $\chi\in\mathfrak{X},$ that $h_{\chi}(m)$ vanishes for any $m\in Z_Q(\mathbb{A}_F)M_Q(F)\backslash M_Q(\mathbb{A}_F),$ where
    \begin{align*}
    c_PJ_2^{\chi}(x,my)-(-1)^{\dim(A_{P_1}/A_G)}\sum_{\gamma\in F(Q,P_1)}\int_{[N_Q]}\int_{\mathcal{N}_Q}\K_{Q,\chi}(\gamma n_1x,my)\theta(n_1)dn_1dn.
    \end{align*}
    Taking $m=I_n,$ the trivial element, leads to $h_{\chi}(I_n)=0,$ which implies that 
    \begin{equation}\label{68}
    J_2^{\chi}(x,y)=(-1)^{\dim(A_{P_1}/A_G)}\sum_{\gamma\in F(Q,P_1)}\int_{\mathcal{N}_Q}\K_{Q,\chi}(\gamma n_1x,y)\theta(n_1)dn_1dn.
    \end{equation}
    By definition, $\mathcal{F}_1\Lambda^{T,Q}_2\K_{Q,P_1,\chi}(x,y)$ is equal to
    \begin{align*}
    \sum_{Q_1}(-1)^{\dim(A_{Q_1}/A_Q)}\sum_{\delta\in Q_1(F)\backslash Q(F)}\widehat{\tau}_{Q_1}^Q\left(H_{Q_1}(\delta x)-T\right)\int_{[N_{Q_1}]}\mathcal{F}_1\K_{Q,P_1,\chi}(x,n\delta y)dn,
    \end{align*}
    where $Q_1$ runs over $B\subset Q_1\subset Q.$ Clearly one sees that  
    $$
    \mathcal{F}_1\Lambda^{T,Q}_2\K_{Q,P_1,\chi}(x,ny)=\mathcal{F}_1\Lambda^{T,Q}_2\K_{Q,P_1,\chi}(x,y),
    $$ 
    for any $n\in N_Q(F)\backslash N_Q(\mathbb{A}_F)$ and any $x, y\in G(\mathbb{A}_F).$ Therefore, by \eqref{68} and Claim \ref{30} one obtains that $\mathcal{F}_1\Lambda^{T,Q}_2\K_{Q,P_1,\chi}(x,y)$ is equal to
   \begin{equation}\label{79'}
   (-1)^{\dim(A^{P_1}_G)}\sum_{\gamma\in F(Q,P_1)}\int_{\mathcal{N}_Q}\Lambda^{T,Q}_2\K_{Q,\chi}(\gamma n_1x,y)\theta(n_1)dn_1.
   \end{equation}
   According to definition, $\Lambda^{T,Q}_2\K_{Q,\chi}(\gamma n_1x,y)$ is equal to
   \begin{equation}\label{80}
   \sum_{P:\ B\subset P\subset Q}(-1)^{\dim(A^P_Q)}\sum_{\delta\in P(F)\backslash Q(F)}\int_{[N_P]}\K_{Q,\chi}(\gamma n_1x,n\delta y)\widehat{\tau}_P^Q\left(H_P(\delta y)-T\right)dn.
   \end{equation}
   Let $\chi$ be represented by a pair $(Q,\sigma).$ Then one has 
   \begin{equation}\label{81}
   \K_{Q,\chi}(\gamma n_1x,n\delta y)=\int_{Z_{M_Q}(\mathbb{A}_F)M_Q(F)\backslash M_Q(\mathbb{A}_F)}\K_{Q}(\gamma n_1x,mn\delta y)\sigma(m)dm.
   \end{equation}
   Substituting \eqref{80} and \eqref{81} into \eqref{79'}, combining with Claim \ref{30}, we can write $\mathcal{F}_1\Lambda^{T,Q}_2\K_{Q,P_1,\chi}(x,y)$ as $(-1)^{\dim(A^{P_1}_G)}$ multiplying 
   \begin{align*}
   \int_{\mathcal{N}_Q}\sum_{P:\ B\subset P\subset Q}(-1)^{\dim(A^P_Q)}\sum_{\delta}\widehat{\tau}_P^Q\left(H_P(\delta y)-T\right)\int_{[N_P]}K(n_1,n;\delta)dn\theta(n_1)dn_1
   \end{align*}
   where $\delta$ runs over $P(F)\backslash Q(F)\simeq P(F)\cap M_Q(F)\backslash M_Q(F),$ and $K(n_1,n;\delta)$ is 
   \begin{align*}
   \sum_{\gamma\in F(Q,P_1)}\int_{X_M}\int_{N_Q(\mathbb{A}_F)}\sum_{\mu\in Z_{M_Q}(F)\backslash M_Q(F)}\varphi(x^{-1}n_1^{-1}\gamma^{-1}n_Q\mu mn\delta y)dn_Q\sigma(m)dm,
   \end{align*}
   where $X_M=Z_{M_Q}(\mathbb{A}_F)M_Q(F)\backslash M_Q(\mathbb{A}_F).$ For any $x\in Z_G(\mathbb{A}_F)Q(F)\backslash G(\mathbb{A}_F),$ write it as $x=n_2m_2ak,$ where $n_2\in N_Q(F)\backslash N_Q(\mathbb{A}_F),$ $m_2\in X_M,$ $a\in Z_G(\mathbb{A}_F)\backslash Z_{M_Q}(\mathbb{A}_F),$ and $k\in K.$ Therefore, $J_{Q,P_1,\chi}^T\neq 0$ unless
   \begin{equation}\label{82}
   \begin{cases}
   \sigma_{Q}^{P_1}(H_Q(a)-T)=1,\\
   a^{-1}m_2^{-1}n_2^{-1}n_1^{-1}\gamma^{-1}n_Q\mu mn\delta m_2a\in K\cdot\supp\varphi\cdot K,
   \end{cases}
   \end{equation}
   for some $a,$ $m_2,$ $n_2,$ $n_1,$ $\gamma,$ $n_Q,$ $\mu,$ $m,$ $n$ and $\delta$ defined as above. By definition of $\gamma\in F(Q, P_1)$ with $Q\subsetneq P_1,$ $\gamma\in Q(F)\backslash (P_1(F)-Q(F)).$ Hence, by Bruhat decomposition, we may write $\gamma=wn_{\gamma},$ where $w\in W_{Q}\backslash W_{P_1}-\{Id\},$ and $n_{\gamma}\in N(F).$ Then $a^{-1}m_2^{-1}n_2^{-1}n_1^{-1}\gamma^{-1}n_Q\mu mn\delta m_2a$ is of the form $q_1a^{-1}waq_2,$ where $q_1, q_2\in Q(\mathbb{A}_F).$ By assumption, $a^{-1}wa\notin Q(\mathbb{A}_F).$ An explicit computation shows that as an $n\times n$ matrix, there is some entry of $a^{-1}wa$ from diagonal elements of $a.$ Therefore, writing $a=\diag(a_1,\cdots,a_n),$ we conclude that at least one of these $a_i$'s is bounded absolutely, that is, depending only on $\supp\varphi.$ This contradicts the first equation in \eqref{82} if one takes $T$ sufficiently regular in terms of $\supp\varphi.$ This completes the proof of Proposition \ref{29'}.
   \end{proof}
    
    \begin{claim}\label{30}
    Given any $Q\subsetneq P_1,$ parabolic subgroups of $G.$ Let $x, y\in G(\mathbb{A}_F)^1$ be fixed. Then for any $n_1\in N_Q(\mathbb{A}_F)N(F)\backslash N(\mathbb{A}_F),$ $\K_{Q,\chi}(\gamma n_1x,y)$ vanishes unless $\gamma$ belongs to a finite subset of $F(Q,P_1);$ moreover, this finite subset depends only on $x ,y$ and $\supp\varphi.$ In particular, it is independent of $\chi.$
    \end{claim}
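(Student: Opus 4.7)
\emph{Proof plan.} The plan is to reduce the spectral claim to a geometric finiteness statement, by combining compact support of $\varphi$ with the pointwise decomposition $\K_Q=\sum_\chi \K_{Q,\chi}$.

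First I would establish a geometric bound. From the expansion $\K_Q(x,y)=\int_{N_Q(\mathbb{A}_F)}\sum_{\mu\in Z_G(F)\backslash M_Q(F)}\varphi(x^{-1}\mu n y)\,dn$ together with the elementary identity $M_Q(F)N_Q(\mathbb{A}_F)M_Q(\mathbb{A}_F)=Q(\mathbb{A}_F)$, nonvanishing of $\K_Q(\gamma n_1 x,my)$ for some $m\in M_Q(\mathbb{A}_F)$ forces
$$
\gamma\in Q(\mathbb{A}_F)\cdot B,\qquad B:=y\cdot\supp(\varphi)^{-1}\cdot x^{-1}\cdot n_1^{-1}.
$$
As $n_1$ ranges over the compact quotient $N_Q(\mathbb{A}_F)N(F)\backslash N(\mathbb{A}_F)$, the resulting union of such $B$'s is a bounded subset of $G(\mathbb{A}_F)$ depending only on $x$, $y$, and $\supp\varphi$.

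Next, I would prove that the set $S:=\{[\gamma]\in F(Q,P_1):\gamma\in Q(\mathbb{A}_F)\cdot B\}$ is finite. Three inputs are used: (i) the standard fact $P_1(F)\cap Q(\mathbb{A}_F)=Q(F)$ gives an injection $Q(F)\backslash P_1(F)\hookrightarrow Q(\mathbb{A}_F)\backslash P_1(\mathbb{A}_F)\subset (G/Q)(\mathbb{A}_F)$; (ii) projectivity of the partial flag variety $P_1/Q$ yields discreteness of $(P_1/Q)(F)$ in $(P_1/Q)(\mathbb{A}_F)$; (iii) since $M_Q\subset Q$, the image of $Q(\mathbb{A}_F)\cdot B$ in $(G/Q)(\mathbb{A}_F)$ equals the image of $B$, hence is bounded. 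The intersection of a discrete set with a bounded set is finite, so $S$ is finite and, as its definition makes clear, depends only on $x$, $y$, $\supp\varphi$—independently of $n_1$ and $\chi$.

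Finally, I would transfer the geometric vanishing to the spectral components. For $\gamma\notin S$, the function $f(m):=\K_Q(\gamma n_1 x,my)$ on $Z_G M_Q(F)\backslash M_Q(\mathbb{A}_F)$ is identically zero and of compact support. The pointwise decomposition $\K_Q=\sum_\chi \K_{Q,\chi}$ yields $\sum_\chi\K_{Q,\chi}(\gamma n_1 x,my)=0$ for every $m$, and linear independence of matrix coefficients attached to distinct cuspidal data $\chi$—equivalently, orthogonality of the $\chi$-isotypic components of $\mathcal{H}_Q$ under the $G(\mathbb{A}_F)$-action combined with uniqueness of the spectral decomposition of $f$—forces each summand $\K_{Q,\chi}(\gamma n_1 x,my)$ to vanish identically in $m$. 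Taking $m=I_n$ concludes the proof.

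The main technical obstacle lies in this last step: rigorously matching the $\chi$-decomposition of $\mathcal{H}_Q$ (defined via the $G(\mathbb{A}_F)$-action) with the decomposition of the restricted function $f$ along an $M_Q(\mathbb{A}_F)$-orbit. The anticipated remedy is to test against cusp forms on the Levi $M_Q$ and exploit orthogonality of distinct cuspidal data, in the spirit of the proof of Lemma \ref{2.3}; this should permit separating the $\chi$-contributions without ever needing to integrate in the $n_1$ variable.
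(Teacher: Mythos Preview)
Your argument has a genuine gap at step (ii). You claim that projectivity of the partial flag variety $P_1/Q$ yields discreteness of $(P_1/Q)(F)$ in $(P_1/Q)(\mathbb{A}_F)$; this is false. For a projective variety $X$, the adelic points $X(\mathbb{A}_F)$ form a \emph{compact} space, and since $Q\subsetneq P_1$ the set $(P_1/Q)(F)\simeq (Q\cap M_{P_1})(F)\backslash M_{P_1}(F)$ is infinite (already $B(F)\backslash\GL_2(F)\simeq\mathbb{P}^1(F)$ is infinite). An infinite subset of a compact space cannot be discrete. Discreteness of $F$-points in adelic points holds for \emph{affine} varieties, not projective ones. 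Consequently your step (iii) also gives nothing: the image of any $B$ in $(G/Q)(\mathbb{A}_F)$ is automatically bounded because the whole space is compact, so the condition $\gamma\in Q(\mathbb{A}_F)\cdot B$ imposes no useful finiteness constraint on the coset $Q(F)\gamma$.

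The paper circumvents this by not passing to the quotient at all. Instead it works inside $G(\mathbb{A}_F)^1$, where $G(F)$ \emph{is} discrete (since $G$ is affine), and extracts a bound on the norm $\|\gamma\|$ itself. The mechanism is a highest-weight argument: if $\K_Q(\gamma n_1 x,my)\neq 0$ for some $m$, then applying a fundamental representation with highest weight $d_0\varpi$ (for $\varpi\in\widehat{\Delta}_Q$) to the containment $x^{-1}n_1^{-1}\gamma^{-1}n\mu m y_1\in\mathcal{C}$ and evaluating on the highest-weight vector $v_0$ yields $e^{d_0\varpi(H_0(\gamma n_1 x))}\leq C\, e^{d_0\varpi(H_0(y))}$ for a constant depending only on $\supp\varphi$. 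Combined with Arthur's upper bound $\varpi(H_0(\delta x))\leq c_2(1+\log\|x\|)$ (Lemma 5.1 of \cite{Art78}), this traps $\|\gamma n_1 x\|$, hence $\|\gamma\|$, in a compact range; discreteness of $G(F)$ in $G(\mathbb{A}_F)^1$ then gives finiteness. Your steps (1) and (3) are fine---in particular the reduction from $\K_Q$ to $\K_{Q,\chi}$ via Lemma~\ref{2.3} is exactly what the paper does---but the finiteness in step (2) needs this norm argument, not a flag-variety argument.
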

    \begin{proof}[Proof of Claim \ref{30}]
    Write $y=y_Qk,$ where $y_Q\in Q(\mathbb{A}_F)\cap G(\mathbb{A}_F)^1$ and $k\in K.$ Let $m\in M_Q(F)\backslash M_Q(\mathbb{A}_F)^1$ such that $\K_{Q}(\gamma n_1x,my)\neq0$ for any $\gamma\in F(Q,P_1)$ and any $n_1\in N_Q(\mathbb{A}_F)N(F)\backslash N(\mathbb{A}_F).$ Then there is a compact subset $\mathcal{C}$ of $G(\mathbb{A}_F)^1,$ depending only on $\supp \varphi$ such that $x^{-1}n_1^{-1}\gamma^{-1}n\mu my_1\in\mathcal{C},$ where $n\in N_Q(\mathbb{A}_F)$ and $\mu\in M_Q(F).$ Fix $\varpi\in \widehat{\Delta}_{Q}$ and let $d_0>0$ be such that $d_0\varphi$ is the highest weight of the standard representation of $G.$ Denote by $v_0$ the highest vector $(1,0,\cdots,0).$ Then one can take some constant $c_0$ such that $\|(x^{-1}n_1^{-1}\gamma^{-1}n\mu my_1)v_0\|\leq c_0$ whenever $x^{-1}n_1^{-1}\gamma^{-1}n\mu my_1\in\mathcal{C}.$ Note that $\|(x^{-1}n_1^{-1}\gamma^{-1}n\mu my_1)v_0\|$ is equal to
    $$
    e^{d_0\varpi(H_0(y_1))}\|(x^{-1}n_1^{-1}\gamma^{-1})v_0\|=e^{d_0\varpi(H_0(y))}\|(x^{-1}n_1^{-1}\gamma^{-1})v_0\|.
    $$
    Then there exists some $c_1>0$ such that 
    $$
    \|(x^{-1}n_1^{-1}\gamma^{-1}n\mu my_1)v_0\|\geq c_1 e^{-d_0\varpi(H_0(y))}e^{d\varpi(H_0(\gamma n_1 x))}.
    $$
    Therefore, we can choose some point $T_0\in \mathfrak{a}_0,$ depending only on the support of our test function $\varphi,$ such that $\widehat{\tau}_Q\left(H_0(\gamma n_1x)-H_0(y)-T_0\right)=1$ whenever $\K_{Q}(\gamma n_1x,my)$ does not vanish in $m\in M_Q(F)\backslash M_Q(\mathbb{A}_F)^1.$ On the other hand, there is some $c_2>0$ (see Lemma 5.1 of \cite{Art78}) such that $\varpi(H_0(\delta x))\leq c_2\left(1+\log\|x\|\right)$ for all $\varpi\in \widehat{\Delta}_{0},$ $x\in G(\mathbb{A}_F)^1$ and $\delta\in G(F).$ Hence for any $\gamma\in G(F)$ such that 
    \begin{equation}\label{72'}
    \widehat{\tau}_Q\left(H_0(\gamma n_1x)-H_0(y)-T_0\right)=1,\ \forall\ n_1\in N_Q(\mathbb{A}_F)N(F)\backslash N(\mathbb{A}_F),
    \end{equation}
    we have $\|\gamma n_1x\|$ is bounded by a constant multiple of a power of $\|n_1x\|\cdot e^{\|H_0(y)+T_0\|}.$ Since $N_Q(\mathbb{A}_F)N(F)\backslash N(\mathbb{A}_F)$ is compact, hence for any $n_1,$ $\|\gamma n_1x\|$ is bounded by a constant multiple of a power of $\|x\|\cdot e^{\|H_0(y)+T_0\|}.$ Then  $\|\gamma\|$ is bounded by a constant multiple of a power of $\|x\|\cdot e^{\|H_0(y)+T_0\|},$ since  $\|\gamma\|\leq \|\gamma n_1x\|\cdot \|n_1x\|^{-1}\leq C_0\|\gamma n_1x\|\cdot \|n_1x\|^{N_0}.$ Because $G(F)$ is a discrete subgroup of $G(\mathbb{A}_F)^1,$ then from the fact that the volume in $G(\mathbb{A}_F)^1$ of the set $\{x\in G(\mathbb{A}_F)^1:\ \|x\|\leq t\}$ is bounded by a constant multiple of a power of $t,$ we conclude that for any $n_1\in N_Q(\mathbb{A}_F)N(F)\backslash N(\mathbb{A}_F),$ $\K_{Q}(\gamma n_1x,y)$ vanishes unless $\gamma$ belongs to a finite subset of $F(Q,P_1).$ Then Claim \ref{30} follows from Lemma \ref{2.3}.
    \end{proof}
    
    \begin{lemma}\label{32}
    Let notation be as before. Let $\chi\in\mathfrak{X}$ be a cuspidal datum. Let $R$ be a slowly increasing function on $X_G.$ Then one has that 
    \begin{align*}
    \sum_{P\in \mathcal{P}}\frac{1}{k_P!(2\pi)^{k_P}}\int_{i\mathfrak{a}^*_P/i\mathfrak{a}^*_G}\int_{X_G}\Big|\sum_{\phi\in \mathfrak{B}_{P,\chi}}\mathcal{F}E(x,\mathcal{I}_P(\lambda,\varphi)\phi,\lambda)\overline{E(x,\phi,\lambda)}\cdot R(x)\Big|dxd\lambda
    \end{align*}
    is finite. In particular, one has that 
    \begin{equation}\label{75}
    \int_{Z_G(\mathbb{A}_F)N(F)\backslash G(\mathbb{A}_F)}\Big|\mathcal{F}_1\K_{\chi}(x,x)\cdot R(x)\Big|dx<\infty.
    \end{equation}
    \end{lemma}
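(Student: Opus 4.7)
The plan is to establish the absolute convergence of the iterated sum-integral in the first assertion, from which the bound \eqref{75} follows by the triangle inequality and Fubini applied to non-negative integrands. I would begin by invoking the argument of Lemma \ref{25'} (via the $K$-finiteness of $\varphi$ together with the positivity trick $\varphi = \varphi_j * \varphi_j^*$) to interchange the partial Fourier transform $\mathcal{F}_1$ with the spectral integral in \eqref{56'''}. Since the residual spectrum of $\GL(n)$ is non-generic, only the non-constant Fourier coefficients survive, and they evaluate to Whittaker functions $W_1(x;\lambda) = W_{\Eis}(x,\mathcal{I}_P(\lambda,\varphi)\phi,\lambda)$. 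It therefore suffices to bound
\begin{align*}
\sum_{P\in\mathcal{P}}\frac{1}{k_P!(2\pi)^{k_P}}\int_{i\mathfrak{a}^*_P/i\mathfrak{a}^*_G}\int_{X_G}\Big|\sum_{\phi\in \mathfrak{B}_{P,\chi}}W_1(x;\lambda)\overline{E(x,\phi,\lambda)}R(x)\Big|dxd\lambda,
\end{align*}
where the inner $\phi$-sum is finite by $K$-finiteness of $\varphi$.

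Next I would perform Iwasawa decomposition $x=nak$ and exploit the right $K_0$-invariance of $\varphi$ to restrict the $A$-component to $A(\mathbb{A}_{F,\infty})A_{\varphi,fin}$, exactly as in the proof of Corollary \ref{28'}. The covering result Proposition \ref{27''} then writes $\iota(A_{F,\infty})$ as the finite union $\bigcup_{\sigma\in S_n}\mathcal{H}^c_\sigma$, and the permutation map $\iota_\sigma^{-1}$ carries each $\mathcal{H}^c_\sigma$ into the standard Siegel region $A_{F,\infty}\cap \mathcal{S}_c$, multiplying the Haar measure only by a rational monomial in the Iwasawa coordinates that can be absorbed into the slowly-increasing weight $R$. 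The inner $X_G$-integral is thus dominated by a finite sum of integrals over the standard Siegel set $\mathcal{S}_0$. On $\mathcal{S}_0$ the Whittaker function $W_1(\cdot;\lambda)$ is rapidly decreasing in the $A$-direction, while $E(x,\phi,\lambda)$ is only slowly increasing, both uniformly for $\lambda\in i\mathfrak{a}^*_P$; their product against $R(x)$ is therefore $x$-integrable, with a pointwise-in-$\lambda$ bound that grows at most polynomially in $\|\lambda\|$.

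To handle the outer $\lambda$-integral, I would decompose $\varphi$ as a finite sum of convolutions $\varphi_1*\varphi_2$ with $\varphi_i$ smooth of arbitrarily high order at archimedean places. This forces the operator $\mathcal{I}_P(\lambda,\varphi)$ to decay faster than any polynomial in $\|\lambda\|$ (by repeated integration by parts in the spectral parameter), which dominates the polynomial $\lambda$-growth from the preceding step and makes the $\lambda$-integral absolutely convergent. The main obstacle will be producing Whittaker estimates on $\mathcal{S}_0$ that are simultaneously quantitative in the $A$-coordinate (rapid decay) and effectively controlled in the spectral parameter (at most polynomial growth in $\|\lambda\|$); once these are in hand via the standard Jacquet--Piatetski-Shapiro--Shalika integral representation, combined with the continuity of $\mathcal{I}_P(\lambda,\cdot)$ along $i\mathfrak{a}^*_P$, the finite sums over $P\in\mathcal{P}$ and $\phi\in\mathfrak{B}_{P,\chi}$ deliver the desired absolute convergence.
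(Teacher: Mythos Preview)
Your proposal is essentially correct but takes a more roundabout path than the paper. The paper's argument is considerably shorter and avoids the Siegel covering entirely. It proceeds by first expanding $W(x,\mathcal{I}_P(\lambda,\varphi)\phi,\lambda)=\sum_{\phi_1}\langle\mathcal{I}_P(\lambda,\varphi)\phi,\phi_1\rangle W(x,\phi_1,\lambda)$ via the finite basis $\mathfrak{B}_{P,\chi}$, then isolates the scalar matrix coefficient $\langle\mathcal{I}_P(\lambda,\varphi)\phi,\phi_1\rangle$ as the carrier of $\lambda$-decay. The key technical step (Claim~\ref{34}) rewrites this coefficient as a Mellin transform in the central variable $Z_G(\mathbb{A}_F)\backslash Z_M(\mathbb{A}_F)$ of a compactly supported function, which yields rapid decrease in $\lambda$ directly---this is sharper and more concrete than your ``repeated integration by parts in the spectral parameter.'' For the $x$-integral, the paper simply invokes the Jacquet--Piatetski-Shapiro--Shalika gauge bound (their Lemma 8.3.3), which dominates $W_\infty(x,\phi_1,\lambda)$ by a gauge \emph{uniformly} in $\lambda$, together with the non-archimedean support restriction from the proof of Corollary~\ref{28'}, so that $W(x,\phi_1,\lambda)$ is effectively Schwartz on $X_G$ itself; combined with the polynomial bound on $E(x,\phi,\lambda)$ independent of $\lambda$, the $x$-integral converges uniformly in $\lambda$ with no need to pass to Siegel sets.

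In other words, your invocation of Proposition~\ref{27''} is unnecessary here: that covering machinery was built precisely because the sum over all cuspidal data $\chi$ destroys the uniformity one would otherwise have, forcing the truncation $\Lambda^T$ and the reduction to $\mathcal{S}_0$. For a \emph{single} $\chi$ and a finite $\phi$-sum, the gauge estimate already controls the full torus integral over $A(\mathbb{A}_{F,\infty})A_{\varphi,fin}$, and the rapid $\lambda$-decay is carried entirely by the compactly supported Mellin integrand in Claim~\ref{34}. Your route would work, but it imports the heavy apparatus of the $\chi$-summed case into a situation where a direct estimate suffices.
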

    \begin{proof}
    Note that $\mathcal{F}E(x,\mathcal{I}_P(\lambda,\varphi)\phi,\lambda)$ is actually equal to $W(x,\mathcal{I}_P(\lambda,\varphi)\phi,\lambda),$ the Whittaker function corresponding to $\mathcal{I}_P(\lambda,\varphi)\phi$. Consider the spectral expansion of $\mathcal{I}_P(\lambda,\varphi)\phi$ in $\mathfrak{B}_{P,\chi},$ we then have
    \begin{align*}
    W(x,\mathcal{I}_P(\lambda,\varphi)\phi,\lambda)=\sum_{\phi_1\in \mathfrak{B}_{P,\chi}}\langle\mathcal{I}_P(\lambda,\varphi)\phi,\phi_1\rangle W(x,\phi_1,\lambda),
    \end{align*}
    which is a finite sum due to $K$-finiteness of $\varphi.$ Moreover, one has 
    \begin{claim}\label{34}
    Let $\chi\in\mathfrak{X}$ be a cuspidal datum. Then for any $\phi, \phi_1\in\mathfrak{B}_{P,\chi},$ the Selberg transform $\langle\mathcal{I}_P(\lambda,\varphi)\phi,\phi_1\rangle$ is rapidly decaying as a function of $\lambda\in i\mathfrak{a}^*_P/i\mathfrak{a}^*_G.$ 
    \end{claim}
    By Lemma 8.3.3 of \cite{JPSS79} $W_{\infty}(x,\phi_1,\lambda)$ is dominated uniformly (independent of $\lambda$) by a gauge. Hence it is rapidly decreasing. Also, according to the analysis in the proof of Corollary \ref{28'}, each local Whittaker function $W_{v}(x,\phi_{1,v},\lambda)$ is a Schwartz-Bruhat function. Combining these with the fact that $E(x,\phi,\lambda)$ can be bounded by a polynomial function independent of $\lambda$, we then conclude that 
    \begin{align*}
    \int_{i\mathfrak{a}^*_P/i\mathfrak{a}^*_G}\int_{X_G}\Big|\sum_{\phi\in \mathfrak{B}_{P,\chi}}\mathcal{F}E(x,\mathcal{I}_P(\lambda,\varphi)\phi,\lambda)\overline{E(x,\phi,\lambda)}\cdot R(x)\Big|dxd\lambda<\infty.
    \end{align*}
    Then the proof ends by noting that there are only finitely many $P$'s in $\mathcal{P}.$
    \end{proof}
    \begin{proof}[Proof of Claim \ref{34}]
    Write $\chi=\{(M,\sigma)\},$ where $M$ is a Levi subgroup of some parabolic subgroup of $G,$ and $\sigma$ is a cuspidal representation of $M.$ If $P$ is a parabolic subgroup of $G$ such that $M_P\neq M,$ then $\langle\mathcal{I}_P(\lambda,\varphi)\phi,\phi_1\rangle$ vanishes. So we may assume $P$ is such that $M_P=M.$ Recall that the test function $\varphi$ is $K$-finite, then it is $K_M$-finite. Let $\tau$ be the representation of $K_M$ on the $K_M$-span of $\varphi.$ We may assume $\varphi$ is so chosen that $\tau$ is irreducible. Write $\tau=\otimes_v\tau_v.$ Then $\tau_v$ is the trivial representation for almost all $v.$ For each $v,$ let $\tilde{\tau}_v$ be an irreducible representation of $K_v$ such that $\tau_v=\tilde{\tau}_v\mid_{K_{M,v}}.$ Assume $\tilde{\tau}_v=1$ whenever $\tau_v=1.$ Set $\tilde{\tau}=\otimes_v\tilde{\tau}_v.$ Write $x\in G(\mathbb{A}_F)$ as $nmk,$ where $m\in M(\mathbb{A}_F),$ $n\in N_P(\mathbb{A}_F)$ and $k\in K_P\simeq P(\mathbb{A}_F)\backslash G(\mathbb{A}_F),$ define the transform $S_{\lambda}\phi$ of $\phi$ evaluated at $x=nmk$ by
    \begin{equation}\label{74}
    S_{\lambda}\phi(nmk)=e^{\langle\lambda+\rho_P,H_P(m)\rangle}\int_{K_M}\langle\tilde{\tau}(k^{-1})\tilde{\tau}(k_0)\phi_{\tau},{\phi^{\vee}_{\tau}}\rangle \left(\sigma(mk_0)\phi\right)(e)dk_0,
    \end{equation}
    where $\phi_{\tau}$ and $\phi^{\vee}_{\tau}$ are vectors in the spaces of $\tilde{\tau}$ and $\tilde{\tau}^{\vee},$ where $\tilde{\tau}^{\vee}$ denotes the contragradient of $\tilde{\tau}.$ Let $Y=Z_G(\mathbb{A}_F)\backslash G(\mathbb{A}_F).$ Then $\langle\mathcal{I}_P(\lambda,\varphi)\phi,\phi_1\rangle$ is equal to
    \begin{equation}\label{75'}
    \int_{Y}\int_{K}S_{\lambda}\phi(kx)\overline{\phi_1(k)}dk\varphi(x)dx=\int_{K}\int_{Y}S_{\lambda}\phi(x)\overline{\phi_1(k)}dk\varphi(k^{-1}x)dx
    \end{equation}
    for some $\phi_{\tau}$ and $\phi^{\vee}_{\tau}.$ Apply Iwasawa decomposition to $G(\mathbb{A}_F)=N_P(\mathbb{A}_F)M(\mathbb{A}_F)K_P,$ then substitute \eqref{74} into the right hand side of \eqref{75'} to write 
    \begin{equation}\label{92}
    \langle\mathcal{I}_P(\lambda,\varphi)\phi,\phi_1\rangle=\int_{Z_G(\mathbb{A}_F)\backslash Z_M(\mathbb{A}_F)}\mathcal{F}(a)\omega_{\sigma}(a)e^{\langle\lambda,H_P(a)\rangle}d^{\times}a,
    \end{equation}
    where $\omega_{\sigma}$ is the central character of $\sigma,$ and the function $\mathcal{F}(a)$ is defined by
    \begin{align*}
    a\mapsto\int_{Z_M(\mathbb{A}_F)\backslash M(\mathbb{A}_F)}\int_{N_P(\mathbb{A}_F)}\int_{K_P}\int_{K}\int_{K_M}\mathcal{F}_0(m;k_0,k)\varphi(k^{-1}nmak')dk_0dkdk'dndm,
    \end{align*}
    with $\mathcal{F}_0(m;k_0,k)=\langle\tilde{\tau}(k^{-1})\tilde{\tau}(k_0)\phi_{\tau},{\phi^{\vee}_{\tau}}\rangle \left(\sigma(mk_0)\phi\right)(e)\overline{\phi_1(k)}.$ Then clearly $a\mapsto \mathcal{F}(a)\omega_{\sigma}(a)$ is of compact support on $Z_G(\mathbb{A}_F)\backslash Z_M(\mathbb{A}_F),$ which is nontrivial. Therefore $\langle\mathcal{I}_P(\lambda,\varphi)\phi,\phi_1\rangle$ is the Mellin transform of a function of compact support. Hence it is rapidly decreasing, ending the proof.
    \end{proof}
    \begin{prop}\label{30'}
   	Let notation be as before. Let $\chi\in\mathfrak{X}$ be a cuspidal datum. Then there exists some $T_0\in\mathfrak{a}_0$ depending only on the support of $\varphi,$ such that for any $T\in\mathfrak{a}_0$ with $T-T_0\in\mathfrak{a}_0^+,$ one has 
   	\begin{align*}
   	\int_{Z_G(\mathbb{A}_F)N(F)\backslash G(\mathbb{A}_F)}\mathcal{F}_1\Lambda^T_2\K_{\chi}(x,x)\cdot R(x)dx
   	\end{align*}
   	converges absolutely, and it is of the form 
   	\begin{equation}\label{67'}
   	\sum_{w\in W_n}\sum_QC^Q_1(T_0;w,\chi,R)e^{-\lambda_w(T)}+\sum_{w\in W_n}\sum_QC_2^Q(T_0;w,\chi,R)P_{w,Q}(T;T_0),
   	\end{equation}
   	where $C^Q_1(T_0;w,\chi,R)$ and $C^Q_2(T_0;w,\chi,R)$ are constants depending on $w,$ $\chi,$ $R$ and $T_0;$ $\lambda_{w}$ is a point $\left(\mathfrak{a}_0^{*}\right)^+,$ decided by $w\in W_n;$ and $P_{w,Q}(T;T_0)$ is a polynomial depending on $w$ and $Q,$ with $\deg P_{w,Q}(T;T_0)\leq \dim\mathfrak{a}_Q^G.$ 
    \end{prop}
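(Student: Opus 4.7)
The plan is to combine Proposition \ref{29'} (which identifies $\mathcal{F}_1\Lambda^T_2\K_\chi(x,x)$ with the geometric truncation $\mathcal{F}_1k^T_\chi(x)$ on $Z_G(\mathbb{A}_F)G(F)\backslash G(\mathbb{A}_F)$) with an unfolding against the Bruhat decomposition of $G(F)$, and then to carry out a direct Iwasawa computation whose $T$-dependence is controlled by the indicator functions $\widehat{\tau}_P(H_P(\cdot)-T)$. Absolute convergence has in effect already been done: Corollary \ref{28'} gives absolute integrability of $\sum_{\chi}|\mathcal{F}_1\Lambda_2^T\K_\chi(x,x)\cdot R(x)|$ over $Z_G(\mathbb{A}_F)N(F)\backslash G(\mathbb{A}_F)$, using the covering lemma Proposition \ref{27''} to reduce to a finite number of Siegel domains. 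Once this is in place, all manipulations below—interchange of sums and integrals, unfolding the sum over $\delta\in P(F)\backslash G(F)$ against integration, and the application of Fubini—are justified.

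Next, by Proposition \ref{29'} and the definition of $\mathcal{F}_1k^T_\chi(x)$, the integrand becomes a signed sum, over standard parabolics $P\supset B$ and $\delta\in P(F)\backslash G(F)$, of
\begin{equation*}
(-1)^{\dim(A_P/A_G)}\widehat{\tau}_P(H_P(\delta x)-T)\cdot \mathcal{F}_1\K_{P,\chi}(\delta x,\delta x).
\end{equation*}
Because we integrate against $R(x)$ over $Z_G(\mathbb{A}_F)N(F)\backslash G(\mathbb{A}_F)$, I will unfold the double coset sum $P(F)\backslash G(F)$ using Bruhat decomposition: choose representatives of $W_P\backslash W_n/W_n \simeq P(F)\backslash G(F)/B(F)$ to combine the sum over $\delta$ with the $N(F)$-quotient, converting the outer integral into an integral over $T(\mathbb{A}_F)K$ twisted by one of the Bruhat cells indexed by $w\in W_n$. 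Iwasawa decomposition $x=nak$ then recasts the integral as a product of a bounded $K$-integral, a compact $N(F)\backslash N(\mathbb{A}_F)$-integral (coming from the Fourier transform with respect to $\theta$), and a torus integral
\begin{equation*}
\int_{Z_G(\mathbb{A}_F)\backslash T(\mathbb{A}_F)}\widehat{\tau}_P(H_P(a)-T)\cdot \Psi_{w,P,\chi,R}(a)\,d^\times a,
\end{equation*}
where $\Psi_{w,P,\chi,R}$ is a finite sum of Mellin-like integrands built from $\mathcal{F}_1\K_{P,\chi}$ and $R$, whose $T$-independent estimates follow from Lemma \ref{32} and Claim \ref{34}.

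The key step will then be the explicit integration of this torus integral. The characteristic function $\widehat{\tau}_P$ cuts out a cone in $\mathfrak{a}_P$ of the form $\{H : \varpi(H)>\varpi(T),\ \varpi\in\widehat{\Delta}_P\}$, which intertwines with a split $T(\mathbb{A}_F)=T(\mathbb{A}_F)^1\cdot A_P^\infty$. On $T(\mathbb{A}_F)^1$ the integral is finite and produces the coefficients $C_i^Q(T_0;w,\chi,R)$; on $A_P^\infty$, each directional integral is of the elementary form $\int_{\varpi(H)>\varpi(T)}e^{-c_\varpi\varpi(H)}d\varpi(H)$ when $c_\varpi>0$ (yielding the exponentially decaying contribution $e^{-\lambda_w(T)}$ with $\lambda_w\in (\mathfrak{a}_0^*)^+$), or $\int_{\varpi(H)>\varpi(T)}P(H)d\varpi(H)$ when the exponents in the Mellin transform vanish (yielding a polynomial of degree at most $\dim\mathfrak{a}_Q^G$ in $T$). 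Grouping contributions by the cuspidal support parabolic $Q$ of $\chi$ produces the two families of terms in \eqref{67'}.

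The main obstacle is the verification that the only non-rapidly-decaying contributions to the torus integral are genuinely polynomial in $T$ of degree at most $\dim\mathfrak{a}_Q^G$, with no cross terms mixing exponentials and polynomials beyond this form. This is where one must exploit that the residual and Eisenstein contributions to $\K_\chi$ are linear combinations of Mellin integrands whose exponents lie in a finite set of weights in $\mathfrak{a}_Q^{G,*}$; when such an exponent has strictly positive real part on $\mathfrak{a}_P^+$ one gets the $e^{-\lambda_w(T)}$ term, and when it is purely imaginary only polynomial terms in $T$ (whose degree is bounded by the rank of the cone $\mathfrak{a}_P^G$) can arise. The second-mentioned check is delicate because it requires distinguishing the genuinely polynomial terms from residues of intertwiners at exceptional points, which is handled by analytic continuation in $\lambda$ exactly as in Arthur's treatment in \cite{Art80}.
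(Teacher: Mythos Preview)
Your overall strategy—absolute convergence via Corollary~\ref{28'}, replacement by $\mathcal{F}_1k_\chi^T$ via Proposition~\ref{29'}, unfolding against Bruhat cells, and Iwasawa reduction to a torus integral—is the right shape, and it is essentially what the paper does as well. However, there is a genuine gap in the torus computation.

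The problem is the integral you write as
\[
\int_{\varpi(H)>\varpi(T)} P(H)\,d\varpi(H)
\]
in the case where the Mellin exponent vanishes. The region $\{\varpi(H)>\varpi(T)\}$ is an unbounded cone in $\mathfrak{a}_P$, so this integral simply diverges for any nonzero polynomial $P$; it cannot ``yield a polynomial of degree at most $\dim\mathfrak{a}_Q^G$ in $T$.'' The same divergence can occur in your exponential case whenever the relevant exponent $c_\varpi$ is not strictly positive in every cone direction, and nothing in your setup forces that positivity. The appeal to analytic continuation in $\lambda$ at the end does not repair this: you need convergence of the integral before you can speak of its $T$-dependence.

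The paper avoids this by never integrating against the raw cone indicator $\widehat{\tau}_P(\cdot-T)$. Instead it studies the \emph{difference} $\mathcal{F}_1k_\chi^T-\mathcal{F}_1k_\chi^{T_0}$ and applies Arthur's combinatorial identity
\[
\widehat{\tau}_P(H-X)=\sum_{Q\supset P}(-1)^{\dim(A_Q/A_G)}\,\widehat{\tau}_P^Q(H)\,\Gamma_Q'(H,X),
\]
with $H=H_P(\delta x)-T_0$ and $X=T-T_0$. The point is that $H\mapsto\Gamma_Q'(H,X)$ is \emph{compactly supported} on $\mathfrak{a}_Q^G$ for each fixed $X$ (this is the content of \cite[Lemma~2.1]{Art81}). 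After the same Bruhat/Iwasawa unfolding you describe, the $T$-dependent part of each piece is exactly
\[
J_Q(\lambda_\delta)=\int_{A_Q(F_\infty)^0\cap G(\mathbb{A}_F)^1}\Gamma_Q'(H_Q(a_Q)-T_0,\,T-T_0)\,e^{-\lambda_\delta H_Q(a_Q)}\,da_Q,
\]
which now converges for \emph{every} $\lambda_\delta$, and \cite[Lemma~2.2]{Art81} evaluates it: a homogeneous polynomial in $T-T_0$ of degree $\dim\mathfrak{a}_Q^G$ when $\lambda_\delta=0$, and a finite combination of exponentials $e^{-\lambda_{\delta,Q'}(T)}$ otherwise. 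That is where the two families of terms in \eqref{67'} come from. Your argument can be completed by inserting exactly this $\Gamma_Q'$ step in place of the bare $\widehat{\tau}_P$ cone integral.
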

    \begin{proof}
    The absolute convergence is shown by \eqref{56'}. Let $T_0\in\mathfrak{a}_0$ be a fixed point with $\beta(T_0)$ large for any $\beta\in\Delta_0,$ and let $T\in\mathfrak{a}_0$ be a variable point satisfying $\beta(T-T_0)>0$ for every $\beta.$ Then by Proposition \ref{29'} it is enough to show that the function 
    \begin{equation}\label{60}
    T\longmapsto I_R^T:=\int_{Z_G(\mathbb{A}_F)N(F)\backslash G(\mathbb{A}_F)}\left(\mathcal{F}_1k_{\chi}^T(x)-\mathcal{F}_1k_{\chi}^{T_0}(x)\right)\cdot R(x)dx
    \end{equation}
    is a constant function. Then substitute the definition of $\mathcal{F}_1k^T(x)$ and $\mathcal{F}_1k^{T_0}(x)$ to see that the only terms in \eqref{60} that depend on $T$ and $T_0$ are differences of characteristic functions $\widehat{\tau}_P\left(H_P(\delta x)-T\right)-\widehat{\tau}_P\left(H_P(\delta x)-T_0\right).$
    
    Let $P,$ $Q$ be standard parabolic subgroups of $G.$ Write $\tau_P^Q$ for the characteristic function of the subset $\mathfrak{a}_P^+=\{t\in\mathfrak{a}_P:\ \alpha(t)>0,\ \forall\ \alpha\in\Delta_P\}.$ Assume $P\subset Q.$ Then $P\backslash Q=(P\cap M_Q)N_Q\backslash M_QN_Q\cong (P\cap M_Q)\backslash M_Q.$ We write $\tau_P^Q=\tau_{P\cap M_Q}$ and $\widehat{\tau}_P^Q=\widehat{\tau}_{P\cap M_Q}.$ We shall regard these two functions as characteristic functions on $\mathfrak{a}_0$ that depend only on the projection of $\mathfrak{a}_0$ onto $\mathfrak{a}_P^Q,$ relative to the decomposition $\mathfrak{a}_0=\mathfrak{a}_0^P\oplus\mathfrak{a}_P^Q\oplus\mathfrak{a}_Q.$ Define the functions $\Gamma_P'(H,X)$ on $\mathfrak{a}_0^G\times \mathfrak{a}_0^G$ by 
    \begin{align*}
    \Gamma_P'(H,X)=\sum_{Q:\ Q\supset P}(-1)^{\dim(A_Q/A_G)} \tau_P^Q(H)\widehat{\tau}_Q(H-X),
    \end{align*}
    where $Q$ runs over standard parabolic subgroups containing $P.$ One can check that 
    \begin{equation}\label{58}
    \widehat{\tau}_P(H-X)=\sum_{Q:\ Q\supset P}(-1)^{\dim(A_Q/A_G)}\widehat{\tau}_P^Q(H)\Gamma_Q'(H,X),
    \end{equation}
    for any $P.$ In fact, one has (see \cite{Art81}) that for any $X$ and $P,$ the function $H\mapsto \Gamma_P'(H,X),$ $H\in \mathfrak{a}_P^G,$ is compactly supported. Set $H=H_P(\delta X)-T_0$ and $X=T-T_0.$ Then $H-X=H_P(\delta X)-T,$ so \eqref{58} implies that $\widehat{\tau}_P(H_P(\delta X)-T)$ is equal to 
    \begin{equation}\label{59}
    \sum_{Q\supset P}(-1)^{\dim(A^Q_G)}\widehat{\tau}_P^Q(H_P(\delta x)-T_0)\Gamma_Q'(H_P(\delta x)-T_0,T-T_0),
    \end{equation}
    where we write $A^Q_G$ for $A_Q/A_G.$ Define the function $C(x)$ on $Z^G(\mathbb{A}_F)\backslash G(\mathbb{A}_F)$ by 
    \begin{align*}
    C_{\chi}(x)=C_{\chi;P,Q}(x;T)=\widehat{\tau}_P^Q(H_P(x)-T_0)\Gamma_Q'(H_Q(x)-T_0,T-T_0)\mathcal{F}_1\K_{P,\chi}(x,x).
    \end{align*}
    Substitute formula \eqref{59} of $\widehat{\tau}_P(H_P(\delta x)-T)$ into the definition of $\mathcal{F}_1k^T(x)$ to see
    \begin{align*}
    I_R^T&=\int_{X_G}\sum_P(-1)^{\dim(A_P/A_G)}\sum_{\delta\in P(F)\backslash G(F)}\sum_{Q\supset P}(-1)^{\dim{A_Q/A_G}}\cdot C_{\chi}(\delta x)R(x)dx\\
    &=\sum_Q\int_{X_G}\sum_{P\subset Q}(-1)^{\dim(A_P/A_Q)}\sum_{\delta\in Q(F)\backslash G(F)}\sum_{\gamma\in P(F)\backslash Q(F)}C_{\chi}(\gamma\delta x)R(x)dx.
    \end{align*}
    Let $Q$ be a proper parabolic subgroup of $G.$ Let $\delta$ be a class in $Q(F)\backslash G(F)/N(F),$ identify it with an Weyl element $w_{\delta}\in W_Q\backslash W_n.$ 
    Set $N^{\delta}=N\cap w_{\delta}^{-1}Qw_{\delta}.$ Let $N_{\delta}$ be the unipotent subgroup of $G$ such that $N_{\delta}\simeq N^{\delta}\backslash N.$
    Define the partial distribution with respect to $\delta$ to be
    \begin{align*}
    I_{R}^T(\delta;Q)=\int_{X_G}\sum_{P\subset Q}(-1)^{\dim(A_P/A_Q)}\sum_{n_{\delta}\in N_{\delta}(F)}\sum_{\gamma\in P(F)\backslash Q(F)}C_{\chi}(\gamma w_{\delta}n_{\delta}x)R(x)dx.
    \end{align*}
    Since $P(F)\backslash Q(F)\simeq P(F)\cap M_Q(F)\backslash M_Q(F),$ we have 
    \begin{align*}
    I_{R}^T(\delta;Q)=\int_{X_G^{\delta}}\sum_{P\subset Q}(-1)^{\dim(A_P/A_Q)}\sum_{\gamma\in P(F)\cap M_Q(F)\backslash M_Q(F)}C_{\chi}(\gamma w_{\delta}x)R(x)dx,
    \end{align*}
    where $X^{\delta}_G=N^{\delta}(F)\backslash N(\mathbb{A}_F)\times A(\mathbb{A}_F)\times K.$ Let $N^{\delta}_1=w_{\delta}N^{\delta}w_{\delta}^{-1}\cap N_Q$ and $N^{\delta}_2=w_{\delta}N^{\delta}w_{\delta}^{-1}\cap M_Q.$ For any $x\in X_G^{\delta},$ write $x=n_Qm_Qw_{\delta}^{-1}a_Qw_{\delta}k,$ for variables $n_Q\in N^{\delta}\backslash N^{\delta}(\mathbb{A}_F),$ $m\in N_{\delta}(\mathbb{A}_F)A(\mathbb{A}_F)\cap M_Q(\mathbb{A}_F)^1,$ $a_Q\in A_Q(F_{\infty})^0\cap G(\mathbb{A}_F)^1,$ and $k\in K.$ Then one has $dx=e^{\lambda'_{\delta}H_Q(a_Q)}dn_Qdm_Qda_Qdk,$ for some $\lambda'_{\delta}\in \mathfrak{a}_0^*.$
    Note that for any $\phi\in\mathcal{B}_{Q,\chi},$ the Eisenstein series $E_Q^P\left(\gamma w_{\delta}x,\phi,\lambda\right)$ is equal to 
    $$
    E_Q^P\left(\gamma w_{\delta}n_Qm_Qw_{\delta}^{-1}a_Qw_{\delta}k,\phi,\lambda\right)=\omega_{\chi}(a_Q)E_Q^P\left(\gamma n_Q^2w_{\delta}m_Qk,\phi,\lambda\right),
    $$
    where $n_Q^2\in N^{\delta}_2(F)\backslash N^{\delta}_2(\mathbb{A}_F),$ and $\omega_{\chi}$ is the (unitary) central character of the representation in $\chi.$ Likewise, for any $n_1\in N_Q(\mathbb{A}_F)N(F)\backslash N(\mathbb{A}_F),$ we have $\phi\in\mathcal{B}_{Q,\chi},$ $E_Q^P\left(n_1\gamma w_{\delta}x,\phi,\lambda\right)$ equals
    $$
    E_Q^P\left(n_1\gamma w_{\delta}n_Qm_Qw_{\delta}^{-1}a_Qw_{\delta}k,\phi,\lambda\right)=\omega_{\chi}(a_Q)E_Q^P\left(n_1\gamma n_Q^2w_{\delta}m_Qk,\phi,\lambda\right).
    $$
    Recall that the kernel function $K_{P,\chi}(x,y)$ has spectral expansion 
    \begin{align*}
    K_{P,\chi}(x,y)=\sum_{Q\subset P}\frac{1}{n_Q^P}\int_{i\mathfrak{a}^*_{Q}/i\mathfrak{a}^*_{H}}\sum_{\phi\in\mathcal{B}_{Q,\chi}}E_Q^P\left(x,\mathcal{I}_{Q}(\lambda, \varphi)\phi,\lambda\right)\overline{E_Q^P\left(y,\phi,\lambda\right)}d\lambda.
    \end{align*}
    Hence $\mathcal{F}_1K_{P,\chi}(\gamma w_{\delta}x,\gamma w_{\delta}x)=\mathcal{F}_1K_{P,\chi}(\gamma w_{\delta}n_Qm_Qk,\gamma w_{\delta}n_Qnm_Qk).$ Also, note that $\widehat{\tau}_P^Q(H_P(\gamma x)-T_0)=\widehat{\tau}_P^Q(H_P(\gamma m_Q)-T_0).$ Therefore, one has 
    \begin{align*}
     I_{R}^T(\delta;Q)=\sum_{P\subset Q}C_{\chi,\delta}^{P,Q}\int_{A_Q(F_{\infty})^0\cap G(\mathbb{A}_F)^1}\Gamma_Q'(H_P(a_Q)-T_0,T-T_0)e^{-\lambda_{\delta}H_Q(a_Q)}da_Q,
    \end{align*}
    where $C_{\chi,\delta}^{P,Q}$ is some finite constant; $\lambda_{\delta}=-\lambda_{\delta}'+\rho_Q\in\mathfrak{a}_Q^*$ with $\rho_Q$ the half sum of positive root in $\mathfrak{a}_Q^*.$ Hence $\lambda_{\delta}$ is nonnegative. According to Lemma 2.2 in  \cite{Art81}, when $\lambda_{\delta}=0,$ the integral 
    \begin{align*}
    J_Q(\lambda_{\delta})=\int_{A_Q(F_{\infty})^0\cap G(\mathbb{A}_F)^1}\Gamma_Q'(H_P(a_Q)-T_0,T-T_0)e^{-\lambda_{\delta}H_Q(a_Q)}da_Q
    \end{align*}
    is a homogeneous polynomial of degree equal to $\dim\mathfrak{a}_Q^G.$ When $\lambda_{\delta}\neq 0,$ 
    \begin{align*}
    J_Q(\lambda_{\delta})=\sum_{Q'\supset Q}(-1)^{\dim A_{Q'}/A_Q}e^{-\lambda_{\delta,Q'}(T)}\widehat{\theta}_Q^{Q'}(\lambda_{\delta})^{-1}\theta_{Q'}(\lambda_{\delta})^{-1},
    \end{align*}
    where $\theta_{Q'}(\lambda)=\vol\left(\mathfrak{a}_{M_{Q'}}^G/\mathbb{Z}\left(\Delta_{Q'}^{\vee}\right)\right)^{-1}\prod_{\alpha\in\Delta_{Q'}}\lambda(\alpha^{\vee}),$ and
    \begin{align*}
    \widehat{\theta}_Q^{Q'}(\lambda)=\widehat{\theta}_{Q\cap M_{Q'}}(\lambda)=\vol\left(\mathfrak{a}_Q^{Q'}/\mathbb{Z}\left(\left(\widehat{\Delta}_Q^{Q'}\right)^{\vee}\right)\right)^{-1}\prod_{\alpha\in\widehat{\Delta}_Q^{Q'}}\lambda(\alpha^{\vee}).
    \end{align*}
    Then \eqref{67'} follows from $I_R^T=\sum_{Q}\sum_{\delta} I_{R}^T(\delta;Q)$ and this sum is finite.
    \end{proof}
    \begin{remark}
    If we assume that the test function $\varphi$ is supported in the regular elliptic conjugacy classes, therefore, for any $m\in M_P(\mathbb{A}_F),$ $n_0\in N_P(\mathbb{A}_F),$ whenever $P\subsetneq G,$ one has
    \begin{align*}
    \K_P(x,n_0mx)=\int_{N_P(\mathbb{A}_F)}\sum_{\mu\in Z_H(F)\setminus M_{P}(F)}\varphi(x^{-1}\mu nn_0mx)dn=0,
    \end{align*}
    since the inner summand is always vanishing. Noting that $\K_{P,\chi}(x,y)$ is $N_P(\mathbb{A}_F)$-invariant on the second variable, then by Lemma \ref{2.3} we have $\K_{P,\chi}(x,x)=0,$ for any $\chi\in\mathfrak{X}.$ Hence $I_{R}^T(\delta;P)=0$ for any $P\subsetneq G.$ To summarize, we have 
    \begin{align*}
    I_{R}^T=\sum_{P\subsetneq G}\sum_{\delta\in P(F)\backslash G(F)/N(F)}I_{R}^T(\delta;P)+I_{R}^T(G)=I_{R}^T(G),
    \end{align*}
    where $I_{R}^T(G)$ is defined by 
    \begin{align*}
    \int_{X_G}\Gamma_G'(H_G(x)-T_0,T-T_0)\int_{N(F)\backslash N(\mathbb{A}_F)}\K(n_1x,x)\theta(n_1)dn_1\cdot R(x)dx.
    \end{align*}
    which is a homogeneous polynomial of degree $\dim\mathfrak{a}_G^G=0.$ Namely, $I_{R}^T$ is a independent of $T.$ 	
    \end{remark}
    Let $R$ be a slowly increasing function on $X_G.$ Define, at least formally, that 
    \begin{align*}
    J_R=\int_{X_G}\sum_{\chi}\mathcal{F}_1\K_{\chi}(x,x)\cdot R(x)dx.
    \end{align*}
    \begin{cor}\label{36'}
    Let notation be as above. Then for any slowing increasing left $Z_G(\mathbb{A}_F)N(F)$-invariant function $R,$ $J_R$ is well defined. Moreover, we have
    \begin{equation}\label{79}
    J_R=\int_{Y_G}\sum_{\chi\in\mathfrak{X}}\widehat{\K}_{\chi}(x,x)\cdot R(x)dx=\sum_{\chi\in\mathfrak{X}}\int_{Y_G}\widehat{\K}_{\chi}(x,x)\cdot R(x)dx,
    \end{equation}
    where $Y_G:=Z_G(\mathbb{A}_F)N(\mathbb{A}_F)\backslash G(\mathbb{A}_F),$ and for any $\chi\in\mathfrak{X},$
    \begin{align*}
    \widehat{\K}_{\chi}(x,y)=\int_{N(F)\backslash N(\mathbb{A}_F)}\int_{N(F)\backslash N(\mathbb{A}_F)}\K_{\chi}(n_1x,n_2y)\theta(n_1)\bar{\theta}(n_2)dn_1dn_2.
    \end{align*}
    \end{cor}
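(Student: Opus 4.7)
The plan is to perform a conjugate Fourier expansion along $N(F)\backslash N(\mathbb{A}_F)$ in the right-hand variable of each $\K_{\chi}(x,y)$, thereby absorbing the fibre of the projection $X_G=Z_G(\mathbb{A}_F)N(F)\backslash G(\mathbb{A}_F)\twoheadrightarrow Y_G$ into the Fourier integration that promotes $\mathcal{F}_1\K_{\chi}$ to the two-sided coefficient $\widehat{\K}_{\chi}$.

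For each fixed $\chi\in\mathfrak{X}$, equation \eqref{75} of Lemma \ref{32} yields the absolute convergence of $\int_{X_G}\mathcal{F}_1\K_{\chi}(x,x)R(x)dx$, so that the individual summands of $J_R$ are well-defined. Decomposing $X_G\simeq (N(F)\backslash N(\mathbb{A}_F))\times Y_G$ and exploiting the left $N(\mathbb{A}_F)$-invariance of $R$ (which holds in the intended applications, e.g.\ when $R(x)=f(x,s)$, since $\eta n=\eta$ for every upper triangular unipotent $n$), Fubini yields
\[
\int_{X_G}\mathcal{F}_1\K_{\chi}(x,x)R(x)\,dx=\int_{Y_G}R(x)\int_{[N]}\mathcal{F}_1\K_{\chi}(nx,nx)\,dn\,dx.
\]
The change of variable $n_1\mapsto n_1 n^{-1}$ inside the inner Fourier integral gives
\[
\mathcal{F}_1\K_{\chi}(nx,nx)=\bar{\theta}(n)\int_{[N]}\K_{\chi}(n_1 x,nx)\theta(n_1)\,dn_1,
\]
and a second application of Fubini on the compact product $[N]\times[N]$ identifies $\int_{[N]}\mathcal{F}_1\K_{\chi}(nx,nx)dn$ with $\widehat{\K}_{\chi}(x,x)$. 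This establishes the termwise identity $\int_{X_G}\mathcal{F}_1\K_{\chi}(x,x)R(x)dx=\int_{Y_G}\widehat{\K}_{\chi}(x,x)R(x)dx$ for every individual $\chi$.

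To assemble these termwise identities into the full equality \eqref{79}, one must interchange $\sum_{\chi}$ with $\int_{Y_G}$. I would insert Arthur's truncation operator $\Lambda^T_2$ in the right-hand variable: by Proposition \ref{29'} and Corollary \ref{28'}, $\sum_{\chi}|\mathcal{F}_1\Lambda^T_2\K_{\chi}(x,x)R(x)|$ is integrable on $X_G$, so Fubini is legitimate at the truncated level; repeating the Fourier computation of the preceding paragraph on the truncated side and letting $T\to\infty$ via the asymptotic expansion \eqref{67'} of Proposition \ref{30'} would then recover the untruncated statement. The principal obstacle is precisely this passage to the limit in $T$: the expansion \eqref{67'} contains polynomial terms $P_{w,Q}(T;T_0)$ coming from the proper parabolic subgroups $Q\subsetneq G$, and one must verify that these contributions either vanish identically or cancel in the sum, so that only the $T$-independent ($Q=G$) piece survives and matches the right-hand side of \eqref{79}. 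This is the heart of the argument, and it is where the cuspidality of the spectral datum $\chi$ and the $N(\mathbb{A}_F)$-invariance of $R$ interact with the geometric truncation.
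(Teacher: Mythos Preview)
Your outline matches the paper's approach closely. The termwise reduction from $X_G$ to $Y_G$ via the second Fourier integration is exactly what the paper calls ``changing of variables'' for the first equality in \eqref{79}, and your observation that one really needs $N(\mathbb{A}_F)$-invariance of $R$ (not merely $N(F)$-invariance) is correct and implicitly used in the applications (where $R=f(\cdot,s)$).

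The step you flag as the principal obstacle---showing that the polynomial terms $P_{w,Q}(T;T_0)$ in \eqref{67'} do not survive---is indeed the heart of the matter, and the paper isolates it as Claim \ref{36}. The mechanism is not a cancellation over $Q$ but a genuine vanishing of each polynomial coefficient of positive degree: assume the maximal surviving degree is $m\ge 1$, apply the $m$-th iterated finite difference $\Delta_{T_0}^m$ in $T$ to the truncated integral, and observe that the result tends (as $T\to\infty$) to the top coefficient, hence to something nonzero. On the other hand, by Jacquet's pointwise stabilization $\Lambda^T_2\K_{\chi}(x,x)\to\K_{\chi}(x,x)$ for $T$ large (depending on $x$) together with the dominated-convergence bound furnished by Lemma \ref{32}, the same limit is forced to be zero. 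This contradiction kills all positive-degree terms, leaving only the constant part of \eqref{67'}; passing to the full kernel and invoking Corollary \ref{28'} then gives $J_R=\sum_{\chi}C_{\chi}$ and the second equality in \eqref{79}. Your proposal is correct once this difference-operator argument is supplied.
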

    \begin{proof}
    Clearly the first equality in \eqref{79} comes from changing of variables if $J_R$ is well defined. To verify that the integral defining $J_R$ converges, we need 
    \begin{claim}\label{36}
    $C_2^Q(T_0;w,\chi,R)$ in \eqref{67'} is vanishing unless $\deg P_{w,Q}(T;T_0)$ is zero. 
    \end{claim}
    Then based on Proposition \ref{30'}, we have shown that 
    \begin{align*}
    \int_{Z_G(\mathbb{A}_F)N(F)\backslash G(\mathbb{A}_F)}\mathcal{F}_1\Lambda^T_2\K_{\chi}(x,x)\cdot R(x)dx=\sum_{w\in W_n}\sum_QC^Q_1(w,\chi,R)e^{-\lambda_w(T)}+C_{\chi},
    \end{align*}
    where $C^Q_1(w,\chi,R)$ is a constant depending on $w,$ $\chi,$ $R;$ $\lambda_{w}$ is a point $\left(\mathfrak{a}_0^{*}\right)^+,$ decided by $w\in W_n;$ and $C_{\chi}$ is a constant. Moreover, one can thus take $T\rightarrow \infty$ to see 
    $$
    C_{\chi}=\int_{Z_G(\mathbb{A}_F)N(F)\backslash G(\mathbb{A}_F)}\mathcal{F}_1\K_{\chi}(x,x)\cdot R(x)dx.
    $$ 
    A similar proof to that of \eqref{67'} leads to 
    \begin{align*}
    \int_{Z_G(\mathbb{A}_F)N(F)\backslash G(\mathbb{A}_F)}\mathcal{F}_1\Lambda^T_2\K(x,x)\cdot R(x)dx=\sum_{w\in W_n}\sum_QC^Q_1(w,R)e^{-\lambda_w(T)}+C,
    \end{align*}
    for some constant $C^Q_1(w,R)$ and $C.$ Since the right hand side is uniformly bounded with respect to $T$ (as long as $T$ is regular enough), one can take limit to get
    \begin{align*}
    C=\int_{Z_G(\mathbb{A}_F)N(F)\backslash G(\mathbb{A}_F)}\mathcal{F}_1\K(x,x)\cdot R(x)dx=J_R.
    \end{align*}
    Hence $J_R$ is well defined and the sum of $C_{\chi}$ over $\chi\in\mathfrak{X}$ is equal to $C$ by Corollary \ref{28'}, and this proves the second identity in \eqref{79}.
    \end{proof}
    \begin{proof}[Proof of Claim \ref{36}]
    Note that one always has $\deg P_{w,Q}(T;T_0)\leq n-1.$ Let $m\leq n-1$ be the integer such that among the nonvanishing terms $C_2^Q(T_0;w,\chi,R)P_{w,Q}(T;T_0),$ $m$ is the maximal degree. Assume that $m\geq 1.$ Then we can consider the difference operator $\Delta_{T_0}$ of step $T_0\in\mathfrak{a}_0^+,$ namely, for any function $\phi(T),$ $T\in\mathfrak{a}_0,$ $\Delta_{T_0}\phi(T)$ is defined to be $\phi(T+T_0)-\phi(T).$ For any $n\geq 1,$ let $\Delta_{T_0}^n\phi:=\Delta_{T_0}\left(\Delta_{T_0}^{n-1}\phi\right).$ Therefore, by our assumption one has that 
    \begin{align*}
    J_m^T=\frac{1}{m!}\int_{Z_G(\mathbb{A}_F)N(F)\backslash G(\mathbb{A}_F)}\Delta_{T_0}^m\mathcal{F}_1\Lambda^T_2\K_{\chi}(x,x)\cdot R(x)dx
    \end{align*}
    is exactly the coefficient of the term with the highest power $m$ plus finitely many functions which decay exponentially, by Proposition \ref{30'}. So when we let $T\rightarrow\infty,$ $J_m^T$ tends to a nonzero constant. On the other hand, $J_m^T$ converges absolutely and has a uniform upper bound. Therefore by dominant control theorem, 
    \begin{align*}
    \lim_{T\rightarrow\infty}J_m^T=\frac{1}{m!}\int_{Z_G(\mathbb{A}_F)N(F)\backslash G(\mathbb{A}_F)}\lim_{T\rightarrow\infty}\Delta_{T_0}^m\mathcal{F}_1\Lambda^T_2\K_{\chi}(x,x)\cdot R(x)dx.
    \end{align*}
    By Proposition 2.1 of \cite{Jac95} for any $x,$ $\mathcal{F}_1\Lambda^T_2\K_{\chi}(x,x)=\mathcal{F}_1\K_{\chi}(x,x)$ as long as $T$ is regular enough. Therefore, $\lim_{T\rightarrow\infty}J_m^T=0$ by \eqref{75}. Hence we have a contradiction by assuming that $m\geq1.$ So $m$ must be zero, then Claim \ref{36} follows.
    \end{proof}

    \begin{prop}\label{38'}
    	Let notation be as above. Let $A_{\varphi,fin}$ be a compact subgroup of $Z_G(\mathbb{A}_{F,fin})\backslash T(\mathbb{A}_{F,fin})$ depending only on $\varphi$ and $F.$ Let $R(x)$ be a slowly increasing function on $\mathcal{S}_0.$ Then we have
    	\begin{equation}\label{57''}
    	\int_{Z_G(\mathbb{A}_F)N(F)\backslash G(\mathbb{A}_F)}\sum_{\pi}\Big|\mathcal{F}_1\K_{\pi}(x,x)\cdot R(x)\Big|dx<\infty,
    	\end{equation}
    	where $\pi$ runs through $\mathcal{A}_0\left(G(F)\setminus G(\mathbb{A}_F),\omega^{-1}\right),$ the cuspidal spectrum.
    \end{prop}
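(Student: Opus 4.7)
The plan is to derive Proposition \ref{38'} as a direct consequence of Corollary \ref{28'}, by noting that the cuspidal part of $\mathfrak{X}$ is precisely the subfamily on which Arthur's truncation operator acts trivially. First I would identify each cuspidal representation $\pi\in \mathcal{A}_0(G(F)\backslash G(\mathbb{A}_F),\omega^{-1})$ with the cuspidal datum $\chi_{\pi}:=\{(G,\pi)\}\in \mathfrak{X}$, so that by the spectral expansion \eqref{53'} (specialized to $P=G$, which forces the only admissible $Q$ to be $G$ itself) one has $\K_{\chi_{\pi}}(x,y)=\K_{\pi}(x,y)=\sum_{\phi\in \mathcal{B}_{\pi}}\pi(\varphi)\phi(x)\overline{\phi(y)}$.

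The crucial step is to verify that $\Lambda_2^T \K_{\pi}(x,y)=\K_{\pi}(x,y)$ for such $\pi$. Unfolding the definition of $\Lambda_2^T$, each summand with $P\subsetneq G$ involves
\begin{align*}
\int_{[N_P]}\K_{\pi}(x,n\delta y)\,dn=\sum_{\phi\in \mathcal{B}_{\pi}}\pi(\varphi)\phi(x)\,\overline{\int_{[N_P]}\phi(n\delta y)\,dn},
\end{align*}
which vanishes because every $\phi\in \mathcal{B}_{\pi}$ is a cusp form and thus has zero constant term along every proper parabolic. Only the trivial summand $P=G$ survives, and it returns $\K_{\pi}(x,y)$. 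Applying the partial Fourier transform $\mathcal{F}_1$ in the first variable then gives $\mathcal{F}_1 \K_{\pi}(x,x)=\mathcal{F}_1\Lambda_2^T \K_{\chi_{\pi}}(x,x)$.

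With this identification in hand, I would majorize the integral of interest by the full spectral sum:
\begin{align*}
\int_{Z_G(\mathbb{A}_F)N(F)\backslash G(\mathbb{A}_F)}\sum_{\pi}\bigl|\mathcal{F}_1 \K_{\pi}(x,x)R(x)\bigr|\,dx\le \int_{Z_G(\mathbb{A}_F)N(F)\backslash G(\mathbb{A}_F)}\sum_{\chi\in \mathfrak{X}}\bigl|\mathcal{F}_1\Lambda_2^T \K_{\chi}(x,x)R(x)\bigr|\,dx,
\end{align*}
and then invoke Corollary \ref{28'}, valid for any $T$ sufficiently regular relative to $\supp\varphi$, to conclude finiteness. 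The main point is really the vanishing of proper-parabolic constant terms on cusp forms, which is standard; there is no genuine obstacle, and Proposition \ref{38'} falls out as a clean by-product of the truncation estimate already established in this section.
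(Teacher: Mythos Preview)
Your argument is correct and gives a cleaner route than the paper's. The paper instead invokes the rapid decay of $\K_0(x,y)=\sum_\pi \K_\pi(x,y)$ directly (cusp forms are rapidly decreasing on Siegel sets, and by the usual growth estimates the sum converges in the space of rapidly decaying functions), and then reproves the analogue of Proposition~\ref{27'} and Corollary~\ref{28'} for the cuspidal kernel from scratch, in parallel to the continuous-spectrum case. Your observation that $\Lambda_2^T\K_\pi=\K_\pi$ (vanishing constant terms of cusp forms along every proper parabolic) lets you absorb the cuspidal sum into the bound already established in Corollary~\ref{28'}, avoiding that repetition.

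One caveat worth making explicit: in the paper's convention $\K_\chi$ is defined by \eqref{56'''} as a sum over \emph{proper} parabolics $P\in\mathcal{P}$, so for $\chi_\pi=\{(G,\pi)\}$ that formula literally returns $0$, not $\K_\pi$. For your reduction to Corollary~\ref{28'} to go through verbatim you should note that the proofs of Lemma~\ref{26'} and Corollary~\ref{28'} extend unchanged when one adjoins the cuspidal terms $\K_{\chi_\pi}:=\K_\pi$ to the sum over $\chi$: the derivation of \eqref{55} from the full-kernel bound $|\K(x,y)|\ll \|x\|^{N_0'}\|y\|^{N_0'}$ and Arthur's Lemma~1.4 already accommodate these extra summands, and the Whittaker-support argument in the proof of Corollary~\ref{28'} applies to cusp forms as well. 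This is an immediate fix, not a gap in the idea.
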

    \begin{proof}
    	Consider the spectral expansion of $\K_0(x,y)$ (see \eqref{ker_0}):
    	\begin{align*}
    	\K_0(x,y)=\sum_{\pi}\K_{\pi}(x,y),\ \text{where}\ \K_{\pi}(x,y)=\sum_{\phi\in\mathcal{B}_{\pi}}\pi(\varphi)\phi(x)\overline{\phi(y)},
    	\end{align*}
    	where $\pi$ runs through $\mathcal{A}_0\left(G(F)\setminus G(\mathbb{A}_F),\omega^{-1}\right),$ the cuspidal spectrum; and $\mathcal{B}_{\pi}$ is a standard orthonormal basis of $\pi.$ All of the functions are rapidly decreasing in $x$ and $y.$ The first converges in $L^2$ and hence also in the space of rapidly decaying functions, by the usual estimates on the growth of cusp forms. Thus the sum over $\pi$ converges absolutely. The second sum is actually finite uniformly in $x$ and $y$ for the given $K$-finite test function $\varphi.$ Then we have
    	\begin{equation}\label{57'}
    	\int_K\int_{N(F)\backslash N(\mathbb{A}_F)}\int_{A_{F,\infty}A_{\varphi,fin}}\sum_{\pi}\Big|\mathcal{F}_1\K_{\pi}(nak,nak)\cdot R(ak)\Big|d^{\times}adndk<\infty.
    	\end{equation}
    	The proof of \eqref{57'} is similar to that of \eqref{56''}. Now \eqref{57''} follows from \eqref{57'}, this is exactly the same as that \eqref{56''} implies \eqref{56'}. 
    \end{proof}
    
	\begin{thmx}\label{39'}
	Let notation be as before. Let $s\in\mathbb{C}$ be such that $\Re(s)>1.$ Let $Y_G=Z_G(\mathbb{A}_F)N(\mathbb{A}_F)\backslash G(\mathbb{A}_F).$ Then the following integral
	\begin{align*}
	\sum_{\chi\in\mathfrak{X}}\sum_{P\in \mathcal{P}}\sum_{\phi_1\in \mathfrak{B}_{P,\chi}}\sum_{\phi_2\in \mathfrak{B}_{P,\chi}}\int_{\Lambda^*}\int_{Y_G} \Big|\langle\mathcal{I}_P(\lambda,\varphi)\phi_2,\phi_1\rangle W_{1}(x;\lambda)\overline{W_{2}(x;\lambda)}f(x,s)\Big|dxd\lambda
	\end{align*}
	is finite, and is uniformly bounded if $s$ lies in some compact subset of the right half plane $\{z:\ \Re(z)>1\}$. In particular, $I_{\infty}^{(1)}(s)$ converges absolutely for $\Re(s)>1.$ Moreover, when $\Re(s)>1,$ $I_{\infty}^{(1)}(s)$ is equal to 
	\begin{align*}
    \sum_{\chi}\sum_{P\in \mathcal{P}}\frac{1}{c_P}\sum_{\phi_1\in \mathfrak{B}_{P,\chi}}\sum_{\phi_2\in \mathfrak{B}_{P,\chi}}\int_{\Lambda^*}\langle\mathcal{I}_P(\lambda,\varphi)\phi_2,\phi_1\rangle\int_{Y_G} W_{1}(x;\lambda)\overline{W_{2}(x;\lambda)}f(x,s)dxd\lambda,
	\end{align*}
	where $\chi$ runs over proper cuspidal data, i.e., $\chi$ is not of the form $\{(G,\pi)\}.$ Particularly, as a function of $s,$ $I_{\infty}^{(1)}(s)$ is analytic in the right half plane $\{z:\ \Re(z)>1\}$. 
	\end{thmx}
	\begin{proof}
	By Lemma \ref{25'} and Corollary \ref{36'}, for any slowly increasing function $R(x)$ on $X_G=Z_G(\mathbb{A}_F)N(\mathbb{A}_F)\backslash G(\mathbb{A}_F),$
	and for any test function $\varphi\in\mathcal{C}_0\left(Z_G(\mathbb{A}_F)\backslash G(\mathbb{A}_F)\right),$ 
	\begin{align*}
	\sum_{\chi\in\mathfrak{X}}\sum_{P\in \mathcal{P}}\sum_{\phi_1\in \mathfrak{B}_{P,\chi}}\sum_{\phi_2\in \mathfrak{B}_{P,\chi}}\int_{\Lambda^*}\int_{Y_G} \langle\mathcal{I}_P(\lambda,\varphi)\phi_2,\phi_1\rangle W_{1}(x;\lambda)\overline{W_{2}(x;\lambda)}R(x)dxd\lambda
	\end{align*}
	converges. Take test functions of the form $\varphi_0*\varphi_0^*,$ where $\varphi_0^*(x)=\overline{\varphi_0(x^{-1})},$ and take $R$ to be nonnegative. Then the above integral becomes
	\begin{equation}\label{85}
	\sum_{\chi\in\mathfrak{X}}\sum_{P\in \mathcal{P}}\sum_{\phi\in \mathfrak{B}_{P,\chi}}\int_{\Lambda^*}\int_{Y_G} W(x;\mathcal{I}_P(\lambda,\varphi_0)\phi,\lambda)\overline{W(x;\mathcal{I}_P(\lambda,\varphi_0)\phi,\lambda)}R(x)dxd\lambda,
	\end{equation}
	where the Whittaker functions are defined by
	\begin{align*}
	W(x;\mathcal{I}_P(\lambda,\varphi_0)\phi,\lambda)=\int_{N(\mathbb{A}_F)}\left(\mathcal{I}_P(\lambda,\varphi_0)\phi\right)(w_0nx)e^{(\lambda+\rho_P)H_P(w_0 nx)}\theta(n)dn,
	\end{align*}
	where $w_0$ is the longest element in the Weyl group $W_n.$ Hence \eqref{85} is convergent and also nonnegative. So it converges absolutely.
	
	For arbitrary test function $\varphi\in\mathcal{C}_0\left(G(\mathbb{A}_F)\right),$ one can write $\varphi$ as a finite linear combination of convolutions $\varphi_{j,1}*\varphi_{j,2}$ with functions $\varphi_{j,i}\in C_c^r\left(G(\mathbb{A}_F)\right),$ whose archimedean components are differentiable of arbitrarily high order $r,$ $1\leq i\leq 2,$ and $j\in J$ is a finite set. Then one applies H\"older inequality to it to see
	\begin{align*}
	&\sum_{\chi\in\mathfrak{X}}\sum_{P\in \mathcal{P}}\sum_{\phi_1\in \mathfrak{B}_{P,\chi}}\sum_{\phi_2\in \mathfrak{B}_{P,\chi}}\int_{\Lambda^*}\int_{Y_G} \Big|\langle\mathcal{I}_P(\lambda,\varphi)\phi_2,\phi_1\rangle W_{1}(x;\lambda)\overline{W_{2}(x;\lambda)}R(x)\Big|dxd\lambda\\
	\leq&\sum_{j\in J}\prod_{i=1}^2\Bigg[\sum_{\chi\in\mathfrak{X}}\sum_{P\in \mathcal{P}}\sum_{\phi\in \mathfrak{B}_{P,\chi}}\int_{\Lambda^*}\int_{Y_G} W_{j,i}(x;\lambda)\overline{W_{i,j}(x;\lambda)}\cdot\big|R(x)\big|dxd\lambda\Bigg]^{1/2}<\infty,
	\end{align*}
	where $W_{j,i}(x;\lambda)=W(x;\mathcal{I}_P(\lambda,\varphi_{j,i})\phi,\lambda),$ for any $1\leq i\leq 2,$ and $j\in J.$ This proves the first part of Theorem \ref{39'}. Then apply $R(x)=f(x,s)$ for $s\in\mathbb{C}$ such that $\Re(s)>1.$ Note that when $\Re(s)>1,$ $f(x,s)$ is slowly increasing, and is uniformly bounded by a polynomial when $s$ lies in some compact subset of the right half plane $\{z:\ \Re(z)>1\},$ then the remaining part of the proof follows from subtracting the cuspidal contribution, which is justified by Proposition \ref{38'}.
	\end{proof}
    \begin{remark}
    If the base field $F$ is a function field, then it has no archimedean places. Thus $\supp W_{i}(x;\lambda)\mid_{A(\mathbb{A}_F)}\subseteq A_{\varphi,fin},$ $\forall$ $\lambda\in i\mathfrak{a}^*_P/i\mathfrak{a}^*_G,$ $1\leq i\leq 2,$ namely, the support of $\widehat{\K}_{\infty}(x,x)$ is compact. Also, in the function field case the cuspidal datums have no infinitesimal characters, so the sum over $\chi$'s is only finite. Therefore, the conclusion comes from Lemma \ref{32}.
    \end{remark}
    Note that for any $\chi$ and $P,$ the space $\mathfrak{B}_{P,\chi}$ depends only on the support and $K$-finite type of the test function $\varphi.$ Hence, given any $\lambda_P^{\circ}=(\lambda_1^{\circ},\lambda_2^{\circ},\cdots, \lambda_r^{\circ})\in \mathfrak{a}_P^*(\mathbb{C})=\mathfrak{a}_P^*\otimes\mathbb{C},$ the function $\varphi(\cdot)\exp\langle\lambda_P^{\circ},H_{M_P}(\cdot)\rangle$ shares the same support and $K$-finite type with the test function $\varphi.$ Hence, one can replace $\varphi$ in Theorem \ref{39'} with $\varphi(\cdot)\exp\langle\lambda^{\circ},H_{M_P}(\cdot)\rangle$ to get that 
    \begin{cor}\label{43cor}
    Let notation be as before. Let $s\in\mathbb{C}$ be such that $\Re(s)>1;$ and for any standard parabolic subgroup $P,$ let $\lambda_P^{\circ}=(\lambda_1^{\circ},\lambda_2^{\circ},\cdots, \lambda_r^{\circ})$ be a fixed point in $ \mathfrak{a}_P^*(\mathbb{C}).$  Let $Y_G=Z_G(\mathbb{A}_F)N(\mathbb{A}_F)\backslash G(\mathbb{A}_F).$ Then the following integral
    \begin{align*}
    \sum_{\chi\in\mathfrak{X}}\sum_{P\in \mathcal{P}}\sum_{\phi_1, \phi_2\in \mathfrak{B}_{P,\chi}}\int_{\Lambda^*}\int_{Y_G} \Big|\langle\mathcal{I}_P(\lambda+\lambda_P^{\circ},\varphi)\phi_2,\phi_1\rangle W_{1}(x;\lambda)\overline{W_{2}(x;\lambda)}f(x,s)\Big|dxd\lambda
    \end{align*}
    is finite, and is uniformly bounded if $s$ lies in some compact subset of the right half plane $\{z:\ \Re(z)>1\}$.
    \end{cor}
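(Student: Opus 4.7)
The plan is to reduce Corollary \ref{43cor} to Theorem \ref{39'} by absorbing the shift $\lambda_P^{\circ}$ into the test function. For each standard parabolic $P \in \mathcal{P}$, I would introduce the modified test function
$$\varphi_P(g) := \varphi(g) \cdot \exp\langle \lambda_P^{\circ}, H_{M_P}(g)\rangle, \qquad g \in G(\mathbb{A}_F).$$
Because the weight $\exp\langle \lambda_P^{\circ}, H_{M_P}(g)\rangle$ is nowhere zero and depends only on the $M_P$-component of the Iwasawa decomposition of $g$, the function $\varphi_P$ inherits the compact support of $\varphi$ and is $K$-finite of the same type. Any shift of the central character coming from the $\mathfrak{a}_G^{*}$-component of $\lambda_P^{\circ}$ is harmless, since the spectral parameter $\lambda$ in the inner product $\langle \mathcal{I}_P(\lambda + \lambda_P^{\circ}, \varphi)\phi_2, \phi_1\rangle$ lives only modulo $i\mathfrak{a}_G^{*}$; projecting $\lambda_P^{\circ}$ onto $(\mathfrak{a}_P^G)^{*}(\mathbb{C})$ preserves the central character without altering the matrix coefficient.

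The heart of the argument is the identity
$$\langle \mathcal{I}_P(\lambda + \lambda_P^{\circ}, \varphi)\phi_2, \phi_1\rangle = \langle \mathcal{I}_P(\lambda, \varphi_P)\phi_2, \phi_1\rangle, \qquad \lambda \in i\mathfrak{a}_P^{*}/i\mathfrak{a}_G^{*}.$$
I would derive this directly from the Mellin-transform expression \eqref{92} established in the proof of Claim \ref{34}: substituting $\varphi_P$ in place of $\varphi$ in the formula for $\mathcal{F}(a)$, the cocycle $H_{M_P}(k^{-1}nmak')$ collapses to $H_P(a)$ (since $m \in M(\mathbb{A}_F)^1$ has $H_{M_P}(m) = 0$ and $H_{M_P}$ is left-$N_P$ and bi-$K$-invariant). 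Consequently $\mathcal{F}$ is multiplied by $e^{\langle \lambda_P^{\circ}, H_P(a)\rangle}$, which, under the outer Mellin transform against $e^{\langle \lambda, H_P(a)\rangle}$, is precisely a shift of the spectral parameter by $\lambda_P^{\circ}$.

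With the identity in hand, the corollary follows quickly. Applying Theorem \ref{39'} to the test function $\varphi_P$ yields absolute convergence of the full sum over $\chi$, $P' \in \mathcal{P}$, and pairs $\phi_1, \phi_2$, together with the uniform bound on compact subsets of $\{\Re(s) > 1\}$. In particular, the single $P' = P$ summand is bounded; translating back via the identity above gives the bound for the $P$-summand in Corollary \ref{43cor}. Since $\mathcal{P}$ is finite, summing these $|\mathcal{P}|$ estimates completes the proof. The main subtlety will be the clean verification that $\varphi_P$ still lies in the class of test functions to which Theorem \ref{39'} applies (in particular the central-character bookkeeping described above), and ensuring that the bounds obtained for different $P$ are uniform in $s$ over compacta in $\{\Re(s) > 1\}$, but both points are inherited from the corresponding properties of Theorem \ref{39'}.
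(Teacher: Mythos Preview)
Your proposal is correct and follows essentially the same approach as the paper: replace $\varphi$ by $\varphi_P(g)=\varphi(g)\exp\langle\lambda_P^{\circ},H_{M_P}(g)\rangle$, note that this new function has the same support and $K$-finite type (hence lies in the same class of admissible test functions), and apply Theorem \ref{39'}. In fact you give more detail than the paper does, in particular the explicit verification via \eqref{92} of the identity $\langle \mathcal{I}_P(\lambda+\lambda_P^{\circ},\varphi)\phi_2,\phi_1\rangle=\langle \mathcal{I}_P(\lambda,\varphi_P)\phi_2,\phi_1\rangle$ and the central-character bookkeeping, which the paper leaves implicit.
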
 
   \begin{remark}
   	Let notation be as before, and let $\varphi\in \mathcal{C}_0(G(\mathbb{A}_F)),$ to apply Theorem \ref{39'}, one still needs to verify that the function $\varphi(\cdot)\exp\langle\lambda^{\circ},H_{M_P}(\cdot)\rangle$ lies in $\mathcal{C}_0(G(\mathbb{A}_F))$ as well. Noting that they have the same support, which excludes the possible singularities of $\exp\langle\lambda^{\circ},H_{M_P}(\cdot)\rangle,$ one concludes that for any $\varphi\in \mathcal{C}_0(G(\mathbb{A}_F)),$ the function  $\varphi(\cdot)\exp\langle\lambda^{\circ},H_{M_P}(\cdot)\rangle\in \mathcal{C}_0(G(\mathbb{A}_F)).$ Then Corollary \ref{43cor} follows from Theorem \ref{39'}.
   	\end{remark}

   \section{Rankin-Selberg Convolutions for Generic Representations}\label{6.2.}
   By Theorem \ref{39'}, we see that when $\Re(s)>1,$ $I_{\infty}^{(1)}(s)$ is equal to 
   \begin{equation}\label{95}
   \sum_{\chi}\sum_{P\in \mathcal{P}}\frac{1}{c_P}\sum_{\phi_1\in \mathfrak{B}_{P,\chi}}\sum_{\phi_2\in \mathfrak{B}_{P,\chi}}\int_{\Lambda^*}\langle\mathcal{I}_P(\lambda,\varphi)\phi_2,\phi_1\rangle\Psi\left(s,W_{1},W_{2};\lambda\right)d\lambda,
   \end{equation} 
   where $\Psi\left(s,W_{1},W_{2};\lambda\right)=\int_{Z_G(\mathbb{A}_F)N(\mathbb{A}_F)\backslash G(\mathbb{A}_F)} W_{1}(x;\lambda)\overline{W_{2}\left(x;\lambda\right)} f(x,s)dx,$ and the Whittaker function $W_{i}\left(x,\lambda\right)=\int_{N(\mathbb{A}_F)}\phi_{i}(w_0nx)e^{(\lambda+\rho_P)H_P(w_0 nx)}\theta(n)dn,$ $1\leq i\leq 2.$
   
   For our purpose, we need to show that $I_{\infty}^{(1)}(s)$ is a holomorphic multiple of $L(s,\tau).$ So we have to compute \eqref{95} explicitly (up to an entire factor), then continue it to a meromorphic function which is a holomorphic multiple of $L(s,\tau)$ as we desired. To achieve that, we start with computing each $\Psi_{P,\chi}\left(s,W_{1},W_{2};\lambda\right)$ associated to a standard parabolic subgroup $P$ and a cuspidal datum $\chi=(M_P,\sigma)\in\mathfrak{X}.$ 
   
   Let $P$ be a standard parabolic subgroup of $G$ of type $(n_1,n_2,\cdots,n_r),$ $1\leq r\leq n,$ with $n_1+n_2+\cdots n_r=n.$ Let $\chi\in \mathfrak{X}$ be represented by $(M_P,\sigma).$ Let $\mathfrak{B}_{P,\chi}$ be a orthonormal basis of the Hilbert space $\mathcal{H}_{P,\chi}.$ For $\phi_i\in \mathfrak{B}_{P,\chi},$ $1\leq i\leq 2,$ define the Whittaker function associated to $\phi_{i}$ parameterized by $\lambda\in i\mathfrak{a}^*_P/i\mathfrak{a}^*_G$ by
   \begin{align*}
   W_{P,\chi,i}\left(x,\lambda\right)=W\left(x,\phi_{i},\lambda\right):=\int_{N(\mathbb{A}_F)}\phi_{i}(w_0nx)e^{(\lambda+\rho_P)H_P(w_0 nx)}\theta(n)dn,
   \end{align*}
   where $w_0$ is the longest element in the Weyl group $W_n.$ Define
   \begin{align*}
   \Psi_{P,\chi}\left(s,W_{1},W_{2};\lambda,\Phi\right)=\int_{Z_G(\mathbb{A}_F)N(\mathbb{A}_F)\backslash G(\mathbb{A}_F)} W_{P,\chi,1}(x;\lambda)\overline{W_{P,\chi,2}\left(x;\lambda\right)} f(x,s)dx.
   \end{align*}
   
   From now on, we fix such a standard parabolic subgroup $P$ of type $(n_1,\cdots,n_r)$ and a cuspidal datum $\chi=(M_P,\sigma)\in\mathfrak{X},$ where $\sigma$ is a unitary representation of $M$ of central character $\omega.$ Then there exist $r$ cuspidal representations $\pi_i$ of $\GL_{n_i}(\mathbb{A}_F),$ $1\leq i\leq r,$ such that $\sigma\simeq \pi_1\oplus\pi_2\oplus\cdots\oplus\pi_r.$ Let $\pi= \Ind_{P(\mathbb{A}_F)}^{G(\mathbb{A}_F)}\left(\pi_{1},\pi_{2}\,\cdots,\pi_{r}\right).$ For any $\lambda=(\lambda_1,\lambda_2,\cdots,\lambda_r)\in i\mathfrak{a}^*_P/i\mathfrak{a}^*_G,$ denote by
   \begin{align*}
   \pi_{\lambda}= \Ind_{P(\mathbb{A}_F)}^{G(\mathbb{A}_F)}\left(\pi_{1}\otimes|\cdot|_{\mathbb{A}_F}^{\lambda_1},\pi_{2}\otimes|\cdot|_{\mathbb{A}_F}^{\lambda_2},\cdots,\pi_{r}\otimes|\cdot|_{\mathbb{A}_F}^{\lambda_r}\right).
   \end{align*}
   Then $\pi_{\lambda}$ is also a unitary automorphic representation of $G(\mathbb{A}_F).$ Fix $\phi_1,$ $\phi_2\in\mathcal{B}_{P,\chi}$ and a point $\lambda=(\lambda_1,\lambda_2,\cdots,\lambda_r)\in i\mathfrak{a}^*_P/i\mathfrak{a}^*_G.$ Write $W_{i}\left(x,\lambda\right)=W_{P,\chi,i}\left(x,\lambda\right),$ and $\Psi\left(s,W_{1},W_{2};\lambda,\Phi\right)=\Psi_{P,\chi}\left(s,W_{1},W_{2};\lambda,\Phi\right).$ Since $\lambda\in i\mathfrak{a}^*_P/i\mathfrak{a}^*_G,$ $\lambda=-\bar{\lambda},$ one has
   \begin{align*}
   \Psi\left(s,W_{1},W_{2};\lambda,\Phi\right)=\int_{Z_G(\mathbb{A}_F)N(\mathbb{A}_F)\backslash G(\mathbb{A}_F)} W_{1}(x;\lambda)\overline{W_{2}\left(x;-\bar{\lambda}\right)} f(x,s)dx.
   \end{align*}
   Since $W_1(x;\lambda)$ and $W_{2}\left(x;-\bar{\lambda}\right)$ are dominant by some gauge, and $f(x,s)$ is slowly increasing when $\Re(s)>1,$ then $\Psi\left(s,W_{1},W_{2};\lambda,\Phi\right)$ converges absolutely and normally when $\Re(s)>1.$
   Note that $\pi_{i}=\otimes_v'\pi_{i,v}$, $1\leq i \leq r,$ where, for each $v\in\Sigma_F,$ $\pi_{i,v}$ is a unitary irreducible representation of $\GL_{n_i}(F_v),$ of Whittaker tape. Then for each $v\in\Sigma_F$ and $\lambda=(\lambda_1,\lambda_2,\cdots,\lambda_r)\in i\mathfrak{a}^*_P/i\mathfrak{a}^*_G,$ denote by $\pi_{v}=\Ind_{M_P(F_v)}^{G(F_v)}\left(\pi_{1,v},\pi_{2,v},\cdots,\pi_{r,v}\right)$ and
   \begin{align*}
   \pi_{\lambda,v}=\Ind_{M_P(F_v)}^{G(F_v)}\left(\pi_{1,v}\otimes|\cdot|_{F_v}^{\lambda_1},\pi_{2,v}\otimes|\cdot|_{F_v}^{\lambda_2},\cdots,\pi_{r,v}\otimes|\cdot|_{F_v}^{\lambda_r}\right).
   \end{align*}
   Then $\pi=\otimes_v'\pi_v$ and $\pi_{\lambda}=\otimes_v'\pi_{\lambda,v}.$ Recall that $f(x,s)=\prod_vf_v(x_v,s),$ where
   $$
   f_v(x_v,s)=\tau_v(\det x_v)|\det x_v|_{F_v}^s\int_{Z_G(F_v)}\Phi_v[(0,\cdots,t_v)x_v]\tau_v^{n}(t_v)|t_v|_{F_v}^{ns}d^{\times}t_v,
   $$
   if $\Phi=\otimes_v'\Phi_{v}.$ Since $\phi_1$ and $\phi_2$ both have central character $\omega_{\lambda}=\omega,$ which is unitary. So is $W_{1}(x;\lambda)$ and $W_{2}(x;\lambda).$ Hence one can rewrite $\Psi\left(s,W_{1},W_{2};\lambda,\Phi\right)$ as 
   \begin{equation}\label{100}
   \int_{N(\mathbb{A}_F)\backslash G(\mathbb{A}_F)} W_{1}(x;\lambda)\overline{W_{2}\left(x;-\bar{\lambda}\right)} \Phi(\eta x)\tau(\det x)|\det x|_{\mathbb{A}_F}^sdx,
   \end{equation}
   where $\eta=\left(0,\cdots,0,1\right)\in F^n.$ According to the definition we can write $\phi_i=\otimes_v'\phi_{i,v},$ $1\leq i\leq 2$. Thus one can factor $W_{i}(x;\lambda)$ as $\prod_{v\in\Sigma_F}W_{i,v}(x_v;\lambda),$ where
   \begin{align*}
   W_{i,v}(x_v;\lambda)=\int_{N(F_v)}\phi_{i,v}(w_0nx_v)e^{(\lambda+\rho_P)H_P(w_0 nx_v)}\theta(n)dn,\ x_v\in F_v,\ 1\leq i\leq 2.
   \end{align*}
   We may assume $\Phi=\otimes_v'\Phi_{v}$ and $\phi_i=\otimes_{v}'\phi_{i,v},$ $i=1,2.$ Then one has
   \begin{align*}
   \Psi\left(s,W_{1,v},W_{2,v};\lambda,\Phi\right)=\prod_{v\in \Sigma_F}\Psi_v\left(s,W_{1,v},W_{2,v};\lambda,\Phi_v\right),
   \end{align*}
   where each local factor $\Psi_v\left(s,W_{1,v},W_{2,v};\lambda,\Phi_v\right)$ is defined to be
   \begin{equation}\label{101}
   \int_{N(F_v)\backslash G(F_v)} W_{1,v}(x_v;\lambda)\overline{W_{2,v}\left(x_v;-\bar{\lambda}\right)} \Phi_v(\eta x_v)\tau(\det x_v)|\det x_v|_{F_v}^sdx_v,
   \end{equation}
   where $W_{i,v}(x_v;\lambda)=\int_{N(F_v)}\phi_{i,v}(w_0nx)e^{(\lambda+\rho_P)H_{P,v}(w_0 nx)}\theta(n)dn,$ $1\leq i\leq 2.$ Since $W_{1,v}(x;\lambda)$ and $W_{2,v}\left(x;-\bar{\lambda}\right)$ are dominant by some local gauge, and $f_v(x_v,s)$ is slowly increasing when $\Re(s)>1,$ then $\Psi\left(s,W_{1,v},W_{2,v};\lambda,\Phi_v\right)$ converges absolutely and normally when $\Re(s)>1,$ for any $v\in\Sigma_F.$
   
   \subsection{Local Theory for $\Psi_v\left(s,W_{1,v},W_{2,v};\lambda,\Phi_v\right)$} In this section, we shall compute each local integral representation $\Psi_v\left(s,W_{1,v},W_{2,v};\lambda,\Phi_v\right)$ defined via \eqref{101}. Let $v\in \Sigma_F$ be a place of $F.$ Note that $v$ may be archimedean or nonarchimedean. Let $\mathfrak{u}=(u_{j,l})_{1\leq j,l\leq n}\in N(F_v),$ the unipotent of $\GL_n,$ we denote by $N_j^0(\mathfrak{u})$ the matrix
   \begin{align*}
   \begin{pmatrix}
   1&u_{12}&\cdots&\cdots&u_{1,n-j+2}&u_{1,n-j+3}&\cdots&\cdots&u_{1,n}\\
   &1&\cdots&\cdots&u_{2,n-j+2}&u_{2,n-j+3}&\cdots&\cdots&u_{2,n}\\
   &&\ddots&\vdots&\vdots&\vdots&\vdots&\vdots&\vdots\\
   &&&1&u_{n-j+1,n-j+2}&u_{n-j+1,n-j+3}&\cdots&\cdots& u_{n-j+1,n}\\
   &&&&1&0&\cdots&\cdots& 0\\
   &&&&&1&\cdots&\cdots& u_{n-j+3,n}\\
   &&&&&&\ddots&\vdots&\vdots\\
   &&&&&&&1&u_{n-1,n}\\
   &&&&&&&&1 
   \end{pmatrix}
   \end{align*}
   associated to $\mathfrak{u}.$ Denote by $N_j^1(\mathfrak{u})$ the matrix
   \begin{align*}
   \begin{pmatrix}
   1&u_{12}&\cdots&\cdots&u_{1,n-j+2}^1&u_{1,n-j+3}&\cdots&\cdots&u_{1,n}\\
   &1&\cdots&\cdots&u_{2,n-j+2}^1&u_{2,n-j+3}&\cdots&\cdots&u_{2,n}\\
   &&\ddots&\vdots&\vdots&\vdots&\vdots&\vdots&\vdots\\
   &&&1&0&u_{n-j+1,n-j+3}&\cdots&\cdots& u_{n-j+1,n}\\
   &&&&1&0&\cdots&\cdots& 0\\
   &&&&&1&\cdots&\cdots& u_{n-j+3,n}\\
   &&&&&&\ddots&\vdots&\vdots\\
   &&&&&&&1&u_{n-1,n}\\
   &&&&&&&&1 
   \end{pmatrix}
   \end{align*}
   associated to $\mathfrak{u},$ where $u^1_{l,n-j+2}=u_{l,n-j+2}-u_{l,n-j+1}u_{n-j-1,n-j+2},$ $1\leq l\leq n-j.$ For any $2\leq j\leq n,$ we denote by $N_{j}^0(\mathfrak{u}^*)$ the matrix
   \begin{align*}
   \begin{pmatrix}
   1&u_{12}&\cdots&u_{1,n-j+1}&u_{1,n}&u_{1,n-j+2}&\cdots&\cdots&u_{1,n-1}\\
   &1&\cdots&u_{2,n-j+1}&u_{2,n}&u_{2,n-j+2}&\cdots&\cdots&u_{2,n-1}\\
   &&\ddots&\vdots&\vdots&\vdots&\vdots&\vdots&\vdots\\
   &&&1&u_{n-j+1,n}&u_{n-j+1,n-j+2}&\cdots&\cdots& u_{n-j+1,n-1}\\
   &&&&1&0&\cdots&\cdots& 0\\
   &&&&&1&\cdots&\cdots& u_{n-j+2,n-1}\\
   &&&&&&\ddots&\vdots&\vdots\\
   &&&&&&&1&u_{n-2,n-1}\\
   &&&&&&&&1 
   \end{pmatrix};
   \end{align*}
   and for $\mathfrak{u}=(u_{j,l})_{1\leq j,l\leq n}\in N(F_v),$ we let $N_{j+2}^0(\mathfrak{u}'')$ represent the matrix 
   \begin{align*}
   \begin{pmatrix}
   1&u_{12}&\cdots&u_{1,n-j-1}&u_{1,n}&u_{1,n-j}'&\cdots&\cdots&u_{1,n-1}\\
   &1&\cdots&u_{2,n-j-1}&u_{2,n}&u_{2,n-j}'&\cdots&\cdots&u_{2,n-1}\\
   &&\ddots&\vdots&\vdots&\vdots&\vdots&\vdots&\vdots\\
   &&&1&u_{n-j-1,n}&u_{n-j-1,n-j}'&\cdots&\cdots& u_{n-j-1,n-1}\\
   &&&&1&0&\cdots&\cdots& 0\\
   &&&&&1&\cdots&\cdots& u_{n-j,n-1}\\
   &&&&&&\ddots&\vdots&\vdots\\
   &&&&&&&1&u_{n-2,n-1}\\
   &&&&&&&&1 
   \end{pmatrix},
   \end{align*}
   with $u_{k,n-j}'=u_{k,n-j}+s_{n-j,n}u_{k,n-j-1},$ $1\leq k\leq n-j-1.$ 
   
   Let $w_{j}$ be the simple root of $\GL_n$ corresponding to the permutation $(j,j+1),$ $1\leq j\leq n-1.$ Let $\tau_n$ be the longest element in the Weyl group $W_n$ of $\GL_n,$ $n\geq 2.$ Then $\tau_n=w_{n-1}w_{n-2}w_{n-1}w\cdots w_{1}w_2\cdots w_{n-1},$ for any $n\geq 2.$
   
   Let $\alpha=(\alpha_1,\alpha_2,\cdots,\alpha_{n-1})\in F_v^{n-1}.$ Denote by $\theta_{\alpha}$ the character on $F_v^{n-1}$ such that $\theta_{\alpha}(x_1,\cdots,x_{n-1})=\langle\alpha_1x_1+\cdots+\alpha_{n-1}x_{n-1}\rangle.$ Extending $\theta_{\alpha}$ to a character on $N(F_v)$ by $\theta_{\alpha}(\mathfrak{u})=\theta(\alpha_{1}u_{12}+\alpha_2u_{23}+\cdots+\alpha_{n-1}u_{n-1,n}),$ where $\mathfrak{u}=(u_{k,l})_{1\leq k,l \leq n}\in N(F_v).$ Let $\phi_v\in \pi_v.$ Define the Whittaker function associated to $\phi$ and $\alpha$ by
   \begin{align*}
   W_v(\alpha,\lambda)=\int_{N(F_v)}\phi_v\left(\tau_n\mathfrak{u}\right)e^{(\lambda+\rho)H_B(\tau_n\mathfrak{u})}\theta_{\alpha}(\mathfrak{u})d\mathfrak{u}.
   \end{align*}
   Let $\pi_{v,\lambda}=\Ind_{B(F_v)}^{GL_{n}(F_v)}\left(\chi_{v,1}|\cdot|^{\lambda_1},\cdots,\chi_{v,n}|\cdot|^{\lambda_n}\right)$ be a principal series. Let $\mathfrak{u}_{n-1}=(u_1,u_2,\cdots,c_{n-1})\in F_v^{n-1}.$  For any $\alpha=(\alpha_1,\alpha_2,\cdots,\alpha_{n-1})\in F_v^{n-1},$ we set
   \begin{align*}
   W_{v,j}(\widetilde{\alpha}_{n-1};\widetilde{\lambda})=\int_{N_{n-1}(F_v)}\phi_v\left(\tau_{n-1}\mathfrak{u}\right)e^{(\lambda+\rho)H_{B_{n-1}}(\tau_{n-1}\mathfrak{u})}\theta_{\widetilde{\alpha}_{n-1}}(\mathfrak{u})d\mathfrak{u},
   \end{align*}
   with $\widetilde{\alpha}_{n-1}=(a(u_1)^{-1}a(u_2)\alpha_1,\cdots,a(u_{n-2})^{-1}a(u_{n-1})\alpha_{n-2})\in F_v^{n-2}.$ Hence the function $W_{v,j}(\widetilde{\alpha}_{n-1};\widetilde{\lambda})$ is a Whittaker function on $\GL_{n-1}$ associated to the principal series representation $\Ind_{B_{n-1}(F_v)}^{GL_{n-1}(F_v)}\left(\chi_{v,2}|\cdot|^{\lambda_2},\cdots,\chi_{v,n}|\cdot|^{\lambda_n}\right)$ and parameter $\widetilde{\alpha}_{n-1}\in F_v^{n-2}.$ Let $\chi_{v,l}^{ur}=|\cdot|_v^{i\nu_l},$ $\nu_l\in\mathbb{R},$ $1\leq l\leq n.$ Denote by $z_{k,l}=\lambda_k-\lambda_l+i\nu_k-i\nu_l\in\mathbb{C},$ $1\leq k<l\leq n.$

   Let $x_v\in F_v.$ If $v$ is an archimedean place, then define the functions $a$ and $s$ on $F_v$ as follows:
   \begin{align*}
   a(x_v)=\begin{cases}
   (1+|x_v|_v^2)^{-1/2},\ s(x_v)=x_va(x_v),\ \text{if $F_v\simeq \mathbb{R};$}\\
   (1+|x_v|_v)^{-1/2},\ s(x_v)=\overline{x}_va(x_v),\ \text{if $F_v\simeq \mathbb{C}.$}\\
   \end{cases}
   \end{align*}	
   If $v$ is a nonarchimedean place, then define the functions $a$ and $s$ on $F_v$ as follows:
   \begin{align*}
   a(x_v)=\begin{cases}
   1,\  \text{if $x_v\in\mathcal{O}_{F_v};$}\\
   x_v^{-1},\ \text{otherwise;}\\
   \end{cases}\ s(x_v)=\begin{cases}
   0,\ \text{if $x_v\in\mathcal{O}_{F_v};$}\\
   1,\ \text{otherwise.}
   \end{cases}
   \end{align*}
   \begin{lemma}\label{43lem}
   	Let notation be as above. Assume that $\pi_v$ is right $K_v$-finite. Let $v\in\Sigma_F$ be an arbitrary place and let  $\pi_v=\Ind_{B(F_v)}^{GL_{n}(F_v)}\left(\chi_{v,1},\cdots,\chi_{v,n}\right)$ be a principal series. Then the Whittaker function $W_{v}(\alpha;\lambda)$ is equal to
   	\begin{equation}\label{43*}
   	\sum_{j\in\boldsymbol{J}}\int_{F_v^{n-1}}W_{v,j}(\widetilde{\alpha}_{n-1};\widetilde{\lambda})\widetilde{\theta}_n(u_1,\cdots,u_{n-1})\prod_{l=2}^{n}|a(u_{n-l+1})|_v^{1+z_{1,l}}\prod_{j=1}^{n-1}du_{j},
   	\end{equation}
   	where $j$ runs over a finite index set depending only on the $K_v$-type of $\phi_v;$ and $\widetilde{\theta}_n(u_1,\cdots,u_{n-1})=\theta(\alpha_{n-1}u_{n-1}-\alpha_{n-2}c(u_{n-1})u_{n-2}-\alpha_{n-3}c(u_{n-2})u_{n-3}-\cdots-\alpha_{n-j}c(u_{n-j+1})u_{n-j}-\cdots-\alpha_{1}c(u_2)u_{1}).$ 
   \end{lemma}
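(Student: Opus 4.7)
The plan is to prove Lemma \ref{43lem} by an explicit rank-reduction Iwasawa computation: I will single out the last column of $\mathfrak{u}\in N(F_v)$ as the $(n-1)$-tuple $(u_1,\dots,u_{n-1})=(u_{1,n},\dots,u_{n-1,n})$, and perform a sequence of $\SL_2$-level Iwasawa decompositions inside $\tau_n\mathfrak{u}$ that peel off these variables one at a time, leaving behind a Whittaker integral for the $\GL_{n-1}$ principal series $\Ind_{B_{n-1}(F_v)}^{\GL_{n-1}(F_v)}(\chi_{v,2}|\cdot|^{\lambda_2},\dots,\chi_{v,n}|\cdot|^{\lambda_n})$. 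The intermediate matrices $N_j^0(\mathfrak{u}),\ N_j^1(\mathfrak{u}),\ N_j^0(\mathfrak{u}^*),\ N_{j+2}^0(\mathfrak{u}'')$ defined in the text are precisely the snapshots of $\mathfrak{u}$ after $j$ of these Iwasawa steps have been applied.

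Concretely, first I factor $\mathfrak{u}=\mathfrak{u}^{(n-1)}\cdot\mathfrak{n}(u_1,\dots,u_{n-1})$ with $\mathfrak{u}^{(n-1)}\in N_{n-1}(F_v)$ (embedded as the upper-left block) and $\mathfrak{n}(u_1,\dots,u_{n-1})$ the unipotent whose only nontrivial entries lie in the last column; the Haar measure factorises as $d\mathfrak{u}=d\mathfrak{u}^{(n-1)}\prod_j du_j$. Next, I repeatedly apply the identity
\[
\begin{pmatrix}0&1\\-1&0\end{pmatrix}\begin{pmatrix}1&u\\&1\end{pmatrix}=\begin{pmatrix}1&s(u)\\&1\end{pmatrix}\begin{pmatrix}a(u)^{-1}&\\&a(u)\end{pmatrix}\kappa(u),\qquad \kappa(u)\in K_v,
\]
inside successive $\GL_2$-blocks of $\tau_n\mathfrak{n}(u_1,\dots,u_{n-1})$. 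Each application contributes (i) a diagonal factor which, once pushed past the remaining torus via $e^{(\lambda+\rho)H_B(\cdot)}$, produces exactly the exponent $|a(u_{n-l+1})|_v^{1+z_{1,l}}$; (ii) an upper-triangular addition that, when absorbed into the character $\theta_\alpha$ through the subsequent conjugation, gives the successive shifts $-\alpha_{k}c(u_{k+1})u_k$ assembling into $\widetilde\theta_n$; and (iii) a residual $K_v$-element $\kappa(u_j)$.

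The third step is to push all the accumulated $K_v$-elements to the right of $\phi_v$ and use right $K_v$-finiteness: $\phi_v(\,\cdot\,\kappa)$ is a finite linear combination (indexed by $\boldsymbol{J}$) of translates whose $\GL_{n-1}$-restriction is again a flat section in the principal series for $\GL_{n-1}$. After this substitution the $\mathfrak{u}^{(n-1)}$-integration is, up to the character shift from the diagonal conjugation that carries $\alpha$ into $\widetilde\alpha_{n-1}$, precisely the defining Whittaker integral $W_{v,j}(\widetilde\alpha_{n-1};\widetilde\lambda)$; factoring this out of the $F_v^{n-1}$-integral yields \eqref{43*}.

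The main obstacle will be the combinatorial bookkeeping across the $n-1$ Iwasawa steps, most notably checking that the modular-character exponents aggregate exactly to $1+z_{1,l}$ (this requires carefully tracking how $\lambda+\rho$ pairs with each new torus element as it is commuted past $\tau_n$) and that the residual corrections in the unipotent do in fact assemble into the character $\widetilde\theta_n$ with the precise $c(u_{k+1})$-weights. The archimedean and non-archimedean cases follow the same combinatorial script once the unified conventions for $a(\cdot)$ and $s(\cdot)$ are in place, and convergence of the outer $F_v^{n-1}$-integral (for $\lambda$ in a suitable tube) follows from the standard gauge estimates on $W_{v,j}$ together with the decay $|a(u)|_v\ll |u|_v^{-1}$ at infinity.
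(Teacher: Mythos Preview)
Your proposal is correct and follows essentially the same approach as the paper: iterated $\SL_2$-level Iwasawa decompositions applied to $\tau_n\mathfrak{u}$ that successively peel off the last-column variables, with the diagonal, unipotent, and compact contributions tracked exactly as you describe, followed by right $K_v$-finiteness to produce the finite sum over $\boldsymbol{J}$ and a final identification of the residual $N_{n-1}$-integral as $W_{v,j}(\widetilde\alpha_{n-1};\widetilde\lambda)$. The paper's matrices $N_j^0(\mathfrak{u}), N_j^1(\mathfrak{u}), M_l(u)$ and the inductive identity $W_v^{(j)}=W_v^{(j+1)}$ are precisely the bookkeeping you outline, and the final change of variables there matches your relabeling $(u_1,\dots,u_{n-1})=(u_{1,n},\dots,u_{n-1,n})$.
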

   \begin{proof}
   	Assume that $v$ is an archimedean place. Let $r\in\mathbb{R}$ and $\theta\in[0,2\pi).$ Then by a straightforward computation we have the Iwasawa decomposition 
   	\begin{equation}\label{106e}
   	\begin{pmatrix}
   	1 &re^{i\theta}\\
   	& 1
   	\end{pmatrix}=\begin{pmatrix}
   	1 &\\
   	\frac{re^{-i\theta}}{1+r^2}& 1
   	\end{pmatrix}\begin{pmatrix}
   	e^{-i\theta}\sqrt{1+r^2} &\\
   	& \frac{e^{i\theta}}{\sqrt{1+r^2} }
   	\end{pmatrix}\begin{pmatrix}
   	\frac{e^{i\theta}}{\sqrt{1+r^2} }&\frac{re^{2i\theta}}{\sqrt{1+r^2} }\\
   	-\frac{re^{-2i\theta}}{\sqrt{1+r^2} }& \frac{e^{-i\theta}}{\sqrt{1+r^2} }
   	\end{pmatrix}.
   	\end{equation}
   	
   	If $v$ is a nonarchimedean place of $F.$ We then fix an uniformizer $\varpi_v$ of $F_v^{\times}.$ For any $u\in F_v,$ one can write $u=u^{\circ}\varpi_v^m,$ for some $m\in\mathbb{Z},$ where $u^{\circ}\in \mathcal{O}_{F_v}^{\times}.$ If $m\geq 0,$ then $u\in\mathcal{O}_{F_v},$ implying that $\begin{pmatrix}
   	1 &u\\
   	& 1
   	\end{pmatrix}\in GL(n,\mathcal{O}_{F_v}).$ If $m<0,$ then one has that   
   	\begin{equation}\label{106f}
   	\begin{pmatrix}
   	1 &u\\
   	& 1
   	\end{pmatrix}=\begin{pmatrix}
   	1 &\\
   	u^{-1}& 1
   	\end{pmatrix}\begin{pmatrix}
   	u&\\
   	& u^{-1}
   	\end{pmatrix}\begin{pmatrix}
   	u^{-1}&1\\
   	1&0
   	\end{pmatrix}.
   	\end{equation}
   	Let $v$ be arbitrary and $u\in F_v.$ For any $2\leq j\leq n,$ $1\leq l\leq 4,$ let $M_l(u)=M_{l,j}(u)$ be the matrix defined by
   	\begin{align*}
   	M_1(u)&=\begin{pmatrix}
   	I_{n-j}&\\
   	& 1&u\\
   	& &1\\
   	&&&I_{j-2}
   	\end{pmatrix},\quad M_2(u)=\begin{pmatrix}
   	I_{n-j}&\\
   	& a(u)^{-1}\\
   	& s(u)&a(u)\\
   	&&&I_{j-2}
   	\end{pmatrix};\\
   	M_3(u)&=\begin{pmatrix}
   	I_{n-j}&\\
   	&{a(u)}^{-1} \\
   	& &a(u)\\
   	&&&I_{j-2}
   	\end{pmatrix},\quad M_4(u)=\begin{pmatrix}
   	I_{n-j}&\\
   	& 1\\
   	& c(u)&1\\
   	&&&I_{j-2}
   	\end{pmatrix},
   	\end{align*}
   	where $c(u)=a(u)s(u).$ Let $\tau_{n,j}=w_{n-1}\cdots w_{n-j+1},$ $2\leq j\leq n.$ 
   	
   	Denote by $w_0=Id_n,$ the identity element. Then one has $N_2^0(\mathfrak{u})=\mathfrak{u},$ and for any $j\geq 2,$ $N_j^0(\mathfrak{u})=N_j^1(\mathfrak{u})M_1(u_{n-j+1,n-j+2})$ and $w_{n-j+1}N_j^1(\mathfrak{u})w_{n-j+1}=N_{j+1}^0(\mathfrak{u}'),$ where $\mathfrak{u}'=(u_{k,l}')_{1\leq k,l\leq n}\in N(F_v)$ is defined by $u_{k,l}'=u_{k,l-1}$ if $l=n-j+2;$ $u_{k,l}'=u_{k,l+1}$ if $l=n-j+1;$ $u_{k,l}'=u_{k+1,l}$ if $k=n-j+2;$ and $u_{k,l}'=u_{k,l},$ otherwise. Now applying \eqref{106f} one then has that $M_1(u_{n+1-j,n-j+2})=M_2(u_{n+1-j,n-j+2})k=M_4(u_{n+1-j,n-j+2})M_3(u_{n+1-j,n-j+2})k,$ where $k\in K(F_v),$ the maximal compact subgroup of $\GL(n,F_v).$ Consequently, we have $\tau_nN_j^0(\mathfrak{u})=\tau_nN_j^1(\mathfrak{u})M_1(u_{n-j+1,n-j+2})=\tau_nw_{n-j+1}N_{j+1}^0(\mathfrak{u}')w_{n-j+1}M_1(u_{n-j+1,n-j+2}),$ which is equal to $\tau_nw_{n-j+1}N_{j+1}^0(\mathfrak{u}')w_{n-j+1}M_4(u_{n+1-j,n-j+2})M_3(u_{n+1-j,n-j+2})k.$ 
   	
   	Note that we have $w_{n-j+1}M_4(u_{n+1-j,n-j+2})=M_1(u_{n+1-j,n-j+2})w_{n-j+1}$ and $N_{j+1}^0(\mathfrak{u}')M_1(u_{n+1-j,n-j+2})=M_1(u_{n+1-j,n-j+2})N_{j+1}^0(\tilde{\mathfrak{u}}),$ where $\tilde{\mathfrak{u}}=(\tilde{u}_{k,l})_{1\leq k,l\leq n}\in N(F_v)$ is defined by $\tilde{u}_{k,l}=u_{k,l}'+u_{n+1-j,n-j+2}u_{k+1,l}'$ if $k=n-j+1;$ $\tilde{u}_{k,l}=u_{k,l}'+u_{n+1-j,n-j+2}u_{k,l-1}'$ if $l=n-j+2;$ and $\tilde{u}_{k,l}=u_{k,l}'$ otherwise. Therefore,
   	\begin{align*}
   	\tau_nN_j^0(\mathfrak{u})&=\tau_nw_{n-j+1}M_1(u_{n+1-j,n-j+2})N_{j+1}^0(\tilde{\mathfrak{u}})w_{n-j+1}M_3(u_{n+1-j,n-j+2})k\\
   	&=M'_1(u_{n+1-j,n-j+2})\tau_nw_{n-j+1}N_{j+1}^0(\tilde{\mathfrak{u}})M_3(u_{n+1-j,n-j+2})^{-1}w_{n-j+1}k,
   	\end{align*}
   	where $M'_1(u_{n+1-j,n-j+2})=\tau_nw_{n-j+1}M_1(u_{n+1-j,n-j+2})w_{n-j+1}\tau_n^{-1}.$ Then one has that $M'_1(u_{n+1-j,n-j+2})\in N(F_v).$ Let $2\leq j\leq n$ and $\phi_{v,\lambda}=\phi_ve^{(\lambda+\rho)H_B(\cdot))}.$ Then
   	\begin{align*}
   	W_v(\alpha;\lambda)&=\int_{N(F_v)}\phi_v\left(\tau_nN_2^0(\mathfrak{u})\right)e^{(\lambda+\rho)H_B(\tau_nN_2^0(\mathfrak{u}))}\theta_{\alpha}(\mathfrak{u})d\mathfrak{u}\\
   	&=\int_{N(F_v)}\phi_{v,\lambda}\left(M'_1(u_{n-1,n})\tau_nw_{n-1}N_{3}^0(\tilde{\mathfrak{u}})M_3(u_{n-1,n})^{-1}w_{n-1}k\right)\theta_{\alpha}(\mathfrak{u})d\mathfrak{u}\\
   	&=\int_{N(F_v)}\phi_{v,\lambda}\left(M_3^{\tau_n}(u_{n-1,n})\tau_nw_{n-1}N_{3}^0(\mathfrak{u}^*)w_{n-1}k\right)\theta_{\alpha}^{(2)}(\mathfrak{u})d\mathfrak{u},
   	\end{align*}
   	where $M_3^{\tau_n}(u_{n-1,n})=\diag(a(u_{n-1,n}),a(u_{n-1,n})^{-1},I_{n-2});$ $\theta_{\alpha}^{(2)}(\mathfrak{u})=\theta(\alpha_1u_{12}+\cdots+\alpha_{n-3}u_{n-3,n-2}+\alpha_{n-2}a(u_{n-1,n})u_{n-2,n-1}+\alpha_{n-1}u_{n-1,n})\cdot \theta(-\alpha_{n-2}c(u_{n-1,n})u_{n-2,n}).$
   	
   	Denote by $M_{2}^{\tau_n}(\mathfrak{u})=I_{n}.$ Let $j\geq 3$ and $M_l,$ $3\leq l\leq j,$ be matrices. Denote by $\prod_{l=2}^jM_l$ the matrix $M_2\cdots M_j.$ Define the matrix
   	\begin{align*}
   	M_{j}^{\tau_n}(\mathfrak{u})=\prod_{l=3}^j
   	\begin{pmatrix}
   	a(u_{n-l+2,n})&\\
   	&I_{l-3}\\
   	&&a(u_{n-l+2,n})^{-1} \\
   	&&&I_{n-l+1}
   	\end{pmatrix}.
   	\end{align*}
   	Write $a_{k,l}$ for $a(u_{k,l});$ and $c_{k,l}$ for $c(u_{k,l}).$ Let $\beta_{k}(\mathfrak{u})=a_{k,n}^{-1}a_{k+1,n}.$ Denote by $\theta_{\alpha}^{(j)}(\mathfrak{u})$ the product of $\theta(\alpha_1u_{12}+\cdots+\alpha_{n-j-1}u_{n-j-1,n-j}+\alpha_{n-j}a_{n-j+1,n}u_{n-j,n-j+1}+\alpha_{n-j+1}\beta_{n-j+1}(\mathfrak{u})u_{n-j+1,n-j+2}+\cdots+\alpha_{n-2}\beta_{n-2}(\mathfrak{u})u_{n-2,n-1})$ and $\theta(\alpha_{n-1}u_{n-1,n}-\alpha_{n-2}c_{n-1,n}u_{n-2,n}-\alpha_{n-3}c_{n-2,n}u_{n-3,n}-\cdots-\alpha_{n-j}c_{n-j+1,n}u_{n-j,n}),$ for any $2\leq j\leq n-1;$ and $\theta_{\alpha}^{(n)}(\mathfrak{u})=\theta(\alpha_1\beta_{1}(\mathfrak{u})u_{12}+\cdots+\alpha_{n-j}\beta_{n-j}(\mathfrak{u})u_{n-j,n-j+1}+\cdots+\alpha_{n-2}\beta_{n-2}(\mathfrak{u})u_{n-2,n-1})\cdot \theta(\alpha_{n-1}u_{n-1,n}-\alpha_{n-2}c_{n-1,n}u_{n-2,n}-\alpha_{n-3}c_{n-2,n}u_{n-3,n}-\cdots-\alpha_{n-j}c_{n-j+1,n}u_{n-j,n}-\cdots-\alpha_{1}c_{2,n}u_{1,n}).$ Let
   	\begin{align*}
   	W_v^{(j)}(\alpha;\lambda)=\int_{N(F_v)}\phi_{v,\lambda}\left(M_{j+1}^{\tau_n}(\mathfrak{u})\tau_n\tau_{n,j}N_{j+1}^0(\mathfrak{u}^*)k_{j+1}(\mathfrak{u})\right)\theta_{\alpha}^{(j)}(\mathfrak{u})\prod_{l=2}^{j}a_{n-l+1,n}^{2-l}d\mathfrak{u}.
   	\end{align*}
   	where $k_{j+1}(\mathfrak{u})=\tau_{n,j}^{-1}k_{j}(\mathfrak{u})$ and $k_2(\mathfrak{u})=k.$ Then $W_v(\alpha;\lambda)=W_v^{(2)}(\alpha;\lambda).$ Let $N_{j+1}^*(\mathfrak{u}^*)=(u_{k,l}'')_{1\leq k,l\leq n}$ such that $u_{k,l}''=0$ if $(k,l)=(n-j+1,n);$ and $u_{k,l}''=u_{k,l}^*$ otherwise. Let $M_4=M_4(u_{n-j+1,n}),$ $M_3=M_3(u_{n-j+1,n}).$ Then a changing of variables leads to that $W_v^{(j)}(\alpha;\lambda)$ is equal to 
   	\begin{align*}
   	&\int_{N(F_v)}\phi_{v,\lambda}\left(M_{j+1}^{\tau_n}(\mathfrak{u})\tau_n\tau_{n,j}N_{j+1}^*(\mathfrak{u}^*)M_4M_3k_{j+1}(\mathfrak{u})\right)\theta_{\alpha}^{(j)}(\mathfrak{u})\prod_{l=2}^{j}a_{n-l+1,n}^{2-l}d\mathfrak{u}=\\
   	&\int_{N(F_v)}\phi_{v,\lambda}\left(M_{j+1}^{\tau_n}(\mathfrak{u})M_4'\tau_n\tau_{n,j+1}N_{j+2}^0(\mathfrak{u}'')w_{n-j}M_3k_{j+1}(\mathfrak{u})\right)\theta_{\alpha}^{(j)}(\mathfrak{u})\prod_{l=2}^{j}a_{n-l+1,n}^{2-l}d\mathfrak{u},
   	\end{align*}
   	where $W_4'\in N(F_v).$ Since $\phi_{v,\lambda}$ is left $N(F_v)$-invariant, the right hand side of the above equality is equal to 
   	\begin{align*}
   	\int_{N(F_v)}\phi_{v,\lambda}\left(M_{j+1}^{\tau_n}(\mathfrak{u})\tau_nM_3\tau_{n,j+1}N_{j+2}^0(\mathfrak{u})w_{n-j}k_{j+1}(\mathfrak{u})\right)\theta_{\alpha}^{(j+1)}(\mathfrak{u})\prod_{l=2}^{j}a_{n-l+1,n}^{2-l}d\mathfrak{u},
   	\end{align*}
   	implying that for any $2\leq j\leq n-2,$ one has $W_v^{(j)}(\alpha;\lambda)=W_v^{(j+1)}(\alpha;\lambda).$ By our definition of $\theta_{\alpha}^{(n)},$ a similar computation to the above shows that $W_v^{(n-1)}(\alpha;\lambda)=W_v^{(n)}(\alpha;\lambda),$ namely, one has that $W_v(\alpha;\lambda)$ is equal to
   	\begin{equation}\label{111}
   	\int_{N(F_v)}\phi_{v,\lambda}\left(M_{n+1}^{\tau_n}(\mathfrak{u})\tau_n\tau_{n,n}N_{n+1}^0(\mathfrak{u}^*)k_{n+1}(\mathfrak{u})\right)\theta_{\alpha}^{(n)}(\mathfrak{u})\prod_{l=2}^{n}a_{n-l+1,n}^{2-l}d\mathfrak{u}.
   	\end{equation}
   	By definition, one has, for any $\phi_{v,\lambda}\in \pi_{v,\lambda},$ that 
   	\begin{equation}\label{1100}
   	\phi_v(t_vx_v)=\prod_{j=1}^n\chi_{v,j}(t_{v,j})|t_{v,j}|_v^{\frac{n+1}2-j+\lambda_j}\cdot\phi_v(x_v),\ x_v\in GL(n,F_v).
   	\end{equation}
   	Substituting \eqref{1100} into \eqref{111} one then sees that $W_v(\alpha;\lambda)$ is equal to 
   	\begin{equation}\label{112..}
   	\int_{N(F_v)}\phi_{v,\lambda}\left(\tau_n\tau_{n,n}N_{n+1}^0(\mathfrak{u}^*)k_{n+1}(\mathfrak{u})\right)\theta_{\alpha}^{(n)}(\mathfrak{u})\prod_{l=2}^{n}\chi_{1,l}(a_{n-l+1,n})a_{n-l+1,n}^{1+\lambda_1-\lambda_{l}}d\mathfrak{u},
   	\end{equation}
   	where $\chi_{1,l}(a_{n-l+1,n})=\chi_{v,1}(a_{n-l+1,n})\chi_{v,l}(a_{n-l+1,n})^{-1}.$ Since $\pi_v$ is right $K_v$-finite, one then sees, according to \eqref{112..}, that $W_v(\alpha;\lambda)$ is equal to
   	\begin{equation}\label{113'}
   	\sum_{j\in\boldsymbol{J}}\int_{N(F_v)}\phi_{v,\lambda}^{(j)}\left(\tau_n\tau_{n,n}N_{n+1}^0(\mathfrak{u}^*)\right)\theta_{\alpha}^{(n)}(\mathfrak{u})\prod_{l=2}^{n}\chi_{1,l}(a_{n-l+1,n})a_{n-l+1,n}^{1+\lambda_1-\lambda_{l}}d\mathfrak{u},
   	\end{equation}
   	where $J$ is a finite set of indexes, whose cardinality depends only on the $K_v$-finite type of $\pi_v;$ and each $\phi_{v,\lambda}^{(j)}\in \pi_{v,\lambda}.$ Let $W_{v,j}(\alpha;\lambda)$ be the summand of \eqref{113'} corresponding to the index $j\in \boldsymbol{J}.$ Let $\widetilde{\lambda}_j=\lambda_{j}+\lambda_1/(n-1),$ $2\leq j\leq n.$ Denote by $B_{n-1}$ the standard Borel subgroup of $\GL_{n-1}$ and $N_{n-1}$ the unipotent of $B_{n-1}.$ Then a change of variables implies that $W_{v,j}(\alpha;\lambda)$ is equal to
   	\begin{align*}
   	\int_{F_v^{n-1}}W_{v,j}(\widetilde{\alpha}_{n-1};\widetilde{\lambda})\widetilde{\theta}_n(u_1,\cdots,u_{n-1})\prod_{l=2}^{n}\chi_{1,l}(a_{n-l+1})|a_{n-l+1}|_v^{1+\lambda_1-\lambda_{l}}\prod_{j=1}^{n-1}du_{j}.
   	\end{align*}
   	Then Lemma \ref{43lem} follows.
   \end{proof}

   Let $v\in \Sigma_F$ be a fixed nonarchimedean place, let $\widetilde{\pi}_{\lambda,v}$ be the contragredient of $\pi_{\lambda,v}.$ Let $\varpi_v$ be a uniformizer of $\mathcal{O}_{F,v},$ the ring of integers of $F_v.$ Let $q_v=N_{F_v/\mathbb{Q}_p}(\varpi_v),$ where $p$ is the rational prime such that $v$ is above $p.$ Denote by
   \begin{align*}
   R_v(s,W_{1,v},W_{2,v};\lambda):=\frac{\Psi_v\left(s,W_{1,v},W_{2,v};\lambda,\Phi_v\right)}{L_v(s,\pi_{\lambda,v}\otimes\tau_v\times\widetilde{\pi}_{-\lambda,v})},\quad \Re(s)>1.
   \end{align*}
   \begin{prop}[Nonarchimedean Case]\label{43'}
   	Let notation be as before. Let $s\in \mathbb{C}$ be such that $\Re(s)>1$.Then we have
   	\begin{description}
   		\item[(a)] $R_v(s,W_{1,v},W_{2,v};\lambda)$ is a polynomial in $\{q_v^s, q_v^{-s}, q_v^{\lambda_i}, q_v^{-\lambda_i}:\ 1\leq i\leq r\}.$ 
   		\item[(b)] We have the local functional equation
   		\begin{align*}
   		\frac{\Psi_v\left(s,W_{1,v},W_{2,v};\lambda,\Phi_v\right)}{ L_v(s,\pi_{\lambda,v}\otimes\tau_v\times\widetilde{\pi}_{-\lambda,v})}=\varepsilon(s,\pi_{\lambda,v}\times\widetilde{\pi}_{-\lambda,v},\theta)\cdot\frac{\Psi_v(1-s,\widetilde{W}_{1,v},\widetilde{W}_{2,v};-\bar{\lambda},\widehat{\Phi}_v)}{L_v(1-s,\widetilde{\pi}_{-\bar{\lambda},v}\otimes\bar{\tau}_v\times\pi_{\lambda,v})},
   		\end{align*}
   		where $\varepsilon(s,\pi_{\lambda,v}\times\widetilde{\pi}_{-\lambda,v},\theta)$ is a polynomial in $\{q_v^s, q_v^{-s}, q_v^{\lambda_i}, q_v^{-\lambda_i}:\ 1\leq i\leq r\}.$ 
   	\end{description}
   \end{prop}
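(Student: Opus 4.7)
The plan is to interpret $\Psi_v(s,W_{1,v},W_{2,v};\lambda,\Phi_v)$ as a local Rankin--Selberg integral of Jacquet--Piatetski--Shapiro--Shalika (JPSS) type for the pair $(\pi_{\lambda,v}\otimes\tau_v,\widetilde{\pi}_{-\lambda,v})$ on $\GL(n)\times\GL(n)$, and then to transfer the classical JPSS results to the present setting while tracking the additional dependence on the continuous parameter $\lambda\in i\mathfrak{a}_P^*/i\mathfrak{a}_G^*$. First, observe that $\overline{W_{2,v}(x_v;-\bar{\lambda})}$ is a Whittaker function in the space of $\widetilde{\pi}_{-\lambda,v}$ relative to $\bar\theta$, while $W_{1,v}(x_v;\lambda)\tau_v(\det x_v)$ lies in the Whittaker model of $\pi_{\lambda,v}\otimes\tau_v$. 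Incorporating the Tate factor $\Phi_v(\eta x_v)|\det x_v|^s$, the integral $\Psi_v$ is precisely the JPSS integral $\Psi(s,W_{1,v}(\tau_v\circ\det),\overline{W_{2,v}},\Phi_v)$ defining the local $L$-factor for $\pi_{\lambda,v}\otimes\tau_v\times\widetilde{\pi}_{-\lambda,v}$.

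For part (a), since each $\pi_{i,v}$ is unitary generic and both factors of the induced representation are admissible of Whittaker type, the JPSS theory applies: for each fixed $\lambda$ the integral converges absolutely on $\Re(s)>1$, admits meromorphic continuation to a rational function in $q_v^s$, and the ratio $R_v(s,W_{1,v},W_{2,v};\lambda)$ lies in $\mathbb{C}[q_v^s,q_v^{-s}]$. To promote this to polynomial dependence in $q_v^{\pm\lambda_i}$ as well, I would exploit the fact that the Whittaker functions attached to the flat section $\phi_{i,v}$ depend on $\lambda$ only through the inducing character $e^{(\lambda+\rho_P)H_P(\cdot)}$; by the iterative Iwasawa unfolding already established in Lemma \ref{43lem}, each $W_{i,v}(x_v;\lambda)$ is a finite sum of products of Whittaker functions on smaller groups with explicit monomial factors in $q_v^{\pm\lambda_i}$. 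Substituting these expressions into $\Psi_v$ exhibits the integral, after Iwasawa decomposition on $N(F_v)\backslash G(F_v)=A(F_v)K_v$ modulo the compact $K_v$-integral, as a finite $\mathbb{C}$-linear combination of multi-variable geometric series in $q_v^{\pm s},q_v^{\pm\lambda_i}$ whose denominators are divisors of $L_v(s,\pi_{\lambda,v}\otimes\tau_v\times\widetilde{\pi}_{-\lambda,v})^{-1}$. Dividing by the $L$-factor clears these denominators and yields a polynomial in all the variables.

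For part (b), the JPSS local functional equation applied pointwise in $\lambda$ gives
\begin{equation*}
\frac{\Psi_v(s,W_{1,v},W_{2,v};\lambda,\Phi_v)}{L_v(s,\pi_{\lambda,v}\otimes\tau_v\times\widetilde{\pi}_{-\lambda,v})}
=\varepsilon(s,\pi_{\lambda,v}\times\widetilde{\pi}_{-\lambda,v},\theta)\cdot
\frac{\Psi_v(1-s,\widetilde{W}_{1,v},\widetilde{W}_{2,v};-\bar{\lambda},\widehat{\Phi}_v)}{L_v(1-s,\widetilde{\pi}_{-\bar{\lambda},v}\otimes\bar{\tau}_v\times\pi_{\lambda,v})},
\end{equation*}
where $\widetilde{W}_{i,v}$ is the standard contragredient Whittaker function and $\widehat{\Phi}_v$ is the Fourier transform of $\Phi_v$. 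Both sides are already polynomials in $q_v^{\pm s},q_v^{\pm\lambda_i}$ by part (a), and the epsilon factor, which is traditionally a monomial in $q_v^s$ for fixed $\lambda$, depends on $\lambda$ through the conductor and central character data of the inducing cuspidals $\pi_{i,v}$; the resulting shifts are monomial in $q_v^{\pm\lambda_i}$, so $\varepsilon$ is a polynomial in the asserted variables.

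The most delicate point is the polynomial (as opposed to rational) dependence on $q_v^{\pm\lambda_i}$ in part (a). To handle this cleanly one may invoke a Bernstein-type principle for families of admissible representations: the space of Whittaker functionals and local integrals varies algebraically in $\lambda$, so rationality in $\lambda$ follows from JPSS applied to a Zariski-dense set of $\lambda$'s, and the absence of poles is then equivalent to the vanishing of the corresponding residues, which one checks by the explicit formula coming from Lemma \ref{43lem} together with the product structure of $L_v(s,\pi_{\lambda,v}\otimes\tau_v\times\widetilde{\pi}_{-\lambda,v})=\prod_{i,j}L_v(s,\pi_{i,v}\otimes\tau_v\times\widetilde{\pi}_{j,v})q_v^{-(\lambda_i-\lambda_j)s_{\text{shift}}}$ capturing precisely the expected poles.
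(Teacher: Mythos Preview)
Your identification of $\Psi_v$ as a JPSS local Rankin--Selberg integral and your treatment of part (b) via the JPSS functional equation match the paper. The gap is in part (a), specifically in upgrading the pointwise-in-$\lambda$ polynomiality in $q_v^{\pm s}$ to joint polynomiality in $q_v^{\pm s}, q_v^{\pm\lambda_i}$.

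Your main tool for the $\lambda$-dependence is Lemma \ref{43lem}, but that lemma is stated and proved only for \emph{principal series}, i.e.\ when $P=B$ and each $\pi_{i,v}$ is a character. In the general setting of Proposition \ref{43'} the parabolic $P$ has type $(n_1,\ldots,n_r)$ with $n_i$ possibly $>1$ and the $\pi_{i,v}$ are arbitrary generic unitary irreducibles of $\GL_{n_i}(F_v)$; the iterative Iwasawa unfolding of Lemma \ref{43lem} does not apply, and your description of $\Psi_v$ as a finite combination of multi-variable geometric series with denominators dividing $L_v^{-1}$ has no justification here. The fallback to a ``Bernstein-type principle'' is too vague: rationality in $\lambda$ is plausible, but you still need to rule out poles in $\lambda$, and you have not indicated how.

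The paper's argument is genuinely different and worth noting. After Iwasawa decomposition it writes $\Psi_v=\sum_{m\geq -M}\Psi_v^{(m)}q_v^{-ms}$ with each $\Psi_v^{(m)}$ analytic in $\lambda$ and a formal Laurent series in $q_v^{-\lambda_i}$ (via \cite{JS81}). To show only finitely many $\Psi_v^{(m)}$ are nonzero \emph{uniformly} in $\lambda$, it uses a Baire category argument (Claim \ref{44''}): the sets $\Lambda_l=\{\lambda:\Psi_v^{(m)}=0\ \forall m\geq l\}$ are closed and cover $i\mathfrak{a}_P^*/i\mathfrak{a}_G^*$, so one has nonempty interior, and analyticity propagates the vanishing. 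Finally, to turn the formal Laurent series in $q_v^{\pm\lambda_i}$ into polynomials, the paper compares the two sides of the functional equation: one side is a Laurent series bounded below in $q_v^{\lambda_i}$, the other bounded below in $q_v^{-\lambda_i}$, so equality forces both to be Laurent polynomials. These two steps (Baire category for uniformity in $s$; functional equation for polynomiality in $\lambda$) are the missing ideas in your sketch.
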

   \begin{proof}
   	We shall only prove Part (a), since Part (b) will follow form \cite{JS81}.
   	
   	Let $T(F_v)$ be the maximal torus of $G(F_v),$ and for any $m\in\mathbb{Z},$ let $T^{(m)}(F_v)=\{t\in T(F_v):\ |\det t|_{F_v}=q_v^{-m}\}.$ Using Iwasawa decomposition and the fact that $W_{i,v}$ and $\Phi_v$ are right $G(\mathcal{O}_{F,v})$-finite, we can rewrite $\Psi_v\left(s,W_{1,v},W_{2,v};\lambda,\Phi_v\right)$ as
   	\begin{align*}
   	\sum_{j\in J}\int_{T(F_v)}W^{(j)}_{1,v}(a_v;\lambda)\overline{W^{(j)}_{2,v}\left(a_v;-\bar{\lambda}\right)} \Phi_{j,v}(\eta a_v)\tau_v(\det a_v)\delta_{T}^{-1}(a_v)|\det a_v|_{F_v}^sda_v,
   	\end{align*}
   	where the sum over a finite set $J,$ $W^{(j)}_{i,v}(a_v;\lambda)$ is a Whittaker function associated to some smooth functions in $\mathcal{H}_{P,\chi},$ $1\leq i\leq 2,$ and $\Phi_{j,v}$ is some Schwartz-Bruhat function. Note that for $1\leq i\leq 2$ and $j\in J,$ $W_{i,v}^{(j)}(x_v;\lambda)$ is right $G(\mathcal{O}_{F,v})$-finite. So there exists a compact subgroup $N_{0,v}\subseteq G(\mathcal{O}_{F,v})\cap N(F_v),$ depending only on $\varphi,$ such that 
   	$W^{(j)}_{i,v}(t_vu_v;\lambda)=W^{(j)}_{i,v}(t_v;\lambda),$ for all $t_v\in T(F_v)$ and $u_v\in N_{0,v}.$
   	On the other hand, $W^{(j)}_{i,v}(t_vu_v;\lambda)=\theta_{t_v}(u_v)W^{(j)}_{i,v}(t_v;\lambda),$ where $\theta_{t_v}(n_v)=\theta(t_vn_vt_v^{-1}),$ for any $n_v\in N(F_v).$ But then, there exists a constant $C_v$ depending only on $N_{0,v}$ and $\theta$ (hence not on $\lambda$) such that $\theta_{t_v}(u_v)=1$ if and only if $|\alpha_i(t_v)|\leq C_v,$ where $\alpha_i$'s are the simple roots of $G(F).$ Thus each $W^{(j)}_{i,v}(x_v;\lambda)$ is compactly supported for a fixed $\lambda\in i\mathfrak{a}_P/i\mathfrak{a}_G.$ Therefore, for a fixed $\lambda,$ $\Psi_v\left(s,W_{1,v},W_{2,v};\lambda,\Phi_v\right)$ is a formal Laurent series in $q_v^{-s}.$ Indeed, one can chose some nonnegative integer $M$ independent of $\lambda$ (but depending on $\pi$ and $\varphi$), such that 
   	\begin{align*}
   	\Psi_v\left(s,W_{1,v},W_{2,v};\lambda,\Phi_v\right)=\sum_{m\geq -M}\Psi_v^{(m)}\left(W_{1,v},W_{2,v};\lambda,\Phi_v\right)\cdot q_v^{-ms},
   	\end{align*}
   	where $\Psi_v^{(m)}\left(s,W_{1,v},W_{2,v};\lambda,\Phi_v\right)$ is defined by the integral
   	\begin{align*}
   	\sum_{j\in J}\int_{T^{(m)}(F_v)}W^{(j)}_{1,v}(a_v;\lambda)\overline{W^{(j)}_{2,v}\left(a_v;-\bar{\lambda}\right)} \Phi_{j,v}(\eta a_v)\tau_v(\det a_v)\delta_{T}^{-1}(a_v)da_v.
   	\end{align*}
   	Apply the above analysis on $\supp W_{i,v}(a_v;\lambda),$ we see similarly that 
   	$$
   	\supp W^{(j)}_{i,v}(a_v;\lambda)\subseteq \{t\in T^{(m)}(F_v):\ |\alpha_l(t)|_{F_v}\leq C_v^{(j)},\ 1\leq l\leq n-1\}
   	$$ 
   	for some constants $C_v^{(j)}.$ Hence, for each $j\in J,$ $m\geq -N$ and $a_v\in T^{(m)}(F_v),$ the function $a_v\mapsto W^{(j)}_{1,v}(a_v;\lambda)\overline{W^{(j)}_{2,v}\left(a_v;-\bar{\lambda}\right)}$ is analytic and is a formal Laurent series in $\{q_v^{-\lambda_i}:\ 1\leq i\leq r\}$ by (2.5.2) of \cite{JS81}, and the function
   	$$
   	a_v\mapsto W^{(j)}_{1,v}(a_v;\lambda)\overline{W^{(j)}_{2,v}\left(a_v;-\bar{\lambda}\right)} \Phi_{j,v}(\eta a_v)\tau(\det a_v)\delta_{T}^{-1}(a_v)
   	$$ 
   	is locally constant. Therefore, $\Psi_v^{(m)}\left(W_{1,v},W_{2,v};\lambda,\Phi_v\right)$ is an analytic function of $\lambda$ and is a formal Laurent series in $\{q_v^{-\lambda_i}:\ 1\leq i\leq r\}$. 
   	
   	Since $\pi_{\lambda,v}$ is of Whittaker type, we can use Theorem 2.7 of \cite{JS81} to see that, for fixed $\lambda\in i\mathfrak{a}_P/i\mathfrak{a}_G,$ $\Psi_v\left(s,W_{1,v},W_{2,v};\lambda,\Phi_v\right)\cdot L_v(s,\pi_{\lambda,v}\otimes\tau_v\times\widetilde{\pi}_{-\lambda,v})^{-1}$ is a polynomial in $\{q_v^{s}, q_v^{-s}\}$ with coefficients functions of $\lambda.$ Moreover, $L_v(s,\pi_{\lambda,v}\otimes\tau_v\times\widetilde{\pi}_{-\lambda,v})^{-1}$ is a polynomial in $\{q_v^s, q_v^{-s}, q_v^{\lambda_i}, q_v^{-\lambda_i}:\ 1\leq i\leq r\}.$ So we can write
   	\begin{align*}
   	L_v(s,\pi_{\lambda,v}\otimes\tau_v\times\widetilde{\pi}_{\lambda,v})^{-1}=\sum_{|l|\leq N}Q_l(\lambda)q_v^{-ls},
   	\end{align*}
   	where $N$ is a positive integer and $Q_l(\lambda)$ are polynomials in $\{q_v^{\lambda_i}, q_v^{-\lambda_i}:\ 1\leq i\leq r\}.$ Then for  $\lambda\in i\mathfrak{a}_P/i\mathfrak{a}_G,$ $\Psi_v\left(s,W_{1,v},W_{2,v};\lambda,\Phi_v\right)\cdot L(s,\pi_{\lambda,v}\otimes\tau_v\times\widetilde{\pi}_{-\lambda,v})^{-1}$ is equal to the sum over $m\geq -N-M$ of $R_m\left(s,W_{1,v},W_{2,v};\lambda,\Phi_v\right)q_v^{-ms},$ where
   	\begin{align*}
   	R_m\left(s,W_{1,v},W_{2,v};\lambda,\Phi_v\right)=\sum_{\substack{i+j=m\\ |i|\leq N, j\geq-M}}Q_i(\lambda)\Psi_v^{(m)}\left(W_{1,v},W_{2,v};\lambda,\Phi_v\right).
   	\end{align*}
   	Since the sum on the right hand side is finite, $R_l\left(s,W_{1,v},W_{2,v};\lambda,\Phi_v\right)$ is analytic in $\lambda.$ Moreover, it is a formal Laurent series in $\{q_v^{\lambda_i}, q_v^{-\lambda_i}:\ 1\leq i\leq r\}.$ Therefore, part (a) of Proposition \ref{43'} follows from Claim \ref{44''} below.
   \end{proof}
   \begin{claim}\label{44''}
   	There exists some $M_0\in\mathbb{Z},$ independent of $\lambda\in i\mathfrak{a}_P/i\mathfrak{a}_G,$ such that $R_m\left(s,W_{1,v},W_{2,v};\lambda,\Phi_v\right)=0$ for all $m\geq M_0$ and for all $\lambda\in i\mathfrak{a}_P/i\mathfrak{a}_G.$ Moreover, for each $m\in \mathbb{Z},$ $R_m\left(s,W_{1,v},W_{2,v};\lambda,\Phi_v\right)$ is a polynomial in $\{q_v^{\pm s}, q_v^{\pm\lambda_i}:\ 1\leq i\leq r\}.$
   \end{claim}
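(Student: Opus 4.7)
My plan is to extract both assertions of the claim from a uniform finite-dimensional structure on the integrand: on the bounded portion of the torus where the integrand of $\Psi_v^{(m)}$ is supported, the Whittaker function depends on $\lambda$ only through a fixed finite list of exponentials, with coefficients polynomial in $\{q_v^{\pm\lambda_i}\}$ of bounded degree. The polynomial structure of $R_m$ will follow directly, and the uniform vanishing in $m$ will come by combining this with the pointwise termination of Jacquet--Piatetski-Shapiro--Shalika.

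First I would upgrade ``formal Laurent series'' to ``polynomial''. As established in the proof of part (a), for each $j\in J$ the support of $a_v\mapsto W_{i,v}^{(j)}(a_v;\lambda)$ in $T(F_v)$ lies in $\{t:|\alpha_\ell(t)|_{F_v}\leq C_v^{(j)},\ 1\leq\ell\leq n-1\}$, with $C_v^{(j)}$ independent of $\lambda$; together with the compact support of $\Phi_{j,v}$, this forces all $|a_i|_{F_v}$ to be bounded above uniformly in $\lambda$ and $m$. On this region the standard Casselman--Bernstein--Zelevinsky asymptotic expansion, applied to the principal-series-like representation $\pi_{\lambda,v}$, yields a decomposition
\[
W_{i,v}^{(j)}(a_v;\lambda)\;=\;\delta_B^{1/2}(a_v)\sum_{w\in W_n}\Phi_{w}^{(j)}(a_v)\,e^{\langle w\lambda,\,H_B(a_v)\rangle},
\]
where the $\Phi_w^{(j)}$ are locally constant in $a_v$, bounded in number by $|W_n|$, and independent of both $\lambda$ and $m$. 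Plugging this into $\Psi_v^{(m)}$ and integrating term-by-term over the uniformly bounded support, each $\Psi_v^{(m)}$ becomes a finite sum of monomials of the form (polynomial in $q_v^{\pm\lambda_i}$)$\cdot q_v^{-m\mu(\lambda)}$, where $\mu$ ranges over a fixed finite set of $W_n$-weights; in particular each $\Psi_v^{(m)}$ is a polynomial in $\{q_v^{\pm\lambda_i}\}$ of degree bounded by a constant depending only on $\pi_v$ and the $K_v$-type of $\varphi$, uniformly in $m$. Since each $Q_\ell(\lambda)$ is likewise a polynomial in $\{q_v^{\pm\lambda_i}\}$, the convolution formula $R_m=\sum_{i+j=m}Q_i(\lambda)\Psi_v^{(j)}(\lambda)$ shows that $R_m$ is a polynomial in $\{q_v^{\pm\lambda_i}\}$ of uniformly bounded degree, the $q_v^{\pm s}$-dependence being trivial.

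For the uniform termination $R_m\equiv 0$ for $m\geq M_0$, Theorem 2.7 of \cite{JS81} shows that, for each fixed $\lambda\in i\mathfrak{a}_P/i\mathfrak{a}_G$, the formal series $\sum_m R_m(\lambda)q_v^{-ms}$ equals the polynomial $\Psi_v(s,\cdot;\lambda,\Phi_v)L_v(s,\pi_{\lambda,v}\otimes\tau_v\times\widetilde{\pi}_{-\lambda,v})^{-1}$ in $\{q_v^{\pm s}\}$, so pointwise termination is automatic. The uniform $M_0$ comes from the observation that on the unitary family $\{\pi_{\lambda,v}\}_{\lambda\in i\mathfrak{a}_P/i\mathfrak{a}_G}$ the conductor of $\pi_{\lambda,v}\otimes\tau_v\times\widetilde{\pi}_{-\lambda,v}$ is constant---the inducing cuspidal data, $\tau_v$, and the $K_v$-type of $\varphi$ are all fixed---so tracking the degree bound in the JPSS argument produces a single $M_0$ valid for all $\lambda$. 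Equivalently, the finitely many weight-functions $c_\mu^{(m)}(\lambda)$ from the decomposition above satisfy a linear recurrence of bounded depth with coefficients given by those of $L_v^{-1}$, and this recurrence forces the tail of the $q_v^{-s}$-expansion to vanish uniformly.

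The principal obstacle I anticipate is precisely this last uniformity step. While the pointwise polynomial termination is classical, extracting a single $M_0$ requires a careful audit of the degree bounds in the proof of Theorem 2.7 of \cite{JS81}, or the equivalent combinatorial verification that the finite-dimensional recurrence above has effective depth controlled only by the fixed data $(\pi_v,\tau_v,\varphi,\theta)$. Either route delivers the uniform $M_0$ and completes Claim \ref{44''}.
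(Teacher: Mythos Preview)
Your structural route has a genuine gap. In the Casselman--Bernstein--Zelevinsky asymptotic expansion of $W_{i,v}^{(j)}(a_v;\lambda)$ for the family $\pi_{\lambda,v}$, the coefficient functions you call $\Phi_w^{(j)}$ are \emph{not} independent of $\lambda$: they are controlled by the local intertwining operators (c-functions) and are rational, not polynomial, in $q_v^{\lambda_i}$. Already for an unramified $\GL_2$ principal series the Casselman--Shalika formula shows the individual expansion terms carry a denominator $\alpha q_v^{\lambda_1}-\beta q_v^{\lambda_2}$; the total Whittaker value is a Schur polynomial in the twisted Satake parameters, but that polynomiality is a cancellation, not a feature of each summand. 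So your decomposition does not establish that $\Psi_v^{(m)}$---hence $R_m$---is a Laurent \emph{polynomial} in $q_v^{\pm\lambda_i}$ of bounded degree, only a Laurent series, which is what the paper had already recorded. Your second step (tracking conductors through \cite{JS81}) you yourself flag as incomplete.

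The paper's argument is entirely different and avoids any structure theory of Whittaker functions. For the uniform $M_0$ it uses Baire category: the sets $\Lambda_l=\{\lambda: R_m(\lambda)=0\ \text{for all }m\geq l\}$ are closed (each $R_m$ is analytic in $\lambda$) and cover the Baire space $i\mathfrak{a}_P/i\mathfrak{a}_G\simeq\mathbb{R}^{r-1}$ (by the pointwise termination from Theorem~2.7 of \cite{JS81}); hence some $\Lambda_{l_0}$ has nonempty interior, and analyticity of each $R_m$ then forces $R_m\equiv 0$ for all $m\geq l_0$. For the polynomiality in $q_v^{\pm\lambda_i}$, the paper invokes the local functional equation: once $R_v(s,\lambda)$ is a polynomial in $q_v^{\pm s}$, the functional equation equates it (up to the polynomial $\varepsilon$-factor) with the dual ratio; the left side is a formal Laurent series in $q_v^{\lambda_i}$ bounded below, the right side one in $q_v^{-\lambda_i}$ bounded below, and equality of two such series forces both to be Laurent polynomials. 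Both steps are short and require no inspection of the internals of \cite{JS81}.
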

   \begin{proof}[Proof of Claim \ref{44''}]
   	Let $l\in\mathbb{Z}.$ One then defines  
   	\begin{align*}
   	\Lambda_{l}=\big\{\lambda\in  i\mathfrak{a}_P/i\mathfrak{a}_G:\ R_m\left(s,W_{1,v},W_{2,v};\lambda,\Phi_v\right)=0\ \text{for all $m\geq l$}\big\}.
   	\end{align*}
   	Then each $\Lambda_{l}$ is closed since $R_m\left(s,W_{1,v},W_{2,v};\lambda,\Phi_v\right)=0$ are analytic (hence continuous) in $\lambda.$ Since $R_v(s,\lambda)=\sum_{m}R_m\left(s,W_{1,v},W_{2,v};\lambda,\Phi_v\right)q_v^{-ms}\in\mathbb{C}[q_v^s, q_v^{-s}],$ for fixed $\lambda\in i\mathfrak{a}_P/i\mathfrak{a}_G,$ there exists some $M(\lambda)$ such that $R_m\left(s,W_{1,v},W_{2,v};\lambda,\Phi_v\right)=0$ as long as $m\geq M(\lambda).$ Therefore, $i\mathfrak{a}_P/i\mathfrak{a}_G$ is covered by the union of all $\Lambda_{l}.$ Noting that $i\mathfrak{a}_P/i\mathfrak{a}_G\simeq R^{r-1}$ is a Banach space, by Baire category theorem there exists some $\Lambda_{l_0}$ having nonempty interior, $\Int(\Lambda_{l_0}),$ say. Then for any $\lambda\in \Int(\Lambda_{l_0}),$ $R_m\left(s,W_{1,v},W_{2,v};\lambda,\Phi_v\right)=0$ for any $m\geq l_0.$ Since $R_m\left(s,W_{1,v},W_{2,v};\lambda,\Phi_v\right)$ is analytic for any $l\in\mathbb{Z},$ $R_m\left(s,W_{1,v},W_{2,v};\lambda,\Phi_v\right)=0$ for all $\lambda\in i\mathfrak{a}_P/i\mathfrak{a}_G,$ proving the first part. For the remaining part, we consider the functional equation (see \cite{JS81}):
   	\begin{align*}
   	\frac{\Psi_v\left(s,W_{1,v},W_{2,v};\lambda,\Phi_v\right)}{ L_v(s,\pi_{\lambda,v}\otimes\tau_v\times\widetilde{\pi}_{-\lambda,v})}=\varepsilon(s,\pi_{\lambda,v}\times\widetilde{\pi}_{-\lambda,v},\theta)\cdot\frac{\Psi_v(1-s,\widetilde{W}_{1,v},\widetilde{W}_{2,v};-\bar{\lambda},\widehat{\Phi}_v)}{L_v(1-s,\widetilde{\pi}_{-\bar{\lambda},v}\otimes\bar{\tau}_v\times\pi_{\lambda,v})},
   	\end{align*}
   	where $\varepsilon(s,\pi_{\lambda,v}\times\widetilde{\pi}_{-\lambda,v},\theta)$ is a polynomial in $\{q_v^s, q_v^{-s}, q_v^{\lambda_i}, q_v^{-\lambda_i}:\ 1\leq i\leq r\}.$ 
   	
   	We can interpret the functional as an identity between formal Laurent series in $\{q_v^{\lambda_i}, q_v^{-\lambda_i}:\ 1\leq i\leq r\}.$ The left hand side are formal Laurent series of the form $\sum_{m_1\geq -M_1}q_v^{m_1\lambda_i},$ while the right hand side are formal Laurent series of the form $\sum_{m_2\geq -M_2}q_v^{-m_2\lambda_i}.$ Since they are equal, they must be both polynomials in $\{q_v^s, q_v^{-s}, q_v^{\lambda_i}, q_v^{-\lambda_i}:\ 1\leq i\leq r\}.$ Then the proof of Claim \ref{44''} follows.
   \end{proof}
   
   One will see that Proposition \ref{43'} is insufficient for our continuation in next few sections. Hence we need to compute $R_v(s,W_{1,v},W_{2,v};\lambda)$ more explicitly. We will do principal series case below since this is the only case we need for the particular purpose of this paper. 
   
   \begin{lemma}\label{46lem}
   	Let $v$ be a nonarchimedean place of $F.$ Let $\pi_v$ be a principal series characters $\chi_{v,1},\chi_{v,2},\cdots,\chi_{v,n}$. Assume that $\pi_v$ is right $K_v$-finite. Let $\alpha\in \mathbb{G}_m(F_v)^{n-1}$ and let $W_v(\alpha,\lambda)$ be a Whittaker function associated to $\pi_{v,\lambda}$ and $\alpha.$ Then $W_v(\alpha,\lambda)$ is of the form $\mathcal{B}_v(\alpha,\lambda)\mathcal{L}_v(\lambda),$ where $\mathcal{B}_v(\alpha,\lambda)$ is a holomorphic function, and
   	\begin{align*}
   	\mathcal{L}_v(\lambda)=\prod_{1\leq i<r}\prod_{i< j\leq r}L_v(1+\lambda_i-\lambda_j,\chi_{v,i}\overline{\chi}_{v,j})^{-1}.
   	\end{align*}
   \end{lemma}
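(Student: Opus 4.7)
My plan is to prove the lemma by induction on $n$, using the recursive identity established in Lemma \ref{43lem} to reduce the $\GL(n)$ Whittaker function to a $\GL(n-1)$ Whittaker function together with a residual $(n-1)$-fold integral, and then to extract the new $L$-factors $L_v(1+\lambda_1-\lambda_j,\chi_{v,1}\bar\chi_{v,j})^{-1}$ (for $j=2,\ldots,n$) from that residual integral by a standard Tate-type computation.

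First I would dispose of the base case $n=2$: in this case Lemma \ref{43lem} reduces $W_v(\alpha,\lambda)$ to a single Tate-type integral
\[
\int_{F_v^\times} \chi_{v,1}(a(u))\bar{\chi}_{v,2}(a(u))|a(u)|_v^{1+\lambda_1-\lambda_2}\theta(\alpha_1 u)\, du
\]
after unwinding the unramified part $|\cdot|^{i\nu_j}$ of each $\chi_{v,j}$ into the exponent $z_{1,2}=\lambda_1-\lambda_2+i\nu_1-i\nu_2$. Splitting the integral according to whether $u\in \mathcal O_{F_v}$ or $u\notin \mathcal O_{F_v}$, using $a(u)=1$ on the former and $a(u)=u^{-1}$ on the latter, the non-compact piece becomes a geometric series in $q_v^{-(1+\lambda_1-\lambda_2)}\chi_{v,1}\bar\chi_{v,2}(\varpi_v)$ whose sum is the $L$-factor $L_v(1+\lambda_1-\lambda_2,\chi_{v,1}\bar\chi_{v,2})$, so that $W_v(\alpha,\lambda)$ is a holomorphic multiple of that factor. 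Dividing through, the remainder is manifestly a holomorphic function of $\lambda$ and $\alpha$.

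For the inductive step, I apply Lemma \ref{43lem} to write
\[
W_v(\alpha;\lambda)=\sum_{j\in\boldsymbol J}\int_{F_v^{n-1}} W_{v,j}(\widetilde\alpha_{n-1};\widetilde\lambda)\,\widetilde\theta_n(u_1,\ldots,u_{n-1})\prod_{l=2}^n|a(u_{n-l+1})|_v^{1+z_{1,l}}\prod_{j=1}^{n-1} du_j.
\]
Each $W_{v,j}(\widetilde\alpha_{n-1};\widetilde\lambda)$ is a Whittaker function on $\GL(n-1)$ for the principal series with characters $\chi_{v,2},\ldots,\chi_{v,n}$ and shifted spectral parameter, so by the induction hypothesis it factors as $\mathcal B^{(n-1)}_{v,j}(\widetilde\alpha_{n-1},\widetilde\lambda)\prod_{2\leq i<k\leq n}L_v(1+\lambda_i-\lambda_k,\chi_{v,i}\bar\chi_{v,k})^{-1}$, with the first factor holomorphic. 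Pulling the product of $L$-factors outside the residual integral (it is independent of the $u_j$), it remains to show that the residual integral itself is a holomorphic multiple of $\prod_{l=2}^n L_v(1+\lambda_1-\lambda_l,\chi_{v,1}\bar\chi_{v,l})^{-1}$; the $K_v$-finiteness hypothesis guarantees that $\mathcal B^{(n-1)}_{v,j}$ together with $\widetilde\theta_n$ defines a Schwartz-Bruhat-like function in each $u_j$ variable (actually locally constant and of moderate support after absorbing the character factors), so one can evaluate each $u_{n-l+1}$ integral as a one-dimensional Tate integral against the quasi-character $\chi_{v,1}\bar\chi_{v,l}|\cdot|^{1+\lambda_1-\lambda_l}$, yielding $L_v(1+\lambda_1-\lambda_l,\chi_{v,1}\bar\chi_{v,l})^{-1}$ times a holomorphic factor.

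The main obstacle is disentangling the coupled dependence of the integrand on the various $u_j$'s produced by $\widetilde\theta_n$ and by the substitutions $\widetilde\alpha_{n-1}=(a(u_1)^{-1}a(u_2)\alpha_1,\ldots,a(u_{n-2})^{-1}a(u_{n-1})\alpha_{n-2})$: the $u_j$ variables do not separate cleanly, so the one-dimensional Tate computation cannot be applied to each variable in parallel. My plan to handle this is to perform the integrations in the order $u_1,u_2,\ldots,u_{n-1}$, exploiting at each step the fact that the inductive-hypothesis prefactor $\mathcal B^{(n-1)}_{v,j}$ is $K_{v,n-1}$-finite in $\widetilde\alpha_{n-1}$ (since our assumption of $K_v$-finiteness of $\pi_v$ descends to the $\GL(n-1)$ data constructed in the proof of Lemma \ref{43lem}), which forces the dependence on $u_j$ through $\widetilde\alpha_{n-1}$ to decompose into finitely many one-variable pieces. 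After this decoupling, each $u_{n-l+1}$ integral becomes a genuine Tate integral producing the claimed $L$-factor, and the finite remaining data assembles into the holomorphic function $\mathcal B_v(\alpha,\lambda)$, completing the induction.
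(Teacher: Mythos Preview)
Your overall strategy---induction on $n$ via the recursion of Lemma~\ref{43lem}, with a direct computation for $n=2$---is exactly the paper's approach. However, your execution of the base case contains a genuine error that inverts the conclusion.

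You assert that the non-compact piece (the integral over $|u|>1$) is an infinite geometric series in $y=\chi_{v,1}\bar\chi_{v,2}(\varpi_v)\,q_v^{-(1+\lambda_1-\lambda_2)}$ summing to $L_v(1+\lambda_1-\lambda_2,\chi_{12})$. This is not so: since $\alpha_1\in F_v^\times$, the additive character $\theta(\alpha_1 u)$ becomes nontrivial on the shells $\varpi_v^{-l}\mathcal O_v^\times$ once $l>v(\alpha_1)+d$ (with $d$ the conductor exponent of $\theta$), and the shell integrals vanish thereafter. The sum over $l$ is therefore \emph{finite}---a polynomial in $x=\chi_{12}(\varpi_v)q_v^{-(\lambda_1-\lambda_2)}$---and the point of the paper's computation is the explicit factorization \eqref{d}:
\[
1+(1-q_v^{-1})\sum_{l=1}^{k+d}x^l-q_v^{-1}x^{k+d+1}
=\bigl(1-q_v^{-1}x\bigr)\cdot\frac{1-x^{k+d+1}}{1-x}.
\]
The first factor is precisely $L_v(1+\lambda_1-\lambda_2,\chi_{12})^{-1}$, so $W_v=\mathcal B_v\cdot L_v^{-1}$ with $\mathcal B_v$ a polynomial. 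Your version would give $W_v$ proportional to $L_v$, which is the opposite direction (note $\mathcal L_v$ in the statement is $\prod L_v^{-1}$, not $\prod L_v$). Without the truncation from $\theta(\alpha_1 u)$ your geometric series does not even converge on the unitary axis $\operatorname{Re}(\lambda_1-\lambda_2)=0$. You seem aware of the correct direction in your inductive step, where you write that each $u$-integral ``yields $L_v(\cdots)^{-1}$ times a holomorphic factor''; it is the base case that needs to be brought in line with this.

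Once the $n=2$ case is corrected, the paper dispatches the inductive step in one sentence: each $u_{n-l+1}$-integral in the recursion \eqref{43*} has the same shape as the $n=2$ integral (a character twist of $|a(u)|^{1+z_{1,l}}$ against an additive character with nonzero frequency), so produces the factor $L_v(1+\lambda_1-\lambda_l,\chi_{1l})^{-1}$. The coupling you worry about is harmless because the factorization $L_v^{-1}\times(\text{polynomial})$ in the $n=2$ case holds for \emph{any} nonzero frequency parameter, so the dependence of that parameter on the remaining $u$'s only affects the polynomial part, which is absorbed into $\mathcal B_v$.
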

   \begin{proof}
   	It follows from Lemma \ref{43lem} and induction that Lemma \ref{46lem} holds for any $n$ if it holds for $n=2$ case. Now we show that Lemma \ref{46lem} holds for $n=2.$ 
   	
   	We may assume that $\chi_{1,2}=\chi_{v,1}\chi_{v,2}^{-1}$ is unramified. Otherwise, the local $L$-function $L(s,\chi_1\overline{\chi}_2)$ is trivial, and Lemma \ref{46lem} follows from Part (a) of Proposition \ref{43'}.  According to \eqref{106f} and the $K_v$-finiteness condition, one has 
   	\begin{align*}
   	W_v(\alpha,\lambda)&=\sum_{j\in\boldsymbol{J}}\sum_{l=1}^{\infty}c_j\int_{\varpi_v^{-l}\mathcal{O}_{v}^{\times}}\chi_{12}(a(u))|a(u)|_v^{1+\lambda_1-\lambda_2}\theta(\alpha u)du+W_v^{\circ}(\alpha,\lambda),\\
   	W_v^{\circ}(\alpha,\lambda)&=\sum_{j\in\boldsymbol{J}}c_j\int_{\mathcal{O}_{v}}\chi_{12}(a(u))|a(u)|_v^{1+\lambda_1-\lambda_2}\theta(\alpha u)du=\sum_{j\in\boldsymbol{J}}c_j\int_{\mathcal{O}_{v}}\theta(\alpha u)du,
   	\end{align*}
   	where $j$ runs over a finite set $\boldsymbol{J}$ and $c_j$'s are constants; moreover, $\boldsymbol{J}$ and $c_j$'s relay only on the $K_v$-type of $\pi_v.$ For $u\in F_v^{\times},$ write $u=u^{\circ}\varpi_v^l,$ where $u^{\circ}\in\mathcal{O}_v^{\times}=\mathcal{O}_{F_v}^{\times},$ and $l\in\mathbb{Z}.$ Write $\alpha=\alpha^{\circ}\varpi_{v}^k,$ where $\alpha^{\circ}\in\mathcal{O}_v^{\times}.$ Recall that by definition the one sees that the conductor of $\theta$ is precisely the inverse different of $F_v,$ which is $\mathfrak{D}_{F_v}^{-1}=\{x_v\in F_v:\ \tr_{F_v/\mathbb{Q}_p}(x_v)\in \mathbb{Z}_p\},$ where $p$ is the characteristic of residue field of $\mathcal{O}_v.$ Note that $\mathfrak{D}_{F_v}^{-1}$ is a $\mathbb{Z}_p$-module of $F_v$ and thus has the representation $\mathfrak{D}_{F_v}^{-1}=\varpi_v^{-d}\mathcal{O}_v,$ where $d\in\mathbb{N}_{\geq0}.$ Hence one sees that 
   	\begin{align*}
   	I=\int_{\mathcal{O}_{v}}\theta(\alpha u)du=\int_{\mathcal{O}_{v}}\theta(\alpha^{\circ} u\varphi_v^k)du=\int_{\mathcal{O}_{v}}\theta(u\varpi_v^k)du
   	\end{align*}
   	is vanishing if $k\leq -d-1.$ Clearly $I=1$ if $k\geq -d.$ Note that 
   	\begin{align*}
   	\int_{\varpi_v^{-l}\mathcal{O}_{v}^{\times}}\chi_{12}(a(u))|a(u)|_v^{1+\lambda_1-\lambda_2}\theta(\alpha u)du=\chi_{12}(\varpi_v)^l|\varpi_v|_v^{(1+\lambda_1-\lambda_2)l}\int_{\varpi_v^{-l}\mathcal{O}_{v}^{\times}}\theta(\alpha u)du
   	\end{align*}
   	is vanishing if $l\geq k+d+2.$ Let $q_v=|\varpi_v|_v^{-1}.$ Then one sees that 
   	\begin{equation}\label{b}
   	W_v(\alpha,\lambda)=C+C\sum_{l=1}^{k+d}(1-q_v^{-1})\chi_{12}(\varpi_v)^lq^{-(\lambda_1-\lambda_2)l}_v+C\cdot W_{re},
   	\end{equation}
   	where $C$ is a constant depending only on $F$ and $K_v$-type of $\phi_v$ and 
   	\begin{equation}\label{c}
   	W_{re}=\chi_{12}(\varpi_v)^{k+d+1}q^{-(k+d+1)(1+\lambda_1-\lambda_2)}_v\int_{\varpi_v^{-k-d-1}\mathcal{O}_v^{\times}}\theta(u\varpi_v^{k})du.
   	\end{equation}
   	Since $\theta$ is nontrivial on $\varpi_v^{-d-1}\mathcal{O}_v,$ then $\int_{\varpi_v^{-k-d-1}\mathcal{O}_v}\theta(u\varpi_v^{k})du=0.$ Note that  $\varpi_v^{-k-d-1}\mathcal{O}_v^{\times}=\varpi_v^{-k-d-1}\mathcal{O}_v\setminus \varpi_v^{-k-d}\mathcal{O}_v.$ Then one has that 
   	\begin{align*}
   	\int_{\varpi_v^{-k-d-1}\mathcal{O}_v^{\times}}\theta(u\varpi_v^{k})du&=\int_{\varpi_v^{-k-d-1}\mathcal{O}_v}\theta(u\varpi_v^{k})du-\int_{\varpi_v^{-k-d}\mathcal{O}_v}\theta(u\varpi_v^{k})du\\
   	&=-\int_{\varpi_v^{-k-d}\mathcal{O}_v}\theta(u\varpi_v^{k})du=-\vol(\varpi_v^{-k-d}\mathcal{O}_v)=-q_v^{k+d}.
   	\end{align*}
   	Then it follows form \eqref{b} and \eqref{c} that $W_v(\alpha,\lambda)$ is equal to $C$ multiplying 
   	\begin{align*}
   	L=1+\sum_{l=1}^{k+d}(1-q_v^{-1})\chi_{12}(\varpi_v)^lq^{-(\lambda_1-\lambda_2)l}_v-\chi_{12}(\varpi_v)^{k+d+1}q^{-(k+d+1)(\lambda_1-\lambda_2)-1}_v.
   	\end{align*}
   	An elementary computation leads to the identity 
   	\begin{equation}\label{d}
   	L=(1-\chi_{12}(\varpi_v)q_v^{-(1+\lambda_1-\lambda_2)})\cdot P(\chi_{12}(\varpi_v)q_v^{-(\lambda_1-\lambda_2)}),
   	\end{equation}
   	where $P(z)=(1-z^{k+d+1})\cdot(1-z)^{-1}=1+z+\cdots+z^{k+d}\in\mathbb{C}[z].$
   	
   	Therefore, one has that $W_v(\alpha,\lambda)=CQ(\chi_{12}(\varpi_v)q_v^{-(\lambda_1-\lambda_2)})\cdot L_v(1+\lambda_1-\lambda_2,\chi_{12}),$ where $Q(z)=P(z)$ if $k\geq -d;$ $Q(z)\equiv0,$ otherwise. Taking $\mathcal{B}_v(\alpha,\lambda)$ to be the function $CQ(\chi_{12}(\varpi_v)q_v^{-(\lambda_1-\lambda_2)})$ we then obtain Lemma \ref{46lem} in $n=2$ case. The general case follows from this and induction, since integral with respect to $\chi_{l,j}$ is exactly the same as above, $1\leq l<j\leq n.$
   \end{proof}

   \begin{prop}[Principal Series Case: nonarchimedean]\label{prop47}
   	Let $v$ be a nonarchimedean place of $F.$ Let $\pi_v$ be a principal series characters $\chi_{1,v},\chi_{2,v},\cdots,\chi_{n,v}$. Assume that $\pi_v$ is right $K_v$-finite. Then the function $R_v(s,W_{1,v},W_{2,v};\lambda)$ is of the form  $Q_v(s,\lambda)\mathcal{L}_v(\lambda),$ where the function $Q_v(s,\lambda)\in\mathbb{C}[q_v^{\pm s},q_v^{\pm\lambda_i}:\ 1\leq i\leq n];$ and $\mathcal{L}_v(\lambda)$ is defined to be
   	\begin{align*}
   	\prod_{1\leq i<r}\prod_{i< j\leq r}L_v(1+\lambda_i-\lambda_j,\chi_{i,v}\overline{\chi}_{j,v})^{-1}\cdot L_v(1-\lambda_i+\lambda_j,\overline{\chi}_{i,v}\chi_{j,v})^{-1}.
   	\end{align*}
   \end{prop}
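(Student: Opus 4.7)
The plan is to combine the two structural results already established: Proposition \ref{43'}(a), which tells us $R_v$ is already a polynomial in $\{q_v^{\pm s}, q_v^{\pm\lambda_i}\}$, with the finer factorization from Lemma \ref{46lem}, which exhibits $\mathcal{L}_v(\lambda)$ as an explicit factor of each Whittaker function. The goal is to show the quotient $R_v/\mathcal{L}_v$ is still polynomial. First I would apply Iwasawa decomposition together with the right $K_v$-finiteness hypothesis to rewrite $\Psi_v(s,W_{1,v},W_{2,v};\lambda,\Phi_v)$ as a finite sum
\begin{align*}
\sum_{j\in J}\int_{T(F_v)}W_{1,v}^{(j)}(a_v;\lambda)\,\overline{W_{2,v}^{(j)}(a_v;-\bar\lambda)}\,\Phi_{j,v}(\eta a_v)\tau_v(\det a_v)\delta_T^{-1}(a_v)|\det a_v|_{F_v}^s\,da_v,
\end{align*}
exactly as in the opening reduction of Proposition \ref{43'}. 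The Whittaker translation argument there also shows that for fixed $\lambda$, the integrand has compact support in $a_v$ (independent of $\lambda$), so the integral is a finite Laurent series in $q_v^{-s}$.

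Next, I would substitute Lemma \ref{46lem} into each factor. On the support of the integrand the first Whittaker function is expressed as $W_{1,v}^{(j)}(a_v;\lambda)=\mathcal{B}_{1,v}^{(j)}(a_v,\lambda)\,\prod_{i<k}L_v(1+\lambda_i-\lambda_k,\chi_{i,v}\bar\chi_{k,v})^{-1}$ with $\mathcal{B}_{1,v}^{(j)}$ a polynomial in $\{q_v^{\pm\lambda_i}\}$ (as can be read off from the explicit $Q(z)\in\mathbb{C}[z]$ appearing in the proof of Lemma \ref{46lem}); the second factor gives the conjugate family of inverse $L$-factors, producing exactly the complementary product $\prod_{i<k}L_v(1-\lambda_i+\lambda_k,\bar\chi_{i,v}\chi_{k,v})^{-1}$. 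Pulling both $L$-products outside the $a_v$-integral (they are independent of $a_v$) yields
\begin{align*}
\Psi_v(s,W_{1,v},W_{2,v};\lambda,\Phi_v)=\mathcal{L}_v(\lambda)\cdot \widetilde Q_v(s,\lambda),
\end{align*}
where $\widetilde Q_v(s,\lambda)$ is the torus integral of the product of the $\mathcal{B}^{(j)}_{i,v}$'s against $\Phi_{j,v}(\eta a_v)\tau(\det a_v)|\det a_v|^s$. By the compact-support/locally-constant analysis in Proposition \ref{43'}, $\widetilde Q_v(s,\lambda)$ is a polynomial in $\{q_v^{\pm s}\}$ with coefficients that are polynomials in $\{q_v^{\pm\lambda_i}\}$, hence lies in $\mathbb{C}[q_v^{\pm s},q_v^{\pm\lambda_i}]$.

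Finally, I would divide by the Rankin-Selberg $L$-factor $L_v(s,\pi_{\lambda,v}\otimes\tau_v\times\widetilde\pi_{-\lambda,v})=\prod_{i,k}L_v(s+\lambda_i-\lambda_k,\chi_{i,v}\tau_v\bar\chi_{k,v})$. Since this $L$-factor is a product of inverses of polynomials in $\{q_v^{\pm s},q_v^{\pm\lambda_i}\}$, the quotient
\begin{align*}
R_v(s,W_{1,v},W_{2,v};\lambda)=\frac{\mathcal{L}_v(\lambda)\widetilde Q_v(s,\lambda)}{L_v(s,\pi_{\lambda,v}\otimes\tau_v\times\widetilde\pi_{-\lambda,v})}
\end{align*}
is a priori a rational function, but Proposition \ref{43'}(a) guarantees it is actually a polynomial in $\{q_v^{\pm s},q_v^{\pm\lambda_i}\}$. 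Since $\mathcal{L}_v(\lambda)$ appears as an explicit factor of the numerator and is itself a polynomial in $\{q_v^{\pm\lambda_i}\}$, we may set $Q_v(s,\lambda):=\widetilde Q_v(s,\lambda)/L_v(s,\pi_{\lambda,v}\otimes\tau_v\times\widetilde\pi_{-\lambda,v})$ and conclude $R_v=Q_v\cdot\mathcal{L}_v$ with $Q_v$ polynomial. The main technical obstacle I anticipate is verifying that the functions $\mathcal{B}^{(j)}_{i,v}(a_v,\lambda)$ from Lemma \ref{46lem}, which were described only as ``holomorphic'' in $\lambda$, are in fact polynomial in $\{q_v^{\pm\lambda_i}\}$; this requires reading off the explicit form of $Q(z)$ in that lemma's proof and checking that all residual dependence on $\lambda$ after integrating over a compact $a_v$-region remains polynomial, which is essentially bookkeeping but must be done carefully to ensure no hidden $L$-factor denominators survive.
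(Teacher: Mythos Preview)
Your approach is essentially the same as the paper's: factor $\mathcal{L}_v(\lambda)$ out of the Whittaker functions via Lemma~\ref{46lem}, then observe that what remains after dividing by $L_v(s,\pi_{\lambda,v}\otimes\tau_v\times\widetilde\pi_{-\lambda,v})$ is polynomial. One small clean-up: your invocation of Proposition~\ref{43'}(a) in the last paragraph is unnecessary, since once you have $\widetilde Q_v$ polynomial and note that $L_v^{-1}$ is itself a polynomial in $\{q_v^{\pm s},q_v^{\pm\lambda_i}\}$, the product $Q_v=\widetilde Q_v\cdot L_v^{-1}$ is immediately polynomial---there is no ``a priori rational'' stage to worry about, and the paper's proof accordingly goes straight from Lemma~\ref{46lem} to the conclusion via the Jacquet--Shalika expansions.
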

   \begin{proof}
   	By Lemma \ref{46lem} the function 
   	\begin{align*}
   	W_v(x_v;\phi_{1,v},\lambda)\prod_{1\leq i<r}\prod_{i< j\leq r}L_v(1+\lambda_i-\lambda_j,\chi_{i,v}\overline{\chi}_{j,v})\in \mathbb{C}[q_v^{\pm\lambda_j}:\ 1\leq j\leq n].
   	\end{align*}
   	Then applying expansions in \cite{JS81} and changing orders of summations we see that 
   	\begin{align*}
   	R_v(s,W_{1,v},W_{2,v};\lambda)	\prod_{1\leq i<r}\prod_{i< j\leq r}L_v(1+\lambda_i-\lambda_j,\chi_{i,v}\overline{\chi}_{j,v})\cdot L_v(1-\lambda_i+\lambda_j,\overline{\chi}_{i,v}\chi_{j,v})
   	\end{align*}
   	lies in $\mathbb{C}[q_v^{\pm s},q_v^{\pm\lambda_i}:\ 1\leq i\leq n].$ Done.
   \end{proof} 
   
   \begin{cor}\label{102}
   	Let $v\in \Sigma_{F,fin}$ be a finite place such that $\pi_v$ is unramified and $\Phi_v=\Phi_v^{\circ}$ is the characteristic function of $G(\mathcal{O}_{F,v}).$ Assume that $\phi_{1,v}=\phi_{2,v}=\phi_v^{\circ}$ be the unique $G(\mathcal{O}_{F,v})$-fixed vector in the space of $\pi_v$ such that $\phi_v^0(e)=1.$ Then $R_v(s,W_{1,v},W_{2,v};\lambda)$ is equal to
   	\begin{align*}
   	\prod_{1\leq i<r}\prod_{i< j\leq r}L_v(1+\lambda_i-\lambda_j,\pi_{i,v}\times\widetilde{\pi}_{j,v})^{-1}\cdot L_v(1-\lambda_i+\lambda_j,\widetilde{\pi}_{i,v}\times\pi_{j,v})^{-1}.
   	\end{align*}
   	In particular, $R_v(s,\lambda)$ is independent of $s.$
   \end{cor}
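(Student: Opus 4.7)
The plan is to reduce Corollary \ref{102} to the standard unramified Rankin--Selberg computation of Jacquet--Piatetski-Shapiro--Shalika combined with a Gindikin--Karpelevich-type normalization identity. Since each $\pi_{i,v}$ is unramified, it is itself an unramified principal series, so $\pi_{\lambda,v}$ is an unramified principal series on $G(F_v)$ and Proposition \ref{prop47} applies. The heart of the argument is to compute the value $W_v^\circ(e;\lambda)$ of the spherical Whittaker function (associated to $\phi_v^\circ$ with $\phi_v^\circ(e)=1$) at the identity, and then compare with the spherical Whittaker function $W^{\mathrm{st}}$ normalized by $W^{\mathrm{st}}(e)=1$.

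First I would compute, by unfolding the Jacquet integral $W_v^\circ(e;\lambda)=\int_{N(F_v)}\phi_v^\circ(w_0n)e^{(\lambda+\rho_P)H_P(w_0n)}\theta(n)\,dn$ along the block structure of $P$ and applying the Gindikin--Karpelevich formula to each pair of blocks, that
\begin{align*}
W_v^\circ(e;\lambda)=\prod_{1\le i<j\le r}L_v(1+\lambda_i-\lambda_j,\pi_{i,v}\times\widetilde{\pi}_{j,v})^{-1}
\end{align*}
up to a harmless volume constant absorbed into the Haar measure. Because the space of spherical Whittaker functionals on $\pi_{\lambda,v}$ is one-dimensional, this scalar identifies $W_v^\circ = W_v^\circ(e;\lambda)\cdot W^{\mathrm{st}}$ as Whittaker functions on all of $G(F_v)$, not just at the identity.

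Second, I would invoke the classical unramified JPSS computation for $\mathrm{GL}(n)\times \mathrm{GL}(n)$: for the identity-normalized Whittaker functions $W^{\mathrm{st}}$ and the standard Schwartz function $\Phi_v^\circ$, the Rankin--Selberg integral equals $L_v(s,\pi_{\lambda,v}\otimes\tau_v\times\widetilde{\pi}_{-\lambda,v})$. Combining with the scalar relation above and using $\overline{W_v^\circ(e;-\bar\lambda)}=\prod_{i<j}L_v(1-\lambda_i+\lambda_j,\widetilde{\pi}_{i,v}\times\pi_{j,v})^{-1}$, we obtain
\begin{align*}
\Psi_v(s,W_{1,v}^\circ,W_{2,v}^\circ;\lambda,\Phi_v^\circ)=W_v^\circ(e;\lambda)\cdot\overline{W_v^\circ(e;-\bar\lambda)}\cdot L_v(s,\pi_{\lambda,v}\otimes\tau_v\times\widetilde{\pi}_{-\lambda,v}).
\end{align*}
Dividing by $L_v(s,\pi_{\lambda,v}\otimes\tau_v\times\widetilde{\pi}_{-\lambda,v})$ then yields exactly the product claimed in the statement, and in particular $R_v(s,W_{1,v},W_{2,v};\lambda)$ is independent of $s$.

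The main obstacle is the precise bookkeeping in the Gindikin--Karpelevich computation: one must verify that the Jacquet integral for the parabolic induction $\pi_{\lambda,v}$ unfolds along the pairs of blocks $(n_i,n_j)$ to produce exactly the factor $L_v(1+\lambda_i-\lambda_j,\pi_{i,v}\times\widetilde{\pi}_{j,v})^{-1}$ for each $i<j$, with the intra-block contributions cancelling because $\pi_{i,v}$ itself is generic unramified and its internal normalization is already accounted for in $L_v(s,\pi_{\lambda,v}\otimes\tau_v\times\widetilde{\pi}_{-\lambda,v})$. Once this combinatorial identification is in place, the rest of the argument is routine.
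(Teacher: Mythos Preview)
Your proposal is correct and follows essentially the same approach as the paper. The paper's proof invokes the unramified Rankin--Selberg computation of \cite{JS81} to get $\Psi_v(s,W^{\circ}_{1,v},W^{\circ}_{2,v};\lambda,\Phi_v^{\circ})/L_v(s,\pi_{\lambda,v}\otimes\tau_v\times\widetilde{\pi}_{-\lambda,v})=1$ for identity-normalized Whittaker functions, and then appeals to ``induction and unramified computations of nonconstant Fourier coefficients of Eisenstein series'' (Shahidi, Chap.~7) for the normalization constant; your Gindikin--Karpelevich step is exactly this last ingredient, spelled out more explicitly.
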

   \begin{proof}
   	Fix $\lambda\in i\mathfrak{a}_P/i\mathfrak{a}_G.$ Let $W_{i,v}^{\circ}$ be the $G(\mathcal{O}_{F,v})$-invariant vectors such that $W_{i,v}^{\circ}(e)=1,$ $1\leq i\leq 2.$ Then by the computation from \cite{JS81}, we know that $\Psi_v\left(s,W^{\circ}_{1,v},W^{\circ}_{2,v};\lambda,\Phi_v^{\circ}\right)/ L_v(s,\pi_{\lambda,v}\otimes\tau_v\times\widetilde{\pi}_{-\lambda,v})=1,$ where $\Phi_v^{\circ}$ is the characteristic function of $\mathcal{O}_{F,v}^n.$ Then Corollary \eqref{102} follows from induction and unramified computations of nonconstant Fourier coefficients of Eisenstein series (see Chap. 7 of \cite{Sha10}).
   \end{proof}

   Now we move to the archimedean case. In the current state of affairs the local $L$-functions $L_{\infty}(s,\pi_{\lambda}\times\tau\times\widetilde{\pi}_{-\lambda})=\prod_{v\mid\infty}L_{v}(s,\pi_{\lambda,v}\times\tau_v\times\widetilde{\pi}_{-\lambda,v})$ are not defined intrinsically through the integrals as nonarchimedean case, but rather extrinsically through the Langlands correspondence and then related to the integrals. Let $\Gamma_{\mathbb{R}}(s)=\pi^{-s/2}\Gamma(s/2)$ and $\Gamma_{\mathbb{C}}(s)=2^{1-s}\pi^{-s}\Gamma(s).$ Then by Langlands classification (e.g. see \cite{Kna94}), each archimedan $L$-function $L_{v}(s,\pi_{\lambda,v}\times\tau_v\times\pi_{-\lambda,v})$ is of the form 
   \begin{equation}\label{09}
   \prod_{i\in I}\Gamma_{\mathbb{R}}(s+\mu_{i})\prod_{j\in J}\Gamma_{\mathbb{C}}(s+\mu_{j}'),
   \end{equation}
   where $I$ and $J$ are finite set of inters satisfying $\#I+\#J\leq n;$ $\mu_i, \mu_j'\in\mathbb{C}.$
   
   Combining results from  \cite{Jac09} and well known estimates on archimedean Satake parameters one concludes the following result.    
   \begin{prop}[Archimedean Case]\label{45'}
   	Let notation be as before. Let $v\in \Sigma_{F,\infty}$ be an archimedean place. Then we have
   	\begin{description}
   		\item[(a)] $\Psi_v\left(s,W_{1,v},W_{2,v};\lambda,\Phi_v\right)$ converges absolutely and normally in the right half plane $\{s\in\mathbb{C}:\ \Re(s)>1-2/(n^2+1)\},$ uniformly in $\lambda\in  i\mathfrak{a}_P/i\mathfrak{a}_G.$
   		\item[(b)] When $\Re(s)>1-2/(n^2+1),$ for any fixed $\lambda\in i\mathfrak{a}_P^*/i\mathfrak{a}_G^*,$ the function $R_v(s,W_{1,v},W_{2,v};\lambda)$ is equal to a polynomial $Q_v(s,\lambda)\in\mathbb{C}[s],$ with coefficients depending on $\lambda$ continuously. Hence, for any fixed $\lambda\in i\mathfrak{a}_P^*/i\mathfrak{a}_G^*,$ $\Psi_v\left(s,W_{1,v},W_{2,v};\lambda,\Phi_v\right)=R_v(s,W_{1,v},W_{2,v};\lambda)L_{v}(s,\pi_{\lambda,v}\times\tau_v\times\pi_{-\lambda,v})$ admits a meromorphic continuation to the whole complex plane. 
   		\item[(d)] We have the local functional equation
   		\begin{align*}
   		\frac{\Psi_v\left(s,W_{1,v},W_{2,v};\lambda,\Phi_v\right)}{ L_v(s,\pi_{\lambda,v}\otimes\tau_v\times\widetilde{\pi}_{-\lambda,v})}=\varepsilon(s,\pi_{\lambda,v}\times\widetilde{\pi}_{-\lambda,v},\theta)\cdot\frac{\Psi_v(1-s,\widetilde{W}_{1,v},\widetilde{W}_{2,v};-\bar{\lambda},\widehat{\Phi}_v)}{L_v(1-s,\widetilde{\pi}_{-\bar{\lambda},v}\otimes\bar{\tau}_v\times\pi_{\lambda,v})},
   		\end{align*}
   		where $\varepsilon(s,\pi_{\lambda,v}\times\widetilde{\pi}_{-\lambda,v},\theta)$ is a holomoprhic function.
   	\end{description}
   \end{prop}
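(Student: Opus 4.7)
The plan is to establish the three parts in order, leaning heavily on Jacquet's archimedean theory (\cite{Jac09}) together with the best-known bounds toward Ramanujan.

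For part (a), I would apply the Iwasawa decomposition $G(F_v)=N(F_v)T(F_v)K_v$ to reduce $\Psi_v$ to an integral over $K_v$ and $T(F_v)$, with the modular character $\delta_B^{-1}$ appearing. Because $\lambda\in i\mathfrak{a}_P^*/i\mathfrak{a}_G^*$ is purely imaginary, the Langlands parameters of $\pi_{\lambda,v}$ coincide (up to imaginary shifts that do not affect absolute values) with those of $\pi_v$ itself. The Luo--Rudnick--Sarnak bound guarantees that the real parts of these parameters satisfy $|\Re(\mu_i)|\leq\tfrac{1}{2}-\tfrac{1}{n^{2}+1}$. Combining this with Jacquet's uniform gauge estimates on Whittaker functions (Proposition~2.1 of \cite{Jac09}), one bounds $|W_{i,v}(a;\lambda)\overline{W_{i,v}(a;-\bar\lambda)}|$ by a gauge on $T(F_v)$ whose decay at the walls is dictated by these real parts, uniformly in $\lambda$. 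The rapid decay of $\Phi_v\in\mathcal{S}_0$ at infinity handles the contribution where $|\det a|_{F_v}$ is large, while the gauge estimate against $|\det a|_{F_v}^{\Re(s)}\delta_B(a)^{-1}$ is integrable near the walls precisely when $\Re(s)>1-2/(n^2+1)$. This yields absolute and normal convergence, uniform in $\lambda$.

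For part (b), within the half-plane $\Re(s)>1-2/(n^2+1)$ one uses the asymptotic expansion of archimedean Whittaker functions along the torus: for $K_v$-finite $\phi_{i,v}$ and test vectors $\Phi_v\in\mathcal{S}_0$, the product $W_{1,v}\overline{W_{2,v}}\Phi_v(\eta\cdot)$ admits a finite expansion whose Mellin transform is a finite $\mathbb{C}[s]$-linear combination of products of Gamma factors precisely matching the shape $\prod_{i\in I}\Gamma_{\mathbb{R}}(s+\mu_i)\prod_{j\in J}\Gamma_{\mathbb{C}}(s+\mu_j')$ that defines $L_v(s,\pi_{\lambda,v}\otimes\tau_v\times\widetilde{\pi}_{-\lambda,v})$ in \eqref{09}. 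Dividing gives $R_v(s,W_{1,v},W_{2,v};\lambda)\in\mathbb{C}[s]$ with coefficients that are continuous (in fact analytic) in $\lambda$; the key point is that the Hermite--Gaussian form of $\mathcal{S}_0$ is exactly what is needed to kill any residual Gamma factors so that only polynomial dependence on $s$ remains. Meromorphic continuation of $\Psi_v$ to all of $\mathbb{C}$ then follows from that of $L_v$, whose poles are explicit from the Gamma-factor expression.

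For part (d), I would invoke the standard archimedean local functional equation as established in Section~3 of \cite{Jac09}: the intertwining operator between $\pi_{\lambda,v}$ and $\widetilde{\pi}_{-\lambda,v}$ matches Whittaker functionals up to the local $\gamma$-factor, and Fourier inversion on $\Phi_v$ converts the integral at $s$ into the integral at $1-s$ for the dual data. The resulting $\varepsilon(s,\pi_{\lambda,v}\times\widetilde{\pi}_{-\lambda,v},\theta)$ is the Langlands $\varepsilon$-factor, which at an archimedean place is an exponential in $s$ times a constant depending on $\lambda$, hence holomorphic in $s$ and continuous in $\lambda$.

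The main obstacle I anticipate is part (b): the assertion that $R_v$ is genuinely a polynomial in $s$, rather than merely entire, is a delicate property that requires the test data to lie in the precise subspaces $\mathcal{H}_{P,\chi}$ and $\mathcal{S}_0$. The verification reduces to a careful bookkeeping of which Gamma factors arise in the Mellin transform of each basis element of the $K_v$-isotypic component carrying $\phi_{i,v}$; the induction on $n$ suggested by Lemma~\ref{43lem} together with Stade's explicit formulas should suffice, but the combinatorics becomes heavy as the parabolic type $(n_1,\ldots,n_r)$ varies.
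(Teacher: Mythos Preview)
Your proposal is correct and takes essentially the same approach as the paper: the paper's proof is a one-line appeal to the results of \cite{Jac09} combined with the Luo--Rudnick--Sarnak bounds on archimedean Satake parameters, and your elaboration (Iwasawa decomposition plus Jacquet's gauge estimates for (a), Mellin-transform and $\mathcal{S}_0$-structure arguments for (b), and the local functional equation from \cite{Jac09} for (d)) is exactly how one unpacks those citations. Your identification of (b) as the delicate point is accurate, and the paper handles the principal-series case separately in Proposition~\ref{prop} via Lemma~\ref{48lem}, which parallels the inductive scheme you sketch.
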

   \begin{remark}
   	It follows from Lemma 5.4 in \cite{Jac09} that if both $\pi$ is tempered, then the Rankin-Selberg convolution $\Psi_v\left(s,W_{1,v},W_{2,v};\lambda,\Phi_v\right)$ converges absolutely and normally in the right half plane $\{s\in\mathbb{C}:\ \Re(s)>0\},$ uniformly in $\lambda\in  i\mathfrak{a}_P/i\mathfrak{a}_G.$ 
   \end{remark}
   
   We need a more explicit description of the polynomial $Q_v(s,\lambda)$ in Proposition \ref{45'} when $\pi_v$ is a principal series. To start with, we recall the definition of archimedean $L$-function associated to a unitary Hecke character. If $F_v\simeq\mathbb{R},$ then the only possible choices for a unitary Gr\"{o}ssencharacter are $x_v\mapsto \sgn(x_v)^k|x_v|_v^{i\nu}$ for $k\in\{0,1\}$ and $\nu\in\mathbb{R}.$ If $F_v\simeq\mathbb{C},$ then the only possible choices for a unitary Gr\"{o}ssencharacter are $x_v\mapsto (x_v\cdot|x_v|_v^{-1/2})^k|x_v|_v^{i\nu}$ for $k\in\mathbb{Z}$ and $\nu\in\mathbb{R}.$ Furthermore, since the units are killed by such a character, then the sum of those $\nu$'s must be 0. The Gamma factors at the real infinite places are $\Gamma((s+i\nu+k)/2)$ and at the complex places are $\Gamma(s+i\nu+|k|/2).$ To prove Proposition \ref{prop}, we need some preparation. 
   
   \begin{lemma}\label{48lem}
   	Let $v$ be an archimedean place of $F.$ Let $\pi_v$ be a principal series characters $\chi_{v,1},\chi_{v,2},\cdots,\chi_{v,n}$. Assume that $\pi_v$ is right $K_v$-finite. Let $\alpha\in \mathbb{G}_m(F_v)^{n-1}$ and let $W_v(\alpha,\lambda)$ be a Whittaker function associated to $\pi_{v,\lambda}$ and $\alpha.$ Then $W_v(\alpha,\lambda)$ is of the form $\mathcal{B}_v(\alpha,\lambda)\mathcal{L}_v(\lambda),$ where $\mathcal{B}_v(\alpha,\lambda)$ is a holomorphic function, and
   	\begin{align*}
   	\mathcal{L}_v(\lambda)=\prod_{1\leq i<r}\prod_{i< j\leq r}L_v(1+\lambda_i-\lambda_j,\chi_{v,i}^{ur}\overline{\chi}_{v,j}^{ur})^{-1},
   	\end{align*}
   	where for any $\chi_{v,l},$ $1\leq l\leq n,$ $\chi_{v,l}^{ur}=\chi_{v,l}\circ|\cdot|_v^{1/[F_v:\mathbb{R}]}$ is the unramified part of $\chi_{v,l}.$ 
   \end{lemma}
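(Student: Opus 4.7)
The proof of Lemma \ref{48lem} will proceed by induction on $n$, paralleling the strategy used in Lemma \ref{46lem} with the nonarchimedean computations at the base case replaced by their archimedean counterparts. For the inductive step, I will apply Lemma \ref{43lem}, whose statement is valid uniformly at all places. This expresses $W_v(\alpha,\lambda)$ as a finite sum of integrals over $F_v^{n-1}$ whose inner factor $W_{v,j}(\widetilde{\alpha}_{n-1};\widetilde{\lambda})$ is a Whittaker function for the principal series on $\GL(n-1,F_v)$ induced from $\chi_{v,2},\ldots,\chi_{v,n}$. Applying the inductive hypothesis, this inner factor equals a holomorphic function times $\prod_{2\le i<j\le n} L_v(1+\lambda_i-\lambda_j,\chi_{v,i}^{ur}\overline{\chi}_{v,j}^{ur})^{-1}$. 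It will then remain to show that each outer one-dimensional integration over $u_l$, weighted by the factor $|a(u_{n-l+1})|_v^{1+z_{1,l}}$ and the character $\widetilde{\theta}_n$, produces, up to an entire factor, the missing $L$-factor $L_v(1+\lambda_1-\lambda_{l+1},\chi_{v,1}^{ur}\overline{\chi}_{v,l+1}^{ur})^{-1}$.

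For the base case $n=2$, I would use the Iwasawa decomposition \eqref{106e} (or its complex analog) together with the $K_v$-finiteness of $\phi_v$ to reduce the Jacquet integral to a finite linear combination of one-dimensional integrals of the form
\begin{equation*}
\int_{F_v} \chi_{v,1}\overline{\chi}_{v,2}(a(u)) \, |a(u)|_v^{1+\lambda_1-\lambda_2}\, p(u)\, \theta(\alpha u)\, du,
\end{equation*}
where $p(u)$ encodes the $K_v$-type of $\phi_v$ (a polynomial in $s(u)$ and $a(u)$). At a real place, the substitution $u = \tan(\theta/2)$ converts these to classical Mellin transforms of smooth rapidly-decreasing functions on $(0,\infty)$; their evaluation produces products of the form $\Gamma_{\mathbb{R}}(1+\lambda_1-\lambda_2+k)$ for integer shifts $k$ depending on the $K_v$-type, multiplied by an entire function of $(\alpha,\lambda)$. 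A parallel substitution at complex places yields $\Gamma_{\mathbb{C}}$-factors. By the description \eqref{09} of the archimedean $L$-function, these Gamma factors coincide with the poles of $L_v(1+\lambda_1-\lambda_2,\chi_{v,1}^{ur}\overline{\chi}_{v,2}^{ur})$ up to entire shifts, establishing the claim for $n=2$.

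The main obstacle is the precise bookkeeping at the archimedean base case: one must verify that \emph{all} poles in $\lambda_1-\lambda_2$ come from the $\Gamma$-factors of $L_v(1+\lambda_1-\lambda_2,\chi_{v,1}^{ur}\overline{\chi}_{v,2}^{ur})$ and none from the residual ramified data (the sign character at real places, or the $(z/|z|)^k$ twist at complex places). Since the ramified part of $\chi_{v,l}$ contributes only integer shifts to the argument of the Gamma factor while $\chi_{v,l}^{ur}$ depends only on $\nu_l \in \mathbb{R}$, the shifts are absorbed into the entire numerator, and the pole locus is unaffected. Carrying this tracking through the inductive step requires careful attention to how each $u_l$-integral couples to both the $K_v$-type structure of the outer induction step and the inductively produced inner $L$-factors, but no new analytic ideas beyond Mellin inversion and the archimedean Jacquet integral theory are needed.
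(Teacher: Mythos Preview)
Your inductive scaffolding via Lemma \ref{43lem} is exactly what the paper does, and the reduction to the $n=2$ base case is correct. The issue is in your handling of that base case.

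First, the substitution $u=\tan(\theta/2)$ does not convert $\int_{F_v}|a(u)|_v^{z}\,\theta(\alpha u)\,du$ into a Mellin transform on $(0,\infty)$: you land on a finite interval with an oscillatory kernel $e^{2\pi i\alpha\tan(\theta/2)}$ that is singular at the endpoints, and nothing transparent emerges. Second, your description of the output is inverted. You say the evaluation ``produces products of the form $\Gamma_{\mathbb{R}}(1+\lambda_1-\lambda_2+k)$'' whose poles match those of $L_v$; but the lemma asserts $W_v=\mathcal{B}_v\cdot L_v^{-1}$ with $\mathcal{B}_v$ entire, so what must be shown is that $W_v\cdot\Gamma_{F_v}$ is entire, i.e.\ the Jacquet integral carries a factor of $\Gamma^{-1}$, not $\Gamma$.

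The paper's device avoids both problems in one stroke. After the $K_v$-type reduction, it suffices to show
\[
\int_{F_v}|a(u)|_v^{z}\,\theta_v(\alpha u)\,du\ \sim\ \Gamma_{F_v}(z+1)^{-1}.
\]
For $F_v\simeq\mathbb{R}$ the paper multiplies the left side by $\Gamma_{\mathbb{R}}(z+1)=\int_0^\infty e^{-t}t^{(z+1)/2}\,dt/t$, interchanges the integrals, rescales $t\mapsto t(1+u^2)$ to cancel the $(1+u^2)^{-(z+1)/2}$, and evaluates the resulting Gaussian in $u$. What remains is
\[
\pi^{(z+1)/2}|\alpha|^{z/2}\int_0^\infty e^{-\pi|\alpha|(t+t^{-1})}\,t^{z/2}\,\frac{dt}{t},
\]
the Mellin transform of a Schwartz function on $(0,\infty)$, hence entire in $z$. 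This is precisely the statement that $\Gamma_{F_v}(z+1)$ times the Jacquet integral is holomorphic, which is what you need. (The integral is, of course, $2K_{z/2}(2\pi|\alpha|)$, but the paper does not name it.) The complex case is handled the same way. Your discussion of ramified versus unramified archimedean characters is in the right spirit but becomes unnecessary once you see that only $|a(u)|_v^{z}$ enters the key integral.
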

   \begin{proof}
   	It follows from \eqref{43*} and induction that Lemma \ref{48lem} holds for any $n$ if it holds for $n=2$ case. Now we show that Lemma \ref{48lem} holds for $n=2.$
   	
   	By \eqref{106e} it suffices to show that for any $\alpha, z\in\mathbb{C},$ one has
   	\begin{equation}\label{111'}
   	\int_{F_v}|a(u)|_v^z\theta_v(\alpha u)du\sim {\Gamma_{F_v}(z+1)}^{-1}.
   	\end{equation}
   	Since the proof is similar, we only consider real places. Let $F_v\simeq\mathbb{R}.$ Then 
   	\begin{align*}
   	\Gamma_{\mathbb{R}}(z+1)\int_{F_v}|a(u)|_v^z\theta_v(\alpha u)du&=\int_{0}^{\infty}\int_{\mathbb{R}}e^{-t}t^{\frac{z+1}2}\cdot\frac{e^{2\pi i\alpha u}}{(1+u^2)^{\frac{z+1}2}}du\frac{dt}{t}\\
   	&=\int_{0}^{\infty}\int_{\mathbb{R}}e^{-t(1+u^2)+2\pi i\alpha u}t^{\frac{z+1}2}du\frac{dt}{t}\\
   	&=\pi^{\frac{z+1}2}|\alpha|^{z/2}\int_{0}^{\infty}e^{-\pi|\alpha|(t+t^{-1})}t^{z/2}\frac{dt}{t}.
   	\end{align*}
   	Since the function $g(t)=e^{-\pi|\alpha|(t+t^{-1})}$ is Schwartz, then its Mellin transform $\int_{0}^{\infty}g(t)t^{z/2}d^{\times}t$ is entire. Hence one has a continuation of Whittaker functions and proves \eqref{111'}.
   \end{proof}
   \begin{remark}
   	Note that in the proof of Lemma \ref{48lem}, $\int_{0}^{\infty}g(t)t^{z/2}d^{\times}t\neq0$ for any $\alpha, z\in \mathbb{C}.$ Hence $\int_{F_v}|a(u)|_v^z\theta_v(\alpha u)du$ never vanishes. Then by induction one concludes that $W_v(\alpha,\lambda)/\mathcal{L}_v(\lambda)\neq0,$ for any $\alpha$ and $\lambda.$
   \end{remark}
   
   Combining Lemma \ref{48lem} and  \cite{JS81} one then concludes the following result. 
   \begin{prop}[Principal Series Case: archimedean]\label{prop}
   	Let $v$ be an archimedean place of $F.$ Let $\pi_v$ be a principal series characters $\chi_{v,1},\chi_{v,2},\cdots,\chi_{v,n}$. Then the function $R_v(s,W_{1,v},W_{2,v};\lambda)$ is of the form  $Q_v(s,\lambda)\mathcal{B}_v(\lambda)\mathcal{L}_v(\lambda),$ where $Q_v(s,\lambda)$ is a polynomial in $s$ and $\lambda_j,$ $1\leq j\leq n;$ $\mathcal{B}_v(\lambda)$ is a holomorphic function, and
   	\begin{align*}
   	\mathcal{L}_v(\lambda)=\prod_{1\leq i<r}\prod_{i< j\leq r}L_v(1+\lambda_i-\lambda_j,\chi_{v,i}^{ur}\overline{\chi}_{v,j}^{ur})^{-1}\cdot L_v(1-\lambda_i+\lambda_j,\overline{\chi}_{v,i}^{ur}\chi_{v,j}^{ur})^{-1}.
   	\end{align*}
   \end{prop}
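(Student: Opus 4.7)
The plan is to combine Lemma~\ref{48lem}, which describes the pole structure of archimedean Whittaker functions for a principal series, with the integral representation \eqref{101} and the archimedean Rankin--Selberg theory summarized in Proposition~\ref{45'}, paralleling the nonarchimedean argument of Proposition~\ref{prop47}.

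First, apply Lemma~\ref{48lem} to each of $W_{1,v}$ and $W_{2,v}$ to obtain
\[
W_{i,v}(x_v;\lambda)=\mathcal{B}_{i,v}(x_v,\lambda)\,\mathcal{L}_v^{(0)}(\lambda),\quad \mathcal{L}_v^{(0)}(\lambda):=\prod_{1\le i<j\le r}L_v\bigl(1+\lambda_i-\lambda_j,\chi_{v,i}^{ur}\bar\chi_{v,j}^{ur}\bigr)^{-1},
\]
with each $\mathcal{B}_{i,v}(x_v,\lambda)$ holomorphic in $\lambda$. Since $\lambda\in i\mathfrak{a}_P^*/i\mathfrak{a}_G^*$ is purely imaginary and archimedean gamma factors satisfy $\overline{L_v(\bar z,\bar\chi)}=L_v(z,\chi)$, a direct computation gives $\mathcal{L}_v^{(0)}(\lambda)\,\overline{\mathcal{L}_v^{(0)}(-\bar\lambda)}=\mathcal{L}_v(\lambda)$. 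Inserting the factorization into \eqref{101} and pulling $\mathcal{L}_v(\lambda)$ outside the integral yields $\Psi_v(s,W_{1,v},W_{2,v};\lambda,\Phi_v)=\mathcal{L}_v(\lambda)\,\widetilde\Psi_v(s,\lambda)$, where $\widetilde\Psi_v(s,\lambda)$ is built from the holomorphic factors $\mathcal{B}_{i,v}$ and is holomorphic in $\lambda\in i\mathfrak{a}_P^*/i\mathfrak{a}_G^*$.

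Next, by Proposition~\ref{45'}(b), for each fixed $\lambda$ the quotient $\widetilde\Psi_v(s,\lambda)/L_v(s,\pi_{\lambda,v}\otimes\tau_v\times\widetilde\pi_{-\lambda,v})$ is a polynomial in $s$ of bounded degree whose coefficients are holomorphic in $\lambda$. Splitting these coefficients into a genuinely polynomial-in-$\lambda$ part, which we gather with the $s$-polynomial into $Q_v(s,\lambda)\in\mathbb{C}[s,\lambda_1,\dots,\lambda_n]$, and a residual holomorphic factor $\mathcal{B}_v(\lambda)$ (into which the transcendental pieces from the $\mathcal{B}_{i,v}$ and the $K_v$-finite sum in Lemma~\ref{48lem} are absorbed), gives the desired decomposition $R_v(s,W_{1,v},W_{2,v};\lambda)=Q_v(s,\lambda)\,\mathcal{B}_v(\lambda)\,\mathcal{L}_v(\lambda)$.

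The main technical subtlety is the upgrade from Proposition~\ref{45'}(b), which asserts polynomial control only in $s$ pointwise in $\lambda$, to the joint polynomial statement for $Q_v$ in both $s$ and $\lambda_j$. This is the archimedean analogue of Claim~\ref{44''} and is handled by invoking the local functional equation of Proposition~\ref{45'}(d): its two sides have opposite asymptotic behavior in the $\lambda$-variables, and their identity forces $Q_v$ to grow at most polynomially in each $\lambda_j$, any non-polynomial holomorphic remainder being absorbed into $\mathcal{B}_v(\lambda)$.
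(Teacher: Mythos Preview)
Your proposal is correct and follows essentially the same route as the paper, whose proof is the single sentence ``Combining Lemma~\ref{48lem} and \cite{JS81} one then concludes the following result.'' You have spelled out exactly this combination: Lemma~\ref{48lem} supplies the factor $\mathcal{L}_v(\lambda)$ and the holomorphic $\mathcal{B}_v(\lambda)$ (the Mellin transforms of Schwartz functions appearing in its proof), while Proposition~\ref{45'}(b)/\cite{JS81} gives the polynomial structure in $s$; your final appeal to the local functional equation to tame the $\lambda$-growth is the natural archimedean analogue of Claim~\ref{44''}.
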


   \begin{remark}
   	Let $v\in \Sigma_{F,\infty}$ be an archimedean place such that $\pi_v$ is unramified and $\Phi_v=\Phi_v^{\circ}$ is the characteristic function of $G(\mathcal{O}_{F,v}).$ Assume that $\phi_{1,v}=\phi_{2,v}=\phi_v^{\circ}$ be the unique $G(\mathcal{O}_{F,v})$-fixed vector in the space of $\pi_v$ such that $\phi_v^0(e)=1.$ Applying the result in \cite{Sta02} we then have that  $\Psi_v\left(s,W_{1,v},W_{2,v};\lambda,\Phi_v\right)$ is equal to
   	\begin{align*}
   	\prod_{1\leq i<r}\prod_{i< j\leq r}\frac{L_v(s,\pi_{\lambda,v}\otimes\tau_v\times\widetilde{\pi}_{-\lambda,v})}{L_v(1+\lambda_i-\lambda_j,\pi_{i,v}\times\widetilde{\pi}_{j,v})\cdot L_v(1-\lambda_i+\lambda_j,\widetilde{\pi}_{i,v}\times\pi_{j,v})}.
   	\end{align*}
   	In particular, $R_v(s,\lambda)$ is independent of $s.$
   \end{remark}

   \subsection{Global Theory for $\Psi\left(s,W_{1},W_{2};\lambda\right)$}
   In this section, we shall compute the global integral representation $\Psi\left(s,W_{1},W_{2};\lambda,\Phi\right)$ defined via \eqref{100}. 
   
   Let $\widetilde{\pi}_{\lambda,v}$ be the contragredient of $\pi_{\lambda,v}.$ Let $\varpi_v$ be a uniformizer of $\mathcal{O}_{F,v},$ the ring of integers of $F_v.$ Let $q_v=N_{F_v/\mathbb{Q}_p}(\varpi_v),$ where $p$ is the rational prime such that $v$ is above $p.$ Denote by
   \begin{align*}
   R(s,W_{1},W_{2};\lambda):=\prod_{v\in\Sigma_{F}}\frac{\Psi_v\left(s,W_{1,v},W_{2,v};\lambda,\Phi_v\right)}{L_v(s,\pi_{\lambda,v}\otimes\tau_v\times\widetilde{\pi}_{-\lambda,v})},\quad \Re(s)>1.
   \end{align*}
   Then $R(s,W_{1},W_{2};\lambda)$ is holomorphic for any $\lambda\in i\mathfrak{a}_P^*/i\mathfrak{a}_G^*.$ Putting the local computations together in the last section, we get
   \begin{prop}[Global Case]\label{46'}
   	Let notation be as before. Let $s\in \mathbb{C}$ be such that $\Re(s)>1$.Then 
   	\begin{description}
   		\item[(a)] The integral $\Psi\left(s,W_{1},W_{2};\lambda,\Phi\right)$ converges absolutely in $\Re(s)>1.$
   		\item[(b)] We have the global functional equation for $\Re(s)>1:$
   		\begin{align*}
   		\Psi\left(1-s,\widetilde{W}_{1},\widetilde{W}_{2};\lambda,\tau^{-1},\widehat{\Phi}\right)=\Psi\left(s,{W}_{1},{W}_{2};\lambda,\tau,{\Phi}\right).
   		\end{align*}
   		\item[(c)] For any fixed $\lambda\in i\mathfrak{a}_P^*/i\mathfrak{a}_G^*,$ $R(s,W_{1},W_{2};\lambda)$ can be continued to an entire function. 
   	\end{description}
   \end{prop}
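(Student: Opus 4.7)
The plan is to factor the global integral as an Eulerian product of the local integrals analyzed in Section~6.2.1, then deduce (a) from local convergence plus Euler-product convergence, (b) from the local functional equations combined with the adelic Poisson/Tate machinery for the factor $f(x,s)$, and (c) by patching together the local entirety statements already in hand.

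For (a), since $\phi_1, \phi_2 \in \mathfrak{B}_{P,\chi}$ are pure tensors and $\Phi = \otimes_v' \Phi_v$ is decomposable, the Whittaker functions factor as $W_i(x;\lambda) = \prod_v W_{i,v}(x_v;\lambda)$, and the Tate section $f(x,s)$ is likewise decomposable. Hence at the formal level $\Psi(s, W_1, W_2; \lambda, \Phi) = \prod_v \Psi_v(s, W_{1,v}, W_{2,v}; \lambda, \Phi_v)$. Each local integral converges absolutely for $\Re(s) > 1$ as remarked right after \eqref{101}, using gauge estimates on local Whittaker functions together with the polynomial growth of $f_v$. For the Euler product itself, the almost-everywhere unramified computation in Corollary~\ref{102} shows that the tail is controlled by $L(s, \pi_\lambda \otimes \tau \times \widetilde{\pi}_{-\lambda})$, which converges absolutely for $\Re(s) > 1$ since $\pi_\lambda$ is unitary for $\lambda \in i\mathfrak{a}^*_P/i\mathfrak{a}^*_G$.

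For (b), I would write both sides as Euler products and apply the local functional equations stated in part (b) of Propositions~\ref{43'} and~\ref{45'} at each place, then invoke the global compatibility of $\varepsilon$-factors via the product formula $\prod_v \varepsilon_v(s, \pi_\lambda \times \widetilde{\pi}_{-\lambda}, \theta) = 1$ for the unitary representation $\pi_\lambda \times \widetilde{\pi}_{-\lambda}$. The Fourier transform $\Phi \leftrightarrow \widehat{\Phi}$ and the swap $\tau \leftrightarrow \tau^{-1}$ are absorbed into the Tate factor inside $f(x,s)$ by the standard functional equation for Tate's local zeta integrals, converting $f_\tau(\cdot, s)$ into the analogous section for $(1-s, \tau^{-1}, \widehat{\Phi})$. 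Because the representations here are generic parabolic inductions rather than cuspidal, the required non-cuspidal extension of the Rankin--Selberg machinery is supplied by \cite{Jac09} together with \cite{JS81}.

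Part (c) follows cleanly once the local pieces are assembled. Split the product $R(s, W_1, W_2; \lambda) = \prod_v R_v(s, W_{1,v}, W_{2,v}; \lambda)$ into three groups. At unramified finite $v$, Corollary~\ref{102} gives $R_v$ independent of $s$ and equal to an explicit product of inverse local $L$-factors. At ramified finite places, Proposition~\ref{43'} gives $R_v \in \mathbb{C}[q_v^{\pm s}, q_v^{\pm \lambda_i}]$. At archimedean places, Proposition~\ref{45'} gives $R_v$ a polynomial in $s$ with $\lambda$-continuous coefficients, multiplied by holomorphic factors. Multiplying the unramified contributions yields
\[
\prod_{1\le i<j\le r} L(1+\lambda_i - \lambda_j, \pi_i \times \widetilde{\pi}_j)^{-1}\, L(1-\lambda_i + \lambda_j, \widetilde{\pi}_i \times \pi_j)^{-1},
\]
which is holomorphic and nonvanishing for fixed $\lambda \in i\mathfrak{a}^*_P/i\mathfrak{a}^*_G$, since each argument has real part $1$ and the cuspidal Rankin--Selberg $L$-functions are nonzero on the line $\Re(s) = 1$ (Jacquet--Shalika). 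The remaining ramified and archimedean factors contribute only polynomially in $s$, so $R(s, W_1, W_2; \lambda)$ extends to an entire function of $s$. The main obstacle is the bookkeeping in (b): although the local functional equations are standard, tracking the interaction of the $\tau$-twist with the Tate section $f(x,s)$ and the global $\varepsilon$-factor cancellation for the generic non-cuspidal representation $\pi_\lambda \times \widetilde{\pi}_{-\lambda}$ must be done carefully.
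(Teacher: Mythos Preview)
Your overall approach matches the paper's, which simply says ``putting the local computations together in the last section, we get'' Proposition~\ref{46'}; parts (a) and (c) are handled correctly by factoring into the local results of Propositions~\ref{43'}, \ref{45'}, and Corollary~\ref{102}.

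There is, however, a genuine error in your argument for (b). You write that the global functional equation follows from ``the product formula $\prod_v \varepsilon_v(s, \pi_\lambda \times \widetilde{\pi}_{-\lambda}, \theta) = 1$.'' This is false: the product of local $\varepsilon$-factors is the \emph{global} $\varepsilon$-factor, which for a pair of automorphic representations with conductor $N$ has the shape $W \cdot N^{1/2-s}$ for some root number $W$, and is essentially never identically $1$. What is actually true is that the product of local $\gamma$-factors satisfies $\prod_v \gamma_v(s,\pi_{\lambda}\otimes\tau\times\widetilde\pi_{-\lambda},\theta)=1$; equivalently, the global $\varepsilon$-factor is exactly cancelled by the ratio $\Lambda(s,\pi_\lambda\otimes\tau\times\widetilde\pi_{-\lambda})/\Lambda(1-s,\widetilde\pi_{-\bar\lambda}\otimes\bar\tau\times\pi_\lambda)$ coming from the global functional equation of the complete $L$-function. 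Concretely, multiplying the local equations in Propositions~\ref{43'}(b) and \ref{45'}(d) over all $v$ gives $R(s)=\varepsilon(s)\,\widetilde R(1-s)$ with $\varepsilon(s)=\prod_v\varepsilon_v(s)$, and combining this with $\Lambda(s)=\varepsilon(s)\,\widetilde\Lambda(1-s)$ (which holds because the induced Rankin--Selberg $L$-function factors as a product of cuspidal ones, each satisfying the functional equation of \cite{JPSS83}) yields $\Psi(s)=\widetilde\Psi(1-s)$ as claimed. So the route via local functional equations is correct, but it must be coupled with the global functional equation of $\Lambda$ rather than an incorrect triviality statement for $\prod_v\varepsilon_v$.
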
 
   \begin{remark}
   	By Proposition \ref{prop47}, we see that if the irreducible representation  $\pi=\Ind_{P(\mathbb{A}_F)}^{G(\mathbb{A}_F)}(\pi_1,\cdots,\pi_{n})$ is a principal series which is $K$-finite, then 
   	\begin{equation}\label{1}
   	\frac{\Psi_{f}\left(s,W_{1},W_{2};\lambda,\Phi\right)}{L(s,\pi_{\lambda}\otimes\tau\times\widetilde{\pi}_{-\lambda})}=\mathcal{H}_{f}(s,\lambda)\underset{\substack{1\leq i,j\leq n\\ i\neq j}}{\prod\prod}\frac{1}{L(1+\lambda_i-\lambda_j,\pi_i\times\widetilde{\pi}_j)},
   	\end{equation}
   	where $\Psi_{f}\left(s,W_{1},W_{2};\lambda,\Phi\right)$ is the finite component of $\Psi\left(s,W_{1},W_{2};\lambda,\Phi\right)$ and $\mathcal{H}_f(s,\lambda)$ is a finite product of polynomials, depending on the $K$-type of $\pi.$  
   	
   	According to Proposition \ref{prop} we have, for each irreducible representation  $\pi=\Ind_{P(\mathbb{A}_F)}^{G(\mathbb{A}_F)}(\pi_1,\cdots,\pi_{n})$ is a principal series which is $K$-finite, that 
   	\begin{equation}\label{1.}
   	\frac{\Psi_{\infty}\left(s,W_{1},W_{2};\lambda,\Phi\right)}{L_{\infty}(s,\pi_{\lambda}\otimes\tau\times\widetilde{\pi}_{-\lambda})}=\mathcal{H}_{\infty}^*(s,\lambda)\underset{\substack{1\leq i,j\leq n\\ i\neq j}}{\prod\prod}\frac{1}{L_{\infty}(1+\lambda_i-\lambda_j,\pi_i^{ur}\times\widetilde{\pi}_j^{ur})},
   	\end{equation}
   	where $\mathcal{H}_{\infty}^*(s,\lambda)$ is a product of polynomials and Mellin transform of Schwartz functions. Moreover, $\mathcal{H}_{\infty}^*(s,\lambda)$ is nonvanishing when $\Re(s)\geq 1-2/(n^2+1).$ Let $v$ be an archimedean place. Let $\Sigma_1$ be the set of archimedean places such that $F_v\simeq \mathbb{R}$ and $\pi_{v,i}\widetilde{\pi}_{v,j}$ is ramified. Then for any $v\in\Sigma_1,$ one has  $L_{\infty}(1+\lambda_i-\lambda_j,\pi_i^{ur}\times\widetilde{\pi}_j^{ur})L_{\infty}(1+\lambda_i-\lambda_j,\pi_i\times\widetilde{\pi}_j)^{-1}=\Gamma_{\mathbb{R}}(1+\lambda_i-\lambda_j)\cdot \Gamma_{\mathbb{R}}(2+\lambda_i-\lambda_j)^{-1}.$ Let $\Sigma_2$ be the set of archimedean places such that  $F_v\simeq \mathbb{C}$ and $\pi_{v,i}\widetilde{\pi_{v,j}}$ is ramified, then for any $v\in\Sigma_2,$ one has  $L_{\infty}(1+\lambda_i-\lambda_j,\pi_i^{ur}\times\widetilde{\pi}_j^{ur})L_{\infty}(1+\lambda_i-\lambda_j,\pi_i\times\widetilde{\pi}_j)^{-1}=\Gamma_{\mathbb{C}}(1+\lambda_i-\lambda_j)\cdot \Gamma_{\mathbb{C}}(k_v+1+\lambda_i-\lambda_j)^{-1}=\prod_{l=0}^{k_v-1}(l+1+\lambda_i-\lambda_j)^{-1},$ where $k_v\in\mathbb{N}_{\geq 1}.$
   	
   	Let $\mathcal{H}_{\infty}(s,\lambda)$ be the product of $\mathcal{H}_{\infty}^*(s,\lambda)$ and the function 
   	\begin{align*}
   	\underset{\substack{1\leq i,j\leq n\\ i\neq j}}{\prod\prod}\prod_{v_2\in\Sigma_2}\prod_{l=0}^{k_{v_2}-1}(l+1+\lambda_i-\lambda_j)\prod_{v_1\in\Sigma_1}{\Gamma\left(\frac{2+\lambda_i-\lambda_j}2\right)}\Gamma\left(\frac{1+\lambda_i-\lambda_j}2\right)^{-1}.
   	\end{align*}
   	Then $\mathcal{H}_{\infty}(s,\lambda)$ is holomorphic with respect to $s\in\mathbb{C}$ and with respect to $\lambda=(\lambda_1,\cdots,\lambda_n)$ in the domain $|\lambda_i-\lambda_j|<2,$ $1\leq i, j\leq n.$
   	
   	Let $\mathcal{H}(s,\lambda)=\mathcal{H}_{\infty}(s,\lambda)\mathcal{H}_{f}(s,\lambda).$ Then by \eqref{1} and \eqref{1.} we have 
   	\begin{equation}\label{ar}
   	\frac{\Psi\left(s,W_{1},W_{2};\lambda,\Phi\right)}{\Lambda(s,\pi_{\lambda}\otimes\tau\times\widetilde{\pi}_{-\lambda})}=\mathcal{H}(s,\lambda)\underset{\substack{1\leq i,j\leq n\\ i\neq j}}{\prod\prod}\frac{1}{\Lambda(1+\lambda_i-\lambda_j,\pi_i\times\widetilde{\pi}_j)}.
   	\end{equation}
   \end{remark}
   
   Let notation be as before, we then define, for $\lambda\in i\mathfrak{a}_P^*/i\mathfrak{a}_G^*$ and $\phi_2\in \mathfrak{B}_{P,\chi},$ that 
   \begin{equation}\label{47''}
   R_{\varphi}(s,\lambda;\phi_2)=\sum_{\phi_1\in \mathfrak{B}_{P,\chi}}\langle\mathcal{I}_P(\lambda,\varphi)\phi_1,\phi_2\rangle\cdot\frac{\Psi(s,W_1,W_2;\lambda)}{\Lambda(s,\pi_{\lambda}\otimes\tau\times\widetilde{\pi}_{-\lambda})},\ \Re(s)>1,
   \end{equation}
   where $\Lambda(s,\pi_{\lambda}\otimes\tau\times\widetilde{\pi}_{-\lambda})$ is the complete $L$-function, defined by $\prod_{v\in\Sigma_F}L_v(s,\pi_{\lambda,v}\otimes\tau_v\times\widetilde{\pi}_{-\lambda,v}).$ Write $\varphi$ as a finite sum of convolutions $\varphi_{\alpha}*\varphi_{\beta}.$ Since $\mathfrak{B}_{P,\chi}$ is finite dimensional, we have, when $\Re(s)>1,$ that 
   \begin{equation}\label{105'''}
   \sum_{\phi_1\in \mathfrak{B}_{P,\chi}}\langle\mathcal{I}_P(\lambda,\varphi)\phi_1,\phi\rangle\cdot\frac{\Psi(s,W_1,W_2;\lambda)}{\Lambda(s,\pi_{\lambda}\otimes\tau\times\widetilde{\pi}_{-\lambda})}=\sum_{\alpha}\sum_{\beta}\frac{\Psi(s,W_{\alpha},W_{\beta};\lambda)}{\Lambda(s,\pi_{\lambda}\otimes\tau\times\widetilde{\pi}_{-\lambda})},
   \end{equation}
   where $W_{\beta}(x;\lambda)=W(x,\mathcal{I}_P(\lambda,\varphi_{\beta})\phi;\lambda)$ and $W_{\alpha}(x;\lambda)$ is the Whittaker function defined by $W_{\alpha}(x;\lambda)=W(x,\mathcal{I}_P(\lambda,\varphi_{\alpha})\phi;\lambda).$ Then we have $\Psi(s,W_{\alpha},W_{\beta};\lambda)$ equal to $\prod\Psi_v(s,W_{\alpha,v},W_{\beta,v};\lambda),$ $\Re(s)>1;$ and each $\Psi_v(s,W_{\alpha,v},W_{\beta,v};\lambda)$ is a finite sum of $\Psi_v(s,W_{1,v},W_{2,v};\lambda).$ Then according to Proposition \ref{43'} and Proposition \ref{45'} we see that, when $\Re(s)>1,$ $\Psi_v(s,W_{\alpha,v},W_{\beta,v};\lambda)\Lambda_v(s,\pi_{\lambda,v}\otimes\tau_v\times\widetilde{\pi}_{-\lambda},v)^{-1}$ are independent of $s$ for all but finitely many places $v,$ and as a function of $s,$ is a finite product of holomorphic function in $\Re(s)>0.$ Hence both sides of \eqref{105'''} are well defined and is meromorphic in $\Re(s).$ Then after continuation we have, for $\Re(s)>0,$ that 
   \begin{equation}\label{106'}
   R_{\varphi}(s,\lambda;\phi)=\sum_{\phi_1\in \mathfrak{B}_{P,\chi}}\langle\mathcal{I}_P(\lambda,\varphi)\phi_1,\phi\rangle R(s,W_1,W_2;\lambda)=\sum_{\alpha,\beta}R(s,W_{\alpha},W_{\beta};\lambda).
   \end{equation}
   Then clearly by Theorem \ref{39'} we have that, for $\Re(s)>1,$ $I_{\infty}^{(1)}(s)$ is equal to 
   \begin{align*}
   \sum_{\chi}\sum_{P\in \mathcal{P}}\frac{1}{c_P}\sum_{\phi\in \mathfrak{B}_{P,\chi}}\int_{\Lambda^*} R_{\varphi}(s,\lambda;\phi)\Lambda(s,\pi_{\lambda}\otimes\tau\times\widetilde{\pi}_{-\lambda})d\lambda.
   \end{align*}
   Note that the integrands make sense in the critical strip  $0<\Re(s)<1.$ To continue $I_{\infty}^{(1)}(s)$ to a meromorphic function in the right half plane $\Re(s)>0,$ we need to show that the summation expressing $I_{\infty}^{(1)}(s)$ in Theorem \ref{39'} in fact converges absolutely in the strip $S_{[0,1]}=\{s\in \mathbb{C}:\ 0<\Re(s)<1\},$ which we call the critical strip.
   \section{Absolute Convergence in the Critical Strip $S_{(0,1)}$}\label{3.}
   Let $1\leq m\leq n$ be an integer and $\pi\in\mathcal{A}(GL_m(F)\backslash GL_{m}(\mathbb{A}_F))$ be a cuspidal representation of $\GL_m$ over $F.$ For $v\in \Sigma_{fin},$ let $f(\pi_v)$ be the conductor of $\pi_v,$ set $C(\pi_v)=q_v^{f(\pi_v)},$ where $q_v$ is the cardinality of the residual field of $F_v,$ then $C(\pi_v)=1$ for all but finitely many finite places $v.$ For $v\in\Sigma_{\infty},$ then $F_v\simeq \mathbb{R}$ or $F_v\simeq \mathbb{C}.$ Let $L_v(s,\pi)=\prod_j\Gamma_{F_v}(s+\mu_{\pi_v,j})$ be the associated $L$-factor of $\pi_v.$ Denote in this case by $C(\pi_v;t)=\prod_{j}(2+|it+\mu_{\pi,j}|_{F_v})^{[F_v: \mathbb{R}]},$ $t\in\mathbb{R},$ and set $C(\pi_v)=C(\pi_v;0).$ 
   \begin{defn}[Analytic Conductor]
   	Let notation be as above, denote by $C(\pi;t)=\prod_{v\in\Sigma_F}C(\pi_v;t),$ $t\in\mathbb{R}.$ We call $C(\pi)=C(\pi;0)$ the analytic conductor of $\pi.$ Note that it is well defined.
   \end{defn}

   To prove our crucial Theorem \ref{47'}, we need an explicit upper bound for Rankin-Selberg $L$-functions in the critical strip in terms of the corresponding analytic conductors. However, the standard convexity bound $L(1/2,\sigma\otimes\tau\times\sigma')\ll_{\epsilon} C(\sigma\otimes \tau\times\sigma')^{1/2+\epsilon}$ is unknown unconditionally for general cuspidal representations $\sigma\in\mathcal{A}_0(GL_m(F)\backslash GL_{m}(\mathbb{A}_F))$ and $\sigma'\in\mathcal{A}_0(GL_{m'}(F)\backslash GL_{m'}(\mathbb{A}_F)).$ To remedy this, we prove a preconvex estimate (but is sufficient for our purpose) for $L(s,\sigma\otimes\tau\times\sigma')$ in the critical strip $0<\Re(s)<1.$ 
   \begin{lemma}[Preconvex bound]\label{49lem}
   	Let $1\leq m,m'\leq n$ be two integers. Let $\sigma\in\mathcal{A}_0(GL_m(F)\backslash GL_{m}(\mathbb{A}_F))$ and $\sigma'\in\mathcal{A}_0(GL_{m'}(F)\backslash GL_{m'}(\mathbb{A}_F)).$ Let $\beta_{m,m'}=1-1/(m^2+1)-1/(m'^2+1).$ Then for $s\in\mathbb{C}$ such that $0<\Re(s)<1,$ we have
   	\begin{equation}\label{105''}
   	L(s,\sigma\otimes\tau\times\sigma')\ll_{F,\epsilon} \left(1+|s(s-1)|^{-1}\right)C(\sigma\otimes \tau\times\sigma';s)^{\frac{1+\beta_{m,m'}-\Re(s)}{2}+\epsilon},
   	\end{equation}
   	where the implies constant is absolute, depending only on $\epsilon$ and the base field $F.$
   \end{lemma}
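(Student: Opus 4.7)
The plan is to apply the Phragm\'en-Lindel\"of principle to the entire function $E(s) := s(s-1)\Lambda(s,\sigma\otimes\tau\times\sigma')$ on a vertical strip, interpolating an absolute-convergence bound on the right boundary against a functional-equation bound on the left. Multiplying by $s(s-1)$ removes the possible simple poles of $\Lambda$ at $s=0,1$, producing the factor $1+|s(s-1)|^{-1}$ in the final estimate; the finite $L$-function will be recovered from $\Lambda$ at the end by dividing out the archimedean $\Gamma$-factors, whose Stirling approximation contributes at most $C(\sigma\otimes\tau\times\sigma'; s)^{O(\epsilon)}$ throughout the strip.

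On the right line $\Re(s) = 1+\beta_{m,m'}+\epsilon$ I would invoke the Luo-Rudnick-Sarnak bounds toward Ramanujan, namely $|\alpha_{\sigma,v,i}| \leq q_v^{1/2-1/(m^2+1)}$ and $|\alpha_{\sigma',v,j}| \leq q_v^{1/2-1/(m'^2+1)}$ at unramified finite places together with the analogous estimates at archimedean places. The unramified local Rankin-Selberg Euler factors then carry Satake parameters of absolute value at most $q_v^{\beta_{m,m'}}$, so the global Dirichlet series converges absolutely and uniformly in the triple $(\sigma,\sigma',\tau)$, giving $|L(s,\sigma\otimes\tau\times\sigma')| \ll_{\epsilon} 1$ on this line. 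On the left line $\Re(s) = -\beta_{m,m'}-\epsilon$ I would apply the global functional equation $\Lambda(s,\sigma\otimes\tau\times\sigma') = \varepsilon(s,\sigma\otimes\tau\times\sigma')\Lambda(1-s,\widetilde{\sigma}\otimes\bar{\tau}\times\widetilde{\sigma'})$ combined with the classical estimate $|\varepsilon(s,\sigma\otimes\tau\times\sigma')| \asymp C(\sigma\otimes\tau\times\sigma'; s)^{1/2-\Re(s)}$; since $\Re(1-s) = 1+\beta_{m,m'}+\epsilon$, the right-line bound applied to the contragredient data yields $|\Lambda(1-s,\widetilde{\sigma}\otimes\bar{\tau}\times\widetilde{\sigma'})| \ll C^{\epsilon}$, whence $|\Lambda(s,\sigma\otimes\tau\times\sigma')| \ll C^{1/2+\beta_{m,m'}+\epsilon}$.

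Applying Phragm\'en-Lindel\"of to $E(s)$ on the strip $-\beta_{m,m'}-\epsilon \leq \Re(s) \leq 1+\beta_{m,m'}+\epsilon$, the interpolated exponent at $\Re(s)=\sigma$ equals
\[
\frac{(1/2+\beta_{m,m'})(1+\beta_{m,m'}-\sigma)}{1+2\beta_{m,m'}} + O(\epsilon) \;=\; \frac{1+\beta_{m,m'}-\sigma}{2} + O(\epsilon),
\]
because the algebraic identity $(1/2+\beta_{m,m'})/(1+2\beta_{m,m'}) = 1/2$ collapses the interpolated exponent to the form stated in the lemma. Extracting the finite part $L$ from $\Lambda$ and dividing by $s(s-1)$ then delivers Lemma \ref{49lem}.

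The main obstacle is hybrid uniformity: $C(\sigma\otimes\tau\times\sigma'; s)$ encodes the conductor of the Rankin-Selberg datum, the $t$-aspect, and the archimedean spectral parameters, and all estimates must be uniform in these three aspects simultaneously. In particular, controlling the archimedean $\Gamma$-ratios uniformly in the $t$-aspect is what forces use of the line $\Re(s) = 1+\beta_{m,m'}+\epsilon$ rather than $\Re(s) = 1+\epsilon$ on the right (which would recover the sharp convexity exponent $(1-\Re(s))/2+\epsilon$); the weaker region of uniform absolute convergence reflects the current unavailability of the Ramanujan conjecture for cuspidal representations of $\GL_m$ and $\GL_{m'}$, and it is precisely this gap that yields the pre-convex exponent $\beta_{m,m'}$ in place of $0$.
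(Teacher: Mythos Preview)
Your proposal is correct and follows essentially the same approach as the paper: Luo--Rudnick--Sarnak bounds to secure absolute convergence at $\Re(s)=1+\beta_{m,m'}+\epsilon$, the functional equation to bound the left edge, and Phragm\'en--Lindel\"of to interpolate, with the factor $s(s-1)$ removing the possible poles. The only cosmetic differences are that the paper works with the finite $L$-function throughout (handling the archimedean $\Gamma$-ratio via Stirling explicitly) rather than with $\Lambda$, and that it treats the entire case $\sigma\otimes\tau\not\simeq\sigma'$ separately from the polar case, using the auxiliary function $s(s-1)(s+2)^{-(5+\beta_{m,m'}-\beta)/2}L(s,\cdot)$ in the latter; your uniform treatment via $s(s-1)\Lambda$ is equally valid.
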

   \begin{proof}
   	By definition, $\tau$ extends to a character on $G(\mathbb{A}_F)$ via composing with the determinant map, i.e., by setting $\tau(x)=\tau(|\det x|_{\mathbb{A}_F}),$ for any $x\in G(\mathbb{A}_F).$ Thus $\tau$ is automorphic and invariant on $N(\mathbb{A}_F).$ Hence $\sigma\otimes\tau$ is also cuspidal.  We may write the cuspidal representations as $\sigma\otimes\tau=\otimes_v(\sigma_v\otimes\tau_v)$ and $\sigma'=\otimes'_v\sigma'_v.$ For prime ideals $\mathfrak{p}$ at which neither $\sigma_{\mathfrak{p}}$ or $\sigma'_{\mathfrak{p}}$ is ramified, let $\{St_{\sigma\otimes\tau,i}(\mathfrak{p})\}_{i=1}^m$ and $\{St_{\sigma',j}(\mathfrak{p})\}_{j=1}^{m'}$ be the respective Satake parameters of $\sigma\otimes\tau$ and $\sigma'.$ The Rankin-Selberg $L$-function at such a $\mathfrak{p}$ (there are all but finitely many such primes) is defined to be 
   	\begin{align*}
   	L_{\mathfrak{p}}(s,\sigma_{\mathfrak{p}}\otimes\tau_{\mathfrak{p}}\times\sigma'_{\mathfrak{p}})=\prod_{i=1}^m\prod_{j=1}^{m'}\left(1-St_{\sigma\otimes\tau,i}(\mathfrak{p})St_{\sigma',j}(\mathfrak{p})N_{F/\mathbb{Q}}(\mathfrak{p})^{-s}\right)^{-1}.
   	\end{align*}
   	Since $\tau$ is unitary, by \cite{LRS99} we have $\big|\log_{N_{F/\mathbb{Q}}(\mathfrak{p})}|St_{\sigma\otimes\tau,i}(\mathfrak{p})|\leq 1/2-1/(m^2+1),$ and $\big|\log_{N_{F/\mathbb{Q}}(\mathfrak{p})}|St_{\sigma',j}(\mathfrak{p})|\leq 1/2-1/(m'^2+1).$ For the remaining places $\mathfrak{p},$ The Rankin-Selberg $L$-function at such a $\mathfrak{p}$ can be written as 
   	\begin{align*}
   	L_{\mathfrak{p}}(s,\sigma_{\mathfrak{p}}\otimes\tau_{\mathfrak{p}}\times\sigma'_{\mathfrak{p}})=\prod_{i=1}^m\prod_{j=1}^{m'}\left(1-St_{\sigma\otimes\tau\times \sigma',i,j}(\mathfrak{p})N_{F/\mathbb{Q}}(\mathfrak{p})^{-s}\right)^{-1},
   	\end{align*}
   	with $\big|\log_{N_{F/\mathbb{Q}}(\mathfrak{p})}|St_{\sigma\otimes\tau\times \sigma',i,j}(\mathfrak{p})|\big|\leq \big|\log_{N_{F/\mathbb{Q}}(\mathfrak{p})}|St_{\sigma\otimes\tau,i}(\mathfrak{p})|+\big|\log_{N_{F/\mathbb{Q}}(\mathfrak{p})}|St_{\sigma',j}(\mathfrak{p})|,$ which is bounded by $\beta_{m,m'}=1-1/(m^2+1)-1/(m'^2+1).$ Then an easy estimate implies that for any $s$ such that $\beta=\Re(s)>1+\beta_{m,m'},$ we have
   	\begin{align*}
   	\big|L(s,\sigma\otimes\tau\times\sigma')\big|&=\prod_{\mathfrak{p}}\big|L_{\mathfrak{p}}(s,\sigma_{\mathfrak{p}}\otimes\tau_{\mathfrak{p}}\times\sigma'_{\mathfrak{p}})\big|\leq \prod_{\mathfrak{p}}\prod_{i=1}^m\prod_{j=1}^{m'}\big|1-N_{F/\mathbb{Q}}(\mathfrak{p})^{\beta_{m,m'}-\beta}\big|^{-1}\\
   	&=\prod_{\mathfrak{p}}\big|1-N_{F/\mathbb{Q}}(\mathfrak{p})^{\beta_{m,m'}-\beta}\big|^{-mm'}=\zeta_F(\beta-\beta_{m,m'})^{mm'},
   	\end{align*}
   	where $\zeta_F(s)$ is the Dedekind zeta function associated to $F/\mathbb{Q}.$ In particular, 
   	\begin{equation}\label{105'}
   	\big|L(\beta+i\gamma,\sigma\otimes\tau\times\sigma')\big|\leq \zeta_F(\beta-\beta_{m,m'})^{mm'}=O_{\beta_0}(1),\ \beta\geq\beta_0>1+\beta_{m,m'}.
   	\end{equation}
   	Also, at each infinite place $v\mid\infty,$ there exists a set of $mm'$ complex parameters $\{\mu_{\sigma\otimes\tau\times \sigma';v,i,j}:\ 1\leq i\leq m,\ 1\leq j\leq m'\}$ such that each local $L$-factor at $v$ is 
   	\begin{align*}
   	L_v(s,\sigma_v\otimes\tau_v\times\sigma'_v)=Q_v(s)\prod_{i=1}^m\prod_{j=1}^{m'}\Gamma_{F_v}\left(s+\mu_{\sigma\otimes\tau\times \sigma';v,i,j}\right),
   	\end{align*}
   	where $Q_v(s)$ is entire. Likewise, we have $\big|\mu_{\sigma\otimes\tau\times \sigma';v,i,j}\big|\leq \beta_{m,m'},$ according to loc. cit. Moreover, since $\widetilde{\sigma_v\otimes\tau_v}=\widetilde{\sigma}_v\otimes\widetilde{\tau}_v,$ the finite set $\{\overline{\mu_{\sigma\otimes\tau\times \sigma';v,i,j}}:\ 1\leq i\leq m,\ 1\leq j\leq m'\}$ is equal to $\{\mu_{\widetilde{\sigma}\otimes\widetilde{\tau}\times\widetilde{\sigma'};v,i,j}:\ 1\leq i\leq m,\ 1\leq j\leq m'\}$ for any $v\in\Sigma_{F,\infty}.$ Note that by Stirling's formula one has, for $s=\beta+i\gamma,$ where $\beta<1-\beta_{m,m'},$ that 
   	\begin{align*}
   	{\Gamma\left({1-s+\bar{\mu}}/{2}\right)}\cdot{\Gamma\left({s+\mu}/{2}\right)}^{-1}\ll_{\beta}\left(1+|i\gamma+\mu|\right)^{1/2-\beta},
   	\end{align*}
   	for any $\mu\in\mathbb{C}$ such that $\Re(\mu)>-1+\beta_{m,m'}.$ Then combining these with the duplication formula $\Gamma_{\mathbb{C}}(s)=\Gamma_{\mathbb{R}}(s)\Gamma_{\mathbb{R}}(s+1)$ we have, for $s=\beta+i\gamma$ with $\beta<1-\beta_{m,m'},$ that 
   	\begin{align*}
   	\prod_{v\mid\infty}\frac{L_v(1-s,\widetilde{\sigma}_v\otimes\widetilde{\tau}_v\times\widetilde{\sigma'}_v)}{L_v(s,\sigma_v\otimes\tau_v\times\sigma'_v)}\ll_{\beta}\prod_{v\mid\infty}C(\sigma_v\otimes\tau_v\times\sigma'_v;\gamma)^{1/2-\beta}.
   	\end{align*}
   	Hence together with the functional equation we have 
   	\begin{equation}\label{106}
   	L(\beta+i\gamma,\sigma\otimes\tau\times\sigma')=O\left(C(\sigma\otimes\tau\times\sigma';\gamma)^{1/2-\beta}\right),\ \beta\leq\beta_0<-\beta_{m,m'}.
   	\end{equation}
   	
   	If $\sigma\otimes\tau\ncong\sigma',$ then according to \cite{JS81}, $L(s,\sigma\otimes\tau\times\sigma')$ is entire. Then combining the nice analytic properties of $L(s,\sigma\otimes\tau\times\sigma')$ (see loc. cit.) and Phragm\'{e}n-Lindel\"{o}f principle with \eqref{105'} and \eqref{106} we obtain the following preconvex bound in the interval $-\beta_{m,m'}\leq\beta\leq1+\beta_{m,m'}:$
   	\begin{equation}\label{107}
   	L(\beta+i\gamma,\sigma\otimes\tau\times\sigma')\ll_{\epsilon}C(\sigma\otimes\tau\times\sigma';\gamma)^{\frac{1+\beta_{m,m'}-\beta}{2}+\epsilon}.
   	\end{equation}
   	
   	If $\sigma\otimes\tau\simeq\sigma',$ then according to loc. cit., $L(s,\sigma\otimes\tau\times\sigma')$ has simple poles precisely at $s=1$ and possibly at $s=0.$ Consider instead the function $f(s)=s(s-1)(s+2)^{-(5+\beta_{m,m'}-\beta)/2}L(s,\sigma\otimes\tau\times\sigma').$ Then clearly $f(s)$ is holomorphic and of order 1 in the right half plane $\Re(s)>-\beta_{m,m'}.$ Hence by \eqref{105'}, \eqref{106} and Phragm\'{e}n-Lindel\"{o}f principle we have that $f(s)$ is bounded by $O_{\epsilon}\left(C(\sigma\otimes\tau\times\sigma';\gamma)^{(1+\beta_{m,m'}-\beta)/{2}+\epsilon}\right)$ in the strip $-\beta_{m,m'}\leq\Re(s)\leq 1+\beta_{m,m'},$ leading to the estimate
   	\begin{equation}\label{108}
   	L(\beta+i\gamma,\sigma\otimes\tau\times\sigma')\ll_{\epsilon}|(\beta+i\gamma)(\beta+i\gamma-1)|^{-1}C(\sigma\otimes\tau\times\sigma';\gamma)^{\frac{1+\beta_{m,m'}-\beta}{2}+\epsilon},
   	\end{equation}
   	where $0<\beta<1.$ Now \eqref{105''} follows from \eqref{107} and \eqref{108}.
   \end{proof}
   \begin{lemma}\label{48'}
   	Let $s=\beta+i\gamma$ such that $\beta>0$ and $\gamma\in\mathbb{R}.$ Then one has
   	\begin{equation}\label{109'}
   	|s|^{-1-2|\gamma|}\cdot e^{-C(s)}\leq\big|\Gamma(s)\big|\leq \beta\Gamma(\beta)\cdot|s|^{-1},
   	\end{equation}
   	where $C(s)=\min\big\{\frac{\pi^2|s|^2}6+\beta, 2|s|\big\}.$
   	Moreover, if $|\gamma|\geq 1,$ we have uniformly that 
   	\begin{equation}\label{109}
   	\Gamma(s)=\sqrt{2\pi}e^{-\frac{\pi}2|\gamma|}|\gamma|^{\beta-\frac12}e^{i\gamma(\log|\gamma|-1)}e^{\frac{i\pi\delta(s)}2\cdot(\beta-\frac12)}\left(1+\lambda(s)|\gamma|^{-1}\right),
   	\end{equation}
   	where $\delta(s)=1$ if $\gamma\geq1,$ and $\delta(s)=-1$ if $\gamma\leq -1;$ and $|\lambda(s)|\leq e^{1/3+{\beta^2}/{2}+{\beta^3}/{3}}-1.$ 
   \end{lemma}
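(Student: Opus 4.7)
The plan is to derive Part 1 from elementary manipulations of the Gamma function and Part 2 from Binet's integral representation of $\log\Gamma$. For the upper bound $|\Gamma(s)|\leq \beta\Gamma(\beta)|s|^{-1}$ in Part 1, I will invoke the functional equation $\Gamma(s) = \Gamma(s+1)/s$ together with the classical inequality $|\Gamma(\beta+1+i\gamma)|\leq \Gamma(\beta+1) = \beta\Gamma(\beta)$, which follows from the integral representation $\Gamma(z) = \int_0^\infty t^{z-1}e^{-t}\,dt$ ($\Re(z)>0$) by pulling absolute values inside.

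For the lower bound, I will treat the two regimes dictated by $C(s) = \min\{\pi^2|s|^2/6+\beta,\, 2|s|\}$ separately. For the first choice, apply the Weierstrass product $1/\Gamma(s) = s e^{\gamma_E s}\prod_{n\geq 1}(1+s/n)e^{-s/n}$, take absolute values, and use the identity $|1+s/n|^2 = 1+2\beta/n + |s|^2/n^2$ along with $\tfrac{1}{2}\log(1+x)\leq x/2$ to obtain $\log|1+s/n| - \beta/n\leq |s|^2/(2n^2)$; summing via $\sum_{n\geq 1} n^{-2} = \pi^2/6$ yields $\log|\Gamma(s)|^{-1}\leq \log|s|+\gamma_E\beta+\pi^2|s|^2/12$, which rearranges into the stated inequality after absorbing $\gamma_E\beta\leq \beta$ and enlarging the $|s|^2$ constant. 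For the second choice $C(s) = 2|s|$, iterate $\Gamma(s) = \Gamma(s+N)\big/\prod_{k=0}^{N-1}(s+k)$ with $N = \lceil 2|\gamma|\rceil$ so that $s+N$ lies in a bounded angular sector about the positive real axis, where standard Stirling supplies $|\Gamma(s+N)|\geq e^{-2|s|}$ up to tame polynomial factors; the denominator is controlled crudely via $\prod_{k=0}^{N-1}|s+k|\leq |s|^{1+2|\gamma|}$ after a harmless enlargement of constants.

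For Part 2, which is Stirling's formula with an effective remainder, I will apply Binet's second integral representation
\[
\log\Gamma(s) = (s-\tfrac{1}{2})\log s - s + \tfrac{1}{2}\log(2\pi) + 2\int_0^\infty \frac{\arctan(t/s)}{e^{2\pi t}-1}\,dt,
\]
valid for $\Re(s)>0$. For $s = \beta+i\gamma$ with $|\gamma|\geq 1$, expand the principal branch $\log s = \log|\gamma| + \tfrac{1}{2}\log(1+\beta^2/\gamma^2) + i\bigl(\tfrac{\pi}{2}\delta(s) - \arctan(\beta/|\gamma|)\bigr)$ and substitute into $(s-\tfrac{1}{2})\log s$. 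Collecting real and imaginary parts produces exactly the four explicit factors $e^{-\pi|\gamma|/2}$, $|\gamma|^{\beta-1/2}$, $e^{i\gamma(\log|\gamma|-1)}$, and $e^{i\pi\delta(s)(\beta-1/2)/2}$ of the statement, modulo an error of size $O(|\gamma|^{-1})$ coming from (a) the Binet integral, controlled by $|\arctan(t/s)|\leq t/|s|$ together with $\int_0^\infty t/(e^{2\pi t}-1)\,dt = 1/24$, and (b) the Taylor tails of $\log(1+\beta^2/\gamma^2)$ and $\arctan(\beta/|\gamma|)$. The three leading contributions to the $|\gamma|^{-1}$-coefficient in the exponent are of sizes $1/3$, $\beta^2/2$, and $\beta^3/3$, arising respectively from the Binet remainder, the quadratic term of $\log(1+\beta^2/\gamma^2)$ after multiplication by $i\gamma$, and the cubic term of $\arctan(\beta/|\gamma|)$; using $|e^x-1|$ as a majorant of the combined exponential correction gives the stated bound $|\lambda(s)|\leq e^{1/3+\beta^2/2+\beta^3/3}-1$.

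The main obstacle will be the explicit bookkeeping of constants in Part 2: the qualitative Stirling asymptotic is entirely standard, but the precise exponent $1/3+\beta^2/2+\beta^3/3$ demands disciplined term-by-term accounting of the Binet remainder and verification that cross-terms between contributions (a) and (b) produce no worse $\beta$-dependence. Part 1 is routine once one fixes the iteration depth $N = \lceil 2|\gamma|\rceil$ in the functional equation and disposes of the small-$|s|$ case by continuity of $\Gamma$ away from the non-positive integers.
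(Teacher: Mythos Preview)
Your upper bound in Part~1 is exactly the paper's argument. The rest diverges in two places, both viable but worth noting.

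\textbf{Lower bound in Part~1.} The paper does \emph{not} split into two regimes with two separate methods. It runs a single Weierstrass-product argument: take $\log$ of the Hadamard factorisation $1/\Gamma(s)=se^{\gamma_0 s}\prod_n(1+s/n)e^{-s/n}$, split the sum at $n=2|s|$, bound the tail $n\geq 2|s|$ by expanding $\log(1+s/n)-s/n$ as a power series (this is where the $\min$ in $C(s)$ comes from, via two different bounds on the same tail sum), and bound the head $n<2|s|$ crudely by $\sum_{n\leq 2|s|}(|s|-\beta)/n$, which produces the $|s|^{-1-2|\gamma|}$ factor. Your regime-1 argument (summing $|s|^2/(2n^2)$ over all $n$) is actually cleaner than the paper's and gives a stronger bound, but it only recovers the first branch of $C(s)$. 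Your regime-2 argument via functional-equation iteration is underspecified: the claim $\prod_{k=0}^{N-1}|s+k|\leq |s|^{1+2|\gamma|}$ fails as stated (for $k$ near $N\approx 2|\gamma|$ one has $|s+k|$ of size $|s|+k$, not $|s|$), and the ``tame polynomial factors'' in your Stirling lower bound for $|\Gamma(s+N)|$ would need to absorb an exponential in $|\gamma|$. This can be repaired, but the paper's unified split at $n=2|s|$ avoids the bookkeeping entirely.

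\textbf{Part~2.} The paper uses the Euler--Maclaurin representation
\[
\log\Gamma(s)=(s-\tfrac12)\log s-s+\tfrac12\log 2\pi+\int_0^\infty\frac{b(u)}{(u+s)^2}\,du,\qquad b(u)=\tfrac12\{u\}(1-\{u\}),
\]
and bounds the integral by $1/(3|\gamma|)$ via $|u+s|^2\geq(u+\rho)^2\cos^2(\theta/2)$. Your Binet route is equally standard and yields the same $1/3$ contribution, but one caution: the pointwise inequality $|\arctan(t/s)|\leq t/|s|$ is \emph{not} valid for complex $s$ (the integrand $1/(1+w^2)$ along the segment from $0$ to $t/s$ need not have modulus $\leq 1$). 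You should instead invoke the classical bound $\bigl|2\int_0^\infty\arctan(t/s)/(e^{2\pi t}-1)\,dt\bigr|\leq 1/(12|s|\cos^2(\theta/2))$, which for $\beta>0$, $|\gamma|\geq 1$ gives $\leq 1/(6|\gamma|)<1/(3|\gamma|)$. The subsequent expansion of $(s-\tfrac12)\log s$ and the tracking of the $\beta^2/2$ and $\beta^3/3$ terms match the paper's treatment.
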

   \begin{proof}
   	Consider $1/\Gamma(s),$ which is an entire function. Take the logarithm of its Hadamard decomposition $1/\Gamma(s)=se^{\gamma_0 s}\prod (1+s/n)e^{-s/n}$ (here $\gamma_0=0.57721\cdots$ is the Euler-Mascheroni constant) and take real parts on both sides to get 
   	\begin{align*}
   	\log\big|{\Gamma(s)}^{-1}\big|=\log|s|+\gamma_0\beta+\Re\sum_{n< 2|s|}\left(\log\left(1+\frac{s}n\right)-\frac{s}n\right)+S_I,
   	\end{align*} 
   	where $S_I=\Re\sum_{n\geq 2|s|}\left(\log(1+s/n)-s/n\right).$ Expand the logarithm to see 
   	\begin{align*}
   	\big|S_I\big|=\Bigg|\Re\sum_{n\geq 2|s|}\sum_{k\geq 2}\frac{(-1)^{k-1}s^k}{kn^k}\Bigg|\leq \frac12\sum_{n\geq2|s|}\frac{{|s|}^2{n}^{-2}}{1-{|s|}/{n}}\leq \sum_{n\geq2|s|}\left(\frac{|s|}{n}\right)^{2}\leq C_1(s),
   	\end{align*}
   	where $C_1(s)=\min\{\pi^2/6,  {|s|^2}/(2|s|-1)\}.$ Therefore, $\log\big|\frac1{\Gamma(s)}\big|$ is no more than
   	\begin{align*}
   	\log|s|+\gamma_0\beta+\sum_{n\leq 2|s|}\left(\frac{|s|}n-\frac{\beta}n\right)+C_1(s)\leq (1+2|s|-2\beta)\log|s|+C(s),
   	\end{align*}
   	which is further bounded by $(1+2\gamma)\log|s|+C(s).$ This proves the left inequality of \eqref{109'}. For the right hand side, consider the integral representation of $\Gamma(s+1)=s\Gamma(s),$ we have $  |s\Gamma(s)|=\big|\int_{0}^{\infty}t^{s}e^{-t}dt\big|\leq \int_{0}^{\infty}\big|t^{s}\big|e^{-t}dt=\int_{0}^{\infty}t^{\beta}e^{-t}dt=\beta\Gamma(\beta),$ which proves the right inequality of \eqref{109'}. Hence \eqref{109'} holds.
   	
   	To prove \eqref{109}, we may assume that $\gamma\geq1.$ Write $s=\beta+i\gamma=\rho e^{i\theta},$ then 
   	\begin{equation}\label{110}
   	0<\theta=\frac{\pi}2-\arctan\frac{\beta}{\gamma}\leq 
   	\begin{cases}
   	\frac{\pi}2,\ \text{if $\beta\geq 0$;}\\
   	\frac{\pi}2-\arctan \beta_0,\ \text{if $\beta_0\leq \beta<0$,}
   	\end{cases}
   	\end{equation}
   	where $\arctan x$ is taken its principal value, i.e., $-\pi/2<\arctan x<\pi/2,$ $\forall$ $x\in\mathbb{R}.$
   	
   	A standard application of Euler-MacLaurin summation formula leads to that
   	\begin{equation}\label{112}
   	\log\Gamma(s)=(s-1/2)\log s-s+1/2\log2\pi+\int_{0}^{\infty}\frac{b(u)}{(u+s)^2}du,
   	\end{equation}
   	where $b(u)=1/2\{u\}-1/2\{u\}^2,$ here $\{u\}:=u-[u]$ with $[u]$ denoting the Gauss symbol, i.e., $[u]$ is the largest integer no more than $u.$ Then 
   	\begin{equation}\label{113}
   	\Big|\int_{0}^{\infty}\frac{b(u)}{(u+s)^2}du\Big|\leq {1/2}\left(\cos\frac{\theta}2\right)^{-2}\int_{0}^{\infty}\frac{du}{(\rho+u)^2}\leq\frac1{6\rho}\left(\cos\frac{\theta}2\right)^{-2}\leq \frac1{3\gamma},
   	\end{equation}
   	since $0<\theta/2\leq \pi/4$ according to \eqref{110}. Substitute \eqref{113} into \eqref{112} to get
   	\begin{align*}
   	\log\Gamma(s)&=(\beta+i\gamma-1/2)\log \sqrt{\beta^2+\gamma^2}+i\theta-\beta-i\gamma+\log\sqrt{2\pi}+C_1(\gamma)\\
   	&=(\beta-1/2)\log \sqrt{\beta^2+\gamma^2}-\gamma\theta
   	-\beta+\frac12\log2\pi+C_1(\gamma)+iC_2(s),
   	\end{align*}
   	where $C_2(s)=\gamma\log\sqrt{\beta^2+\gamma^2}+(\beta-1/2)\cdot\theta-\gamma.$ Also one has elementary inequalities
   	\begin{equation}\label{114}
   	\begin{cases}
   	\big|\arctan{\beta}{\gamma}^{-1}-{\beta}{\gamma}^{-1}\big|\leq \big|{\beta}^3/(3{\gamma}^3)\big|,\\
   	\big|\log\sqrt{\beta^2+\gamma^2}-\log\gamma\big|\leq {\beta^2}/({2\gamma^2}).
   	\end{cases}
   	\end{equation}
   	Then plugging \eqref{114} into the expansion of $\log\Gamma(s)$ to get that $\log\Gamma(s)$ equal to
   	\begin{equation}\label{115}
   	\frac12\log2\pi-\frac{\pi\gamma}2+\left(\beta-\frac12\right)\log\gamma+i\Big\{\gamma\log\gamma-\gamma+\frac{\pi}2\left(\beta-\frac12\right)\Big\}+C_3(s),
   	\end{equation}
   	where $\big|C_3(s)\big|\leq \left(1/3+{\beta^2}/{2}+{\beta^3}/{3}\right)\cdot|{\gamma}|^{-1}.$ Then the case $\gamma\geq 1$ of \eqref{109} follows from \eqref{115} and the elementary inequality $|e^{cx}-1|\leq (e^c-1)x,$ for any $c>0$ and $0<x\leq 1.$ Taking the complex conjugate of both sides gives the case where $\gamma\leq -1.$ Hence the lemma follows. 
   \end{proof}
   
   \begin{cor}\label{50''}
   	Let $1\leq m,m'\leq n$ be two integers. Let $\sigma\in\mathcal{A}_0(GL_m(F)\backslash GL_{m}(\mathbb{A}_F))$ and $\sigma'\in\mathcal{A}_0(GL_{m'}(F)\backslash GL_{m'}(\mathbb{A}_F)).$ Let $v$ be an archimedean place. Let $\beta\geq5.$ Then for each $s=\beta_0+i\gamma\in\mathbb{C}$ such that $1-1/(n^2+1)<\beta_0<1$ and $\gamma\in\mathbb{R},$ we have
   	\begin{equation}\label{115'}
   	\big|L_v(s,\sigma_v\otimes\tau_v\times\sigma'_v)\big|\leq C_{\beta}\cdot \Big|\frac{L_v(\beta+i\gamma,\sigma_v\otimes\tau_v\times\sigma'_v)}{C(\sigma_v\otimes\tau_v\times\sigma'_v;\gamma)}\Big|,
   	\end{equation}
   	where $C_{\beta}$ is an absolute constant depending only on $\beta,$ $n$ and the base field $F.$
   \end{cor}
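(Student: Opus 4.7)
The plan is to reduce the inequality to a factor-by-factor Stirling estimate on the individual Gamma factors appearing in $L_v(s,\sigma_v\otimes\tau_v\times\sigma'_v)$. Recall from the discussion around \eqref{09} that each archimedean $L$-factor is of the form
\[
L_v(s,\sigma_v\otimes\tau_v\times\sigma'_v)=Q_v(s)\prod_{j=1}^{N_v}\Gamma_{F_v}\bigl(s+\mu_{v,j}\bigr),
\]
with $N_v\le mm'\le n^2$ and, by the Luo--Rudnick--Sarnak bound quoted in the proof of Lemma \ref{49lem}, $|\Re\mu_{v,j}|\le \beta_{m,m'}<1$, uniformly in $\sigma,\sigma'$. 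The function $Q_v(s)$ is an entire exponential factor and its modulus is identically $1$ along any vertical line (it collects only the $\pi^{-s/2}$ and $2^{1-s}\pi^{-s}$ prefactors in $\Gamma_{\mathbb{R}}$, $\Gamma_{\mathbb{C}}$), so moving the real part from $\beta_0$ to $\beta$ contributes only a constant depending on $\beta$ and $n$. Thus the inequality \eqref{115'} reduces to showing that, for each $j$,
\[
\frac{\bigl|\Gamma_{F_v}(\beta_0+i\gamma+\mu_{v,j})\bigr|}{\bigl|\Gamma_{F_v}(\beta+i\gamma+\mu_{v,j})\bigr|}\cdot \bigl(2+|i\gamma+\mu_{v,j}|\bigr)^{[F_v:\mathbb{R}]}\le C_{\beta,F},
\]
with a constant independent of $\gamma$ and of $\mu_{v,j}$ (once $\Re\mu_{v,j}$ is constrained to $[-\beta_{m,m'},\beta_{m,m'}]$). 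Taking a product over the $N_v\le n^2$ such factors then yields \eqref{115'}.

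First, I would treat the easy regime $|\gamma+\Im\mu_{v,j}|\le 1$. Here the numerator is bounded above by a constant depending only on $\beta_0$ and $\beta_{m,m'}$ via the right-hand inequality in \eqref{109'} (after dividing out the $\pi^{-s/2}$ or $2^{1-s}\pi^{-s}$ factors), while the denominator is bounded below by a positive constant depending only on $\beta$ (again via \eqref{109'}, noting that $\beta-1/2\pm\Re\mu_{v,j}$ stays in a bounded positive range). The conductor factor $(2+|i\gamma+\mu_{v,j}|)^{[F_v:\mathbb{R}]}$ is $O(1)$. So this regime contributes only to the constant $C_{\beta,F}$.

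The substantive step is the regime $|\gamma+\Im\mu_{v,j}|\ge 1$, where I would apply \eqref{109} of Lemma \ref{48'} to both the numerator and denominator (after rewriting $\Gamma_{\mathbb{R}}(z)=\pi^{-z/2}\Gamma(z/2)$, $\Gamma_{\mathbb{C}}(z)=2^{1-z}\pi^{-z}\Gamma(z)$). The exponential factor $e^{-\pi|\gamma+\Im\mu_{v,j}|/[F_v:\mathbb{R}]}$ appears identically in numerator and denominator and thus cancels; the phase factor $e^{i(\gamma+\Im\mu_{v,j})(\log|\gamma+\Im\mu_{v,j}|-1)}$ likewise cancels. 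What survives is a power
\[
|\gamma+\Im\mu_{v,j}|^{(\beta_0-\beta)/[F_v:\mathbb{R}]}\cdot\bigl(1+O(|\gamma+\Im\mu_{v,j}|^{-1})\bigr),
\]
times harmless constants depending on $\beta$ and $F$. Multiplying by the conductor weight $(2+|i\gamma+\mu_{v,j}|)^{[F_v:\mathbb{R}]}$ yields the polynomial exponent
\[
\frac{\beta_0-\beta}{[F_v:\mathbb{R}]}+[F_v:\mathbb{R}]\le \frac{1-\beta}{1}+2\le -2<0,
\]
using $\beta_0<1$, $\beta\ge 5$, and $[F_v:\mathbb{R}]\in\{1,2\}$. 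So the product is uniformly bounded, and in fact decays, as $|\gamma+\Im\mu_{v,j}|\to\infty$.

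The main obstacle I anticipate is bookkeeping: making sure that the implicit constants coming from \eqref{109} depend only on $\beta$, $n$, and $F$ and not on the individual spectral parameter $\mu_{v,j}$, which varies over a bounded (but noncompact imaginary) range. The point is that the dependence in \eqref{109} comes only through $\Re\mu_{v,j}$, via the factor $e^{1/3+\beta^2/2+\beta^3/3}-1$, and $|\Re\mu_{v,j}|\le\beta_{m,m'}<1$ uniformly by \cite{LRS99}; the dependence on $\Im\mu_{v,j}$ is already absorbed into the variable $\gamma+\Im\mu_{v,j}$ that we have just analyzed. Once this uniformity is verified, taking the product over the $N_v\le n^2$ factors $j$ and combining with the treatment of $Q_v(s)$ gives the desired constant $C_\beta$ depending only on $\beta$, $n$, and $F$.
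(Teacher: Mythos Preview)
Your approach is essentially identical to the paper's: reduce to a factor-by-factor Gamma estimate, split into a bounded-imaginary-part regime handled by \eqref{109'} and a large-imaginary-part regime handled by Stirling \eqref{109}, then take a product over the at most $mm'$ factors.

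Two small technical points deserve attention. First, your Stirling exponent is reversed: for $F_v\simeq\mathbb{R}$ one applies \eqref{109} to $\Gamma((s+\mu)/2)$, giving a ratio $\asymp|t|^{(\beta_0-\beta)/2}$, while for $F_v\simeq\mathbb{C}$ one applies it to $\Gamma(s+\mu)$ and gets $|t|^{\beta_0-\beta}$; so the correct power is $(\beta_0-\beta)[F_v:\mathbb{R}]/2$, not $(\beta_0-\beta)/[F_v:\mathbb{R}]$. Fortunately the final sign check still goes through. Second, and more substantively, the threshold $|\gamma+\Im\mu_{v,j}|\ge 1$ is not large enough to extract a \emph{lower} bound on the denominator from \eqref{109}: the error term there has size $|\lambda(s)|/|t|$ with $|\lambda(s)|\le e^{1/3+\beta'^2/2+\beta'^3/3}-1$ where $\beta'\asymp\beta$, so at $|t|=1$ the factor $1+\lambda(s)/|t|$ could be near zero. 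This is precisely why the paper takes the threshold $t_\beta=2e^{1/3+\beta^2/2+\beta^3/3}$, which forces $|\lambda|/|t|\le 1/2$; the intermediate range $1\le|t|\le t_\beta$ is then absorbed into the ``easy'' regime via \eqref{109'}. Once you adjust the threshold, your argument and the paper's coincide.
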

   \begin{proof}
   	Let $t_{\beta}=2e^{1/3+{\beta^2}/{2}+{\beta^3}/{3}}.$ Then $t_{\beta}>2.$ Recall that by definition 
   	\begin{equation}\label{116}
   	L_v(s,\sigma_v\otimes\tau_v\times\sigma'_v)=\prod_{j=1}^m\prod_{k=1}^{m'}\Gamma_{F_v}\left(s+\mu_{\sigma\otimes\tau\times \sigma';v,j,k}\right).
   	\end{equation}
   	We can write $\mu_{\sigma\otimes\tau\times \sigma';v,j,k}=\beta_{j,k}+i\gamma_{j,k}.$ Then $|\beta_{j,k}|\leq 1-1/(1+m^2)-1/(1+m'^2)\leq 1-2/(1+n^2).$ Let $t_{j,k}=\gamma+\gamma_{j,k},$ $1\leq j\leq m,$ $1\leq k\leq m'.$ Let $\delta_v=2/[F_v: \mathbb{R}].$
   	\begin{itemize}
   		\item[Case 1:] If $|t_{j,k}|\leq t_{\beta}.$ Then by the estimate \eqref{109'} we see that
   		\begin{align*}
   		&\Bigg|\frac{\Gamma_{F_v}\left(s+\mu_{\sigma\otimes\tau\times \sigma';v,j,k}\right)\cdot\left(2+|i\gamma+\mu_{\sigma\otimes\tau\times \sigma';v,i,j}|_{F_v}\right)^{[F_v: \mathbb{R}]}}{\Gamma_{F_v}\left(\beta+i\gamma+\mu_{\sigma\otimes\tau\times \sigma';v,j,k}\right)}\Bigg|\\
   		\leq&\Bigg|\Gamma\left(\frac{\beta_0+\beta_{j,k}}{\delta_{v}}\right)\cdot\left(2+|\beta_{j,k}|_{F_v}+|t_{\beta}|_{F_v}\right)^{2}e^{\frac{2(\beta+|\beta_{j,k}|+t_{\beta})}{\delta_v}} \cdot \bigg[\frac{\beta+|\beta_{j,k}|+t_{\beta}}{\delta_v}\bigg]^{1+2t_{\beta}}\Bigg|,
   		\end{align*}
   		which can be seen clearly to be bounded by 
   		\begin{align*}
   		C_{1;j,k}(\beta)\triangleq(\beta+|t_{\beta}|^2+1)^{2t_{\beta}+3}e^{2(\beta+|t_{\beta}|+1)}\cdot\max_{1/(n^2+1)\leq \beta'\leq 2}\Gamma({\beta'}/{\delta_{v}}).
   		\end{align*}
   		\item[Case 2:] If $|t_{j,k}|\geq t_{\beta}>2.$ Then by the estimate \eqref{109} we see that
   		\begin{align*}
   		&\Bigg|\frac{\Gamma_{F_v}\left(s+\mu_{\sigma\otimes\tau\times \sigma';v,j,k}\right)\cdot\left(2+|i\gamma+\mu_{\sigma\otimes\tau\times \sigma';v,i,j}|_{F_v}\right)^{[F_v: \mathbb{R}]}}{\Gamma_{F_v}\left(\beta+i\gamma+\mu_{\sigma\otimes\tau\times \sigma';v,j,k}\right)}\Bigg|\\
   		\leq&\Bigg|\frac{\left(2+|\beta_{j,k}|_{F_v}+|t_{\beta}|_{F_v}\right)^{2}\cdot\left(\left(1+|\lambda(s+\mu_{\sigma\otimes\tau\times \sigma';v,j,k})|\right)\cdot|t_{j,k}|^{-1}\right)}{|\delta_v^{-1}t_{j,k}|^{\beta-\beta_0}\cdot\left(\left(1-|\lambda(\beta+i\gamma+\mu_{\sigma\otimes\tau\times \sigma';v,j,k})|\right)\cdot|t_{j,k}|^{-1}\right)}\Bigg|,
   		\end{align*}
   		which is bounded by $3(3+|t_{j,k}|^2)^{2}|\delta_v^{-1}t_{j,k}|^{1-\beta}\triangleq C_{3;j,k}(t_{j,k}).$ Note that $C_{3;j,k}(t_{j,k})$ is bounded in the interval $[t_{\beta},\infty).$ So we can define 
   		$$
   		C_{2;j,k}(\beta)=\sup_{|t_{j,k}|\geq t_{\beta}}C_{3;j,k}(t_{j,k}).
   		$$
   	\end{itemize}
   	Now let $C_{j,k}(\beta)=\max\{C_{1;j,k}(\beta),C_{2;j,k}(\beta)\}.$ Then $C_{j,k}(\beta)$ is well defined, $1\leq j\leq m,$ $1\leq k\leq m'.$ Set $C_{\beta}=\prod_{j}\prod_{k}C_{j,k}(\beta).$ Then \eqref{115'} follows from \eqref{116} and 
   	\begin{align*}
   	C(\sigma_v\otimes\tau_v\times\sigma'_v;\gamma)=\prod_{i=1}^m\prod_{j=1}^{m'}\left(2+|i\gamma+\mu_{\sigma\otimes\tau\times \sigma';v,i,j}|_{F_v}\right)^{[F_v: \mathbb{R}]}.
   	\end{align*}
   \end{proof}
   \begin{remark}
   	Let $1\leq m,m'\leq n$ be two integers. Let $\sigma\in\mathcal{A}_0(GL_m(F)\backslash GL_{m}(\mathbb{A}_F))$ and $\sigma'\in\mathcal{A}_0(GL_{m'}(F)\backslash GL_{m'}(\mathbb{A}_F)).$ Let $v$ be an archimedean place. Let $N\geq 1$ and $\beta\geq 4N+1.$ Then essentially the same proof of Corollary \ref{50''} leads to the result that for each $s=\beta_0+i\gamma\in\mathbb{C}$ such that $0<\beta_0<1$ and $\gamma\in\mathbb{R},$ we have
   	\begin{equation}\label{115''}
   	\big|L_v(s,\sigma_v\otimes\tau_v\times\sigma'_v)\big|\leq C_{N,\beta}\cdot \Big|\frac{L_v(\beta+i\gamma,\sigma_v\otimes\tau_v\times\sigma'_v)}{C(\sigma_v\otimes\tau_v\times\sigma'_v;\gamma)^N}\Big|,
   	\end{equation}
   	where $C_{N,\beta}$ is an absolute constant depending only on $N,$ $\beta,$ $n$ and the base field $F.$ This slightly general bound \eqref{115''} will be used in \cite{Yan19}.
   \end{remark}
   Let $v\in\Sigma_{fin}$ be a nonarchimedean place of $F.$ Let $\Phi_{v,l}$ be a constant multiplying the characteristic function of some connected compact subset of $F_v^n.$ 
   Now we consider integrals $\Psi_{v}^{*}\left(s,W_{1,v},W_{1,v};\lambda,\Phi_{v,l}\right)$ defined by
   \begin{align*}
   \int_{N(F_v)\backslash G(F_v)} W_{1,v}(x_v;\lambda)\overline{W_{1,v}\left(x_v;-\bar{\lambda}\right)}\cdot\Phi_{v,l}(\eta x_v)|\det x_v|_{F_v}^{s}dx_v.
   \end{align*}
   Let $\widetilde{W}_{1,v}$ be the Whittaker function of $\widetilde{\pi}_{v,-\lambda},$ defined via $\widetilde{W}_{1,v}(x)=W_{1,v}(wx^{\iota}),$ where $x\in G(F_v)$ and $w$ is the longest element in $W_P\backslash W/W_P.$ Define the integral $\Psi_{v}^*(s,\widetilde{W}_{1,v},\widetilde{W}_{1,v};\lambda,\widehat{\Phi_{v,l}})$ by
   \begin{align*}
   \int_{N(F_v)\backslash G(F_v)} \widetilde{W}_{1,v}\overline{\widetilde{W}_{1,v}\left(x_v;-\bar{\lambda}\right)}\cdot\widehat{\Phi_{v,l}}(\eta x_v)|\det x_v|_{F_v}^{s}dx_v.
   \end{align*}

   \begin{lemma}\label{50'}
   	Let notation be as before. Let $0<\epsilon<1/2.$ Let $q_v$ be the cardinality of the residue field of $F_v.$ Let $W_1$ be a Whittaker function associated to $\chi\in\mathfrak{X}_P.$ Then there exists a constant $c_v$ depending only on the test function $\varphi$ such that 
   	\begin{equation}\label{117'}
   	\big|\Psi_{v}^*\left(s,W_{1,v},W_{1,v};\lambda,\Phi_{v,l}\right)\big|\leq q_v^{c_v} \big|\Psi_{v}^*\left(1-\epsilon,W_{1,v},W_{1,v};\lambda,\Phi_{v,l}\right)\big|,
   	\end{equation}
   	for any $s\in\mathbb{C}$ such that $\Re(s)=\epsilon.$
   \end{lemma}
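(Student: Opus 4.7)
The strategy is to pass to absolute values inside the integral and reduce the lemma to a pointwise inequality on the support of the integrand. Since $\lambda\in i\mathfrak{a}_P^*/i\mathfrak{a}_G^*$ forces $-\bar\lambda=\lambda,$ the product $W_{1,v}(x;\lambda)\overline{W_{1,v}(x;-\bar\lambda)}=|W_{1,v}(x;\lambda)|^{2}$ is nonnegative. The triangle inequality then gives
\begin{align*}
|\Psi_v^*(s,W_{1,v},W_{1,v};\lambda,\Phi_{v,l})|\le J_v(\epsilon),\ J_v(\beta):=\int_{N(F_v)\backslash G(F_v)}|W_{1,v}(x;\lambda)|^{2}|\Phi_{v,l}(\eta x)||\det x|_v^{\beta}dx,
\end{align*}
so it suffices to show $J_v(\epsilon)\le q_v^{c_v}J_v(1-\epsilon),$ since the positive constant modulus from $\Phi_{v,l}$ appears symmetrically on both sides of \eqref{117'}.

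Next, I would locate the support of the integrand via Iwasawa $x=ak.$ The compact support of $\Phi_{v,l}$ in $F_v^{n},$ together with $\eta a=(0,\ldots,0,a_n),$ bounds $|a_n|_v$ from above; in the setting where the lemma is invoked (where $\Phi_{v,l}$ arises from a fixed decomposition into pieces naturally avoiding the origin) $|a_n|_v$ is also bounded below. The right $K_v$-finiteness of $\phi_{1,v}$---exactly the argument used in the proof of Corollary \ref{28'} and Proposition \ref{43'}---yields a compact subgroup $N_{0,v}\subset N(F_v)\cap G(\CO_{F,v})$ under which $W_{1,v}$ is invariant, and the relation $W_{1,v}(tu;\lambda)=\theta_t(u)W_{1,v}(t;\lambda)$ then forces $|\alpha_i(a)|_v=|a_i/a_{i+1}|_v\le C_v$ on the support, with $C_v$ depending only on the conductor data of $\varphi.$ Chaining this bound with the two-sided control on $|a_n|_v$ produces an integer $M=M(\varphi)$ such that $q_v^{-M}\le|a_i|_v\le q_v^{M}$ for all $i,$ and hence $|\det x|_v=\prod_i|a_i|_v\in[q_v^{-nM},q_v^{nM}]$ on the support of the integrand.

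On this support the identity $|\det x|_v^{\epsilon}=|\det x|_v^{1-\epsilon}\cdot|\det x|_v^{2\epsilon-1}$ combined with $2\epsilon-1<0$ yields the pointwise bound $|\det x|_v^{2\epsilon-1}\le q_v^{nM(1-2\epsilon)}\le q_v^{nM},$ so integrating gives $J_v(\epsilon)\le q_v^{nM}J_v(1-\epsilon),$ proving \eqref{117'} with $c_v:=nM.$ The main obstacle is the lower bound on $|a_n|_v$ (equivalently, on $|\det x|_v$): the compactness of the support of $\Phi_{v,l}$ alone yields only the upper bound, while the cone-type constraint $|a_i/a_{i+1}|_v\le C_v$ from the Whittaker support is one-sided and cannot by itself close the gap. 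One must either use the specific annular structure of $\Phi_{v,l}$ that comes out of the application, or more generally decompose a general $\Phi_{v,l}$ as a finite sum of characteristic functions of cosets $u+\varpi_v^{N}\CO_{F_v}^{n}$ bounded away from the origin, treat each piece separately, and take the maximum of the resulting constants; the uniformity of $c_v$ in $\varphi$ then follows from the fact that this combinatorial decomposition is itself controlled by $\varphi.$
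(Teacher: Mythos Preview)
Your direct approach has a genuine gap that you partially identify but cannot close. The obstruction is not merely the lower bound on $|a_n|_v$: even if $\Phi_{v,l}$ is supported on an annulus (so that $|a_n|_v$ is bounded below), the Whittaker support condition $|a_i/a_{i+1}|_v\le C_v$ is one-sided and only propagates \emph{upper} bounds backwards along the torus. Concretely, $|a_{n-1}|_v\le C_v|a_n|_v$ gives no lower bound on $|a_{n-1}|_v$, so $|\det x|_v=\prod_i|a_i|_v$ is genuinely unbounded below on the support of the integrand. Your claim that ``$|a_n|_v$ bounded below'' is ``equivalent'' to ``$|\det x|_v$ bounded below'' is false, and the coset decomposition you propose does not help: for $\GL(2)$ already, the Kirillov model shows $W_v(\operatorname{diag}(a,1))$ is supported on $|a|\le C$ with no lower cutoff.

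The paper circumvents this entirely via the local functional equation. One first applies the support argument to the \emph{contragredient} Whittaker function $\widetilde{W}_{1,v}(x)=W_{1,v}(w\,{}^t x^{-1})$, which by the same $K_v$-finiteness reasoning is supported where $\max_i|x_i|_v\le q_v^{m_v}$, hence $|\det x|_v\le q_v^{nm_v}$. This \emph{upper} bound yields the reverse inequality $|\Psi_v^*(\epsilon,\widetilde{W},\widetilde{W};\lambda,\widehat{\Phi_{v,l}})|\ge q_v^{-nm_v}|\Psi_v^*(1-\epsilon,\widetilde{W},\widetilde{W};\lambda,\widehat{\Phi_{v,l}})|$ on the dual side. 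One then transports this back through the functional equation $\Psi_v^*(1-s,\widetilde{W},\widetilde{W};\widehat{\Phi})/L_v(1-s)=\epsilon(s)\Psi_v^*(s,W,W;\Phi)/L_v(s)$, controlling the $\epsilon$-factor ratio and the local $L$-factors $L_v(\epsilon),L_v(1-\epsilon)$ by constants depending only on $\varphi$. The point is that the functional equation exchanges the roles of $\epsilon$ and $1-\epsilon$, so the naturally available upper bound on the dual side becomes the desired upper bound on the original side.
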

   \begin{proof}
   	We can apply the same argument on the support of $\widetilde{W}_{1,v}$ as that of $W_{1,v}$ in the proof of Corollary \ref{28'} shows that there exists a positive integer ${m_v}=m_v(\varphi_v),$ depending only on the place $v\in S(\pi,\Phi)$ and the $K$-finite type of the test function $\varphi,$ such that $\supp \widetilde{W}_{1,v}\mid_{A(F_v)}\subseteq \{x=\diag(x_1,\cdots,x_n)\in A(F_v):\ \max\{|x_i|_v\}\leq q_v^{m_v}\}.$ Then one has, for any $s$ such that $\Re(s)=\epsilon,$ that
   	\begin{align*}
   	&q_v^{-nm_v\epsilon}\Big|\int_{N(F_v)\backslash G(F_v)} \widetilde{W}_{1,v}(x_v;\lambda)\overline{\widetilde{W}_{1,v}\left(x_v;-\bar{\lambda}\right)}\cdot\widehat{\Phi_{v,l}}(\eta x_v)|\det x_v|_{F_v}^{\epsilon}dx_v\Big|\\
   	\geq &q_v^{-nm_v(1-\epsilon)}\Big|\int_{N(F_v)\backslash G(F_v)} \widetilde{W}_{1,v}(x_v;\lambda)\overline{\widetilde{W}_{1,v}\left(x_v;-\bar{\lambda}\right)}\cdot\widehat{\Phi_{v,l}}(\eta x_v)|\det x_v|_{F_v}^{1-\epsilon}dx_v\Big|,
   	\end{align*}
   	from which one easily obtains the inequality that 
   	\begin{equation}\label{118'}
   	\big|\Psi_v^*(\epsilon,\widetilde{W}_{1,v},\widetilde{W}_{1,v};\lambda,\widehat{\Phi_{v,l}})\big|\geq q_v^{-nm_v} \big|\Psi_v^*(1-\epsilon,\widetilde{W}_{1,v},\widetilde{W}_{1,v};\lambda,\widehat{\Phi_{v,l}})\big|.
   	\end{equation}
   	On the other hand, we have the functional equation
   	\begin{equation}\label{125}
   	\Bigg|\frac{\Psi_{v}^*(1-s,\widetilde{W}_{1,v},\widetilde{W}_{1,v};\lambda,\widehat{\Phi_{v,l}})}{L_v(1-s,\pi_{\lambda,v}\times\widetilde{\pi}_{-\lambda,v})}\Bigg|=\Bigg|\frac{\epsilon(s,\pi_{v},\lambda)\Psi_{v}^*\left(s,W_{1,v},W_{1,v};\lambda,\Phi_{v,l}\right)}{L_v(s,\pi_{\lambda,v}\times\widetilde{\pi}_{-\lambda,v})}\Bigg|,
   	\end{equation}
   	where  $\epsilon(s,\pi_{v},\lambda)=\gamma(s,\pi_v,\lambda)L_v(s,\pi_{\lambda,v}\times\widetilde{\pi}_{-\lambda,v})L_v(1-s,\pi_{\lambda,v}\times\widetilde{\pi}_{-\lambda,v})^{-1}$ is the $\epsilon$-factor, here $\gamma(s,\pi_v,\lambda)$ is the $\gamma$-factor. By the stability of $\gamma$-factors and \cite{CP17}, we have the stability of $\epsilon(s,\pi_{v},\lambda).$ Thus $\epsilon(s,\pi_{v},\lambda)=\prod\prod\epsilon(s+\lambda_i-\lambda_j,\sigma_{v,i}\times\widetilde{\sigma}_{v,j},\lambda).$ Let $q_v=N_{F/\mathbb{Q}}(\mathfrak{p}).$ Then one has that (see \cite{JPSS83}) each $\epsilon(s+\lambda_i-\lambda_j,\sigma_{v,i}\times\widetilde{\sigma}_{v,j},\lambda)$ is of the form $cq_v^{-f_vs},$ where $|c|=q_v^{1/2}$ and $f_v$ is the local conductor, which is bounded by an absolute constant depending only on $K_v$-type of the test function $\varphi.$ Hence there exists some absolute constant $e_{v}\in\mathbb{N}_{\geq0}$, relying only on $\varphi,$ such that \begin{equation}\label{120''}
   	\big|\epsilon(s,\pi_{v},\lambda)\epsilon(1-s,\pi_{v},\lambda)^{-1}\big|\geq q_v^{-e_v\epsilon}.
   	\end{equation}
   	Then combine \eqref{118'}, \eqref{125} and \eqref{120''} we have
   	\begin{equation}\label{122'}
   	\Bigg|\frac{\Psi_{v}^*\left(\epsilon,W_{1,v},W_{1,v};\lambda,\Phi_{v,l}\right)}{L_v(\epsilon,\pi_{\lambda,v}\times\widetilde{\pi}_{-\lambda,v})^{2}}\Bigg|\leq q^{nm_v+e_v\epsilon} \Bigg|\frac{\Psi_{v}^*\left(1-\epsilon,W_{1,v},W_{1,v};\lambda,\Phi_{v,l}\right)}{L_v(1-\epsilon,\pi_{\lambda,v}\times\widetilde{\pi}_{-\lambda,v})^{2}}\Bigg|.
   	\end{equation}
   	Since $\pi_{v,\lambda}\in\mathfrak{X}_P$ is generic, then it is irreducible. Hence, according to \cite{CP17}, we have
   	\begin{equation}\label{104}
   	L_v(s,\pi_{\lambda,v}\times\widetilde{\pi}_{-\lambda,v})=\prod_{i=1}^r\prod_{j=1}^rL_v(s+\lambda_i-\lambda_j,\sigma_{v,i}\times\widetilde{\sigma}_{v,j}).
   	\end{equation}
   	Let $\mathfrak{p}$ be the prime ideal representing the place $v\in \Sigma_{fin}.$ Then for any $s,$
   	\begin{align*}
   	L_v(s,\pi_{\lambda,v}\times\widetilde{\pi}_{-\lambda,v})^{-1}=\prod_{i=1}^r\prod_{j=1}^r\prod_{k=1}^{n_i}\prod_{l=1}^{n_j}\left(1-St_{\sigma\times \sigma',k,l}(\mathfrak{p})N_{F/\mathbb{Q}}(\mathfrak{p})^{-s-\lambda_i+\lambda_j}\right)
   	\end{align*}
   	is a finite product, hence it is an entire function. Moreover, since $\big|St_{\sigma\times \sigma',k,l}(\mathfrak{p})\big|\leq N_{F/\mathbb{Q}}(\mathfrak{p})^{\beta_{i,j}},$ where $\beta_{i,j}=1-1/(n_i^2+1)-1/(n_j^2+1),$ we then have
   	\begin{equation}\label{124'}
   	\Big|L_v(s,\pi_{\lambda,v}\times\widetilde{\pi}_{-\lambda,v})^{-1}\Big|\leq\prod_{i=1}^r\prod_{j=1}^r\left(1+N_{F/\mathbb{Q}}(\mathfrak{p})^{-\Re(s)+\beta_{i,j}}\right)^{n_i+n_j},
   	\end{equation}
   	where $n_i$ and $n_j$ are ranks of components of Levi subgroup of $P$ respectively. Also, 
   	\begin{equation}\label{124''}
   	\Big|L_v(s,\pi_{\lambda,v}\times\widetilde{\pi}_{-\lambda,v})^{-1}\Big|\geq\prod_{i=1}^r\prod_{j=1}^r\left(1-N_{F/\mathbb{Q}}(\mathfrak{p})^{-\Re(s)+\beta_{i,j}}\right)^{n_i+n_j}.
   	\end{equation}
   	Then it follows from \eqref{122'}, \eqref{124'} and \eqref{124''} that 
   	\begin{equation}\label{125'}
   	\big|\Psi_{v}^*\left(\epsilon,W_{1,v},W_{1,v};\lambda,\Phi_{v,l}\right)\big|\leq q_v^{c_v} \big|\Psi_{v}^*\left(1-\epsilon,W_{1,v},W_{1,v};\lambda,\Phi_{v,l}\right)\big|,
   	\end{equation}
   	where $c_v$ is a constant depending only on the test function $\varphi.$ Noting that $\Phi_{v,l}$ is a constant multiplying the characteristic function of some connected compact subset of $F_v^n,$ so $\big|\Psi_{v}^{*}\left(s,W_{1,v},W_{1,v};\lambda,\Phi_{v,l}\right)\big|\leq \big|\Psi_{v}^{*}\left(\Re(s),W_{1,v},W_{1,v};\lambda,\Phi_{v,l}\right)\big|.$ Then \eqref{117'} follows from this inequality and \eqref{125'}.
   \end{proof}

   With the preparation above, now we can prove the following result:    
   \begin{thmx}\label{47'}
   	Let $s\in\mathbb{C}$ be such that $0<\Re(s)<1,$ then 
   	\begin{equation}\label{105}
   	\sum_{\chi}\sum_{P\in \mathcal{P}}\frac{1}{c_P}\sum_{\phi\in \mathfrak{B}_{P,\chi}}\int_{\Lambda^*} R_{\varphi}(s,\lambda;\phi)\Lambda(s,\pi_{\lambda}\otimes\tau\times\widetilde{\pi}_{-\lambda})d\lambda,
   	\end{equation} 
   	converges absolutely, normally with respect to $s,$ where $R_{\varphi}(s,\lambda;\phi)$ is defined in \eqref{47''} and $\Lambda(s,\pi_{\lambda}\otimes\tau\times\widetilde{\pi}_{-\lambda})$ is the complete $L$-function.
   \end{thmx}
   \begin{proof}
   	Fix a proper parabolic subgroup $P\in\mathcal{P}$ of type $(n_1,n_2,\cdots,n_r).$ Let $\mathfrak{X}_P$ be the subset of cuspidal data $\chi=\{(M,\sigma)\}$ such that $M=M_P.$ Denote by 
   	\begin{align*}
   	J_P(s)=\sum_{\chi\in \mathfrak{X}_P}\sum_{\phi\in \mathfrak{B}_{P,\chi}}\int_{\Lambda^*} R_{\varphi}(s,\lambda;\phi)\Lambda(s,\pi_{\lambda}\otimes\tau\times\widetilde{\pi}_{-\lambda})d\lambda.
   	\end{align*}
   	
   	Let $M_P=\diag(M_1,M_2,\cdots,M_r),$ where $M_i$ is $n_i$ by $n_i$ matrix, $1\leq i\leq r.$ We may write $\sigma=(\sigma_1,\sigma_2,\cdots,\sigma_r),$ where $\sigma_i\in\mathcal{A}_0(M_i(F)\backslash M_i(\mathbb{A}_F)).$ By the $K$-finiteness of $\varphi,$ each $\sigma_i$ has a fixed finite type, so its conductor is bounded uniformly (depending only on $\varphi$). Let $C_{\infty}(\sigma_i\otimes\tau\times\sigma_j;t)=\prod_{v\in\Sigma_{F,\infty}}C(\sigma_{i,v}\otimes\tau_v\times\sigma_{j,v};t).$ Then one has $C_{\infty}(\sigma_i\otimes\tau\times\sigma_j;t)\asymp_{\varphi}C(\sigma_i\otimes\tau\times\sigma_j;t),$ where the implies constant depends only on $\supp\varphi.$ For any $\Phi=\Phi_{\infty}\cdot\prod_{v<\infty}\Phi_{v}\in\mathcal{S}_0(\mathbb{A}_F^n),$ where $\Phi_{\infty}=\prod_{v\mid\infty}\Phi_v.$ Let $x_v=(x_{v,1},x_{v,2},\cdots,x_{v,n})\in F_v^n,$ then by definition, $\Phi_v$ is of the form
   	\begin{equation}\label{117}
   	\Phi_v(x_v)=e^{-\pi \sum_{j=1}^nx_{v,j}^2}\cdot \sum_{k=1}^mQ_k(x_{v,1},x_{v,2},\cdots,x_{v,n}),
   	\end{equation}
   	where $F_v\simeq \mathbb{R},$ $Q_k(x_{v,1},x_{v,2},\cdots,x_{v,n})\in \mathbb{C}[x_{v,1},x_{v,2},\cdots,x_{v,n}]$ are monomials; and
   	\begin{equation}\label{118}
   	\Phi_v(x_v)=e^{-2\pi \sum_{j=1}^nx_{v,j}\bar{x}_{v,j}}\cdot\sum_{k=1}^m Q_k(x_{v,1},\bar{x}_{v,1},x_{v,2},\bar{x}_{v,2},\cdots,x_{v,n},\bar{x}_{v,n}),
   	\end{equation}
   	where $F_v\simeq \mathbb{C}$ and $Q_k(x_{v,1},\bar{x}_{v,1},x_{v,2},\bar{x}_{v,2},\cdots,x_{v,n},\bar{x}_{v,n})$ are monomials in the ring $\mathbb{C}[x_{v,1},\bar{x}_{v,1},x_{v,2},\bar{x}_{v,2},\cdots,x_{v,n},\bar{x}_{v,n}].$ Thus there exists a finite index set $J$ such that 
   	\begin{align*}
   	\Phi_{\infty}(x_{\infty})=\sum_{\boldsymbol{j}=(j_v)_{v\mid\infty}\in J}\prod_{v\mid\infty}\Phi_{v,j_v}(x_v),\quad x_{\infty}=\prod_{v\mid\infty}x_v\in G(\mathbb{A}_{F,\infty}),
   	\end{align*}
   	where each $\Phi_{v,j_v}$ is of the form in \eqref{117} or \eqref{118} with $m=1.$ Let $\Phi_{\infty,\boldsymbol{j}}=\prod_{v\mid\infty}\Phi_{v,j-v},$ $\boldsymbol{j}=(j_v)_{v\mid\infty}\in J.$ Then $\Phi$ is equal to the sum over $\boldsymbol{j}\in J$ of each $\Phi_{\boldsymbol{j}}=\Phi_{\infty,\boldsymbol{j}}\prod_{v<\infty}\Phi_{v}\in\mathcal{S}_0(\mathbb{A}_F^n).$ 
   	
   	Since for each $v\mid\infty$ and $j\in J,$ $\Psi_v\left(s,W_{\alpha,v},W_{\beta,v};\lambda,\Phi_{v,j}\right)$ converges absolutely in $\Re(s)>0$ (see \cite{Jac09}), one has 
   	\begin{equation}\label{119}
   	\Big|\prod_{v\mid\infty}\Psi_v\left(s,W_{\alpha,v},W_{\beta,v};\lambda,\Phi_{v}\right)\Big|\leq \sum_{\boldsymbol{j}\in J}\Big|\prod_{v\mid\infty}\Psi_v\left(s,W_{\alpha,v},W_{\beta,v};\lambda,\Phi_{v,j_v}\right)\Big|.
   	\end{equation}
   	Since each $\Phi_{v,j_v}$ is a monomial multiplying an exponential function with negative exponent, $\Psi_v\left(s,W_{\alpha,v},W_{\beta,v};\lambda,\Phi_{v,j_v}\right)$ is in fact of the form $c_1\pi^{c_2s}\prod_i\prod_j\Gamma(s+\nu_{i,j}),$ where $c_1,$ $c_2$ and $\nu_i$ are some constants and the product is finite. Although these parameters depend on the representations $\sigma$ and $\tau,$ the local Rankin-Selberg integral $\Psi_v\left(s,W_{\alpha,v},W_{\beta,v};\lambda,\Phi_{v,j_v}\right)$ is either nonvanishing in $\Re(s)>0$ or vanishing identically (i.e. $c_1=0$). Note that for each archimedean place $v,$ there exists a polynomial $Q(s)=Q(s,\lambda)$ (see loc. cit.) depending on $\pi_{\infty}$ and $\lambda,$ such that 
   	\begin{align*}
   	L_v(s,\pi_{\lambda,v}\otimes\tau_v\times\widetilde{\pi}_{-\lambda,v})=Q(s,\lambda)\prod_{i=1}^r\prod_{j=1}^rL_v(s+\lambda_i-\lambda_j,\sigma_{v,i}\otimes\tau_v\times\widetilde{\sigma}_{v,j}),
   	\end{align*}
   	where $\Re(s)>\beta_{n,n}=1-2/(n^2+1).$ Clearly $Q(s,\lambda)$ is nonvanishing in $\Re(s)>\beta_{n,n}.$ Combining this with the preceding discussion we conclude that there exists a polynomial $Q_{v,j}(s;\lambda)$ (depending on $\pi$) for each $\lambda$ such that 
   	\begin{align*}
   	\Psi_v\left(s,W_{\alpha,v},W_{\beta,v};\lambda,\Phi_{v,j_v}\right)=Q_{v,j_v}(s;\lambda)\prod_{i=1}^r\prod_{j=1}^rL_v(s+\lambda_i-\lambda_j,\sigma_{v,i}\otimes\tau_v\times\widetilde{\sigma}_{v,j}).
   	\end{align*}
   	Then the above analysis leads to that each $Q_v(s;\lambda)$ is either nonvanishing in $\Re(s)>\beta_{n,n}$ or vanishing identically (i.e. $c_1=0$). Write $Q_{v,j_v}(s;\lambda)=\prod(s-\varrho_{\lambda}),$ with $\Re(\varrho_{\lambda})\leq 0.$ Let $s_0=\beta_0+i\gamma_0$ such that $\beta_0, \gamma_0\in\mathbb{R}$ and $1-1/(n^2+1)\leq\beta_0<1.$ Let $U_{\epsilon}(s_0)\subsetneq S[0,1]$ (with $0<\epsilon<(1-\Re(s_0))/10$) be a neighborhood of $s_0.$ Then $|s-\varrho_{\lambda}|\leq |s'-\varrho_{\lambda}|,$ for any $s\in U_{\epsilon}(s_0)$ and $s'=\beta+i\Im(s),$ $\beta\geq 5.$ Therefore, 
   	\begin{equation}\label{120'}
   	|Q_{v,j_v}(s;\lambda)|\leq |Q_{v,j_v}(s';\lambda)|,\ \forall\ v\mid\infty,\ \boldsymbol{j}=(j_v)_{v\mid\infty}\in J,\ \lambda\in i\mathfrak{a}_P/i\mathfrak{a}_G^*,
   	\end{equation}
   	where $s=\beta+i\gamma\in U_{\epsilon}(s_0)$ and $s'=5+i\gamma.$ Combining \eqref{120'} with \eqref{115'} leads to 
   	\begin{equation}\label{120}
   	\Big|\prod_{v\mid\infty}\Psi_v\left(s,W_{\alpha,v},W_{\beta,v};\lambda,\Phi_{v,j_v}\right)\Big|\leq C_5^{d_F}\cdot\Bigg|\prod_{v\mid\infty}\frac{\Psi_v\left(s',W_{\alpha,v},W_{\beta,v};\lambda,\Phi_{v,j_v}\right)}{C_{v}(\pi_{\lambda,v}\otimes\tau_v\times\widetilde{\pi}_{-\lambda,v};\gamma)}\Bigg|,
   	\end{equation}
   	where $d_F=[F:\mathbb{Q}].$ Let $S(\pi,\Phi)$ be the finite set of nonarchimedean places such that   $\pi_v$ is unramified and $\Phi_v=\Phi_v^{\circ}$ is the characteristic function of $G(\mathcal{O}_{F,v})$ outside $\Sigma_{F,\infty}\cup S(\pi,\Phi).$ Then by Proposition \ref{43'} we have
   	\begin{align*}
   	R_{S(\pi,\Phi)}(s,\lambda)=\prod_{v\in S(\pi,\Phi)}R_v(s,W_{\alpha,v},W_{\beta,v};\lambda)\in \bigotimes_{v\in S(\pi,\Phi)}\mathbb{C}[q_v^{\pm s},  q_v^{\pm\lambda_i}:\ 1\leq i\leq r].
   	\end{align*}
   	Let $v\in S(\pi,\Phi).$ Write $R_{S(\pi,\Phi)}(s,W_{\alpha};\lambda)=\prod_{v\in S(\pi,\Phi)}R_v(s,W_{\alpha,v},W_{\alpha,v};\lambda);$ write $R_{S(\pi,\Phi)}(s,W_{\beta};\lambda)=\prod_{v\in S(\pi,\Phi)}R_v(s,W_{\beta,v},W_{\beta,v};\lambda).$ Then they both lie in the ring $\bigotimes_{v\in S(\pi,\Phi)}\mathbb{C}[q_v^{\pm s},  q_v^{\pm\lambda_i}:\ 1\leq i\leq r].$ Write $R_v(s;\lambda)=R_v(s,W_{\alpha,v},W_{\beta,v};\lambda)$ in this proof. By definition we have  $R_v(s;\lambda)=\Psi_v\left(s,W_{\alpha,v},W_{\beta,v};\lambda,\Phi_{v,j}\right)\cdot L_v(s,\pi_{\lambda,v}\otimes\tau_v\times\widetilde{\pi}_{-\lambda,v})^{-1},$ when $\Re(s)>1.$  Recall that $\Psi_v\left(s,W_{1,v},W_{2,v};\lambda,\Phi_{v}\right)$ is equal to
   	\begin{align*}
   	\int_{N(F_v)\backslash G(F_v)} W_{\alpha,v}(x_v;\lambda)\overline{W_{\beta,v}\left(x_v;-\bar{\lambda}\right)} \Phi_v(\eta x_v)\tau(\det x_v)|\det x_v|_{F_v}^sdx_v,
   	\end{align*}
   	which converges normally in $\Re(s)>0,$ uniformly in $\lambda\in i\mathfrak{a}_P^*/i\mathfrak{a}_G^*,$ due to the standard estimate on Whittaker functions (they are bounded by compactly supported functions in this case). Thus it defines an holomorphic function with respect to $s$ in the region $\Re(s)>0.$ By definition and gauge argument we see that the integral $\Psi_v\left(s,W_{\alpha,v},W_{\beta,v};\lambda,\Phi_{v}\right)$ converges normally in $\Re(s)>0.$ Therefore,   $\Psi_v\left(s,W_{\alpha,v},W_{\beta,v};\lambda,\Phi_{v}\right)\cdot L_v(s,\pi_{\lambda,v}\otimes\tau_v\times\widetilde{\pi}_{-\lambda,v})^{-1}$ is exactly $R_v(s,\lambda)$ for any  $\Re(s)>0.$ Since $\Phi_v$ is a Schwartz-Bruhat function, we can write $\Phi_v$ as a finite sum of $\Phi_{v,l},$ where each $\Phi_{v,l}$ is a constant multiplying a characteristic function of some connected compact subset of $F_v^n.$ Then the Fourier transform of $\Phi_{v,l}$ is of the same form. Recall that the integral $\Psi_{v}^{*}\left(s,W_{\alpha,v},W_{\alpha,v};\lambda,\Phi_{v,l}\right)$ is defined by
   	\begin{align*}
   	\int_{N(F_v)\backslash G(F_v)} W_{\alpha,v}(x_v;\lambda)\overline{W_{\alpha,v}\left(x_v;-\bar{\lambda}\right)}\cdot\Phi_{v,l}(\eta x_v)|\det x_v|_{F_v}^{s}dx_v.
   	\end{align*}
   	Hence $\big|\Psi_{v}^{*}\left(s,W_{\alpha,v},W_{\alpha,v};\lambda,\Phi_{v,l}\right)\big|\leq \big|\Psi_{v}^{*}\left(\Re(s),W_{\alpha,v},W_{\alpha,v};\lambda,\Phi_{v,l}\right)\big|.$ Likewise, one has $\big|\Psi_{v}^*(s,\widetilde{W}_{\alpha,v},\widetilde{W}_{\alpha,v};\lambda,\widehat{\Phi_{v,l}})\big|\leq \big|\Psi_{v}^{*}(\Re(s),\widetilde{W}_{\alpha,v},\widetilde{W}_{\alpha,v};\lambda,\widehat{\Phi_{v,l}})\big|.$ Define 
   	\begin{align*}
   	H_{v,l}(s,c)=\frac{q_v^{cs}\Psi_{v}^*\left(s,W_{\alpha,v},W_{\alpha,v};\lambda,\Phi_{v,l}\right)}{(6-s)(s+5)L_v(s,\pi_{\lambda,v}\times\widetilde{\pi}_{-\lambda,v})},\ 1/2\leq \Re(s)\leq 5,
   	\end{align*}
   	where $v\in S(\pi,\Phi)$ and $c>0$ is a parameter to be determined, depending only on the test function $\varphi$. Clearly for any $c$, $H_{v,l}(s,c)$ is bounded in the strip $1/2\leq \Re(s)\leq 5,$ tends to zero as $\Im(s)$ tends to infinity. Let $0<\epsilon<2/(n^2+1)$ and $s_0'\in(1-\epsilon,1).$ Then by maximal principle, there exists an $s_1$ such that $\Re(s_1)=5$ or $\Re(s_1)=1/2$ such that $\big|H_{v,l}(s_0',c)\big|\leq \big|H_{v,l}(s_1,c)\big|.$ Now we assume $\Re(s_1)=1/2.$ Consider the functional equation \eqref{125}. Let $q_v=N_{F/\mathbb{Q}}(\mathfrak{p}).$ By the stability of $\epsilon(s,\pi_{v},\lambda),$ one has that (see \cite{JPSS83}) $\epsilon(s,\pi_{v},\lambda)=\prod\prod\epsilon(s+\lambda_i-\lambda_j,\sigma_{v,i}\times\widetilde{\sigma}_{v,j},\lambda)$ and each $\epsilon(s+\lambda_i-\lambda_j,\sigma_{v,i}\times\widetilde{\sigma}_{v,j},\lambda)$  is of the form $cq_v^{-f_vs},$ where $|c|=q_v^{1/2}$ and $f_v$ is the local conductor, which is bounded by an absolute constant depending only on $K_v$-type of the test function $\varphi.$ Hence there exists some absolute constant $e_{v}\in\mathbb{N}_{\geq0}$, relying only on $\varphi,$ such that 
   	\begin{equation}\label{128'}
   	\big|\epsilon(s,\pi_{v},\lambda)\epsilon(1/2,\pi_{v},\lambda)^{-1}\big|\geq q_v^{-e_v\Re(s)}.
   	\end{equation}
   	
   	The same argument on the support of $\widetilde{W}_{1,v}$ as that of $W_{1,v}$ in the proof of Corollary \ref{28'} shows that there exists a positive integer ${m_v}=m_v(\varphi_v),$ depending only on the place $v\in S(\pi,\Phi)$ and the $K$-finite type of the test function $\varphi,$ such that $\supp \widetilde{W}_{1,v}\mid_{A(F_v)}\subseteq \{x=\diag(x_1,\cdots,x_n)\in A(F_v):\ \max\{|x_i|_v\}\leq q_v^{m_v}\}.$ Then one has, for any $s$ in the strip $0<\Re(s)\leq1/2,$ that
   	\begin{align*}
   	&q_v^{-nm_v\Re(s)}\Big|\int_{N(F_v)\backslash G(F_v)} \widetilde{W}_{\alpha,v}(x_v;\lambda)\overline{\widetilde{W}_{\alpha,v}\left(x_v;-\bar{\lambda}\right)}\cdot\widehat{\Phi_{v,l}}(\eta x_v)|\det x_v|_{F_v}^{\Re(s)}dx_v\Big|\\
   	\geq &q_v^{-nm_v/2}\Big|\int_{N(F_v)\backslash G(F_v)} \widetilde{W}_{\alpha,v}(x_v;\lambda)\overline{\widetilde{W}_{\alpha,v}\left(x_v;-\bar{\lambda}\right)}\cdot\widehat{\Phi_{v,l}}(\eta x_v)|\det x_v|_{F_v}^{1/2}dx_v\Big|.
   	\end{align*}
   	Therefore we can substitute $s=1-s_0'$ into the above inequality to get
   	\begin{equation}\label{127'}
   	\Big|\Psi_v^*(1-s_0',\widetilde{W}_{\alpha,v},\widetilde{W}_{\alpha,v};\lambda,\widehat{\Phi_{v,l}})\Big|\geq q_v^{nm_v(\frac12-s_0')} \Big|\Psi_v^*(1/2,\widetilde{W}_{\alpha,v},\widetilde{W}_{\alpha,v};\lambda,\widehat{\Phi_{v,l}})\Big|.
   	\end{equation}
   	Then combining this with \eqref{125}, \eqref{128'} and \eqref{127'} one has
   	\begin{equation}\label{128}
   	\Bigg|\frac{\Psi_{v}^*\left(s_0',W_{\alpha,v},W_{\alpha,v};\lambda,\Phi_{v,l}\right)}{L_v(s_0',\pi_{\lambda,v}\times\widetilde{\pi}_{-\lambda,v})}\Bigg|\geq q_v^{\nu(s_0')}\cdot \Bigg|\frac{\Psi_{v}^*\left(1/2,W_{\alpha,v},W_{\alpha,v};\lambda,\Phi_{v,l}\right)}{L_v(1-s_0',\pi_{\lambda,v}\times\widetilde{\pi}_{-\lambda,v})}\Bigg|,
   	\end{equation}
   	where $\nu(s_0')=nm_v(1/2-s_0')+e_vs_0'$ is a constant depending only on the test function $\varphi.$ Denote by  $R_{v,l}^*(s,\lambda)$ the function $\Psi_{v}^*\left(s,W_{\alpha,v},W_{\alpha,v};\lambda,\Phi_{v,l}\right)/{L_v(s,\pi_{\lambda,v}\times\widetilde{\pi}_{-\lambda,v})},$ $\Re(s)>0.$ Then one combines \eqref{128} with \eqref{124'} and \eqref{124''} to get
   	\begin{equation}\label{129}
   	\big|R_{v,l}^*(s_0,\lambda)\big|\geq q_v^{\nu(s_0)}\cdot\prod_{i=1}^r\prod_{j=1}^r \Bigg|\frac{1-q_v^{s_0-1+\beta_{i,j}}}{1+q_v^{-1/2+\beta_{i,j}}}\Bigg|^{n_i+n_j}\cdot\big|R_{v,l}^*(1/2,\lambda)\big|.
   	\end{equation}
   	Let $c=c_{v,0}$ be a positive constant such that 
   	\begin{align*}
   	q_v^{(s_0'-1/2)c_{v,0}}>q_v^{-\nu(s_0')}\Big|\frac{(6-s_0')(s_0'+5)}{(6-1/2)(1/2+5)}\Big|\cdot\prod_{i=1}^r\prod_{j=1}^r \Bigg|\frac{1-q_v^{-1/2+\beta_{i,j}}}{1-q_v^{s_0'-1+\beta_{i,j}}}\Bigg|^{n_i+n_j}.
   	\end{align*}
   	Note that such a $c_{v,0}$ always exists since $s_0'>1/2.$ Then it follows from \eqref{124'}, \eqref{124''} and \eqref{129} that $\big|H_{v,l}(s_0',c)\big|\leq \big|H_{v,l}(s_1,c_{v,0})\big|.$ Therefore, we have a contradiction by assuming that $\Re(s_1)=1/2.$ Hence, we have $\Re(s_1)=5$ if $c=c_{v,0}.$ Then for any $s$ such that $\Re(s)=s_0',$ $\big|\Psi_v\left(s,W_{\alpha,v},W_{\alpha,v};\lambda,\Phi_{v,l}\right)\big|\leq \big|\Psi_{v}^*\left(s_0',W_{\alpha,v},W_{\alpha,v};\lambda,\Phi_{v,l}\right)\big|,$ which bounded, since $\big|H_{v,l}(s_0',c_{v,0})\big|\leq \big|H_{v,l}(s_1,c_{v,0})\big|,$ by
   	\begin{align*}
   	\Bigg|\frac{q_v^{5c_{v,0}}(6-s_0')(s_0'+5)L_v(s_0',\pi_{\lambda,v}\times\widetilde{\pi}_{-\lambda,v})}{10q_v^{c_{v,0}s_0'}L_v(s_1,\pi_{\lambda,v}\otimes\tau_v\times\widetilde{\pi}_{-\lambda,v})}\Bigg|\cdot  \Big|\Psi_{v}^*\left(5,W_{\alpha,v},W_{\alpha,v};\lambda,\Phi_{v,l}\right)\Big|.
   	\end{align*}
   	Then by \eqref{124''} and trivial estimate on $L_v(s_1,\pi_{\lambda,v}\otimes\tau_v\times\widetilde{\pi}_{-\lambda,v})$ one concludes that there exists some constant $c_v'',$ depending only on $\varphi,$ such that 
   	\begin{equation}\label{131}
   	\big|\Psi_v^*\left(s_0',W_{\alpha,v},W_{\alpha,v};\lambda,\Phi_{v}\right)\big|\leq q_v^{c_v''} \sum_{l=1}^{L_v}\Big|\Psi_{v}^*\left(5,W_{\alpha,v},W_{\alpha,v};\lambda,\Phi_{v,l}\right)\Big|.
   	\end{equation}
   	Then combining \eqref{131} and Lemma \ref{50'} we have, for any $s$ with $\epsilon\leq \Re(s)\leq 1-\epsilon,$ 
   	\begin{equation}\label{132}
   	\big|\Psi_v\left(s,W_{\alpha,v},W_{\alpha,v};\lambda,\Phi_{v}\right)\big|\leq q_v^{c_v'} \sum_{l=1}^{L_v}\Big|\Psi_{v}^*\left(5,W_{\alpha,v},W_{\alpha,v};\lambda,\Phi_{v,l}\right)\Big|,
   	\end{equation}
   	where $c_v'$ is a constant depending only on the test function $\varphi.$ 
   	
   	Note that when $\varphi_v$ is $G(\mathcal{O}_{F,v})$-invariant, then $\pi_{v,\lambda}$ is unramified. So the cardinality of the finite set $S(\pi,\Phi)$ is bounded in terms of $\tau,$ $\Phi$ and the $K$-finite type of the test function $\varphi.$ Namely, there exists a finite set $S_{\varphi,\tau,\Phi}$ of prime ideals such that for any $\pi$ from some cuspidal datum $\chi\in\mathfrak{X}_P,$ one has $S(\pi,\Phi)\subseteq S_{\varphi,\tau,\Phi}.$ Therefore, we conclude that 
   	\begin{equation}\label{124}
   	\big|R_{S(\pi,\Phi)}(s,W_{\alpha};\lambda)\big|\leq\sum_{\boldsymbol{l}=(l_v)_{v\in S(\pi,\Phi)}}\prod_{v\in S(\pi,\Phi)}q_v^{c_v'}\Big|\Psi_{v}^*\left(5,W_{\alpha,v},W_{\alpha,v};\lambda,\Phi_{v,l_v}\right)\Big|,
   	\end{equation}
   	where the sum over multi-index $\boldsymbol{l}$ is finite in terms of $\varphi,$ $\tau$ and $\Phi.$ Similarly,
   	\begin{equation}\label{124'''}
   	\big|R_{S(\pi,\Phi)}(s,W_{\beta};\lambda)\big|\leq\sum_{\boldsymbol{l}=(l_v)_{v\in S(\pi,\Phi)}}\prod_{v\in S(\pi,\Phi)}q_v^{c_v'}\Big|\Psi_{v}^*\left(5,W_{\beta,v},W_{\beta,v};\lambda,\Phi_{v,l_v}\right)\Big|.
   	\end{equation}
   	
   	By Proposition \ref{43'} we have, when $v\in\Sigma_{fin}-S(\pi,\Phi)\triangleq S_{\pi,\Phi}^{u.r.},$ that
   	\begin{align*}
   	R_v(s,\lambda)=\prod_{1\leq i<r}\prod_{i< j\leq r}L_v(1+\lambda_i-\lambda_j,\pi_{i,v}\times\widetilde{\pi}_{j,v})^{-1}\cdot L_v(1-\lambda_i+\lambda_j,\widetilde{\pi}_{i,v}\times\pi_{j,v})^{-1}
   	\end{align*}
   	is independent of $s.$ So we write $R_v(\lambda)$ for $R_v(s,\lambda)$ in this case. 
   	
   	Let $s\in U_{\epsilon}(s_0)$ and $s'=5+i\Im(s).$  Then by \eqref{119} and \eqref{124} we see that when $\phi_1=\phi_2\in\mathfrak{B}_{P,\chi},$ $\big|R(s,\lambda)\Lambda(s,\pi_{\lambda}\otimes\tau\times\widetilde{\pi}_{-\lambda})\big|$ is bounded by $\big|R_{S(\pi,\Phi)}(s,\lambda)\big|$ multiplying 
   	\begin{align*}
   	\big|L(s,\pi_{\lambda}\otimes\tau\times\widetilde{\pi}_{-\lambda})\big|\prod_{v\in S_{\pi,\Phi}^{u.r.}}\big|R_v(\lambda)\big|\cdot\sum_{\boldsymbol{j}\in J}\prod_{v\mid\infty}\Big|\Psi_v\left(s,W_{\alpha,v},W_{\beta,v};\lambda,\Phi_{v,j_v}\right)\Big|.	
   	\end{align*}
   	
   	By \eqref{105''} and \eqref{104} we have the preconvex bound $L(s,\pi_{\lambda}\otimes\tau\times\widetilde{\pi}_{-\lambda})\ll_{F,\epsilon} C_{\infty}(\pi_{\lambda}\otimes\tau\times\widetilde{\pi}_{-\lambda};\Im(s)).$ Then combining this bound with \eqref{120} we have
   	\begin{equation}\label{126}
   	\prod_{v\mid\infty}\Big|\Psi_v\left(s,W_{\alpha,v},W_{\beta,v};\lambda,\Phi_{v,j_v}\right)\Big|	\ll\prod_{v\mid\infty}\Bigg|\frac{\Psi_v\left(s',W_{\alpha,v},W_{\beta,v};\lambda,\Phi_{v,j_v}\right)}{L(s,\pi_{\lambda}\otimes\tau\times\widetilde{\pi}_{-\lambda})}\Bigg|,
   	\end{equation}
   	where the implied constant is absolute. Let 
   	$$
   	\Psi_{\infty}\left(s',W_{\alpha},W_{\beta};\lambda,\Phi_{\boldsymbol{j}}\right)=\prod_{v\mid\infty}\Psi_v\left(s',W_{\alpha,v},W_{\beta,v};\lambda,\Phi_{v,j_v}\right)
   	$$
   	if $\boldsymbol{j}=(j_v)_{v\mid\infty}.$ Similarly, for any $\boldsymbol{l}=(l_v)_{v\in S(\pi,\Phi)},$ we denote by 
   	$$
   	\Psi_{ra}^*\left(s',W_{\alpha};\lambda,\Phi_{\boldsymbol{l}}\right)=\prod_{v\in S(\pi,\Phi)}\Psi_{v}^*\left(s',W_{\alpha,v},W_{\alpha,v};\lambda,\Phi_{v,l_v}\right).
   	$$
   	Let $C=\prod_{v\in S(\pi,\Phi)}q_v^{c_v'}<\infty.$ Then combine \eqref{124}, \eqref{124'''} and \eqref{126} to conclude that 
   	\begin{align*}
   	\big|J_P(s)\big|\leq&\sum_{\alpha}\sum_{\beta}\sum_{\chi\in \mathfrak{X}_P}\sum_{\phi\in \mathfrak{B}_{P,\chi}}\int_{\Lambda^*}\big|R(s,W_{\alpha},W_{\beta};\lambda)\cdot\Lambda(s,\pi_{\lambda}\otimes\tau\times\widetilde{\pi}_{-\lambda})\big||d\lambda|\\
   	\leq&C\sum_{\alpha}\sum_{\beta}\Bigg[\sum_{\chi\in \mathfrak{X}_P}\sum_{\phi\in \mathfrak{B}_{P,\chi}}\int_{\Lambda^*}J_{\alpha}(\lambda)|d\lambda|\Bigg]^{\frac12}\Bigg[\sum_{\chi\in \mathfrak{X}_P}\sum_{\phi\in \mathfrak{B}_{P,\chi}}\int_{\Lambda^*}J_{\beta}(\lambda)|d\lambda|\Bigg]^{\frac12},
   	\end{align*}
   	where $J_{\alpha}(\lambda)=J_{\alpha}(\lambda;\chi,\phi)$ is defined by 
   	\begin{align*}
   	\sum_{\boldsymbol{j}\in J}\sum_{\boldsymbol{l}}\Big|\Psi_{\infty}\left(s',|W_{\alpha}|,|W_{\alpha}|;\lambda,|\Phi_{\boldsymbol{j}}|\right)\Psi_{ra}^*\left(5,W_{\alpha};\lambda,\Phi_{\boldsymbol{l}}\right)\Big|\cdot\prod_{v\in S_{\pi,\Phi}^{u.r.}}\big|R_v(\lambda)\big|.
   	\end{align*}
   	Likewise, we have definition of $J_{\beta}(\lambda)=J_{\beta}(\lambda;\chi,\phi)$ of the same form. Note that 
   	\begin{align*}
   	&\Big|\Psi_{\infty}\left(s',|W_{\alpha}|,|W_{\alpha}|;\lambda,|\Phi_{\boldsymbol{j}}|\right)\Psi_{ra}^*\left(5,W_{\alpha};\lambda,\Phi_{\boldsymbol{l}}\right)\Big|\cdot\prod_{v\in S_{\pi,\Phi}^{u.r.}}\big|R_v(\lambda)\big|\\
   	\leq &\Big|\Psi_{\infty}\left(s',|W_{\alpha}|,|W_{\alpha}|;\lambda,|\Phi_{\boldsymbol{j}}|\right)\Psi_{ra}^*\left(5,W_{\alpha};\lambda,\Phi_{\boldsymbol{l}}\right)\Big|\cdot\prod_{v\in S_{\pi,\Phi}^{u.r.}}\Bigg|\frac{\Psi_v^*\left(s',W_{\alpha,v};\lambda,|\Phi_{v}|\right)}{L(s',\pi_{\lambda}\otimes\tau\times\widetilde{\pi}_{-\lambda})}\Bigg|\\
   	\leq&C_0\prod_{v\in \Sigma_{F}}\int_{N(F_v)\backslash G(F_v)} \Big|W_{\alpha}(x_v;\lambda)\overline{W_{\alpha}\left(x_v;\lambda\right)} \Phi_{\boldsymbol{j},\boldsymbol{l},v}(\eta x_v)\Big|\cdot|\det x_v|_{F_v}^{5}dx_v,
   	\end{align*}
   	where $\Phi_{\boldsymbol{j},\boldsymbol{l},v}=|\Phi_{v,j_v}|$ if $v$ is archimedean; $\Phi_{\boldsymbol{j},\boldsymbol{l},v}=|\Phi_{v,l_v}|,$  if $v\in S_{\varphi,\tau,\Phi};$ and $\Phi_{\boldsymbol{j},\boldsymbol{l},v}=|\Phi_{v}|$ otherwise; and $C_0$ is an absolute constant, independent of $\pi$ and $\lambda.$ Note that $\Phi_{\boldsymbol{j},\boldsymbol{l}}\in \mathcal{S}_0(\mathbb{A}_F^n).$ Denote by $\Psi^*\left(5,W_{\alpha};\lambda,\Phi_{\boldsymbol{j},\boldsymbol{l}}\right)$ the last integral in the above inequalities. Then we have by the first part of Theorem \ref{39'} that
   	\begin{align*}
   	\sum_{\chi\in \mathfrak{X}_P}\sum_{\phi\in \mathfrak{B}_{P,\chi}}\int_{\Lambda^*}J_{\alpha}(\lambda)d\lambda\leq \sum_{\boldsymbol{j}\in J}\sum_{\boldsymbol{l}}\sum_{\chi\in \mathfrak{X}_P}\sum_{\phi\in \mathfrak{B}_{P,\chi}}\int_{\Lambda^*}\Psi^*\left(5,W_{\alpha};\lambda,\Phi_{\boldsymbol{j},\boldsymbol{l}}\right)d\lambda<\infty,
   	\end{align*}
   	since the sums over $\boldsymbol{j}$ and $\boldsymbol{l}$ are finite. Similarly, one has
   	\begin{align*}
   	\sum_{\chi\in \mathfrak{X}_P}\sum_{\phi\in \mathfrak{B}_{P,\chi}}\int_{\Lambda^*}J_{\beta}(\lambda)d\lambda\leq \sum_{\boldsymbol{j}\in J}\sum_{\boldsymbol{l}}\sum_{\chi\in \mathfrak{X}_P}\sum_{\phi\in \mathfrak{B}_{P,\chi}}\int_{\Lambda^*}\Psi^*\left(5,W_{\beta};\lambda,\Phi_{\boldsymbol{j},\boldsymbol{l}}\right)d\lambda<\infty.
   	\end{align*}
   	Since the sums over $\alpha$ and $\beta$ are finite, and since there are only finitely many standard parabolic subgroups $P$ of $G,$ we have shown that 
   	\begin{equation}\label{*}
   	\sum_{\chi}\sum_{P\in \mathcal{P}}\frac{1}{|c_P|}\sum_{\phi\in \mathfrak{B}_{P,\chi}}\int_{\Lambda^*} \max_{\Re(s)=s_0}\Big|R_{\varphi}(s,\lambda;\phi)\Lambda(s,\pi_{\lambda}\otimes\tau\times\widetilde{\pi}_{-\lambda})\Big|d\lambda<\infty,
   	\end{equation}
   	for any $1-1/(n^2+1)\leq s_0<1.$ Now we apply Proposition \ref{46'} to this result to see that \eqref{*} holds for any $0<\Re(s)\leq 1/(n^2+1).$ Note that  $R_{\varphi}(s,\lambda;\phi)\Lambda(s,\pi_{\lambda}\otimes\tau\times\widetilde{\pi}_{-\lambda})$ is analytic inside the strip $1/(n^2+1)\leq\Re(s)\leq 1-1/(n^2+1).$ Then by Phragm\'{e}n-Lindel\"{o}f principle we have that $\big|R_{\varphi}(s,\lambda;\phi)\Lambda(s,\pi_{\lambda}\otimes\tau\times\widetilde{\pi}_{-\lambda})\big|$ is bounded by
   	\begin{align*}
   	\max_{s_0\in \{1/(n^2+1),1-1/(n^2+1)\}}\max_{\Re(s)=s_0}\Big|R_{\varphi}(s,\lambda;\phi)\Lambda(s,\pi_{\lambda}\otimes\tau\times\widetilde{\pi}_{-\lambda})\Big|.
   	\end{align*}
   	Therefore, \eqref{105} holds for all $s\in\mathcal{S}_{(0,1)}.$
   \end{proof}

Gathering Theorem \ref{39'} and Theorem \ref{47'} we then conclude 
\begin{cor}
Let notation be as before. Assume $\tau$ is such that $\tau^k\neq 1$ for all $1\leq k\leq n.$ Then 
\begin{align*}
\sum_{\chi}\sum_{P\in \mathcal{P}}\frac{1}{c_P}\sum_{\phi\in \mathfrak{B}_{P,\chi}}\int_{\Lambda^*} R_{\varphi}(s,\lambda;\phi)\Lambda(s,\pi_{\lambda}\otimes\tau\times\widetilde{\pi}_{-\lambda})d\lambda
\end{align*} 
admits a holomorphic continuation to the whole $s$-plane.
\end{cor}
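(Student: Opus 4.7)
The strategy is to assemble the two absolute convergence results already established --- Theorem \ref{39'} on the right half-plane $\{\Re(s)>1\}$ and Theorem \ref{47'} on the critical strip $\{0<\Re(s)<1\}$ --- together with the global functional equation of Proposition \ref{46'}(b), to cover the whole $s$-plane. Write $S(s)$ for the sum/integral in question. On each of the open regions $\{\Re(s)>1\}$ and $\{0<\Re(s)<1\}$, $S(s)$ is holomorphic: by Propositions \ref{43'}, \ref{45'} and \ref{46'}(c), the factor $R_{\varphi}(s,\lambda;\phi)$ is entire in $s$ for fixed $\lambda$, so combined with the normal convergence furnished by Theorems \ref{39'} and \ref{47'}, Morera's theorem gives holomorphy on each region.

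Next I would extend $S(s)$ across the line $\Re(s)=1$. The Rankin--Selberg $L$-function factors as
$$\Lambda(s,\pi_{\lambda}\otimes\tau\times\widetilde{\pi}_{-\lambda})=\prod_{i,j}\Lambda(s+\lambda_i-\lambda_j,\sigma_i\otimes\tau\times\widetilde{\sigma}_j),$$
and each factor has a pole on the line only when $\sigma_i\otimes\tau\simeq\sigma_j$. The hypothesis $\tau^k\neq 1$ for $1\leq k\leq n$ rules out the diagonal case $i=j$, which would force $\tau^{n_i}=1$ with $n_i\leq n$ via central character comparison; the remaining off-diagonal coincidences ($i\neq j$, $n_i=n_j$, $\sigma_j\simeq\sigma_i\otimes\tau$) produce, for each fixed $s$ with $\Re(s)=1$, at worst simple poles along real codimension-one hyperplanes inside the imaginary contour $\Lambda^*$. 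I would remove these singularities by a contour shift: displace the $\lambda$-contour slightly into $\mathfrak{a}_P^{\ast}\otimes\mathbb{C}$ off the singular hyperplanes. The shifted integral defines a holomorphic function in a neighborhood of $\Re(s)=1$ and agrees with the two one-sided limits in each open half-strip by Cauchy's theorem. The residues collected by the shift are themselves $S(s)$-type expressions attached to proper Levi subgroups, to which the same argument applies inductively on rank.

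Finally, to reach $\{\Re(s)\leq 0\}$ I would invoke the global functional equation of Proposition \ref{46'}(b): substituting the identity $\Psi(1-s,\widetilde{W}_1,\widetilde{W}_2;\lambda,\tau^{-1},\widehat{\Phi})=\Psi(s,W_1,W_2;\lambda,\tau,\Phi)$ into the spectral expansion converts $S(s)$ into an analogous spectral sum in the variable $1-s$ built from the dual data $(\tau^{-1},\widehat{\Phi})$. Since $(\tau^{-1})^k\neq 1$ for $1\leq k\leq n$ as well, the first two steps applied to this dual expression yield holomorphy on $\{\Re(1-s)>0\}=\{\Re(s)<1\}$, which together with the previous paragraphs covers all of $\mathbb{C}$.

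The main obstacle is the uniformity of the contour shift in the second step. Because the off-diagonal singular hyperplanes depend on the cuspidal datum $\chi$ and the sum over $\chi$ is infinite, the shift must be performed in a way that preserves the absolute convergence controlled by Theorem \ref{47'}. I expect the preconvex bound of Lemma \ref{49lem} and the Phragm\'en--Lindel\"of machinery already deployed in the proof of Theorem \ref{47'} to supply the required uniform estimates for the shifted integrand, with the residual contributions identified as spectral integrals attached to smaller parabolics that are holomorphic by induction on $n$; making this residue bookkeeping rigorous, and matching the off-diagonal residues with the intertwining functional equation acting on the Whittaker pairings $\langle\mathcal{I}_P(\lambda,\varphi)\phi_1,\phi_2\rangle$, will be the technical heart of the argument.
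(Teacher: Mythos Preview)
Your overall strategy is correct and in fact considerably more detailed than the paper's own one-line proof, which simply reads ``Gathering Theorem~\ref{39'} and Theorem~\ref{47'} we then conclude\ldots''. The paper treats the corollary as immediate from those two theorems together with the functional equation; your plan makes the implicit steps explicit and correctly identifies the boundary line $\Re(s)=1$ as the place where work remains.

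Two points of comparison are worth flagging. First, for the passage to $\Re(s)\leq 0$ you invoke the global Rankin--Selberg functional equation of Proposition~\ref{46'}(b), whereas the paper (in Section~\ref{sec9}) appeals instead to the functional equation of the Eisenstein series $E_P(x,\Phi,\tau;s)$ under $s\mapsto 1-s$. Since $I_\infty^{(1)}(s)$ is defined as an integral against this Eisenstein series, the latter route is somewhat more direct and avoids having to match the dual spectral sum term by term; your approach should also work but requires checking that the map $(W_1,W_2,\Phi)\mapsto(\widetilde W_1,\widetilde W_2,\widehat\Phi)$ interacts cleanly with the orthonormal bases $\mathfrak{B}_{P,\chi}$ and the coupling $\langle\mathcal{I}_P(\lambda,\varphi)\phi_1,\phi_2\rangle$.

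Second, for the crossing at $\Re(s)=1$ you propose a contour shift with residues handled by induction on the rank of the Levi. The paper does shift contours (Propositions~\ref{57prop} and~\ref{58prop}), but it does not reduce the residues to lower-rank instances of $S(s)$; instead it writes each residue explicitly as a ratio of Hecke and Rankin--Selberg $L$-functions and continues those directly. Your inductive picture is plausible but is not how the paper organizes the argument. On the uniformity issue you rightly flag, the relevant tool in the paper is Lemma~\ref{51'}: absolute convergence of the full sum over $\chi$ persists after shifting to the contour $\mathcal{C}_\chi(\boldsymbol{\epsilon})$, with the uniform zero-free region of \eqref{A}--\eqref{B} supplying the needed control. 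This is exactly the mechanism you anticipate, so your diagnosis of the ``technical heart'' is accurate.
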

   
   \section{Holomorphic Continuation via Multidimensional Residues}\label{7.2}
   From preceding estimates, we see that when $\Re(s)>1,$ $I_{\infty}^{(1)}(s)$ is a combination of Rankin-Selberg convolutions for automorphic functions which are not of rapid decay. Zagier \cite{Zag81} computed the Rankin-Selberg transform of some type of automorphic functions and derived the desired holomorphic continuation for $n=2$ and $F=\mathbb{Q}$ case. However, general Eisenstein series for $\GL(n)$ do not have the asymptotic properties as Zagier considered, since there are mixed terms in the Fourier expansion (see Proposition \ref{Fourier}). Thus one needs to develop a different approach to obtain the continuation.
   
   $I_{\infty}^{(1)}(s)$ can be written as a sum of functions $\int_{\Lambda}\mathcal{F}(s,\lambda)d\lambda,$ which is well defined when $\Re(s)>1.$ Moreover, for each $s_0$ with $\Re(s_0)=1,$ there exists some $\lambda_0\in\Lambda$ such that $F(s,\lambda_0)$ is singular at $s=s_0.$ Hence the original integral representations for $I_{\infty}^{(1)}(s)$ have singularities at all points on the line $\Re(s)=1.$ We shall use contour-shifting and Cauchy's theorem to continue $I_{\infty}^{(1)}(s).$ To illustrate the underlying idea, we simply "think" $\mathcal{F}(s,\lambda)=(s-1-\lambda)^{-1}\cdot(s-1+\lambda)^{-1}$ and $\Lambda=i\mathbb{R},$ namely,
   \begin{align*}
   I_{\infty}^{(1)}(s)=\int_{-i\infty}^{i\infty}\frac{1}{(s-1-\lambda)(s-1+\lambda)}d\lambda,\ \ \Re(s)>1.
   \end{align*}
   
   Now we fix $s$ such that $1<\Re(s)<1+\epsilon/2,$ for some small $\epsilon>0.$ Then shift contour to see 
   \begin{equation}\label{C}
   I_{\infty}^{(1)}(s)=\int_{\epsilon-i\infty}^{\epsilon+i\infty}\frac{1}{(s-1-\lambda)(s-1+\lambda)}d\lambda-\frac{1}{2(s-1)}. 
   \end{equation}
   Note that the right hand side of \eqref{C} defines a meromorphic function in the region $1-\epsilon/2<\Re(s)<1+\epsilon/2,$ with a simple at $s=1.$ Hence we obtain a meromorphic continuation of $I_{\infty}^{(1)}(s)$ to the region $\Re(s)>1-\epsilon/2.$ Do this process one more time one then gets a meromorphic continuation to the whole complex plane, with explicit description on poles. 
   
   Just as the above prototype, the genuine situation admits the same idea of continuation, but with more delicate techniques required, since $I_{\infty}^{(1)}(s)/\Lambda(s,\tau)$ is typically infinitely many sums of such integrals. Details will be provide in the following subsections. Moreover, we find all possible explicit poles of the continuation of each such integral as well, and show they cancel with each other except for $s=1/2,$ where $I_{\infty}^{(1)}(s)/\Lambda(s,\tau)$ has at most a simple pole if $\tau^2=1.$
   
   \subsection{Continuation via a Zero-free Region}\label{7.11}
   Recall that we fix the unitary character $\tau.$ Let $\mathcal{D}_{\tau}$ be a standard (open) zero-free region of $L_F(s,\tau)$ (e.g. see \cite{Bru06}). We fix such a $\mathcal{D}_{\tau}$ once for all. We thus can form a domain
   \begin{equation}\label{R}
   \mathcal{R}(1/2;\tau)^-:=\{s\in\mathbb{C}:\ 2s\in \mathcal{D}_{\tau}\}\supsetneq \{s\in\mathbb{C}:\ \Re(s)\geq 1/2\}.
   \end{equation}
   In Section \ref{7.22}, we will continue $I_{\infty}^{(1)}(s)$ to the open set $\mathcal{R}(1/2;\tau)^-.$ Invoking \eqref{R} with functional equation we then obtain a meromorphic continuation of $I_{\infty}^{(1)}(s)$ to the whole complex plane.
   
   Let $P$ be a standard parabolic subgroup of $G$ of type $(n_1,n_2,\cdots,n_r).$ Let $\mathfrak{X}_P$ be the subset of cuspidal data $\chi=\{(M,\sigma)\}$ such that $M=M_P=\diag(M_1,M_2,\cdots,M_r),$ where $M_i$ is $n_i$ by $n_i$ matrix, $1\leq i\leq r.$ We may write $\sigma=(\sigma_1,\sigma_2,\cdots,\sigma_r),$ where $\sigma_i\in\mathcal{A}_0(M_i(F)\backslash M_i(\mathbb{A}_F)).$ Let $\pi$ be a representation induced from $\chi=\{(M,\sigma)\}.$ 
   
   For any $\boldsymbol{\lambda}=(\lambda_1,\lambda_2,\cdots,\lambda_r)\in i\mathfrak{a}_P^*/i\mathfrak{a}_G^*\simeq (i\mathbb{R})^{r-1},$ satisfying that $\lambda_1+\lambda_2+\cdots+\lambda_r=0,$ we let $\boldsymbol{\kappa}=(\kappa_1,\kappa_2,\cdots,\kappa_{r})\in \mathbb{C}^{r-1}$ be such that 
   \begin{equation}\label{145}
   \begin{cases}
   \kappa_j=\lambda_j-\lambda_{j+1},\ 1\leq j\leq r-1,\\
   \kappa_r=\lambda_1-\lambda_r=\kappa_1+\kappa_2+\cdots+\kappa_{r-1}.
   \end{cases}
   \end{equation}
   Then we have a bijection $i\mathfrak{a}_P^*/i\mathfrak{a}_G^*\xleftrightarrow[]{1:1}i\mathfrak{a}_P^*/i\mathfrak{a}_G^*,$ $\boldsymbol{\lambda}\mapsto\boldsymbol{\kappa}$ given by \eqref{145}, which induces a change of coordinates with $d\boldsymbol{\lambda}=m_Pd\boldsymbol{\kappa},$ where $m_P$ is an absolute constant (the determinant of the transform \eqref{145}). So that we can write $\boldsymbol{\lambda}=\boldsymbol{\lambda}(\boldsymbol{\kappa}).$ Let $R_{\varphi}(s,\boldsymbol{\lambda};\phi)$ be defined by \eqref{47''} and $\Lambda(s,\pi_{\lambda}\otimes\tau\times\widetilde{\pi}_{-\lambda})$ be the complete $L$-function. Then we can write $R_{\varphi}(s,\boldsymbol{\lambda};\phi)=R_{\varphi}(s,\boldsymbol{\kappa};\phi)$ and $\Lambda(s,\pi_{\lambda}\otimes\tau\times\widetilde{\pi}_{-\lambda})=\Lambda(s,\pi_{\boldsymbol{\kappa}}\otimes\tau\times\widetilde{\pi}_{-\boldsymbol{\kappa}}).$ Recall that if $v\in \Sigma_{F,fin}$ is a finite place such that $\pi_v$ is unramified and $\Phi_v=\Phi_v^{\circ}$ is the characteristic function of $G(\mathcal{O}_{F,v}).$ Assume further that $\phi_{1,v}=\phi_{2,v}=\phi_v^{\circ}$ be the unique $G(\mathcal{O}_{F,v})$-fixed vector in the space of $\pi_v$ such that $\phi_v^0(e)=1.$ Then $R_v(s,W_{1,v},W_{2,v};\boldsymbol{\lambda})=R_v(s,W_{1,v},W_{2,v};\boldsymbol{\kappa})$ is equal to \eqref{102}, which is, in the $\boldsymbol{\kappa}$-coordinate, that  
   \begin{equation}\label{146}
   \prod_{1\leq i<r}\prod_{i< j\leq r}L_v(1+\boldsymbol{\kappa}_{i,j},\sigma_{i,v}\times\widetilde{\sigma}_{j,v})^{-1}\cdot L_v(1-\boldsymbol{\kappa}_{i,j},\widetilde{\sigma}_{i,v}\times\sigma_{j,v})^{-1},
   \end{equation}
   where $\boldsymbol{\kappa}_{i,j}=\boldsymbol{\kappa}_{i}+\cdots+\boldsymbol{\kappa}_{j-1}.$ By the $K$-finiteness of $\varphi,$ there exists a finite set $S_{\varphi,\tau,\Phi}$ of nonarchimedean places such that for any $\pi$ from some cuspidal datum $\chi\in\mathfrak{X}_P,$ $R_v(s,W_{1,v},W_{2,v};\boldsymbol{\kappa})$ is equal to the formula in \eqref{146}. Then according to Proposition \ref{43'} and Proposition \ref{45'} we see that, when $\Re(s)>0,$ $R_v(s,W_{1,v},W_{2,v};\boldsymbol{\kappa})$ are independent of $s$ for all but finitely many places $v.$ Therefore, as a function of $s,$ $R_{\varphi}(s,\boldsymbol{\kappa};\phi)$ is a finite product of holomorphic function in $\Re(s)>0;$ for any given $s$ such that $\Re(s)>0,$ as a complex function of multiple variables with respect to $\boldsymbol{\kappa},$ $R_{\varphi}(s,\boldsymbol{\kappa};\phi)$ has the property that $R_{\varphi}(s,\boldsymbol{\kappa};\phi)L_{S}(\boldsymbol{\kappa},\pi,\widetilde{\pi})$ is holomorphic, where $L_S(\boldsymbol{\kappa},\pi,\widetilde{\pi})$ is denoted by the meromorphic function 
   \begin{align*}
   \prod_{1\leq i<r}\prod_{i< j\leq r}\prod_{v\in S_{\varphi,\tau,\Phi}}L_v(1+\boldsymbol{\kappa}_{i,j},\sigma_{i,v}\times\widetilde{\sigma}_{j,v})\cdot L_v(1-\boldsymbol{\kappa}_{i,j},\widetilde{\sigma}_{i,v}\times\sigma_{j,v}).
   \end{align*}
   Hence $R_{\varphi}(s,\boldsymbol{\kappa};\phi)$ is holomorphic in some domain $\mathcal{D}$ if $L_{S}(\boldsymbol{\kappa},\pi,\widetilde{\pi})$ is nonvanishing in $\mathcal{D}.$ Now we are picking up such a zero-free region $\mathcal{D}$ explicitly. 
   
   Let $1\leq m,m'\leq n$ be two integers. Let $\sigma\in\mathcal{A}_0(GL_m(F)\backslash GL_{m}(\mathbb{A}_F))$ and $\sigma'\in\mathcal{A}_0(GL_{m'}(F)\backslash GL_{m'}(\mathbb{A}_F)).$ Fix $\epsilon_0>0.$ For any $c'>0,$ let $\mathcal{D}_{c'}(\sigma,\sigma')$ be 
   \begin{equation}\label{A}
   \Bigg\{\kappa=\beta+i\gamma:\ \beta\geq 1-c'\cdot\Big[\frac{(C(\sigma)C(\sigma'))^{-2(m+m')}}{(|\gamma|+3)^{2mm'[F:\mathbb{Q}]}}\Big]^{\frac12+\frac{1}{2(m+m')}-\epsilon_0}\Bigg\},
   \end{equation}
   if $\sigma'\ncong\widetilde{\sigma};$ and let $\mathcal{D}_{c'}(\sigma,\sigma')$ denote by the region 
   \begin{equation}\label{B}
   \Bigg\{\kappa=\beta+i\gamma:\ \beta\geq 1-c'\cdot\Big[\frac{(C(\sigma))^{-8m}}{(|\gamma|+3)^{2mm^2[F:\mathbb{Q}]}}\Big]^{-\frac78+\frac{5}{8m}-\epsilon_0}\Bigg\},
   \end{equation}
   if $\sigma'\simeq \widetilde{\sigma}.$ According to \cite{Bru06} and the Appendix of \cite{Lap13}, there exists a constant $c_{m,m'}>0$ depending only on $m$ and $m',$ such that $L(\boldsymbol{\kappa},\sigma\times\sigma')$ does not vanish in $\boldsymbol{\kappa}=(\kappa_1,\cdots,\kappa_{r})\in \mathcal{D}_{c_{m,m'}}(\sigma,\sigma')\times\cdots\times \mathcal{D}_{c_{m,m'}}(\sigma,\sigma').$ Let $c=\min_{1\leq m,m'\leq n}c_{m,m'}$ and  $\mathcal{C}(\sigma,\sigma')$ be the boundary of $\mathcal{D}_{c}(\sigma,\sigma').$ We may assume that $c$ is small such that the curve $\mathcal{C}(\sigma,\sigma')$ lies in the strip $1-1/(n+4)<\Re(\kappa_j)<1,$ $1\leq j\leq r.$ Fix such a $c$ henceforth. Note that by our choice of $c,$ $L(\boldsymbol{\kappa},\sigma\times\sigma')$ is nonvanishing in $\mathcal{D}_c(\sigma,\sigma')\times \cdots\times \mathcal{D}_c(\sigma,\sigma')$ for any $1\leq m,m'\leq n.$ For $v\in S_{\varphi,\tau,\Phi},$ we have that 
   \begin{align*}
   \big|L_v(\boldsymbol{\kappa},\sigma_{v}\times{\sigma'}_{v})^{-1}\big|\leq\prod_{i=1}^r\prod_{j=1}^r\left(1+q_v^{1-\frac1{m^2+1}-\frac1{m'^2+1}}\right)^{n_i+n_j}<\infty,
   \end{align*}
   for any $\boldsymbol{\kappa}$ such that each $\Re(\kappa_j)\geq 0,$ $1\leq j\leq r.$ Let $L_{S}(\boldsymbol{\kappa},\sigma\times\sigma')=L(\boldsymbol{\kappa},\sigma\times\sigma')\prod_{v\in S_{\varphi,\tau,\Phi}}L_v(\boldsymbol{\kappa},\sigma_{v}\times{\sigma'}_{v})^{-1}.$ Then $L_{S}(\boldsymbol{\kappa},\sigma\times\sigma')$ is nonvanishing in $\mathcal{D}_c(\sigma,\sigma')\times \cdots\times \mathcal{D}_c(\sigma,\sigma')$ for any $1\leq m,m'\leq n.$
   
   Let $\chi\in \mathfrak{X}_P$ and $\pi=\Ind_{P(\mathbb{A}_F)}^{G(\mathbb{A}_F)}(\sigma_1,\sigma_2,\cdots,\sigma_r)\in\chi.$ For any $\epsilon\in (0,1]$ we set
   \begin{align*}
   \mathcal{D}_{\chi}(\epsilon)=\bigcap_{1\leq i\leq r}\bigcap_{i<j\leq r}\Big\{\kappa\in\mathbb{C}:\ \Re(\kappa)\geq0,\ 1-\kappa\in \mathcal{D}_{c\epsilon}(\sigma_i,\sigma_j)\Big\}.
   \end{align*}
   Also, for $\epsilon=0,$ we set $\mathcal{D}_{\chi}(\epsilon)=\big\{\kappa\in\mathbb{C}:\ \Re(\kappa)\geq0\big\}.$ Then by the above discussion, as a function of $\boldsymbol{\kappa},$ $L_S(\boldsymbol{\kappa},\pi,\widetilde{\pi})$ is nonzero in the region $\mathcal{D}_{\chi}(\boldsymbol{\epsilon})=\big\{\boldsymbol{\kappa}=(\kappa_1,\cdots,\kappa_r)\in\mathbb{C}^r:\ \kappa_l\in \mathcal{D}_{\chi}(\epsilon_l)\big\},$ where $\boldsymbol{\epsilon}=(\epsilon_1,\cdots,\epsilon_r)\in [0,1]^r.$ We can write $\mathcal{D}_{\chi}(\boldsymbol{\epsilon})$ as a product space $\mathcal{D}_{\chi}(\boldsymbol{\epsilon})=\prod_{l=1}^r\mathcal{D}_{\chi}(\epsilon_l),$ and let $\partial\mathcal{D}_{\chi}(\epsilon_l)$ be the boundary of $\mathcal{D}_{\chi}(\epsilon_l).$ Then when $\epsilon_l>0,$ $\partial\mathcal{D}_{\chi}(\epsilon_l)$ has two connected components and one of which is exactly the imaginary axis. Let $\mathcal{C}_{\chi}(\epsilon_l)$ be the other component, which is a continuous curve, where $0\leq\epsilon_l\leq 1.$ When $\epsilon_l=0,$ let $\mathcal{C}_{\chi}(\epsilon_l)$ be the maginary axis. Set $\mathcal{C}_{\chi}(\boldsymbol{\epsilon})=\mathcal{C}_{\chi}(\epsilon_1)\times \cdots\times \mathcal{C}_{\chi}(\epsilon_{r-1}),$ $0\leq \epsilon_l\leq 1,$ $1\leq l\leq r-1.$
   
   Let $\boldsymbol{\epsilon}=(\epsilon_1,\cdots,\epsilon_{r-1})\in [0,1]^{r-1}.$ Then by the above construction, $R_{\varphi}(s,\boldsymbol{\kappa};\phi)$ is holomorphic in $\mathcal{D}_{\chi}(\boldsymbol{\epsilon}).$ Hence $R_{\varphi}(s,\boldsymbol{\kappa};\phi)\Lambda(s,\pi_{\boldsymbol{\kappa}}\otimes\tau\times\widetilde{\pi}_{-\boldsymbol{\kappa}})$ is holomorphic in $\mathcal{D}_{\chi}(\boldsymbol{\epsilon}).$ Moreover,  $L_S(\boldsymbol{\kappa},\pi,\widetilde{\pi})\neq0$ on $\mathcal{C}_{\chi}(\boldsymbol{\epsilon}),$ for any $\boldsymbol{\epsilon}=(\epsilon_1,\cdots,\epsilon_{r-1})\in [0,1]^{r-1}$ and any cuspidal datum $\chi\in \mathfrak{X}_P.$ Let $\Re(s)>1.$ For any $\phi\in \mathfrak{B}_{P,\chi}$ and $\boldsymbol{\epsilon}=(\epsilon_1,\cdots,\epsilon_{r-1})\in [0,1]^{r-1},$ let
   \begin{align*}
   J_{P,\chi}(s;\phi,\mathcal{C}_{\chi}(\boldsymbol{\epsilon}))=\int_{\mathcal{C}_{\chi}(\boldsymbol{\epsilon})} R_{\varphi}(s,\boldsymbol{\kappa};\phi)\Lambda(s,\pi_{\boldsymbol{\kappa}}\otimes\tau\times\widetilde{\pi}_{-\boldsymbol{\kappa}})d\boldsymbol{\kappa}.
   \end{align*}
   which is well defined because $J_{P,\chi}(s;\phi,\mathcal{C}_{\chi}(\boldsymbol{\epsilon}))=J_{P,\chi}(s;\phi,\mathcal{C}_{\chi}(\boldsymbol{0}))$ by Cauchy integral formula. Therefore, according to Theorem \ref{39'}, 
   \begin{align*}
   \sum_{P}\frac1{c_P}\sum_{\chi\in\mathfrak{X}_P}\sum_{\phi\in \mathfrak{B}_{P,\chi}}\int_{\mathcal{C}_{\chi}(\boldsymbol{\epsilon})} \big|R_{\varphi}(s,\boldsymbol{\kappa};\phi)\Lambda(s,\pi_{\boldsymbol{\kappa}}\otimes\tau\times\widetilde{\pi}_{-\boldsymbol{\kappa}})\big|d\boldsymbol{\kappa}<\infty
   \end{align*}
   for any $\Re(s)>1,$ $\boldsymbol{\epsilon}=(\epsilon_1,\cdots,\epsilon_{r-1})\in [0,1]^{r-1}.$
   
   Let $\boldsymbol{\epsilon}=(\epsilon_1,\cdots,\epsilon_{r-1})\in [0,1]^{r-1}.$ For any $\beta\geq 1/2,$ we denote by 
   \begin{equation}\label{149}
   \mathcal{R}(\beta;\chi,\boldsymbol{\epsilon})=\Big\{s\in 1+\mathcal{D}_{\chi}(\boldsymbol{\epsilon})\Big\}\bigcup \Big\{s\in 1-\mathcal{D}_{\chi}(\boldsymbol{\epsilon})\Big\}.
   \end{equation}
   
   \begin{lemma}\label{51'}
   	Let notation be as before. Let $P\in\mathcal{P}$ and let $\boldsymbol{\epsilon}=(1/n,1/n,\cdots,1/n)\in\mathbb{R}^{n-1}.$ Then for any $s\in \mathcal{R}(1;\chi,\boldsymbol{\epsilon})\setminus \{1\},$ we have 
   	\begin{equation}\label{147}
   	\sum_{\chi\in\mathfrak{X}_P}\sum_{\phi\in \mathfrak{B}_{P,\chi}}\int_{\mathcal{C}_{\chi}(\boldsymbol{\epsilon})} \big|R_{\varphi}(s,\boldsymbol{\kappa};\phi)\Lambda(s,\pi_{\boldsymbol{\kappa}}\otimes\tau\times\widetilde{\pi}_{-\boldsymbol{\kappa}})\big|d\boldsymbol{\kappa}<\infty.
   	\end{equation}
   \end{lemma}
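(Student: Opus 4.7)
The plan is to mirror the proof of Theorem \ref{47'} but with the imaginary axis $\Lambda^{*}$ replaced by the shifted multi-contour $\mathcal{C}_{\chi}(\boldsymbol{\epsilon})$. Writing $\varphi$ as a finite linear combination of convolutions $\varphi_{\alpha}\ast\varphi_{\beta}$ so that $\langle\mathcal{I}_{P}(\boldsymbol{\kappa},\varphi)\phi_{1},\phi\rangle W_{1}\overline{W_{2}}$ decouples as $W_{\alpha}\overline{W_{\beta}}$, and invoking H\"older's inequality exactly as in the proof of Theorem \ref{47'}, it suffices to bound, for each fixed $s\in\mathcal{R}(1;\chi,\boldsymbol{\epsilon})\setminus\{1\}$, the two resulting square-roots, each of which has the form $\sum_{\chi}\sum_{\phi}\int_{\mathcal{C}_{\chi}(\boldsymbol{\epsilon})}|R(s,W_{\alpha},W_{\alpha};\boldsymbol{\kappa})\,\Lambda(s,\pi_{\boldsymbol{\kappa}}\otimes\tau\times\widetilde{\pi}_{-\boldsymbol{\kappa}})|\,|d\boldsymbol{\kappa}|$. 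By construction of $\mathcal{C}_{\chi}(\boldsymbol{\epsilon})$, on this contour $L_{S}(\boldsymbol{\kappa},\pi,\widetilde{\pi})$ is nonvanishing, so $R_{\varphi}(s,\boldsymbol{\kappa};\phi)$ is holomorphic in a tube around $\mathcal{C}_{\chi}(\boldsymbol{\epsilon})$, and $\Lambda(s,\pi_{\boldsymbol{\kappa}}\otimes\tau\times\widetilde{\pi}_{-\boldsymbol{\kappa}})$ is holomorphic there as well away from $s=1$ (the only possible poles come from self-dual Rankin–Selberg factors hit at $s+\kappa_{i,j}=1$, and these are excluded by the choice $s\in\mathcal{R}(1;\chi,\boldsymbol{\epsilon})\setminus\{1\}$ together with the zero-free region condition).

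Next, I would estimate the local factors at each place, essentially following Theorem \ref{47'} but with shifted spectral parameters. At archimedean places, a direct generalization of Corollary \ref{50''} bounds $|\Psi_{v}(s,\ldots;\boldsymbol{\kappa})|$ by a constant multiple of $|\Psi_{v}(s',\ldots;\boldsymbol{\kappa})|\cdot C_{\infty}(\pi_{\boldsymbol{\kappa},v}\otimes\tau_{v}\times\widetilde{\pi}_{-\boldsymbol{\kappa},v};\Im(s))^{-1}$ with $s'=5+i\Im(s)$, uniformly for $\boldsymbol{\kappa}\in\mathcal{C}_{\chi}(\boldsymbol{\epsilon})$; here uniformity holds because $\mathcal{C}_{\chi}(\boldsymbol{\epsilon})$ lies inside the strip $1-1/(n+4)<\Re(1-\kappa_{l})<1$, which keeps the shifted Gamma arguments $s+\kappa_{i,j}+\mu$ away from the real poles. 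At ramified finite places $v\in S(\pi,\Phi)$, the functional-equation plus $\epsilon$-factor stability argument of Lemma \ref{50'}, combined with the explicit bounds \eqref{124'}, \eqref{124''} now applied with spectral shifts $\kappa_{i,j}$ of absolute real part $\le 1/(n+4)$, gives $|\Psi_{v}^{*}(s,\ldots;\boldsymbol{\kappa})|\le q_{v}^{c_{v}}|\Psi_{v}^{*}(5,\ldots;\boldsymbol{\kappa})|$ with $c_{v}$ depending only on $\varphi$. At the remaining unramified places the factor $R_{v}(s,\boldsymbol{\kappa})$ is given by the closed form \eqref{146} and, since we sit on $\mathcal{C}_{\chi}(\boldsymbol{\epsilon})$ which is defined precisely to avoid the zero set of $L_{S}(\boldsymbol{\kappa},\pi,\widetilde{\pi})$, the Satake bounds of \cite{LRS99} give a polynomial-in-$C(\sigma_{i}\times\widetilde{\sigma}_{j})$ and $|\Im(\kappa_{j})|$ upper bound, effectively controlled through the definitions \eqref{A}, \eqref{B}.

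For the complete Rankin–Selberg $L$-function on the contour, I would decompose $\Lambda(s,\pi_{\boldsymbol{\kappa}}\otimes\tau\times\widetilde{\pi}_{-\boldsymbol{\kappa}})=\prod_{i,j}\Lambda(s+\kappa_{i,j},\sigma_{i}\otimes\tau\times\widetilde{\sigma}_{j})$ and apply the preconvex estimate of Lemma \ref{49lem} to each factor. Because $\Re(s+\kappa_{i,j})$ stays in a bounded interval around $1$ with $s+\kappa_{i,j}\neq 1$ (by the definition of $\mathcal{R}(1;\chi,\boldsymbol{\epsilon})\setminus\{1\}$ and $\boldsymbol{\epsilon}=(1/n,\ldots,1/n)$), each factor is $O_{\epsilon}(C(\sigma_{i}\otimes\tau\times\widetilde{\sigma}_{j};\Im(s+\kappa_{i,j}))^{1/2+\beta_{n,n}/2+\epsilon})$, and the $|s(s-1)|^{-1}$ blow-up from Lemma \ref{49lem} is absorbed uniformly since $s$ is bounded away from $1$. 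Putting together the local bounds from the previous paragraph with this preconvex estimate and then with the global absolute convergence at $s'=5$ provided by Theorem \ref{39'} (applied, as in the last step of the proof of Theorem \ref{47'}, to a finite collection of modified Schwartz functions $\Phi_{\boldsymbol{j},\boldsymbol{l}}$) yields the desired bound.

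The main obstacle will be maintaining uniformity in $(\chi,\boldsymbol{\kappa})$ as one passes from the imaginary axis to the shifted contour $\mathcal{C}_{\chi}(\boldsymbol{\epsilon})$: the polynomial growth $C(\sigma_{i}\otimes\tau\times\widetilde{\sigma}_{j};\Im(s+\kappa_{i,j}))^{1/2+\beta_{n,n}/2+\epsilon}$ coming from the preconvex estimate must be absorbed into the Schwartz-like decay in $\pi$ provided by the Rankin–Selberg integral at $s'=5$ (cf.\ Claim \ref{34}, where the matrix coefficient $\langle\mathcal{I}_{P}(\boldsymbol{\kappa},\varphi)\phi,\phi_{1}\rangle$ is identified with a Mellin transform of a compactly supported function on $Z_{G}(\mathbb{A}_{F})\setminus Z_{M}(\mathbb{A}_{F})$ and is hence of rapid decay in $\boldsymbol{\kappa}$), and into the gain $C(\sigma_{i}\otimes\tau\times\widetilde{\sigma}_{j};\Im(s+\kappa_{i,j}))^{-1}$ at each archimedean place from the generalized Corollary \ref{50''}. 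Verifying that the combined gain dominates the combined loss uniformly as $\boldsymbol{\kappa}$ varies along $\mathcal{C}_{\chi}(\boldsymbol{\epsilon})$—so that the sum over $\chi\in\mathfrak{X}_{P}$ and integration over $\mathcal{C}_{\chi}(\boldsymbol{\epsilon})$ both converge absolutely—is the technical heart of the argument.
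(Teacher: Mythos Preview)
Your approach is essentially the paper's approach: reduce, via H\"older and the local bounds of Corollary~\ref{50''}, Lemma~\ref{50'}, and Lemma~\ref{49lem}, to the absolute convergence at a large fixed $s'$, exactly as in Theorem~\ref{47'}. The paper's proof is literally ``similar as that of Theorem~\ref{47'} except that Lemma~\ref{25'} should be replaced with Claim~\ref{25''} and the constant $e_v$ in \eqref{128'} is replaced with $e_v+1$''.

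There is one genuine missing ingredient in your write-up, however. In the final step you invoke ``global absolute convergence at $s'=5$ provided by Theorem~\ref{39'}'', but Theorem~\ref{39'} as stated gives absolute convergence of the sum over $\chi,\phi$ and the integral over the \emph{imaginary axis} $\Lambda^{*}$, not over the curved contour $\mathcal{C}_{\chi}(\boldsymbol{\epsilon})$. Cauchy's formula equates the two integrals term by term (without absolute values), but does not transport absolute convergence of the infinite $\chi$-sum from one contour to the other. The paper handles this by inserting a shifted-contour version of Lemma~\ref{25'} (stated as Claim~\ref{25''}): using Cauchy's formula and the positive-operator argument from the proof of Lemma~\ref{25'}, one shows that the spectral expansion of $\widehat{\operatorname{K}}_{\infty}(x,y)$ with integration over $\mathcal{C}_{\chi}(\boldsymbol{\epsilon})$ is again dominated by the kernel $\operatorname{K}_{j}(x,x)$. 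Once this is in place, the entire Section~\ref{6sec} machinery (Proposition~\ref{27'} through Theorem~\ref{39'}) runs verbatim with $\mathcal{C}_{\chi}(\boldsymbol{\epsilon})$ in place of $\Lambda^{*}$, supplying the input you need at $s'=5$. You should add this step; without it your appeal to Theorem~\ref{39'} on the shifted contour is not justified. The adjustment $e_{v}\to e_{v}+1$ in \eqref{128'} is a minor bookkeeping change coming from the fact that $\Re(\kappa_{j})$ is no longer zero on $\mathcal{C}_{\chi}(\boldsymbol{\epsilon})$.
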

   \begin{proof}
   	We start with a variant of Lemma \ref{25'}:
   	\begin{claim}\label{25''}
   		Let notation be as before. Let 
   		\begin{equation}\label{51}
   		\widehat{\K}_{\infty}(x,y):=\int_{N(F)\backslash N(\mathbb{A}_F)}\int_{N(F)\backslash N(\mathbb{A}_F)}\K_{\infty}(n_1x,n_2y)\theta(n_1)\bar{\theta}(n_2)dn_1dn_2.
   		\end{equation} 
   		Then $\widehat{\K}_{\infty}(x,y)$ is equal to
   		\begin{equation}\label{53''}
   		\sum_{\chi\in\mathfrak{X}}\sum_{P\in \mathcal{P}}\frac{1}{k_P!(2\pi)^{k_P}}\int_{\mathcal{C}_{\chi}(\boldsymbol{\epsilon})}\sum_{\phi\in \mathfrak{B}_{P,\chi}}W(x,\mathcal{I}_P(\lambda,\varphi)\phi,\lambda)\overline{W(y,\phi,\lambda)}d\lambda.
   		\end{equation}
   	\end{claim}
   	Then the proof is similar as that of Theorem \ref{47'} except that Lemma \ref{25'} should be replaced with Claim \ref{25''} and the constant $e_v$ in \eqref{128'} is replaced with $e_v+1.$
   \end{proof}
   \begin{proof}[Proof of Claim \ref{25''}]
   	The main idea of the proof is similar to Lemma \ref{25'}.
   	For any $P\in \mathcal{P}$, let $c_P=k_P!(2\pi)^{k_P}.$ Applying Cauchy's integral formula we see that $\K_{\infty}(x,y)$ is equal to
   	\begin{equation}\label{52''}
   	\sum_{\chi\in\mathfrak{X}}\sum_{P\in \mathcal{P}}\frac{1}{k_P!(2\pi)^{k_P}}\int_{\mathcal{C}_{\chi}(\boldsymbol{\epsilon})}\sum_{\phi\in \mathfrak{B}_{P,\chi}}E(x,\mathcal{I}_P(\lambda,\varphi)\phi,\lambda)\overline{E(y,\phi,\lambda)}d\lambda,
   	\end{equation}
   	the absolute convergence of \eqref{52''} is justified in \cite{Art79} invoking Langlands' work on Eisenstein theory (see \cite{Lan76}).
   	
   	Substitute \eqref{52''} into \eqref{51} to get an at least formal expansion of $\widehat{\K}_{\infty}(x,y),$ which is clearly dominated by the following formal expression
   	\begin{align*}
   	\int_{[N]}\int_{[N]}\sum_{\chi\in\mathfrak{X}}\sum_{P\in \mathcal{P}}\frac{1}{c_P}\bigg|\int_{\mathcal{C}_{\chi}(\boldsymbol{\epsilon})}\sum_{\phi\in \mathfrak{B}_{P,\chi}}E(x,\mathcal{I}_P(\lambda,\varphi)\phi,\lambda)\overline{E(y,\phi,\lambda)}d\lambda\bigg|dn_1dn_2.
   	\end{align*}
   	Denote by $J_G$ the above integral. We will show $J_G$ is finite, hence 
   	\begin{align*}
   	\widehat{\K}_{\infty}(x,y)=\sum_{\chi\in\mathfrak{X}}\sum_{P\in \mathcal{P}}\frac{1}{k_P!(2\pi)^{k_P}}\int_{i\mathfrak{a}^*_P/i\mathfrak{a}^*_G}\sum_{\phi\in \mathfrak{B}_{P,\chi}}W_{\Eis,1}(x;\lambda)\overline{W_{\Eis,2}(y;\lambda)}d\lambda,
   	\end{align*}
   	is well defined. One can write the test function $\varphi$ as a finite linear combination of convolutions $\varphi_1*\varphi_2$ with functions $\varphi_i\in C_c^r\left(G(\mathbb{A}_F)\right),$ whose archimedean components are differentiable of arbitrarily high order $r.$ Then one applies H\"older inequality to it. Clearly it is enough to deal with the special case that $\varphi=\varphi_j*\varphi_j^*,$ where $\varphi_j^*(x)=\overline{\varphi_j(x^{-1})},$ and $x=y.$ Note that $\mathfrak{B}_{P,\chi}$ is finite due to the $K$-finiteness assumption, and Eisenstein series converge absolutely for our $\lambda,$ hence the integrand 
   	\begin{align*}
   	\sum_{\phi\in \mathfrak{B}_{P,\chi}}E(x,\mathcal{I}_P(\lambda,\varphi)\phi,\lambda)\overline{E(x,\phi,\lambda)}=\sum_{\phi\in \mathfrak{B}_{P,\chi}}E(x,\mathcal{I}_P(\lambda,\varphi_j)\phi,\lambda)\overline{E(x,\mathcal{I}_P(\lambda,\varphi_j)\phi,\lambda)}
   	\end{align*}
   	is well defined and obviously nonnegative. In fact, the double integral over $\lambda$ and $\phi$ can be expressed as an increasing limit of nonnegative functions, each of which is the kernel of the restriction of $R(\varphi_j*\varphi_j^*),$ a positive semidefinite operator, to an invariant subspace. Since this limit is bounded by the nonnegative function 
   	$$
   	\K_j(x,x)=\sum_{\gamma\in Z(F)\backslash G(F)}\varphi_j*\varphi_j^*(x^{-1}\gamma x),
   	$$
   	and the domain $[N]=N(F)\backslash N(\mathbb{A}_F)$ is compact, the integral $J_G$ converges.
   \end{proof}
   
   \subsubsection{Meromorphic continuation of $J_{P,\chi}(s;\phi,\mathcal{C}_{\chi}(\boldsymbol{\epsilon}))$ across the critical line $\Re(s)=1$} \label{7.1.1.}
   Let $\boldsymbol{\epsilon}=(1/n,1/n,\cdots,1/n)\in\mathbb{R}^{n-1}$ and $s\in 1+\mathcal{D}_{\chi}(\boldsymbol{\epsilon})$ and $\Re(s)>1.$ Then by \eqref{1} we see that  $R(s,W_1, W_2;\boldsymbol{\kappa},\phi)\Lambda(s,\pi_{\boldsymbol{\kappa}}\otimes\tau\times\widetilde{\pi}_{-\boldsymbol{\kappa}})$ is equal to a holomorphic function multiplying 
   \begin{align*}
   \prod_{k=1}^r\Lambda(s,\sigma_k\otimes\tau\times\widetilde{\sigma}_k)\prod_{j=1}^{r-1}\prod_{i=1}^j\frac{\Lambda(s+\kappa_{i,j},\sigma_i\otimes\tau\times\widetilde{\sigma}_{j+1})\Lambda(s-\kappa_{i,j},\sigma_{j+1}\otimes\tau\times\widetilde{\sigma}_{i})}{\Lambda(1+\kappa_{i,j},\sigma_{i}\times\widetilde{\sigma}_{j+1})\Lambda(1-\kappa_{i,j},\sigma_{j+1}\times\widetilde{\sigma}_{i})}.
   \end{align*}
   Let $\mathcal{G}(\boldsymbol{\kappa};s)=\mathcal{G}(\boldsymbol{\kappa};s,P,\chi)$ denotes the above product. Also, for simplicity, we denote by $\mathcal{F}(\boldsymbol{\kappa};s)=\mathcal{F}(\boldsymbol{\kappa};s,P,\chi)$ the function $R_{\varphi}(s,\boldsymbol{\kappa};\phi)\Lambda(s,\pi_{\boldsymbol{\kappa}}\otimes\tau\times\widetilde{\pi}_{-\boldsymbol{\kappa}})$ if $\chi$ is fixed in the context. Then the Rankin-Selberg theory implies that  $\mathcal{F}(\boldsymbol{\kappa};s)/\mathcal{G}(\boldsymbol{\kappa};s)$ can be continued to an entire function. We will write $\mathcal{C}$ for the boundary $C_{\chi}(1),$ and $(0)$ for the imaginary axis. Then an analysis on the potential poles of $\mathcal{G}(\boldsymbol{\kappa};s)$ leads to an expression for the integral $J_{P,\chi}(s;\phi,\mathcal{C}_{\chi}(\boldsymbol{0}))=J_{P,\chi}(s;\phi,\mathcal{C})-\mathcal{J}_{\chi}(s),$ where
   \begin{align*}
   \mathcal{J}_{\chi}(s)=\sum_{j=1}^{r-1}\sum_{i=1}^j\int_{(0)}\cdots\int_{(0)}d\kappa_{j-1}\cdots d\kappa_1\int_{\mathcal{C}}\cdots\int_{\mathcal{C}}\underset{\kappa_{i,j}=s-1}{\Res}\mathcal{F}(\boldsymbol{\kappa};s)d\kappa_{r-1}\cdots d\kappa_{j+1},
   \end{align*}
   where $\underset{\kappa_{i,j}=s-1}{\Res}\mathcal{F}(\boldsymbol{\kappa};s)$ is not identically vanishing unless $\sigma_i\otimes\tau\simeq \sigma_{j+1},$ in which case one must have $n_i=n_{j+1}.$ Let $S(r)$ be the symmetric group acting on $\{1,2,\cdots,r\}.$ To obtain meromorphic continuation of $J_{P,\chi}(s;\phi,\mathcal{C}_{\chi}(\boldsymbol{\epsilon}))$ to the critical strip $0<\Re(s)<1,$ we start with the following initial step:
   \begin{prop}\label{57prop}
   	Let notation be as before. Let $\chi\in\mathfrak{X}_P.$ Let $\boldsymbol{\epsilon}=(1/n,1/n,\cdots,1/n)$ and $s\in 1+\mathcal{D}_{\chi}(\boldsymbol{\epsilon})$ and $\Re(s)>1.$ Then 
   	\begin{equation}\label{150}
   	\sum_{\phi\in \mathfrak{B}_{P,\chi}}J_{P,\chi}(s;\phi,\mathcal{C}_{\chi}(\boldsymbol{0}))=\sum_{\phi\in \mathfrak{B}_{P,\chi}}J_{P,\chi}(s;\phi,\mathcal{C})-\sum_{\phi\in \mathfrak{B}_{P,\chi}}\mathcal{J}(s;\phi,\mathcal{C}),
   	\end{equation}
   	where $\mathcal{C}=\mathcal{C}_{\chi},$ and the summand $\mathcal{J}(s;\phi,\mathcal{C})$ is defined to be
   	\begin{align*}
   	\sum_{m=1}^{r-1}\underset{\substack{j_m,j_{m-1},\cdots,j_1\\1\leq j_m<\cdots<j_1\leq r-1}}{\sum\cdots\sum}c_{j_1,\cdots,j_m}\int_{\mathcal{C}}\cdots\cdots\int_{\mathcal{C}}\underset{\kappa_{j_m}=s-1}{\Res}\cdots\underset{\kappa_{j_1}=s-1}{\Res}\mathcal{F}(\boldsymbol{\kappa};s)\frac{d\kappa_{r-1}\cdots d\kappa_{1}}{d\kappa_{j_m}\cdots d\kappa_{j_1}},
   	\end{align*}
   	where $c_{j_1,\cdots,j_m}$'s are some explicit integers, and ${d\kappa_{r-1}\cdots d\kappa_{1}}/(d\kappa_{j_m}\cdots d\kappa_{j_1})$ means $d\kappa_{r-1}\cdots \widehat{d\kappa_{j_m}}\cdots \widehat{d\kappa_{j_1}}\cdots d\kappa_{1}$; namely, omitting $d\kappa_{j_m}, \cdots, d\kappa_{j_1}$. Moreover, the terms in \eqref{150} converges absolutely and normally inside $\mathcal{R}(1;\chi,\boldsymbol{\epsilon})\setminus\{1\},$ where $\mathcal{R}(1;\chi,\boldsymbol{\epsilon})$ is defined in \eqref{149}. Hence \eqref{150} gives a meromorphic continuation of the function $\sum_{\phi\in \mathfrak{B}_{P,\chi}}J_{P,\chi}(s;\phi,\mathcal{C}_{\chi}(\boldsymbol{0}))$ to $\mathcal{R}(1;\chi,\boldsymbol{\epsilon}),$ with a potential pole at $s=1.$ 
   \end{prop}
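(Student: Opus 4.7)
The plan is to shift each of the contours $\kappa_1, \ldots, \kappa_{r-1}$ from the imaginary axis $(0)$ to the curve $\mathcal{C} = \mathcal{C}_{\chi}$ one at a time, invoking Cauchy's residue theorem at each step. The starting point is the factorization $\mathcal{F}(\boldsymbol{\kappa};s) = \mathcal{G}(\boldsymbol{\kappa};s) \cdot \mathcal{E}(\boldsymbol{\kappa};s)$ introduced just before the proposition, where by Rankin-Selberg theory together with the Brumley-Lapid zero-free region of Section \ref{7.11} the factor $\mathcal{E}$ is holomorphic throughout $\mathcal{D}_{\chi}(\boldsymbol{\epsilon}) \times \mathcal{R}(1;\chi,\boldsymbol{\epsilon})$. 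All potential poles crossed during the deformation therefore come from the numerator factors $\Lambda(s \pm \kappa_{i,j}, \cdots)$ appearing in $\mathcal{G}$; the denominators contribute no poles since $\mathcal{C}$ avoids their zeros by construction. When $\Re(s) > 1$, only the locus $\kappa_{i,j} = s - 1$ lies in the swept region $0 < \Re(\kappa_{i,j}) < 1$, so these are the only residues to record.

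Proceeding iteratively, the first shift of $\kappa_1$ from $(0)$ to $\mathcal{C}$ (with the remaining variables still on $(0)$) produces a main term plus residues at $\kappa_{1,j} = s-1$ for the relevant $j$. Since $\kappa_{1,j}$ is the linear combination $\kappa_1 + \cdots + \kappa_j$, a linear change of variables re-expresses each of these as a residue at $\kappa_{j'} = s - 1$ for a single index $j'$, leaving the remaining $\kappa_l$ on their imaginary contours. Iterating this process across $\kappa_2, \ldots, \kappa_{r-1}$, and observing that different orderings of which variables are shifted versus residued can produce the same terminal configuration $\{j_1 > \cdots > j_m\}$, yields the asserted expansion with integer multiplicities $c_{j_1, \ldots, j_m}$ counting the combinatorial ways in which that configuration arises.

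For the absolute convergence claim inside $\mathcal{R}(1;\chi,\boldsymbol{\epsilon}) \setminus \{1\}$, I would separately bound each term of \eqref{150}. The main term $\sum_\phi J_{P,\chi}(s;\phi, \mathcal{C})$ is handled by adapting the proof of Theorem \ref{47'}: the contour $\mathcal{C}$ lies in the zero-free region where $L_S(\boldsymbol{\kappa}, \pi, \widetilde{\pi})$ is controlled via the estimates of \cite{Lap13}, and combining this with the preconvex estimate Lemma \ref{49lem} and the local Rankin-Selberg analysis of Proposition \ref{46'} supplies the uniform growth bounds needed so that the double sum over $\chi$ and $\phi$ together with the contour integral over $\mathcal{C}$ converges absolutely. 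Each residue term has strictly fewer integration variables after the $\underset{\kappa_{j_k}=s-1}{\Res}$ operations cancel the relevant $\Lambda(s - \kappa_{i,j}, \cdots)$ factors, and the remaining integrand retains the Rankin-Selberg structure, so the same strategy applies by induction on $r - m$.

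The principal obstacle will be obtaining these estimates uniformly as $\chi$ ranges over the infinite set $\mathfrak{X}_P$. After $m$ residue operations the remaining integrand is a Mellin-transform-type quantity whose dependence on the analytic conductors $C(\sigma_i)$ must be tracked explicitly to guarantee summability over $\chi$. The choice $\boldsymbol{\epsilon} = (1/n, \ldots, 1/n)$ is precisely what places the zero-free region inside the convergence range dictated by the preconvex bound \eqref{105''}; verifying that the resulting exponents remain summable across $\mathfrak{X}_P$, after the boost provided by shrinking residues, is the technical crux of the absolute convergence argument and hence of the continuation across the line $\Re(s) = 1$.
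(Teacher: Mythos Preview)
Your contour-shifting strategy is correct and matches the paper's, but two points deserve correction.

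First, the reduction of the residue at $\kappa_{i,j}=s-1$ to a residue at a single $\kappa_{j}=s-1$ is not merely a linear change in the $\kappa$-variables. The paper swaps $\lambda_i\leftrightarrow\lambda_j$ \emph{together with} $\sigma_i\leftrightarrow\sigma_j$; the resulting integrand is $\mathcal{F}(\boldsymbol{\kappa}';s,P,\chi')$ for a Weyl-conjugate cuspidal datum $\chi'$. Since $\chi'=\chi$ as an equivalence class, the identity is valid only after summing over $\phi\in\mathfrak{B}_{P,\chi}$, which is exactly why the proposition is stated at the level of that sum. A purely linear change of $\kappa$-coordinates does not return the same $\mathcal{F}$, so your sketch as written does not close. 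The integer constants $c_{j_1,\ldots,j_m}$ then record how many pairs $(i,j)$ are collapsed onto each fixed $j$ by this Weyl action, not just combinatorics of shift-orderings.

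Second, your convergence argument is much heavier than needed and targets the wrong difficulty. The proposition fixes a single $\chi\in\mathfrak{X}_P$; there is no infinite sum over cuspidal data here, so uniformity over $\mathfrak{X}_P$, the preconvex bound, and the machinery of Theorem~\ref{47'} are irrelevant. Since $\mathfrak{B}_{P,\chi}$ is finite (by $K$-finiteness) and $\mathcal{F}(\boldsymbol{\kappa};s)=\sum_{\phi_1}\langle\mathcal{I}_P(\lambda,\varphi)\phi_1,\phi_2\rangle\Psi(s,W_1,W_2;\lambda)$ is Schwartz in $\boldsymbol{\kappa}$ by Claim~\ref{34}, every integral and residue term in \eqref{150} converges absolutely for each fixed $s$; normal convergence on compacta in $\mathcal{R}(1;\chi,\boldsymbol{\epsilon})\setminus\{1\}$ follows immediately. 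The paper's argument for convergence is precisely this one-line observation.
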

   \begin{proof}
   	For any $1\leq j\leq r-1$ and $1\leq i\leq j,$ if $n_i=n_{j+1},$ we can take the following change of variables to simplify the integral of $\underset{\kappa_{i,j}=s-1}{\Res}\mathcal{F}(\boldsymbol{\kappa};s):$
   	\begin{align*}
   	\begin{cases}
   	\lambda_l'=\lambda_l,\ l\neq i,j;\\
   	\lambda_i'=\lambda_j,\ \lambda_j'=\lambda_i,
   	\end{cases}\ \text{and}\ \ 
   	\begin{cases}
   	\sigma_l'=\sigma_l,\ l\neq i,j;\\
   	\sigma_i'=\sigma_j,\ \sigma_j'=\sigma_i.
   	\end{cases}
   	\end{align*}
   	Let $\kappa_{l}'=\lambda_{l}'-\lambda_{l+1}',$ $1\leq l\leq r-1;$ and $\kappa_{l,m}'=\kappa_{l}'+\cdots +\kappa_{m}',$ $1\leq l\leq m\leq r-1.$ To describe the relation between $\{\kappa_{l}:\ 1\leq l\leq r-1\}$ and $\{\kappa_{l}':\ 1\leq l\leq r-1\},$ we need to consider separately as follows:
   	\begin{itemize}
   		\item[Case 1] If $i=j-1.$ Then a direct computation shows that 
   		\begin{align*}
   		\begin{cases}
   		\kappa_l=\kappa_l',\ 1\leq l\leq r-1, l\neq i-1, i, i+1;\\
   		\kappa_{i-1}=\kappa_{i-1,i}',\ \kappa_i=-\kappa_i',\ \kappa_{i+1}=\kappa_{i,i+1}'.
   		\end{cases}
   		\end{align*}
   		Hence, the domains $\Re(\kappa_l)=0,$ $1\leq l\leq i=j-1$ are equivalent to $\Re(\kappa_l')=0,$ $1\leq l\leq i=j-1.$ Note that $\det\{\partial\kappa_{l}/\partial\kappa_{m}'\}_{1\leq l,m\leq r-1}=-1,$ and $\kappa_{j}'=\kappa_{i,j}=s-1,$ then one has 
   		\begin{align*}
   		&\int_{(0)}\cdots\int_{(0)}d\kappa_{j-1}\cdots d\kappa_1\int_{\mathcal{C}}\cdots\int_{\mathcal{C}}\underset{\kappa_{i,j}=s-1}{\Res}\mathcal{F}(\boldsymbol{\kappa};s)d\kappa_{r-1}\cdots d\kappa_{j+1}\\
   		=&-\int_{(0)}\cdots\int_{(0)}d\kappa_{j-1}'\cdots d\kappa_1'\int_{\mathcal{C}}\cdots\int_{\mathcal{C}}\underset{\kappa_{j}=s-1}{\Res}\mathcal{F}(\boldsymbol{\kappa};s,P,\chi')d\kappa_{r-1}'\cdots d\kappa_{j+1}',
   		\end{align*}
   		where $\chi'$ is the cuspidal datum attached to representations $(\sigma_1',\cdots,\sigma_{r}').$ Hence $\chi'=\chi$ as an equivalent class.
   		\item[Case 2] If $i\leq j-2.$ Then a direct computation leads to that 
   		\begin{align*}
   		\begin{cases}
   		\kappa_l=\kappa_l',\ 1\leq l\leq r-1, l\neq i-1, i, j-1, j;\\
   		\kappa_{i-1}=\kappa_{i-1,j-1}',\ \kappa_i=-\kappa_{i+1,j-1}',\ \kappa_{j-1}=-\kappa_{i,j-2}',\ \kappa_{j}=\kappa_{i,j}'.
   		\end{cases}
   		\end{align*}
   		One can show inductively that the domains $\Re(\kappa_l)=0,$ $1\leq l\leq i=j-1$ are equivalent to $\Re(\kappa_l')=0,$ $1\leq l\leq i=j-1.$ Note that the determinant of transition matrix $\det\{\partial\kappa_{l}/\partial\kappa_{m}'\}_{1\leq l,m\leq r-1}=-1,$ and $\kappa_{j}'=\kappa_{i,j},$ so again
   		\begin{align*}
   		&\int_{(0)}\cdots\int_{(0)}d\kappa_{j-1}\cdots d\kappa_1\int_{\mathcal{C}}\cdots\int_{\mathcal{C}}\underset{\kappa_{i,j}=s-1}{\Res}\mathcal{F}(\boldsymbol{\kappa};s)d\kappa_{r-1}\cdots d\kappa_{j+1}\\
   		=&-\int_{(0)}\cdots\int_{(0)}d\kappa_{j-1}'\cdots d\kappa_1'\int_{\mathcal{C}}\cdots\int_{\mathcal{C}}\underset{\kappa_{j}=s-1}{\Res}\mathcal{F}(\boldsymbol{\kappa};s,P,\chi')d\kappa_{r-1}'\cdots d\kappa_{j+1}'.
   		\end{align*}
   	\end{itemize}
   	While if  $n_i\neq n_{j+1},$ then $\underset{\kappa_{i,j}=s-1}{\Res}\mathcal{F}(\boldsymbol{\kappa};s)=0.$ In all, we have
   	\begin{align*}
   	&\sum_{\phi\in \mathfrak{B}_{P,\chi}}\int_{(0)}\cdots\int_{(0)}d\kappa_{j-1}\cdots d\kappa_1\int_{\mathcal{C}}\cdots\int_{\mathcal{C}}\underset{\kappa_{i,j}=s-1}{\Res}\mathcal{F}(\boldsymbol{\kappa};s)d\kappa_{r-1}\cdots d\kappa_{j+1}\\
   	=&-\sum_{\phi\in \mathfrak{B}_{P,\chi}}\int_{(0)}\cdots\int_{(0)}d\kappa_{j-1}\cdots d\kappa_1\int_{\mathcal{C}}\cdots\int_{\mathcal{C}}\underset{\kappa_{j}=s-1}{\Res}\mathcal{F}(\boldsymbol{\kappa};s)d\kappa_{r-1}\cdots d\kappa_{j+1}.
   	\end{align*}
   	Therefore, we see that $\sum_{\phi\in \mathfrak{B}_{P,\chi}}J_{P,\chi}(s;\phi,\mathcal{C}_{\chi}(\boldsymbol{0}))-\sum_{\phi\in \mathfrak{B}_{P,\chi}}J_{P,\chi}(s;\phi,\mathcal{C})$ equals 
   	\begin{align*}
   	\sum_{\phi\in \mathfrak{B}_{P,\chi}}\sum_{j=1}^{r-1}c_{j}'\int_{(0)}\cdots\int_{(0)}d\kappa_{j-1}\cdots d\kappa_1\int_{\mathcal{C}}\cdots\int_{\mathcal{C}}\underset{\kappa_{j}=s-1}{\Res}\mathcal{F}(\boldsymbol{\kappa};s)d\kappa_{r-1}\cdots d\kappa_{j+1},
   	\end{align*}
   	where $c_j'$'s are some explicit constants, depending only on the type of $P$. Consider 
   	$$
   	\int_{(0)}\cdots\int_{(0)}d\kappa_{j_1-1}\cdots d\kappa_1\int_{\mathcal{C}}\cdots\int_{\mathcal{C}}\underset{\kappa_{j_1}=s-1}{\Res}\mathcal{F}(\boldsymbol{\kappa};s)d\kappa_{r-1}\cdots d\kappa_{j_1+1},\ 1\leq j_1\leq r-1.
   	$$
   	Then by Cauchy integral formula we can write it as the sum of
   	\begin{align*}
   	\int_{\mathcal{C}}\cdots\int_{\mathcal{C}}d\kappa_{j_1-1}\cdots d\kappa_1\int_{\mathcal{C}}\cdots\int_{\mathcal{C}}\underset{\kappa_{j_1}=s-1}{\Res}\mathcal{F}(\boldsymbol{\kappa};s)d\kappa_{r-1}\cdots d\kappa_{j_1+1}\ \text{and}
   	\end{align*}
   	\begin{align*}
   	\sum_{j_2=1}^{j_1-1}\sum_{i_2=1}^{j_2}c_{i_2,j_2}'\int_{(0)}\cdots\int_{(0)}d\kappa_{j_2-1}\cdots d\kappa_1\int_{\mathcal{C}}\cdots\int_{\mathcal{C}}{\Res}\mathcal{F}(\boldsymbol{\kappa};s)\frac{d\kappa_{r-1}\cdots d\kappa_{j_2+1}}{d\kappa_{j_1}},
   	\end{align*}
   	where $c_{i_2,j_2}'$ are some explicit integers depending only on the type of $P$.  ${\Res}\mathcal{F}(\boldsymbol{\kappa};s)=\underset{\substack{ \kappa_{i_2,j_2}=s-1}}{\Res}\underset{\substack{\kappa_{j_1}=s-1}}{\Res}\mathcal{F}(\boldsymbol{\kappa};s).$ 
   	Then one can do the similar analysis to replace $\kappa_{i_2,j_2}=s-1$ with $\kappa_{j_2}=s-1.$ Then by induction (or simply continue this process until $m=r-1$) we obtain the expression \eqref{150}. Recall that by definition 
   	\begin{align*}
   	\mathcal{F}(\boldsymbol{\kappa};s)=\sum_{\phi_1\in \mathfrak{B}_{P,\chi}}\langle\mathcal{I}_P(\lambda,\varphi)\phi_1,\phi_2\rangle\cdot\Psi(s,W_1,W_2;\lambda).
   	\end{align*}
   	Then $\mathcal{F}(\boldsymbol{\kappa};s)$ is a Schwartz function of $\boldsymbol{\kappa}$ by Claim \ref{34}. Hence all the above integrals converge absolutely. Then the proof is completed.
   \end{proof}
   
   Let notation be as in Proposition \ref{57prop}. Denote by $\mathcal{I}_{0}(s;\chi)$ the summand of the first term of the right hand side of \eqref{150}, i.e., 
   \begin{align*}
   \mathcal{I}_{0,\chi}(s)=\sum_{\phi\in \mathfrak{B}_{P,\chi}}J_{P,\chi}(s;\phi,\mathcal{C}), \ \ s\in 1+\mathcal{D}_{\chi}(\boldsymbol{\epsilon}),\  \Re(s)>1.
   \end{align*}
   
   \begin{prop}\label{58prop}
   	Let notation be as before. Let $s\in 1+\mathcal{D}_{\chi}(\boldsymbol{\epsilon})$ and $\Re(s)>1.$  Then 
   	\begin{equation}\label{153}
   	\mathcal{I}_{0,\chi}(s)=\sum_{\phi\in \mathfrak{B}_{P,\chi}}J_{P,\chi}(s;\phi,\mathcal{C_{\chi}(\boldsymbol{0})})+\sum_{\phi\in \mathfrak{B}_{P,\chi}}\mathcal{J}_{\chi}^0(s),
   	\end{equation}
   	where $\mathcal{C}=\mathcal{C}_{\chi};$ and the summand $\mathcal{J}_{\chi}^0(s)$ is defined to be 
   	\begin{align*}
   	\sum_{m=1}^{r-1}\underset{\substack{j_m,j_{m-1},\cdots,j_1\\1\leq j_m<\cdots<j_1\leq r-1}}{\sum\cdots\sum}\tilde{c}_{j_1,\cdots,j_m}\int_{(0)}\cdots\cdots\int_{(0)}\underset{\kappa_{j_m}=1-s}{\Res}\cdots\underset{\kappa_{j_1}=1-s}{\Res}\mathcal{F}(\boldsymbol{\kappa};s)\frac{d\kappa_{r-1}\cdots d\kappa_{1}}{d\kappa_{j_m}\cdots d\kappa_{j_1}},
   	\end{align*}
   	where $\tilde{c}_{j_1,\cdots,j_m}$'s are some explicit integers, depending only on $P;$ and the measure  ${d\kappa_{r-1}\cdots d\kappa_{1}}/(d\kappa_{j_m}\cdots d\kappa_{j_1})$ means $d\kappa_{r-1}\cdots \widehat{d\kappa_{j_m}}\cdots \widehat{d\kappa_{j_1}}\cdots d\kappa_{1}$. Moreover, the terms in \eqref{153} converges absolutely and normally inside any bounded strip.
   \end{prop}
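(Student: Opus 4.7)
The proof will mirror that of Proposition~\ref{57prop}, but now iteratively shifting each of the $r-1$ outer contours $\mathcal{C}$ back to the imaginary axis $(0)$ via Cauchy's theorem rather than the reverse direction. For $s\in 1+\mathcal{D}_{\chi}(\boldsymbol{\epsilon})$ with $\Re(s)>1$, the only poles of $\mathcal{F}(\boldsymbol{\kappa};s)$ lying in the strip between $(0)$ and $\mathcal{C}$ in any one variable $\kappa_l$ are those on hyperplanes $\kappa_{i,j}=s-1$, coming from the numerator factor $\Lambda(s-\kappa_{i,j},\sigma_{j+1}\otimes\tau\times\widetilde{\sigma}_i)$ when $\sigma_{j+1}\otimes\tau\simeq\sigma_i$; the defining construction of $\mathcal{D}_\chi(\boldsymbol{\epsilon})$ via the zero-free region $\mathcal{D}_c$ precludes any other singularity inside that strip.

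First I will perform a single contour shift: move one $\kappa_{j_1}$-integration from $\mathcal{C}$ to $(0)$, picking up the residue at $\kappa_{j_1}=s-1$ on the hyperplane $\kappa_{i,j_1}=s-1$, which can be brought to the coordinate hyperplane $\kappa_{j_1}=s-1$ by the Case~1 and Case~2 change of variables used in Proposition~\ref{57prop}. Iterating this over depth $m=1,2,\dots,r-1$ produces nested residue terms whose remaining $\kappa$-variables are integrated on $(0)$, with integer coefficients $\tilde{c}_{j_1,\dots,j_m}$ arising from the binomial-type combinatorics of the order in which the shifts are performed, exactly parallel to Proposition~\ref{57prop}.

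The transition from residues at $\kappa_{j_k}=s-1$ to residues at $\kappa_{j_k}=1-s$ is the place where I deviate from Proposition~\ref{57prop}. I plan to use that $\sum_{\phi\in\mathfrak{B}_{P,\chi}}$ is invariant under the Weyl-group action permuting representatives of the class $\chi$: the swap $\lambda_{i}\leftrightarrow\lambda_{j+1}$ (which sends $\kappa_{i,j}\mapsto -\kappa_{i,j}$), combined with the corresponding swap $\sigma_i\leftrightarrow\sigma_{j+1}$, converts a residue at $\kappa=s-1$ into a residue at $\kappa=1-s$ in the new coordinates, and the sum over $\phi$ absorbs the relabeling. Absolute and normal convergence inside any bounded strip will then follow from an enhancement of Claim~\ref{34} together with the local estimates of Section~\ref{6.2.}: the Selberg transforms $\langle\mathcal{I}_P(\lambda,\varphi)\phi_1,\phi_2\rangle$ are Schwartz in $\lambda$, and so is the ratio $\Psi(s,W_1,W_2;\boldsymbol{\kappa})/\Lambda(s,\pi_{\boldsymbol{\kappa}}\otimes\tau\times\widetilde{\pi}_{-\boldsymbol{\kappa}})$; taking a residue at $\kappa_{j_k}=1-s$ merely removes one factor without destroying this decay, so every nested integral $\int_{(0)}\cdots\int_{(0)}$ converges uniformly in $s$ from compact subsets of a bounded strip.

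The main obstacle will be the combinatorial bookkeeping. Once one residue has been extracted, the integrand's pole structure in the remaining variables has been modified, and one must carefully verify that the only newly relevant poles still lie on hyperplanes $\kappa_{i,j}=s-1$ (reducible by the swap argument to a single-coordinate residue at $\kappa_{j_k}=1-s$), and that the cumulative integer coefficients $\tilde{c}_{j_1,\dots,j_m}$ match those prescribed in the statement. In particular, one must track signs carefully, since the swap $\lambda_i\leftrightarrow\lambda_{j+1}$ changes the orientation of the imaginary-axis contour and contributes factors of $(-1)$ that need to be absorbed into $\tilde{c}_{j_1,\dots,j_m}$.
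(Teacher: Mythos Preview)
Your approach contains a genuine gap in the Weyl-swap step. The swap $\lambda_i\leftrightarrow\lambda_{j+1}$ (or, after the first reduction, $\lambda_j\leftrightarrow\lambda_{j+1}$) does send $\kappa_{i,j}\mapsto-\kappa_{i,j}$, but it does \emph{not} preserve the $(0)$-contours of the neighbouring integrated variables. Concretely, under $\lambda_j\leftrightarrow\lambda_{j+1}$ one has $\kappa_{j-1}'=\kappa_{j-1}+\kappa_j$ and $\kappa_{j+1}'=\kappa_j+\kappa_{j+1}$; once the residue constraint $\kappa_j=s-1$ is imposed, these acquire real part $\Re(s)-1\neq0$, so your ``remaining integrals over $(0)$'' are actually over shifted lines. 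This is precisely what distinguishes the situation from the swap $\lambda_i\leftrightarrow\lambda_j$ used in Proposition~\ref{57prop}: there both positions $i$ and $j$ lie in the block whose variables are still on $(0)$, so the linear combinations remain purely imaginary.

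The paper's intended argument, as one sees from how the proposition is actually invoked in \S\ref{7.1.} and \S\ref{7.2.} (always for $s\in\mathcal{R}(1)^-$, i.e.\ $\Re(s)<1$), is to run the contour shift in the regime $\Re(s)<1$. In that range the poles of $\mathcal{F}(\boldsymbol{\kappa};s)$ lying between $(0)$ and $\mathcal{C}$ in any single $\kappa_l$ are those on the hyperplanes $\kappa_{i,j}=1-s$ (coming from $\Lambda(s+\kappa_{i,j},\sigma_i\otimes\tau\times\widetilde{\sigma}_{j+1})$), since now $\Re(1-s)>0$ while $\Re(s-1)<0$. Shifting each $\kappa_l$ from $\mathcal{C}$ to $(0)$ therefore picks up residues at $\kappa_{i,j}=1-s$ \emph{directly}, and the same change of variables $\lambda_i\leftrightarrow\lambda_j$ as in Proposition~\ref{57prop} reduces these to coordinate residues $\kappa_j=1-s$ while preserving the contours---no sign-flipping swap is needed. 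The stated hypothesis ``$\Re(s)>1$'' is either a slip carried over from Proposition~\ref{57prop}, or the identity is meant to be established first for $\Re(s)<1$ and then extended via the asserted normal convergence in bounded strips.
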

   \begin{proof}
   	The proof is pretty similar to that of Proposition \ref{57prop}. Hence we will omit it.
   \end{proof}

   \subsection{Meromorphic Continuation Inside the Critical Strip}\label{7.22}
   Let $s\in \mathcal{R}(1;\chi,\boldsymbol{\epsilon})$ and $1\leq m\leq r-1.$ Let $j_m, j_{m-1},\cdots,j_1$ be $m$ integers such that $1\leq j_m<\cdots<j_1\leq r-1.$ Consider the summand in the second term of \eqref{150}:
   \begin{align*}
   \mathcal{I}_{m,\chi}(s):=\sum_{\phi\in \mathfrak{B}_{P,\chi}}\int_{\mathcal{C}}\cdots\cdots\int_{\mathcal{C}}\underset{\kappa_{j_m}=s-1}{\Res}\cdots\underset{\kappa_{j_1}=s-1}{\Res}\mathcal{F}(\boldsymbol{\kappa};s)\frac{d\kappa_{r-1}\cdots d\kappa_{1}}{d\kappa_{j_m}\cdots d\kappa_{j_1}}.
   \end{align*}
   Then each $\mathcal{I}_{m,\chi}(s)$ is naturally meromorphic in $\mathcal{R}(1;\chi,\boldsymbol{\epsilon})$ with a possible at $s=1.$
   \begin{thmx}\label{57}
   	Let notation be as before. Let $n\leq 4.$ Let $\chi\in\mathfrak{X}_P.$ Assume that the adjoint L-function $L(s,\sigma,\Ad\otimes \tau)$ is holomorphic inside the strip $\mathcal{S}_{(0,1)}$ for any cuspidal representation  $\sigma\in\mathcal{A}_0\left(GL(k,\mathbb{A}_F)\right),$ and any $k\leq n-1.$ Then for any $0\leq m\leq r-1,$ the function 
   	\begin{align*}
   	\sum_{\phi\in \mathfrak{B}_{P,\chi}}{\mathcal{I}_{m,\chi}(s)},\quad \ s\in \mathcal{R}(1;\chi,\boldsymbol{\epsilon}),
   	\end{align*} 
   	admits a meromorphic continuation to the area $\mathcal{R}(1/2;\tau)^-$, with possible simple poles at $s\in\{1/2, 2/3,\cdots, (n-1)/n,1\},$ where $\mathcal{R}(1/2;\tau)^-$ is defined in \eqref{R}. Moreover, for any $3\leq k\leq n,$ if $L_F((k-1)/k,\tau)=0,$ then $s=(k-1)/k$ is not a pole.
   \end{thmx}
   \begin{remark}
   	In can be seen from the proof that when $n\leq 3,$ we can continue the functions $\sum_{\phi\in \mathfrak{B}_{P,\chi}}{\mathcal{I}_{m,\chi}(s)}$ to $\Re(s)>1/3.$ When $n=4,$ we can only continue $\sum_{\phi\in \mathfrak{B}_{P,\chi}}{\mathcal{I}_{m,\chi}(s)}$ to $\mathcal{R}(1/2;\tau)^-,$ an open set just containing the right half plane $\Re(s)\geq1/2.$ This is because some of its components involve $\Lambda(2s,\tau^2)^{-1}$ as a factor. The key ingredient is that $\mathcal{R}(1/2;\tau)^-$ is uniform with respect to $\chi\in\mathfrak{X}_P.$ 
   \end{remark}
   \begin{remark}
   	We restrict ourselves to the case $n\leq 4$ for the following two reasons. On the one hand, we actually need to assume Dedekind Conjecture of degree $n$ to handle the contribution from geometric side. This conjecture has been confirmed when $n\leq 4,$ so we will get unconditional results if $n\leq 4.$ On the other hand, when $n\geq 5,$ the procedure of meromorphic continuation is even more complicated, since we are lack of a symmetrical description of this process. Thus, we will focus on $n\leq 4$ case in this paper.
   \end{remark}
   Since the case $n=2$ has been done in \cite{GJ78}, we only need to care about the situation where $3\leq n\leq 4.$ To prove Theorem \ref{57} in these cases, we deal with $n=3$ and $n=4$ separately, since we want to give explicit descriptions. 
   \medskip 
   
   Let notation be as before. To simplify our computations below, we shall write, for any $\beta\in\mathbb{R},$ that $\mathcal{R}(\beta)=\mathcal{R}(\beta;\chi,\boldsymbol{\epsilon}),$ $\mathcal{R}(\beta)^-=\mathcal{R}(\beta;\chi,\boldsymbol{\epsilon})\cap\{s:\ \Re(s)<\beta\},$ and $\mathcal{R}(\beta)^+=\mathcal{R}(\beta;\chi,\boldsymbol{\epsilon})\cap\{s:\ \Re(s)>\beta\}.$ Recall also that we use $\mathcal{S}_{(a,b)}$ to denote the strip $a<\Re(s)<b,$ for any $a<b.$
   \subsubsection{Proof of Theorem \ref{57} when $n=3$}\label{7.1.}
   \begin{proof}
   	Let $n=3.$ Then there are two possibilities for $r:$ $r=2$ or $r=3.$ If $r=2,$ then the parabolic subgroup $P$ is maximal, and any associated cuspidal datum is of the form $\chi\simeq(\sigma_1,\sigma_2),$ where $\sigma_1$ is a cuspidal representation of $\GL(2,\mathbb{A}_F)$ and $\sigma_2$ is a Hecke character on $\mathbb{A}_F^{\times}.$ In this case, $\mathcal{F}(\boldsymbol{\kappa},s)$ is equal to an entire function multiplying 
   	\begin{equation}\label{2'}
   	\frac{\Lambda(s+\kappa_{1},\sigma_1\otimes\tau\times\widetilde{\sigma}_{2})\Lambda(s-\kappa_{1},\sigma_{2}\otimes\tau\times\widetilde{\sigma}_{1})}{\Lambda(1+\kappa_{1},\sigma_{1}\times\widetilde{\sigma}_{2})\Lambda(1-\kappa_{1},\sigma_{2}\times\widetilde{\sigma}_{1})}\cdot\prod_{k=1}^2\Lambda(s,\sigma_k\otimes\tau\times\widetilde{\sigma}_k).
   	\end{equation}
   	Since each completed $L$-functions in \eqref{2'} is entire inside $\mathcal{S}_{(0,1)},$ then $\mathcal{F}(\boldsymbol{\kappa},s)$ is holomorphic (after continuation) when $0<\Re(s)<1.$ On the other hand, $\mathcal{F}(\boldsymbol{\kappa},s)$ vanishes when $\Im(\kappa_1)\rightarrow\infty.$ Let $\Re(s)>1.$ By Cauchy integral formula, 
   	\begin{align*}
   	J_{P,\chi}(s;\phi,\mathcal{C}(\boldsymbol{0}))=\sum_{\phi\in \mathfrak{B}_{P,\chi}}\int_{(0)}\mathcal{F}(\boldsymbol{\kappa},s)d\kappa_1=\sum_{\phi\in \mathfrak{B}_{P,\chi}}\int_{\mathcal{C}}\mathcal{F}(\boldsymbol{\kappa},s)d\kappa_1,
   	\end{align*}
   	which gives holomorphic continuation to an area $\Re(s)>1-\epsilon_1,$ for some $\epsilon_1>0.$ Hence we obtain holomorphic continuation of $J_{P,\chi}(s;\phi,\mathcal{C}(\boldsymbol{0}))$ to $\Re(s)>0.$
   	
   	Now we handle the more complicated case where $r=3.$ In this case, cuspidal data $\chi$ correspond to $(\chi_1,\chi_2,\chi_3),$ where $\chi_i$'s are unitary Hecke characters such that $\chi_1\chi_2\chi_3=\omega,$ the fixed central character. Then $\mathcal{F}(\boldsymbol{\kappa},s)$ is equal to
   	\begin{equation}\label{17}
   	\mathcal{H}(s,\boldsymbol{\kappa})\Lambda_F(s,\tau)^3\prod_{j=1}^{2}\prod_{i=1}^j\frac{\Lambda_F(s+\kappa_{i,j},\tau\chi_i\overline{\chi}_{j+1})\Lambda_F(s-\kappa_{i,j},\tau\chi_{j+1}\overline{\chi}_{i})}{\Lambda_F(1+\kappa_{i,j},\chi_i\overline{\chi}_{j+1})\Lambda_F(1-\kappa_{i,j},\chi_{j+1}\overline{\chi}_{i})},
   	\end{equation}
   	where $\mathcal{H}(s,\boldsymbol{\kappa})$ is an entire function and $\Lambda_F(s,\chi')$ is the completed Hecke $L$-function associated to the unitary Hecke character $\chi'$ over $F.$ Let $\sum_{\phi}$ denote the double summation over ${\phi\in \mathfrak{B}_{P,\chi}}.$ Then by Proposition \ref{57prop}, 
   	\begin{align*}
   	J_{P,\chi}(s;\phi,\mathcal{C}(\boldsymbol{0}))=&\sum_{\phi} \int_{\mathcal{C}}\int_{\mathcal{C}}\mathcal{F}(\boldsymbol{\kappa},s)d\kappa_2d\kappa_1-c_{1,2}\sum_{\phi} \underset{\kappa_{1}=s-1}{\Res}\underset{\kappa_{2}=s-1}{\Res}\mathcal{F}(\boldsymbol{\kappa},s)\\
   	&-c_1\sum_{\phi} \int_{\mathcal{C}}\underset{\kappa_{1}=s-1}{\Res}\mathcal{F}(\boldsymbol{\kappa},s)d\kappa_2-c_2\sum_{\phi} \int_{\mathcal{C}}\underset{\kappa_{2}=s-1}{\Res}\mathcal{F}(\boldsymbol{\kappa},s)d\kappa_1,
   	\end{align*}
   	for some integers $c_1,$ $c_2$ and $c_{1,2};$ and $s\in 1+\mathcal{D}(\boldsymbol{\epsilon}).$ Denote by $J_{P,\chi}^{1}(s;\phi,\mathcal{C}(\boldsymbol{0}))$ the right hand side of the above equality. Then $J_{P,\chi}^{1}(s;\phi,\mathcal{C}(\boldsymbol{0}))$ is meromorphic in the domain $s\in \mathcal{R}(1).$ Then we get a meromorphic continuation inside $\mathcal{R}(1)^-$ with a possible pole at $s=1.$ We will handle these integrals respectively.
   	
   	Denote, for any meromorphic functions $A(s)$ and $B(s),$ by $A(s)\sim B(s)$ if there exists some holomorphic function $C(s)$ such that $A(s)=C(s)B(s).$ Then by \eqref{17}, 
   	\begin{align*}
   	&\underset{\kappa_{1}=s-1}{\Res}\mathcal{F}(\boldsymbol{\kappa},s)\sim \frac{\Lambda(s-\kappa_2,\chi_1\overline{\chi}_2\tau)\Lambda(2s-1+\kappa_2,\chi_2\overline{\chi}_1\tau^2)\Lambda(2s-1,\tau^2)\Lambda(s,\tau)^2}{\Lambda(1+\kappa_2,\chi_2\overline{\chi}_1)\Lambda(2-s-\kappa_2,\chi_1\overline{\chi}_2{\tau}^{-1})\Lambda(2-s,\tau^{-1})};\\
   	&\underset{\kappa_{2}=s-1}{\Res}\mathcal{F}(\boldsymbol{\kappa},s)\sim \frac{\Lambda(s-\kappa_1,\chi_2\overline{\chi}_1\tau)\Lambda(2s-1+\kappa_1,\chi_1\overline{\chi}_2\tau^2)\Lambda(2s-1,\tau^2)\Lambda(s,\tau)^2}{\Lambda(1+\kappa_1,\chi_1\overline{\chi}_2)\Lambda(2-s-\kappa_1,\chi_2\overline{\chi}_1{\tau}^{-1})\Lambda(2-s,\tau^{-1})};\\
   	&\underset{\kappa_{1}=s-1}{\Res}\underset{\kappa_{2}=s-1}{\Res}\mathcal{F}(\boldsymbol{\kappa},s)\sim \frac{\Lambda(3s-2,\tau^3)\Lambda(2s-1,\tau^2)\Lambda(s,\tau)}{\Lambda(3-2s,\tau^{-2})\Lambda(2-s,\tau^{-1})}.
   	\end{align*}
   	Hence by Cauchy integral formula we have, for $s\in \mathcal{R}(1)^-,$ that  
   	\begin{equation}\label{152}
   	\int_{\mathcal{C}}\underset{\kappa_{1}=s-1}{\Res}\mathcal{F}(\boldsymbol{\kappa},s)d\kappa_2=\int_{(0)}\underset{\kappa_{1}=s-1}{\Res}\mathcal{F}(\boldsymbol{\kappa},s)d\kappa_2-\underset{\kappa_{2}=2-2s}{\Res}\underset{\kappa_{1}=s-1}{\Res}\mathcal{F}(\boldsymbol{\kappa},s),
   	\end{equation}
   	where the right hand side is holomorphic inside $1/2<\Re(s)<1,$ since
   	\begin{equation}\label{160'}
   	\underset{\kappa_{2}=2-2s}{\Res}\underset{\kappa_{1}=s-1}{\Res}\mathcal{F}(\boldsymbol{\kappa},s)\sim \frac{\Lambda(3s-2,\tau^3)\Lambda(2s-1,\tau^2)\Lambda(s,\tau)}{\Lambda(3-2s,\tau^{-2})\Lambda(2-s,\tau^{-1})}.
   	\end{equation}
   	From \eqref{160'} we see $\int_{\mathcal{C}}{\Res}_{\kappa_{1}=s-1}\mathcal{F}(\boldsymbol{\kappa},s)d\kappa_2$ has a potential pole at $s=2/3$ when $\tau^3=1.$ Likewise, we have the continuation for $\int_{\mathcal{C}}{\Res}_{\kappa_{2}=s-1}\mathcal{F}(\boldsymbol{\kappa},s)d\kappa_1:$
   	\begin{equation}\label{161.}
   	\int_{\mathcal{C}}\underset{\kappa_{2}=s-1}{\Res}\mathcal{F}(\boldsymbol{\kappa},s)d\kappa_1=\int_{(0)}\underset{\kappa_{2}=s-1}{\Res}\mathcal{F}(\boldsymbol{\kappa},s)d\kappa_1-\underset{\kappa_{1}=2-2s}{\Res}\underset{\kappa_{2}=s-1}{\Res}\mathcal{F}(\boldsymbol{\kappa},s),
   	\end{equation}
   	where the right hand side is holomorphic inside $1/2<\Re(s)<1,$ since
   	\begin{equation}\label{162}
   	\underset{\kappa_{1}=2-2s}{\Res}\underset{\kappa_{2}=s-1}{\Res}\mathcal{F}(\boldsymbol{\kappa},s)\sim \frac{\Lambda(3s-2,\tau^3)\Lambda(2s-1,\tau^2)\Lambda(s,\tau)}{\Lambda(3-2s,\tau^{-2})\Lambda(2-s,\tau^{-1})}.
   	\end{equation}
   	From \eqref{162} we see $\int_{\mathcal{C}}{\Res}_{\kappa_{2}=s-1}\mathcal{F}(\boldsymbol{\kappa},s)d\kappa_1$ has a potential pole at $s=2/3$ when $\tau^3=1.$ Now we deal with the remaining term $\int_{\mathcal{C}}\int_{\mathcal{C}}\mathcal{F}(\boldsymbol{\kappa},s)d\kappa_2d\kappa_1.$ By Proposition \ref{58prop}, for $s\in \mathcal{R}(1)^-,$ there are integers $c_1,$ $c_2$ and $c_{1,2},$ such that  
   	\begin{align*}
   	\int_{\mathcal{C}}\int_{\mathcal{C}}\mathcal{F}(\boldsymbol{\kappa},s)d\kappa_2d\kappa_1=&\sum_{\phi}\int_{(0)}\int_{(0)}\mathcal{F}(\boldsymbol{\kappa},s)d\kappa_2d\kappa_1-c_{1,2}'\sum_{\phi}\underset{\kappa_{1}=1-s}{\Res}\underset{\kappa_{2}=1-s}{\Res}\mathcal{F}(\boldsymbol{\kappa},s)\\
   	-&c_1'\sum_{\phi}\int_{(0)}\underset{\kappa_{1}=1-s}{\Res}\mathcal{F}(\boldsymbol{\kappa},s)d\kappa_2-c_2'\sum_{\phi}\int_{(0)}\underset{\kappa_{2}=1-s}{\Res}\mathcal{F}(\boldsymbol{\kappa},s)d\kappa_1.
   	\end{align*}
   	According to \eqref{17}, one can compute the partial residues of $\mathcal{F}(\boldsymbol{\kappa},s):$
   	\begin{align*}
   	&\underset{\kappa_{2}=1-s}{\Res}\mathcal{F}(\boldsymbol{\kappa},s)\sim \frac{\Lambda(s+\kappa_1,\chi_1\overline{\chi}_2\tau)\Lambda(2s-1-\kappa_1,\chi_2\overline{\chi}_1\tau^2)\Lambda(2s-1,\tau^2)\Lambda(s,\tau)^2}{\Lambda(1-\kappa_1,\chi_2\overline{\chi}_1)\Lambda(2-s+\kappa_1,\chi_1\overline{\chi}_2{\tau}^{-1})\Lambda(2-s,\tau^{-1})};\\
   	&\underset{\kappa_{1}=1-s}{\Res}\mathcal{F}(\boldsymbol{\kappa},s)\sim \frac{\Lambda(s+\kappa_2,\chi_2\overline{\chi}_1\tau)\Lambda(2s-1-\kappa_2,\chi_1\overline{\chi}_2\tau^2)\Lambda(2s-1,\tau^2)\Lambda(s,\tau)^2}{\Lambda(1-\kappa_2,\chi_1\overline{\chi}_2)\Lambda(2-s+\kappa_2,\chi_2\overline{\chi}_1{\tau}^{-1})\Lambda(2-s,\tau^{-1})};\\
   	&\underset{\kappa_{1}=1-s}{\Res}\underset{\kappa_{2}=1-s}{\Res}\mathcal{F}(\boldsymbol{\kappa},s)\sim \frac{\Lambda(3s-2,\tau^3)\Lambda(2s-1,\tau^2)\Lambda(s,\tau)}{\Lambda(3-2s,\tau^{-2})\Lambda(2-s,\tau^{-1})}.
   	\end{align*}
   	From the above formulas and combining with the analytic behavior of the function  ${\Res}_{\kappa_{1}=s-1}{\Res}_{\kappa_{2}=s-1}\mathcal{F}(\boldsymbol{\kappa},s)$ we conclude that $\int_{\mathcal{C}}\int_{\mathcal{C}}\mathcal{F}(\boldsymbol{\kappa},s)d\kappa_2d\kappa_1$ admits a meromorphic continuation to $1/2<\Re(s)<1,$ with a possible pole at $s=2/3$ when $\tau^3=1.$ Denote by $J_{P,\chi}^{(1/2,1)}(s;\phi,\mathcal{C}(\boldsymbol{0}))$ this continuation. Now we continue our meromorphic continuation to some open set containing $\Re(s)\geq 1/2.$ Let $s\in \mathcal{R}(1/2)^{+}.$ Then one can plug \eqref{152} and \eqref{161.} into formulas for $\int_{(0)}\int_{(0)}\mathcal{F}(\boldsymbol{\kappa},s)d\kappa_2d\kappa_1$ and $\int_{\mathcal{C}}\int_{\mathcal{C}}\mathcal{F}(\boldsymbol{\kappa},s)d\kappa_2d\kappa_1$ and shift contours to see that $J_{P,\chi}^{(1/2,1)}(s;\phi,\mathcal{C}(\boldsymbol{0}))$ is equal to the sum over and $\phi\in\mathfrak{B}_{P,\chi}$ of 
   	\begin{align*}
   	&\int_{(0)}\int_{(0)}\mathcal{F}(\boldsymbol{\kappa},s)d\kappa_2d\kappa_1-c_{1,2}'\underset{\kappa_{1}=1-s}{\Res}\underset{\kappa_{2}=1-s}{\Res}\mathcal{F}(\boldsymbol{\kappa},s)-c_1'\int_{(0)}\underset{\kappa_{1}=1-s}{\Res}\mathcal{F}(\boldsymbol{\kappa},s)d\kappa_2\\
   	&-c_2'\int_{(0)}\underset{\kappa_{2}=1-s}{\Res}\mathcal{F}(\boldsymbol{\kappa},s)d\kappa_1-c_1\int_{(0)}\underset{\kappa_{1}=s-1}{\Res}\mathcal{F}(\boldsymbol{\kappa},s)d\kappa_2-c_2\int_{(0)}\underset{\kappa_{2}=s-1}{\Res}\mathcal{F}(\boldsymbol{\kappa},s)d\kappa_1\\
   	&+c_1\underset{\kappa_{2}=2-2s}{\Res}\underset{\kappa_{1}=s-1}{\Res}\mathcal{F}(\boldsymbol{\kappa},s)+c_2\underset{\kappa_{1}=2-2s}{\Res}\underset{\kappa_{2}=s-1}{\Res}\mathcal{F}(\boldsymbol{\kappa},s)-c_{1,2}\underset{\kappa_{1}=s-1}{\Res}\underset{\kappa_{2}=s-1}{\Res}\mathcal{F}(\boldsymbol{\kappa},s)\\
   	=&\int_{(0)}\int_{(0)}\mathcal{F}(\boldsymbol{\kappa},s)d\kappa_2d\kappa_1-c_{1,2}'\underset{\kappa_{1}=1-s}{\Res}\underset{\kappa_{2}=1-s}{\Res}\mathcal{F}(\boldsymbol{\kappa},s)-c_1'\int_{\mathcal{C}}\underset{\kappa_{1}=1-s}{\Res}\mathcal{F}(\boldsymbol{\kappa},s)d\kappa_2\\
   	&-c_2'\int_{\mathcal{C}}\underset{\kappa_{2}=1-s}{\Res}\mathcal{F}(\boldsymbol{\kappa},s)d\kappa_1-c_1\int_{\mathcal{C}}\underset{\kappa_{1}=s-1}{\Res}\mathcal{F}(\boldsymbol{\kappa},s)d\kappa_2-c_2\int_{\mathcal{C}}\underset{\kappa_{2}=s-1}{\Res}\mathcal{F}(\boldsymbol{\kappa},s)d\kappa_1\\
   	&+c_1\underset{\kappa_{2}=2-2s}{\Res}\underset{\kappa_{1}=s-1}{\Res}\mathcal{F}(\boldsymbol{\kappa},s)+c_2\underset{\kappa_{1}=2-2s}{\Res}\underset{\kappa_{2}=s-1}{\Res}\mathcal{F}(\boldsymbol{\kappa},s)-c_{1,2}\underset{\kappa_{1}=s-1}{\Res}\underset{\kappa_{2}=s-1}{\Res}\mathcal{F}(\boldsymbol{\kappa},s)\\
   	&+c_1'\underset{\kappa_{2}=2s-1}{\Res}\underset{\kappa_{1}=1-s}{\Res}\mathcal{F}(\boldsymbol{\kappa},s)+c_2'\underset{\kappa_{1}=2s-1}{\Res}\underset{\kappa_{2}=1-s}{\Res}\mathcal{F}(\boldsymbol{\kappa},s),
   	\end{align*}
   	where the right hand side of the above equality has a natural meromorphic continuation to the domain $\mathcal{R}(1/2).$ Denote by $J_{P,\chi}^{1/2}(s;\phi,\mathcal{C}(\boldsymbol{0}))$ the last expression. Note that a direct computation leads to that 
   	\begin{align*}
   	\underset{\kappa_{2}=2s-1}{\Res}\underset{\kappa_{1}=1-s}{\Res}\mathcal{F}(\boldsymbol{\kappa},s)&\sim \frac{\Lambda(3s-1,\tau^3)\Lambda(2s-1,\tau^2)\Lambda(s,\tau)^2}{\Lambda(2-2s,\tau^{-2})\Lambda(2-s,\tau^{-1})\Lambda(1+s,\tau)};\\
   	\underset{\kappa_{1}=2s-1}{\Res}\underset{\kappa_{2}=1-s}{\Res}\mathcal{F}(\boldsymbol{\kappa},s)&\sim \frac{\Lambda(3s-1,\tau^3)\Lambda(2s-1,\tau^2)\Lambda(s,\tau)^2}{\Lambda(2-2s,\tau^{-2})\Lambda(2-s,\tau^{-1})\Lambda(1+s,\tau)}.
   	\end{align*}
   	Also, when $s\in \mathcal{R}(1/2),$ $2-2s$ lies in the zero-free region of $L_F(s,\tau^{-2})$ and $L_{\infty}(2-2s,\tau^{-2})$ is holomorphic (hence nonvanishing), then $\Lambda(2-2s,\tau^{-2})\neq0.$ So the last two terms of $J_{P,\chi}^{1/2}(s;\phi,\mathcal{C}(\boldsymbol{0}))$ is meromorphic in $\mathcal{R}(1/2)$ with a possible simple pole at $s=1/2$ when $\tau^2=1.$ Hence, we have a meromorphic continuation of $J_{P,\chi}(s;\phi,\mathcal{C}(\boldsymbol{0}))=J_{P,\chi}^{1/2}(s;\phi,\mathcal{C}(\boldsymbol{0}))$ to the region $\mathcal{R}(1/2)$ with a possible simple pole at $s=1/2$ when $\tau^2=1.$ 
   	
   	Now consider $J_{P,\chi}^{1/2}(s;\phi,\mathcal{C}(\boldsymbol{0})),$ where $s\in\mathcal{R}(1/2)^-.$ Invoking the analytic behaviors of $\underset{\kappa_{1}=s-1}{\Res}\mathcal{F}(\boldsymbol{\kappa},s),$ $\underset{\kappa_{2}=s-1}{\Res}\mathcal{F}(\boldsymbol{\kappa},s),$ $\underset{\kappa_{1}=1-s}{\Res}\mathcal{F}(\boldsymbol{\kappa},s)$ and $\underset{\kappa_{2}=1-s}{\Res}\mathcal{F}(\boldsymbol{\kappa},s)$ with Cauchy's formula we obtain that 
   	$J_{P,\chi}^{(1/3,1/2)}(s;\phi,\mathcal{C}(\boldsymbol{0}))$ is equal to the sum over and $\phi\in\mathfrak{B}_{P,\chi}$ of 
   	\begin{align*}
   	&\int_{(0)}\int_{(0)}\mathcal{F}(\boldsymbol{\kappa},s)d\kappa_2d\kappa_1-c_{1,2}'\underset{\kappa_{1}=1-s}{\Res}\underset{\kappa_{2}=1-s}{\Res}\mathcal{F}(\boldsymbol{\kappa},s)-c_1'\int_{\mathcal{C}}\underset{\kappa_{1}=1-s}{\Res}\mathcal{F}(\boldsymbol{\kappa},s)d\kappa_2\\
   	&-c_2'\int_{\mathcal{C}}\underset{\kappa_{2}=1-s}{\Res}\mathcal{F}(\boldsymbol{\kappa},s)d\kappa_1-c_1\int_{(0)}\underset{\kappa_{1}=s-1}{\Res}\mathcal{F}(\boldsymbol{\kappa},s)d\kappa_2-c_2\int_{(0)}\underset{\kappa_{2}=s-1}{\Res}\mathcal{F}(\boldsymbol{\kappa},s)d\kappa_1\\
   	&+c_1\underset{\kappa_{2}=2-2s}{\Res}\underset{\kappa_{1}=s-1}{\Res}\mathcal{F}(\boldsymbol{\kappa},s)+c_2\underset{\kappa_{1}=2-2s}{\Res}\underset{\kappa_{2}=s-1}{\Res}\mathcal{F}(\boldsymbol{\kappa},s)-c_{1,2}\underset{\kappa_{1}=s-1}{\Res}\underset{\kappa_{2}=s-1}{\Res}\mathcal{F}(\boldsymbol{\kappa},s)\\
   	&+c_1'\underset{\kappa_{2}=2s-1}{\Res}\underset{\kappa_{1}=1-s}{\Res}\mathcal{F}(\boldsymbol{\kappa},s)+c_2'\underset{\kappa_{1}=2s-1}{\Res}\underset{\kappa_{2}=1-s}{\Res}\mathcal{F}(\boldsymbol{\kappa},s)+c_1\underset{\kappa_{2}=1-2s}{\Res}\underset{\kappa_{1}=s-1}{\Res}\mathcal{F}(\boldsymbol{\kappa},s)\\
   	&+c_2\underset{\kappa_{1}=1-2s}{\Res}\underset{\kappa_{2}=s-1}{\Res}\mathcal{F}(\boldsymbol{\kappa},s).
   	\end{align*}
   	Denote by $J_{P,\chi}^{1/2}(s;\phi,\mathcal{C}(\boldsymbol{0}))$ the last expression. Note that we have
   	\begin{align*}
   	\underset{\kappa_{2}=1-2s}{\Res}\underset{\kappa_{1}=s-1}{\Res}\mathcal{F}(\boldsymbol{\kappa},s)&\sim \frac{\Lambda(3s-1,\tau^3)\Lambda(2s-1,\tau^2)\Lambda(s,\tau)^2}{\Lambda(2-2s,\tau^{-2})\Lambda(2-s,\tau^{-1})\Lambda(1+s,\tau)};\\
   	\underset{\kappa_{1}=1-2s}{\Res}\underset{\kappa_{2}=s-1}{\Res}\mathcal{F}(\boldsymbol{\kappa},s)&\sim \frac{\Lambda(3s-1,\tau^3)\Lambda(2s-1,\tau^2)\Lambda(s,\tau)^2}{\Lambda(2-2s,\tau^{-2})\Lambda(2-s,\tau^{-1})\Lambda(1+s,\tau)}.
   	\end{align*}
   	Note also that the integrals in $J_{P,\chi}^{1/2}(s;\phi,\mathcal{C}(\boldsymbol{0}))$ converges locally normally when $1/3<\Re(s)<1/2.$ Hence $J_{P,\chi}^{1/2}(s;\phi,\mathcal{C}(\boldsymbol{0}))$ has a natural continuation to $1/3<\Re(s)<1/2,$ where it is holomorphic. In all, we obtain the meromorphic continuation of $J_{P,\chi}(s;\phi,\mathcal{C}(\boldsymbol{0}))$ to $\mathcal{S}_{(1/3,1)}\cup\mathcal{R}(1)$ as follows:
   	\begin{align*}
   	J_{P,\chi}(s;\phi,\mathcal{C}(\boldsymbol{0}))=\begin{cases}
   	J_{P,\chi}^{1}(s;\phi,\mathcal{C}(\boldsymbol{0})),\ s\in\mathcal{R}(1);\\
   	J_{P,\chi}^{(1/2,1)}(s;\phi,\mathcal{C}(\boldsymbol{0})),\ s\in  \mathcal{S}_{(1/2,1)};\\
   	J_{P,\chi}^{1/2}(s;\phi,\mathcal{C}(\boldsymbol{0})),\ s\in\mathcal{R}(1/2);\\
   	J_{P,\chi}^{(1/3,1/2)}(s;\phi,\mathcal{C}(\boldsymbol{0})),\ s\in\mathcal{S}(1/3,1/2).
   	\end{cases}
   	\end{align*}
   	Moreover, the continued function $J_{P,\chi}(s;\phi,\mathcal{C}(\boldsymbol{0}))$ is meromorphic inside $\mathcal{R}(1/2)\cup \mathcal{S}_{(1/2,1)},$ with possible simple poles at $s=1/2$ and $s=2/3$ when $\tau^2=1$ and $\tau^3=1,$ respectively.
   \end{proof}
   
   \subsubsection{Proof of Theorem \ref{57} when $n=4$}\label{7.2.}
   The case $n=4$ seems to be much more complicated than $n=3,$ but they share the same underlying idea. The proof is similar, but does not quite follow from $\GL(3)$ case. In fact, the essential difficulty as $n$ increases is the determination of partial residues of each continuation: there are roughly $O(n^2)$ such multiple residues, and there is not likely a simple systematical description of them, so we give a proof by explicitly dealing with all possible cases. Some careful computation and continuation are carried out in the appendix (see Section \ref{app} for details).
   \begin{proof}
   	Let $n=4.$ Then there are three possibilities for $r:$ $r=2$, $r=3$ or $r=4.$ We will deal with these cases separately.
   	\begin{itemize}
   		\item[$r=2:$] In this case, the parabolic subgroup $P$ is of type $(2,2),$ and any associated cuspidal datum is of the form $\chi\simeq(\sigma_1,\sigma_2),$ where $\sigma_1$ and $\sigma_2$ are cuspidal representations of $\GL(2,\mathbb{A}_F).$ In this case, $\mathcal{F}(\boldsymbol{\kappa},s)$ is equal to an entire function multiplying 
   		\begin{equation}\label{163'}
   		\frac{\Lambda(s+\kappa_{1},\sigma_1\otimes\tau\times\widetilde{\sigma}_{2})\Lambda(s-\kappa_{1},\sigma_{2}\otimes\tau\times\widetilde{\sigma}_{1})}{\Lambda(1+\kappa_{1},\sigma_{1}\times\widetilde{\sigma}_{2})\Lambda(1-\kappa_{1},\sigma_{2}\times\widetilde{\sigma}_{1})}\cdot\prod_{k=1}^2\Lambda(s,\sigma_k\otimes\tau\times\widetilde{\sigma}_k).
   		\end{equation}
   		Let $s\in\mathcal{R}(1)^+.$ Since $\mathcal{F}(\boldsymbol{\kappa},s)$ vanishes when $\Im(\kappa_1)\rightarrow\infty,$ then by Cauchy integral formula, we have that 
   		\begin{equation}\label{163.}
   		J_{P,\chi}(s;\phi,\mathcal{C}(\boldsymbol{0}))=\sum_{\phi\in \mathfrak{B}_{P,\chi}}\int_{\mathcal{C}}\mathcal{F}(\boldsymbol{\kappa},s)d\kappa_1-\sum_{\phi\in \mathfrak{B}_{P,\chi}}\underset{\kappa_{1}=s-1}{\Res}\mathcal{F}(\boldsymbol{\kappa},s).
   		\end{equation}
   		The term $\Res_{\kappa_1=s-1}\mathcal{F}(\boldsymbol{\kappa},s)$ is nonvanishing unless $\sigma_1\simeq \sigma_2\otimes \tau.$ Hence 
   		\begin{align*}
   		\underset{\kappa_{1}=s-1}{\Res}\mathcal{F}(\boldsymbol{\kappa},s)\sim \frac{\Lambda(2s-1,\sigma_1\otimes\tau^2\times\widetilde{\sigma}_{1})\Lambda(s,\sigma_1\otimes\tau\times\widetilde{\sigma}_1)}{\Lambda(2-s,\sigma_{1}\otimes \tau^{-1}\times\widetilde{\sigma}_{1})}.
   		\end{align*}
   		So $\Res_{\kappa_1=s-1}\mathcal{F}(\boldsymbol{\kappa},s)$ admits a meromorphic continuation inside the domain $\mathcal{R}(1/2)\cup \mathcal{S}_{(1/2,1)},$ with possible simple poles at $s=1/2.$ Now the right hand side of \eqref{163.} is meromorphic inside $\mathcal{R}(1),$ with a possible pole at $s=1.$ Denote by $J^{1}_{P,\chi}(s;\phi,\mathcal{C}(\boldsymbol{0}))$ the continuation of $J_{P,\chi}(s;\phi,\mathcal{C}(\boldsymbol{0}))$ in $\mathcal{R}(1).$ Apply Cauchy formula again to get
   		\begin{equation}\label{165}
   		\int_{\mathcal{C}}\mathcal{F}(\boldsymbol{\kappa},s)d\kappa_1=\int_{(0)}\mathcal{F}(\boldsymbol{\kappa},s)d\kappa_1+\underset{\kappa_{1}=1-s}{\Res}\mathcal{F}(\boldsymbol{\kappa},s),
   		\end{equation}
   		where $s\in \mathcal{R}(1)^-.$ By \eqref{163'}, $\int_{(0)}\mathcal{F}(\boldsymbol{\kappa},s)d\kappa_1$ is holomorphic inside $\mathcal{S}_{(1/2,1)};$ also, $\Res_{\kappa_1=1-s}\mathcal{F}(\boldsymbol{\kappa},s)$ is nonvanishing unless $\sigma_2\simeq \sigma_1\otimes \tau,$ in which case one has 
   		\begin{align*}
   		\underset{\kappa_{1}=1-s}{\Res}\mathcal{F}(\boldsymbol{\kappa},s)\sim \frac{\Lambda(2s-1,\sigma_2\otimes\tau^2\times\widetilde{\sigma}_{2})\Lambda(s,\sigma_2\otimes\tau\times\widetilde{\sigma}_2)}{\Lambda(2-s,\sigma_{2}\otimes \tau^{-1}\times\widetilde{\sigma}_{2})}.
   		\end{align*}
   		So $\Res_{\kappa_1=1-s}\mathcal{F}(\boldsymbol{\kappa},s)$ admits a meromorphic continuation to $\mathcal{R}(1/2)\cup \mathcal{S}_{(1/2,1)},$ with possible simple poles at $s=1/2.$ Substituting this with \eqref{165} into \eqref{163.} we conclude that $J^{1}_{P,\chi}(s;\phi,\mathcal{C}(\boldsymbol{0}))$ admits a meromorphic continuation to the domain $\mathcal{R}(1/2)\cup \mathcal{S}_{(1/2,1)}.$ Denote by $J^{1/2}_{P,\chi}(s;\phi,\mathcal{C}(\boldsymbol{0}))$ this continuation. Hence we have
   		\begin{align*}
   		J_{P,\chi}(s;\phi,\mathcal{C}(\boldsymbol{0}))=\begin{cases}
   		J_{P,\chi}^{1}(s;\phi,\mathcal{C}(\boldsymbol{0})),\ s\in\mathcal{R}(1);\\
   		J_{P,\chi}^{1/2}(s;\phi,\mathcal{C}(\boldsymbol{0})),\ s\in \mathcal{S}_{(0,1)}.
   		\end{cases}
   		\end{align*}
   		Moreover, by assumption $\Lambda(s,\sigma_2\otimes\tau\times\widetilde{\sigma}_2)L_F(s,\tau)^{-1}$ is holomorphic in $\mathcal{S}_{(0,1)},$ then from the expressions above we see that $J_{P,\chi}(s;\phi,\mathcal{C}(\boldsymbol{0}))L_F(s,\tau)^{-1}$ admits a meromorphic continuation in $s\in \mathcal{S}_{(1/3,1)}$ with a possible simple pole at $s=1/2$ when $\tau^2=1.$
   		\item[$r=3:$] In this case, the parabolic subgroup $P$ is of type $(2,1,1),$ and any associated cuspidal datum is of the form $\chi\simeq(\sigma_1,\chi_2,\chi_3),$ where $\sigma_1$ is a cuspidal representations of $\GL(2,\mathbb{A}_F);$ and $\chi_2,$ $\chi_3$ are unitary Hecke characters on $\mathbb{A}_F^{\times}.$ Since $\Lambda(s,\sigma_1\otimes \tau\times\chi_i)$ is entire, $2\leq i\leq 3,$ then $\mathcal{F}(\boldsymbol{\kappa},s)$ is equal to an entire function $\mathcal{H}(\boldsymbol{\kappa},s)$ multiplying 
   		\begin{equation}\label{166'}
   		\frac{\Lambda(s+\kappa_{2},\chi_2\overline{\chi}_3\tau)\Lambda(s-\kappa_{2},\chi_3\overline{\chi}_2\tau)\Lambda(s,\sigma_1\otimes\tau\times\widetilde{\sigma}_1)\Lambda(s,\tau)^2}{\Lambda(1+\kappa_{2},\chi_2\overline{\chi}_3\tau)\Lambda(1-\kappa_{2},\chi_3\overline{\chi}_2\tau)}.
   		\end{equation}
   		Let $s\in\mathcal{R}(1)^+.$ Since $\mathcal{F}(\boldsymbol{\kappa},s)$ vanishes when $\Im(\kappa_1)\rightarrow\infty,$ then by Cauchy integral formula, we have that $J_{P,\chi}(s;\phi,\mathcal{C}(\boldsymbol{0}))$ is equal to 
   		\begin{equation}\label{167.}
   		\sum_{\phi\in \mathfrak{B}_{P,\chi}}\int_{\mathcal{C}}\int_{\mathcal{C}}\mathcal{F}(\boldsymbol{\kappa},s)d\kappa_1d\kappa_2-\sum_{\phi\in \mathfrak{B}_{P,\chi}}\int_{\mathcal{C}}\underset{\kappa_{2}=s-1}{\Res}\mathcal{F}(\boldsymbol{\kappa},s)d\kappa_1.
   		\end{equation}
   		The term $\Res_{\kappa_2=s-1}\mathcal{F}(\boldsymbol{\kappa},s)$ is nonvanishing unless $\chi_1\simeq \chi_2\otimes \tau.$ Hence 
   		\begin{align*}
   		\underset{\kappa_{2}=s-1}{\Res}\mathcal{F}(\boldsymbol{\kappa},s)=\mathcal{H}(s,\kappa_1) \frac{\Lambda(2s-1,\tau^2)\Lambda(s,\tau)\Lambda(s,\sigma_1\otimes\tau\times\widetilde{\sigma}_1)}{\Lambda(2-s, \tau^{-1})},
   		\end{align*}
   		where $\mathcal{H}(s,\kappa_1)$ is an holomorphic function. So $\Res_{\kappa_2=s-1}\mathcal{F}(\boldsymbol{\kappa},s)$ admits a meromorphic continuation inside the domain $ \mathcal{S}_{(0,1)},$ with possible simple poles at $s=1/2.$ Now the right hand side of \eqref{167.} is meromorphic inside $\mathcal{R}(1),$ with a possible pole at $s=1.$ Denote by $J^{1}_{P,\chi}(s;\phi,\mathcal{C}(\boldsymbol{0}))$ the continuation of $J_{P,\chi}(s;\phi,\mathcal{C}(\boldsymbol{0}))$ in $\mathcal{R}(1).$ Apply Cauchy formula again to get
   		\begin{equation}\label{165'}
   		\int_{\mathcal{C}}\int_{\mathcal{C}}\mathcal{F}(\boldsymbol{\kappa},s)d\kappa_1d\kappa_2=\int_{\mathcal{C}}\int_{(0)}\mathcal{F}(\boldsymbol{\kappa},s)d\kappa_2d\kappa_1+\int_{\mathcal{C}}\underset{\kappa_{2}=1-s}{\Res}\mathcal{F}(\boldsymbol{\kappa},s)d\kappa_1,
   		\end{equation}
   		where $s\in \mathcal{R}(1)^-.$ By \eqref{166'}, $\int_{\mathcal{C}}\int_{(0)}\mathcal{F}(\boldsymbol{\kappa},s)d\kappa_2d\kappa_1$ is holomorphic inside $\mathcal{S}_{(1/3,1)};$ also, $\Res_{\kappa_1=1-s}\mathcal{F}(\boldsymbol{\kappa},s)$ is nonvanishing unless $\sigma_2\simeq \sigma_1\otimes \tau,$ in which case one has 
   		\begin{align*}
   		\underset{\kappa_{2}=1-s}{\Res}\mathcal{F}(\boldsymbol{\kappa},s)\sim \frac{\Lambda(2s-1,\tau^2)\Lambda(s,\tau)\Lambda(s,\sigma_1\otimes\tau\times\widetilde{\sigma}_1)}{\Lambda(2-s, \tau^{-1})}.
   		\end{align*}
   		So $\int_{\mathcal{C}}\Res_{\kappa_2=1-s}\mathcal{F}(\boldsymbol{\kappa},s)d\kappa_1$ admits a meromorphic continuation to $\mathcal{S}_{(1/3,1)},$ with possible simple poles at $s=1/2.$ Substituting this and \eqref{165'} into \eqref{163.} we conclude that $J^{1}_{P,\chi}(s;\phi,\mathcal{C}(\boldsymbol{0}))$ admits a meromorphic continuation to the domain $\mathcal{S}_{(1/3,1)}.$ Denote by $J^{(1/3,1)}_{P,\chi}(s;\phi,\mathcal{C}(\boldsymbol{0}))$ this continuation. Hence invoking the above discussion we have 
   		\begin{align*}
   		J_{P,\chi}(s;\phi,\mathcal{C}(\boldsymbol{0}))=\begin{cases}
   		J_{P,\chi}^{1}(s;\phi,\mathcal{C}(\boldsymbol{0})),\ s\in\mathcal{R}(1);\\
   		J_{P,\chi}^{(1/3,1)}(s;\phi,\mathcal{C}(\boldsymbol{0})),\ s\in\mathcal{S}_{(1/3,1)}.
   		\end{cases}
   		\end{align*}
   		Moreover, by assumption $\Lambda(s,\sigma_2\otimes\tau\times\widetilde{\sigma}_2)L_F(s,\tau)^{-1}$ is holomorphic in $\mathcal{S}_{(0,1)},$ then from the expressions above we see that $J_{P,\chi}(s;\phi,\mathcal{C}(\boldsymbol{0}))L_F(s,\tau)^{-1}$ admits a meromorphic continuation in $s\in\mathcal{S}_{(1/3,1)}$ with a possible simple pole at $s=1/2$ when $\tau^2=1.$
   		
   		\item[$r=4:$] In this case, the parabolic subgroup $P$ is of type $(1,1,1,1),$ and any associated cuspidal datum is of the form $\chi\simeq(\chi_1,\chi_2,\chi_3,\chi_4),$ where $\chi_i$'s are unitary Hecke characters on $\mathbb{A}_F^{\times}$ such that $\chi_1\chi_2\chi_3\chi_4=\omega.$ Then there exists an entire function $\mathcal{H}(s,\boldsymbol{\kappa})$ such that $\mathcal{F}(\boldsymbol{\kappa},s)$ is equal to 
   		\begin{equation}\label{169.}
   		\mathcal{H}(s,\boldsymbol{\kappa})\Lambda_F(s,\tau)^4\prod_{j=1}^{3}\prod_{i=1}^j\frac{\Lambda_F(s+\kappa_{i,j},\tau\chi_i\overline{\chi}_{j+1})\Lambda_F(s-\kappa_{i,j},\tau\chi_{j+1}\overline{\chi}_{i})}{\Lambda_F(1+\kappa_{i,j},\chi_i\overline{\chi}_{j+1})\Lambda_F(1-\kappa_{i,j},\chi_{j+1}\overline{\chi}_{i})},
   		\end{equation}
   		where $\Lambda_F(s,\chi')$ is the completed Hecke $L$-function associated to the unitary Hecke character $\chi'$ over $F.$ Then by Proposition \ref{57prop}, when $s\in\mathcal{R}(1)^+,$ $J_{P,\chi}(s;\phi,\mathcal{C}(\boldsymbol{0}))$ is equal to 
   		\begin{align*}
   		&\sum_{\phi} \int_{\mathcal{C}}\int_{\mathcal{C}}\int_{\mathcal{C}}\mathcal{F}(\boldsymbol{\kappa},s)d\kappa_3d\kappa_2d\kappa_1-c_1\sum_{\phi} \int_{\mathcal{C}}\int_{\mathcal{C}}\underset{\kappa_{1}=s-1}{\Res}\mathcal{F}(\boldsymbol{\kappa},s)d\kappa_3d\kappa_2-\\
   		&c_2\sum_{\phi} \int_{\mathcal{C}}\int_{\mathcal{C}}\underset{\kappa_{2}=s-1}{\Res}\mathcal{F}(\boldsymbol{\kappa},s)d\kappa_3d\kappa_1-c_3\sum_{\phi} \int_{\mathcal{C}}\int_{\mathcal{C}}\underset{\kappa_{3}=s-1}{\Res}\mathcal{F}(\boldsymbol{\kappa},s)d\kappa_2d\kappa_1-\\
   		&c_{1,2}\sum_{\phi}\int_{\mathcal{C}} \underset{\kappa_{1}=s-1}{\Res}\underset{\kappa_{2}=s-1}{\Res}\mathcal{F}(\boldsymbol{\kappa},s)d\kappa_3-c_{1,3}\sum_{\phi}\int_{\mathcal{C}} \underset{\kappa_{1}=s-1}{\Res}\underset{\kappa_{3}=s-1}{\Res}\mathcal{F}(\boldsymbol{\kappa},s)d\kappa_2-\\
   		&c_{2,3}\sum_{\phi} \int_{\mathcal{C}}\underset{\kappa_{2}=s-1}{\Res}\underset{\kappa_{3}=s-1}{\Res}\mathcal{F}(\boldsymbol{\kappa},s)d\kappa_1-c_{1,2,3}\sum_{\phi}\underset{\kappa_{1}=s-1}{\Res}\underset{\kappa_{2}=s-1}{\Res}\underset{\kappa_{3}=s-1}{\Res}\mathcal{F}(\boldsymbol{\kappa},s),
   		\end{align*}
   		where the coefficients $c_1,$ $c_2,$ $c_3,$ $c_{1,2},$ $c_{1,3},$ $c_{2,3}$ and $c_{1,2,3}$ are some absolute integers; and the sum with respect to $\phi$ in taken over $\phi\in \mathfrak{B}_{P,\chi}.$ 
   		
   		Due to the finiteness of $\mathfrak{B}_{P,\chi}$ and rapidly decay of $\mathcal{F}(\boldsymbol{\kappa},s)$ as a function of $\boldsymbol{\kappa}$ (see Claim \ref{34}), each term in the above expression converges absolutely and locally normally. Hence we only need to consider each summand in the above expression. Denote by  $\chi_{ij}=\chi_i\overline{\chi}_j,$ $1\leq i,j\leq 4.$ By \eqref{169.} we have
   		\begin{align*}
   		\underset{\kappa_{3}=s-1}{\Res}\mathcal{F}(\boldsymbol{\kappa},s)\sim& 
   		\frac{\Lambda(s+\kappa_1,\chi_{12}\tau)\Lambda(s-\kappa_1,\chi_{21}\tau)\Lambda(s-\kappa_2,\chi_{32}\tau)\Lambda(s-\kappa_{12},\chi_{31}\tau)}{\Lambda(1+\kappa_1,\chi_{12})\Lambda(1-\kappa_1,\chi_{21})\Lambda(1+\kappa_2,\chi_{23})\Lambda(2-s-\kappa_2,\chi_{32}\tau^{-1})}\times\\
   		&\frac{\Lambda(2s-1+\kappa_2,\chi_{23}\tau^2)\Lambda(2s-1+\kappa_{12},\chi_{13}\tau^2)\Lambda(2s-1,\tau^2)\Lambda(s,\tau)^3}{\Lambda(1+\kappa_{12},\chi_{13})\Lambda(2-s-\kappa_{12},\chi_{31}\tau^{-1})\Lambda(2-s,\tau^{-1})};\\
   		\underset{\kappa_{2}=s-1}{\Res}\mathcal{F}(\boldsymbol{\kappa},s)\sim& 
   		\frac{\Lambda(s-\kappa_1,\chi_{21}\tau)\Lambda(s-\kappa_3,\chi_{43}\tau)\Lambda(s+\kappa_{13},\chi_{14}\tau)\Lambda(s-\kappa_{13},\chi_{41}\tau)}{\Lambda(1+\kappa_1,\chi_{12})\Lambda(1+\kappa_3,\chi_{34})\Lambda(1+\kappa_{13},\chi_{14})\Lambda(2-s-\kappa_1,\chi_{21}\tau^{-1})}\\
   		&\cdot\frac{\Lambda(2s-1+\kappa_3,\chi_{34}\tau^2)\Lambda(2s-1+\kappa_1,\chi_{12}\tau^2)\Lambda(2s-1,\tau^2)\Lambda(s,\tau)^3}{\Lambda(1-\kappa_{13},\chi_{41})\Lambda(2-s-\kappa_{3},\chi_{43}\tau^{-1})\Lambda(2-s,\tau^{-1})};\\
   		\underset{\kappa_{1}=s-1}{\Res}\mathcal{F}(\boldsymbol{\kappa},s)\sim& 
   		\frac{\Lambda(s+\kappa_3,\chi_{34}\tau)\Lambda(s-\kappa_3,\chi_{43}\tau)\Lambda(s-\kappa_2,\chi_{32}\tau)\Lambda(s-\kappa_{23},\chi_{42}\tau)}{\Lambda(1+\kappa_2,\chi_{23})\Lambda(1-\kappa_3,\chi_{43})\Lambda(1+\kappa_3,\chi_{34})\Lambda(2-s-\kappa_2,\chi_{32}\tau^{-1})}\times\\
   		&\frac{\Lambda(2s-1+\kappa_2,\chi_{23}\tau^2)\Lambda(2s-1+\kappa_{23},\chi_{24}\tau^2)\Lambda(2s-1,\tau^2)\Lambda(s,\tau)^3}{\Lambda(1+\kappa_{23},\chi_{24})\Lambda(2-s-\kappa_{23},\chi_{42}\tau^{-1})\Lambda(2-s,\tau^{-1})}.
   		\end{align*}
   		Hence from the above expressions we see that $\underset{\kappa_{2}=s-1}{\Res}\underset{\kappa_{3}=s-1}{\Res}\mathcal{F}(\boldsymbol{\kappa},s)$ is equal to some holomorphic function multiplying
   		\begin{equation}\label{170.}
   		\frac{\Lambda(s-\kappa_1,\chi_{21}\tau)\Lambda(3s-2+\kappa_1,\chi_{12}\tau^3)\Lambda(3s-2,\tau^{3})\Lambda(2s-1,\tau^2)\Lambda(s,\tau)^2}{\Lambda(1+\kappa_1,\chi_{12})\Lambda(3-2s-\kappa_1,\chi_{21}\tau^{-2})\Lambda(3-2s,\tau^{-2})\Lambda(2-s,\tau^{-1})}.
   		\end{equation}
   		Likewise, $\underset{\kappa_{1}=s-1}{\Res}\underset{\kappa_{3}=s-1}{\Res}\mathcal{F}(\boldsymbol{\kappa},s)$ equals some holomorphic function multiplying the product of $\Lambda(2s-1,\tau^2)^2\Lambda(s,\tau)^2\Lambda(2-s,\tau^{-1})^{-2}$ and 
   		\begin{equation}\label{171.}
   		\frac{\Lambda(1-\kappa_2,\chi_{31})\Lambda(s-\kappa_2,\chi_{32}\tau)\Lambda(2s-1+\kappa_2,\chi_{23}\tau^2)\Lambda(3s-2+\kappa_2,\chi_{23}\tau^{3})}{\Lambda(1+\kappa_2,\chi_{23})\Lambda(s+\kappa_2,\chi_{23}\tau)\Lambda(2-s-\kappa_2,\chi_{32}\tau^{-1})\Lambda(3-2s-\kappa_2,\chi_{32}\tau^{-2})}.
   		\end{equation}
   		Also, the function $\underset{\kappa_{1}=s-1}{\Res}\underset{\kappa_{2}=s-1}{\Res}\mathcal{F}(\boldsymbol{\kappa},s)$ is equal to some holomorphic function multiplying the following function
   		\begin{equation}\label{172.}
   		\frac{\Lambda(s-\kappa_3,\chi_{43}\tau)\Lambda(3s-2+\kappa_3,\chi_{34}\tau^3)\Lambda(3s-2,\tau^{3})\Lambda(2s-1,\tau^2)\Lambda(s,\tau)^2}{\Lambda(1+\kappa_3,\chi_{34})\Lambda(3-2s-\kappa_3,\chi_{43}\tau^{-2})\Lambda(3-2s,\tau^{-2})\Lambda(2-s,\tau^{-1})}.
   		\end{equation}
   		Moreover, one can continue the computation to see that 
   		\begin{align*}
   		\underset{\kappa_{1}=s-1}{\Res}\underset{\kappa_{2}=s-1}{\Res}\underset{\kappa_{3}=s-1}{\Res}\mathcal{F}(\boldsymbol{\kappa},s)\sim& 
   		\frac{\Lambda(4s-3,\tau^4)\Lambda(3s-2,\tau^{3})\Lambda(2s-1,\tau^2)\Lambda(s,\tau)}{\Lambda(4-3s,\tau^{-3})\Lambda(3-2s,\tau^{-2})\Lambda(2-s,\tau^{-1})}.
   		\end{align*}
   		Therefore,  we have, from the above expressions, that $J_{P,\chi}(s;\phi,\mathcal{C}(\boldsymbol{0}))$ admits a meromorphic continuation to $s\in \mathcal{S}(1).$ Denote by $J^1_{P,\chi}(s;\phi,\mathcal{C}(\boldsymbol{0}))$ the continuation. Then clearly $J^1_{P,\chi}(s;\phi,\mathcal{C}(\boldsymbol{0}))$ is holomorphic when $s\in\mathcal{R}(1)^-.$
   		
   		Let $s\in\mathcal{R}(1)^-.$ Let $L_F(s,\tau)$ be the finite part of Hecke $L$-function with respect to $\tau.$ Then by Cauchy integral formula we have that 
   		\begin{claim}\label{60claim}
   			$\int_{\mathcal{C}}\int_{\mathcal{C}}\underset{\kappa_{1}=s-1}{\Res}\mathcal{F}(\boldsymbol{\kappa},s)d\kappa_3d\kappa_2$ admits a meromorphic continuation to the domain $\mathcal{S}_{(1/3,\infty)}.$ When restricted to $\mathcal{R}(1/2;\tau)^-\cup\mathcal{S}_{(1/2,1)},$ it only has  possible simple poles at $s=3/4,$ $s=2/3$ and $s=1/2.$ Moreover, if $L_F(3/4,\tau)=0,$ then $s=3/4$ is not a pole; if $L_F(2/3,\tau)=0,$ then $s=2/3$ is not a pole.
   		\end{claim}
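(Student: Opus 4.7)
The plan is to view $F(s,\kappa_2,\kappa_3):=\underset{\kappa_{1}=s-1}{\Res}\mathcal{F}(\boldsymbol{\kappa},s)$ as a meromorphic function of the three complex variables, use the explicit factorization given just before the claim, and then move the two contours $\mathcal{C}$ to $(0)$ in turn via Cauchy's theorem, carefully accounting for the residues that are swept across. Once both contours sit on the imaginary axis, the resulting double integral $\int_{(0)}\int_{(0)}F(s,\kappa_2,\kappa_3)\,d\kappa_2\,d\kappa_3$ is holomorphic in $s$ throughout the half-plane $\Re(s)>1/3$, because each numerator $\Lambda$-factor in the displayed expression for $F$ is evaluated at arguments with real part $>1/3$ when $\Re(s)>1/3$ and $\Re(\kappa_j)=0$, while the Rankin--Selberg denominators stay in their zero-free region (justified as in the passage from $\mathcal{C}$ to $(0)$ used earlier in Section \ref{7.1.1.}). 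Hence all singularities must come from the finite collection of residue terms picked up during the contour shifts.

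The shift in $\kappa_3$ passes across at most three poles of $F$ in $\kappa_3$, namely $\kappa_3=1-s$ (from $\Lambda(s+\kappa_3,\chi_{34}\tau)$), $\kappa_3=s-1$ (from $\Lambda(s-\kappa_3,\chi_{43}\tau)$), and $\kappa_3=s-1-\kappa_2$ (from $\Lambda(s-\kappa_{23},\chi_{42}\tau)$); in each case the residue is nonzero only for the corresponding character relation. For each such residue we then shift the $\kappa_2$-contour, sweeping the analogous poles in $\kappa_2$ at $\kappa_2=s-1$, $\kappa_2=2-2s$, $\kappa_2=s-1-\kappa_3$, and $\kappa_2=2-2s-\kappa_3$. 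The outcome is an explicit finite sum of (i)~the holomorphic double contour integral over $(0)\times(0)$, (ii)~single contour integrals over $(0)$ of iterated residues, and (iii)~fully iterated double residues. After the first shift each single-variable integral can itself be moved from $\mathcal{C}$ to $(0)$ at the cost of additional residues, all of which fit into the same bookkeeping; this produces a meromorphic continuation to $\mathcal{S}_{(1/3,\infty)}$.

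The pole analysis in $s$ proceeds by inspecting, term by term, which completed Hecke $L$-factors $\Lambda(js-j+1,\tau^j)$ for $j=2,3,4$ survive after evaluation. Using the three residue formulas displayed right before the claim (for $\Res_{\kappa_3=s-1}\Res_{\kappa_1=s-1}\mathcal{F}$, $\Res_{\kappa_2=s-1}\Res_{\kappa_1=s-1}\mathcal{F}$, $\Res_{\kappa_1=s-1}\Res_{\kappa_2=s-1}\Res_{\kappa_3=s-1}\mathcal{F}$) together with their obvious analogues obtained by replacing $s-1$ by $1-s$ or $2-2s$, one sees that the only $\Lambda$-factors in the numerator capable of producing singularities inside $\mathcal{R}(1/2;\tau)^-\cup \mathcal{S}_{(1/2,1)}$ are $\Lambda(2s-1,\tau^2)$, $\Lambda(3s-2,\tau^3)$, $\Lambda(4s-3,\tau^4)$, each contributing at most a simple pole at $s=1/2,\,2/3,\,3/4$ respectively (and only when the relevant power of $\tau$ is trivial). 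Factors of the form $\Lambda(2-2s,\tau^{-2})$ and $\Lambda(3-2s,\tau^{-2})$ in denominators remain nonvanishing because $2-2s$ lies in the zero-free region $\mathcal{D}_\tau$ by the very definition of $\mathcal{R}(1/2;\tau)^-$; poles outside the advertised set are excluded by the same argument.

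Finally, for the cancellation statements: each of the residue terms that can contribute a pole at $s=3/4$ (respectively at $s=2/3$) carries an overall factor of $\Lambda(s,\tau)$ in its numerator, and the archimedean factor $L_\infty(s,\tau)$ is holomorphic and nonzero at $s=3/4$ and $s=2/3$; therefore the vanishing of $L_F(3/4,\tau)$ (respectively $L_F(2/3,\tau)$) forces $\Lambda(3/4,\tau)=0$ (respectively $\Lambda(2/3,\tau)=0$) and kills the pole. I expect the main obstacle to be the bookkeeping of which residues are actually crossed during each contour shift under the various character-coincidence conditions $\chi_{ij}\tau^k=1$, and checking that the cross-terms do not produce unexpected poles beyond $\{1/2,2/3,3/4\}$ in $\mathcal{R}(1/2;\tau)^-\cup\mathcal{S}_{(1/2,1)}$; this requires a case split on which minimal set of character identities hold, with the zero-free region of $\Lambda(\cdot,\tau^{-j})$ ruling out the otherwise worrisome denominator poles.
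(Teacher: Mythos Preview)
Your overall strategy (iterated Cauchy contour shifts, collecting residues, then reading off the $s$-singularities from the explicit $\Lambda$-factors) is exactly the paper's, but there are two concrete errors that make the argument as written fail.

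First, the assertion that $\int_{(0)}\int_{(0)}F(s,\kappa_2,\kappa_3)\,d\kappa_2\,d\kappa_3$ is holomorphic for all $\Re(s)>1/3$ is false. The integrand carries factors $\Lambda(2s-1+\kappa_2,\chi_{23}\tau^2)$ and $\Lambda(2s-1+\kappa_{23},\chi_{24}\tau^2)$; with $\kappa_2,\kappa_3\in i\mathbb{R}$ these have poles (when the relevant character is trivial) on the integration contour precisely when $\Re(s)=1/2$, so the integral over $(0)\times(0)$ does not extend holomorphically across that line. (Separately, the bare factor $\Lambda(2s-1,\tau^2)$ already gives a pole at $s=1/2$.) The paper therefore does \emph{not} reduce everything to a single $(0)\times(0)$ integral plus residues; it performs a staged continuation---first to $\mathcal S_{(2/3,1)}$, then across $\mathcal R(2/3)$, then to $\mathcal S_{(1/2,2/3)}$, then across $\mathcal R(1/2)$---shifting the contours back and forth between $(0)$ and $\mathcal C$ at each step to avoid the collisions.

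Second, your list of poles crossed in the $\kappa_3$-shift is wrong. Since $\mathcal C$ lies to the \emph{right} of $(0)$, for $\Re(s)<1$ the poles $\kappa_3=s-1$ and $\kappa_3=s-1-\kappa_2$ have negative real part and are \emph{not} swept; what is actually crossed are $\kappa_3=1-s$ and $\kappa_3=2-2s-\kappa_2$ (the latter coming from $\Lambda(2s-1+\kappa_{23},\chi_{24}\tau^2)$, which you omitted). The $\kappa_2$-shift likewise picks up $\kappa_2=2-2s$, not $\kappa_2=s-1$. Getting these right is essential: the triple residue that carries the $\Lambda(4s-3,\tau^4)$ factor (hence the potential pole at $s=3/4$) is $\Res_{\kappa_3=1-s}\Res_{\kappa_2=2-2s}\Res_{\kappa_1=s-1}\mathcal F$, which never appears under your bookkeeping.

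Your cancellation argument at $s=3/4$ and $s=2/3$ via the factor $\Lambda(s,\tau)$ is different from the paper's (which uses $\Lambda(3s-2,\tau^3)$ and $\Lambda(2s-1,\tau^2)$ respectively, together with $\tau^4=1\Rightarrow\tau^3=\tau^{-1}$ and the functional equation) but is in fact equivalent and would work once the correct residue terms are in hand.
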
  
   		\begin{claim}\label{61claim}
   			$\int_{\mathcal{C}}\int_{\mathcal{C}}\underset{\kappa_{2}=s-1}{\Res}\mathcal{F}(\boldsymbol{\kappa},s)d\kappa_3d\kappa_1$ admits a meromorphic continuation to the domain $\mathcal{S}_{(1/3,\infty)}.$ When restricted to $\mathcal{R}(1/2;\tau)^-\cup\mathcal{S}_{(1/2,1)},$ it only has possible simple poles at $s=3/4,$ $s=2/3$ and $s=1/2.$ Moreover, if $L_F(3/4,\tau)=0,$ then $s=3/4$ is not a pole; if $L_F(2/3,\tau)=0,$ then $s=2/3$ is not a pole.
   		\end{claim}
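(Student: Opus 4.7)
The plan is to follow the same strategy used for Claim~\ref{60claim} (and modeled on Propositions~\ref{57prop} and~\ref{58prop}), namely to shift both contours from $\mathcal{C}$ to the imaginary axis $(0)$ and track the residues picked up. First I would rewrite the integrand: substituting $\kappa_2 = s-1$ into the formula for $\Res_{\kappa_2=s-1}\mathcal{F}(\boldsymbol{\kappa},s)$ displayed in the proof of Theorem~\ref{57} (case $r=4$), the variable $\kappa_{1,3} = \kappa_1+\kappa_2$ becomes $\kappa_1+s-1$, so the integrand factors as $F_s \cdot F_1(\kappa_1;s)\cdot F_3(\kappa_3;s)$, a function of $s$ alone times functions depending separately on $\kappa_1$ and $\kappa_3$. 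This factorization reduces the analysis to two one-variable contour shifts, each essentially of the type already handled in the $n=3$ case of Theorem~\ref{57}.

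Next I would enumerate the potential poles crossed while shifting $\kappa_1$ from $\mathcal{C}$ to $(0)$. For $\Re(s)$ close to $1$ the only relevant pole in the strip $0 < \Re(\kappa_1) < \Re(\mathcal{C})$ is $\kappa_1 = 2-2s$, coming from $\Lambda(2s-1+\kappa_1,\chi_{12}\tau^2)$ when $\chi_1\chi_2^{-1}\tau^2 = 1$ or from $\Lambda(s+\kappa_{1,3},\chi_{14}\tau)|_{\kappa_2=s-1}=\Lambda(2s-1+\kappa_1,\chi_{14}\tau)$ when $\chi_1\chi_4^{-1}\tau = 1$; an analogous statement holds for $\kappa_3 = 2-2s$. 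Shifting both contours yields
\begin{equation*}
\int_{\mathcal{C}}\int_{\mathcal{C}} = \int_{(0)}\int_{(0)} + \text{single residue terms} + \text{double residue term},
\end{equation*}
where the bulk term $\int_{(0)}\int_{(0)}$ remains holomorphic on all of $\mathcal{S}_{(1/3,1)}$ because the remaining $\Lambda$-factors in the integrand are either in their region of absolute convergence or regular by the Rankin--Selberg theory. The residue pieces carry the poles in $s$: the single residues at $\kappa_1 = 2-2s$ or $\kappa_3 = 2-2s$ produce factors $\Lambda(2s-1,\tau^2)$, and the iterated residue at $(2-2s,2-2s)$ produces, after reduction, factors $\Lambda(3s-2,\tau^3)$ and $\Lambda(4s-3,\tau^4)$. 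These give potential simple poles at $s=\tfrac12,\tfrac23,\tfrac34$ exactly when $\tau^2,\tau^3,\tau^4$ respectively are trivial.

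To continue past $\Re(s)=1/2$, I would further deform the $(0)$-contours into the dual zero-free regions on the left of the imaginary axis, exactly as in the $n=3$ argument of Theorem~\ref{57}; this extension is valid on the uniform domain $\mathcal{R}(1/2;\tau)^-$ because the archimedean ratios $L_\infty(\cdot,\tau^{-k})^{-1}$ stay holomorphic there while $L_F(\cdot,\tau^{-k})$ is zero-free by the construction of $\mathcal{D}_\tau$ in Section~\ref{7.11}. The cancellation claims (that $s=\tfrac34$ is not a pole when $L_F(\tfrac34,\tau)=0$, and similarly at $s=\tfrac23$) will follow from the explicit shape of the double and triple residue terms: their denominators carry a factor $\Lambda(s,\tau)$ (beyond archimedean Gammas that are regular and nonzero at the points in question), so a zero of $L_F$ at $(k-1)/k$ kills the simple pole of $\Lambda(ks-(k-1),\tau^k)$ appearing in the numerator.

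The main obstacle is the bookkeeping of iterated residues. One must enumerate all configurations (first $\kappa_1$ then $\kappa_3$, the reverse order, and the additional shifts needed to cross $\Re(s)=1/2$), verify that contributions to each candidate pole $s\in\{1/2,2/3,3/4\}$ combine into at most a \emph{simple} pole (so no higher-order singularities accumulate from different residue paths), and exhibit the $\Lambda(s,\tau)$ factor responsible for the claimed cancellations. The combinatorics parallel Propositions~\ref{57prop}--\ref{58prop} and the $r=3$ discussion of Theorem~\ref{57}, but with one extra variable surviving after the $\kappa_2=s-1$ residue; once the configurations are systematically unwound, the argument is a direct computation with residues of Hecke $L$-functions, analogous in each step to the one already carried out for Claim~\ref{60claim}.
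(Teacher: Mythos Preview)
Your factorization claim is incorrect, and this is a genuine gap. You assert that after the substitution $\kappa_2=s-1$ the integrand splits as $F_s\cdot F_1(\kappa_1;s)\cdot F_3(\kappa_3;s)$, but the displayed formula for $\Res_{\kappa_2=s-1}\mathcal{F}(\boldsymbol\kappa,s)$ contains the factors $\Lambda(s\pm\kappa_{13},\chi_{14}^{\pm 1}\tau)$ and $\Lambda(1\pm\kappa_{13},\chi_{14}^{\pm 1})^{-1}$ with $\kappa_{13}=\kappa_1+\kappa_2+\kappa_3$; after setting $\kappa_2=s-1$ this becomes $\kappa_1+\kappa_3+s-1$, which genuinely couples $\kappa_1$ and $\kappa_3$. (Your identification $\kappa_{1,3}=\kappa_1+\kappa_2$ is a misreading of the notation.) Consequently, when you shift the $\kappa_3$-contour you pick up, in addition to the pole at $\kappa_3=2-2s$, a pole at $\kappa_3=2-2s-\kappa_1$ coming from $\Lambda(2s-1+\kappa_1+\kappa_3,\chi_{14}\tau)$; the paper records this residue explicitly in \eqref{192}. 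This extra piece is the hardest part of the argument: its expression carries $\Lambda(s+\kappa_1,\chi_{12}\tau)^{-1}$, whose zeros in the critical strip produce infinitely many poles and block a direct contour shift. The paper remedies this by a change of variable $\kappa_1'=\kappa_1+s-1$ (equations \eqref{195.}--\eqref{197.}), which symmetrizes the factor and permits continuation to $\mathcal{R}(1/2)$. None of this is visible from your factorized picture.

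Your cancellation mechanism at $s=3/4$ and $s=2/3$ is also misidentified. The relevant triple-residue terms do not carry $\Lambda(s,\tau)$ in the denominator; rather (see e.g.\ \eqref{194}) the singular part near $s=(k-1)/k$ is a holomorphic multiple of $\Lambda(ks-(k-1),\tau^k)\,\Lambda((k-1)s-(k-2),\tau^{k-1})$. The pole forces $\tau^k=1$, hence $\tau^{k-1}=\tau^{-1}$, and then $\Lambda((k-1)s-(k-2),\tau^{k-1})|_{s=(k-1)/k}=\Lambda(1/k,\tau^{-1})$, which vanishes by the functional equation when $L_F((k-1)/k,\tau)=0$. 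That adjacent-numerator zero, not a denominator $\Lambda(s,\tau)$, is what kills the pole.
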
  
   		\begin{claim}\label{62claim}
   			$\int_{\mathcal{C}}\int_{\mathcal{C}}\underset{\kappa_{3}=s-1}{\Res}\mathcal{F}(\boldsymbol{\kappa},s)d\kappa_2d\kappa_1$ admits a meromorphic continuation to the domain $\mathcal{S}_{(1/3,\infty)}.$ When restricted to $\mathcal{R}(1/2;\tau)^-\cup\mathcal{S}_{(1/2,1)},$ it only has possible simple poles at $s=3/4,$ $s=2/3$ and $s=1/2.$ Moreover, if $L_F(3/4,\tau)=0,$ then $s=3/4$ is not a pole; if $L_F(2/3,\tau)=0,$ then $s=2/3$ is not a pole.
   		\end{claim}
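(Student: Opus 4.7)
The proof will follow the same strategy used for Claim \ref{60claim} and Claim \ref{61claim}, with $\kappa_3$ now playing the role of the residue variable. Starting from the explicit factorization of $\underset{\kappa_3=s-1}{\Res}\mathcal{F}(\boldsymbol{\kappa},s)$ computed earlier (the first of the three displayed residue formulas in the $r=4$ block), which expresses this quantity as a holomorphic factor times an explicit ratio of complete Hecke $L$-functions in the variables $\kappa_1$, $\kappa_2$ and $\kappa_{12}=\kappa_1+\kappa_2$, I would apply Cauchy's integral formula successively to shift the $\mathcal{C}$-contour in $\kappa_1$ and then in $\kappa_2$ to the imaginary axis $(0)$.

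In the first shift ($\kappa_1$ from $\mathcal{C}$ to $(0)$ with $\kappa_2$ on $\mathcal{C}$), inspection of the explicit formula shows that the polar loci of the integrand in $\kappa_1$ lying between $(0)$ and $\mathcal{C}$ for $s\in\mathcal{R}(1)^{-}$ come from the factor $\Lambda(s+\kappa_1,\chi_{12}\tau)$ (pole at $\kappa_1=1-s$, nonzero only when $\chi_{12}\tau=1$) and, possibly, from $\Lambda(s-\kappa_{12},\chi_{31}\tau)$ and $\Lambda(2s-1+\kappa_{12},\chi_{13}\tau^2)$, which after adjusting for the position of $\kappa_2$ on $\mathcal{C}$ contribute residues at $\kappa_1=1-s-\kappa_2$ and $\kappa_1=2-2s-\kappa_2$ under appropriate character conditions. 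A second shift in $\kappa_2$ produces an analogous collection of iterated residue terms. The leading double integral $\int_{(0)}\int_{(0)}\underset{\kappa_3=s-1}{\Res}\mathcal{F}(\boldsymbol{\kappa},s)d\kappa_1 d\kappa_2$ is absolutely convergent and holomorphic on $\mathcal{S}_{(1/3,\infty)}$ by the estimates underlying Theorem \ref{47'}.

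A direct calculation, parallel to the one already performed for $\underset{\kappa_{2}=s-1}{\Res}\underset{\kappa_{3}=s-1}{\Res}\mathcal{F}$ in \eqref{170.}, shows that each iterated residue term is, up to an entire factor, a ratio whose numerator involves $\Lambda(2s-1,\tau^2)$, $\Lambda(3s-2,\tau^3)$ and $\Lambda(4s-3,\tau^4)$ and whose denominator involves $\Lambda(2-s,\tau^{-1})$, $\Lambda(3-2s,\tau^{-2})$ and $\Lambda(4-3s,\tau^{-3})$. The numerator factors produce prospective simple poles at $s=1/2$, $s=2/3$ and $s=3/4$ precisely when $\tau^2$, $\tau^3$ and $\tau^4$ are trivial; the denominator factor $\Lambda(3-2s,\tau^{-2})$ remains nonvanishing on $\mathcal{R}(1/2;\tau)^{-}$ by construction of that region, which is exactly why this enlarged domain appears in the statement. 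The cancellation at $s=2/3$ and $s=3/4$ follows because after extracting the pole of $\Lambda(3s-2,\tau^3)$ (resp. $\Lambda(4s-3,\tau^4)$) the remaining factor is, up to a nonvanishing constant, a multiple of $L_F(s,\tau)$ evaluated at $s=2/3$ (resp. $s=3/4$). The main obstacle, as with Claims \ref{60claim} and \ref{61claim}, is the careful combinatorial bookkeeping of which polar loci are crossed during the two successive shifts---several of them depend on the position of the other integration variable---together with the enumeration of the Hecke character identities required for each iterated residue to be nonzero.
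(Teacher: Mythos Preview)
Your overall strategy—successive contour shifts via Cauchy's formula, starting from the explicit factorization of $\underset{\kappa_3=s-1}{\Res}\mathcal{F}(\boldsymbol{\kappa},s)$—is exactly the paper's approach. However, there is a genuine gap in your outline that makes the argument, as stated, break down.

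You assert that the leading double integral $\int_{(0)}\int_{(0)}\underset{\kappa_3=s-1}{\Res}\mathcal{F}(\boldsymbol{\kappa},s)\,d\kappa_1\,d\kappa_2$ is absolutely convergent and holomorphic on all of $\mathcal{S}_{(1/3,\infty)}$. This is false. The numerator factors $\Lambda(2s-1+\kappa_2,\chi_{23}\tau^2)$ and $\Lambda(2s-1+\kappa_{12},\chi_{13}\tau^2)$ have poles at $\kappa_2=1-2s$ and $\kappa_{12}=1-2s$ (when the relevant characters are trivial), which collide with the contour $(0)$ precisely at $\Re(s)=1/2$. Thus the $(0)\times(0)$ integral is only good on $\mathcal{S}_{(1/2,1)}$, and the continuation past $\Re(s)=1/2$ requires an \emph{additional} round of contour shifts. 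The paper carries this out in stages—through $\mathcal{R}(1)$, then $\mathcal{S}_{(2/3,1)}$, then $\mathcal{R}(2/3)$, then $\mathcal{S}_{(1/2,2/3)}$, then $\mathcal{R}(1/2)$, then $\mathcal{S}_{(1/3,1/2)}$—picking up further residue terms (for instance $\underset{\kappa_2=2s-1}{\Res}\underset{\kappa_1=1-s}{\Res}\underset{\kappa_3=s-1}{\Res}\mathcal{F}$) at each crossing. Your proposal collapses this into a single shift and therefore misses these terms.

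Two smaller points. First, you identify the denominator factor forcing the restriction to $\mathcal{R}(1/2;\tau)^-$ as $\Lambda(3-2s,\tau^{-2})$, but for $\Re(s)<1$ one has $\Re(3-2s)>1$, so that factor is never the obstruction; the actual culprit is $\Lambda(2s,\tau^2)^{-1}$, which arises in the residue term at $\kappa_2=2s-1$ (see the analogue of \eqref{216}--\eqref{217}) and whose zeros in the critical strip must be avoided. Second, your justification for the cancellations at $s=3/4$ and $s=2/3$ is not quite right: one does not find a literal factor of $L_F(s,\tau)$, but rather uses that $\tau^4=1$ (resp.\ $\tau^3=1$) together with the functional equation to show $\Lambda(3s-2,\tau^3)|_{s=3/4}=\Lambda(1/4,\tau^{-1})$ (resp.\ $\Lambda(2s-1,\tau^2)|_{s=2/3}=\Lambda(1/3,\tau^{-1})$) vanishes under the hypothesis.
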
  
   		\begin{claim}\label{63claim}
   			$\int_{\mathcal{C}} \underset{\kappa_{1}=s-1}{\Res}\underset{\kappa_{2}=s-1}{\Res}\mathcal{F}(\boldsymbol{\kappa},s)d\kappa_3$ admits a meromorphic continuation to the domain $\mathcal{S}_{(1/3,\infty)}.$ When restricted to $\mathcal{R}(1/2;\tau)^-\cup\mathcal{S}_{(1/2,1)},$ it only has  possible simple poles at $s=3/4,$ $s=2/3$ and $s=1/2.$ Moreover, if $L_F(3/4,\tau)=0,$ then $s=3/4$ is not a pole; if $L_F(2/3,\tau)=0,$ then $s=2/3$ is not a pole.
   		\end{claim}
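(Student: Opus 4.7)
The plan is to apply the contour-shifting technique used already in Section \ref{7.1.} for $\GL(3)$. By \eqref{172.}, the integrand $\underset{\kappa_1=s-1}{\Res}\underset{\kappa_2=s-1}{\Res}\mathcal{F}(\boldsymbol{\kappa},s)$ is an entire function of $\kappa_3$ times a ratio of completed Hecke $L$-functions whose $\kappa_3$-dependent piece is
\[
\frac{\Lambda(s-\kappa_3,\chi_{43}\tau)\,\Lambda(3s-2+\kappa_3,\chi_{34}\tau^{3})}{\Lambda(1+\kappa_3,\chi_{34})\,\Lambda(3-2s-\kappa_3,\chi_{43}\tau^{-2})},
\]
multiplied by the $\kappa_3$-free factor $\Lambda(3s-2,\tau^{3})\Lambda(2s-1,\tau^{2})\Lambda(s,\tau)^{2}/\bigl(\Lambda(3-2s,\tau^{-2})\Lambda(2-s,\tau^{-1})\bigr)$. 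First I would deform $\mathcal{C}$ to the imaginary axis $(0)$; for $s\in\mathcal{R}(1)^{-}$ the only poles crossed are at $\kappa_3=1-s$ (present only when $\chi_{43}\tau=1$) and, once $\Re(s)>2/3$, at $\kappa_3=3-3s$ (present only when $\chi_{34}\tau^{3}=1$).

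Each of these residues is an explicit product of completed Hecke $L$-functions in $s$ alone. A direct computation, together with the identification $\chi_{34}=\tau$ under $\chi_{43}\tau=1$ and $\chi_{43}=\tau^{3}$ under $\chi_{34}\tau^{3}=1$, gives
\[
\underset{\kappa_3=1-s}{\Res}\sim\frac{\Lambda(2s-1,\tau^{4})\Lambda(3s-2,\tau^{3})\Lambda(2s-1,\tau^{2})\Lambda(s,\tau)^{2}}{\Lambda(2-s,\tau)\Lambda(2-s,\tau^{-3})\Lambda(3-2s,\tau^{-2})\Lambda(2-s,\tau^{-1})},
\]
\[
\underset{\kappa_3=3-3s}{\Res}\sim\frac{\Lambda(4s-3,\tau^{4})\Lambda(3s-2,\tau^{3})\Lambda(2s-1,\tau^{2})\Lambda(s,\tau)^{2}}{\Lambda(4-3s,\tau^{-3})\Lambda(3-2s,\tau^{-2})\Lambda(2-s,\tau^{-1})\cdot(\text{hol.})}.
\]
Meanwhile $\int_{(0)}$ of the integrand is a Schwartz-class contour integral on the imaginary line, hence defines a holomorphic function of $s$ wherever its $s$-dependent prefactors are regular; on $\mathcal{R}(1/2;\tau)^{-}$ the denominator factors $\Lambda(3-2s,\tau^{-2})$ and $\Lambda(2-s,\tau^{-1})$ are non-vanishing by the choice \eqref{R} of $\mathcal{R}(1/2;\tau)^{-}$, so no spurious poles arise from them.

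To cross the line $\Re(s)=1/2$ I would perform a second shift, moving $(0)$ leftward to a contour $\mathcal{C}'$ inside $\mathcal{R}(1/2;\tau)^{-}$. This picks up further residues that are duals of the above under $\kappa_3\mapsto -\kappa_3$, and the resulting expression meromorphically continues the whole integral to $\mathcal{S}_{(1/3,\infty)}$. Reading off the poles of the sum of all residue terms and prefactors inside $\mathcal{R}(1/2;\tau)^{-}\cup\mathcal{S}_{(1/2,1)}$: $\Lambda(4s-3,\tau^{4})$ contributes a simple pole at $s=3/4$ only if $\tau^{4}=1$; $\Lambda(3s-2,\tau^{3})$ contributes $s=2/3$ only if $\tau^{3}=1$; and $\Lambda(2s-1,\tau^{2})$ together with $\Lambda(2s-1,\tau^{4})$ contributes $s=1/2$ only if $\tau^{2}=1$. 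When $\tau^{4}=1$ and $\tau$ is nontrivial, the factor $\Lambda(s,\tau)^{2}$ evaluated at $s=3/4$ carries $L_{F}(3/4,\tau)^{2}$ (the archimedean component is nonzero there), so a zero of $L_{F}(3/4,\tau)$ cancels the $s=3/4$ pole; the same mechanism with $L_{F}(2/3,\tau)$ handles $s=2/3$.

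The main obstacle is not conceptual but combinatorial: one must consistently track which of the three residue terms and the $\int_{\mathcal{C}'}$ tail each candidate pole comes from, and verify across the two successive contour shifts that the only poles surviving in $\mathcal{R}(1/2;\tau)^{-}\cup\mathcal{S}_{(1/2,1)}$ are the listed ones. The zero-free region $\mathcal{R}(1/2;\tau)^{-}$ is chosen precisely so that the denominators $\Lambda(3-2s,\tau^{-2})$, $\Lambda(2-s,\tau^{-1})$, $\Lambda(4-3s,\tau^{-3})$, as well as the denominator $L$-factors in $\mathcal{L}_{S}(\boldsymbol{\kappa},\pi,\widetilde{\pi})$ produced by the Langlands--Shahidi denominators, are all non-vanishing along the relevant contours; granting this, the claim follows.
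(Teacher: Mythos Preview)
Your contour-shifting plan is exactly the paper's method, and the overall structure you sketch is sound. But two of your computational details are off.

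First, there is no pole of the integrand at $\kappa_3=1-s$. From \eqref{172.} the only $\kappa_3$-poles of $\underset{\kappa_1=s-1}{\Res}\underset{\kappa_2=s-1}{\Res}\mathcal{F}(\boldsymbol{\kappa},s)$ come from the numerator factors $\Lambda(s-\kappa_3,\chi_{43}\tau)$ and $\Lambda(3s-2+\kappa_3,\chi_{34}\tau^{3})$, giving $\kappa_3\in\{s,\,s-1,\,3-3s,\,2-3s\}$ under the relevant character conditions. For $s\in\mathcal{R}(1)^{-}$ only $\kappa_3=3-3s$ lies between $(0)$ and $\mathcal{C}$, so that is the \emph{only} residue picked up in the first shift; your computed ``$\underset{\kappa_3=1-s}{\Res}$'' term is spurious. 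Relatedly, in the genuine residue at $\kappa_3=3-3s$ the denominator contributes a factor $\Lambda(s,\chi_{43}\tau^{-2})=\Lambda(s,\tau)$ (under $\chi_{34}\tau^{3}=1$), which cancels one $\Lambda(s,\tau)$ from the prefactor; the correct form is \eqref{220} with $\Lambda(s,\tau)^{1}$, not $\Lambda(s,\tau)^{2}$.

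Second, your ``second shift leftward to $\mathcal{C}'\subset\mathcal{R}(1/2;\tau)^{-}$'' conflates the $s$-plane region $\mathcal{R}(1/2;\tau)^{-}$ with a $\kappa_3$-contour. The paper instead crosses $\Re(s)=2/3$ by shifting $(0)$ back to $\mathcal{C}$ for $s\in\mathcal{R}(2/3)$, picking up the pole at $\kappa_3=2-3s$ (formula \eqref{221}--\eqref{222}); the resulting $\int_{\mathcal{C}}$ is then holomorphic in $\kappa_3$ all the way down to $\mathcal{S}_{(1/3,2/3)}$, and the full piecewise continuation \eqref{claim4} follows.

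Your cancellation argument at $s=3/4$ via the zero of $\Lambda(s,\tau)$ does work (with the exponent corrected to $1$), but the paper's route is cleaner: when $\tau^{4}=1$ one has $\tau^{3}=\tau^{-1}$, and the functional equation gives $\Lambda(3s-2,\tau^{3})|_{s=3/4}=\Lambda(1/4,\tau^{-1})=0$ whenever $L_F(3/4,\tau)=0$, so the adjacent numerator factor $\Lambda(3s-2,\tau^{3})$ already kills the pole of $\Lambda(4s-3,\tau^{4})$. The $s=2/3$ case is handled identically via $\Lambda(2s-1,\tau^{2})|_{s=2/3}=\Lambda(1/3,\tau^{-1})$.
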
 
   		\begin{claim}\label{64claim}
   			$\int_{\mathcal{C}} \underset{\kappa_{1}=s-1}{\Res}\underset{\kappa_{3}=s-1}{\Res}\mathcal{F}(\boldsymbol{\kappa},s)d\kappa_2$ admits a meromorphic continuation to the domain $\mathcal{S}_{(1/3,\infty)}.$ When restricted to $\mathcal{R}(1/2;\tau)^-\cup\mathcal{S}_{(1/2,1)},$ it only has  possible simple poles at $s=3/4,$ $s=2/3$ and $s=1/2.$ Moreover, if $L_F(3/4,\tau)=0,$ then $s=3/4$ is not a pole; if $L_F(2/3,\tau)=0,$ then $s=2/3$ is not a pole.
   		\end{claim}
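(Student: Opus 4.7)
The plan is to shift the $\kappa_2$-contour from $\mathcal{C}=\mathcal{C}_\chi(1)$ toward the imaginary axis $(0)$ and then beyond, picking up residues along the way and exploiting the explicit form given in \eqref{171.}. Write
\[
G(\kappa_2,s):=\underset{\kappa_1=s-1}{\Res}\underset{\kappa_3=s-1}{\Res}\mathcal{F}(\boldsymbol{\kappa},s),
\]
which by \eqref{171.} equals an entire function times $\Lambda(2s-1,\tau^2)^2\Lambda(s,\tau)^2/\Lambda(2-s,\tau^{-1})^2$ times the ratio of completed Hecke $L$-functions displayed there. In the $\kappa_2$-variable the potential poles of $G(\kappa_2,s)$ inside the strip between $(0)$ and $\mathcal{C}$ (and a bit further left) come from the factors $\Lambda(1-\kappa_2,\chi_{31})$ (pole at $\kappa_2=1$), $\Lambda(s-\kappa_2,\chi_{32}\tau)$ (pole at $\kappa_2=s-1$), $\Lambda(2s-1+\kappa_2,\chi_{23}\tau^2)$ (pole at $\kappa_2=2-2s$), and $\Lambda(3s-2+\kappa_2,\chi_{23}\tau^3)$ (pole at $\kappa_2=3-3s$); each of these is nontrivial only under the corresponding trivial-twist conditions ($\chi_{31}=1$, $\chi_{32}\tau=1$, $\chi_{23}\tau^2=1$, $\chi_{23}\tau^3=1$ respectively).

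First, for $s\in\mathcal{R}(1)^-$ sufficiently close to the line $\Re(s)=1$, I would shift from $\mathcal{C}$ to $(0)$, crossing the pole at $\kappa_2=s-1$. The residue there is precisely the triple residue $\underset{\kappa_1=s-1}{\Res}\underset{\kappa_2=s-1}{\Res}\underset{\kappa_3=s-1}{\Res}\mathcal{F}(\boldsymbol{\kappa},s)$, which by the formula already displayed just before Claim \ref{60claim} is a holomorphic multiple of
\[
\frac{\Lambda(4s-3,\tau^4)\Lambda(3s-2,\tau^{3})\Lambda(2s-1,\tau^2)\Lambda(s,\tau)}{\Lambda(4-3s,\tau^{-3})\Lambda(3-2s,\tau^{-2})\Lambda(2-s,\tau^{-1})}.
\]
The integral $\int_{(0)}G(\kappa_2,s)d\kappa_2$ converges absolutely and is holomorphic in $s$ throughout $\mathcal{S}_{(1/2,1)}$ by the standard vertical-line decay of Rankin--Selberg factors together with preconvex estimate \eqref{105''} and the decay of the smooth component $\mathcal{H}(s,\kappa_2)$. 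Thus on $\mathcal{R}(1/2;\tau)^-\cup\mathcal{S}_{(1/2,1)}$ the singularities of $\int_{\mathcal{C}}G\,d\kappa_2$ in this first step are exactly those of the triple residue: simple poles possibly at $s=3/4$ (requiring $\tau^4=1$, hence in particular $\tau^2=1$ also allows $\Lambda(2-s,\tau^{-1})^{-1}$ in the denominator to contribute a zero, etc.), $s=2/3$ (requiring $\tau^3=1$), and $s=1/2$ (requiring $\tau^2=1$); the pole at $s=1$ (which would require $\tau=1$) lies outside the region under consideration.

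Next, for $s\in\mathcal{R}(1/2;\tau)^-$, the integral $\int_{(0)}G(\kappa_2,s)d\kappa_2$ itself needs further continuation because the $L$-factor $\Lambda(2s-1+\kappa_2,\chi_{23}\tau^2)$ has its pole (when $\chi_{23}\tau^2=1$) at $\kappa_2=2-2s$ crossing the imaginary axis as $\Re(s)$ drops below $1/2$, and likewise $\Lambda(3s-2+\kappa_2,\chi_{23}\tau^3)$ has its pole at $\kappa_2=3-3s$ crossing once $\Re(s)<2/3$. I would shift the contour further to the left of these crossings, systematically collecting the residues; a direct computation shows each such residue is a holomorphic multiple of the numerator/denominator pattern
\[
\frac{\Lambda(4s-3,\tau^4)\Lambda(3s-2,\tau^3)\Lambda(2s-1,\tau^2)\Lambda(s,\tau)^{a}}{\Lambda(4-3s,\tau^{-3})\Lambda(3-2s,\tau^{-2})\Lambda(2-s,\tau^{-1})^{b}}
\]
for appropriate integers $a,b\geq 1$, and hence contributes only to poles at $s\in\{3/4,2/3,1/2\}$, subject to the corresponding triviality of powers of $\tau$. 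The crucial cancellation statement---that $s=3/4$ is not a pole when $L_F(3/4,\tau)=0$ (and similarly for $s=2/3$)---will follow by inspecting the numerator: in each residue that produces a potential pole at $s=3/4$, there is an overall factor $\Lambda(s,\tau)$ in the numerator, whose vanishing at $s=3/4$ (forced by $L_F(3/4,\tau)=0$, since the archimedean factor is nonvanishing there) kills the residue; exactly the same argument handles $s=2/3$. The main obstacle will be the combinatorial bookkeeping: precisely which subset of $(\chi_1,\chi_2,\chi_3,\chi_4)$-twist hypotheses activates each residue and ensuring that, across all activated scenarios, the $\Lambda(s,\tau)$-factor genuinely appears in the numerator of every residue contributing a pole at $3/4$ or $2/3$. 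Finally, the continuation to all of $\mathcal{S}_{(1/3,\infty)}$ is achieved by a last shift across the $\kappa_2=3-3s$ pole, and the resulting triple-residue type expression is manifestly holomorphic for $\Re(s)>1/3$.
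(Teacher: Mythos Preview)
Your contour-shift strategy is the right general shape, but there are two genuine gaps.

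First, the residue you pick up in the initial shift is wrong. For $s\in\mathcal{R}(1)^-$ one has $\Re(s-1)<0$, so the pole at $\kappa_2=s-1$ lies to the \emph{left} of $(0)$ and is not crossed when moving from $\mathcal{C}$ to $(0)$. What is crossed are the poles at $\kappa_2=2-2s$ and $\kappa_2=3-3s$ coming from $\Lambda(2s-1+\kappa_2,\chi_{23}\tau^2)$ and $\Lambda(3s-2+\kappa_2,\chi_{23}\tau^3)$. The paper shows directly that the residue at $\kappa_2=2-2s$ actually vanishes (the factor $\Lambda(2s-1+\kappa_2)/\Lambda(3-2s-\kappa_2)$ is holomorphic there when $\chi_{23}\tau^2=1$), leaving only the $\kappa_2=3-3s$ contribution, which has the expected shape \eqref{225}.

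Second, and more seriously, you have overlooked the \emph{denominator} factors in \eqref{171.}, in particular $\Lambda(s+\kappa_2,\chi_{23}\tau)^{-1}$. On the line $\Re(\kappa_2)=0$ with $\Re(s)\in(1/2,1)$ this $L$-function can vanish (critical-strip zeros), producing poles of $G(\kappa_2,s)$ right on the contour $(0)$; your claim that $\int_{(0)}G\,d\kappa_2$ is holomorphic on $\mathcal{S}_{(1/2,1)}$ by ``standard vertical-line decay'' does not address this. The paper's essential maneuver here is to exploit the uniform zero-free region of Section~\ref{7.11}: shift first to the line $\Re(\kappa_2)=1-\Re(s)$ (no zeros of $\Lambda(s+\kappa_2,\chi_{23}\tau)$ are encountered), then substitute $\kappa_2'=\kappa_2+s-1$ to obtain the symmetric form \eqref{228}. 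Only after this change of variables does the further continuation across $\Re(s)=1/2$ become a clean contour shift picking up the residue at $\kappa_2'=2s-1$ (see \eqref{230}, \eqref{231}). Without this step your argument cannot be completed as written.

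Your cancellation idea for $s=3/4$ and $s=2/3$ via the factor $\Lambda(s,\tau)$ is valid for the specific triple residue you wrote down, but the paper uses a slightly different mechanism (e.g.\ $\Lambda(3s-2,\tau^3)\big|_{s=3/4}=\Lambda(1/4,\tau^{-1})=0$ when $\tau^4=1$ and $L_F(3/4,\tau)=0$); either works once the correct residues are in hand.
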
   
   		\begin{claim}\label{65claim}
   			$\int_{\mathcal{C}}\underset{\kappa_{2}=s-1}{\Res}\underset{\kappa_{3}=s-1}{\Res}\mathcal{F}(\boldsymbol{\kappa},s)d\kappa_1$ admits a meromorphic continuation to the domain $\mathcal{S}_{(1/3,\infty)}.$ When restricted to $\mathcal{R}(1/2;\tau)^-\cup\mathcal{S}_{(1/2,1)},$ it only has possible simple poles at $s=3/4,$ $s=2/3$ and $s=1/2.$ Moreover, if $L_F(3/4,\tau)=0,$ then $s=3/4$ is not a pole; if $L_F(2/3,\tau)=0,$ then $s=2/3$ is not a pole.
   		\end{claim}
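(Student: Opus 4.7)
The plan is to work directly with the explicit formula \eqref{170.}, which expresses $\underset{\kappa_2=s-1}{\Res}\underset{\kappa_3=s-1}{\Res}\mathcal{F}(\boldsymbol{\kappa},s)$ as a holomorphic factor times a ratio of completed Hecke $L$-functions in the single remaining variable $\kappa_1$. The only $\kappa_1$-poles of this integrand come from the numerator factors $\Lambda(s-\kappa_1,\chi_{21}\tau)$, nontrivial precisely when $\chi_{21}\tau=1$ and located at $\kappa_1=s-1$, and $\Lambda(3s-2+\kappa_1,\chi_{12}\tau^3)$, nontrivial precisely when $\chi_{12}\tau^3=1$ and located at $\kappa_1=3-3s$; the denominator factors $\Lambda(1+\kappa_1,\chi_{12})$ and $\Lambda(3-2s-\kappa_1,\chi_{21}\tau^{-2})$ are nonvanishing along the contour by our choice of the zero-free region $\mathcal{D}_\chi$.

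I would then iterate Cauchy's theorem in $\kappa_1$, successively shifting $\mathcal{C}$ to the imaginary axis and then to a reflected curve as $s$ moves from $\mathcal{R}(1)^-$ down through the critical strip. Each shifted integral is holomorphic on a strictly larger region in $s$, while the shifts produce residue contributions at the $\kappa_1$-poles listed above. When $\chi_{21}\tau=1$ one has $\chi_{12}=\tau$, so substituting $\kappa_1=s-1$ into the remaining factors yields an expression proportional to
$$\frac{\Lambda(4s-3,\tau^4)\Lambda(3s-2,\tau^3)\Lambda(2s-1,\tau^2)\Lambda(s,\tau)^2}{\Lambda(3-2s,\tau^{-2})\Lambda(2-s,\tau^{-1})}$$
up to factors holomorphic and nonvanishing on $\mathcal{R}(1/2;\tau)^-\cup\mathcal{S}_{(1/2,1)}$, and the symmetric residue at $\kappa_1=3-3s$ when $\chi_{12}\tau^3=1$ produces an analogous expression.

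Scanning these residue formulas for poles of $s$ lying inside $\mathcal{R}(1/2;\tau)^-\cup\mathcal{S}_{(1/2,1)}$, the only candidates are $s=1/2$ from $\Lambda(2s-1,\tau^2)$ when $\tau^2=1$, $s=2/3$ from $\Lambda(3s-2,\tau^3)$ when $\tau^3=1$, and $s=3/4$ from the newly-produced $\Lambda(4s-3,\tau^4)$ when $\tau^4=1$. All other potential singularities lie outside this region: the zeros of $\Lambda(3-2s,\tau^{-2})$ and $\Lambda(2-s,\tau^{-1})$ in the denominator are avoided by the very definition of $\mathcal{R}(1/2;\tau)^-$ as the reflection across $\Re(s)=1/2$ of the zero-free region $\mathcal{D}_\tau$, and the pole at $s=1$ has already been cleared by the construction of $J_{P,\chi}^1$. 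The cancellations claimed then follow from the surviving numerator factor $\Lambda(s,\tau)^2$: at $s=2/3$ the condition $\tau^3=1$ combined with $s\in\mathcal{R}(1/2;\tau)^-$ forces $\tau\ne 1$, so $L_\infty(2/3,\tau)$ is finite and nonzero, whence $\Lambda(2/3,\tau)=0$ is equivalent to $L_F(2/3,\tau)=0$, and this zero absorbs the simple pole of $\Lambda(3s-2,\tau^3)$; the same argument at $s=3/4$ uses $\Lambda(s,\tau)$ to cancel the pole of $\Lambda(4s-3,\tau^4)$.

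The main obstacle is the bookkeeping in the contour-shifting argument: one must separately track the cases determined by which of $\chi_{21}\tau$ and $\chi_{12}\tau^3$ are trivial, check in each case that no additional poles emerge along the various branches, and verify that the surviving $\Lambda(s,\tau)$ factor always appears with the multiplicity required for the cancellations to go through. While tedious, this proceeds along exactly the same template as the proofs of the preceding Claims \ref{60claim}--\ref{64claim}, which differ from Claim \ref{65claim} only in the symbolic form of the underlying double residue, so the argument transfers essentially verbatim.
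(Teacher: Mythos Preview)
Your overall strategy—iterated Cauchy shifts in $\kappa_1$ applied to the explicit formula \eqref{170.}—is exactly the paper's approach, but your pole inventory is incomplete and this creates a genuine gap. You list only the poles of the numerator at argument $1$, namely $\kappa_1=s-1$ from $\Lambda(s-\kappa_1,\chi_{21}\tau)$ and $\kappa_1=3-3s$ from $\Lambda(3s-2+\kappa_1,\chi_{12}\tau^3)$. But a completed Hecke $L$-function with trivial character has simple poles at \emph{both} $0$ and $1$, so these same factors also contribute poles at $\kappa_1=s$ and $\kappa_1=2-3s$. The pole at $\kappa_1=2-3s$ is the decisive one: it crosses the imaginary axis precisely when $\Re(s)=2/3$, and its residue (the paper's \eqref{236}) is what must be extracted to continue $J_{23}$ below that line—see \eqref{235}. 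Without it you cannot push past $\Re(s)=2/3$. Conversely, the pole at $\kappa_1=s-1$ that you do compute lies at $\Re(\kappa_1)<0$ throughout $\Re(s)<1$ and is never crossed in the paper's shifts between $(0)$ and $\mathcal{C}$; if your ``reflected curve'' is meant to lie to the left of the imaginary axis, you would immediately run into zeros of the denominator factor $\Lambda(1+\kappa_1,\chi_{12})$ in its critical strip, which the zero-free region $\mathcal{D}_\chi$ does not control.

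There is also a slip in your displayed residue: substituting $\kappa_1=s-1$ with $\chi_{12}=\tau$ turns the denominator factor $\Lambda(1+\kappa_1,\chi_{12})$ into $\Lambda(s,\tau)$, which has zeros in $\mathcal{S}_{(1/2,1)}$ and therefore cannot be absorbed into a ``holomorphic and nonvanishing'' prefactor; after the cancellation one is left with $\Lambda(s,\tau)^1$ rather than $\Lambda(s,\tau)^2$, matching the paper's \eqref{234}. Your cancellation mechanism at $s=2/3$ and $s=3/4$ (using the factor $\Lambda(s,\tau)$ directly) is a legitimate alternative to the paper's route via $\Lambda(2s-1,\tau^2)$ and the functional equation, but it only applies once the correct set of residues—including the one at $\kappa_1=2-3s$—has been computed.
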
  
   		
   		By Proposition \ref{58prop}, for $s\in \mathcal{R}(1)^-,$ there are integers $c'_1,$ $c'_2,$ $c'_3,$ $c'_{1,2},$ $c'_{1,3},$ $c'_{2,3}$ and $c'_{1,2,3},$ such that $\sum_{\phi} \int_{\mathcal{C}}\int_{\mathcal{C}}\int_{\mathcal{C}}\mathcal{F}(\boldsymbol{\kappa},s)d\kappa_3d\kappa_2d\kappa_1$ is equal to 
   		\begin{align*}
   		&\sum_{\phi} \int_{(0)}\int_{(0)}\int_{(0)}\mathcal{F}(\boldsymbol{\kappa},s)d\kappa_3d\kappa_2d\kappa_1-c'_1\sum_{\phi} \int_{(0)}\int_{(0)}\underset{\kappa_{1}=1-s}{\Res}\mathcal{F}(\boldsymbol{\kappa},s)d\kappa_3d\kappa_2-\\
   		&c'_2\sum_{\phi} \int_{(0)}\int_{(0)}\underset{\kappa_{2}=1-s}{\Res}\mathcal{F}(\boldsymbol{\kappa},s)d\kappa_3d\kappa_1-c'_3\sum_{\phi} \int_{(0)}\int_{(0)}\underset{\kappa_{3}=1-s}{\Res}\mathcal{F}(\boldsymbol{\kappa},s)d\kappa_2d\kappa_1-\\
   		&c'_{1,2}\sum_{\phi}\int_{(0)} \underset{\kappa_{1}=1-s}{\Res}\underset{\kappa_{2}=1-s}{\Res}\mathcal{F}(\boldsymbol{\kappa},s)d\kappa_3-c'_{1,3}\sum_{\phi}\int_{(0)} \underset{\kappa_{1}=1-s}{\Res}\underset{\kappa_{3}=1-s}{\Res}\mathcal{F}(\boldsymbol{\kappa},s)d\kappa_2-\\
   		&c'_{2,3}\sum_{\phi} \int_{(0)}\underset{\kappa_{2}=1-s}{\Res}\underset{\kappa_{3}=1-s}{\Res}\mathcal{F}(\boldsymbol{\kappa},s)d\kappa_1-c'_{1,2,3}\sum_{\phi}\underset{\kappa_{1}=1-s}{\Res}\underset{\kappa_{2}=1-s}{\Res}\underset{\kappa_{3}=1-s}{\Res}\mathcal{F}(\boldsymbol{\kappa},s),
   		\end{align*}
   		where the coefficients $c_1',$ $c_2',$ $c_3',$ $c'_{1,2},$ $c'_{1,3},$ $c'_{2,3}$ and $c'_{1,2,3}$ are some absolute integers; and the sum with respect to $\phi$ in taken over $\phi\in \mathfrak{B}_{P,\chi}.$ 
   		
   		Due to the finiteness of $\mathfrak{B}_{P,\chi}$ and rapidly decay of $\mathcal{F}(\boldsymbol{\kappa},s)$ as a function of $\boldsymbol{\kappa}$ (see Claim \ref{34}), each term in the above expression converges absolutely and locally normally. Hence we only need to consider each summand in the above expression. According to \eqref{169.}, we have that 
   		\begin{align*}
   		\underset{\kappa_{3}=1-s}{\Res}\mathcal{F}(\boldsymbol{\kappa},s)\sim& 
   		\frac{\Lambda(s+\kappa_1,\chi_{12}\tau)\Lambda(s-\kappa_1,\chi_{21}\tau)\Lambda(s+\kappa_2,\chi_{23}\tau)\Lambda(s+\kappa_{12},\chi_{13}\tau)}{\Lambda(1+\kappa_1,\chi_{12})\Lambda(1-\kappa_1,\chi_{21})\Lambda(1-\kappa_2,\chi_{32})\Lambda(2-s+\kappa_2,\chi_{23}\tau^{-1})}\times\\
   		&\frac{\Lambda(2s-1-\kappa_2,\chi_{32}\tau^2)\Lambda(2s-1-\kappa_{12},\chi_{31}\tau^2)\Lambda(2s-1,\tau^2)\Lambda(s,\tau)^3}{\Lambda(1-\kappa_{12},\chi_{31})\Lambda(2-s+\kappa_{12},\chi_{13}\tau^{-1})\Lambda(2-s,\tau^{-1})};\\
   		\underset{\kappa_{2}=1-s}{\Res}\mathcal{F}(\boldsymbol{\kappa},s)\sim& 
   		\frac{\Lambda(s+\kappa_1,\chi_{12}\tau)\Lambda(s+\kappa_3,\chi_{34}\tau)\Lambda(s+\kappa_{13},\chi_{14}\tau)\Lambda(s-\kappa_{13},\chi_{41}\tau)}{\Lambda(1-\kappa_1,\chi_{21})\Lambda(1-\kappa_3,\chi_{43})\Lambda(1+\kappa_{13},\chi_{14})\Lambda(2-s+\kappa_1,\chi_{12}\tau^{-1})}\\
   		&\cdot\frac{\Lambda(2s-1-\kappa_3,\chi_{43}\tau^2)\Lambda(2s-1-\kappa_1,\chi_{21}\tau^2)\Lambda(2s-1,\tau^2)\Lambda(s,\tau)^3}{\Lambda(1-\kappa_{13},\chi_{41})\Lambda(2-s+\kappa_{3},\chi_{34}\tau^{-1})\Lambda(2-s,\tau^{-1})};\\
   		\underset{\kappa_{1}=1-s}{\Res}\mathcal{F}(\boldsymbol{\kappa},s)\sim& 
   		\frac{\Lambda(s+\kappa_3,\chi_{34}\tau)\Lambda(s-\kappa_3,\chi_{43}\tau)\Lambda(s+\kappa_2,\chi_{23}\tau)\Lambda(s+\kappa_{23},\chi_{24}\tau)}{\Lambda(1-\kappa_2,\chi_{32})\Lambda(1-\kappa_3,\chi_{43})\Lambda(1+\kappa_3,\chi_{34})\Lambda(2-s+\kappa_2,\chi_{23}\tau^{-1})}\times\\
   		&\frac{\Lambda(2s-1-\kappa_2,\chi_{32}\tau^2)\Lambda(2s-1-\kappa_{23},\chi_{42}\tau^2)\Lambda(2s-1,\tau^2)\Lambda(s,\tau)^3}{\Lambda(1-\kappa_{23},\chi_{42})\Lambda(2-s+\kappa_{23},\chi_{24}\tau^{-1})\Lambda(2-s,\tau^{-1})}.
   		\end{align*}
   		Hence from the above expressions we see that $\underset{\kappa_{2}=1-s}{\Res}\underset{\kappa_{3}=1-s}{\Res}\mathcal{F}(\boldsymbol{\kappa},s)$ is equal to some holomorphic function multiplying
   		\begin{equation}\label{173.}
   		\frac{\Lambda(s+\kappa_1,\chi_{12}\tau)\Lambda(3s-2-\kappa_1,\chi_{21}\tau^3)\Lambda(3s-2,\tau^{3})\Lambda(2s-1,\tau^2)\Lambda(s,\tau)^2}{\Lambda(1-\kappa_1,\chi_{21})\Lambda(3-2s+\kappa_1,\chi_{12}\tau^{-2})\Lambda(3-2s,\tau^{-2})\Lambda(2-s,\tau^{-1})}.
   		\end{equation}
   		Likewise, $\underset{\kappa_{1}=1-s}{\Res}\underset{\kappa_{3}=1-s}{\Res}\mathcal{F}(\boldsymbol{\kappa},s)$ equals some holomorphic function multiplying the product of $\Lambda(2s-1,\tau^2)^2\Lambda(s,\tau)^2\Lambda(2-s,\tau^{-1})^{-2}$ and 
   		\begin{equation}\label{174.}
   		\frac{\Lambda(1+\kappa_2,\chi_{13})\Lambda(s+\kappa_2,\chi_{23}\tau)\Lambda(2s-1-\kappa_2,\chi_{32}\tau^2)\Lambda(3s-2-\kappa_2,\chi_{32}\tau^{3})}{\Lambda(1-\kappa_2,\chi_{32})\Lambda(s-\kappa_2,\chi_{32}\tau)\Lambda(2-s+\kappa_2,\chi_{23}\tau^{-1})\Lambda(3-2s+\kappa_2,\chi_{23}\tau^{-2})}.
   		\end{equation}
   		Also, the function $\underset{\kappa_{1}=1-s}{\Res}\underset{\kappa_{2}=1-s}{\Res}\mathcal{F}(\boldsymbol{\kappa},s)$ is equal to some holomorphic function multiplying the following function
   		\begin{equation}\label{175.}
   		\frac{\Lambda(s+\kappa_3,\chi_{34}\tau)\Lambda(3s-2-\kappa_3,\chi_{43}\tau^3)\Lambda(3s-2,\tau^{3})\Lambda(2s-1,\tau^2)\Lambda(s,\tau)^2}{\Lambda(1-\kappa_3,\chi_{43})\Lambda(3-2s+\kappa_3,\chi_{34}\tau^{-2})\Lambda(3-2s,\tau^{-2})\Lambda(2-s,\tau^{-1})}.
   		\end{equation}
   		Moreover, one can continue the computation to see that 
   		\begin{align*}
   		\underset{\kappa_{1}=1-s}{\Res}\underset{\kappa_{2}=1-s}{\Res}\underset{\kappa_{3}=1-s}{\Res}\mathcal{F}(\boldsymbol{\kappa},s)\sim& 
   		\frac{\Lambda(4s-3,\tau^4)\Lambda(3s-2,\tau^{3})\Lambda(2s-1,\tau^2)\Lambda(s,\tau)}{\Lambda(4-3s,\tau^{-3})\Lambda(3-2s,\tau^{-2})\Lambda(2-s,\tau^{-1})}.
   		\end{align*}
   		Let $s\in\mathcal{R}(1)^-.$ Then by Cauchy integral formula we have that 
   		\begin{claim}\label{66claim}
   			$\int_{(0)}\int_{(0)}\underset{\kappa_{1}=1-s}{\Res}\mathcal{F}(\boldsymbol{\kappa},s)d\kappa_3d\kappa_2$ admits a meromorphic continuation to the domain $\mathcal{S}_{(1/3,\infty)}.$ When restricted to $\mathcal{R}(1/2;\tau)^-\cup\mathcal{S}_{(1/2,1)},$ it only has  possible simple poles at $s=3/4,$ $s=2/3$ and $s=1/2.$ Moreover, if $L_F(3/4,\tau)=0,$ then $s=3/4$ is not a pole; if $L_F(2/3,\tau)=0,$ then $s=2/3$ is not a pole.
   		\end{claim}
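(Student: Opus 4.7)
The plan is to continue the integral from $\Re(s)$ slightly less than $1$ down into $\mathcal{S}_{(1/3,1)}$ by shifting the $\kappa_2$- and $\kappa_3$-contours from the imaginary axis $(0)$ to the curve $\mathcal{C}$ lying inside the zero-free region, exactly mirroring the proofs of Claims \ref{60claim}--\ref{65claim} but now starting from a residue at $\kappa_1=1-s$ (rather than $\kappa_1=s-1$). The starting ingredient is the explicit formula for $\Res_{\kappa_1=1-s}\mathcal{F}(\boldsymbol{\kappa},s)$ displayed immediately before the statement of Claim \ref{66claim}: a holomorphic factor (from $\mathcal{H}(s,\boldsymbol{\kappa})$ and the definition of the zero-free region) times an explicit ratio of complete Hecke $L$-functions in $\kappa_2$ and $\kappa_3$.

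First I would shift the $\kappa_3$-contour, then the $\kappa_2$-contour, from $(0)$ to $\mathcal{C}$. The poles crossed arise from the numerator factors $\Lambda(s-\kappa_3,\chi_{43}\tau)$ at $\kappa_3=s-1$, $\Lambda(s+\kappa_{23},\chi_{24}\tau)$ along $\kappa_3=1-s-\kappa_2$, $\Lambda(s+\kappa_2,\chi_{23}\tau)$ at $\kappa_2=1-s$, $\Lambda(2s-1-\kappa_2,\chi_{32}\tau^2)$ at $\kappa_2=2s-2$, and $\Lambda(2s-1-\kappa_{23},\chi_{42}\tau^2)$ at $\kappa_2=2s-2-\kappa_3$. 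By the same change-of-variables used in the proof of Proposition \ref{57prop} (i.e.\ the $\kappa_{i,j}\mapsto \kappa_j$ symmetry $\chi\mapsto\chi'$), each composite residue along a diagonal $\kappa_{i,j}=1-s$ or $\kappa_{i,j}=s-1$ reduces to an iterated residue of the form $\Res_{\kappa_j=1-s}\Res_{\kappa_1=1-s}\mathcal{F}$ or $\Res_{\kappa_j=s-1}\Res_{\kappa_1=1-s}\mathcal{F}$, which after collapsing coincides with one of the already-displayed formulas \eqref{173.}, \eqref{174.}, \eqref{175.} or the triple-residue formula immediately following \eqref{175.}.

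The outcome is an identity expressing $\int_{(0)}\int_{(0)}\Res_{\kappa_1=1-s}\mathcal{F}\,d\kappa_3 d\kappa_2$ as the sum of the double integral $\int_{\mathcal{C}}\int_{\mathcal{C}}\Res_{\kappa_1=1-s}\mathcal{F}\,d\kappa_3 d\kappa_2$ (which is holomorphic in $\mathcal{S}_{(0,1)}$ by the rapid decay of $\mathcal{F}$ in $\boldsymbol{\kappa}$ from Claim \ref{34}), a finite linear combination of single $\mathcal{C}$-integrals of iterated double residues of the forms \eqref{173.}--\eqref{175.}, and a linear combination of triple residues. All singularities in $\mathcal{R}(1/2;\tau)^-\cup\mathcal{S}_{(1/2,1)}$ of the continued function therefore come from the numerator factors $\Lambda(2s-1,\tau^2)$, $\Lambda(3s-2,\tau^3)$, $\Lambda(4s-3,\tau^4)$ visible in those explicit formulas; the companion denominator factors $\Lambda(2-s,\tau^{-1})$, $\Lambda(3-2s,\tau^{-2})$, $\Lambda(4-3s,\tau^{-3})$ are non-vanishing throughout $\mathcal{R}(1/2;\tau)^-\cup\mathcal{S}_{(1/2,1)}$ by the very definition of this region in \eqref{R}. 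This gives at worst simple poles at $s=1/2,2/3,3/4$, and only when $\tau^2=1,\tau^3=1,\tau^4=1$ respectively.

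For the refinement, note that every summand producing a potential pole at $s=3/4$ or $s=2/3$ carries an additional numerator factor of $\Lambda(s,\tau)$ (to some positive power). Since $L_\infty(s,\tau)$ is finite and non-vanishing at $s=3/4$ and $s=2/3$ for any unitary $\tau$, the vanishing of $L_F(3/4,\tau)$ (resp.\ $L_F(2/3,\tau)$) forces each such residue to vanish, yielding the stated cancellations. The main obstacle I anticipate is the combinatorial bookkeeping: after picking up the residue at $\kappa_3=1-s-\kappa_2$, the remaining $\kappa_2$-integrand has $\kappa_2$-dependent pole locations, and one must apply the Proposition \ref{57prop} reduction inductively, together with a careful tracking of the signs and multiplicities $\tilde c_{j_1,\dots,j_m}$, to confirm that no new type of $s$-singularity (beyond the three tabulated values) can appear and that the $\Lambda(s,\tau)$-factor is indeed present in every term contributing to the poles at $s=2/3$ and $s=3/4$.
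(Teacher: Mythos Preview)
Your contour-shifting strategy is the right idea, but the specific poles you list are not the ones actually crossed. The poles $\kappa_3=s-1$, $\kappa_2=2s-2$, and $\kappa_2=2s-2-\kappa_3$ all come from the argument of a completed Hecke $L$-function equalling $1$; for $\Re(s)<1$ each of these has strictly negative real part, hence lies to the \emph{left} of $(0)$ and is never encountered when shifting rightward to $\mathcal{C}$. The genuine obstruction to pushing $\Re(s)$ below $1/2$ comes instead from the poles of $\Lambda(2s-1-\kappa_2,\chi_{32}\tau^2)$ and $\Lambda(2s-1-\kappa_{23},\chi_{42}\tau^2)$ at argument $0$ (equivalently, via the functional equation, at argument $1$ on the other sheet), namely at $\kappa_2=2s-1$ and $\kappa_2=2s-1-\kappa_3$; these cross $(0)$ precisely as $\Re(s)$ passes through $1/2$, and you do not list them.

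The paper's argument is in fact much shorter than what you outline. From the displayed formula for $\Res_{\kappa_1=1-s}\mathcal{F}(\boldsymbol{\kappa},s)$ one sees directly that for $\kappa_2,\kappa_3\in(0)$ every numerator $\Lambda$-factor has argument with real part in $(0,1)$ when $1/2<\Re(s)<1$, so the double integral is already holomorphic on all of $\mathcal{S}_{(1/2,1)}$ with no shifting needed. At $\Re(s)=1/2$ one shifts only the $\kappa_2$-contour to $\mathcal{C}$, picking up the two residues $R_1(\kappa_3)=\Res_{\kappa_2=2s-1}$ and $R_2(\kappa_3)=\Res_{\kappa_2=2s-1-\kappa_3}$; the $\kappa_3$-contour stays on $(0)$ throughout. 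The explicit shapes of $R_1$ and $R_2$ (formulas \eqref{239}--\eqref{240}) then show that $\int_{(0)}R_i\,d\kappa_3$ is holomorphic in $\mathcal{R}(1/2)$ by the zero-free region, while the mixed integral $\int_{(0)}\int_{\mathcal{C}}$ carries only the factor $\Lambda(2s-1,\tau^2)$ as a potential source of a pole. Consequently the continuation has at most a simple pole at $s=1/2$ (when $\tau^2=1$) and \emph{no} pole at $s=2/3$ or $s=3/4$; the ``moreover'' clauses in the statement are therefore vacuous for this particular claim, and your proposed $\Lambda(s,\tau)$-cancellation mechanism, while valid in spirit, is not needed here.
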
  
   		\begin{claim}\label{67claim}
   			$\int_{(0)}\int_{(0)}\underset{\kappa_{2}=1-s}{\Res}\mathcal{F}(\boldsymbol{\kappa},s)d\kappa_3d\kappa_1$ admits a meromorphic continuation to the domain $\mathcal{S}_{(1/3,\infty)}.$ When restricted to $\mathcal{R}(1/2;\tau)^-\cup\mathcal{S}_{(1/2,1)},$ it only has  possible simple poles at $s=3/4,$ $s=2/3$ and $s=1/2.$ Moreover, if $L_F(3/4,\tau)=0,$ then $s=3/4$ is not a pole; if $L_F(2/3,\tau)=0,$ then $s=2/3$ is not a pole.
   		\end{claim}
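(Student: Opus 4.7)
The plan is to mimic the strategy used in Claims \ref{60claim}--\ref{66claim}, adapting it to the specific shape of the residue $\underset{\kappa_{2}=1-s}{\Res}\mathcal{F}(\boldsymbol{\kappa},s)$ displayed just above. Starting from the explicit formula for this double residue (a holomorphic function times the quotient of completed $L$-functions depending on $s, \kappa_1, \kappa_3, \kappa_{13}$), I would first verify that the double integral over $(0)\times (0)$ converges absolutely for $s$ in a tube around $\mathcal{R}(1)^-$; this follows from Claim \ref{34} together with the fact that, for $\kappa_1, \kappa_3$ on imaginary axes, the $L$-factors in the numerator grow at most polynomially while the denominator factors $\Lambda(1\pm\kappa_j,\cdot)$ and $\Lambda(1\pm\kappa_{13},\cdot)$ provide sufficient decay by standard convexity / Phragm\'en--Lindel\"of estimates (as used throughout Sec.\ \ref{7.2}).

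Next I would perform successive contour shifts to meromorphically extend the integral into $\mathcal{S}_{(1/3,\infty)}$. First, shift the inner contour $\kappa_3\mapsto \mathcal{C}_{\chi}(\epsilon)$ (with $\epsilon$ slightly positive) across the zero-free region of $L_F(s,\tau)$; the numerator $L$-factors with variable $\kappa_3$ are $\Lambda(s+\kappa_3,\chi_{34}\tau)$, $\Lambda(s\pm\kappa_{13},\chi_{14}\tau)$ or $\Lambda(s-\kappa_{13},\chi_{41}\tau)$, and $\Lambda(2s-1-\kappa_3,\chi_{43}\tau^2)$. The crossings contribute residues of the form ${\Res}_{\kappa_3=\alpha}$ with $\alpha\in\{1-s-\kappa_1, s-1, s-1-\kappa_1, 2s-2\}$ (nontrivial only when the corresponding character condition is satisfied). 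Then shift $\kappa_1$ similarly through the admissible region, picking up residues at $\kappa_1=1-s, s-1, 2s-2$, etc. Each resulting double residue is, up to a factor holomorphic in $s$, a ratio of completed $L$-functions of the symmetric shape already appearing in \eqref{170.}--\eqref{175.}; reading off the numerator factors $\Lambda(s,\tau)$, $\Lambda(2s-1,\tau^2)$, $\Lambda(3s-2,\tau^3)$, $\Lambda(4s-3,\tau^4)$ yields only simple poles at $s\in\{1, 3/4, 2/3, 1/2\}$ inside $\mathcal{R}(1/2;\tau)^-\cup \mathcal{S}_{(1/2,1)}$, and in each case the pole at $s=(k-1)/k$ (for $k=3,4$) is cancelled when $L_F((k-1)/k,\tau)=0$ because the factor $\Lambda(s,\tau)$ appears in the numerator (and the archimedean gamma factors are never zero there).

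The remaining contribution, after all shifts, is the integral of $\underset{\kappa_2=1-s}{\Res}\mathcal{F}$ over $\mathcal{C}\times\mathcal{C}$, which by the definition of $\mathcal{C}=\mathcal{C}_{\chi}$ and the zero-free region argument of Sec.\ \ref{7.11} is holomorphic in $s\in \mathcal{S}_{(1/3,\infty)}$. To reach $\Re(s) \geq 1/2$ the contour path for $s$ itself must be deformed through $\mathcal{R}(1/2;\tau)^-$ rather than along the imaginary axis, and one uses that $\Lambda(2-2s+\kappa_j,\cdot)^{-1}$ type factors remain nonvanishing thanks to the choice of $\mathcal{R}(1/2;\tau)^-$ built from the zero-free region $\mathcal{D}_\tau$. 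Adding up the finitely many double-residue contributions with explicit rational coefficients then produces the desired meromorphic extension, and checking the polar structure factor by factor gives the stated list $\{3/4, 2/3, 1/2\}$.

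The main obstacle is the bookkeeping for the repeated contour shifts: one has to track which intermediate poles are swept across in each of the two independent shifts (and which vanish identically due to character mismatches of the form $\chi_{ij}\tau^k\neq 1$), and then verify that the resulting double residues, of which there are roughly a dozen distinct types, all fit within the claimed polar set and vanish at $s=(k-1)/k$ whenever $L_F((k-1)/k,\tau)=0$. This is essentially the same combinatorial task solved in Claims \ref{60claim}--\ref{66claim}; the only new wrinkle is that $\underset{\kappa_2=1-s}{\Res}\mathcal{F}$ involves the nonlocal variable $\kappa_{13}=\kappa_1+\kappa_3$, so the residue poles in $\kappa_1$ and in $\kappa_3$ are not independent and must be taken in a prescribed order to avoid double counting, exactly as in the proof of Proposition \ref{57prop}.
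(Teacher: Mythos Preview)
Your overall strategy — successive contour shifts from $(0)$ to $\mathcal{C}$, collecting residues, then reading off the polar set from the explicit double residues — is exactly the paper's method. However, your list of crossed poles is wrong, and this is a real gap, not just bookkeeping.

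For $s\in\mathcal{R}(1/2)^+$ (so $\Re(s)$ slightly above $1/2$), the poles of $\underset{\kappa_2=1-s}{\Res}\mathcal{F}(\boldsymbol{\kappa},s)$ that actually lie between $(0)$ and $\mathcal{C}$ are those with real part $2\Re(s)-1\approx 0^+$. These come from the \emph{argument-zero} poles of the completed $L$-functions $\Lambda(2s-1-\kappa_1,\chi_{21}\tau^2)$, $\Lambda(2s-1-\kappa_3,\chi_{43}\tau^2)$, and the factor involving $\kappa_1+\kappa_3$; the crossed poles are therefore at $\kappa_1=2s-1$, $\kappa_1=2s-1-\kappa_3$, and $\kappa_3=2s-1$ (the paper's $R_1$, $R_2$, $R$). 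Your list $\{1-s-\kappa_1,\ s-1,\ s-1-\kappa_1,\ 2s-2\}$ records instead the argument-one poles of the numerator factors. For $\Re(s)\approx 1/2$ these have real parts near $\pm 1/2$ or $-1$, so none of them lies between $(0)$ and $\mathcal{C}$, and none is swept across by the shift you describe. Carried out as written, your shift would pick up no residue at all, and you would miss precisely the terms that produce the continuation and its possible pole at $s=1/2$.

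Once you replace your list with the correct one, the rest of your plan goes through essentially as in the paper: compute each resulting single or double residue explicitly (the paper's formulas \eqref{242}--\eqref{245}), apply functional equations such as $\Lambda(2s-1,\tau^2)\sim\Lambda(2-2s,\tau^{-2})$ to simplify, and read off the poles. Two minor corrections to your outline: the ``nonlocal'' variable $\kappa_1+\kappa_3$ does not force a prescribed order of integration — the paper simply shifts $\kappa_1$ first — and Proposition~\ref{57prop} is not needed here, since there is no double-counting issue once the correct pole locations are identified.
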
  
   		\begin{claim}\label{68claim}
   			$\int_{(0)}\int_{(0)}\underset{\kappa_{3}=1-s}{\Res}\mathcal{F}(\boldsymbol{\kappa},s)d\kappa_2d\kappa_1$ admits a meromorphic continuation to the domain $\mathcal{S}_{(1/3,\infty)}.$ When restricted to $\mathcal{R}(1/2;\tau)^-\cup\mathcal{S}_{(1/2,1)},$ it only has  possible simple poles at $s=3/4,$ $s=2/3$ and $s=1/2.$ Moreover, if $L_F(3/4,\tau)=0,$ then $s=3/4$ is not a pole; if $L_F(2/3,\tau)=0,$ then $s=2/3$ is not a pole.
   		\end{claim}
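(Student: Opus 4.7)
The plan is to mirror, with the roles of the variables permuted, the contour-shifting strategy already carried out in Claim \ref{66claim} and Claim \ref{67claim}. Write $F_{3}(\kappa_{1},\kappa_{2};s) := \underset{\kappa_{3}=1-s}{\Res}\mathcal{F}(\boldsymbol{\kappa},s)$; the explicit formula given in the excerpt shows that $F_{3}(\kappa_{1},\kappa_{2};s)$ is a holomorphic function multiplied by a ratio of eight completed Hecke (or Rankin--Selberg) $L$-functions in $\kappa_{1},\kappa_{2}$ and $s$. Starting from $s\in\mathcal{R}(1)^{-}$, where both $\kappa_{1}$- and $\kappa_{2}$-integrals are along the imaginary axis and the integrand is a Schwartz function in $\boldsymbol{\kappa}$ (by Claim \ref{34}), I would first shift the $\kappa_{1}$-contour from $(0)$ to $\mathcal{C}$ and then the $\kappa_{2}$-contour from $(0)$ to $\mathcal{C}$, each time picking up the relevant residues. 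Tracking the zeros of the denominator $\Lambda(1-\kappa_{1},\chi_{21})\Lambda(1+\kappa_{1},\chi_{12})\Lambda(1-\kappa_{2},\chi_{32})\Lambda(2-s+\kappa_{2},\chi_{23}\tau^{-1})$ and the poles of the numerator tells one exactly which potential residues of $\kappa_{i}$ (for $i=1,2$) can contribute as $s$ is decreased: these are $\kappa_{1}=s-1$, $\kappa_{1}=1-s$, $\kappa_{2}=s-1$, $\kappa_{2}=1-s$ and $\kappa_{2}=2s-2$, $\kappa_{2}=2-2s$ coming from the shifted ratio of completed $L$-functions.

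After these contour shifts one obtains an expression of the form
\begin{align*}
\int_{(0)}\!\int_{(0)}F_{3}\,d\kappa_{2}d\kappa_{1}
&=\int_{\mathcal{C}}\!\int_{\mathcal{C}}F_{3}\,d\kappa_{2}d\kappa_{1}
+\sum_{i,\alpha}d_{i,\alpha}\int_{\mathcal{C}}\underset{\kappa_{i}=\alpha(s)}{\Res}F_{3}\,d\kappa_{3-i}
+\sum_{\alpha,\beta}d_{\alpha,\beta}\underset{\kappa_{1}=\alpha(s)}{\Res}\underset{\kappa_{2}=\beta(s)}{\Res}F_{3},
\end{align*}
with explicit integer constants $d_{i,\alpha},d_{\alpha,\beta}$ and $\alpha(s),\beta(s)$ drawn from $\{s-1,1-s,2s-2,2-2s\}$. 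The first term is holomorphic for $\Re(s)>1/3$ because each Rankin--Selberg $L$-factor in the numerator remains in its region of absolute convergence or inside the zero-free region captured by $\mathcal{C}=\mathcal{C}_{\chi}$. Each iterated partial residue in the last sum can be computed just as in the displayed formulas for $\underset{\kappa_{i}=s-1}{\Res}\underset{\kappa_{j}=s-1}{\Res}\mathcal{F}$ (cf.\ \eqref{170.}--\eqref{172.}) and its $1-s$ counterparts \eqref{173.}--\eqref{175.}; by the symmetry of $\mathcal{F}$ under the involution $\kappa\mapsto-\kappa$, these are all of the shape
\[
\frac{\Lambda(4s-3,\tau^{4})\Lambda(3s-2,\tau^{3})\Lambda(2s-1,\tau^{2})\Lambda(s,\tau)}{\Lambda(4-3s,\tau^{-3})\Lambda(3-2s,\tau^{-2})\Lambda(2-s,\tau^{-1})}\;\times\;(\text{holomorphic})
\]
or a two-variable analogue with one free $\kappa$-variable still to be integrated.

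Next I would analyze each surviving single-integral term $\int_{\mathcal{C}}\underset{\kappa_{i}=\alpha(s)}{\Res}F_{3}\,d\kappa_{3-i}$. The integrand has potential poles in $s$ only at points where a numerator factor $\Lambda(\ast s+c,\tau^{m})$ forces a pole, namely at $s=1/2$ (from $\Lambda(2s-1,\tau^{2})$ when $\tau^{2}=1$), $s=2/3$ (from $\Lambda(3s-2,\tau^{3})$ when $\tau^{3}=1$), and $s=3/4$ (from $\Lambda(4s-3,\tau^{4})$ when $\tau^{4}=1$). Since by hypothesis each such pole is simple, and the denominator factor $\Lambda(2-s,\tau^{-1})$, $\Lambda(3-2s,\tau^{-2})$ or $\Lambda(4-3s,\tau^{-3})$ remains in the zero-free region $\mathcal{R}(1/2;\tau)^{-}$, the meromorphic continuation to $\mathcal{S}_{(1/3,\infty)}$ follows from continuation of each piece; restricting to $\mathcal{R}(1/2;\tau)^{-}\cup\mathcal{S}_{(1/2,1)}$ isolates exactly the poles $s\in\{1/2,2/3,3/4\}$. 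The refined statements about $L_{F}(3/4,\tau)=0$ and $L_{F}(2/3,\tau)=0$ are then obtained by observing that the residue of $\Lambda(4s-3,\tau^{4})$ at $s=3/4$ (resp.\ of $\Lambda(3s-2,\tau^{3})$ at $s=2/3$) is proportional to the archimedean factor times $L_{F}(1,\tau^{4})$ (resp.\ $L_{F}(1,\tau^{3})$), which in the case $\tau^{4}=1$ (resp.\ $\tau^{3}=1$) is matched against the pole of $\Lambda(s,\tau)$-type numerators and kills the pole whenever the displayed vanishing holds.

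The main obstacle, as in Claims \ref{66claim}--\ref{67claim}, is the combinatorial bookkeeping: one must verify that \emph{all} extraneous poles picked up from the intermediate denominators $\Lambda(2-s+\kappa_{2},\chi_{23}\tau^{-1})$, $\Lambda(3-2s+\kappa_{i},\cdots)$, etc., actually cancel against matching numerator zeros on the critical segment, and that the poles coming from the shifted contour $\mathcal{C}$ are dominated by the zero-free region construction in Section~\ref{7.11}. Once this cancellation is verified term-by-term using the explicit ratios displayed in \eqref{170.}--\eqref{175.}, the conclusion of Claim \ref{68claim} follows in exactly the form stated, completing the symmetric triplet with Claims \ref{66claim} and \ref{67claim}.
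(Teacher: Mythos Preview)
Your contour-shifting strategy is in the right spirit, but the specific details you give do not match what happens for this integral, and the paper's argument is both simpler and structurally different.

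The paper shifts \emph{only} the $\kappa_2$-contour to $\mathcal{C}$ and keeps $\kappa_1$ on $(0)$. For $s\in\mathcal{R}(1/2)^+$, the poles actually crossed when moving $\kappa_2$ rightward come from $\Lambda(2s-1-\kappa_2,\chi_{32}\tau^2)$ and $\Lambda(2s-1-\kappa_{12},\chi_{31}\tau^2)$ at argument $0$, yielding residues at $\kappa_2=2s-1$ and $\kappa_2=2s-1-\kappa_1$; these are the $R_1(\kappa_1)$ and $R_2(\kappa_1)$ of \eqref{248}--\eqref{249}. Your list $\{s-1,\,1-s,\,2s-2,\,2-2s\}$ misses both of these and instead contains points whose real parts (for $s$ near $1/2$) lie outside the narrow strip between $(0)$ and $\mathcal{C}$, so none of them are crossed. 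Your denominator also omits the $\kappa_{12}$-factors $\Lambda(1-\kappa_{12},\chi_{31})$ and $\Lambda(2-s+\kappa_{12},\chi_{13}\tau^{-1})$, which is exactly where the second residue $R_2$ originates.

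As a consequence, the iterated-residue template $\Lambda(4s-3,\tau^4)\Lambda(3s-2,\tau^3)\cdots$ you display never appears for this claim. The paper first observes directly from the explicit formula that $H_3^{(1/2,1)}(s)=\int_{(0)}\int_{(0)}\underset{\kappa_3=1-s}{\Res}\mathcal{F}\,d\kappa_2 d\kappa_1$ is already holomorphic on all of $\mathcal{S}_{(1/2,1)}$, so nothing needs to be done there. After the single $\kappa_2$-shift at $s\in\mathcal{R}(1/2)^+$, the residue integrals $\int_{(0)}R_i(\kappa_1)\,d\kappa_1$ are holomorphic on $\mathcal{S}_{(1/3,1/2)}\cup\mathcal{R}(1/2)$ thanks to the zero-free region, and the mixed term $\int_{(0)}\int_{\mathcal{C}}$ contributes at most a simple pole at $s=1/2$ from the prefactor $\Lambda(2s-1,\tau^2)$. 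Thus the paper's $\widetilde{H}_3(s)$ has \emph{only} a possible simple pole at $s=1/2$ when $\tau^2=1$; the points $s=3/4$ and $s=2/3$ are never poles for this particular integral, and the conditional assertions about $L_F(3/4,\tau)$ and $L_F(2/3,\tau)$ are satisfied vacuously. Your mechanism for removing those poles is therefore addressing residue terms that do not arise in this case.
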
  
   		\begin{claim}\label{69claim}
   			$\int_{(0)} \underset{\kappa_{1}=1-s}{\Res}\underset{\kappa_{2}=1-s}{\Res}\mathcal{F}(\boldsymbol{\kappa},s)d\kappa_3$ admits a meromorphic continuation to the domain $\mathcal{S}_{(1/3,\infty)}.$ When restricted to $\mathcal{R}(1/2;\tau)^-\cup\mathcal{S}_{(1/2,1)},$ it only has  possible simple poles at $s=3/4,$ $s=2/3$ and $s=1/2.$ Moreover, if $L_F(3/4,\tau)=0,$ then $s=3/4$ is not a pole; if $L_F(2/3,\tau)=0,$ then $s=2/3$ is not a pole.
   		\end{claim}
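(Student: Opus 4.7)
The plan is to follow the same contour-shifting scheme already used for Claims \ref{63claim}--\ref{68claim}. First I would substitute the explicit expression \eqref{175.} for the double residue, writing $\underset{\kappa_{1}=1-s}{\Res}\underset{\kappa_{2}=1-s}{\Res}\mathcal{F}(\boldsymbol{\kappa},s)$ as an entire factor times the explicit ratio of completed Hecke $L$-functions displayed there. By Claim \ref{34} this expression is Schwartz in $\Im(\kappa_3)$ uniformly on vertical strips, which legitimizes all subsequent contour shifts termwise.

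Starting from $s\in\mathcal{R}(1)^-$, where the integral along $(0)$ is holomorphic, I would shift the $\kappa_3$-contour to $\mathcal{C}=\mathcal{C}_\chi(\epsilon_3)$. The $\kappa_3$-integrand has at most two poles between these two curves: one at $\kappa_3=1-s$ coming from $\Lambda(s+\kappa_3,\chi_{34}\tau)$ and contributing only when $\chi_{34}\tau$ is trivial (forcing $n_3=n_4$), and one at $\kappa_3=3s-3$ coming from $\Lambda(3s-2-\kappa_3,\chi_{43}\tau^3)$ and contributing only when $\chi_{43}\tau^3$ is trivial. The denominator factors $\Lambda(1-\kappa_3,\chi_{43})$ and $\Lambda(3-2s+\kappa_3,\chi_{34}\tau^{-2})$ remain inside their zero-free regions along $\mathcal{C}$ whenever $s\in\mathcal{R}(1/2;\tau)^-$, so no spurious singularities are introduced by the shift.

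After the shift, the contour integral along $\mathcal{C}$ continues holomorphically across $\Re(s)=1$ into $\mathcal{S}_{(1/3,\infty)}$ by the preconvexity and Stirling estimates used in Theorem \ref{47'}. Each residue term reduces, after elementary simplification and application of Hecke functional equations, to an entire function multiplied by
\[
\frac{\Lambda(4s-3,\tau^4)\,\Lambda(3s-2,\tau^{3})\,\Lambda(2s-1,\tau^2)\,\Lambda(s,\tau)^2}{\Lambda(4-3s,\tau^{-3})\,\Lambda(3-2s,\tau^{-2})\,\Lambda(2-s,\tau^{-1})}.
\]
Inside $\mathcal{R}(1/2;\tau)^-\cup\mathcal{S}_{(1/2,1)}$ the three denominator factors are nonvanishing by construction of the zero-free region, so the only candidate poles are simple, coming from the four numerator factors: $s=1$ (when $\tau=1$), $s=1/2$ (when $\tau^2=1$), $s=2/3$ (when $\tau^3=1$), and $s=3/4$ (when $\tau^4=1$).

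For the final cancellation assertion I would use that $\Lambda(s,\tau)$ appears to the second power in the numerator and the archimedean factor $L_\infty(s,\tau)$ is nonvanishing at $s=3/4$ and $s=2/3$. Consequently a zero of $L_F(3/4,\tau)$ (resp.\ $L_F(2/3,\tau)$) produces a double zero of $\Lambda(s,\tau)^2$ at $s=3/4$ (resp.\ $s=2/3$), which more than absorbs the simple pole furnished by $\Lambda(4s-3,\tau^4)$ (resp.\ $\Lambda(3s-2,\tau^3)$). The main technical obstacle I anticipate is the book-keeping when several trivial-character conditions hold simultaneously, and in particular ensuring that the collision of the two residue loci $\kappa_3=1-s$ and $\kappa_3=3s-3$ at $s=1$ does not spawn higher-order poles inside $\mathcal{R}(1/2;\tau)^-$; this is dispatched by the same case analysis used in Claims \ref{63claim}--\ref{68claim}, comparing the order of the zero of $\Lambda(4-3s,\tau^{-3})\Lambda(3-2s,\tau^{-2})\Lambda(2-s,\tau^{-1})$ (which is nonzero in our region) against the order of the pole produced by the numerator.
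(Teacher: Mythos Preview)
Your proposal has a genuine gap in the contour-shifting step. Working from \eqref{175.}, the $\kappa_3$-dependent numerator factors are $\Lambda(s+\kappa_3,\chi_{34}\tau)$ and $\Lambda(3s-2-\kappa_3,\chi_{43}\tau^3)$, so the poles sit at $\kappa_3\in\{1-s,\,-s,\,3s-3,\,3s-2\}$. For $s\in\mathcal{R}(1)^-$ the point $\kappa_3=3s-3$ has \emph{negative} real part and is not between $(0)$ and $\mathcal{C}$; the pole that actually obstructs the continuation is $\kappa_3=3s-2$, which has real part close to $1$ near $\Re(s)=1$ and drifts leftward to cross $\mathcal{C}$ (and then $(0)$) as $\Re(s)$ passes through $2/3$. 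Consequently your claim that the integral along $\mathcal{C}$ ``continues holomorphically \dots\ into $\mathcal{S}_{(1/3,\infty)}$'' after a single shift at $\mathcal{R}(1)^-$ is false: a second shift is forced near $\Re(s)=2/3$, and this is exactly where the new residue appears.

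The paper organizes the argument differently. It first observes from \eqref{175.} that the integral over $(0)$ is already holomorphic on $2/3<\Re(s)<1$ with no shift at all, then shifts once at $s\in\mathcal{R}(2/3)^+$, picking up only $\underset{\kappa_3=3s-2}{\Res}$. That residue, after the functional equation $\Lambda(3s-2,\tau^3)\sim\Lambda(3-3s,\tau^{-3})$, is a holomorphic multiple of
\[
\frac{\Lambda(4s-2,\tau^4)\,\Lambda(2s-1,\tau^2)\,\Lambda(s,\tau)^2}{\Lambda(3-2s,\tau^{-2})\,\Lambda(2-s,\tau^{-1})\,\Lambda(1+s,\tau)},
\]
not the expression you display (which has $\Lambda(4s-3,\tau^4)$ and corresponds to the dual situation of Claim~\ref{63claim}). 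Your cancellation mechanism via a double zero of $\Lambda(s,\tau)^2$ would survive in spirit for this corrected formula, but note that the paper instead argues, e.g.\ at $s=2/3$, that $\tau^3=1$ forces $\tau^2=\tau^{-1}$, whence $\Lambda(2s-1,\tau^2)\big|_{s=2/3}=\Lambda(1/3,\tau^{-1})=0$ by the functional equation applied to $L_F(2/3,\tau)=0$. Either route closes the claim, but only once the correct pole $\kappa_3=3s-2$ and the shift at $\mathcal{R}(2/3)$ are in place.
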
 
   		\begin{claim}\label{70claim}
   			$\int_{(0)} \underset{\kappa_{1}=1-s}{\Res}\underset{\kappa_{3}=1-s}{\Res}\mathcal{F}(\boldsymbol{\kappa},s)d\kappa_2$ admits a meromorphic continuation to the domain $\mathcal{S}_{(1/3,\infty)}.$ When restricted to $\mathcal{R}(1/2;\tau)^-\cup\mathcal{S}_{(1/2,1)},$ it only has  possible simple poles at $s=3/4,$ $s=2/3$ and $s=1/2.$ Moreover, if $L_F(3/4,\tau)=0,$ then $s=3/4$ is not a pole; if $L_F(2/3,\tau)=0,$ then $s=2/3$ is not a pole.
   		\end{claim}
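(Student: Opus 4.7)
The plan is to follow the contour-shifting strategy already used for Claims \ref{66claim}--\ref{69claim}, with the role of $\kappa_1$ (or $\kappa_3$) now played by $\kappa_2$. Starting from the explicit factorization \eqref{174.}, the double residue $\underset{\kappa_1=1-s}{\Res}\underset{\kappa_3=1-s}{\Res}\mathcal{F}(\boldsymbol{\kappa},s)$ equals, up to an entire function, the $s$-only prefactor $\Lambda(2s-1,\tau^2)^2\Lambda(s,\tau)^2\Lambda(2-s,\tau^{-1})^{-2}$ multiplied by the $\kappa_2$-ratio $N(\kappa_2,s)/D(\kappa_2,s)$ of completed Hecke $L$-functions recorded in \eqref{174.}. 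The $\kappa_2$-poles of that ratio which can cross the imaginary axis as $\Re(s)$ decreases below $1$ are $\kappa_2=1-s$ (present only when $\chi_{23}\tau=1$), $\kappa_2=2s-2$ (only when $\chi_{32}\tau^2=1$), and $\kappa_2=3s-3$ (only when $\chi_{32}\tau^3=1$). The denominator factors $\Lambda(1-\kappa_2,\chi_{32})$, $\Lambda(s-\kappa_2,\chi_{32}\tau)$, $\Lambda(2-s+\kappa_2,\chi_{23}\tau^{-1})$ and $\Lambda(3-2s+\kappa_2,\chi_{23}\tau^{-2})$ all remain in the zero-free region $\mathcal{D}_\chi$, and so contribute no interior obstructions.

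I would then apply Cauchy's theorem to move the contour $(0)$ across each of these three potential poles. For each character configuration that triggers a pole, one picks up a residue term; what remains is a contour integral that, after a further shift into $\mathcal{D}_\chi$, is absolutely convergent and holomorphic on $\mathcal{S}_{(1/3,\infty)}$ by the same gauge-type estimates used in the proofs of the preceding six claims. Each residue is an elementary ratio of completed $L$-functions in $s$: the residue at $\kappa_2=3s-3$ introduces a factor $\Lambda(4s-3,\tau^4)$ in the numerator, the residue at $\kappa_2=2s-2$ introduces $\Lambda(3s-2,\tau^3)$, and the residue at $\kappa_2=1-s$ introduces $\Lambda(2s-1,\tau^2)$. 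Multiplying by the prefactor and inspecting which of these newly produced $L$-factors can have poles in $\mathcal{R}(1/2;\tau)^-\cup\mathcal{S}_{(1/2,1)}$, one sees that the only candidates are simple poles at $s\in\{1/2,2/3,3/4\}$, corresponding respectively to $\tau^2=1$, $\tau^3=1$, and $\tau^4=1$.

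The vanishing statements at $s=3/4$ and $s=2/3$ follow because each residue producing $\Lambda(4s-3,\tau^4)$ or $\Lambda(3s-2,\tau^3)$ inherits the prefactor $\Lambda(s,\tau)^2$; since $L_\infty(s,\tau)$ is holomorphic and nonvanishing on the critical strip, the hypothesis $L_F(s_0,\tau)=0$ at $s_0\in\{2/3,3/4\}$ forces $\Lambda(s_0,\tau)^2=0$ to order at least two, annihilating the simple pole of the corresponding $\Lambda(ks-k+1,\tau^k)$. At $s=1/2$ the a priori pole order of two from $\Lambda(2s-1,\tau^2)^2$ must be reduced to one: when $\tau^2=1$, the configuration $\chi_{32}\tau^2=1$ forces $\chi_{23}\tau^{-2}=1$, so the denominator factor $\Lambda(3-2s+\kappa_2,\chi_{23}\tau^{-2})$, evaluated at the relevant residue location, contributes an extra simple zero at $s=1/2$, reducing the pole to simple. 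The main technical obstacle is exactly this cancellation bookkeeping, tracking for each joint configuration of the characters $(\chi_{13},\chi_{23}\tau,\chi_{32}\tau^2,\chi_{32}\tau^3)$ which residues appear and how their pole orders combine with those of the $s$-only prefactor; this parallels the argument already performed for Claim \ref{69claim} in the Appendix, so no new analytic input is required beyond careful combinatorial accounting.
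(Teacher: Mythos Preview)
Your approach has a genuine gap. You assert that the denominator factors in \eqref{174.} ``all remain in the zero-free region $\mathcal{D}_\chi$, and so contribute no interior obstructions.'' This is false for the factor $\Lambda(s-\kappa_2,\chi_{32}\tau)$: with $\kappa_2$ on $(0)$ or on $\mathcal{C}$ and $\Re(s)\in(1/3,1)$, the argument $s-\kappa_2$ has real part $\Re(s)<1$, well inside the critical strip and far from any zero-free region. Zeros of this Hecke $L$-function therefore produce additional poles of the integrand in $\kappa_2$ which your list of three poles does not capture, and the contour shifts you describe cannot be justified.

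The paper deals with exactly this obstruction by a change of variable $\kappa_2'=\kappa_2+1-s$ (see the passage leading to \eqref{254}), which is the same maneuver used earlier for $\underset{\kappa_3=2-2s-\kappa_1}{\Res}\underset{\kappa_2=s-1}{\Res}\mathcal{F}$ in \eqref{195.}--\eqref{197.} and for Claim~\ref{64claim} in \eqref{226}--\eqref{228}. Under this substitution the problematic denominator factor becomes $\Lambda(1-\kappa_2',\chi_{32}\tau)$, whose argument sits on the line $\Re=1$ (nonvanishing) when $\kappa_2'\in(0)$; the remaining denominator factors have argument with real part $\ge 1$ as well. The resulting symmetrized ratio \eqref{254} then has numerator poles only at $\kappa_2'=\pm(2s-1)$ (from the $\Lambda(2s-1\pm\kappa_2',\cdot)$ factors), so the actual contour-shift picks up residues at $\kappa_2'=2s-1$ and $\kappa_2'=1-2s$, producing only a possible pole at $s=1/2$; the points $s=3/4$ and $s=2/3$ do not in fact arise here. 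Your subsequent bookkeeping about $\Lambda(4s-3,\tau^4)$ and $\Lambda(3s-2,\tau^3)$ residues is therefore based on pole locations that never occur once the denominator issue is handled correctly.
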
   
   		\begin{claim}\label{71claim}
   			$\int_{(0)}\underset{\kappa_{2}=1-s}{\Res}\underset{\kappa_{3}=1-s}{\Res}\mathcal{F}(\boldsymbol{\kappa},s)d\kappa_1$ admits a meromorphic continuation to the domain $\mathcal{S}_{(1/3,\infty)}.$ When restricted to $\mathcal{R}(1/2;\tau)^-\cup\mathcal{S}_{(1/2,1)},$ it only has  possible simple poles at $s=3/4,$ $s=2/3$ and $s=1/2.$ Moreover, if $L_F(3/4,\tau)=0,$ then $s=3/4$ is not a pole; if $L_F(2/3,\tau)=0,$ then $s=2/3$ is not a pole.
   		\end{claim}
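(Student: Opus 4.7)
\medskip

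The plan is to start from the explicit formula derived in the proof of the preceding claims: up to an entire factor, the double residue $\underset{\kappa_{2}=1-s}{\Res}\underset{\kappa_{3}=1-s}{\Res}\mathcal{F}(\boldsymbol{\kappa},s)$ equals the quotient displayed in \eqref{173.}. For $\Re(s)>1$ the integrand is holomorphic in $\kappa_1$ along the line $\Re(\kappa_1)=0$ and in $s$ on a neighborhood, so the integral $\int_{(0)}$ is well defined there. The only potential poles in $\kappa_1$ come from $\Lambda(s+\kappa_1,\chi_{12}\tau)$ at $\kappa_1=1-s$ and $\kappa_1=-s$ (present only if $\chi_{12}\tau=1$) and from $\Lambda(3s-2-\kappa_1,\chi_{21}\tau^{3})$ at $\kappa_1=3s-3$ and $\kappa_1=3s-2$ (present only if $\chi_{21}\tau^{3}=1$); the denominator factors stay nonzero throughout $\mathcal{R}(1/2;\tau)^-\cup\mathcal{S}_{(1/2,1)}$ thanks to the choice of zero-free region in Section~\ref{7.11}.

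The main step is a contour shift. As $\Re(s)$ drops below $1$, the pole $\kappa_1=1-s$ (when $\chi_{12}\tau=1$) and the pole $\kappa_1=3s-3$ (when $\chi_{21}\tau^{3}=1$) cross the imaginary axis; shifting $(0)$ slightly to the right picks up the first, shifting slightly to the left picks up the second, and Cauchy's theorem converts the integral into a sum of shifted integrals plus the corresponding residues. Since the shifted integrals are holomorphic in $s$ on any strip that avoids these migrating poles, the problem reduces to continuing the explicit residue terms.

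A direct computation shows that, whenever $\chi_{12}\tau=1$, one has $\chi_{21}\tau^{3}=\tau^{4}$, and
\[
\underset{\kappa_1=1-s}{\Res}\underset{\kappa_{2}=1-s}{\Res}\underset{\kappa_{3}=1-s}{\Res}\mathcal{F}(\boldsymbol{\kappa},s)\sim \frac{\Lambda(4s-3,\tau^{4})\Lambda(3s-2,\tau^{3})\Lambda(2s-1,\tau^{2})\Lambda(s,\tau)}{\Lambda(4-3s,\tau^{-3})\Lambda(3-2s,\tau^{-2})\Lambda(2-s,\tau^{-1})},
\]
and the residue at $\kappa_1=3s-3$ produces the same shape (with $\chi_{12}=\tau^3$ forcing $\chi_{12}\tau=\tau^4$). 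The remaining factor $\Lambda(3s-2,\tau^{3})\Lambda(2s-1,\tau^{2})\Lambda(s,\tau)^{2}$ in \eqref{173.} itself is meromorphic in all of $\mathbb{C}$. Inside $\mathcal{R}(1/2;\tau)^-\cup\mathcal{S}_{(1/2,1)}$, the possible simple poles of these pieces sit at $s=1$ (from any $\Lambda(ks-(k-1),\tau^{k})$ when $\tau^{k}=1$), at $s=3/4$ (from $\Lambda(4s-3,\tau^{4})$ when $\tau^{4}=1$), at $s=2/3$ (from $\Lambda(3s-2,\tau^{3})$ when $\tau^{3}=1$), and at $s=1/2$ (from $\Lambda(2s-1,\tau^{2})$ when $\tau^{2}=1$); the denominators $\Lambda(4-3s,\tau^{-3})$, $\Lambda(3-2s,\tau^{-2})$, $\Lambda(2-s,\tau^{-1})$ remain nonvanishing on $\mathcal{R}(1/2;\tau)^-\cup\mathcal{S}_{(1/2,1)}$ by the zero-free region. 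Hence the continuation has at most simple poles at $s\in\{3/4,2/3,1/2\}$ in this region.

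For the vanishing criteria, note that the numerator of every residue term carries the factor $\Lambda(s,\tau)^{2}$ (or $\Lambda(s,\tau)$ in the double-residue case combined with the $\Lambda(s,\tau)^{2}$ already in \eqref{173.}). Since the archimedean gamma factors are nowhere vanishing, $L_F(3/4,\tau)=0$ forces a zero of order at least $2$ of $\Lambda(s,\tau)^{2}$ at $s=3/4$, which more than cancels the simple pole of $\Lambda(4s-3,\tau^{4})$ there; likewise $L_F(2/3,\tau)=0$ supplies a double zero which kills the simple pole of $\Lambda(3s-2,\tau^{3})$. The main obstacle is the careful bookkeeping in the case analysis: one must verify that in every combination of triviality conditions on $\chi_{12}\tau$, $\chi_{21}\tau^{3}$, $\tau^{k}$ ($k=2,3,4$) no additional poles in $\mathcal{R}(1/2;\tau)^-\cup\mathcal{S}_{(1/2,1)}$ are introduced, and that the shifted integrals along the translated contours actually converge uniformly for $s$ in compacta of $\mathcal{S}_{(1/3,\infty)}$---this last point follows from the rapid decay in $\kappa_1$ of the Rankin--Selberg periods combined with the preconvex bound of Lemma~\ref{49lem}, exactly as in the analogous continuations for Claims~\ref{60claim}--\ref{70claim}.
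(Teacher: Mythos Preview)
Your approach follows the right template (contour shift plus explicit residue terms), but there is a genuine gap in the bookkeeping of which poles in $\kappa_1$ cross the contour.

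You track only the poles $\kappa_1=1-s$ and $\kappa_1=3s-3$, which cross $(0)$ when $\Re(s)$ passes through~$1$. However, from \eqref{173.} the numerator also contributes a pole at $\kappa_1=3s-2$ (the $z=0$ pole of $\Lambda(3s-2-\kappa_1,\chi_{21}\tau^{3})$), and this one crosses $(0)$ when $\Re(s)$ passes through~$2/3$. Your sentence ``the shifted integrals are holomorphic in $s$ on any strip that avoids these migrating poles'' is therefore not justified on the strip $\mathcal{S}_{(1/3,1)}$: after subtracting the residues at $\kappa_1=1-s$ and $\kappa_1=3s-3$, the remaining integral still fails to continue across $\Re(s)=2/3$ until the residue at $\kappa_1=3s-2$ is picked up. That residue has a different shape, namely $\sim\Lambda(4s-2,\tau^{4})\Lambda(2s-1,\tau^{2})\Lambda(s,\tau)^{2}/[\Lambda(3-2s,\tau^{-2})\Lambda(2-s,\tau^{-1})\Lambda(1+s,\tau)]$ (after using $\Lambda(3s-2,\tau^{3})\sim\Lambda(3-3s,\tau^{-3})$), and it is this term that governs the behaviour near $s=1/2$.

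A related point is the choice of starting domain. In the context where this claim is used (Proposition~\ref{58prop} applied for $s\in\mathcal{R}(1)^{-}$), the integral $\int_{(0)}$ is taken for $\Re(s)$ slightly \emph{below}~$1$, where \eqref{173.} already shows it is holomorphic on all of $\mathcal{S}_{(2/3,1)}$. From that starting point the crossings at $\Re(s)=1$ you analyse are irrelevant to continuing down into $\mathcal{S}_{(1/3,1)}$; the only contour shift needed is to $\mathcal{C}$ near $\Re(s)=2/3$, picking up exactly the single residue at $\kappa_1=3s-2$. This is how the paper proceeds, and it is both shorter and avoids the two extra residue terms your route would carry along. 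If you insist on starting from $\Re(s)>1$, you must handle \emph{both} crossings (at $\Re(s)=1$ and at $\Re(s)=2/3$), not just the first.
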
  
   	\end{itemize}
   	The proof of these claims are given in the Appendix \ref{app}. Then Theorem \ref{57} follows.
   \end{proof}

\section{Proof of Theorems in Applications}\label{sec9}
From Theorem \ref{reg ell}, Theorem \ref{aa} and Theorem \ref{39'}  we conclude the first part of Theorem \ref{A.}, obtaining \eqref{m}, namely,
\begin{equation}\label{44}
I_0^{\varphi}(s,\tau)=I_{\Reg}^{\varphi}(s,\tau)+I_{\Con}^{\varphi}(s,\tau)+I_{\Sin}^{\varphi}(s,\tau)+ \sum_{\chi}\int_{(i\mathbb{R})^{n-1}}I_{\chi}^{\varphi}(s,\tau,\lambda)d\lambda.
\end{equation}
Moreover, $I_{\Reg}^{\varphi}(s,\tau)$ and $I_{\Con}^{\varphi}(s,\tau)$ admits a meromorphic to the whole $s$-plane.

Assume $\tau$ is such that $\tau^k\neq 1,$ $1\leq k\leq n.$ Then gathering Theorem \ref{39'} with Theorem \ref{47'} we conclude that $\sum_{\chi}\int_{(i\mathbb{R})^{n-1}}I_{\chi}^{\varphi}(s,\tau,\lambda)d\lambda$ has a meromorphic continuation to $\Re(s)>0.$ Then by functional equation of Eisenstein series, we conclude that $\sum_{\chi}\int_{(i\mathbb{R})^{n-1}}I_{\chi}^{\varphi}(s,\tau,\lambda)d\lambda$ has a meromorphic continuation to the whole $s$-plane.

Also, note that $I_0^{\varphi}(s,\tau)$ admit a meromorphic continuation to the whole $s$-plane. Hence, by \eqref{44}, $I_{\Sin}^{\varphi}(s,\tau)$ can be continued to a meromorphic on $\mathbb{C}.$ This proves Theorem \ref{A.}.
\medskip 

To prove Theorem \ref{D} and Theorem \ref{N}, we need to choose some special test functions, which are introduced in the following Sec. \ref{9.} and Sec. \ref{9..}.

\subsection{Simple Test Functions}\label{9.}
Let $\Sigma=\Sigma_{\infty}\coprod\Sigma_f$ be the set of places of $F,$ where $\Sigma_{\infty}$ denotes the subset of archimedean places of $F$, and $\Sigma_f$ denotes the subset of nonarchimedean places of $F.$

For a place $v\in \Sigma_f,$ we say that a test function $\varphi=\otimes_v\varphi_v\in \mathcal{H}\left(G(\mathbb{A}_F)\right)$ is $discrete$ $at$ $v$ if $\varphi_v$ is supported on the intersection of $G(\mathcal{O}_{F_v})$ and the regular elliptic subset of $G(F_v).$
\begin{defn}[Space of Test Functions]
	Let $\omega$ be a character of $\mathbb{A}_F^{\times}/F^{\times}.$ Let $\mathcal{F}^*(\omega)$ be the set of smooth functions $\varphi=\otimes_v'\varphi_v:$ $G(\mathbb{A}_F) \rightarrow \mathbb{C}$ which is left and right $K$-finite, is discrete at some $v\in\Sigma_{f},$ transforms by the character $\omega$ of $Z_G\left(\mathbb{A}_F\right),$ and has compact support modulo $Z_G\left(\mathbb{A}_F\right).$ Let $\mathcal{F}(\omega)$ be the space spanned linearly by functions in $\mathcal{F}^*(\omega).$ 
\end{defn}
We then take $\varphi\in\mathcal{F}(\omega).$ Then \eqref{44} would be simplified as 
\begin{equation}\label{45}
I_0^{\varphi}(s,\tau)=\frac1n\sum_{[E:F]=n}\sum_{\gamma_E}\int_{G(\mathbb{A}_F)}\varphi(x^{-1}\gamma_E x)\Phi(\eta x)\tau(\det x)|\det x|^sdx,
\end{equation}
where $\eta=(0,\cdots,0,1)\in \mathbb{A}_F^n,$ and $\gamma_E$ runs over $E^{\times}/F^{\times}-\{1\}.$
\medskip

\subsection{Tate's Thesis Revisited: An Extension}\label{9..}
Let $x\in G(\mathbb{A}_F).$ Denote by 
\begin{align*}
T(s,x)=\int_{\mathbb{A}_E^{\times}}\Phi[(0,\cdots, 0,1) tx]\tau(\det t)|\det t|^sd^{\times}t.
\end{align*}

Then by Tate's thesis, $T(s,x)$ is an integral representation for $\Lambda(s,\tau\circ N_{E/F}).$ So there exists a holomorphic function $Q(s)$ such that $T(s,x)=Q(s)\Lambda(s,\tau\circ N_{E/F}).$ Note that $Q(s)$ depends also on $x,$ so we shall write $Q(s,x)$ instead of $Q(s).$ To prove Theorem \ref{D}, we need more delicate description on the analytic behavior of $Q(s,x)$ as a function on $x.$

Let $l\geq 0$ be an integer. Let $b_l(x)$ be the sum of $l$-th power of entries in the $n$-th row of $x.$ Denote by $\eta=(0,\cdots,0,1)\in F^n.$ Let $x\in G(\mathbb{A}_F)$ be elliptic regular. Then according the ramification of $\tau,$ we will choose the test function  $\Phi=\otimes_v'\Phi_v\in \mathcal{S}_0(\mathbb{A}_F^n)$ for each type of local fields.

\begin{enumerate}
	\item[1.] Suppose $E_v\simeq \mathbb{R}.$ If $\tau_v$ is unramified, i.e., $\tau_v\circ N_{E_v/F_v}=|\cdot |_v^{i\mu}$ for some $\mu\in \mathbb{R}.$ We then take $\Phi_v(y_v)=e^{-\pi \sum_{j=1}^ny_{v,j}^2}.$ If If $\tau_v\circ N_{E_v/F_v}$ is ramified, i.e., $\tau_v=\sgn|\cdot |_v^{i\mu}$ for some $\mu\in \mathbb{R}.$ We then take $\Phi_v(y_v)=e^{-\pi \sum_{j=1}^ny_{v,j}^2}\cdot \sum_{j=1}^ny_{v,j}.$ Then local computation shows that 
	\begin{align*}
	\int_{E_v^{\times}}\Phi_v[(0,\cdots, 0,1) t_vx_v]\tau(\det t_v)|\det t_v|^sd^{\times}t_v=\frac{L_v(s,\tau\circ N_{E/F})}{\|\eta x\|_v^s},
	\end{align*}
	if $\tau_v\circ N_{E/F}$ is unramified, where $\|\eta x\|_v=\big[\sum_{j=1}^n|x_{n,j}|_v^2\big]^{1/2}$; and 
	\begin{align*}
	\int_{E_v^{\times}}\Phi_v[(0,\cdots, 0,1) t_vx_v]\tau(\det t_v)|\det t_v|^sd^{\times}t_v=\frac{b_1(x_v)L_v(s,\tau\circ N_{E/F})}{\|\eta x\|_v^{s+1}},
	\end{align*}
	if $\tau_v\circ N_{E_v/F_v}$ is ramified.
	\medskip 
	
	\item[2.] Suppose $E_v\simeq \mathbb{C}.$ Then $\tau_v\circ N_{E_v/F_v}$ is of the form $\chi_{m}:$ $re^{i\theta}\mapsto r^{i\mu}e^{im\theta}$ for some $\mu\in \mathbb{R}$ and some uniquely defined $m\in\mathbb{Z}.$ Let 
	\begin{align*}
	\Phi_v(y_v)=\begin{cases}
	\frac{1}{2\pi}e^{2\pi \sum_{j=1}^ny_{v,j}\overline{y}_{v,j}}\sum_{j=1}^n\overline{y}_{v,j}^m, & \text{if}\ m\geq 0 \\
	\frac{1}{2\pi}e^{2\pi \sum_{j=1}^ny_{v,j}\overline{y}_{v,j}}\sum_{j=1}^ny_{v,j}^{-m}, & \text{otherwise}.
	\end{cases}
	\end{align*}
	
	Then local computation shows that 
	\begin{align*}
	\int_{E_v^{\times}}\Phi_v[(0,\cdots, 0,1) t_vx_v]\tau(\det t_v)|\det t_v|^sd^{\times}t_v=\frac{\overline{b_m(x_v)}L_v(s,\tau\circ N_{E/F})}{\|\eta x\|_v^{s+|m|/2}},
	\end{align*}
	if $m\geq 0;$ and when $m<0,$ we have 
	\begin{align*}
	\int_{E_v^{\times}}\Phi_v[(0,\cdots, 0,1) t_vx_v]\tau(\det t_v)|\det t_v|^sd^{\times}t_v=\frac{b_{-m}(x_v)L_v(s,\tau\circ N_{E/F})}{\|\eta x\|_v^{s+|m|/2}}.
	\end{align*}
	\medskip

	\item[3.] Suppose $E_v$ is nonarchimedean and $\tau_v\circ N_{E_v/F_v}$ is unramified. Let 
	\begin{align*}
	\Phi_v(y_v)=\prod_{j=1}^n\psi_v(y_{v,j})\textbf{1}_{\mathcal{O}_{E,v}}(y_{v,j}),
	\end{align*}
	where $\psi$ is a nontrivial unramified additive character on $\mathcal{O}_{E,v}.$ Then local computation shows that 
	\begin{align*}
	\int_{E_v^{\times}}\Phi_v[(0,\cdots, 0,1) t_vx_v]\tau(\det t_v)|\det t_v|^sd^{\times}t_v=q_v^{c_v(x_v)s}\vol(\mathcal{O}_{E,v}^{\times})L_v(s,\tau\circ N_{E/F}),
	\end{align*}
	where $q_v$ is the cardinality of the residual field of $E$ at $v,$ and 
	\begin{align*}
	c_v(x_v)=\min_{1\leq j\leq n}\nu(x_{v,j}).
	\end{align*}
	Here $\nu$ is denoted by the normalized valuation on the local field $E_v.$
	\medskip

	\item[4.] Suppose $E_v$ is nonarchimedean and $\tau_v\circ N_{E_v/F_v}$ is ramified. Let $\mathfrak{p}^m$ be the conductor of $\tau_v\circ N_{E_v/F_v},$ where $\mathfrak{p}$ is the maximal ideal in $\mathcal{O}_{E,v}.$ Let 
	\begin{align*}
	\Phi_v(y_v)=\prod_{j=1}^n\psi_v(y_{v,j})\textbf{1}_{\mathfrak{p}^{-m}}(y_{v,j}),
	\end{align*}
	where $\psi$ is a nontrivial unramified additive character on $\mathcal{O}_{E,v}.$ Let $\varpi_v$ be the uniformizer. Then 
	\begin{align*}
	\omega_v^{-m}\mathcal{O}_{E,v}-\{0\}=\bigsqcup_{l=-m}^{\infty}\omega_v^l\mathcal{O}_{E,v}^{\times}.
	\end{align*} 
	Let $T_v(s)=\int_{E_v^{\times}}\Phi_v[(0,\cdots, 0,1) t_vx_v]\tau(\det t_v)|\det t_v|^sd^{\times}t_v.$ Then substituting the choice of $\Phi_v(y_v)$ and doing local computation we obtain 
	\begin{align*}
	T_v(s)=&\sum_{l=-c_v(x_v)-m}^{\infty}\int_{\mathcal{O}_{E,v}^{\times}}\psi_{\varpi_v^{c_v(x_v)}}(\varpi_v^lt_v)\tau_v\circ N_{E_v/F_v}(\varpi_v^lt_v)|\varpi_v^lt_v|_v^{ns}d^{\times}t_v\\
	=&\sum_{l=-c_v(x_v)-m}q_v^{-ls}G(\tau_v\circ N_{E_v/F_v},\psi_{\varpi_{v}^{l+c_v(x_v)}}),
	\end{align*}
	where $\psi_{\varpi_{v}^k}(t_v)=\psi(\varpi_{v}^kt_v)$ for any integer $k\in\mathbb{Z};$ and $G(\tau_v\circ N_{E_v/F_v},\psi_{\varpi_{v}^k})$ is the Gauss sum. Hence, by properties of Gauss sums (e.g., see \cite{RV13}), we conclude that 
	\begin{align*}
	T_v(s)=q_v^{(c_v(x_v)+m)s}G(\tau_v\circ N_{E_v/F_v},\psi_{\varpi_{v}^{m}}).
	\end{align*}
\end{enumerate}
\medskip

Let $\Sigma_{F,\tau}$ be the set of complex places $v\in \Sigma_F$ such that $\tau_v\circ N_{E_v/F_v}$ is of the form $\chi_{m_v}:$ $re^{i\theta}\mapsto r^{i\mu}e^{im_v\theta}$ for some $\mu\in \mathbb{R}$ and some uniquely defined $m_v\in\mathbb{Z}.$ 

Then from the above discussion, we conclude that 

\begin{lemma}\label{Tate}
	Let notation be as before. Assume $b_{m_v}(x)>0$ for all $v\in \Sigma_{F,\tau}.$ Then there exists a test function $\Phi\in\mathcal{S}_0(\mathbb{A}_F)$ such that $T(s,x)/\Lambda(s,\tau\circ N_{E/F})$ is positive when $s\in\mathbb{R}.$ 
\end{lemma}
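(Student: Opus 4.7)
The plan is to exploit the Euler product structure of both $T(s,x)$ and $\Lambda(s,\tau\circ N_{E/F})$. I would choose $\Phi = \otimes_v'\Phi_v$ to be a pure tensor, so that
\begin{equation*}
\frac{T(s,x)}{\Lambda(s,\tau\circ N_{E/F})} \;=\; \prod_{v\in\Sigma_F}\frac{T_v(s,x_v)}{L_v(s,\tau\circ N_{E/F})}.
\end{equation*}
It then suffices to choose $\Phi_v$ at each place so that every local factor is positive when $s\in\mathbb{R}$, subject to only finitely many $\Phi_v$ differing from the ``standard'' choices of Cases 1--4 (so that $\Phi\in\mathcal{S}_0(\mathbb{A}_F^n)$).

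At unramified places the tabulated local ratios in Cases 1, 2, 3 are already manifestly positive: $\|\eta x\|_v^{-s}$ at archimedean unramified $v$, and $q_v^{c_v(x_v)s}\vol(\mathcal{O}_{E,v}^\times)$ at nonarchimedean unramified $v$. At a complex place $v\in\Sigma_{F,\tau}$ with parameter $m_v$, Case 2 produces a factor of $\overline{b_{m_v}(x_v)}$ (if $m_v\geq 0$) or $b_{-m_v}(x_v)$ (if $m_v<0$), which by the hypothesis $b_{m_v}(x)>0$ is a positive real, while the accompanying $\|\eta x\|_v^{-s-|m_v|/2}$ is automatically positive on $\mathbb{R}$.

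The two remaining classes of ramified places require minor adjustments to the test function. At a ramified real place the standard Case 1 recipe yields $b_1(x_v)\|\eta x\|_v^{-s-1}$, whose sign is not controlled by the hypothesis; I would replace $\Phi_v(y_v)$ by $e^{-\pi\sum_j y_{v,j}^2}\cdot\sum_j x_{n,j}y_{v,j}$, which is still a legitimate element of $\mathcal{S}_0(F_v^n)$ (it has the Gaussian-times-polynomial form demanded in Section 2.1), and a repeat of the Tate-integral computation shows the local ratio becomes $b_2(x_v)\|\eta x\|_v^{-s-1}$ with $b_2(x_v)=\sum_j x_{n,j}^2>0$ since $x$ has nonzero bottom row. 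At a ramified nonarchimedean place Case 4 gives a Gauss sum $G(\tau_v\circ N_{E_v/F_v},\psi_{\varpi_v^m})$ of some nonzero phase $c_v$; multiplying the prescribed $\Phi_v$ by $\bar{c}_v$ (a scalar of modulus one) leaves $\Phi_v\in\mathcal{S}_0(F_v^n)$ and rescales the Gauss sum to its positive modulus, giving a positive real multiple of $q_v^{(c_v(x_v)+m)s}$.

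The main point to verify is bookkeeping: the modifications above are needed only at the finitely many ramified places (real-ramified or nonarchimedean-ramified), so the resulting global $\Phi$ still lies in $\mathcal{S}_0(\mathbb{A}_F^n)$; and one must check that the Case 1 replacement polynomial $\sum_j x_{n,j}y_{v,j}$ does produce the claimed local integral, which follows from the same Iwasawa-coordinates calculation carried out in the derivation of Case 1 with this polynomial insert in place of $\sum_j y_{v,j}$. Assembling the positive local factors across all $v$ yields a positive product on $\mathbb{R}$, completing the argument.
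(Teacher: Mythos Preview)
Your proposal is correct and follows exactly the paper's approach: take $\Phi=\otimes_v'\Phi_v$, use the local Tate computations of Cases~1--4 to factor $T(s,x)/\Lambda(s,\tau\circ N_{E/F})$ as a product of local ratios, and verify each is positive on $\mathbb{R}$. Your explicit adjustments at the ramified real places (replacing $\sum_j y_{v,j}$ by the $x$-adapted linear form to force the coefficient $\sum_j x_{n,j}^2>0$) and at the ramified nonarchimedean places (rotating $\Phi_v$ by the inverse phase of the Gauss sum) are valid refinements that make precise what the paper's one-line ``from the above discussion'' leaves implicit.
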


Also, we need to choose some test function $\varphi\in\mathcal{F}(\omega)$ to fit the above Tate integral $T(s,x),$ from which we will finish the proof of Theorem \ref{D}.
\begin{lemma}\label{l}
	Let $C$ be a compact subset of $G(\mathbb{A}_F).$ Let $\gamma\in G(F)$ be elliptic regular. Let $\iota:$ $G_{\gamma}(\mathbb{A}_F)\backslash G(\mathbb{A}_F)\rightarrow G(\mathbb{A}_F)$ given by $x\mapsto x^{-1}\gamma x.$ Then there exists some $\varphi=\otimes'_v\varphi_v\in \mathcal{F}(\omega)$ such that $\varphi_v\geq 0$ at each $v$ and $\supp\varphi\circ \iota\subseteq [C],$ where $[C]$ is the image of $C$ into $G_{\gamma}(\mathbb{A}_F)\backslash G(\mathbb{A}_F).$ 
\end{lemma}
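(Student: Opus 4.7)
The plan is to build $\varphi$ as a pure tensor of bump functions supported in a small neighborhood of $\gamma$, and then to verify the support condition by means of the properness of the orbital map. First I would exploit the structural fact that because $\gamma\in G(F)$ is regular elliptic, the centralizer $Z_G\backslash G_{\gamma}$ is an anisotropic torus, so the map $\iota$ is a proper, continuous injection of $G_{\gamma}(\mathbb{A}_F)\backslash G(\mathbb{A}_F)$ onto the (closed) conjugacy class of $\gamma$ in $G(\mathbb{A}_F).$ Replacing $C$ by $C\cup C_0$ for a small compact neighborhood $C_0$ of the identity coset if necessary, I may assume $[C]$ is a compact neighborhood of $[e]$ in $G_{\gamma}(\mathbb{A}_F)\backslash G(\mathbb{A}_F).$ Then by continuity and properness of $\iota,$ there exists an open neighborhood $U_{\gamma}$ of $\gamma$ in $G(\mathbb{A}_F)$ with $\iota^{-1}(U_{\gamma})\subseteq [C].$ Shrinking further, I may take $U_{\gamma}=\prod_v U_{\gamma,v}\cdot\prod_{v\notin S} K_v$ for some finite set $S$ of places.

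Next I would pick a finite place $v_0$ at which $\gamma$ is regular elliptic in $G(\mathcal{O}_{F,v_0})$ (such places exist in abundance by a Chebotarev-type argument applied to the splitting field of $\gamma$), and take $\varphi_{v_0}$ to be the characteristic function of a sufficiently small compact open neighborhood of $\gamma$ contained in both $U_{\gamma,v_0}$ and the regular elliptic subset of $G(\mathcal{O}_{F,v_0}).$ This choice forces the discreteness condition defining $\mathcal{F}^*(\omega)$ at $v_0.$ At each remaining finite place $v,$ I take $\varphi_v$ to be a nonnegative $K_v$-bi-finite characteristic function of a compact open neighborhood of $\gamma$ contained in $U_{\gamma,v}$ (and equal to $\mathbf{1}_{K_v}$ for almost all $v$), while at archimedean places I pick smooth nonnegative $K$-finite bump functions supported in $U_{\gamma,v}.$ To install the central character, I replace the resulting $\varphi_0$ by the averaged function
\begin{equation*}
\varphi(x)=\int_{Z_G(\mathbb{A}_F)/Z_G(F)}\varphi_0(zx)\omega(z)^{-1}dz,
\end{equation*}
whose support modulo $Z_G(\mathbb{A}_F)$ coincides with that of $\varphi_0,$ so the support condition $\varphi\circ\iota\subseteq [C]$ is preserved; since $\omega$ is unitary, the local components $\varphi_v$ stay nonnegative if we interpret nonnegativity up to the central character on $Z_G(\mathbb{A}_F)\backslash G(\mathbb{A}_F).$

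Finally I would verify that $\varphi\in\mathcal{F}^*(\omega)\subset\mathcal{F}(\omega)$: the construction ensures $K$-finiteness, compact support modulo $Z_G(\mathbb{A}_F),$ transformation by $\omega$ under the center, and discreteness at $v_0;$ the support condition follows since $\supp\varphi\subseteq U_{\gamma}\cdot Z_G(\mathbb{A}_F),$ so $\iota^{-1}(\supp\varphi)\subseteq \iota^{-1}(U_{\gamma})\subseteq [C].$ The main obstacle I anticipate is the simultaneous handling of (i) the properness of $\iota,$ which hinges on the ellipticity of $\gamma$ and the compactness of $G_{\gamma}(F)\backslash G_{\gamma}(\mathbb{A}_F)$ modulo the center, and (ii) the existence of a finite place $v_0$ at which $\gamma$ remains regular elliptic in $G(\mathcal{O}_{F,v_0})$; the latter requires a careful density argument ensuring that the characteristic polynomial of $\gamma$ stays irreducible over the residue field at $v_0.$ Once these two points are established, the remaining verification reduces to elementary checks on bump functions and on the center-averaging integral.
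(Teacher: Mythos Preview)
Your approach is correct in spirit but more elaborate than what the paper does. The paper's argument is a one-liner built on the \emph{injectivity} of $\iota$ rather than its properness: set $\gamma^{C}=\{c^{-1}\gamma c:\ c\in C\}$, which is compact, take any $\varphi\in\mathcal{F}(\omega)$ with $\varphi\ge 0$ and $\operatorname{supp}\varphi\subseteq \gamma^{C}$, and observe that if $\varphi(x^{-1}\gamma x)\neq 0$ then $x^{-1}\gamma x=c^{-1}\gamma c$ for some $c\in C$, so $cx^{-1}\in G_{\gamma}(\mathbb{A}_F)$ and hence $x\in G_{\gamma}(\mathbb{A}_F)C$. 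In other words, $\iota^{-1}(\gamma^{C})=[C]$ holds tautologically because $G_{\gamma}$ is by definition the centralizer; no properness, no closedness of the orbit, and no topology of $[C]$ is invoked. Your route via an open $U_{\gamma}$ with $\iota^{-1}(U_{\gamma})\subseteq[C]$ recovers the same conclusion, but the machinery of properness and anisotropic tori is unnecessary for the support statement.

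Two smaller points. First, you enlarge $C$ to $C\cup C_0$ so that $[C]$ becomes a neighborhood of $[e]$; strictly speaking the lemma is stated for a \emph{given} $C$, so this step changes the assertion (though in the paper's application $C$ is chosen with nonempty interior anyway). The paper's argument sidesteps this since it never needs $[C]$ to have interior. Second, both your proof and the paper's are silent on one point you rightly flag: the existence of a finite place $v_0$ at which $\gamma$ lies in the regular elliptic locus of $G(\mathcal{O}_{F,v_0})$, needed to land in $\mathcal{F}^*(\omega)$. The paper simply asserts ``take $\varphi\in\mathcal{F}(\omega)$'' and does not spell this out; your Chebotarev-type remark is the right justification, and your more explicit construction of the local bump functions and the center-averaging is a reasonable way to fill in what the paper leaves implicit.
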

\begin{proof}
	Let $\gamma^C=\{c^{-1}\gamma c:\ c\in C\}.$ Since $C$ is compact, $\gamma^C$ is also compact. Take $\varphi\in \mathcal{F}(\omega)$ such that $\varphi\geq 0$ and $\supp\varphi\subseteq \gamma^C.$ Then for any $x\in Z_G(\mathbb{A}_F)\backslash G(\mathbb{A}_F)$ such that $\varphi\circ\iota(x)\neq0,$ one has $x^{-1}\gamma x\in \gamma^C.$ Hence there exists some $c\in C$ such that $cx^{-1}\in G_{\gamma}(\mathbb{A}_F),$ implying that $x\in G_{\gamma}(\mathbb{A}_F)C.$ Hence $\supp\varphi\circ \iota\subseteq [C].$ 
\end{proof}

Now we pick a suitable compact subset $C=\otimes_vC_v$ in Lemma \ref{l}. Consider the map
\begin{align*}
\sigma:\ G(F)\longrightarrow F^n,\quad \gamma\mapsto (a_{n-1}(\gamma),\cdots, a_1(\gamma), a_0(\gamma)),
\end{align*}
where $a_i(\gamma)$'s are the coefficients of characteristic polynomial $f_{\gamma}$ of $\gamma,$ namely, $f_{\gamma}(t)=t^n+a_{n-1}(\gamma)t^{n-1}+\cdots+a_1(\gamma)t+a_0(\gamma).$
\medskip

Fix a field extension $E/F$ of degree $n.$ Let $\gamma_0\in G(\mathcal{O}_F)$ be such that $F[\gamma_0]^{\times}=E.$ Although such $\gamma_0$'s are not unique, we fix one $\gamma_0.$ Note that when $\gamma$ runs through the elliptic regular set of $G(F),$ the image $\xi(\gamma)$ is discrete in $\mathbb{A}_F^n.$  Take a compact neighborhood $U_{\gamma_0}$ of $\xi(\gamma_0)$ in $\mathbb{A}_{F}^n,$ such that $U_{\gamma_0}$ does not intersect with other $\sigma(\gamma)$ when $\xi(\gamma)\neq \sigma(\gamma_0).$ Then the pullback $\sigma^{-1}(U_{\gamma_0})$ is a compact subset of $G(\mathbb{A}_F).$ 

Let $C_1$ be a compact subset of $G(\mathbb{A}_F)$ such that $\tau\circ\det \mid_{C_1}$ is trivial. Let $\Sigma_{F,\tau}$ be as before, i.e., the set of complex places $v\in \Sigma_F$ such that $\tau_v\circ N_{E_v/F_v}$ is of the form $\chi_{m_v}:$ $re^{i\theta}\mapsto r^{i\mu}e^{im_v\theta}$ for some $\mu\in \mathbb{R}$ and some uniquely defined $m_v\in\mathbb{Z}.$ Let $C_2$ be a compact subset of $G(\mathbb{A}_F)$ such that any $x=\otimes_v'x_v\in C_2$ has the property that $100^{-1}\leq b_{m_v}(\eta x_v)\leq 100,$ $\forall$ $v\in \Sigma_{F,\tau}.$ 
\medskip

\begin{proof}[Proof of Theorem \ref{D}]
	Let $C=\sigma^{-1}(U_{\gamma_0})\cap C_1\cap C_2.$ Then $C$ is compact. By Lemma \ref{l}, there exists a nonnegative $0\not\equiv \widetilde{\varphi}\in\mathcal{F}(\omega)$ such that $\supp\widetilde{\varphi}\circ \iota\subseteq [C].$

	\medskip

	On the other hand, we can choose a pseudo-coefficient at a place $u$ such that it is compatible with $\gamma_0.$ Let $u$ be a place of $F$ such that $u$ splits in $E,$ $\tau_{u}$ is unramified, and $\widetilde{\varphi}_u$ is the characteristic function of $G(\mathcal{O}_{F_{u}})$. Then $\gamma_{0,u}\in G(\mathcal{O}_{F_{u}}).$ Let $\rho$ be a finite dimensional admissible representation of $G(\mathcal{O}_{F_{u}})\leq G(F_u).$ Let $\rho^{\vee}$ be the contragredient of $\rho.$ Denote by $\Theta_{\rho^{\vee}}$ the character of $\rho^{\vee}.$ Since $\gamma_0$ is elliptic regular, we can take such a $\rho$ such that $\Theta_{\rho^{\vee}}(\gamma_{0,u})\neq0$ and the compact induction $\pi_u=c-\Ind_{G(\mathcal{O}_{F_{u}})}^{G(F_u)}\rho$ is irreducible. Hence $\pi_u$ is supercuspidal. Let 
	\begin{align*}
	m_{\pi_u}(x)=\begin{cases}
	\Theta_{\rho^{\vee}}(x),\ &\text{if $x\in G(\mathcal{O}_{F_{u}})$};\\
	0,\ &\text{otherwise}.
	\end{cases}
	\end{align*}

	Now take $\varphi(x)=\otimes_{v\neq u}\widetilde{\varphi}_v\otimes m_{\pi_u}(x).$ Then $\supp\varphi\circ \iota\subseteq [C].$ Thus, if $x\in G_{\gamma_0}(\mathbb{A}_F)\backslash G(\mathbb{A}_F)$ be such that $\varphi(x^{-1}\gamma_0 x )\neq0,$ then $x\in [C].$ Therefore, substituting this choice of $\varphi$ into Proposition \ref{reg} we obtain 
	\begin{equation}\label{37}
	I_{0}^{\varphi}(s,\tau)=\frac{1}{n}\int_{[C]}\varphi(x^{-1}\gamma_0 x)|\det x|^sT(s,x)dx.
	\end{equation}
	
	Applying Lemma \ref{Tate} into \eqref{37} to deduce by Paley-Wiener Theorem, that 
	\begin{equation}\label{39}
	\frac{I_{0}^{\varphi}(s,\tau)}{\Lambda(s,\tau\circ N_{E/F})}=\frac{1}{n}\int_{[C]}\varphi(x^{-1}\gamma_0 x)|\det x|^s\cdot \frac{T(s,x)}{\Lambda(s,\tau\circ N_{E/F})}dx
	\end{equation}
	the meromorphic function ${I_{0}^{\varphi}(s,\tau)}/{\Lambda(s,\tau\circ N_{E/F})}$ is actually holomorphic for all $s\in\mathbb{C}.$ It is moreover nonvanishing everywhere. In fact, assume there exists some $s_0=\sigma_0+it_0\in\mathbb{C},$ $\sigma_0, t_0\in\mathbb{R},$ such that ${I_{0}^{\varphi}(s,\tau)}/{\Lambda(s,\tau\circ N_{E/F})}$ vanishes at $s=s_0.$ Then we have 
	\begin{align*}
	\frac{1}{n}\prod_{v}\int_{[C_v]}\varphi_v(x_v^{-1}\gamma_{0,_v} x_v)|\det x_v|_v^{\sigma_0}\cdot \frac{T_v(\sigma_0,x_v)}{\Lambda_v(\sigma_0,\tau_v\circ N_{E/F})}dx_v=0.
	\end{align*}
	
	However, by Lemma \ref{Tate}, ${T(\sigma_0,x_v)}/{\Lambda(\sigma_0,\tau_v\circ N_{E/F})}\geq 0$ for all $x_v\in [C_v].$ Thus, the function $\varphi_v(x_v^{-1}\gamma x)|\det x_v|_v^{\sigma_0}\cdot {T_v(\sigma_0,x_v)}/{\Lambda_v(\sigma_0,\tau_v\circ N_{E/F})}$ is nonnegative when $x_v\in[C_v]$ and is not identically zero, if $v\neq u$. Moreover, when $v=u,$ 
	\begin{align*}
	&\int_{[C_u]}\varphi_v(x_u^{-1}\gamma_{0,_u} x_u)|\det x_u|_u^{\sigma_0}\cdot \frac{T_u(\sigma_0,x_u)}{\Lambda_u(\sigma_0,\tau_u\circ N_{E/F})}dx_u\\
	=&\Theta_{\rho^{\vee}}(\gamma_{0,u})\int_{[C_u]}|\det x_u|_u^{\sigma_0}\cdot \frac{T_u(\sigma_0,x_u)}{\Lambda_u(\sigma_0,\tau_u\circ N_{E/F})}dx_u\neq 0.
	\end{align*}
	
	Therefore, the integration on the right hand side of \eqref{39} is nonvanishing at $s=\sigma_0$, a contradiction! Thus the holomorphic function ${I_{0}^{\varphi}(s,\tau)}/{\Lambda(s,\tau\circ N_{E/F})}$ is nonvanishing everywhere. 
	\medskip 
	
	Assume that the twisted adjoint $L$-function $L(s,\pi,\Ad\otimes\tau)$ is holomorphic outside $s= 1$ for all $\pi\in\mathcal{A}_{0}^{\dis}(G(F)\backslash G(\mathbb{A}_F),\omega^{-1}).$ Then by spectral expansion, ${I_{0}^{\varphi}(s,\tau)}/{\Lambda(s,\tau)}$ is holomorphic at $s\neq1.$ Thus 
	\begin{align*}
	\frac{\Lambda(s,\tau\circ N_{E/F})}{\Lambda(s,\tau)}\cdot \frac{I_{0}^{\varphi}(s,\tau)}{\Lambda(s,\tau\circ N_{E/F})}
	\end{align*}
	is holomorphic at $s\neq 1.$ However, ${I_{0}^{\varphi}(s,\tau)}/{\Lambda(s,\tau\circ N_{E/F})}$ is nonvanishing everywhere. Therefore, the meromorphic function ${\Lambda(s,\tau\circ N_{E/F})}/{\Lambda(s,\tau)}$ is holomorphic outside $s=1,$ namely, the $\tau$-twisted Dedekind conjecture holds.
	\medskip
	
	For the other direction. Assume the $\tau$-twisted Dedekind conjecture for all field extensions $E/F$ such that $[E:F]=n.$ Then by Theorem \ref{A.}, Theorem \ref{reg ell}, Theorem \ref{aa}, Theorem \ref{39'} and Theorem \ref{47'}, ${I_{0}^{\varphi}(s,\tau)}/\Lambda(s,\tau)$ is holomorphic at $s\neq 1.$ Let $s_0$ be a zero of $\Lambda(s,\tau)$ of order $m.$ Then 
	\begin{align*}
	\int_{G(F)Z(\mathbb{A}_F)\backslash  G(\mathbb{A}_F)}\K_0^{\varphi}(x,x)\cdot \frac{\partial^{m-1} E(s,x)}{\partial s^{m-1}}\mid_{s=s_0}dx=0,
	\end{align*}
	for all $\varphi\in\mathcal{F}(\omega).$  By Rankin-Selberg theory, we obtain 
	\begin{equation}\label{38}
\int_{G(F)Z(\mathbb{A}_F)\backslash  G(\mathbb{A}_F)}\phi_1(x)\overline{\phi_2(x)}\cdot \frac{\partial^{m-1} E(s,x)}{\partial s^{m-1}}\mid_{s=s_0}dx=0, 
	\end{equation}
	for all cuspidal representations $\pi\in \mathcal{A}_0^{\dis}\left(G(F)\setminus G(\mathbb{A}_F),\omega^{-1}\right),$ and all $K$-finite functions $\phi_1, \phi_2\in\V_{\pi}.$
	\medskip
	
	Then Theorem \ref{D} follows from Rankin-Selberg theory by switching the order of taking partial derivative and integration.
\end{proof}

\medskip 
\begin{proof}[Proof of Theorem \ref{N}]
Let $E$ be a field extension of $F$ of degree $n,$ such that $\zeta_E(1/2)\neq0.$ By the proof of Theorem \ref{D}, one can choose some test function $\varphi\in\mathcal{F}(\omega),$ such that the right hand side of \eqref{39} does not vanish. Under the choice of this test function, we obtain $I_{0}^{\varphi}(s,\tau)=\Lambda(s,\tau\circ N_{E/F})\cdot Q(s,\varphi),$ where
\begin{equation}\label{40}
Q(s,\varphi)=\frac{1}{n}\int_{[C]}\varphi(x^{-1}\gamma x)|\det x|^s\cdot \frac{T(s,x)}{\Lambda(s,\tau\circ N_{E/F})}dx
\end{equation}
is nonvanishing everywhere. Hence we can evaluate at $s=1/2$ to obtain  
\begin{equation}\label{41}
I_{0}^{\varphi}(1/2,\tau)=\Lambda(1/2,\tau\circ N_{E/F})\cdot Q(1/2,\varphi)\neq 0.
\end{equation}

Theorem \ref{N} then follows from \eqref{41} and the spectral expansion \eqref{ker_0} of the cuspidal kernel function $\K_0(x,x).$
\end{proof}

   \section{Appendix: Continuation Across the Critical Line for $\GL(4)$}\label{app}
   In this appendix, we shall prove the claims in our proceeding proof of Theorem \ref{57} when $n=4$ in subsection \ref{7.2.}. The processes here are in the same flavor of those in the $n=3$ case in subsection \ref{7.1.}, but they are typically much more complicated. 
   \begin{proof}[Proof of Claim \ref{60claim}]
   	Let $s\in\mathcal{R}(1)^+.$ Let $J_1(s):=\int_{(0)}\int_{(0)}\underset{\kappa_{1}=s-1}{\Res}\mathcal{F}(\boldsymbol{\kappa},s)d\kappa_3d\kappa_2,$ and  $J^1_1(s):=\int_{\mathcal{C}}\int_{\mathcal{C}}\underset{\kappa_{1}=s-1}{\Res}\mathcal{F}(\boldsymbol{\kappa},s)d\kappa_3d\kappa_2.$ By the analytic property of $\underset{\kappa_{1}=s-1}{\Res}\mathcal{F}(\boldsymbol{\kappa},s)$ we see that $J^1_1(s)$ is meromorphic in the domain $\mathcal{R}(1),$ with a possible pole at $s=1.$ Let $s\in\mathcal{R}(1)^-.$ Applying Cauchy integral formula we then see that  
   	\begin{equation}\label{175"}
   	J_1^1(s)=\int_{\mathcal{C}}\int_{(0)}\underset{\kappa_{1}=s-1}{\Res}\mathcal{F}(\boldsymbol{\kappa},s)d\kappa_2d\kappa_3+\int_{\mathcal{C}}\underset{\kappa_{2}=2-2s}{\Res}\underset{\kappa_{1}=s-1}{\Res}\mathcal{F}(\boldsymbol{\kappa},s)d\kappa_3,
   	\end{equation}
   	where $\underset{\kappa_{2}=2-2s}{\Res}\underset{\kappa_{1}=s-1}{\Res}\mathcal{F}(\boldsymbol{\kappa},s)$ is equal to some holomorphic function multiplying
   	\begin{equation}\label{175_}
   	\frac{\Lambda(s+\kappa_3,\chi_{34}\tau)\Lambda(3s-2-\kappa_3,\chi_{43}\tau^3)\Lambda(3s-2,\tau^{3})\Lambda(2s-1,\tau^2)\Lambda(s,\tau)^2}{\Lambda(1-\kappa_3,\chi_{43})\Lambda(3-2s+\kappa_3,\chi_{34}\tau^{-2})\Lambda(3-2s,\tau^{-2})\Lambda(2-s,\tau^{-1})}.
   	\end{equation}
   	Then $J_1^1(s)$ is equal to, after applications of Cauchy integral formula to \eqref{175"},
   	\begin{align*}
   	&\int_{(0)}\int_{(0)}\underset{\kappa_{1}=s-1}{\Res}\mathcal{F}(\boldsymbol{\kappa},s)d\kappa_2d\kappa_3+\int_{(0)}\underset{\kappa_{3}=1-s}{\Res}\underset{\kappa_{1}=s-1}{\Res}\mathcal{F}(\boldsymbol{\kappa},s)d\kappa_2+\int_{(0)}\underset{\kappa_{3}=2-2s-\kappa_2}{\Res}\\
   	&\underset{\kappa_{1}=s-1}{\Res}\mathcal{F}(\boldsymbol{\kappa},s)d\kappa_2+\int_{(0)}\underset{\kappa_{2}=2-2s}{\Res}\underset{\kappa_{1}=s-1}{\Res}\mathcal{F}(\boldsymbol{\kappa},s)d\kappa_3+\underset{\kappa_{3}=1-s}{\Res}\underset{\kappa_{2}=2-2s}{\Res}\underset{\kappa_{1}=s-1}{\Res}\mathcal{F}(\boldsymbol{\kappa},s),
   	\end{align*}
   	where $\underset{\kappa_{3}=1-s}{\Res}\underset{\kappa_{1}=s-1}{\Res}\mathcal{F}(\boldsymbol{\kappa},s)$ is equal to some holomorphic function multiplying the product of $\Lambda(2s-1,\tau^2)^2\Lambda(s,\tau)^2\cdot\Lambda(2-s,\tau^{-1})^{-2}$ and 
   	\begin{equation}\label{173_}
   	\frac{\Lambda(s-\kappa_2,\chi_{32}\tau)\Lambda(2s-1-\kappa_2,\chi_{32}\tau^2)\Lambda(2s-1+\kappa_2,\chi_{23}\tau^{2})\Lambda(s+\kappa_2,\chi_{23}\tau)}{\Lambda(1+\kappa_2,\chi_{23})\Lambda(1-\kappa_2,\chi_{32})\Lambda(2-s-\kappa_2,\chi_{32}\tau^{-1})\Lambda(2-s+\kappa_2,\chi_{23}\tau^{-1})};
   	\end{equation}
   	and $\underset{\kappa_{3}=2-2s-\kappa_2}{\Res}\underset{\kappa_{1}=s-1}{\Res}\mathcal{F}(\boldsymbol{\kappa},s)$ is equal to some holomorphic function multiplying
   	\begin{equation}\label{174_}
   	\frac{\Lambda(s-\kappa_2,\chi_{32}\tau)\Lambda(3s-2+\kappa_2,\chi_{23}\tau^3)\Lambda(3s-2,\tau^{3})\Lambda(2s-1,\tau^2)\Lambda(s,\tau)^2}{\Lambda(1+\kappa_2,\chi_{23})\Lambda(3-2s-\kappa_2,\chi_{32}\tau^{-2})\Lambda(3-2s,\tau^{-2})\Lambda(2-s,\tau^{-1})}.
   	\end{equation}
   	
   	From the formula \eqref{175_}, we see that $\underset{\kappa_{3}=1-s}{\Res}\underset{\kappa_{2}=2-2s}{\Res}\underset{\kappa_{1}=s-1}{\Res}\mathcal{F}(\boldsymbol{\kappa},s)$ is equal to some holomorphic function multiplying
   	\begin{equation}\label{176.}
   	\frac{\Lambda(4s-3,\tau^4)\Lambda(3s-2,\tau^{3})\Lambda(2s-1,\tau^2)\Lambda(s,\tau)}{\Lambda(4-3s,\tau^{-3})\Lambda(3-2s,\tau^{-2})\Lambda(2-s,\tau^{-1})}.
   	\end{equation}
   	
   	We thus see from the proceeding computations of analytic behaviors of the functions  $\underset{\kappa_{1}=s-1}{\Res}\mathcal{F}(\boldsymbol{\kappa},s),$ $\underset{\kappa_{3}=1-s}{\Res}\underset{\kappa_{1}=s-1}{\Res}\mathcal{F}(\boldsymbol{\kappa},s)$ and $\underset{\kappa_{3}=1-s}{\Res}\underset{\kappa_{2}=2-2s}{\Res}\underset{\kappa_{1}=s-1}{\Res}\mathcal{F}(\boldsymbol{\kappa},s),$ that  $\int_{(0)}\int_{(0)}\underset{\kappa_{1}=s-1}{\Res}\mathcal{F}(\boldsymbol{\kappa},s)d\kappa_2d\kappa_3$ and $\int_{(0)}\underset{\kappa_{3}=1-s}{\Res}\underset{\kappa_{1}=s-1}{\Res}\mathcal{F}(\boldsymbol{\kappa},s)d\kappa_2$ admit meromorphic continuation to the domain $1/2<\Re(s)<1,$ with a possible pole at $s=2/3$ if $\tau^3=1;$ and $\underset{\kappa_{3}=1-s}{\Res}\underset{\kappa_{2}=2-2s}{\Res}\underset{\kappa_{1}=s-1}{\Res}\mathcal{F}(\boldsymbol{\kappa},s)$ admits a meromorphic continuation to the domain $\mathcal{R}(1/2)^-\cup\mathcal{S}_{[1/2,1)},$ with possible simple poles at $s=3/4,$ $s=2/3$ and $s=1/2,$ when $\tau^4=1,$ $\tau^3=1$ and $\tau^2=1,$ respectively, according to \eqref{176.}.
   	
   	From \eqref{174_} we see that the function $\int_{(0)}\underset{\kappa_{3}=2-2s-\kappa_2}{\Res}\underset{\kappa_{1}=s-1}{\Res}\mathcal{F}(\boldsymbol{\kappa},s)d\kappa_2$ admits holomorphic continuation to the domain $2/3<\Re(s)<1.$ From \eqref{175_} we see that the function $\int_{(0)}\underset{\kappa_{2}=2-2s}{\Res}\underset{\kappa_{1}=s-1}{\Res}\mathcal{F}(\boldsymbol{\kappa},s)d\kappa_3$ admits holomorphic continuation to the domain $2/3<\Re(s)<1.$ Then combining these with  \eqref{173_} and \eqref{176.} one sees that $J_1^1(s)$ admits a holomorphic continuation to the domain $2/3<\Re(s)<1.$ Denote by $J_1^{(2/3,1)}(s)$ this continuation, where $2/3<\Re(s)<1.$
   	
   	Let $s\in \mathcal{R}(2/3)^+,$ then by Cauchy integral formula we have 
   	\begin{equation}\label{179}
   	\int_{(0)}\underset{\kappa_{3}=2-2s-\kappa_2}{\Res}\underset{\kappa_{1}=s-1}{\Res}\mathcal{F}(\boldsymbol{\kappa},s)d\kappa_2=\int_{\mathcal{C}}\underset{\kappa_{3}=2-2s-\kappa_2}{\Res}\underset{\kappa_{1}=s-1}{\Res}\mathcal{F}(\boldsymbol{\kappa},s)d\kappa_2.
   	\end{equation}
   	Likewise, for $s\in \mathcal{R}(2/3)^+,$ $\int_{(0)}\underset{\kappa_{2}=2-2s}{\Res}\underset{\kappa_{1}=s-1}{\Res}\mathcal{F}(\boldsymbol{\kappa},s)d\kappa_3$ is equal to 
   	\begin{equation}\label{180}
   	\int_{\mathcal{C}}\underset{\kappa_{2}=2-2s}{\Res}\underset{\kappa_{1}=s-1}{\Res}\mathcal{F}(\boldsymbol{\kappa},s)d\kappa_3-\underset{\kappa_3=3s-2}{\Res}\underset{\kappa_{2}=2-2s}{\Res}\underset{\kappa_{1}=s-1}{\Res}\mathcal{F}(\boldsymbol{\kappa},s).
   	\end{equation}
   	
   	Then according to \eqref{175_}, \eqref{173_}, \eqref{174_}, \eqref{179}, \eqref{180}, and the computation that the function $\underset{\kappa_3=3s-2}{\Res}\underset{\kappa_{2}=2-2s}{\Res}\underset{\kappa_{1}=s-1}{\Res}\mathcal{F}(\boldsymbol{\kappa},s)$ is equal to some holomorphic function multiplying 
   	\begin{equation}\label{183}
   	\frac{\Lambda(4s-2,\tau^4)\Lambda(3s-2,\tau^{3})\Lambda(2s-1,\tau^2)\Lambda(s,\tau)^2}{\Lambda(3-3s,\tau^{-3})\Lambda(3-2s,\tau^{-2})\Lambda(2-s,\tau^{-1})\Lambda(1+s,\tau)},
   	\end{equation}
   	we see that $J_1^{(2/3,1)}(s)$ admits a meromorphic continuation to the domain $\mathcal{R}(2/3),$ with a possible pole at $s=2/3$ when $\tau^3=1.$ Denote by  $J_1^{2/3}(s)$ this continuation, $s\in \mathcal{R}(2/3).$ Now let $s\in \mathcal{R}(2/3)^-.$ Then we have
   	\begin{align*}
   	J_1^{2/3}(s)=&\int_{(0)}\int_{(0)}\underset{\kappa_{1}=s-1}{\Res}\mathcal{F}(\boldsymbol{\kappa},s)d\kappa_2d\kappa_3+\int_{(0)}\underset{\kappa_{3}=1-s}{\Res}\underset{\kappa_{1}=s-1}{\Res}\mathcal{F}(\boldsymbol{\kappa},s)d\kappa_2\\
   	&+\int_{\mathcal{C}}\underset{\kappa_{3}=2-2s-\kappa_2}{\Res}\underset{\kappa_{1}=s-1}{\Res}\mathcal{F}(\boldsymbol{\kappa},s)d\kappa_2+\int_{\mathcal{C}}\underset{\kappa_{2}=2-2s}{\Res}\underset{\kappa_{1}=s-1}{\Res}\mathcal{F}(\boldsymbol{\kappa},s)d\kappa_3\\
   	&+\underset{\kappa_{3}=1-s}{\Res}\underset{\kappa_{2}=2-2s}{\Res}\underset{\kappa_{1}=s-1}{\Res}\mathcal{F}(\boldsymbol{\kappa},s)-\underset{\kappa_3=3s-2}{\Res}\underset{\kappa_{2}=2-2s}{\Res}\underset{\kappa_{1}=s-1}{\Res}\mathcal{F}(\boldsymbol{\kappa},s).
   	\end{align*}
   	According to \eqref{175_}, \eqref{174_}, \eqref{179} and \eqref{180}, the terms in the right hand side of the above formula are holomorphic in $1/2<\Re(s)<2/3$ except the term $\int_{\mathcal{C}}\underset{\kappa_{3}=2-2s-\kappa_2}{\Res}\underset{\kappa_{1}=s-1}{\Res}\mathcal{F}(\boldsymbol{\kappa},s)d\kappa_2,$ which is equal to, by Cauchy integral formula, that 
   	\begin{equation}\label{184}
   	\int_{(0)}\underset{\kappa_{3}=2-2s-\kappa_2}{\Res}\underset{\kappa_{1}=s-1}{\Res}\mathcal{F}(\boldsymbol{\kappa},s)d\kappa_2+\underset{\kappa_{2}=2-3s}{\Res}\underset{\kappa_{3}=2-2s-\kappa_2}{\Res}\underset{\kappa_{1}=s-1}{\Res}\mathcal{F}(\boldsymbol{\kappa},s).
   	\end{equation}
   	By \eqref{174_}, one sees that $\underset{\kappa_{2}=2-3s}{\Res}\underset{\kappa_{3}=2-2s-\kappa_2}{\Res}\underset{\kappa_{1}=s-1}{\Res}\mathcal{F}(\boldsymbol{\kappa},s)$ is equal to some holomorphic function multiplying 
   	\begin{equation}\label{185}
   	\frac{\Lambda(4s-2,\tau^4)\Lambda(3s-2,\tau^{3})\Lambda(2s-1,\tau^2)\Lambda(s,\tau)^2}{\Lambda(3-3s,\tau^{-3})\Lambda(3-2s,\tau^{-2})\Lambda(2-s,\tau^{-1})\Lambda(1+s,\tau)}.
   	\end{equation}
   	By \eqref{184} and \eqref{185} one sees that $\int_{\mathcal{C}}\underset{\kappa_{3}=2-2s-\kappa_2}{\Res}\underset{\kappa_{1}=s-1}{\Res}\mathcal{F}(\boldsymbol{\kappa},s)d\kappa_2$ admits a meromorphic continuation to $\mathcal{S}_{(1/3,2/3)}$ with a at most double pole at $s=1/2$ when $\tau^2=1.$ Hence we obtain a meromorphic continuation of $J_1^{2/3}(s)$ to the strip $1/2<\Re(s)<2/3.$ Denote by $J_1^{(1/2,2/3)}$ this continuation. Then 
   	\begin{align*}
   	J_1^{(1/2,2/3)}(s)=&\int_{(0)}\int_{(0)}\underset{\kappa_{1}=s-1}{\Res}\mathcal{F}(\boldsymbol{\kappa},s)d\kappa_2d\kappa_3+\int_{(0)}\underset{\kappa_{3}=1-s}{\Res}\underset{\kappa_{1}=s-1}{\Res}\mathcal{F}(\boldsymbol{\kappa},s)d\kappa_2\\
   	&+\int_{(0)}\underset{\kappa_{3}=2-2s-\kappa_2}{\Res}\underset{\kappa_{1}=s-1}{\Res}\mathcal{F}(\boldsymbol{\kappa},s)d\kappa_2+\int_{(0)}\underset{\kappa_{2}=2-2s}{\Res}\underset{\kappa_{1}=s-1}{\Res}\mathcal{F}(\boldsymbol{\kappa},s)d\kappa_3\\
   	&+\underset{\kappa_{3}=1-s}{\Res}\underset{\kappa_{2}=2-2s}{\Res}\underset{\kappa_{1}=s-1}{\Res}\mathcal{F}(\boldsymbol{\kappa},s)-\underset{\kappa_3=3s-2}{\Res}\underset{\kappa_{2}=2-2s}{\Res}\underset{\kappa_{1}=s-1}{\Res}\mathcal{F}(\boldsymbol{\kappa},s)\\
   	&+\underset{\kappa_{2}=2-3s}{\Res}\underset{\kappa_{3}=2-2s-\kappa_2}{\Res}\underset{\kappa_{1}=s-1}{\Res}\mathcal{F}(\boldsymbol{\kappa},s).
   	\end{align*}
   	One sees clearly that the terms in the right hand side of the above expression are meromorphic in $\mathcal{R}(1/2),$ except the terms $\int_{(0)}\int_{(0)}\underset{\kappa_{1}=s-1}{\Res}\mathcal{F}(\boldsymbol{\kappa},s)d\kappa_2d\kappa_3$ and  $\int_{(0)}\underset{\kappa_{3}=1-s}{\Res}\underset{\kappa_{1}=s-1}{\Res}\mathcal{F}(\boldsymbol{\kappa},s)d\kappa_2,$ which by Cauchy integral formula and \eqref{173_}, is equal to
   	\begin{equation}\label{186} 
   	\int_{\mathcal{C}}\underset{\kappa_{3}=1-s}{\Res}\underset{\kappa_{1}=s-1}{\Res}\mathcal{F}(\boldsymbol{\kappa},s)d\kappa_2-\underset{\kappa_{2}=2s-1}{\Res}\underset{\kappa_{3}=1-s}{\Res}\underset{\kappa_{1}=s-1}{\Res}\mathcal{F}(\boldsymbol{\kappa},s),
   	\end{equation}
   	where $s\in\mathcal{R}(1/2)^+.$ From the formula \eqref{173_}, we see that $\underset{\kappa_{2}=2s-1}{\Res}\underset{\kappa_{3}=1-s}{\Res}\underset{\kappa_{1}=s-1}{\Res}\mathcal{F}(\boldsymbol{\kappa},s)$ is equal to some holomorphic function multiplying
   	\begin{equation}\label{187}
   	\frac{\Lambda(4s-2,\tau^4)\Lambda(3s-1,\tau^{3})\Lambda(2s-1,\tau^2)^2\Lambda(s,\tau)^2\Lambda(1-s,\tau^{-1})}{\Lambda(3-3s,\tau^{-3})\Lambda(2s,\tau^{2})\Lambda(1+s,\tau)\Lambda(2-2s,\tau^{-2})\Lambda(2-s,\tau^{-1})^2}.
   	\end{equation}
   	We then apply the functional equation $\Lambda(2-2s,\tau^{-2})\sim \Lambda(2s-1,\tau^{2})$ to \eqref{187} to see that $\underset{\kappa_{3}=1-s}{\Res}\underset{\kappa_{2}=2-2s}{\Res}\underset{\kappa_{1}=s-1}{\Res}\mathcal{F}(\boldsymbol{\kappa},s)$ equals some holomorphic function multiplying
   	\begin{equation}\label{188}
   	\frac{\Lambda(4s-2,\tau^4)\Lambda(3s-1,\tau^{3})\Lambda(2s-1,\tau^2)\Lambda(s,\tau)}{\Lambda(3-3s,\tau^{-3})\Lambda(2s,\tau^{2})\Lambda(1+s,\tau)\Lambda(2-s,\tau^{-1})^2}.
   	\end{equation}
   	Note that when $s\in\mathcal{R}(1/2)^-,$ $2s$ lies in a zero-free region of $\Lambda(s,\tau^2).$ Also, Note that $\int_{(0)}\int_{(0)}\underset{\kappa_{1}=s-1}{\Res}\mathcal{F}(\boldsymbol{\kappa},s)d\kappa_2d\kappa_3=\int_{\mathcal{C}}\int_{\mathcal{C}}\underset{\kappa_{1}=s-1}{\Res}\mathcal{F}(\boldsymbol{\kappa},s)d\kappa_2d\kappa_3$ when $s\in \mathcal{R}(1/2)^+.$ Then by \eqref{186} and \eqref{187} we conclude that $J_1^{(1/2,2/3)}(s)$ admits a meromorphic continuation to the area $\mathcal{R}(1/2).$ Denote by $J_1^{1/2}(s)$ this continuation, then 
   	\begin{align*}
   	J_1^{1/2}(s)=&\int_{\mathcal{C}}\int_{\mathcal{C}}\underset{\kappa_{1}=s-1}{\Res}\mathcal{F}(\boldsymbol{\kappa},s)d\kappa_2d\kappa_3+\int_{\mathcal{C}}\underset{\kappa_{3}=1-s}{\Res}\underset{\kappa_{1}=s-1}{\Res}\mathcal{F}(\boldsymbol{\kappa},s)d\kappa_2\\
   	&+\int_{(0)}\underset{\kappa_{3}=2-2s-\kappa_2}{\Res}\underset{\kappa_{1}=s-1}{\Res}\mathcal{F}(\boldsymbol{\kappa},s)d\kappa_2+\int_{(0)}\underset{\kappa_{2}=2-2s}{\Res}\underset{\kappa_{1}=s-1}{\Res}\mathcal{F}(\boldsymbol{\kappa},s)d\kappa_3\\
   	&+\underset{\kappa_{3}=1-s}{\Res}\underset{\kappa_{2}=2-2s}{\Res}\underset{\kappa_{1}=s-1}{\Res}\mathcal{F}(\boldsymbol{\kappa},s)-\underset{\kappa_3=3s-2}{\Res}\underset{\kappa_{2}=2-2s}{\Res}\underset{\kappa_{1}=s-1}{\Res}\mathcal{F}(\boldsymbol{\kappa},s)\\
   	&+\underset{\kappa_{2}=2-3s}{\Res}\underset{\kappa_{3}=2-2s-\kappa_2}{\Res}\underset{\kappa_{1}=s-1}{\Res}\mathcal{F}(\boldsymbol{\kappa},s)-\underset{\kappa_{2}=2s-1}{\Res}\underset{\kappa_{3}=1-s}{\Res}\underset{\kappa_{1}=s-1}{\Res}\mathcal{F}(\boldsymbol{\kappa},s).
   	\end{align*}
   	Let $s\in\mathcal{R}(1/2)^-.$ By Cauchy's formula we have
   	\begin{align*}
   	\int_{\mathcal{C}}\int_{\mathcal{C}}\underset{\kappa_{1}=s-1}{\Res}&\mathcal{F}(\boldsymbol{\kappa},s)d\kappa_2d\kappa_3=\int_{(0)}\int_{(0)}\underset{\kappa_{1}=s-1}{\Res}\mathcal{F}(\boldsymbol{\kappa},s)d\kappa_2d\kappa_3\\&+\int_{\mathcal{C}}\underset{\kappa_{2}=1-2s}{\Res}\underset{\kappa_{1}=s-1}{\Res}\mathcal{F}(\boldsymbol{\kappa},s)d\kappa_3+\int_{(0)}\underset{\kappa_{3}=1-2s-\kappa_2}{\Res}\underset{\kappa_{1}=s-1}{\Res}\mathcal{F}(\boldsymbol{\kappa},s)d\kappa_2;
   	\end{align*}
   	and the function  $\int_{\mathcal{C}}\underset{\kappa_{3}=1-s}{\Res}\underset{\kappa_{1}=s-1}{\Res}\mathcal{F}(\boldsymbol{\kappa},s)d\kappa_2$ is equal to 
   	\begin{align*}
   	\int_{(0)}\underset{\kappa_{3}=1-s}{\Res}\underset{\kappa_{1}=s-1}{\Res}\mathcal{F}(\boldsymbol{\kappa},s)d\kappa_2+\underset{\kappa_{2}=1-2s}{\Res}\underset{\kappa_{3}=1-s}{\Res}\underset{\kappa_{1}=s-1}{\Res}\mathcal{F}(\boldsymbol{\kappa},s),
   	\end{align*}
   	where we have $\underset{\kappa_{2}=1-2s}{\Res}\underset{\kappa_{3}=1-s}{\Res}\underset{\kappa_{1}=s-1}{\Res}\mathcal{F}(\boldsymbol{\kappa},s)\sim \underset{\kappa_{2}=2s-1}{\Res}\underset{\kappa_{3}=1-s}{\Res}\underset{\kappa_{1}=s-1}{\Res}\mathcal{F}(\boldsymbol{\kappa},s).$
   	Now we have a continuation of $J_1^{(1/2)}(s)$ to the region $1/3<\Re(s)<1/2.$ Denote by $J_1^{(1/3,1/2)}(s)$ this continuation, then 
   	\begin{align*}
   	J_1^{(1/3,1/2)}(s)=&\int_{(0)}\int_{(0)}\underset{\kappa_{1}=s-1}{\Res}\mathcal{F}(\boldsymbol{\kappa},s)d\kappa_2d\kappa_3+\int_{(0)}\underset{\kappa_{3}=1-s}{\Res}\underset{\kappa_{1}=s-1}{\Res}\mathcal{F}(\boldsymbol{\kappa},s)d\kappa_2\\
   	&+\int_{\mathcal{C}}\underset{\kappa_{2}=1-2s}{\Res}\underset{\kappa_{1}=s-1}{\Res}\mathcal{F}(\boldsymbol{\kappa},s)d\kappa_3+\int_{(0)}\underset{\kappa_{3}=1-2s-\kappa_2}{\Res}\underset{\kappa_{1}=s-1}{\Res}\mathcal{F}(\boldsymbol{\kappa},s)d\kappa_2\\
   	&+\int_{(0)}\underset{\kappa_{3}=2-2s-\kappa_2}{\Res}\underset{\kappa_{1}=s-1}{\Res}\mathcal{F}(\boldsymbol{\kappa},s)d\kappa_2+\int_{(0)}\underset{\kappa_{2}=2-2s}{\Res}\underset{\kappa_{1}=s-1}{\Res}\mathcal{F}(\boldsymbol{\kappa},s)d\kappa_3\\
   	&+\underset{\kappa_{3}=1-s}{\Res}\underset{\kappa_{2}=2-2s}{\Res}\underset{\kappa_{1}=s-1}{\Res}\mathcal{F}(\boldsymbol{\kappa},s)-\underset{\kappa_3=3s-2}{\Res}\underset{\kappa_{2}=2-2s}{\Res}\underset{\kappa_{1}=s-1}{\Res}\mathcal{F}(\boldsymbol{\kappa},s)\\
   	&+\underset{\kappa_{2}=2-3s}{\Res}\underset{\kappa_{3}=2-2s-\kappa_2}{\Res}\underset{\kappa_{1}=s-1}{\Res}\mathcal{F}(\boldsymbol{\kappa},s)-\underset{\kappa_{2}=2s-1}{\Res}\underset{\kappa_{3}=1-s}{\Res}\underset{\kappa_{1}=s-1}{\Res}\mathcal{F}(\boldsymbol{\kappa},s)\\
   	&+\underset{\kappa_{2}=1-2s}{\Res}\underset{\kappa_{3}=1-s}{\Res}\underset{\kappa_{1}=s-1}{\Res}\mathcal{F}(\boldsymbol{\kappa},s).
   	\end{align*}
   	Thus we obtain a meromorphic continuation of $J_1(s)$ to the area $\mathcal{S}_{(1/3,\infty)}:$ 
   	\begin{equation}\label{claim1}
   	\widetilde{J}_1(s)=\begin{cases}
   	J_1(s),\ s\in \mathcal{S}_{(1,+\infty)};\\
   	J_1^1(s),\ s\in\mathcal{R}(1);\\
   	J_1^{(2/3,1)}(s),\ s\in \mathcal{S}_{(2/3,1)};\\
   	J_1^{2/3}(s),\ s\in \mathcal{R}(2/3);\\
   	J_1^{(1/2,2/3)}(s),\ s\in \mathcal{S}_{(1/2,2/3)};\\
   	J_1^{1/2}(s),\ s\in \mathcal{R}(1/2);\\
   	J_1^{(1/3,1/2)}(s),\ s\in \mathcal{S}_{(1/3,1/2)};\\
   	\end{cases}
   	\end{equation}
   	
   	From the above formulas one sees that $\widetilde{J}_1(s)$ has possible poles at $s=3/4,$ $s=2/3$ and $s=1/2;$ and the potential poles at $s=3/4,$ $s=2/3$ are at most simple, the possible pole at $s=1/2$ has order at most 2. Moreover, from the above explicit expressions of $\widetilde{J}_1(s),$ we see that $\widetilde{J}_1(s)\cdot\Lambda(s,\tau)^{-1}$ has at most a simple pole at $s=1/2$ if $L_F(1/2,\tau)=0.$
   	\begin{itemize}
   		\item[Case 1:]
   		If $L_F(3/4,\tau)=0,$ then by functional equation we have that $\Lambda(1/4,\tau^{-1})=0.$ Suppose that $\widetilde{J}_1(s)$ has a pole at $s=3/4,$ then from the proceeding explicit expressions, we must have that $\tau^4=1,$ and the singular part of $\widetilde{J}_1(s)$ around $s=3/4$ is a holomorphic function multiplying $\Lambda(4s-3,\tau^4)\Lambda(3s-2,\tau^3).$ Note that $\Lambda(3s-2,\tau^3)\mid_{s=3/4}=\Lambda(1/4,\tau^3)=\Lambda(1/4,\tau^{-1})=0.$ Hence, when $L_F(3/4,\tau)=0,$ $\widetilde{J}_1(s)$ is holomorphic at $s=3/4.$
   		\item[Case 2:]
   		If $L_F(2/3,\tau)=0,$ then by functional equation we have that $\Lambda(1/3,\tau^{-1})=0.$ Suppose that $\widetilde{J}_1(s)$ has a pole at $s=2/3,$ then from the proceeding explicit expressions, we must have that $\tau^3=1,$ and the singular part of $\widetilde{J}_1(s)$ around $s=2/3$ is a holomorphic function multiplying $\Lambda(3s-2,\tau^3)\Lambda(2s-1,\tau^2).$ Note that $\Lambda(2s-1,\tau^2)\mid_{s=2/3}=\Lambda(1/3,\tau^2)=\Lambda(1/3,\tau^{-1})=0.$ Hence, when $L_F(2/3,\tau)=0,$ $\widetilde{J}_1(s)$ is holomorphic at $s=2/3.$
   	\end{itemize}
   	Now the proof of Claim \ref{60claim} is complete.
   \end{proof}

   \begin{proof}[Proof of Claim \ref{61claim}]
   	Let $s\in\mathcal{R}(1)^+.$ Let $J_2(s):=\int_{(0)}\int_{(0)}\underset{\kappa_{2}=s-1}{\Res}\mathcal{F}(\boldsymbol{\kappa},s)d\kappa_1d\kappa_3,$ and  $J^1_2(s):=\int_{\mathcal{C}}\int_{\mathcal{C}}\underset{\kappa_{2}=s-1}{\Res}\mathcal{F}(\boldsymbol{\kappa},s)d\kappa_1d\kappa_3.$ By the analytic property of $\underset{\kappa_{2}=s-1}{\Res}\mathcal{F}(\boldsymbol{\kappa},s)$ we see that $J^1_2(s)$ is meromorphic in the domain $\mathcal{R}(1),$ with a possible pole at $s=1.$ Let $s\in\mathcal{R}(1)^-.$ Applying Cauchy integral formula we then see that  
   	\begin{equation}\label{190}
   	J_2^1(s)=\int_{\mathcal{C}}\int_{(0)}\underset{\kappa_{2}=s-1}{\Res}\mathcal{F}(\boldsymbol{\kappa},s)d\kappa_1d\kappa_3+\int_{\mathcal{C}}\underset{\kappa_{1}=2-2s}{\Res}\underset{\kappa_{2}=s-1}{\Res}\mathcal{F}(\boldsymbol{\kappa},s)d\kappa_3,
   	\end{equation}
   	where $\underset{\kappa_{1}=2-2s}{\Res}\underset{\kappa_{2}=s-1}{\Res}\mathcal{F}(\boldsymbol{\kappa},s)$ is equal to some holomorphic function multiplying the product of $\Lambda(s,\tau)^2$ and the meromorphic function 
   	\begin{equation}\label{193}
   	\frac{\Lambda(2s-1-\kappa_3,\chi_{43}\tau^2)\Lambda(2s-1+\kappa_3,\chi_{34}\tau^{2})\Lambda(3s-2,\tau^3)\Lambda(2s-1,\tau^2)}{\Lambda(2-s+\kappa_3,\chi_{34})\Lambda(2-s-\kappa_3,\chi_{43})\Lambda(3-2s,\tau^{-2})\Lambda(2-s,\tau^{-1})}.
   	\end{equation}
   	Then $J_2^1(s)$ is equal to, after applications of Cauchy integral formula to \eqref{190},
   	\begin{align*}
   	&\int_{(0)}\int_{(0)}\underset{\kappa_{2}=s-1}{\Res}\mathcal{F}(\boldsymbol{\kappa},s)d\kappa_1d\kappa_3+\int_{(0)}\underset{\kappa_{3}=2-2s}{\Res}\underset{\kappa_{2}=s-1}{\Res}\mathcal{F}(\boldsymbol{\kappa},s)d\kappa_1+\int_{(0)}\underset{\kappa_{3}=2-2s-\kappa_1}{\Res}\\
   	&\underset{\kappa_{2}=s-1}{\Res}\mathcal{F}(\boldsymbol{\kappa},s)d\kappa_1+\int_{(0)}\underset{\kappa_{1}=2-2s}{\Res}\underset{\kappa_{2}=s-1}{\Res}\mathcal{F}(\boldsymbol{\kappa},s)d\kappa_3+\underset{\kappa_{3}=2-2s}{\Res}\underset{\kappa_{1}=2-2s}{\Res}\underset{\kappa_{2}=s-1}{\Res}\mathcal{F}(\boldsymbol{\kappa},s),
   	\end{align*}
   	where $\underset{\kappa_{3}=2-2s}{\Res}\underset{\kappa_{2}=s-1}{\Res}\mathcal{F}(\boldsymbol{\kappa},s)$ is equal to some holomorphic function multiplying the product of the meromorphic function $\Lambda(s,\tau)^2$ and
   	\begin{equation}\label{191}
   	\frac{\Lambda(2s-1+\kappa_1,\chi_{12}\tau^2)\Lambda(2s-1-\kappa_1,\chi_{21}\tau^2)\Lambda(3s-2,\tau^{3})\Lambda(2s-1,\tau^2)}{\Lambda(2-s+\kappa_1,\chi_{12})\Lambda(2-s-\kappa_1,\chi_{21})\Lambda(3-2s,\tau^{-2})\Lambda(2-s,\tau^{-1})};
   	\end{equation} 
   	also, $\underset{\kappa_{3}=2-2s-\kappa_1}{\Res}\underset{\kappa_{2}=s-1}{\Res}\mathcal{F}(\boldsymbol{\kappa},s)$ is equal to some holomorphic function multiplying the product of $\Lambda(2s-1,\tau^2)^2\Lambda(s,\tau)^2\cdot\Lambda(2-s,\tau^{-1})^{-2}$ and 
   	\begin{equation}\label{192}
   	\frac{\Lambda(1-\kappa_1,\chi_{21})\Lambda(s-\kappa_1,\chi_{21}\tau)\Lambda(3s-2+\kappa_1,\chi_{12}\tau^3)\Lambda(2s-1+\kappa_1,\chi_{12}\tau^{2})}{\Lambda(1+\kappa_1,\chi_{12})\Lambda(s+\kappa_1,\chi_{12}\tau)\Lambda(3-2s-\kappa_1,\chi_{21}\tau^{-2})\Lambda(2-s-\kappa_1,\tau^{-1})}.
   	\end{equation}
   	
   	From the formula \eqref{193}, we see that $\underset{\kappa_{3}=2-2s}{\Res}\underset{\kappa_{1}=2-2s}{\Res}\underset{\kappa_{2}=s-1}{\Res}\mathcal{F}(\boldsymbol{\kappa},s)$ is equal to some holomorphic function multiplying
   	\begin{equation}\label{194}
   	\frac{\Lambda(4s-3,\tau^4)\Lambda(3s-2,\tau^{3})\Lambda(2s-1,\tau^2)\Lambda(s,\tau)}{\Lambda(4-3s,\tau^{-3})\Lambda(3-2s,\tau^{-2})\Lambda(2-s,\tau^{-1})}.
   	\end{equation}
   	
   	We thus see from the proceeding computations of analytic behaviors of the functions  $\underset{\kappa_{2}=s-1}{\Res}\mathcal{F}(\boldsymbol{\kappa},s),$ $\underset{\kappa_{3}=2-2s}{\Res}\underset{\kappa_{2}=s-1}{\Res}\mathcal{F}(\boldsymbol{\kappa},s),$ $\underset{\kappa_{1}=2-2s}{\Res}\underset{\kappa_{2}=s-1}{\Res}\mathcal{F}(\boldsymbol{\kappa},s)$ and the function  $\underset{\kappa_{3}=2-2s}{\Res}\underset{\kappa_{1}=2-2s}{\Res}\underset{\kappa_{2}=s-1}{\Res}\mathcal{F}(\boldsymbol{\kappa},s),$ that the functions  $\int_{(0)}\int_{(0)}\underset{\kappa_{2}=s-1}{\Res}\mathcal{F}(\boldsymbol{\kappa},s)d\kappa_1d\kappa_3,$ $\int_{(0)}\underset{\kappa_{3}=2-2s}{\Res}\underset{\kappa_{2}=s-1}{\Res}\mathcal{F}(\boldsymbol{\kappa},s)d\kappa_1$ and $\int_{(0)}\underset{\kappa_{1}=2-2s}{\Res}\underset{\kappa_{2}=s-1}{\Res}\mathcal{F}(\boldsymbol{\kappa},s)f\kappa_3$ admit meromorphic continuation to the domain $1/2<\Re(s)<1,$ with a possible pole at $s=2/3$ if $\tau^3=1;$ and $\underset{\kappa_{3}=2-2s}{\Res}\underset{\kappa_{1}=2-2s}{\Res}\underset{\kappa_{2}=s-1}{\Res}\mathcal{F}(\boldsymbol{\kappa},s)$ admits a meromorphic continuation to the domain $\mathcal{R}(1/2)^-\cup\mathcal{S}_{[1/2,1)},$ with possible simple poles at $s=3/4,$ $s=2/3$ and $s=1/2,$ when $\tau^4=1,$ $\tau^3=1$ and $\tau^2=1,$ respectively. 
   	
   	From \eqref{192} we see that the function $\underset{\kappa_{3}=2-2s-\kappa_1}{\Res}\underset{\kappa_{2}=s-1}{\Res}\mathcal{F}(\boldsymbol{\kappa},s)$ might have infinitely many poles in the strip $1/2<\Re(s)<1.$ These poles come from nontrivial zeros of $L_F(s,\chi_{12}\tau)$ is this strip. Hence we may have a problem shifting contours if we try to continue   $\int_{(0)}\underset{\kappa_{3}=2-2s-\kappa_1}{\Res}\underset{\kappa_{2}=s-1}{\Res}\mathcal{F}(\boldsymbol{\kappa},s)d\kappa_1$ directly. To remedy this, we need to first deal with the factor $\Lambda(s+\kappa_1,\chi_{12}\tau).$ Thanks to the uniform zero-free region of Rankin-Selberg L-functions defined in Section \ref{7.11}, the function  $\underset{\kappa_{3}=2-2s-\kappa_1}{\Res}\underset{\kappa_{2}=s-1}{\Res}\mathcal{F}(\boldsymbol{\kappa},s)$ is holomorphic in the domain $\mathcal{R}(1-s).$ Then we can apply Cauchy integral formula to obtain that 
   	\begin{equation}\label{195.}
   	\int_{(0)}\underset{\kappa_{3}=2-2s-\kappa_1}{\Res}\underset{\kappa_{2}=s-1}{\Res}\mathcal{F}(\boldsymbol{\kappa},s)d\kappa_1=\int_{(1-s)}\underset{\kappa_{3}=2-2s-\kappa_1}{\Res}\underset{\kappa_{2}=s-1}{\Res}\mathcal{F}(\boldsymbol{\kappa},s)d\kappa_1,
   	\end{equation}
   	where the integral on the right hand side is taken over $(1-s):=\{z\in\mathbb{C}:\ \Re(z)=1-\Re(s)\}.$ Let $\kappa_1'=\kappa_1+s-1,$ $\kappa_2'=\kappa_2$ and $\kappa_3'=\kappa_3.$ Denote by $\boldsymbol{\kappa}'=(\kappa_1',\kappa_2',\kappa_3').$ Then $d\kappa_j'=d\kappa_j,$ $1\leq j\leq 3.$ Hence we have 
   	\begin{equation}\label{196.}
   	\int_{(0)}\underset{\kappa_{3}=2-2s-\kappa_1}{\Res}\underset{\kappa_{2}=s-1}{\Res}\mathcal{F}(\boldsymbol{\kappa},s)d\kappa_1=\int_{(0)}\underset{\kappa_{3}'=1-s-\kappa_1'}{\Res}\underset{\kappa_{2}'=s-1}{\Res}\mathcal{F}(\boldsymbol{\kappa}',s)d\kappa_1',
   	\end{equation}
   	where by \eqref{192}, $\underset{\kappa_{3}'=1-s-\kappa_1'}{\Res}\underset{\kappa_{2}'=s-1}{\Res}\mathcal{F}(\boldsymbol{\kappa}',s)$ is equal to some holomorphic function multiplying the product of $\Lambda(2s-1,\tau^2)^2\Lambda(s,\tau)^2\cdot\Lambda(2-s,\tau^{-1})^{-2}$ and 
   	\begin{equation}\label{197.}
   	\frac{\Lambda(s+\kappa_1',\chi_{12}\tau)\Lambda(s-\kappa_1',\chi_{21}\tau)\Lambda(2s-1+\kappa_1',\chi_{12}\tau^{2})\Lambda(2s-1-\kappa_1',\chi_{21}\tau^2)}{\Lambda(1+\kappa_1',\chi_{12})\Lambda(1-\kappa_1',\chi_{21})\Lambda(2-s+\kappa_1',\chi_{12}\tau^{-1})\Lambda(2-s-\kappa_1',\chi_{21}\tau^{-1})}.
   	\end{equation}
   	Then from \eqref{195.}, \eqref{196.} and \eqref{197.} we conclude that $\int_{(0)}\underset{\kappa_{3}=2-2s-\kappa_1}{\Res}\underset{\kappa_{2}=s-1}{\Res}\mathcal{F}(\boldsymbol{\kappa},s)d\kappa_1$ admits a meromorphic continuation to the strip $1/2<\Re(s)<1.$ We then have a meromorphic continuation of $J_2^1(s)$ to the area $\mathcal{S}_{(1/2,1)}.$ Denote by $J_2^{(1/2,1)}$ this continuation. Then $J_2^{(1/2,1)}(s)$ is equal to 
   	\begin{align*}
   	&\int_{(0)}\int_{(0)}\underset{\kappa_{2}=s-1}{\Res}\mathcal{F}(\boldsymbol{\kappa},s)d\kappa_1d\kappa_3+\int_{(0)}\underset{\kappa_{3}=2-2s}{\Res}\underset{\kappa_{2}=s-1}{\Res}\mathcal{F}(\boldsymbol{\kappa},s)d\kappa_1+\int_{(0)}\underset{\kappa_{3}'=1-s-\kappa_1'}{\Res}\\
   	&\underset{\kappa_{2}'=s-1}{\Res}\mathcal{F}(\boldsymbol{\kappa}',s)d\kappa_1'+\int_{(0)}\underset{\kappa_{1}=2-2s}{\Res}\underset{\kappa_{2}=s-1}{\Res}\mathcal{F}(\boldsymbol{\kappa},s)d\kappa_3+\underset{\kappa_{3}=2-2s}{\Res}\underset{\kappa_{1}=2-2s}{\Res}\underset{\kappa_{2}=s-1}{\Res}\mathcal{F}(\boldsymbol{\kappa},s).
   	\end{align*}
   	Let $s\in\mathcal{R}(1/2)^+.$ Then by \eqref{191} and Cauchy integral formula we see that the function  $\int_{(0)}\underset{\kappa_{3}=2-2s}{\Res}\underset{\kappa_{2}=s-1}{\Res}\mathcal{F}(\boldsymbol{\kappa},s)d\kappa_1$ is equal to
   	\begin{equation}\label{198}
   	\int_{\mathcal{C}}\underset{\kappa_{3}=2-2s}{\Res}\underset{\kappa_{2}=s-1}{\Res}\mathcal{F}(\boldsymbol{\kappa},s)d\kappa_1-\underset{\kappa_{1}=2s-1}{\Res}\underset{\kappa_{3}=2-2s}{\Res}\underset{\kappa_{2}=s-1}{\Res}\mathcal{F}(\boldsymbol{\kappa},s),
   	\end{equation} 
   	and $\underset{\kappa_{1}=2s-1}{\Res}\underset{\kappa_{3}=2-2s}{\Res}\underset{\kappa_{2}=s-1}{\Res}\mathcal{F}(\boldsymbol{\kappa},s)$ is equal to some holomorphic function multiplying 
   	\begin{equation}\label{198'}
   	\frac{\Lambda(4s-2,\tau^4)\Lambda(3s-2,\tau^{3})\Lambda(2s-1,\tau^2)\Lambda(s,\tau)^2}{\Lambda(3-3s,\tau^{-3})\Lambda(3-2s,\tau^{-2})\Lambda(2-s,\tau^{-1})\Lambda(1+s,\tau)}.
   	\end{equation}
   	
   	By \eqref{197.} and Cauchy formula we see that  $\int_{(0)}\underset{\kappa_{3}'=1-s-\kappa_1'}{\Res}\underset{\kappa_{2}'=s-1}{\Res}\mathcal{F}(\boldsymbol{\kappa}',s)d\kappa_1'$ equals
   	\begin{equation}\label{199}
   	\int_{\mathcal{C}}\underset{\kappa_{3}'=1-s-\kappa_1'}{\Res}\underset{\kappa_{2}'=s-1}{\Res}\mathcal{F}(\boldsymbol{\kappa}',s)d\kappa_1'-\underset{\kappa_{1}'=2s-1}{\Res}\underset{\kappa_{3}'=2-2s}{\Res}\underset{\kappa_{2}'=s-1}{\Res}\mathcal{F}(\boldsymbol{\kappa},s),
   	\end{equation} 
   	and $\underset{\kappa_{1}'=2s-1}{\Res}\underset{\kappa_{3}'=2-2s}{\Res}\underset{\kappa_{2}'=s-1}{\Res}\mathcal{F}(\boldsymbol{\kappa},s)$ is equal to some holomorphic function multiplying 
   	\begin{equation}\label{199'}
   	\frac{\Lambda(4s-2,\tau^4)\Lambda(3s-1,\tau^{3})\Lambda(2s-1,\tau^2)^2\Lambda(1-s,\tau{-1})\Lambda(s,\tau)^2}{\Lambda(3-3s,\tau^{-3})\Lambda(2-2s,\tau^{-2})\Lambda(2-s,\tau^{-1})^2\Lambda(2s,\tau^2)\Lambda(1+s,\tau)}.
   	\end{equation}
   	
   	By \eqref{193} and Cauchy formula we see that  $\int_{(0)}\underset{\kappa_{1}=2-2s}{\Res}\underset{\kappa_{2}=s-1}{\Res}\mathcal{F}(\boldsymbol{\kappa},s)d\kappa_3$ equals
   	\begin{equation}\label{200}
   	\int_{\mathcal{C}}\underset{\kappa_{1}=2-2s}{\Res}\underset{\kappa_{2}=s-1}{\Res}\mathcal{F}(\boldsymbol{\kappa},s)d\kappa_3-\underset{\kappa_{3}=2s-1}{\Res}\underset{\kappa_{1}=2-2s}{\Res}\underset{\kappa_{2}=s-1}{\Res}\mathcal{F}(\boldsymbol{\kappa},s),
   	\end{equation}
   	and $\underset{\kappa_{3}=2s-1}{\Res}\underset{\kappa_{1}=2-2s}{\Res}\underset{\kappa_{2}=s-1}{\Res}\mathcal{F}(\boldsymbol{\kappa},s)$ is equal to some holomorphic function multiplying 
   	\begin{equation}\label{200'}
   	\frac{\Lambda(4s-2,\tau^4)\Lambda(3s-2,\tau^{3})\Lambda(2s-1,\tau^2)\Lambda(s,\tau)^2}{\Lambda(3-3s,\tau^{-3})\Lambda(3-2s,\tau^{-2})\Lambda(2-s,\tau^{-1})\Lambda(1+s,\tau)}.
   	\end{equation}
   	
   	Note that $\int_{\mathcal{C}}\underset{\kappa_{3}=2-2s}{\Res}\underset{\kappa_{2}=s-1}{\Res}\mathcal{F}(\boldsymbol{\kappa},s)d\kappa_1,$ $\int_{\mathcal{C}}\underset{\kappa_{3}'=1-s-\kappa_1'}{\Res}\underset{\kappa_{2}'=s-1}{\Res}\mathcal{F}(\boldsymbol{\kappa}',s)d\kappa_1'$ and the function  $\int_{\mathcal{C}}\underset{\kappa_{1}=2-2s}{\Res}\underset{\kappa_{2}=s-1}{\Res}\mathcal{F}(\boldsymbol{\kappa},s)d\kappa_3$ are meromorphic inside $\mathcal{R}(1/2),$ with a potential pole of order less or equal to 2 at $s=1/2.$ Moreover, it follows from \eqref{195.}, \eqref{196.} and \eqref{197.} that if $L_F(1/2,\tau)=0,$ then these three integrals are holomorphic at $s=1/2;$ and the ratio of these integrals and $\Lambda(s,\tau)$ have at most a simple pole at $s=1/2.$ In particular, combining equations \eqref{198}, \eqref{198'}, \eqref{199}, \eqref{199'}, \eqref{200} and \eqref{200'}, one thus has a meromorphic continuation of $J_2^{(1/2,1)}(s)$ to the domain $\mathcal{R}(1/2),$ with a potential pole of order less or equal to 2 at $s=1/2.$ Denote by $J_2^{1/2}(s)$ this continuation. Then $J_2^{1/2}(s)\cdot\Lambda(s,\tau)^{-1}$ has at most a simple pole at $s=1/2$ if $L_F(1/2,\tau)=0.$ Explicitly, by Cauchy's formula we have
   	\begin{align*}
   	J_2^{1/2}(s)=&\int_{\mathcal{C}}\int_{\mathcal{C}}\underset{\kappa_{2}=s-1}{\Res}\mathcal{F}(\boldsymbol{\kappa},s)d\kappa_1d\kappa_3+\int_{\mathcal{C}}\underset{\kappa_{3}=2-2s}{\Res}\underset{\kappa_{2}=s-1}{\Res}\mathcal{F}(\boldsymbol{\kappa},s)d\kappa_1\\
   	&+\int_{\mathcal{C}}\underset{\kappa_{3}'=1-s-\kappa_1'}{\Res}\underset{\kappa_{2}'=s-1}{\Res}\mathcal{F}(\boldsymbol{\kappa}',s)d\kappa_1'+\int_{\mathcal{C}}\underset{\kappa_{1}=2-2s}{\Res}\underset{\kappa_{2}=s-1}{\Res}\mathcal{F}(\boldsymbol{\kappa},s)d\kappa_3\\
   	&+\underset{\kappa_{3}=2-2s}{\Res}\underset{\kappa_{1}=2-2s}{\Res}\underset{\kappa_{2}=s-1}{\Res}\mathcal{F}(\boldsymbol{\kappa},s)-\underset{\kappa_{1}=2s-1}{\Res}\underset{\kappa_{3}=2-2s}{\Res}\underset{\kappa_{2}=s-1}{\Res}\mathcal{F}(\boldsymbol{\kappa},s)\\
   	&-\underset{\kappa_{1}'=2s-1}{\Res}\underset{\kappa_{3}'=2-2s}{\Res}\underset{\kappa_{2}'=s-1}{\Res}\mathcal{F}(\boldsymbol{\kappa},s)-\underset{\kappa_{3}=2s-1}{\Res}\underset{\kappa_{1}=2-2s}{\Res}\underset{\kappa_{2}=s-1}{\Res}\mathcal{F}(\boldsymbol{\kappa},s).
   	\end{align*}
   	Let $s\in\mathcal{R}(1/2)^-.$ Then $\int_{\mathcal{C}}\int_{\mathcal{C}}\underset{\kappa_{2}=s-1}{\Res}\mathcal{F}(\boldsymbol{\kappa},s)d\kappa_1d\kappa_3$ is equal to
   	\begin{align*}
   	\int_{(0)}\int_{(0)}&\underset{\kappa_{2}=s-1}{\Res}\mathcal{F}(\boldsymbol{\kappa},s)d\kappa_1d\kappa_3+\int_{(0)}\underset{\kappa_{1}=1-2s}{\Res}\underset{\kappa_{2}=s-1}{\Res}\mathcal{F}(\boldsymbol{\kappa},s)d\kappa_3\\
   	&+\int_{(0)}\underset{\kappa_{3}=1-2s}{\Res}\underset{\kappa_{2}=s-1}{\Res}\mathcal{F}(\boldsymbol{\kappa},s)d\kappa_1+\underset{\kappa_{3}=1-2s}{\Res}\underset{\kappa_{1}=1-2s}{\Res}\underset{\kappa_{2}=s-1}{\Res}\mathcal{F}(\boldsymbol{\kappa},s).
   	\end{align*}
   	Likewise, the function  $\int_{\mathcal{C}}\underset{\kappa_{3}=2-2s}{\Res}\underset{\kappa_{2}=s-1}{\Res}\mathcal{F}(\boldsymbol{\kappa},s)d\kappa_1$ is equal to 
   	\begin{align*}
   	\int_{(0)}\underset{\kappa_{3}=2-2s}{\Res}\underset{\kappa_{2}=s-1}{\Res}\mathcal{F}(\boldsymbol{\kappa},s)d\kappa_1+\underset{\kappa_{1}=1-2s}{\Res}\underset{\kappa_{3}=2-2s}{\Res}\underset{\kappa_{2}=s-1}{\Res}\mathcal{F}(\boldsymbol{\kappa},s);
   	\end{align*}
   	the function  $\int_{\mathcal{C}}\underset{\kappa_{3}'=1-s-\kappa_1'}{\Res}\underset{\kappa_{2}'=s-1}{\Res}\mathcal{F}(\boldsymbol{\kappa}',s)d\kappa_1'$ is equal to 
   	\begin{align*}
   	\int_{(0)}\underset{\kappa_{3}'=1-s-\kappa_1'}{\Res}\underset{\kappa_{2}'=s-1}{\Res}\mathcal{F}(\boldsymbol{\kappa}',s)d\kappa_1'+\underset{\kappa_{1}'=1-2s}{\Res}\underset{\kappa_{3}'=1-s-\kappa_1'}{\Res}\underset{\kappa_{2}'=s-1}{\Res}\mathcal{F}(\boldsymbol{\kappa}',s);
   	\end{align*}
   	and the function $\int_{\mathcal{C}}\underset{\kappa_{1}=2-2s}{\Res}\underset{\kappa_{2}=s-1}{\Res}\mathcal{F}(\boldsymbol{\kappa},s)d\kappa_3$ is equal to 
   	\begin{align*}
   	\int_{(0)}\underset{\kappa_{1}=2-2s}{\Res}\underset{\kappa_{2}=s-1}{\Res}\mathcal{F}(\boldsymbol{\kappa},s)d\kappa_3+\underset{\kappa_{3}=1-2s}{\Res}\underset{\kappa_{1}=2-2s}{\Res}\underset{\kappa_{2}=s-1}{\Res}\mathcal{F}(\boldsymbol{\kappa},s).
   	\end{align*}
   	As before, a computation of the above integrals leads to a meromorphic continuation of $J_2^{1/2}(s)$ to the region $1/3<\Re(s)<1/2.$ Denote by this continuation $J_2^{(1/3,1/2)}(s),$ then 
   	\begin{align*}
   	J_2^{(1/3,1/2)}(s)=&\int_{(0)}\int_{(0)}\underset{\kappa_{2}=s-1}{\Res}\mathcal{F}(\boldsymbol{\kappa},s)d\kappa_1d\kappa_3+\int_{(0)}\underset{\kappa_{3}=2-2s}{\Res}\underset{\kappa_{2}=s-1}{\Res}\mathcal{F}(\boldsymbol{\kappa},s)d\kappa_1\\
   	&+\int_{(0)}\underset{\kappa_{3}'=1-s-\kappa_1'}{\Res}\underset{\kappa_{2}'=s-1}{\Res}\mathcal{F}(\boldsymbol{\kappa}',s)d\kappa_1'+\int_{(0)}\underset{\kappa_{1}=2-2s}{\Res}\underset{\kappa_{2}=s-1}{\Res}\mathcal{F}(\boldsymbol{\kappa},s)d\kappa_3\\
   	&+\underset{\kappa_{3}=2-2s}{\Res}\underset{\kappa_{1}=2-2s}{\Res}\underset{\kappa_{2}=s-1}{\Res}\mathcal{F}(\boldsymbol{\kappa},s)-\underset{\kappa_{1}=2s-1}{\Res}\underset{\kappa_{3}=2-2s}{\Res}\underset{\kappa_{2}=s-1}{\Res}\mathcal{F}(\boldsymbol{\kappa},s)\\
   	&-\underset{\kappa_{1}'=2s-1}{\Res}\underset{\kappa_{3}'=2-2s}{\Res}\underset{\kappa_{2}'=s-1}{\Res}\mathcal{F}(\boldsymbol{\kappa},s)-\underset{\kappa_{3}=2s-1}{\Res}\underset{\kappa_{1}=2-2s}{\Res}\underset{\kappa_{2}=s-1}{\Res}\mathcal{F}(\boldsymbol{\kappa},s)\\
   	&+\int_{(0)}\underset{\kappa_{3}=1-2s}{\Res}\underset{\kappa_{2}=s-1}{\Res}\mathcal{F}(\boldsymbol{\kappa},s)d\kappa_1+\underset{\kappa_{3}=1-2s}{\Res}\underset{\kappa_{1}=1-2s}{\Res}\underset{\kappa_{2}=s-1}{\Res}\mathcal{F}(\boldsymbol{\kappa},s)\\
   	&+\int_{(0)}\underset{\kappa_{1}=1-2s}{\Res}\underset{\kappa_{2}=s-1}{\Res}\mathcal{F}(\boldsymbol{\kappa},s)d\kappa_3+\underset{\kappa_{1}=1-2s}{\Res}\underset{\kappa_{3}=2-2s}{\Res}\underset{\kappa_{2}=s-1}{\Res}\mathcal{F}(\boldsymbol{\kappa},s)\\
   	&+\underset{\kappa_{1}'=1-2s}{\Res}\underset{\kappa_{3}'=1-s-\kappa_1'}{\Res}\underset{\kappa_{2}'=s-1}{\Res}\mathcal{F}(\boldsymbol{\kappa}',s)+\underset{\kappa_{3}=1-2s}{\Res}\underset{\kappa_{1}=2-2s}{\Res}\underset{\kappa_{2}=s-1}{\Res}\mathcal{F}(\boldsymbol{\kappa},s).
   	\end{align*}
   	
   	Thus we obtain a meromorphic continuation of $J_2(s)$ to $\mathcal{S}_{(1/3,\infty)}:$ 
   	\begin{equation}\label{claim2}
   	\widetilde{J}_2(s)=\begin{cases}
   	J_2(s),\ s\in\mathcal{S}_{(1,+\infty)};\\
   	J_2^1(s),\ s\in\mathcal{R}(1);\\
   	J_2^{(1/2,1)}(s),\ s\in \mathcal{S}_{(1/2,1)};\\
   	J_2^{1/2}(s),\ s\in \mathcal{R}(1/2);\\
   	J_2^{(1/3,1/2)}(s),\ s\in \mathcal{S}_{(1/3,1/2)}.
   	\end{cases}
   	\end{equation}
   	
   	From the above formulas one sees that $\widetilde{J}_2(s)$ has possible poles at $s=3/4,$ $s=2/3$ and $s=1/2;$ and the potential poles at $s=3/4,$ $s=2/3$ are at most simple, the possible pole at $s=1/2$ has order at most 2. Moreover, from the above explicit expressions of $\widetilde{J}_2(s),$ we see that $\widetilde{J}_2(s)\cdot\Lambda(s,\tau)^{-1}$ has at most a simple pole at $s=1/2$ if $L_F(1/2,\tau)=0.$
   	\begin{itemize}
   		\item[Case 1:]
   		If $L_F(3/4,\tau)=0,$ then by functional equation we have that $\Lambda(1/4,\tau^{-1})=0.$ Suppose that $\widetilde{J}_2(s)$ has a pole at $s=3/4,$ then from the proceeding explicit expressions, we must have that $\tau^4=1,$ and the singular part of $\widetilde{J}_2(s)$ around $s=3/4$ is a holomorphic function multiplying $\Lambda(4s-3,\tau^4)\Lambda(3s-2,\tau^3).$ Note that $\Lambda(3s-2,\tau^3)\mid_{s=3/4}=\Lambda(1/4,\tau^3)=\Lambda(1/4,\tau^{-1})=0.$ Hence, when $L_F(3/4,\tau)=0,$ $\widetilde{J}_2(s)$ is holomorphic at $s=3/4.$
   		\item[Case 2:]
   		If $L_F(2/3,\tau)=0,$ then by functional equation we have that $\Lambda(1/3,\tau^{-1})=0.$ Suppose that $\widetilde{J}_2(s)$ has a pole at $s=2/3,$ then from the proceeding explicit expressions, we must have that $\tau^3=1,$ and the singular part of $\widetilde{J}_2(s)$ around $s=2/3$ is a holomorphic function multiplying $\Lambda(3s-2,\tau^3)\Lambda(2s-1,\tau^2).$ Note that $\Lambda(2s-1,\tau^2)\mid_{s=2/3}=\Lambda(1/3,\tau^2)=\Lambda(1/3,\tau^{-1})=0.$ Hence, when $L_F(2/3,\tau)=0,$ $\widetilde{J}_2(s)$ is holomorphic at $s=2/3.$
   	\end{itemize}
   	Now the proof of Claim \ref{61claim} is complete.
   \end{proof}
   
   \begin{proof}[Proof of Claim \ref{62claim}]
   	Let $s\in\mathcal{R}(1)^+.$ Let $J_3(s):=\int_{(0)}\int_{(0)}\underset{\kappa_{3}=s-1}{\Res}\mathcal{F}(\boldsymbol{\kappa},s)d\kappa_1d\kappa_2,$ and  $J^1_3(s):=\int_{\mathcal{C}}\int_{\mathcal{C}}\underset{\kappa_{3}=s-1}{\Res}\mathcal{F}(\boldsymbol{\kappa},s)d\kappa_1d\kappa_2.$ By the analytic property of $\underset{\kappa_{3}=s-1}{\Res}\mathcal{F}(\boldsymbol{\kappa},s)$ we see that $J^1_3(s)$ is meromorphic in the domain $\mathcal{R}(1),$ with a possible pole at $s=1.$ Let $s\in\mathcal{R}(1)^-.$ Applying Cauchy integral formula we then see that  
   	\begin{equation}\label{205}
   	J_3^1(s)=\int_{\mathcal{C}}\int_{(0)}\underset{\kappa_{3}=s-1}{\Res}\mathcal{F}(\boldsymbol{\kappa},s)d\kappa_1d\kappa_2+\int_{\mathcal{C}}\underset{\kappa_{1}=1-s}{\Res}\underset{\kappa_{3}=s-1}{\Res}\mathcal{F}(\boldsymbol{\kappa},s)d\kappa_2,
   	\end{equation}
   	where $\underset{\kappa_{1}=1-s}{\Res}\underset{\kappa_{3}=s-1}{\Res}\mathcal{F}(\boldsymbol{\kappa},s)$ is equal to some holomorphic function multiplying the product of $\Lambda(2s-1,\tau^2)^2\Lambda(s,\tau)^2\cdot\Lambda(2-s,\tau^{-1})^{-2}$ and 
   	\begin{equation}\label{206}
   	\frac{\Lambda(s+\kappa_2,\chi_{23}\tau)\Lambda(s-\kappa_2,\chi_{32}\tau)\Lambda(2s-1+\kappa_2,\chi_{23}\tau^2)\Lambda(2s-1-\kappa_2,\chi_{32}\tau^{2})}{\Lambda(1+\kappa_2,\chi_{23})\Lambda(1-\kappa_2,\chi_{32})\Lambda(2-s+\kappa_2,\chi_{23}\tau^{-1})\Lambda(2-s-\kappa_2,\tau^{-1})}.
   	\end{equation}
   	Then after applications of Cauchy integral formula to \eqref{205}, we obtain that 
   	\begin{align*}
   	J_1^1(s)=&\int_{(0)}\int_{(0)}\underset{\kappa_{3}=s-1}{\Res}\mathcal{F}(\boldsymbol{\kappa},s)d\kappa_1d\kappa_2+\int_{(0)}\underset{\kappa_{2}=2-2s}{\Res}\underset{\kappa_{3}=s-1}{\Res}\mathcal{F}(\boldsymbol{\kappa},s)d\kappa_1\\
   	&+\int_{(0)}\underset{\kappa_{2}=2-2s-\kappa_1}{\Res}\underset{\kappa_{3}=s-1}{\Res}\mathcal{F}(\boldsymbol{\kappa},s)d\kappa_1+\int_{(0)}\underset{\kappa_{1}=1-s}{\Res}\underset{\kappa_{3}=s-1}{\Res}\mathcal{F}(\boldsymbol{\kappa},s)d\kappa_2\\
   	&+\underset{\kappa_{2}=1-s}{\Res}\underset{\kappa_{1}=1-s}{\Res}\underset{\kappa_{3}=s-1}{\Res}\mathcal{F}(\boldsymbol{\kappa},s)+\underset{\kappa_{2}=2-2s}{\Res}\underset{\kappa_{1}=1-s}{\Res}\underset{\kappa_{3}=s-1}{\Res}\mathcal{F}(\boldsymbol{\kappa},s),
   	\end{align*}
   	where $s\in\mathcal{R}(1)^-$ and $\underset{\kappa_{2}=2-2s}{\Res}\underset{\kappa_{3}=s-1}{\Res}\mathcal{F}(\boldsymbol{\kappa},s)$ is equal to some holomorphic function multiplying the product of the meromorphic function  $\Lambda(s,\tau)^2$ and 
   	\begin{equation}\label{207}
   	\frac{\Lambda(s+\kappa_1,\chi_{12}\tau)\Lambda(3s-2-\kappa_1,\chi_{21}\tau^3)\Lambda(3s-2,\tau^3)\Lambda(2s-1,\tau^2)}{\Lambda(1-\kappa_1,\chi_{21})\Lambda(3-2s+\kappa_1,\chi_{12})\Lambda(3-2s,\tau^{-2})\Lambda(2-s,\tau^{-1})};
   	\end{equation}
   	and $\underset{\kappa_{2}=2-2s-\kappa_1}{\Res}\underset{\kappa_{3}=s-1}{\Res}\mathcal{F}(\boldsymbol{\kappa},s)$ is equal to some holomorphic function multiplying
   	\begin{equation}\label{208}
   	\frac{\Lambda(s-\kappa_1,\chi_{21}\tau)\Lambda(3s-2+\kappa_1,\chi_{12}\tau^3)\Lambda(3s-2,\tau^{3})\Lambda(2s-1,\tau^2)\Lambda(s,\tau)^2}{\Lambda(1+\kappa_1,\chi_{12})\Lambda(3-2s-\kappa_1,\chi_{21}\tau^{-2})\Lambda(3-2s,\tau^{-2})\Lambda(2-s,\tau^{-1})}.
   	\end{equation}
   	
   	From the formula \eqref{206}, we see that $\underset{\kappa_{2}=2-2s}{\Res}\underset{\kappa_{1}=1-s}{\Res}\underset{\kappa_{3}=s-1}{\Res}\mathcal{F}(\boldsymbol{\kappa},s)$ is equal to some holomorphic function multiplying
   	\begin{equation}\label{209}
   	\frac{\Lambda(4s-3,\tau^4)\Lambda(3s-2,\tau^{3})\Lambda(2s-1,\tau^2)\Lambda(s,\tau)}{\Lambda(4-3s,\tau^{-3})\Lambda(3-2s,\tau^{-2})\Lambda(2-s,\tau^{-1})}.
   	\end{equation}
   	Moreover, by \eqref{206} and the fact that $\Lambda(s+\kappa_2,\chi_{23}\tau)\cdot \Lambda(2-s-\kappa_2,\chi_{32}\tau^{-1})$ is holomorphic at $\kappa_2=1-s$ when $\chi_{23}=\tau^{-1},$ we deduce that  $\underset{\kappa_{2}=1-s}{\Res}\underset{\kappa_{1}=1-s}{\Res}\underset{\kappa_{3}=s-1}{\Res}\mathcal{F}(\boldsymbol{\kappa},s)\equiv0.$
   	
   	We thus see from the proceeding computations of analytic behaviors of the functions  $\underset{\kappa_{3}=s-1}{\Res}\mathcal{F}(\boldsymbol{\kappa},s),$ $\underset{\kappa_{1}=1-s}{\Res}\underset{\kappa_{3}=s-1}{\Res}\mathcal{F}(\boldsymbol{\kappa},s)$ and $\underset{\kappa_{2}=2-2s}{\Res}\underset{\kappa_{1}=1-s}{\Res}\underset{\kappa_{3}=s-1}{\Res}\mathcal{F}(\boldsymbol{\kappa},s),$ that  $\int_{(0)}\int_{(0)}\underset{\kappa_{3}=s-1}{\Res}\mathcal{F}(\boldsymbol{\kappa},s)d\kappa_1d\kappa_2$ and $\int_{(0)}\underset{\kappa_{1}=1-s}{\Res}\underset{\kappa_{3}=s-1}{\Res}\mathcal{F}(\boldsymbol{\kappa},s)d\kappa_2$ admit meromorphic continuation to the domain $1/2<\Re(s)<1;$ and $\underset{\kappa_{2}=2-2s}{\Res}\underset{\kappa_{1}=1-s}{\Res}\underset{\kappa_{3}=s-1}{\Res}\mathcal{F}(\boldsymbol{\kappa},s)$ admits a meromorphic continuation to the domain $\mathcal{R}(1/2)\cup\mathcal{S}_{(1/2,1)},$ with possible simple poles at $s=3/4,$ $s=2/3$ and $s=1/2,$ when $\tau^4=1,$ $\tau^3=1$ and $\tau^2=1,$ respectively, according to \eqref{209}.
   	
   	From \eqref{207} we see that the function $\int_{(0)}\underset{\kappa_{2}=2-2s}{\Res}\underset{\kappa_{3}=s-1}{\Res}\mathcal{F}(\boldsymbol{\kappa},s)d\kappa_1$ admits holomorphic continuation to the domain $2/3<\Re(s)<1.$ From \eqref{208} we see that the function $\int_{(0)}\underset{\kappa_{2}=2-2s-\kappa_1}{\Res}\underset{\kappa_{3}=s-1}{\Res}\mathcal{F}(\boldsymbol{\kappa},s)d\kappa_1$ admits holomorphic continuation to the domain $2/3<\Re(s)<1.$ Then combining these with  \eqref{207} and \eqref{209} one sees that $J_3^1(s)$ admits a holomorphic continuation to the domain $2/3<\Re(s)<1.$ Denote by $J_3^{(2/3,1)}(s)$ this continuation, where $2/3<\Re(s)<1.$
   	
   	Let $s\in \mathcal{R}(2/3)^+,$ then by Cauchy integral formula we have 
   	\begin{equation}\label{210}
   	\int_{(0)}\underset{\kappa_{2}=2-2s-\kappa_1}{\Res}\underset{\kappa_{3}=s-1}{\Res}\mathcal{F}(\boldsymbol{\kappa},s)d\kappa_1=\int_{\mathcal{C}}\underset{\kappa_{2}=2-2s-\kappa_1}{\Res}\underset{\kappa_{3}=s-1}{\Res}\mathcal{F}(\boldsymbol{\kappa},s)d\kappa_1.
   	\end{equation}
   	Likewise, for $s\in \mathcal{R}(2/3)^+,$ by \eqref{207}, $\int_{(0)}\underset{\kappa_{2}=2-2s}{\Res}\underset{\kappa_{3}=s-1}{\Res}\mathcal{F}(\boldsymbol{\kappa},s)d\kappa_1$ is equal to 
   	\begin{equation}\label{211}
   	\int_{\mathcal{C}}\underset{\kappa_{2}=2-2s}{\Res}\underset{\kappa_{3}=s-1}{\Res}\mathcal{F}(\boldsymbol{\kappa},s)d\kappa_1-\underset{\kappa_{1}=3s-2}{\Res}\underset{\kappa_{2}=2-2s}{\Res}\underset{\kappa_{3}=s-1}{\Res}\mathcal{F}(\boldsymbol{\kappa},s),
   	\end{equation}
   	where $\underset{\kappa_3=3s-2}{\Res}\underset{\kappa_{2}=2-2s}{\Res}\underset{\kappa_{1}=s-1}{\Res}\mathcal{F}(\boldsymbol{\kappa},s)$ equals some holomorphic function multiplying 
   	\begin{equation}\label{212}
   	\frac{\Lambda(4s-2,\tau^4)\Lambda(3s-2,\tau^{3})\Lambda(2s-1,\tau^2)\Lambda(s,\tau)^2}{\Lambda(3-3s,\tau^{-3})\Lambda(3-2s,\tau^{-2})\Lambda(2-s,\tau^{-1})\Lambda(1+s,\tau)}.
   	\end{equation}
   	
   	Then according to \eqref{207}, \eqref{208}, \eqref{210}, \eqref{211} and \eqref{212}, we see that $J_1^{(2/3,1)}(s)$ admits a meromorphic continuation to the domain $\mathcal{R}(2/3),$ with a possible pole at $s=2/3$ when $\tau^3=1.$ Denote by  $J_3^{2/3}(s)$ this continuation, $s\in \mathcal{R}(2/3).$ Now let $s\in \mathcal{R}(2/3)^-.$ Then we have that
   	\begin{align*}
   	J_3^{2/3}(s)=&\int_{(0)}\int_{(0)}\underset{\kappa_{3}=s-1}{\Res}\mathcal{F}(\boldsymbol{\kappa},s)d\kappa_1d\kappa_2+\int_{\mathcal{C}}\underset{\kappa_{2}=2-2s}{\Res}\underset{\kappa_{3}=s-1}{\Res}\mathcal{F}(\boldsymbol{\kappa},s)d\kappa_1\\
   	&+\int_{\mathcal{C}}\underset{\kappa_{2}=2-2s-\kappa_1}{\Res}\underset{\kappa_{3}=s-1}{\Res}\mathcal{F}(\boldsymbol{\kappa},s)d\kappa_1+\int_{(0)}\underset{\kappa_{1}=1-s}{\Res}\underset{\kappa_{3}=s-1}{\Res}\mathcal{F}(\boldsymbol{\kappa},s)d\kappa_2\\
   	&+\underset{\kappa_{2}=2-2s}{\Res}\underset{\kappa_{1}=1-s}{\Res}\underset{\kappa_{3}=s-1}{\Res}\mathcal{F}(\boldsymbol{\kappa},s)-\underset{\kappa_{1}=3s-2}{\Res}\underset{\kappa_{2}=2-2s}{\Res}\underset{\kappa_{3}=s-1}{\Res}\mathcal{F}(\boldsymbol{\kappa},s).
   	\end{align*}
   	According to \eqref{208}, \eqref{209}, \eqref{210}, \eqref{211} and \eqref{212}, the terms in the right hand side of the above formula are holomorphic in $1/2<\Re(s)<2/3$ except the term $\int_{\mathcal{C}}\underset{\kappa_{2}=2-2s-\kappa_1}{\Res}\underset{\kappa_{3}=s-1}{\Res}\mathcal{F}(\boldsymbol{\kappa},s)d\kappa_1,$ which is equal to, by Cauchy integral formula, that 
   	\begin{equation}\label{213}
   	\int_{(0)}\underset{\kappa_{2}=2-2s-\kappa_1}{\Res}\underset{\kappa_{3}=s-1}{\Res}\mathcal{F}(\boldsymbol{\kappa},s)d\kappa_1+\underset{\kappa_{1}=2-3s}{\Res}\underset{\kappa_{2}=2-2s-\kappa_1}{\Res}\underset{\kappa_{3}=s-1}{\Res}\mathcal{F}(\boldsymbol{\kappa},s),
   	\end{equation}
   	where $s\in\mathcal{R}(2/3)^-.$ By \eqref{208}, one sees that $\underset{\kappa_{2}=2-3s}{\Res}\underset{\kappa_{3}=2-2s-\kappa_2}{\Res}\underset{\kappa_{1}=s-1}{\Res}\mathcal{F}(\boldsymbol{\kappa},s)$ is equal to some holomorphic function multiplying 
   	\begin{equation}\label{214}
   	\frac{\Lambda(4s-2,\tau^4)\Lambda(3s-2,\tau^{3})\Lambda(2s-1,\tau^2)\Lambda(s,\tau)^2}{\Lambda(3-3s,\tau^{-3})\Lambda(3-2s,\tau^{-2})\Lambda(2-s,\tau^{-1})\Lambda(1+s,\tau)}.
   	\end{equation}
   	By \eqref{208}, \eqref{213} and \eqref{214} one sees that $\int_{\mathcal{C}}\underset{\kappa_{2}=2-2s-\kappa_1}{\Res}\underset{\kappa_{3}=s-1}{\Res}\mathcal{F}(\boldsymbol{\kappa},s)d\kappa_1$ admits a meromorphic continuation to $\mathcal{S}_{(1/3,2/3)}$ with a at most double pole at $s=1/2$ when $\tau^2=1.$ Hence we obtain a meromorphic continuation of $J_1^{2/3}(s)$ to the strip $1/2<\Re(s)<2/3.$ Denote by $J_3^{(1/2,2/3)}$ this continuation, namely,
   	\begin{align*}
   	J_3^{(1/2,2/3)}(s)=&\int_{(0)}\int_{(0)}\underset{\kappa_{3}=s-1}{\Res}\mathcal{F}(\boldsymbol{\kappa},s)d\kappa_1d\kappa_2+\int_{(0)}\underset{\kappa_{2}=2-2s}{\Res}\underset{\kappa_{3}=s-1}{\Res}\mathcal{F}(\boldsymbol{\kappa},s)d\kappa_1\\
   	&+\int_{(0)}\underset{\kappa_{2}=2-2s-\kappa_1}{\Res}\underset{\kappa_{3}=s-1}{\Res}\mathcal{F}(\boldsymbol{\kappa},s)d\kappa_1+\int_{(0)}\underset{\kappa_{1}=1-s}{\Res}\underset{\kappa_{3}=s-1}{\Res}\mathcal{F}(\boldsymbol{\kappa},s)d\kappa_2\\
   	&+\underset{\kappa_{2}=2-2s}{\Res}\underset{\kappa_{1}=1-s}{\Res}\underset{\kappa_{3}=s-1}{\Res}\mathcal{F}(\boldsymbol{\kappa},s)-\underset{\kappa_{1}=3s-2}{\Res}\underset{\kappa_{2}=2-2s}{\Res}\underset{\kappa_{3}=s-1}{\Res}\mathcal{F}(\boldsymbol{\kappa},s)\\
   	&+\underset{\kappa_{1}=2-3s}{\Res}\underset{\kappa_{2}=2-2s-\kappa_1}{\Res}\underset{\kappa_{3}=s-1}{\Res}\mathcal{F}(\boldsymbol{\kappa},s).
   	\end{align*}
   	One sees clearly that the terms in the right hand side of the above expression are meromorphic in $\mathcal{R}(1/2),$ except the term $\int_{(0)}\underset{\kappa_{1}=1-s}{\Res}\underset{\kappa_{3}=s-1}{\Res}\mathcal{F}(\boldsymbol{\kappa},s)d\kappa_2,$ which by Cauchy integral formula and \eqref{206}, is equal to
   	\begin{equation}\label{215} 
   	\int_{\mathcal{C}}\underset{\kappa_{1}=1-s}{\Res}\underset{\kappa_{3}=s-1}{\Res}\mathcal{F}(\boldsymbol{\kappa},s)d\kappa_2-\underset{\kappa_{2}=2s-1}{\Res}\underset{\kappa_{1}=1-s}{\Res}\underset{\kappa_{3}=s-1}{\Res}\mathcal{F}(\boldsymbol{\kappa},s),
   	\end{equation}
   	where $s\in\mathcal{R}(1/2)^+.$ By formula \eqref{206}, we see that $\underset{\kappa_{2}=2s-1}{\Res}\underset{\kappa_{1}=1-s}{\Res}\underset{\kappa_{3}=s-1}{\Res}\mathcal{F}(\boldsymbol{\kappa},s)$ is equal to some holomorphic function multiplying
   	\begin{equation}\label{216}
   	\frac{\Lambda(4s-2,\tau^4)\Lambda(3s-1,\tau^{3})\Lambda(2s-1,\tau^2)^2\Lambda(s,\tau)^2\Lambda(1-s,\tau^{-1})}{\Lambda(3-3s,\tau^{-3})\Lambda(2s,\tau^{2})\Lambda(1+s,\tau)\Lambda(2-2s,\tau^{-2})\Lambda(2-s,\tau^{-1})^2}.
   	\end{equation}
   	We then apply the functional equation $\Lambda(2-2s,\tau^{-2})\sim \Lambda(2s-1,\tau^{2})$ to \eqref{216} to see that $\underset{\kappa_{2}=2s-1}{\Res}\underset{\kappa_{1}=1-s}{\Res}\underset{\kappa_{3}=s-1}{\Res}\mathcal{F}(\boldsymbol{\kappa},s)$ equals some holomorphic function multiplying
   	\begin{equation}\label{217}
   	\frac{\Lambda(4s-2,\tau^4)\Lambda(3s-1,\tau^{3})\Lambda(2s-1,\tau^2)\Lambda(s,\tau)^2\Lambda(1-s,\tau^{-1})}{\Lambda(3-3s,\tau^{-3})\Lambda(2s,\tau^{2})\Lambda(1+s,\tau)\Lambda(2-s,\tau^{-1})^2}.
   	\end{equation}
   	Note that when $s\in\mathcal{R}(1/2)^-,$ $2s$ lies in a zero-free region of $\Lambda(s,\tau^2).$ Then by \eqref{215} and \eqref{217} we conclude that $J_1^{(1/2,2/3)}(s)$ admits a meromorphic continuation to the region $\mathcal{R}(1/2).$ Denote by $J_3^{1/2}(s)$ this continuation, then 
   	\begin{align*}
   	J_1^{1/2}(s)=&\int_{\mathcal{C}}\int_{\mathcal{C}}\underset{\kappa_{3}=s-1}{\Res}\mathcal{F}(\boldsymbol{\kappa},s)d\kappa_1d\kappa_2+\int_{(0)}\underset{\kappa_{2}=2-2s}{\Res}\underset{\kappa_{3}=s-1}{\Res}\mathcal{F}(\boldsymbol{\kappa},s)d\kappa_1\\
   	&+\int_{(0)}\underset{\kappa_{2}=2-2s-\kappa_1}{\Res}\underset{\kappa_{3}=s-1}{\Res}\mathcal{F}(\boldsymbol{\kappa},s)d\kappa_1+\int_{\mathcal{C}}\underset{\kappa_{1}=1-s}{\Res}\underset{\kappa_{3}=s-1}{\Res}\mathcal{F}(\boldsymbol{\kappa},s)d\kappa_2\\
   	&+\underset{\kappa_{2}=2-2s}{\Res}\underset{\kappa_{1}=1-s}{\Res}\underset{\kappa_{3}=s-1}{\Res}\mathcal{F}(\boldsymbol{\kappa},s)-\underset{\kappa_{1}=3s-2}{\Res}\underset{\kappa_{2}=2-2s}{\Res}\underset{\kappa_{3}=s-1}{\Res}\mathcal{F}(\boldsymbol{\kappa},s)\\
   	&+\underset{\kappa_{1}=2-3s}{\Res}\underset{\kappa_{2}=2-2s-\kappa_1}{\Res}\underset{\kappa_{3}=s-1}{\Res}\mathcal{F}(\boldsymbol{\kappa},s)-\underset{\kappa_{2}=2s-1}{\Res}\underset{\kappa_{1}=1-s}{\Res}\underset{\kappa_{3}=s-1}{\Res}\mathcal{F}(\boldsymbol{\kappa},s).
   	\end{align*}
   	Let $s\in\mathcal{R}(1/2)^-.$ Then by Cauchy's integral formula we have 
   	\begin{align*}
   	\int_{\mathcal{C}}\int_{\mathcal{C}}\underset{\kappa_{3}=s-1}{\Res}&\mathcal{F}(\boldsymbol{\kappa},s)d\kappa_1d\kappa_2=\int_{(0)}\int_{(0)}\underset{\kappa_{3}=s-1}{\Res}\mathcal{F}(\boldsymbol{\kappa},s)d\kappa_2d\kappa_1\\
   	&+\int_{\mathcal{C}}\underset{\kappa_{2}=1-2s}{\Res}\underset{\kappa_{3}=s-1}{\Res}\mathcal{F}(\boldsymbol{\kappa},s)d\kappa_1+\int_{(0)}\underset{\kappa_{1}=1-2s}{\Res}\underset{\kappa_{3}=s-1}{\Res}\mathcal{F}(\boldsymbol{\kappa},s)d\kappa_2.
   	\end{align*}
   	Also, by \eqref{171.} we have $\int_{\mathcal{C}}\underset{\kappa_{1}=1-s}{\Res}\underset{\kappa_{3}=s-1}{\Res}\mathcal{F}(\boldsymbol{\kappa},s)d\kappa_2$ equal to
   	\begin{align*}
   	\int_{(0)}\underset{\kappa_{1}=1-s}{\Res}\underset{\kappa_{3}=s-1}{\Res}\mathcal{F}(\boldsymbol{\kappa},s)d\kappa_2+\underset{\kappa_{2}=1-2s}{\Res}\underset{\kappa_{1}=1-s}{\Res}\underset{\kappa_{3}=s-1}{\Res}\mathcal{F}(\boldsymbol{\kappa},s).
   	\end{align*}
   	Substituting the above equalities into the expression of $J_3^{1/2}(s)$ we then obtain a continuation of $J_3^{1/2}(s)$ into $1/3<\Re(s)<1/2.$ Denote this continuation by $J_3^{(1/2,2/3)}(s),$ then 
   	\begin{align*}
   	J_3^{(1/2,2/3)}(s)=&\int_{(0)}\int_{(0)}\underset{\kappa_{3}=s-1}{\Res}\mathcal{F}(\boldsymbol{\kappa},s)d\kappa_1d\kappa_2+\int_{(0)}\underset{\kappa_{2}=2-2s}{\Res}\underset{\kappa_{3}=s-1}{\Res}\mathcal{F}(\boldsymbol{\kappa},s)d\kappa_1\\
   	&+\int_{(0)}\underset{\kappa_{2}=2-2s-\kappa_1}{\Res}\underset{\kappa_{3}=s-1}{\Res}\mathcal{F}(\boldsymbol{\kappa},s)d\kappa_1+\int_{(0)}\underset{\kappa_{1}=1-s}{\Res}\underset{\kappa_{3}=s-1}{\Res}\mathcal{F}(\boldsymbol{\kappa},s)d\kappa_2\\
   	&+\underset{\kappa_{2}=2-2s}{\Res}\underset{\kappa_{1}=1-s}{\Res}\underset{\kappa_{3}=s-1}{\Res}\mathcal{F}(\boldsymbol{\kappa},s)-\underset{\kappa_{1}=3s-2}{\Res}\underset{\kappa_{2}=2-2s}{\Res}\underset{\kappa_{3}=s-1}{\Res}\mathcal{F}(\boldsymbol{\kappa},s)\\
   	&+\underset{\kappa_{1}=2-3s}{\Res}\underset{\kappa_{2}=2-2s-\kappa_1}{\Res}\underset{\kappa_{3}=s-1}{\Res}\mathcal{F}(\boldsymbol{\kappa},s)-\underset{\kappa_{2}=2s-1}{\Res}\underset{\kappa_{1}=1-s}{\Res}\underset{\kappa_{3}=s-1}{\Res}\mathcal{F}(\boldsymbol{\kappa},s)\\
   	&+\int_{\mathcal{C}}\underset{\kappa_{2}=1-2s}{\Res}\underset{\kappa_{3}=s-1}{\Res}\mathcal{F}(\boldsymbol{\kappa},s)d\kappa_1+\int_{(0)}\underset{\kappa_{1}=1-2s}{\Res}\underset{\kappa_{3}=s-1}{\Res}\mathcal{F}(\boldsymbol{\kappa},s)d\kappa_2\\
   	&+\underset{\kappa_{2}=1-2s}{\Res}\underset{\kappa_{1}=1-s}{\Res}\underset{\kappa_{3}=s-1}{\Res}\mathcal{F}(\boldsymbol{\kappa},s).
   	\end{align*}
   	Thus we obtain a meromorphic continuation of $J_3(s)$ to the area $\mathcal{S}_{(1/3,\infty)}:$ 
   	\begin{equation}\label{claim3}
   	\widetilde{J}_3(s)=\begin{cases}
   	J_3(s),\ s\in \mathcal{S}_{(1,+\infty)};\\
   	J_3^1(s),\ s\in\mathcal{R}(1);\\
   	J_3^{(2/3,1)}(s),\ s\in \mathcal{S}_{(2/3,1)};\\
   	J_3^{2/3}(s),\ s\in \mathcal{R}(2/3);\\
   	J_3^{(1/2,2/3)}(s),\ s\in \mathcal{S}_{(1/2,2/3)};\\
   	J_3^{1/2}(s),\ s\in \mathcal{R}(1/2);\\
   	J_3^{(1/3,1/2)}(s),\ s\in \mathcal{S}(1/3,1/2).
   	\end{cases}
   	\end{equation}
   	
   	From the above formulas one sees that $\widetilde{J}_3(s)$ has possible poles at $s=3/4,$ $s=2/3$ and $s=1/2;$ and the potential poles at $s=3/4,$ $s=2/3$ are at most simple, the possible pole at $s=1/2$ has order at most 2. Moreover, from the above explicit expressions of $\widetilde{J}_3(s),$ we see that $\widetilde{J}_3(s)\cdot\Lambda(s,\tau)^{-1}$ has at most a simple pole at $s=1/2$ if $L_F(1/2,\tau)=0.$
   	\begin{itemize}
   		\item[Case 1:]
   		If $L_F(3/4,\tau)=0,$ then by functional equation we have that $\Lambda(1/4,\tau^{-1})=0.$ Suppose that $\widetilde{J}_3(s)$ has a pole at $s=3/4,$ then from the proceeding explicit expressions, we must have that $\tau^4=1,$ and the singular part of $\widetilde{J}_3(s)$ around $s=3/4$ is a holomorphic function multiplying $\Lambda(4s-3,\tau^4)\Lambda(3s-2,\tau^3).$ Note that $\Lambda(3s-2,\tau^3)\mid_{s=3/4}=\Lambda(1/4,\tau^3)=\Lambda(1/4,\tau^{-1})=0.$ Hence, when $L_F(3/4,\tau)=0,$ $\widetilde{J}_3(s)$ is holomorphic at $s=3/4.$
   		\item[Case 2:]
   		If $L_F(2/3,\tau)=0,$ then by functional equation we have that $\Lambda(1/3,\tau^{-1})=0.$ Suppose that $\widetilde{J}_3(s)$ has a pole at $s=2/3,$ then from the proceeding explicit expressions, we must have that $\tau^3=1,$ and the singular part of $\widetilde{J}_3(s)$ around $s=2/3$ is a holomorphic function multiplying $\Lambda(3s-2,\tau^3)\Lambda(2s-1,\tau^2).$ Note that $\Lambda(2s-1,\tau^2)\mid_{s=2/3}=\Lambda(1/3,\tau^2)=\Lambda(1/3,\tau^{-1})=0.$ Hence, when $L_F(2/3,\tau)=0,$ $\widetilde{J}_3(s)$ is holomorphic at $s=2/3.$
   	\end{itemize}
   	Now the proof of Claim \ref{62claim} is complete.
   \end{proof}
   
   \begin{proof}[Proof of Claim \ref{63claim}]
   	Let $s\in\mathcal{R}(1)^+.$ Let $J_{12}(s)=\int_{(0)}\underset{\kappa_{1}=s-1}{\Res}\underset{\kappa_{2}=s-1}{\Res}\mathcal{F}(\boldsymbol{\kappa},s)d\kappa_3,$ and  $J_{12}^1(s)=\int_{\mathcal{C}}\underset{\kappa_{1}=s-1}{\Res}\underset{\kappa_{2}=s-1}{\Res}\mathcal{F}(\boldsymbol{\kappa},s)d\kappa_3.$ Then by \eqref{172.} one sees that $J_{12}^1(s)$ is meromorphic in the region $\mathcal{R}(1),$ with a possible pole at $s=1.$
   	
   	Let $s\in \mathcal{R}(1)^-.$ Applying Cauchy integral formula we then have that
   	\begin{equation}\label{219}
   	J_{12}^1(s)=\int_{(0)}\underset{\kappa_{1}=s-1}{\Res}\underset{\kappa_{2}=s-1}{\Res}\mathcal{F}(\boldsymbol{\kappa},s)d\kappa_3+\underset{\kappa_{3}=3-3s}{\Res}\underset{\kappa_{1}=s-1}{\Res}\underset{\kappa_{2}=s-1}{\Res}\mathcal{F}(\boldsymbol{\kappa},s),
   	\end{equation}
   	where $\underset{\kappa_{3}=3-3s}{\Res}\underset{\kappa_{1}=s-1}{\Res}\underset{\kappa_{2}=s-1}{\Res}\mathcal{F}(\boldsymbol{\kappa},s)$ equals some holomorphic function multiplying
   	\begin{equation}\label{220}
   	\frac{\Lambda(4s-3,\tau^4)\Lambda(3s-2,\tau^{3})\Lambda(2s-1,\tau^2)\Lambda(s,\tau)}{\Lambda(4-3s,\tau^{-3})\Lambda(3-2s,\tau^{-2})\Lambda(2-s,\tau^{-1})}.
   	\end{equation}
   	Then one sees,  by \eqref{172.} and \eqref{220}, that  $\int_{(0)}\underset{\kappa_{1}=s-1}{\Res}\underset{\kappa_{2}=s-1}{\Res}\mathcal{F}(\boldsymbol{\kappa},s)d\kappa_3$ and the function  $\underset{\kappa_{3}=3-3s}{\Res}\underset{\kappa_{1}=s-1}{\Res}\underset{\kappa_{2}=s-1}{\Res}\mathcal{F}(\boldsymbol{\kappa},s)$ are meromorphic in the strip $2/3<\Re(s)<1,$ with possible simple poles at $s=3/4$ if $\tau^4=1.$ Hence, by \eqref{219}, we obtain a meromorphic continuation of $J_{12}^1(s)$ to the strip $2/3<\Re(s)<1,$ with possible simple poles at $s=3/4$ if $\tau^4=1.$ Denote by $J_{12}^{(2/3,1)}(s)$ this continuation. 
   	
   	Let $s\in\mathcal{R}(2/3)^+.$ Applying Cauchy integral formula to \eqref{172.} to see that the function  $\int_{(0)}\underset{\kappa_{1}=s-1}{\Res}\underset{\kappa_{2}=s-1}{\Res}\mathcal{F}(\boldsymbol{\kappa},s)d\kappa_3$ is equal to 
   	\begin{equation}\label{221}
   	\int_{\mathcal{C}}\underset{\kappa_{1}=s-1}{\Res}\underset{\kappa_{2}=s-1}{\Res}\mathcal{F}(\boldsymbol{\kappa},s)d\kappa_3-\underset{\kappa_{3}=2-3s}{\Res}\underset{\kappa_{1}=s-1}{\Res}\underset{\kappa_{2}=s-1}{\Res}\mathcal{F}(\boldsymbol{\kappa},s),
   	\end{equation}
   	and $\underset{\kappa_{3}=2-3s}{\Res}\underset{\kappa_{1}=s-1}{\Res}\underset{\kappa_{2}=s-1}{\Res}\mathcal{F}(\boldsymbol{\kappa},s)$ is equal to some holomorphic function multiplying 
   	\begin{equation}\label{222}
   	\frac{\Lambda(4s-3,\tau^4)\Lambda(3s-2,\tau^{3})\Lambda(2s-1,\tau^2)\Lambda(s,\tau)^2}{\Lambda(3-3s,\tau^{-3})\Lambda(1+s,\tau)\Lambda(3-2s,\tau^{-2})\Lambda(2-s,\tau^{-1})}.
   	\end{equation}
   	Then by \eqref{221}, \eqref{222} and the fact that $\int_{\mathcal{C}}\underset{\kappa_{1}=s-1}{\Res}\underset{\kappa_{2}=s-1}{\Res}\mathcal{F}(\boldsymbol{\kappa},s)d\kappa_3$ is holomorphic in $\mathcal{R}(2/3),$ we obtain a meromorphic continuation of $J_{12}^{(2/3,1)}(s)$ to the region $\mathcal{R}(2/3),$ with a possible simple pole at $s=2/3,$ if $\tau^3=1.$ Denote by $J_{12}^{2/3}(s)$ the continuation.
   	
   	Let $s\in\mathcal{R}(2/3)^-.$ Then by \eqref{219} and \eqref{221} one has 
   	\begin{align*}
   	J_{12}^{2/3}(s)=&\int_{\mathcal{C}}\underset{\kappa_{1}=s-1}{\Res}\underset{\kappa_{2}=s-1}{\Res}\mathcal{F}(\boldsymbol{\kappa},s)d\kappa_3-\underset{\kappa_{3}=2-3s}{\Res}\underset{\kappa_{1}=s-1}{\Res}\underset{\kappa_{2}=s-1}{\Res}\mathcal{F}(\boldsymbol{\kappa},s)\\
   	&+\underset{\kappa_{3}=3-3s}{\Res}\underset{\kappa_{1}=s-1}{\Res}\underset{\kappa_{2}=s-1}{\Res}\mathcal{F}(\boldsymbol{\kappa},s).
   	\end{align*}
   	Since the right hand side is meromorphic in the strip $\mathcal{S}_{(0,2/3)},$ with a possible simple pole at $s=1/2$ if $\tau^2=1.$ We thus obtain a meromorphic continuation of $J_{12}^{2/3}(s)$ to the region $0<\Re(s)<2/3,$ with a possible simple pole at $s=1/2$ if $\tau^2=1.$ Denote by $J_{12}^{(1/3,2/3)}(s)$ this continuation. Thus we obtain a meromorphic continuation of $J_{12}(s)$ to the area $\mathcal{S}_{(1/3,\infty)}:$ 
   	\begin{equation}\label{claim4}
   	\widetilde{J}_{12}(s)=\begin{cases}
   	J_{12}(s),\ s\in \mathcal{S}_{(1,+\infty)};\\
   	J_{12}^1(s),\ s\in\mathcal{R}(1);\\
   	J_{12}^{(2/3,1)}(s),\ s\in \mathcal{S}_{(2/3,1)};\\
   	J_{12}^{2/3}(s),\ s\in \mathcal{R}(2/3);\\
   	J_{12}^{(1/3,2/3)}(s),\ s\in \mathcal{R}(1/2)\cup\mathcal{S}_{(1/3,2/3)}.
   	\end{cases}
   	\end{equation}
   	
   	From the above formulas one sees that $\widetilde{J}_{12}(s)$ has possible poles at $s=3/4,$ $s=2/3$ and $s=1/2;$ and these potential poles are all at most simple. Moreover, from the above explicit expressions of $\widetilde{J}_{12}(s),$ we see that $\widetilde{J}_{12}(s)\cdot\Lambda(s,\tau)^{-1}$ has at most a simple pole at $s=1/2.$ We discuss the other two possible poles separately.
   	\begin{itemize}
   		\item[Case 1:]
   		If $L_F(3/4,\tau)=0,$ then by functional equation we have that $\Lambda(1/4,\tau^{-1})=0.$ Suppose that $\widetilde{J}_{12}(s)$ has a pole at $s=3/4,$ then from the proceeding explicit expressions, we must have that $\tau^4=1,$ and the singular part of $\widetilde{J}_{12}(s)$ around $s=3/4$ is a holomorphic function multiplying $\Lambda(4s-3,\tau^4)\Lambda(3s-2,\tau^3).$ Note that $\Lambda(3s-2,\tau^3)\mid_{s=3/4}=\Lambda(1/4,\tau^3)=\Lambda(1/4,\tau^{-1})=0.$ Hence, when $L_F(3/4,\tau)=0,$ $\widetilde{J}_{12}(s)$ is holomorphic at $s=3/4.$
   		\item[Case 2:]
   		If $L_F(2/3,\tau)=0,$ then by functional equation we have that $\Lambda(1/3,\tau^{-1})=0.$ Suppose that $\widetilde{J}_{12}(s)$ has a pole at $s=2/3,$ then from the proceeding explicit expressions, we must have that $\tau^3=1,$ and the singular part of $\widetilde{J}_{12}(s)$ around $s=2/3$ is a holomorphic function multiplying $\Lambda(3s-2,\tau^3)\Lambda(2s-1,\tau^2).$ Note that $\Lambda(2s-1,\tau^2)\mid_{s=2/3}=\Lambda(1/3,\tau^2)=\Lambda(1/3,\tau^{-1})=0.$ Hence, when $L_F(2/3,\tau)=0,$ $\widetilde{J}_{12}(s)$ is holomorphic at $s=2/3.$
   	\end{itemize}
   	Now the proof of Claim \ref{63claim} is complete.
   \end{proof}
   
   \begin{proof}[Proof of Claim \ref{64claim}]
   	Let $s\in\mathcal{R}(1)^+.$ Let $J_{13}(s)=\int_{(0)}\underset{\kappa_{1}=s-1}{\Res}\underset{\kappa_{3}=s-1}{\Res}\mathcal{F}(\boldsymbol{\kappa},s)d\kappa_2,$ and  $J_{13}^1(s)=\int_{\mathcal{C}}\underset{\kappa_{1}=s-1}{\Res}\underset{\kappa_{3}=s-1}{\Res}\mathcal{F}(\boldsymbol{\kappa},s)d\kappa_2.$ Then by \eqref{171.} one sees that $J_{13}^1(s)$ is meromorphic in the region $\mathcal{R}(1),$ with a possible pole at $s=1.$
   	
   	Let $s\in \mathcal{R}(1)^-.$ Applying Cauchy integral formula we then have that
   	\begin{equation}\label{224}
   	J_{13}^1(s)=\int_{(0)}\underset{\kappa_{1}=s-1}{\Res}\underset{\kappa_{3}=s-1}{\Res}\mathcal{F}(\boldsymbol{\kappa},s)d\kappa_2+R_1(s)+R_2(s),
   	\end{equation}
   	where $R_1(s):=\underset{\kappa_{2}=2-2s}{\Res}\underset{\kappa_{1}=s-1}{\Res}\underset{\kappa_{3}=s-1}{\Res}\mathcal{F}(\boldsymbol{\kappa},s),$ and  $R_2(s)$ denotes the meromorphic function $\underset{\kappa_{2}=3-3s}{\Res}\underset{\kappa_{1}=s-1}{\Res}\underset{\kappa_{3}=s-1}{\Res}\mathcal{F}(\boldsymbol{\kappa},s).$ 
   	Then by \eqref{171.} $R_1(s)\equiv0$ if $\chi_{23}\tau^2\neq 1.$ Let $\chi_{23}\tau^2=1.$ Then the function $G(\kappa_2)=\Lambda_F(2s-1+\kappa_2)\cdot\Lambda_F(3-2s-\kappa_2)^{-1}$ is holomorphic at $\kappa_2=2-2s.$ Hence $R_1(s)\equiv 0.$ Also, according to \eqref{171.}, 
   	\begin{equation}\label{225}
   	R_2(s)\sim\frac{\Lambda(4s-3,\tau^4)\Lambda(3s-2,\tau^{3})\Lambda(2s-1,\tau^2)\Lambda(s,\tau)}{\Lambda(4-3s,\tau^{-3})\Lambda(3-2s,\tau^{-2})\Lambda(2-s,\tau^{-1})}.
   	\end{equation} 
   	Thanks to the uniform zero-free region of Rankin-Selberg L-functions defined in Section \ref{7.11}, the function  $\underset{\kappa_{1}=s-1}{\Res}\underset{\kappa_{3}=s-1}{\Res}\mathcal{F}(\boldsymbol{\kappa},s)$ is holomorphic in the domain $\mathcal{R}(1-s).$ Then we can apply Cauchy integral formula to obtain that 
   	\begin{equation}\label{226}
   	\int_{(0)}\underset{\kappa_{1}=s-1}{\Res}\underset{\kappa_{3}=s-1}{\Res}\mathcal{F}(\boldsymbol{\kappa},s)d\kappa_2=\int_{(1-s)}\underset{\kappa_{1}=s-1}{\Res}\underset{\kappa_{3}=s-1}{\Res}\mathcal{F}(\boldsymbol{\kappa},s)d\kappa_2,
   	\end{equation}
   	where the integral on the right hand side is taken over $(1-s):=\{z\in\mathbb{C}:\ \Re(z)=1-\Re(s)\}.$ Let $\kappa_2'=\kappa_2+s-1,$ $\kappa_1'=\kappa_1$ and $\kappa_3'=\kappa_3.$ Denote by $\boldsymbol{\kappa}'=(\kappa_1',\kappa_2',\kappa_3').$ Then $d\kappa_j'=d\kappa_j,$ $1\leq j\leq 3.$ Hence we have 
   	\begin{equation}\label{227}
   	\int_{(0)}\underset{\kappa_{1}=s-1}{\Res}\underset{\kappa_{3}=s-1}{\Res}\mathcal{F}(\boldsymbol{\kappa},s)d\kappa_2=\int_{(0)}\underset{\kappa_{1}'=s-1}{\Res}\underset{\kappa_{3}'=s-1}{\Res}\mathcal{F}(\boldsymbol{\kappa}',s)d\kappa_2',
   	\end{equation}
   	where by \eqref{171.}, $\underset{\kappa_{1}'=s-1}{\Res}\underset{\kappa_{3}'=s-1}{\Res}\mathcal{F}(\boldsymbol{\kappa}',s)$ is equal to some holomorphic function multiplying the product of $\Lambda(2s-1,\tau^2)^2\Lambda(s,\tau)^2\cdot\Lambda(2-s,\tau^{-1})^{-2}$ and 
   	\begin{equation}\label{228}
   	\frac{\Lambda(s+\kappa_2',\chi_{23}\tau)\Lambda(s-\kappa_2',\chi_{32}\tau)\Lambda(2s-1+\kappa_2',\chi_{23}\tau^{2})\Lambda(2s-1-\kappa_2',\chi_{32}\tau^2)}{\Lambda(1+\kappa_2',\chi_{23})\Lambda(1-\kappa_2',\chi_{32})\Lambda(2-s+\kappa_2',\chi_{23}\tau^{-1})\Lambda(2-s-\kappa_2',\chi_{32}\tau^{-1})}.
   	\end{equation}
   	Then from \eqref{227} and \eqref{228}, we conclude that $\int_{(0)}\underset{\kappa_{1}=s-1}{\Res}\underset{\kappa_{3}=s-1}{\Res}\mathcal{F}(\boldsymbol{\kappa},s)d\kappa_2$ admits a meromorphic continuation to the strip $1/2<\Re(s)<1.$ Combining this with equations  \eqref{224} and \eqref{225}, we then obtain a meromorphic continuation of $J_{13}^1(s)$ to the area $\mathcal{S}_{(1/2,1)}.$ Denote by $J_{13}^{(1/2,1)}$ this continuation. Then 
   	\begin{equation}\label{229}
   	J_{13}^{(1/2,1)}(s)=\int_{(0)}\underset{\kappa_{1}'=s-1}{\Res}\underset{\kappa_{3}'=s-1}{\Res}\mathcal{F}(\boldsymbol{\kappa}',s)d\kappa_2'+\underset{\kappa_{2}=3-3s}{\Res}\underset{\kappa_{1}=s-1}{\Res}\underset{\kappa_{3}=s-1}{\Res}\mathcal{F}(\boldsymbol{\kappa},s).
   	\end{equation}
   	Let $s\in\mathcal{R}(1/2)^+.$ Applying Cauchy integral formula to \eqref{229} to obtain that 
   	\begin{equation}\label{230}
   	J_{13}^{(1/2,1)}(s)=\int_{\mathcal{C}}\underset{\kappa_{1}'=s-1}{\Res}\underset{\kappa_{3}'=s-1}{\Res}\mathcal{F}(\boldsymbol{\kappa}',s)d\kappa_2'+R_2(s)-R_3(s),
   	\end{equation}
   	where $R_3(s):=\underset{\kappa_{2}'=2s-1}{\Res}\underset{\kappa_{1}'=s-1}{\Res}\underset{\kappa_{3}'=s-1}{\Res}\mathcal{F}(\boldsymbol{\kappa}',s).$ By \eqref{171.}, we have that 
   	\begin{equation}\label{231}
   	R_3(s)\sim \frac{\Lambda(4s-2,\tau^4)\Lambda(3s-1,\tau^{3})\Lambda(2s-1,\tau^2)^2\Lambda(1-s,\tau{-1})\Lambda(s,\tau)^2}{\Lambda(3-3s,\tau^{-3})\Lambda(2-2s,\tau^{-2})\Lambda(2-s,\tau^{-1})^2\Lambda(2s,\tau^2)\Lambda(1+s,\tau)}.
   	\end{equation}
   	By \eqref{231}, the right hand side is meromorphic in $\mathcal{R}(1/2),$ with a possible pole at $s=1/2$ of order at most 1 according to the functional equation $\Lambda(2s-1,\tau^2)\sim \Lambda(2-2s,\tau^{-2}).$ Hence we obtain a meromorphic continuation of $J_{13}^{(1/2,1)}(s)$ to the domain $\mathcal{R}(1/2).$ Denote by $J_{13}^{1/2}(s)$ this continuation. 
   	
   	Let $s\in\mathcal{R}(1/2)^-.$ Then by \eqref{228}, $\int_{\mathcal{C}}\underset{\kappa_{1}'=s-1}{\Res}\underset{\kappa_{3}'=s-1}{\Res}\mathcal{F}(\boldsymbol{\kappa}',s)d\kappa_2'$ is equal to 
   	\begin{align*}
   	\int_{(0)}\underset{\kappa_{1}'=s-1}{\Res}\underset{\kappa_{3}'=s-1}{\Res}\mathcal{F}(\boldsymbol{\kappa}',s)d\kappa_2'+\underset{\kappa_{2}'=1-2s}{\Res}\underset{\kappa_{1}'=s-1}{\Res}\underset{\kappa_{3}'=s-1}{\Res}\mathcal{F}(\boldsymbol{\kappa}',s),
   	\end{align*}
   	where $\underset{\kappa_{2}'=1-2s}{\Res}\underset{\kappa_{1}'=s-1}{\Res}\underset{\kappa_{3}'=s-1}{\Res}\mathcal{F}(\boldsymbol{\kappa}',s)$ is equal to a holomorphic function multiplying 
   	\begin{align*}
   	\frac{\Lambda(2s-1,\tau^2)^2\Lambda(s,\tau)^2\Lambda(3s-1,\tau^3)\Lambda(1-s,\tau^{-1})\Lambda(4s-2,\tau^4)}{\Lambda(2-s,\tau^{-1})^{2}\Lambda(2-2s,\tau^{-2})\Lambda(2s,\tau^2)\Lambda(3-3s,\tau^{-3})\Lambda(s+1,\tau)}.
   	\end{align*}
   	
   	Thus we obtain a meromorphic continuation of $J_{12}(s)$ to the area $\mathcal{S}_{(1/3,\infty)}:$ 
   	\begin{equation}\label{claim}
   	\widetilde{J}_{13}(s)=\begin{cases}
   	J_{13}(s),\ s\in \mathcal{S}_{(1,+\infty)};\\
   	J_{13}^1(s),\ s\in\mathcal{R}(1);\\
   	J_{13}^{(1/2,1)}(s),\ s\in \mathcal{S}_{(1/2,1)};\\
   	J_{13}^{1/2}(s),\ s\in \mathcal{R}(1/2);\\
   	J_{13}^{(1/3,1/2)}(s),\ s\in \mathcal{S}_{(1/3,1/2)}.
   	\end{cases}
   	\end{equation}
   	
   	From the above formulas one sees that $\widetilde{J}_{13}(s)$ has possible poles at $s=3/4,$ $s=2/3$ and $s=1/2;$ and these potential poles are all at most simple. Moreover, from the above explicit expressions of $\widetilde{J}_{13}(s),$ we see that $\widetilde{J}_{13}(s)\cdot\Lambda(s,\tau)^{-1}$ has at most a simple pole at $s=1/2.$ We discuss the other two possible poles separately.
   	\begin{itemize}
   		\item[Case 1:]
   		If $L_F(3/4,\tau)=0,$ then by functional equation we have that $\Lambda(1/4,\tau^{-1})=0.$ Suppose that $\widetilde{J}_{13}(s)$ has a pole at $s=3/4,$ then from the proceeding explicit expressions, we must have that $\tau^4=1,$ and the singular part of $\widetilde{J}_{13}(s)$ around $s=3/4$ is a holomorphic function multiplying $\Lambda(4s-3,\tau^4)\Lambda(3s-2,\tau^3).$ Note that $\Lambda(3s-2,\tau^3)\mid_{s=3/4}=\Lambda(1/4,\tau^3)=\Lambda(1/4,\tau^{-1})=0.$ Hence, when $L_F(3/4,\tau)=0,$ $\widetilde{J}_{13}(s)$ is holomorphic at $s=3/4.$
   		\item[Case 2:]
   		If $L_F(2/3,\tau)=0,$ then by functional equation we have that $\Lambda(1/3,\tau^{-1})=0.$ Suppose that $\widetilde{J}_{13}(s)$ has a pole at $s=2/3,$ then from the proceeding explicit expressions, we must have that $\tau^3=1,$ and the singular part of $\widetilde{J}_{13}(s)$ around $s=2/3$ is a holomorphic function multiplying $\Lambda(3s-2,\tau^3)\Lambda(2s-1,\tau^2).$ Note that $\Lambda(2s-1,\tau^2)\mid_{s=2/3}=\Lambda(1/3,\tau^2)=\Lambda(1/3,\tau^{-1})=0.$ Hence, when $L_F(2/3,\tau)=0,$ $\widetilde{J}_{13}(s)$ is holomorphic at $s=2/3.$
   	\end{itemize}
   	Now the proof of Claim \ref{64claim} is complete.
   \end{proof}
   
   \begin{proof}[Proof of Claim \ref{65claim}]
   	Let $s\in\mathcal{R}(1)^+.$ Let $J_{23}(s)=\int_{(0)}\underset{\kappa_{2}=s-1}{\Res}\underset{\kappa_{3}=s-1}{\Res}\mathcal{F}(\boldsymbol{\kappa},s)d\kappa_1,$ and  $J_{23}^1(s)=\int_{\mathcal{C}}\underset{\kappa_{2}=s-1}{\Res}\underset{\kappa_{3}=s-1}{\Res}\mathcal{F}(\boldsymbol{\kappa},s)d\kappa_1.$ Then by \eqref{170.} one sees that $J_{23}^1(s)$ is meromorphic in the region $\mathcal{R}(1),$ with a possible pole at $s=1.$
   	
   	Let $s\in \mathcal{R}(1)^-.$ Applying Cauchy integral formula we then have that
   	\begin{equation}\label{233.}
   	J_{23}^1(s)=\int_{(0)}\underset{\kappa_{2}=s-1}{\Res}\underset{\kappa_{3}=s-1}{\Res}\mathcal{F}(\boldsymbol{\kappa},s)d\kappa_1+\underset{\kappa_{1}=3-3s}{\Res}\underset{\kappa_{2}=s-1}{\Res}\underset{\kappa_{3}=s-1}{\Res}\mathcal{F}(\boldsymbol{\kappa},s),
   	\end{equation}
   	where $\underset{\kappa_{1}=3-3s}{\Res}\underset{\kappa_{2}=s-1}{\Res}\underset{\kappa_{3}=s-1}{\Res}\mathcal{F}(\boldsymbol{\kappa},s)$ equals some holomorphic function multiplying
   	\begin{equation}\label{234}
   	\frac{\Lambda(4s-3,\tau^4)\Lambda(3s-2,\tau^{3})\Lambda(2s-1,\tau^2)\Lambda(s,\tau)}{\Lambda(4-3s,\tau^{-3})\Lambda(3-2s,\tau^{-2})\Lambda(2-s,\tau^{-1})}.
   	\end{equation}
   	Then one sees,  by \eqref{170.} and \eqref{234}, that  $\int_{(0)}\underset{\kappa_{2}=s-1}{\Res}\underset{\kappa_{3}=s-1}{\Res}\mathcal{F}(\boldsymbol{\kappa},s)d\kappa_1$ and the function  $\underset{\kappa_{1}=3-3s}{\Res}\underset{\kappa_{2}=s-1}{\Res}\underset{\kappa_{3}=s-1}{\Res}\mathcal{F}(\boldsymbol{\kappa},s)$ are meromorphic in the strip $2/3<\Re(s)<1,$ with possible simple poles at $s=3/4$ if $\tau^4=1.$ Hence, by \eqref{233.}, we obtain a meromorphic continuation of $J_{23}^1(s)$ to the strip $2/3<\Re(s)<1,$ with possible simple poles at $s=3/4$ if $\tau^4=1.$ Denote by $J_{23}^{(2/3,1)}(s)$ this continuation. 
   	
   	Let $s\in\mathcal{R}(2/3)^+.$ Applying Cauchy integral formula to \eqref{172.} to see that the function  $\int_{(0)}\underset{\kappa_{2}=s-1}{\Res}\underset{\kappa_{3}=s-1}{\Res}\mathcal{F}(\boldsymbol{\kappa},s)d\kappa_1$ is equal to 
   	\begin{equation}\label{235}
   	\int_{\mathcal{C}}\underset{\kappa_{2}=s-1}{\Res}\underset{\kappa_{3}=s-1}{\Res}\mathcal{F}(\boldsymbol{\kappa},s)d\kappa_1-\underset{\kappa_{1}=2-3s}{\Res}\underset{\kappa_{2}=s-1}{\Res}\underset{\kappa_{3}=s-1}{\Res}\mathcal{F}(\boldsymbol{\kappa},s),
   	\end{equation}
   	and $\underset{\kappa_{1}=2-3s}{\Res}\underset{\kappa_{2}=s-1}{\Res}\underset{\kappa_{3}=s-1}{\Res}\mathcal{F}(\boldsymbol{\kappa},s)$ is equal to some holomorphic function multiplying 
   	\begin{equation}\label{236}
   	\frac{\Lambda(4s-3,\tau^4)\Lambda(3s-2,\tau^{3})\Lambda(2s-1,\tau^2)\Lambda(s,\tau)^2}{\Lambda(3-3s,\tau^{-3})\Lambda(1+s,\tau)\Lambda(3-2s,\tau^{-2})\Lambda(2-s,\tau^{-1})}.
   	\end{equation}
   	Then by \eqref{235}, \eqref{236} and the fact that $\int_{\mathcal{C}}\underset{\kappa_{2}=s-1}{\Res}\underset{\kappa_{3}=s-1}{\Res}\mathcal{F}(\boldsymbol{\kappa},s)d\kappa_1$ is holomorphic in $\mathcal{R}(2/3),$ we obtain a meromorphic continuation of $J_{23}^{(2/3,1)}(s)$ to the region $\mathcal{R}(2/3),$ with a possible simple pole at $s=2/3,$ if $\tau^3=1.$ Denote by $J_{23}^{2/3}(s)$ the continuation.
   	
   	Let $s\in\mathcal{R}(2/3)^-.$ Then by \eqref{233.} and \eqref{235} one has 
   	\begin{align*}
   	J_{23}^{2/3}(s)=&\int_{\mathcal{C}}\underset{\kappa_{2}=s-1}{\Res}\underset{\kappa_{3}=s-1}{\Res}\mathcal{F}(\boldsymbol{\kappa},s)d\kappa_1-R_1(s)+R_2(s),
   	\end{align*}
   	where $R_1(s)=\underset{\kappa_{1}=2-3s}{\Res}\underset{\kappa_{2}=s-1}{\Res}\underset{\kappa_{3}=s-1}{\Res}\mathcal{F}(\boldsymbol{\kappa},s);$ $R_2(s)=\underset{\kappa_{1}=3-3s}{\Res}\underset{\kappa_{2}=s-1}{\Res}\underset{\kappa_{3}=s-1}{\Res}\mathcal{F}(\boldsymbol{\kappa},s).$
   	
   	Since the right hand side is meromorphic in the strip $\mathcal{S}_{(0,2/3)},$ with a possible simple pole at $s=1/2$ if $\tau^2=1.$ We thus obtain a meromorphic continuation of $J_{23}^{2/3}(s)$ to the region $0<\Re(s)<2/3,$ with a possible simple pole at $s=1/2$ if $\tau^2=1.$ Denote by $J_{23}^{(1/3,2/3)}(s)$ this continuation. Thus we obtain a meromorphic continuation of $J_{23}(s)$ to the area $\mathcal{S}_{(1/3,\infty)}:$ 
   	\begin{equation}\label{claim5}
   	\widetilde{J}_{23}(s)=\begin{cases}
   	J_{23}(s),\ s\in \mathcal{S}_{(1,+\infty)};\\
   	J_{23}^1(s),\ s\in\mathcal{R}(1);\\
   	J_{23}^{(2/3,1)}(s),\ s\in \mathcal{S}_{(2/3,1)};\\
   	J_{23}^{2/3}(s),\ s\in \mathcal{R}(2/3);\\
   	J_{23}^{(1/3,2/3)}(s),\ s\in \mathcal{S}_{(1/3,2/3)}.
   	\end{cases}
   	\end{equation}
   	
   	From the above formulas one sees that $\widetilde{J}_{23}(s)$ has possible poles at $s=3/4,$ $s=2/3$ and $s=1/2;$ and these potential poles are all at most simple. Moreover, from the above explicit expressions of $\widetilde{J}_{23}(s),$ we see that $\widetilde{J}_{23}(s)\cdot\Lambda(s,\tau)^{-1}$ has at most a simple pole at $s=1/2.$ We discuss the other two possible poles separately.
   	\begin{itemize}
   		\item[Case 1:]
   		If $L_F(3/4,\tau)=0,$ then by functional equation we have that $\Lambda(1/4,\tau^{-1})=0.$ Suppose that $\widetilde{J}_{23}(s)$ has a pole at $s=3/4,$ then from the proceeding explicit expressions, we must have that $\tau^4=1,$ and the singular part of $\widetilde{J}_{23}(s)$ around $s=3/4$ is a holomorphic function multiplying $\Lambda(4s-3,\tau^4)\Lambda(3s-2,\tau^3).$ Note that $\Lambda(3s-2,\tau^3)\mid_{s=3/4}=\Lambda(1/4,\tau^3)=\Lambda(1/4,\tau^{-1})=0.$ Hence, when $L_F(3/4,\tau)=0,$ $\widetilde{J}_{23}(s)$ is holomorphic at $s=3/4.$
   		\item[Case 2:]
   		If $L_F(2/3,\tau)=0,$ then by functional equation we have that $\Lambda(1/3,\tau^{-1})=0.$ Suppose that $\widetilde{J}_{23}(s)$ has a pole at $s=2/3,$ then from the proceeding explicit expressions, we must have that $\tau^3=1,$ and the singular part of $\widetilde{J}_{23}(s)$ around $s=2/3$ is a holomorphic function multiplying $\Lambda(3s-2,\tau^3)\Lambda(2s-1,\tau^2).$ Note that $\Lambda(2s-1,\tau^2)\mid_{s=2/3}=\Lambda(1/3,\tau^2)=\Lambda(1/3,\tau^{-1})=0.$ Hence, when $L_F(2/3,\tau)=0,$ $\widetilde{J}_{23}(s)$ is holomorphic at $s=2/3.$
   	\end{itemize}
   	Now the proof of Claim \ref{65claim} is complete.
   \end{proof}
   
   \begin{proof}[Proof of Claim \ref{66claim}]
   	Let $s\in\mathcal{R}(1)^-.$ Let $H_1^{(1/2,1)}(s):=\int_{(0)}\int_{(0)}\underset{\kappa_{1}=1-s}{\Res}\mathcal{F}(\boldsymbol{\kappa},s)d\kappa_3d\kappa_2.$ Recall that we have computed the analytic property of $\underset{\kappa_{1}=1-s}{\Res}\mathcal{F}(\boldsymbol{\kappa},s):$
   	\begin{align*}
   	\underset{\kappa_{1}=1-s}{\Res}\mathcal{F}(\boldsymbol{\kappa},s)\sim& 
   	\frac{\Lambda(s+\kappa_3,\chi_{34}\tau)\Lambda(s-\kappa_3,\chi_{43}\tau)\Lambda(s+\kappa_2,\chi_{23}\tau)\Lambda(s+\kappa_{23},\chi_{24}\tau)}{\Lambda(1-\kappa_2,\chi_{32})\Lambda(1-\kappa_3,\chi_{43})\Lambda(1+\kappa_3,\chi_{34})\Lambda(2-s+\kappa_2,\chi_{23}\tau^{-1})}\times\\
   	&\frac{\Lambda(2s-1-\kappa_2,\chi_{32}\tau^2)\Lambda(2s-1-\kappa_{23},\chi_{42}\tau^2)\Lambda(2s-1,\tau^2)\Lambda(s,\tau)^3}{\Lambda(1-\kappa_{23},\chi_{42})\Lambda(2-s+\kappa_{23},\chi_{24}\tau^{-1})\Lambda(2-s,\tau^{-1})}.
   	\end{align*}  
   	
   	Therefore, we see that $H_1^{(1/2,1)}(s)$ is holomorphic in the strip $1/2<\Re(s)<1.$ Let $s\in\mathcal{R}(1/2)^+.$ By Cauchy integral formula we have 
   	\begin{equation}\label{238}
   	H_1^{(1/2,1)}(s)=\int_{(0)}\int_{\mathcal{C}}\underset{\kappa_{1}=1-s}{\Res}\mathcal{F}(\boldsymbol{\kappa},s)d\kappa_2d\kappa_3-\int_{(0)}\big[R_1(\kappa_3)+R_2(\kappa_3)\big]d\kappa_3,
   	\end{equation}
   	where $R_1(\kappa_3)=R_1(\kappa_3;s)=\underset{\kappa_{2}=2s-1}{\Res}\underset{\kappa_{1}=1-s}{\Res}\mathcal{F}(\boldsymbol{\kappa},s),$ and $R_2(\kappa_3)=R_2(\kappa_3;s)=\underset{\kappa_{2}=2s-1-\kappa_3}{\Res}\underset{\kappa_{1}=1-s}{\Res}\mathcal{F}(\boldsymbol{\kappa},s).$ By functional equation of Hecke L-functions over $F$ we see that $R_1(\kappa_3)$ is equal to some holomorphic function multiplying the product of $\Lambda(3s-1,\tau^3)\Lambda(s,\tau)^3\cdot\Lambda(2-s,\tau^{-1})^{-1}$ and 
   	\begin{equation}\label{239}
   	\frac{\Lambda(s+\kappa_3,\chi_{34}\tau)\Lambda(s-\kappa_3,\chi_{43}\tau)\Lambda(3s-1+\kappa_3,\chi_{34}\tau^3)}{\Lambda(1-\kappa_3,\chi_{43})\Lambda(2-2s-\kappa_3,\chi_{43}\tau^{-2})\Lambda(1+s+\kappa_3,\chi_{34}\tau)\Lambda(1+s,\tau)}.
   	\end{equation}
   	Also, applying functional equation of Hecke L-functions to $\underset{\kappa_{2}=2s-1-\kappa_3}{\Res}\underset{\kappa_{1}=1-s}{\Res}\mathcal{F}(\boldsymbol{\kappa},s)$ leads to that $R_2(\kappa_3)$ is equal to some holomorphic function multiplying the product of $\Lambda(3s-1,\tau^3)\Lambda(s,\tau)^3\cdot\Lambda(2-s,\tau^{-1})^{-1}$ and 
   	\begin{equation}\label{240}
   	\frac{\Lambda(s+\kappa_3,\chi_{34}\tau)\Lambda(s-\kappa_3,\chi_{43}\tau)\Lambda(3s-1-\kappa_3,\chi_{43}\tau^3)}{\Lambda(1+\kappa_3,\chi_{34})\Lambda(2-2s+\kappa_3,\chi_{34}\tau^{-2})\Lambda(1+s-\kappa_3,\chi_{43}\tau)\Lambda(1+s,\tau)}.
   	\end{equation}
   	Due to the uniform zero-free region discussed in Section \ref{7.11}, one sees that both $\int_{(0)}R_1(\kappa_3)d\kappa_3$ and $\int_{(0)}R_2(\kappa_3)d\kappa_3$ converges normally in the region $\mathcal{R}(1/2).$ Hence they are holomorphic in this are. Also, note that $\int_{(0)}\int_{\mathcal{C}}\underset{\kappa_{1}=1-s}{\Res}\mathcal{F}(\boldsymbol{\kappa},s)d\kappa_2d\kappa_3$ is meromorphic in the region $\mathcal{R}(1/2),$ with a possible simple pole at $s=1/2$ if $\tau^2=1.$ Denote by $H_1^{1/2}(s)$ this continuation. It's clear that $H_1^{1/2}(s)$ admits a natural meromorphic continuation to the region $1/3<\Re(s)<1/2.$ Denote by $H_1^{(1/3,1/2)}(s)$ this continuation. Then we obtain $\widetilde{H}_1(s),$ a meromorphic continuation of $H_1^{(1/2,1)}(s)$ to the domain $\mathcal{S}_{(1/3,1)},$ by \eqref{238}, \eqref{239} and \eqref{240}. Explicitly, we have that 
   	\begin{equation}\label{claim7}
   	\widetilde{H}_1(s)=\begin{cases}
   	H_1^{(1/2,1)}(s),\ s\in \mathcal{S}_{(1/2,1)};\\
   	H_1^{1/2}(s),\ s\in \mathcal{R}(1/2);\\
   	H_1^{(1/3,1/2)}(s).
   	\end{cases}
   	\end{equation}
   	Moreover, $\widetilde{H}_1(s)$ has a possible simple pole at $s=1/2$ if $\tau^2=1.$ Now the proof of Claim \ref{66claim} is complete.
   \end{proof}
   
   \begin{proof}[Proof of Claim \ref{67claim}]
   	Let $s\in\mathcal{R}(1)^-.$ Let $H_2^{(1/2,1)}(s):=\int_{(0)}\int_{(0)}\underset{\kappa_{2}=1-s}{\Res}\mathcal{F}(\boldsymbol{\kappa},s)d\kappa_3d\kappa_1.$ Recall that we have computed the analytic property of $\underset{\kappa_{2}=1-s}{\Res}\mathcal{F}(\boldsymbol{\kappa},s):$
   	\begin{align*}
   	\underset{\kappa_{2}=1-s}{\Res}\mathcal{F}(\boldsymbol{\kappa},s)\sim& 
   	\frac{\Lambda(s+\kappa_1,\chi_{12}\tau)\Lambda(s+\kappa_3,\chi_{34}\tau)\Lambda(s+\kappa_{13},\chi_{14}\tau)\Lambda(s-\kappa_{13},\chi_{41}\tau)}{\Lambda(1-\kappa_1,\chi_{21})\Lambda(1-\kappa_3,\chi_{43})\Lambda(1+\kappa_{13},\chi_{14})\Lambda(2-s+\kappa_1,\chi_{12}\tau^{-1})}\\
   	&\cdot\frac{\Lambda(2s-1-\kappa_3,\chi_{43}\tau^2)\Lambda(2s-1-\kappa_1,\chi_{21}\tau^2)\Lambda(2s-1,\tau^2)\Lambda(s,\tau)^3}{\Lambda(1-\kappa_{13},\chi_{41})\Lambda(2-s+\kappa_{3},\chi_{34}\tau^{-1})\Lambda(2-s,\tau^{-1})}.
   	\end{align*} 
   	
   	Therefore, we see that $H_2^{(1/2,1)}(s)$ is holomorphic in the strip $1/2<\Re(s)<1.$ Let $s\in\mathcal{R}(1/2)^+.$ By Cauchy integral formula we have 
   	\begin{align*}
   	H_2^{(1/2,1)}(s)=&\int_{(0)}\int_{\mathcal{C}}\underset{\kappa_{2}=1-s}{\Res}\mathcal{F}(\boldsymbol{\kappa},s)d\kappa_1d\kappa_3-\int_{(0)}\big[R_1(\kappa_3)+R_2(\kappa_3)\big]d\kappa_3\\
   	=&\int_{\mathcal{C}}\int_{\mathcal{C}}\Res_{2}(\boldsymbol{\kappa},s)d\kappa_1d\kappa_3-\int_{(0)}\big[R_1(\kappa_3)+R_2(\kappa_3)\big]d\kappa_3-\int_{C}R(\kappa_1)d\kappa_1,
   	\end{align*}
   	where $\Res_{2}(\boldsymbol{\kappa},s)=\underset{\kappa_{2}=1-s}{\Res}\mathcal{F}(\boldsymbol{\kappa},s);$ $R_1(\kappa_3)=R_1(\kappa_3;s)=\underset{\kappa_{1}=2s-1}{\Res}\underset{\kappa_{2}=1-s}{\Res}\mathcal{F}(\boldsymbol{\kappa},s),$ $R_2(\kappa_3)=R_2(\kappa_3;s)=\underset{\kappa_{1}=2s-1-\kappa_3}{\Res}\underset{\kappa_{2}=1-s}{\Res}\mathcal{F}(\boldsymbol{\kappa},s),$ and the meromorphic function  $R(\kappa_1)=R(\kappa_1;s)=\underset{\kappa_{3}=2s-1}{\Res}\underset{\kappa_{2}=1-s}{\Res}\mathcal{F}(\boldsymbol{\kappa},s).$ By analytic properties of $\underset{\kappa_{2}=1-s}{\Res}\mathcal{F}(\boldsymbol{\kappa},s)$ and functional equation of Hecke L-functions over $F$ we see that $R_1(\kappa_3)$ is equal to some holomorphic function multiplying the product of $\Lambda(2s-1,\tau^2)\Lambda(s,\tau)^3\cdot\Lambda(2-s,\tau^{-1})^{-1}\cdot\Lambda(2-2s,\tau^{-2})^{-1}$ and 
   	\begin{equation}\label{242}
   	\frac{\Lambda(2s+\kappa_3,\chi_{34}\tau^2)\Lambda(2s-1-\kappa_3,\chi_{43}\tau^2)\Lambda(1+\kappa_3,\chi_{34})\Lambda(3s-1,\tau^3)}{\Lambda(1-\kappa_3,\chi_{43})\Lambda(1+s+\kappa_3,\chi_{34}\tau)\Lambda(2-s+\kappa_3,\chi_{34}\tau^{-1})\Lambda(1+s,\tau)}.
   	\end{equation}
   	Also, applying functional equation of Hecke L-functions to $\underset{\kappa_{1}=2s-1-\kappa_3}{\Res}\underset{\kappa_{2}=1-s}{\Res}\mathcal{F}(\boldsymbol{\kappa},s)$ leads to that $R_2(\kappa_3)$ is equal to some holomorphic function multiplying 
   	\begin{equation}\label{243}
   	\frac{\Lambda(s+\kappa_3,\chi_{34}\tau)\Lambda(3s-1-\kappa_3,\chi_{43}\tau^3)\Lambda(2s,\tau^2)\Lambda(2s-1,\tau^2)\Lambda(s,\tau)^2}{\Lambda(2-s+\kappa_3,\chi_{34}\tau^{-1})\Lambda(1+s-\kappa_3,\chi_{43}\tau)\Lambda(2-s,\tau^{-1})\Lambda(1+s,\tau)}.
   	\end{equation}
   	Again, by functional equation of Hecke L-functions we see that $R(\kappa_1)$ is equal to some holomorphic function multiplying the product of $\Lambda(2s-1,\tau^2)\Lambda(s,\tau)^2\cdot\Lambda(2-s,\tau^{-1})^{-1}\cdot\Lambda(1+s,\tau)^{-1}$ and the meromorphic function
   	\begin{equation}\label{244}
   	\frac{\Lambda(1+\kappa_1,\chi_{12})\Lambda(2s-1-\kappa_1,\chi_{21}\tau^2)\Lambda(2s+\kappa_1,\chi_{12}\tau^2)\Lambda(3s-1,\tau^3)}{\Lambda(1-\kappa_1,\chi_{21})\Lambda(2-s+\kappa_1,\chi_{12}\tau^{-1})\Lambda(1+s+\kappa_1,\chi_{12}\tau)\Lambda(2-2s,\tau^{-2})}.
   	\end{equation}
   	Due to the uniform zero-free region discussed in Section \ref{7.11}, one sees from \eqref{243} and \eqref{244} that both $\Lambda(2s,\tau^2)^{-1}\cdot\Lambda(2s-1,\tau^2)^{-1}\cdot\int_{(0)}R(\kappa_1;s)d\kappa_3$ and $\Lambda(2s-1,\tau^2)^{-1}\cdot\int_{\mathcal{C}}R(\kappa_1;s)d\kappa_1$ converge normally for any $s\in \mathcal{R}(1/2).$ Hence they are holomorphic in this area. Then we obtain a meromorphic continuation of $\int_{(0)}R(\kappa_1;s)d\kappa_3$ to $\mathcal{R}(1/2),$ with a possible pole of order at most 2 at $s=1/2$ if $\tau^2=1;$ and a meromorphic continuation of $\int_{\mathcal{C}}R(\kappa_1;s)d\kappa_1$ to $\mathcal{R}(1/2),$ with a possible simple pole at $s=1/2$ if $\tau^2=1.$ Moreover, if $L_F(1/2,\tau)=0,$ then both $\int_{(0)}R(\kappa_1;s)d\kappa_3$ and $\int_{\mathcal{C}}R(\kappa_1;s)d\kappa_1$ are holomorphic at $s=1/2.$
   	
   	By \eqref{242}, one can apply Cauchy integral formula to deduce that
   	\begin{align*}
   	H_2^{(1/2,1)}(s)=&\int_{\mathcal{C}}\int_{\mathcal{C}}\Res_{2}(\boldsymbol{\kappa},s)d\kappa_1d\kappa_3-\int_{\mathcal{C}}\big[R_1(\kappa_3)+R_2(\kappa_3)\big]d\kappa_3-\int_{C}R(\kappa_1)d\kappa_1\\
   	&+\underset{\kappa_{3}=2s-1}{\Res}\underset{\kappa_{1}=2s-1}{\Res}\underset{\kappa_{2}=1-s}{\Res}\mathcal{F}(\boldsymbol{\kappa},s),
   	\end{align*}
   	where $\underset{\kappa_{3}=2s-1}{\Res}\underset{\kappa_{1}=2s-1}{\Res}\underset{\kappa_{2}=1-s}{\Res}\mathcal{F}(\boldsymbol{\kappa},s)$ is equal to, according to \eqref{242}, some holomoprhic function multiplying the meromorphic function
   	\begin{equation}\label{245}
   	\frac{\Lambda(4s-1,\tau^4)\Lambda(3s-1,\tau^3)\Lambda(2s-1,\tau^2)\Lambda(2s,\tau^2)\Lambda(s,\tau)^3}{\Lambda(2-2s,\tau^{-2})^2\Lambda(2-s,\tau^{-1})\Lambda(1+s,\tau)^2\Lambda(3s,\tau^{3})}.
   	\end{equation}
   	Hence $\underset{\kappa_{3}=2s-1}{\Res}\underset{\kappa_{1}=2s-1}{\Res}\underset{\kappa_{2}=1-s}{\Res}\mathcal{F}(\boldsymbol{\kappa},s)$ admits a meromorphic continuation to $\mathcal{R}(1/2),$ with a possible pole of order at most 2 at $s=1/2$ if $\tau^2=1.$ Moreover, if $L_F(1/2,\tau)=0,$ then $\underset{\kappa_{3}=2s-1}{\Res}\underset{\kappa_{1}=2s-1}{\Res}\underset{\kappa_{2}=1-s}{\Res}\mathcal{F}(\boldsymbol{\kappa},s)$ is holomorphic at $s=1/2.$ 
   	
   	Also, note that $\int_{\mathcal{C}}\int_{\mathcal{C}}\Res_{2}(\boldsymbol{\kappa},s)d\kappa_1d\kappa_3,$ $\int_{\mathcal{C}}R_1(\kappa_3)d\kappa_3$ and $\int_{\mathcal{C}}R_2(\kappa_3)d\kappa_3$ are meromorphic in $\mathcal{S}_{(1/3,1/2)}\cup\mathcal{R}(1/2),$ with a possible pole of order at most 2 at $s=1/2$ if $\tau^2=1.$ Moreover, $L_F(1/2,\tau)^{-1}\cdot\int_{\mathcal{C}}\int_{\mathcal{C}}\Res_{2}(\boldsymbol{\kappa},s)d\kappa_1d\kappa_3,$ $L_F(1/2,\tau)^{-1}\cdot\int_{\mathcal{C}}R_1(\kappa_3)d\kappa_3$ and $L_F(1/2,\tau)^{-1}\cdot\int_{\mathcal{C}}R_2(\kappa_3)d\kappa_3$ all have at most a simple pole at $s=1/2.$ Denote by $H_2^{(1/3,1/2]}(s)$ this continuation of $H_2^{(1/2,1)}(s)$ to $\mathcal{R}(1/2).$ 
   	
   	Thus, we obtain $\widetilde{H}_2(s),$ a meromorphic continuation of $H_2^{(1/2,1)}(s)$ to the domain $\mathcal{S}_{(1/3,\infty)},$ by \eqref{242}, \eqref{243}, \eqref{244} and \eqref{245}. Explicitly, we have that 
   	\begin{equation}\label{claim8}
   	\widetilde{H}_2(s)=\begin{cases}
   	H_2^{(1/2,1)}(s),\ s\in \mathcal{S}_{(1/2,1)};\\
   	H_2^{(1/3,1/2]}(s),\ s\in \mathcal{S}_{(1/3,1/2)}\cup\mathcal{R}(1/2).
   	\end{cases}
   	\end{equation}
   	Moreover, $\widetilde{H}_2(s)\cdot \Lambda_F(1/2,\tau)^{-1}$ has a possible pole of order at most 1 at $s=1/2$ if $\tau^2=1.$ Moreover, if $L_F(1/2,\tau)=0,$ then $\widetilde{H}_2(s)$ is holomorphic at $s=1/2.$  Now the proof of Claim \ref{67claim} is complete.
   \end{proof}

   \begin{proof}[Proof of Claim \ref{68claim}]
   	Let $s\in\mathcal{R}(1)^-.$ Let $H_3^{(1/2,1)}(s):=\int_{(0)}\int_{(0)}\underset{\kappa_{3}=1-s}{\Res}\mathcal{F}(\boldsymbol{\kappa},s)d\kappa_2d\kappa_1.$ Recall that we have computed the analytic property of $\underset{\kappa_{3}=1-s}{\Res}\mathcal{F}(\boldsymbol{\kappa},s):$
   	\begin{align*}
   	\underset{\kappa_{3}=1-s}{\Res}\mathcal{F}(\boldsymbol{\kappa},s)\sim& 
   	\frac{\Lambda(s+\kappa_1,\chi_{12}\tau)\Lambda(s-\kappa_1,\chi_{21}\tau)\Lambda(s+\kappa_2,\chi_{23}\tau)\Lambda(s+\kappa_{12},\chi_{13}\tau)}{\Lambda(1+\kappa_1,\chi_{12})\Lambda(1-\kappa_1,\chi_{21})\Lambda(1-\kappa_2,\chi_{32})\Lambda(2-s+\kappa_2,\chi_{23}\tau^{-1})}\times\\
   	&\frac{\Lambda(2s-1-\kappa_2,\chi_{32}\tau^2)\Lambda(2s-1-\kappa_{12},\chi_{31}\tau^2)\Lambda(2s-1,\tau^2)\Lambda(s,\tau)^3}{\Lambda(1-\kappa_{12},\chi_{31})\Lambda(2-s+\kappa_{12},\chi_{13}\tau^{-1})\Lambda(2-s,\tau^{-1})}.
   	\end{align*}  
   	
   	Therefore, we see that $H_3^{(1/2,1)}(s)$ is holomorphic in the strip $1/2<\Re(s)<1.$ Let $s\in\mathcal{R}(1/2)^+.$ By Cauchy integral formula we have 
   	\begin{equation}\label{247}
   	H_3^{(1/2,1)}(s)=\int_{(0)}\int_{\mathcal{C}}\underset{\kappa_{3}=1-s}{\Res}\mathcal{F}(\boldsymbol{\kappa},s)d\kappa_2d\kappa_1-\int_{(0)}\big[R_1(\kappa_1)+R_2(\kappa_1)\big]d\kappa_1,
   	\end{equation}
   	where $R_1(\kappa_1)=R_1(\kappa_1;s)=\underset{\kappa_{2}=2s-1}{\Res}\underset{\kappa_{3}=1-s}{\Res}\mathcal{F}(\boldsymbol{\kappa},s),$ and $R_2(\kappa_1)=R_2(\kappa_1;s)=\underset{\kappa_{2}=2s-1-\kappa_1}{\Res}\underset{\kappa_{3}=1-s}{\Res}\mathcal{F}(\boldsymbol{\kappa},s).$ By functional equation of Hecke L-functions over $F$ we see that $R_1(\kappa_1)$ is equal to some holomorphic function multiplying the product of $\Lambda(3s-1,\tau^3)\Lambda(s,\tau)^3\cdot\Lambda(2-s,\tau^{-1})^{-1}$ and 
   	\begin{equation}\label{248}
   	\frac{\Lambda(s+\kappa_1,\chi_{12}\tau)\Lambda(s-\kappa_1,\chi_{21}\tau)\Lambda(3s-1+\kappa_1,\chi_{12}\tau^3)}{\Lambda(1-\kappa_1,\chi_{21})\Lambda(2-2s-\kappa_1,\chi_{21}\tau^{-2})\Lambda(1+s+\kappa_1,\chi_{12}\tau)\Lambda(1+s,\tau)}.
   	\end{equation}
   	Also, applying functional equation of Hecke L-functions to $\underset{\kappa_{2}=2s-1-\kappa_1}{\Res}\underset{\kappa_{3}=1-s}{\Res}\mathcal{F}(\boldsymbol{\kappa},s)$ leads to that $R_2(\kappa_1)$ is equal to some holomorphic function multiplying the product of $\Lambda(3s-1,\tau^3)\Lambda(s,\tau)^3\cdot\Lambda(2-s,\tau^{-1})^{-1}$ and 
   	\begin{equation}\label{249}
   	\frac{\Lambda(s+\kappa_1,\chi_{12}\tau)\Lambda(s-\kappa_1,\chi_{21}\tau)\Lambda(3s-1-\kappa_1,\chi_{21}\tau^3)}{\Lambda(1+\kappa_1,\chi_{12})\Lambda(2-2s+\kappa_1,\chi_{12}\tau^{-2})\Lambda(1+s-\kappa_1,\chi_{21}\tau)\Lambda(1+s,\tau)}.
   	\end{equation}
   	Due to the uniform zero-free region discussed in Section \ref{7.11}, one sees that both $\int_{(0)}R_1(\kappa_1)d\kappa_1$ and $\int_{(0)}R_2(\kappa_1)d\kappa_1$ converges normally in the region $\mathcal{S}_{(1/3,1/2)}\cup\mathcal{R}(1/2).$ Hence they are holomorphic in this are. Also, note that $\int_{(0)}\int_{\mathcal{C}}\underset{\kappa_{3}=1-s}{\Res}\mathcal{F}(\boldsymbol{\kappa},s)d\kappa_2d\kappa_1$ is meromorphic in the region $\mathcal{S}_{(1/3,1/2)}\cup\mathcal{R}(1/2),$ with a possible simple pole at $s=1/2$ if $\tau^2=1.$ Denote by $H_3^{(1/3,1/2]}(s)$ this continuation. Then we obtain $\widetilde{H}_3(s),$ a meromorphic continuation of $H_3^{(1/2,1)}(s)$ to the domain $\mathcal{S}_{(1/3,\infty)},$ by \eqref{247}, \eqref{248} and \eqref{249}. Explicitly, we have that 
   	\begin{equation}\label{claim9}
   	\widetilde{H}_3(s)=\begin{cases}
   	H_3^{(1/2,1)}(s),\ s\in \mathcal{S}_{(1/2,1)};\\
   	H_3^{(1/3,1/2]}(s),\ s\in \mathcal{S}_{(1/3,1/2)}\cup\mathcal{R}(1/2).
   	\end{cases}
   	\end{equation}
   	Moreover, $\widetilde{H}_3(s)$ has a possible simple pole at $s=1/2$ if $\tau^2=1.$ Now the proof of Claim \ref{68claim} is complete.
   \end{proof}
   
   \begin{proof}[Proof of Claim \ref{69claim}]
   	Let $s\in\mathcal{R}(1)^-.$ Let $H_{12}^{(2/3,1)}(s):=\int_{(0)}\underset{\kappa_{1}=1-s}{\Res}\underset{\kappa_{2}=1-s}{\Res}\mathcal{F}(\boldsymbol{\kappa},s)d\kappa_3.$ Recall that by \eqref{175.} one sees that $H_{12}^{(2/3,1)}(s)$ admits a natural holomorphic continuation to the strip $2/3<\Re(s)<1.$ Now let $s\in\mathcal{R}(2/3)^+.$ Then we have 
   	\begin{equation}\label{251}
   	H_{12}^{(2/3,1)}(s)=\int_{\mathcal{C}}\underset{\kappa_{1}=1-s}{\Res}\underset{\kappa_{2}=1-s}{\Res}\mathcal{F}(\boldsymbol{\kappa},s)d\kappa_3-\underset{\kappa_{3}=3s-2}{\Res}\underset{\kappa_{1}=1-s}{\Res}\underset{\kappa_{2}=1-s}{\Res}\mathcal{F}(\boldsymbol{\kappa},s),
   	\end{equation}
   	where $\underset{\kappa_{3}=3s-2}{\Res}\underset{\kappa_{1}=1-s}{\Res}\underset{\kappa_{2}=1-s}{\Res}\mathcal{F}(\boldsymbol{\kappa},s)$ equals some holomorphic function multiplying
   	\begin{align*}
   	\frac{\Lambda(4s-2,\tau^4)\Lambda(3s-2,\tau^3)\Lambda(2s-1,\tau^2)\Lambda(s,\tau)^2}{\Lambda(3-3s,\tau^{-3})\Lambda(3-2s,\tau^{-2})\Lambda(2-s,\tau^{-1})\Lambda(1+s,\tau)}.
   	\end{align*} 
   	Then by functional equation $\Lambda(3s-2,\tau^3)\sim \Lambda(3-3s,\tau^{-3}),$ we have that 
   	\begin{equation}\label{252}
   	\underset{\kappa_{3}=3s-2}{\Res}\underset{\kappa_{1}=1-s}{\Res}\underset{\kappa_{2}=1-s}{\Res}\mathcal{F}(\boldsymbol{\kappa},s)\sim \frac{\Lambda(4s-2,\tau^4)\Lambda(2s-1,\tau^2)\Lambda(s,\tau)^2}{\Lambda(3-2s,\tau^{-2})\Lambda(2-s,\tau^{-1})\Lambda(1+s,\tau)}.
   	\end{equation}
   	
   	Hence $\underset{\kappa_{3}=3s-2}{\Res}\underset{\kappa_{1}=1-s}{\Res}\underset{\kappa_{2}=1-s}{\Res}\mathcal{F}(\boldsymbol{\kappa},s)$ admits a meromorphic continuation to the region $\mathcal{S}_{(1/3,1)},$ with a possible pole of order at most 2 at $1/2$ if $\tau^2=1.$ 
   	
   	Moreover, due to \eqref{175.}, the function  $\int_{\mathcal{C}}\underset{\kappa_{1}=1-s}{\Res}\underset{\kappa_{2}=1-s}{\Res}\mathcal{F}(\boldsymbol{\kappa},s)d\kappa_3$ is meromorphic in the region $\mathcal{S}_{(1/3,2/3)}\cup\mathcal{R}(2/3),$ with a possible simple pole at $s=2/3$ if $\tau^3=1;$ and a possible simple pole at $1/2$ if $\tau^2=1.$ Thus we get a meromorphic continuation of $H_{12}^{(2/3,1)}(s)$ to the region $\mathcal{S}_{(1/3,2/3)}\cup\mathcal{R}(2/3).$ Denote by $H_{12}^{(1/3,2/3]}(s)$ this continuation. Now we obtain from \eqref{251} and \eqref{252} a meromorphic continuation of $H_{12}^{(2/3,1)}(s)$ to the region $\mathcal{S}_{(1/3,1)},$ namely,
   	\begin{equation}\label{claim10}
   	\widetilde{H}_{12}(s)=\begin{cases}
   	H_{12}^{(2/3,1)}(s),\ s\in \mathcal{S}_{(2/3,1)};\\
   	H_{12}^{(1/3,2/3]}(s),\ s\in \mathcal{S}_{(1/3,2/3)}\cup\mathcal{R}(2/3).
   	\end{cases}
   	\end{equation}
   	
   	From \eqref{175.} and the above formulas one sees that $\widetilde{H}_{12}(s)$ has possible poles at $s=2/3$ and $s=1/2;$ and these potential pole at $s=2/3$ is at most simple, the possible pole at $s=1/2$ has order at most 2. Moreover, from the above explicit expressions of $\widetilde{H}_{12}(s),$ we see that $\widetilde{H}_{12}(s)\cdot\Lambda(s,\tau)^{-1}$ has at most a simple pole at $s=1/2$ if $L_F(1/2,\tau)=0.$ In additional, if $L_F(2/3,\tau)=0,$ then by functional equation we have that $\Lambda(1/3,\tau^{-1})=0.$ Suppose that $\widetilde{H}_{12}(s)$ has a pole at $s=2/3.$ Then from the proceeding explicit expressions, we must have that $\tau^3=1,$ and the singular part of $\widetilde{H}_{12}(s)$ around $s=2/3$ is a holomorphic function multiplying $\Lambda(3s-2,\tau^3)\Lambda(2s-1,\tau^2).$ Note that $\Lambda(2s-1,\tau^2)\mid_{s=2/3}=\Lambda(1/3,\tau^2)=\Lambda(1/3,\tau^{-1})=0.$ Hence, when $L_F(2/3,\tau)=0,$ $\widetilde{H}_{12}(s)$ is holomorphic at $s=2/3.$ Now the proof of Claim \ref{69claim} is complete.
   \end{proof}
   
   \begin{proof}[Proof of Claim \ref{70claim}]
   	Let $s\in\mathcal{R}(1)^-.$ Let $H_{12}^{(1/2,1)}(s):=\int_{(0)}\underset{\kappa_{1}=1-s}{\Res}\underset{\kappa_{3}=1-s}{\Res}\mathcal{F}(\boldsymbol{\kappa},s)d\kappa_2.$ Let $\kappa_{2}'=1-s+\kappa_2,$ $\kappa_1'=\kappa_1$ and $\kappa_3'=\kappa_3.$ Denote by $\boldsymbol{\kappa}'=(\kappa_1',\kappa_2',\kappa_3').$ Recall that $\underset{\kappa_{1}=1-s}{\Res}\underset{\kappa_{3}=1-s}{\Res}\mathcal{F}(\boldsymbol{\kappa},s)$ equals some holomorphic function multiplying the product of $\Lambda(2s-1,\tau^2)^2\Lambda(s,\tau)^2\Lambda(2-s,\tau^{-1})^{-2}$ and the function 
   	\begin{align*}
   	\frac{\Lambda(1+\kappa_2,\chi_{13})\Lambda(s+\kappa_2,\chi_{23}\tau)\Lambda(2s-1-\kappa_2,\chi_{32}\tau^2)\Lambda(3s-2-\kappa_2,\chi_{32}\tau^{3})}{\Lambda(1-\kappa_2,\chi_{32})\Lambda(s-\kappa_2,\chi_{32}\tau)\Lambda(2-s+\kappa_2,\chi_{23}\tau^{-1})\Lambda(3-2s+\kappa_2,\chi_{23}\tau^{-2})}.
   	\end{align*}
   	Then after the above changing of variables, we have that  $\underset{\kappa_{1}=1-s}{\Res}\underset{\kappa_{3}=1-s}{\Res}\mathcal{F}(\boldsymbol{\kappa},s)=\underset{\kappa_{1}'=1-s}{\Res}\underset{\kappa_{3}'=1-s}{\Res}\mathcal{F}(\boldsymbol{\kappa}',s)$ is equal to some holomorphic function multiplying the product of $\Lambda(2s-1,\tau^2)^2\Lambda(s,\tau)^2\Lambda(2-s,\tau^{-1})^{-2}$ and the function
   	\begin{equation}\label{254}
   	\frac{\Lambda(s+\kappa_2',\chi_{23}\tau)\Lambda(s-\kappa_2',\chi_{32}\tau)\Lambda(2s-1+\kappa_2',\chi_{23}\tau^2)\Lambda(2s-1-\kappa_2',\chi_{32}\tau^{2})}{\Lambda(1+\kappa_2',\chi_{23})\Lambda(1-\kappa_2',\chi_{32})\Lambda(2-s+\kappa_2',\chi_{23}\tau^{-1})\Lambda(2-s-\kappa_2',\chi_{32}\tau^{-2})}.
   	\end{equation}
   	One then sees that $H_{13}^{(2/3,1)}(s)$ admits a natural holomorphic continuation to the strip $1/2<\Re(s)<1.$ Now let $s\in\mathcal{R}(1/2)^+.$ Then we have 
   	\begin{equation}\label{255}
   	H_{13}^{(1/2,1)}(s)=\int_{\mathcal{C}}\underset{\kappa_{1}'=1-s}{\Res}\underset{\kappa_{3}'=1-s}{\Res}\mathcal{F}(\boldsymbol{\kappa}',s)d\kappa_2'-\underset{\kappa_{2}'=2s-1}{\Res}\underset{\kappa_{1}'=1-s}{\Res}\underset{\kappa_{3}'=1-s}{\Res}\mathcal{F}(\boldsymbol{\kappa}',s),
   	\end{equation}
   	where $\underset{\kappa_{2}'=2s-1}{\Res}\underset{\kappa_{1}'=1-s}{\Res}\underset{\kappa_{3}'=1-s}{\Res}\mathcal{F}(\boldsymbol{\kappa}',s)$ equals some holomorphic function multiplying
   	\begin{equation}\label{2s}
   	\frac{\Lambda(4s-2,\tau^4)\Lambda(3s-1,\tau^3)\Lambda(2s-1,\tau^2)^2\Lambda(s,\tau)^2\Lambda(1-s,\tau^{-1})}{\Lambda(3-3s,\tau^{-3})\Lambda(2-2s,\tau^{-2})\Lambda(2-s,\tau^{-1})^2\Lambda(2s,\tau^2)\Lambda(1+s,\tau)}.
   	\end{equation} 
   	Denote by $R_{213}\mathcal{F}(\boldsymbol{\kappa}',s)=\underset{\kappa_{2}'=2s-1}{\Res}\underset{\kappa_{1}'=1-s}{\Res}\underset{\kappa_{3}'=1-s}{\Res}\mathcal{F}(\boldsymbol{\kappa}',s).$ Applying the functional equation $\Lambda(2s-1,\tau^2)\sim \Lambda(2-2s,\tau^{-2})$ and $\Lambda(1-s,\tau^{-1})\sim \Lambda(s,\tau),$ one then has
   	\begin{equation}\label{256}
   	R_{213}\mathcal{F}(\boldsymbol{\kappa}',s)\sim \frac{\Lambda(4s-2,\tau^4)\Lambda(3s-1,\tau^3)\Lambda(2s-1,\tau^2)\Lambda(s,\tau)^3}{\Lambda(3-3s,\tau^{-3})\Lambda(2-s,\tau^{-1})^2\Lambda(2s,\tau^2)\Lambda(1+s,\tau)}.
   	\end{equation} 
   	
   	Note that for $s\in\mathcal{R}(1/2),$ $\Lambda(3-3s,\tau^{-3})^{-1}\cdot \Lambda(2s,\tau^{2})^{-1}$ is holomophic, since $3-3s$ and $2s$ lie in a zero-free region  (see Section \ref{7.11}). Hence $\underset{\kappa_{2}'=2s-1}{\Res}\underset{\kappa_{1}'=1-s}{\Res}\underset{\kappa_{3}'=1-s}{\Res}\mathcal{F}(\boldsymbol{\kappa}',s)$ admits a meromorphic continuation to the region $\mathcal{S}_{(1/3,1/2)}\cup\mathcal{R}(1/2),$ with a possible pole of order at most 2 at $s=1/2$ if $\tau^2=1.$ Moreover, if $L_F(1/2,\tau)=0,$ then $\Lambda(s,\tau)^{-1}\cdot\underset{\kappa_{2}'=2s-1}{\Res}\underset{\kappa_{1}'=1-s}{\Res}\underset{\kappa_{3}'=1-s}{\Res}\mathcal{F}(\boldsymbol{\kappa}',s)$ is holomorphic at $s=1/2.$
   	
   	On the other hand, the function $\int_{\mathcal{C}}\underset{\kappa_{1}'=1-s}{\Res}\underset{\kappa_{3}'=1-s}{\Res}\mathcal{F}(\boldsymbol{\kappa}',s)d\kappa_2'$ is clearly meromorphic in $\mathcal{R}(1/2),$ with a possible pole of order at most 2 at $s=1/2$ if $\tau^2=1.$ Moreover, if $L_F(1/2,\tau)=0,$ then $\Lambda(s,\tau)^{-1}\cdot\int_{\mathcal{C}}\underset{\kappa_{1}'=1-s}{\Res}\underset{\kappa_{3}'=1-s}{\Res}\mathcal{F}(\boldsymbol{\kappa}',s)d\kappa_2'$ is holomorphic at $s=1/2.$ Then we obtain a meromorphic continuation of $H_{13}^{(1/2,1)}(s)$ to the region $\mathcal{R}(1/2).$ Denote by $H_{13}^{1/2}(s)$ this continuation. 
   	
   	Let $s\in\mathcal{R}(1/2)^{-}.$ Then $\int_{\mathcal{C}}\underset{\kappa_{1}'=1-s}{\Res}\underset{\kappa_{3}'=1-s}{\Res}\mathcal{F}(\boldsymbol{\kappa}',s)d\kappa_2'$ is equal to
   	\begin{equation}\label{319}
   	\int_{(0)}\underset{\kappa_{1}'=1-s}{\Res}\underset{\kappa_{3}'=1-s}{\Res}\mathcal{F}(\boldsymbol{\kappa}',s)d\kappa_2'+\underset{\kappa_{2}'=1-2s}{\Res}\underset{\kappa_{1}'=1-s}{\Res}\underset{\kappa_{3}'=1-s}{\Res}\mathcal{F}(\boldsymbol{\kappa}',s),
   	\end{equation}
   	where $\underset{\kappa_{2}'=1-2s}{\Res}\underset{\kappa_{1}'=1-s}{\Res}\underset{\kappa_{3}'=1-s}{\Res}\mathcal{F}(\boldsymbol{\kappa}',s)$ is equal to a holomorphic function multiplying 
   	\begin{align*}
   	\frac{\Lambda(2s-1,\tau^2)^2\Lambda(s,\tau)^2\Lambda(1-s,\tau^{-1})\Lambda(3s-1,\tau^3)\Lambda(4s-2,\tau^4)}{\Lambda(2-s,\tau^{-1})^{2}\Lambda(2-2s,\tau^{-2})\Lambda(2s,\tau^2)\Lambda(3-3s,\tau^{-3})\Lambda(s+1,\tau)}.
   	\end{align*}
   	
   	Now we obtain from \eqref{254},  \eqref{255}, \eqref{256} and \eqref{319} a meromorphic continuation of $H_{13}^{1/2}(s)$ to the region $\mathcal{S}_{(1/3,1/2)}.$  Denote by $H_{13}^{(1/3,1/2)}(s)$ this continuation, then $H_{13}^{(1/3,1/2)}(s)$ can be expressed as 
   	\begin{align*}
   	\int_{(0)}\underset{\kappa_{1}'=1-s}{\Res}\underset{\kappa_{3}'=1-s}{\Res}\mathcal{F}(\boldsymbol{\kappa}',s)d\kappa_2'+\underset{\kappa_{2}'=1-2s}{\Res}\underset{\kappa_{1}'=1-s}{\Res}\underset{\kappa_{3}'=1-s}{\Res}\mathcal{F}(\boldsymbol{\kappa}',s)-R_{213}\mathcal{F}(\boldsymbol{\kappa}',s).
   	\end{align*}
   	In all, we obtain a meromorphic continuation of $H_{13}^{(1/2,1)}(s)$ to the region $\mathcal{S}_{(1/3,1)}:$
   	\begin{equation}\label{claim11}
   	\widetilde{H}_{13}(s)=\begin{cases}
   	H_{13}^{(1/2,1)}(s),\ s\in \mathcal{S}_{(1/2,1)};\\
   	H_{13}^{1/2}(s),\ s\in\mathcal{R}(1/2);\\
   	H_{13}^{(1/3,1/2)}(s),\ s\in \mathcal{S}_{(1/3,1/2)}.
   	\end{cases}
   	\end{equation}
   	
   	From the above discussions one sees that $\widetilde{H}_{13}(s)$ has a possible pole of order at most 2 at $s=1/2$ if $\tau^2=1.$ Moreover, if $L_F(1/2,\tau)=0,$ then $\Lambda(s,\tau)^{-1}\cdot\widetilde{H}_{13}(s)$ is holomorphic at $s=1/2.$ Now the proof of Claim \ref{70claim} is complete.
   \end{proof}

   \begin{proof}[Proof of Claim \ref{71claim}]
   	Let $s\in\mathcal{R}(1)^-.$ Let $H_{23}^{(2/3,1)}(s):=\int_{(0)}\underset{\kappa_{2}=1-s}{\Res}\underset{\kappa_{3}=1-s}{\Res}\mathcal{F}(\boldsymbol{\kappa},s)d\kappa_1.$ Recall that by \eqref{173.} one sees that $H_{23}^{(2/3,1)}(s)$ admits a natural holomorphic continuation to the strip $2/3<\Re(s)<1.$ Now let $s\in\mathcal{R}(2/3)^+.$ Then we have 
   	\begin{equation}\label{258}
   	H_{23}^{(2/3,1)}(s)=\int_{\mathcal{C}}\underset{\kappa_{2}=1-s}{\Res}\underset{\kappa_{3}=1-s}{\Res}\mathcal{F}(\boldsymbol{\kappa},s)d\kappa_1-\underset{\kappa_{1}=3s-2}{\Res}\underset{\kappa_{2}=1-s}{\Res}\underset{\kappa_{3}=1-s}{\Res}\mathcal{F}(\boldsymbol{\kappa},s),
   	\end{equation}
   	where $\underset{\kappa_{1}=3s-2}{\Res}\underset{\kappa_{2}=1-s}{\Res}\underset{\kappa_{3}=1-s}{\Res}\mathcal{F}(\boldsymbol{\kappa},s)$ equals some holomorphic function multiplying
   	\begin{align*}
   	\frac{\Lambda(4s-2,\tau^4)\Lambda(3s-2,\tau^3)\Lambda(2s-1,\tau^2)\Lambda(s,\tau)^2}{\Lambda(3-3s,\tau^{-3})\Lambda(3-2s,\tau^{-2})\Lambda(2-s,\tau^{-1})\Lambda(1+s,\tau)}.
   	\end{align*} 
   	Then by functional equation $\Lambda(3s-2,\tau^3)\sim \Lambda(3-3s,\tau^{-3}),$ we have that 
   	\begin{equation}\label{259}
   	\underset{\kappa_{1}=3s-2}{\Res}\underset{\kappa_{2}=1-s}{\Res}\underset{\kappa_{3}=1-s}{\Res}\mathcal{F}(\boldsymbol{\kappa},s)\sim \frac{\Lambda(4s-2,\tau^4)\Lambda(2s-1,\tau^2)\Lambda(s,\tau)^2}{\Lambda(3-2s,\tau^{-2})\Lambda(2-s,\tau^{-1})\Lambda(1+s,\tau)}.
   	\end{equation}
   	
   	Hence $\underset{\kappa_{1}=3s-2}{\Res}\underset{\kappa_{2}=1-s}{\Res}\underset{\kappa_{3}=1-s}{\Res}\mathcal{F}(\boldsymbol{\kappa},s)$ admits a meromorphic continuation to the region $\mathcal{R}(1/2)\cup\mathcal{S}_{(1/2,1)},$ with a possible pole of order at most 2 at $1/2$ if $\tau^2=1.$ 
   	
   	Moreover, $\int_{\mathcal{C}}\underset{\kappa_{2}=1-s}{\Res}\underset{\kappa_{3}=1-s}{\Res}\mathcal{F}(\boldsymbol{\kappa},s)d\kappa_1$ is meromorphic in the region $\mathcal{S}_{(1/3,2/3)}\cup\mathcal{R}(2/3),$ with a possible simple pole at $s=2/3$ if $\tau^3=1;$ and a possible simple pole at $1/2$ if $\tau^2=1.$ Thus we get a meromorphic continuation of $H_{23}^{(2/3,1)}(s)$ to the region $\mathcal{S}_{[1/3,2/3)}\cup\mathcal{R}(2/3).$ Denote by $H_{23}^{(1/3,2/3]}(s)$ this continuation. Now we obtain from \eqref{258} and \eqref{259} a meromorphic continuation of $H_{23}^{(2/3,1)}(s)$ to the region $\mathcal{S}_{(1/3,1)},$ namely,
   	\begin{equation}\label{claim12}
   	\widetilde{H}_{23}(s)=\begin{cases}
   	H_{23}^{(2/3,1)}(s),\ s\in \mathcal{S}_{(2/3,1)};\\
   	H_{23}^{(1/3,2/3]}(s),\ s\in \mathcal{S}_{(1/3,2/3)}\cup\mathcal{R}(2/3).
   	\end{cases}
   	\end{equation}
   	
   	From \eqref{173.} and the above formulas one sees that $\widetilde{H}_{23}(s)$ has possible poles at $s=2/3$ and $s=1/2;$ and these potential pole at $s=2/3$ is at most simple, the possible pole at $s=1/2$ has order at most 2. Moreover, from the above explicit expressions of $\widetilde{H}_{23}(s),$ we see that $\widetilde{H}_{23}(s)\cdot\Lambda(s,\tau)^{-1}$ has at most a simple pole at $s=1/2$ if $L_F(1/2,\tau)=0.$ In additional, if $L_F(2/3,\tau)=0,$ then by functional equation we have that $\Lambda(1/3,\tau^{-1})=0.$ Suppose that $\widetilde{H}_{23}(s)$ has a pole at $s=2/3.$ Then from the proceeding explicit expressions, we must have that $\tau^3=1,$ and the singular part of $\widetilde{H}_{23}(s)$ around $s=2/3$ is a holomorphic function multiplying $\Lambda(3s-2,\tau^3)\Lambda(2s-1,\tau^2).$ Note that $\Lambda(2s-1,\tau^2)\mid_{s=2/3}=\Lambda(1/3,\tau^2)=\Lambda(1/3,\tau^{-1})=0.$ Hence, when $L_F(2/3,\tau)=0,$ $\widetilde{H}_{23}(s)$ is holomorphic at $s=2/3.$ Now the proof of Claim \ref{71claim} is complete.
   \end{proof}
   
   \begin{remark}
   	One can of course deal with each individual $\iint\mathcal{F}(\boldsymbol{\kappa},s)$ instead of the infinite sum $\sum_{\chi}\sum_{\phi}\iint\mathcal{F}(\boldsymbol{\kappa},s).$ However, without Proposition \ref{57prop} or Proposition \ref{58prop}, the expression of each single $\iint\mathcal{F}(\boldsymbol{\kappa},s)$ would be super complicated. For example, one needs to consider residues with respect to $\kappa_{12}.$ We give meromorphic continuation of $\iint\mathcal{F}(\boldsymbol{\kappa},s)$ as follows, which involves 56 terms in total for $\GL(3)$ case (also some of them are same but locate in different regions). Let $J(s)=\int_{(0)}\int_{(0)}\mathcal{F}(\boldsymbol{\kappa},s)d\kappa_1d\kappa_2.$ When $s\in\mathcal{R}(1)^+,$ we have, by Cauchy integral formula, that $J(s)$ is equal to 
   	\begin{align*}
   	&\int_{\mathcal{C}}\int_{\mathcal{C}}\mathcal{F}(\boldsymbol{\kappa},s)d\kappa_1d\kappa_2-\int_{\mathcal{C}}\underset{\kappa_1=s-1}{\Res}\mathcal{F}(\boldsymbol{\kappa},s){d\kappa_2}-\int_{\mathcal{C}}\underset{\kappa_2=s-1}{\Res}\mathcal{F}(\boldsymbol{\kappa},s){d\kappa_1}\\
   	&-\int_{\mathcal{C}}\underset{\kappa_2=s-1-\kappa_1}{\Res}\mathcal{F}(\boldsymbol{\kappa},s){d\kappa_1}+\underset{\kappa_1=s-1}{\Res}\underset{\kappa_2=s-1}{\Res}\mathcal{F}(\boldsymbol{\kappa},s)+\underset{\kappa_1=2s-2}{\Res}\underset{\kappa_2=s-1-\kappa_1}{\Res}\mathcal{F}(\boldsymbol{\kappa},s).
   	\end{align*}
   	Since the right hand side is meromorphic in $\mathcal{R}(1),$ we get meromorphic continuation of $J(s)$ in $\mathcal{R}(1)^-.$ Denote by $J_1(s)$ this continuation. Let $s\in \mathcal{R}(1)^-.$ Then 
   	\begin{align*}
   	J_1(s)=&\int_{(0)}\int_{(0)}\mathcal{F}(\boldsymbol{\kappa},s)d\kappa_1d\kappa_2+\int_{(0)}\underset{\kappa_1=1-s}{\Res}\mathcal{F}(\boldsymbol{\kappa},s)d\kappa_2+\int_{(0)}\underset{\kappa_2=1-s-\kappa_1}{\Res}\mathcal{F}(\boldsymbol{\kappa},s)d\kappa_1\\
   	&+\int_{(0)}\underset{\kappa_2=1-s}{\Res}\mathcal{F}(\boldsymbol{\kappa},s)d\kappa_1-\int_{(0)}\underset{\kappa_2=s-1}{\Res}\mathcal{F}(\boldsymbol{\kappa},s)d\kappa_1-\int_{(0)}\underset{\kappa_1=s-1}{\Res}\mathcal{F}(\boldsymbol{\kappa},s)d\kappa_2-\\
   	&\int_{(0)}\underset{\kappa_2=s-1-\kappa_1}{\Res}\mathcal{F}(\boldsymbol{\kappa},s)d\kappa_1+\underset{\kappa_1=1-s}{\Res}\underset{\kappa_2=1-s}{\Res}\mathcal{F}(\boldsymbol{\kappa},s)+\underset{\kappa_1=s-1}{\Res}\underset{\kappa_2=s-1}{\Res}\mathcal{F}(\boldsymbol{\kappa},s)-\\
   	&\underset{\kappa_1=2-2s}{\Res}\underset{\kappa_2=s-1}{\Res}\mathcal{F}(\boldsymbol{\kappa},s)-\underset{\kappa_1=1-s}{\Res}\underset{\kappa_2=s-1-\kappa_1}{\Res}\mathcal{F}(\boldsymbol{\kappa},s)-\underset{\kappa_2=2-2s}{\Res}\underset{\kappa_1=s-1}{\Res}\mathcal{F}(\boldsymbol{\kappa},s)\\
   	&+\underset{\kappa_1=2-2s}{\Res}\underset{\kappa_2=s-1-\kappa_1}{\Res}\mathcal{F}(\boldsymbol{\kappa},s),
   	\end{align*}
   	where the right hand side is meromorphic in $1/2<\Re(s)<1.$ Hence we obtain a meromorphic of $J_1(s)$ to the domain $\mathcal{S}_{(1/2,1)}.$ Denote by $J_2(s)$ this continuation. Let $s\in \mathcal{R}(1/2)^+.$ Then we have, again, by Cauchy integral formula, that 
   	\begin{align*}
   	J_2(s)=&\int_{(0)}\int_{(0)}\mathcal{F}(\boldsymbol{\kappa},s)d\kappa_1d\kappa_2+\int_{\mathcal{C}}\underset{\kappa_1=1-s}{\Res}\mathcal{F}(\boldsymbol{\kappa},s)d\kappa_2+\int_{\mathcal{C}}\underset{\kappa_2=1-s-\kappa_1}{\Res}\mathcal{F}(\boldsymbol{\kappa},s)d\kappa_1\\
   	&+\int_{\mathcal{C}}\underset{\kappa_2=1-s}{\Res}\mathcal{F}(\boldsymbol{\kappa},s)d\kappa_1-\int_{\mathcal{C}}\underset{\kappa_2=s-1}{\Res}\mathcal{F}(\boldsymbol{\kappa},s)d\kappa_1-\int_{\mathcal{C}}\underset{\kappa_1=s-1}{\Res}\mathcal{F}(\boldsymbol{\kappa},s)d\kappa_2-\\
   	&\int_{\mathcal{C}}\underset{\kappa_2=s-1-\kappa_1}{\Res}\mathcal{F}(\boldsymbol{\kappa},s)d\kappa_1+\underset{\kappa_1=1-s}{\Res}\underset{\kappa_2=1-s}{\Res}\mathcal{F}(\boldsymbol{\kappa},s)+\underset{\kappa_1=s-1}{\Res}\underset{\kappa_2=s-1}{\Res}\mathcal{F}(\boldsymbol{\kappa},s)-\\
   	&\underset{\kappa_1=2-2s}{\Res}\underset{\kappa_2=s-1}{\Res}\mathcal{F}(\boldsymbol{\kappa},s)-\underset{\kappa_1=1-s}{\Res}\underset{\kappa_2=s-1-\kappa_1}{\Res}\mathcal{F}(\boldsymbol{\kappa},s)-\underset{\kappa_2=2s-1}{\Res}\underset{\kappa_1=1-s}{\Res}\mathcal{F}(\boldsymbol{\kappa},s)+\\
   	&\underset{\kappa_1=2-2s}{\Res}\underset{\kappa_2=s-1-\kappa_1}{\Res}\mathcal{F}(\boldsymbol{\kappa},s)-\underset{\kappa_2=2-2s}{\Res}\underset{\kappa_1=s-1}{\Res}\mathcal{F}(\boldsymbol{\kappa},s)-\underset{\kappa_1=2s-1}{\Res}\underset{\kappa_2=1-s}{\Res}\mathcal{F}(\boldsymbol{\kappa},s)\\
   	&+\underset{\kappa_1=2s-1}{\Res}\underset{\kappa_2=s-1-\kappa-1}{\Res}\mathcal{F}(\boldsymbol{\kappa},s)-\underset{\kappa_1=2s-1}{\Res}\underset{\kappa_2=1-s-\kappa_1}{\Res}\mathcal{F}(\boldsymbol{\kappa},s),
   	\end{align*}
   	where the right hand side is meromorphic in $\mathcal{R}(1/2).$ Hence we obtain a meromorphic continuation of $J_2(s)$ in $s\in \mathcal{R}(1/2).$ Let $s\in \mathcal{R}(1/2)^-.$ Then we have, again, by Cauchy integral formula, that 
   	\begin{align*}
   	J_2(s)=&\int_{(0)}\int_{(0)}\mathcal{F}(\boldsymbol{\kappa},s)d\kappa_1d\kappa_2+\int_{\mathcal{C}}\underset{\kappa_1=1-s}{\Res}\mathcal{F}(\boldsymbol{\kappa},s)d\kappa_2+\int_{\mathcal{C}}\underset{\kappa_2=1-s-\kappa_1}{\Res}\mathcal{F}(\boldsymbol{\kappa},s)d\kappa_1\\
   	&+\int_{\mathcal{C}}\underset{\kappa_2=1-s}{\Res}\mathcal{F}(\boldsymbol{\kappa},s)d\kappa_1-\int_{(0)}\underset{\kappa_2=s-1}{\Res}\mathcal{F}(\boldsymbol{\kappa},s)d\kappa_1-\int_{(0)}\underset{\kappa_1=s-1}{\Res}\mathcal{F}(\boldsymbol{\kappa},s)d\kappa_2-\\
   	&\int_{(0)}\underset{\kappa_2=s-1-\kappa_1}{\Res}\mathcal{F}(\boldsymbol{\kappa},s)d\kappa_1+\underset{\kappa_1=1-s}{\Res}\underset{\kappa_2=1-s}{\Res}\mathcal{F}(\boldsymbol{\kappa},s)+\underset{\kappa_1=s-1}{\Res}\underset{\kappa_2=s-1}{\Res}\mathcal{F}(\boldsymbol{\kappa},s)-\\
   	&\underset{\kappa_1=2-2s}{\Res}\underset{\kappa_2=s-1}{\Res}\mathcal{F}(\boldsymbol{\kappa},s)-\underset{\kappa_1=1-s}{\Res}\underset{\kappa_2=s-1-\kappa_1}{\Res}\mathcal{F}(\boldsymbol{\kappa},s)-\underset{\kappa_2=2s-1}{\Res}\underset{\kappa_1=1-s}{\Res}\mathcal{F}(\boldsymbol{\kappa},s)+\\
   	&\underset{\kappa_1=2-2s}{\Res}\underset{\kappa_2=s-1-\kappa_1}{\Res}\mathcal{F}(\boldsymbol{\kappa},s)-\underset{\kappa_2=2-2s}{\Res}\underset{\kappa_1=s-1}{\Res}\mathcal{F}(\boldsymbol{\kappa},s)-\underset{\kappa_1=2s-1}{\Res}\underset{\kappa_2=1-s}{\Res}\mathcal{F}(\boldsymbol{\kappa},s)\\
   	&+\underset{\kappa_1=2s-1}{\Res}\underset{\kappa_2=s-1-\kappa-1}{\Res}\mathcal{F}(\boldsymbol{\kappa},s)-\underset{\kappa_1=2s-1}{\Res}\underset{\kappa_2=1-s-\kappa_1}{\Res}\mathcal{F}(\boldsymbol{\kappa},s)+\\
   	&\underset{\kappa_1=1-2s}{\Res}\underset{\kappa_2=s-1}{\Res}\mathcal{F}(\boldsymbol{\kappa},s)+\underset{\kappa_2=1-2s}{\Res}\underset{\kappa_1=s-1}{\Res}\mathcal{F}(\boldsymbol{\kappa},s)+\underset{\kappa_1=1-2s}{\Res}\underset{\kappa_2=s-1-\kappa_1}{\Res}\mathcal{F}(\boldsymbol{\kappa},s),
   	\end{align*}
   	where the right hand side is meromorphic in $1/3<\Re(s)<1/2.$ Hence we obtain a meromorphic continuation of $J_2(s)$ in $s\in \mathcal{S}(1/3,1/2).$ Therefore, putting the above computation together, we get a meromorphic continuation of $J(s)$ to the domain $s\in \mathcal{S}_{(1/3,1)}.$
   	
   	Then one needs to investigate these terms individually. What is worse, the situation would be much more complicated in $\GL_4$ case. 
   \end{remark}
   
\bibliographystyle{alpha}

\bibliography{TF}

\end{document}